\numberwithin{equation}{section}
\theoremstyle{plain}
\newtheorem{theo}{Theorem}[section]
\newtheorem{prop}[theo]{Proposition}
\newtheorem{coro}[theo]{Corollary} 
\newtheorem{lemm}[theo]{Lemma}
\theoremstyle{definition}
\newtheorem{defi}[theo]{Definition}
\newtheorem{rema}[theo]{Remark}
\newtheorem{theo-defi}[theo]{Theorem-Definition}
\newtheorem{prop-defi}[theo]{Proposition-Definition}
\newtheorem{rema-defi}[theo]{Remark-Definition}
\newtheorem{exem-defi} [theo]{Example-Definition}
\newtheorem{exem}[theo]{Example}
\newtheorem{conj}[theo]{Conjecture}
\newtheorem{prob}[theo]{Problem}
\def \bet{\beta}
\def \bul{\bullet}
\def \col{\colon}
\def \Del{\Delta}
\def \eps{\epsilon}
\def \Gam{\Gamma}
\def \kap{\kappa}
\def \Lam{\Lambda}
\def \lam{\lambda}
\def \Lo{\Longrightarrow}
\def \lo{\longrightarrow}
\def \lom{\longmapsto}
\def \mab{\mathbb}
\def \ol{\overline}
\def \os{\overset}
\def \parno{\par\noindent}
\def \sig{\sigma}
\def \sus{\subset}
\def \ul{\underline}
\def \us{\underset}
\def \vp{\varpi}
\def \vil{\varinjlim}
\def \vpl{\varprojlim}
\def \wh{\widehat}
\def \wt{\widetilde}
\newcommand{\getsfrom}{\ensuremath{
\longleftarrow\kern-.50em\lower.0ex\hbox%
{$\shortmid\,$}}}
\begin{document}

\title{The $l$-adic bifiltered El Zein-Steenbrink-Zucker complex
of a proper SNCL scheme with a relative SNCD}
\author{Yukiyoshi Nakkajima 
 \date{}\thanks{
2025 Mathematics subject classification number: 
14F20. \endgraf}}
\maketitle

\bigskip
\parno
{\bf Abstract.---}
The aim of this paper is to give the log $l$-adic 
relative monodromy-weight conjecture and 
to prove that this conjecture is true in certain cases. 
This conjecture is a generalization of the 
famous $l$-adic monodromy-weight conjecture due to P.~Deligne and K.~Kato. 
To give our conjecture, we use our theory of the derived category of bifiltered complexes 
which is a generalization of theory of the derived category of bifiltered complexes in 
\cite{ilc}, \cite{dh2}, \cite{dh3} and the derived category of filtered complexes in 
\cite{blec} (and \cite{nh2}).

$${\bf Contents}$$
\parno
\S\ref{sec:intro}. Introduction
\bigskip
\parno 
Part I. Derived categories of bifiltered complexes
\medskip
\parno 
\S\ref{sec:nfil}. The definition of the derived category of bifiltered complexes 
\parno 
\S\ref{sec:sti}.  Strictly injective resolutions and $Rf_*$
\parno 
\S\ref{sfr}. Strictly flat resolutions and $Lf^*$
\parno 
\S\ref{sec:rhoml}. ${\rm RHom}^{\bul}$
\parno 
\S\ref{sec:otl}. $\otimes^L_{\cal A}$
\parno 
\S\ref{sec:comp}.  Complements
\parno
\S\ref{sec:rksfdc}. A remark on bifiltered derived categories 
\bigskip
\parno 
Part II.  $l$-adic relative monodromy-weight conjecture
\medskip
\parno 
\S\ref{sec:lbc}. 
$l$-adic bifiltered El Zein-Steenbrink-Zucker complex
\parno
\S\ref{sec:lwss}. 
$l$-adic weight spectral sequences 
\parno  
\S\ref{sec:confu}. Contravariant functoriality 
\parno
\S\ref{sec:fpl}. Fundamental properties of 
the $l$-adic weight filtrations and 
the $l$-adic weight spectral sequences 
\parno
\S\ref{sec:rmwc}. 
Log $l$-adic relative monodromy-weight conjecture 
\parno  
\S\ref{sec:ssf}. Strict semistable family 

\medskip 
\parno 
{\bf Appendix}
\medskip 
\parno 
\S\ref{sec:lbddmlif}. Edge morphisms between 
the $E_1$-terms of $l$-adic weight spectral sequences 
\bigskip
\parno 

\smallskip
\parno
References

\section{Introduction}\label{sec:intro} 
Let $V$ be an object of an abelian category 
with a finite increasing filtration $W$  and 
let $N \col (V,W) \lo (V,W)$ be a nilpotent filtered endomorphism.   
In \cite{dw2} Deligne has proved that 
there exists at most one monodromy filtration 
relative to $W$. That is, there exists at most
one finite increasing filtration $M$ on $V$ 
such that $N(M_kV) \subset M_{k-2}V$ 
$(\forall k\in {\mab Z})$ and 
such that $N^e\col V\lo V$ induces an isomorphism 
$$N^e \col {\rm gr}^M_{k+e}{\rm gr}^W_kV 
\os{\sim}{\lo} {\rm gr}^M_{k-e}{\rm gr}^W_kV$$ 
for any $e\in {\mab Z}_{\geq 1}$.  
Let $U$ be the complement of a smooth divisor $D$ 
defined by an equation $t=0$ 
in a smooth scheme $X$ over a finite field ${\mab F}_q$. 
Let ${\cal F}$ be a smooth $\ol{\mab Q}_l$-sheaf on $U$ tamely ramified along $D$.  
Let ${\cal F}[D]$ be the restriction to $D$ 
of a smooth extension $\ol{\cal F}$ on $X[t^{1/n}]$ $((\exists n,q)=1)$ 
of the pull-back of ${\cal F}$ to $X[t^{1/n}]$ (Abyankar's lemma). 
Let $\iota \col \ol{\mab Q}_l \os{\sim}{\lo} {\mab C}$ be 
an isomorphism of fields. 
Assume that ${\cal F}$ has a finite increasing filtration 
$W$ of smooth $\ol{\mab Q}_l$-subsheaves such that 
${\rm gr}_k^W{\cal F}$ is exactly $\iota$-pure of weight $k$. 
In [loc.~cit.] Deligne has proved that there exists 
the monodromy filtration $M$ on ${\cal F}[D]$ 
relative to $W$ and that ${\rm gr}_k^M{\cal F}[D]$ is 
exactly $\iota$-pure of weight $k$. 
Pursuing an $\infty$-adic analogue of this, he has defined 
the variation of mixed Hodge structures $(V,W,F)$ 
over the punctured unit disk $\Del^*$ over the complex number field
and he has posed a problem defining a class 
$(V,W,F)$ with the logarithm $N$ 
of a unipotent monodromy of $V$ such 
that there exists a monodromy filtration 
$M$ on $V$ relative to $W$ 
(See \cite{stz}, \cite{kas} and \cite{asm} 
for a more expanded statement: 
the admissibility of variations of mixed Hodge structures.).  
\par 
Let $\os{\circ}{\cal X}$ be a projective strict semistable family 
with horizontal SNCD(=simple normal crossing divisor) $\os{\circ}{\cal D}$ over 
the unit disk $\os{\circ}{\Del}$. 
In \cite{stz} (resp.~\cite{ezth}) 
Steenbrink and Zucker (resp.~El Zein)
have given an answer for Deligne's problem 
in this good geometric case.  
Let $\os{\circ}{X}$ and $\os{\circ}{D}$ be the special fibers of $\os{\circ}{\cal X}$ 
and $\os{\circ}{\cal D}$, respectively. 
To construct the monodromy filtration on the 
suitable log cohomology of  
$(\os{\circ}{X},\os{\circ}{D})$ with coefficient ${\mab Q}$ 
relative to the weight filtration arising from $\os{\circ}{D}$, 
they have used the weight filtration arising from 
both $\os{\circ}{X}$ and $\os{\circ}{D}$. 
One can  prove that the weight filtration 
has two characterizing properties of the relative monodromy filtration by using 
a certain double complex and using M.~Saito's result 
which tells us that the monodromy filtration coincides with  
the weight filtration in the case $D=\emptyset$ (\cite{sm}). 
Consequently the relative monodromy filtration exists in this geometric case. 
This main result in  \cite{stz} and \cite{ezth} is a generalization of 
that in \cite{st} (whose complete proof has been given by M.~Saito (\cite{sm})). 
Influenced by their work, 
M.~Saito has constructed the category of mixed Hodge modules 
(cf.~\cite[Introduction]{smmh}).   
\par 
In this paper we give an $l$-adic version of 
(a generalization of) El Zein-Steenbrink-Zucker's work 
in the equal characteristic case $p>0$ (without no conjecture) 
and in the mixed characteristics case  
under the assumption of the log $l$-adic monodromy-weight conjecture 
in the mixed characteristics case. 
In order to give the $l$-adic version, 
we use theory of the derived categories of bifiltered complexes in this paper, 
theory of the log geometry of Fontaine-Illusie-Kato in \cite{klog1} and 
theory of log \'{e}tale cohomologies in \cite{nale}. 
Our theory of the derived categories of bifiltered complexes  
can be considered as a generalization of the theory of filtered derived categories in 
\cite{blec} (and \cite{nh2}) and the theory of bifiltered derived categories in \cite{dh3}. 
\par 
In order to give relations between our work and preceding results in the $l$-adic geometric case, 
let us recall the case where $W$ is the trivial filtration, i.e., $W_{k-1}V=0$ and $W_kV=V$ for 
some $k\in {\mab Z}$ in the $l$-adic geometric case. 
\par  
Let $\os{\circ}{S}$ be the spectrum of 
a henselian discrete valuation ring ${\cal V}$. 
Let $\ol{K}$ be a separable closure of $K$ and 
let $\ol{\cal V}$ be the integral closure of ${\cal V}$ in $K$. 
Set $\os{\circ}{\ol{S}}:={\rm Spec}(\ol{\cal V})$. 
Let $\ol{\eta}$ be the generic point of $\os{\circ}{\ol{S}}$ 
and 
let $\os{\circ}{\ol{s}}$ be the closed point of $\os{\circ}{\ol{S}}$.  
Let $\os{\circ}{\cal X}$ be a proper strict semistable family 
over $\os{\circ}{S}$. 
Let $\os{\circ}{\cal X}_{\os{\circ}{\ol{s}}}$ 
(resp.~$\os{\circ}{\cal X}_{\ol{\eta}}$) 
be the geometric special fiber (resp.~the geometric generic fiber) of 
$\os{\circ}{\cal X}$ over $\os{\circ}{S}$. 
For a nonnegative integer $j$, let 
$(\os{\circ}{\cal X}_{\os{\circ}{\ol{s}}})^{(j)}$ be the disjoint union of 
the $(j+1)$-fold intersections of the irreducible components of 
$\os{\circ}{\cal X}_{\os{\circ}{\ol{s}}}$. 
Let $a^{(j)}\col (\os{\circ}{\cal X}_{\os{\circ}{\ol{s}}})^{(j)}\lo 
\os{\circ}{\cal X}_{\os{\circ}{\ol{s}}}$ be the natural morphism. 
Let $l$ be a fixed prime number which is different from the characteristic of 
the residue field. In \cite{rz} 
Rapoport and Zink have constructed the projective system 
$\{(A^{\bul}_{l^n},P)\}_{n=1}^{\infty}$ of 
filtered complexes of ${\mab Z}_l$-modules in 
the \'{e}tale topos 
$(\os{\circ}{\cal X}_{\os{\circ}{\ol{s}}})_{\rm et}$ of $\os{\circ}{\cal X}_{\os{\circ}{\ol{s}}}$ 
such that 
$${\rm gr}_k^PA^{\bul}_{l^n}
\simeq \us{j\geq {\rm max}\{-k,0\}}
{\bigoplus}a^{(2j+k)}_*(({\mab Z}/l^n)_{(\os{\circ}{\cal X}_{\os{\circ}{\ol{s}}})^{(2j+k)}})(-j-k)[-2j-k]$$ 
in the derived category 
${\rm D}^{\rm b}_{\rm ctf}(\os{\circ}{\cal X}_{\os{\circ}{\ol{s}}},{\mab Z}/l^n)$ 
and such that there exists a compatible family of isomorphisms 
$\theta_n \col R\Psi({\mab Z}/l^n) \os{\sim}{\lo}A^{\bul}_{l^n}$'s 
in 
${\rm D}^{\rm b}_{\rm ctf}(\os{\circ}{\cal X}_{\os{\circ}{\ol{s}}},{\mab Z}/l^n)$'s, 
where $R\Psi({\mab Z}/l^n)$ is the nearby cycle sheaf 
of ${\mab Z}/l^n$ on $\os{\circ}{\cal X}/\os{\circ}{S}$.  
Consequently we have the following $l$-adic weight spectral sequence of 
${\rm Gal}(\ol{\eta}/\eta)$-modules: 
\begin{equation*} 
E^{-k,q+k}_1:=
\us{j\geq {\rm max}\{-k,0\}}{\bigoplus}
H^{q-2j-k}_{\rm et}((\os{\circ}{\cal X}_{\os{\circ}{\ol{s}}})^{(2j+k)},
{\mab Z}_l)(-j-k) \Lo 
H^q_{\rm et}(\os{\circ}{\cal X}_{\ol{\eta}},{\mab Z}_l) 
\quad (q\in {\mab N}). 
\tag{1.0.1}\label{eqn:rzw}
\end{equation*}    
(See also some remarks in \cite{ndeg} 
about this spectral sequence.)  
However, unfortunately they have not defined their filtered complexes 
as well-defined objects of what category. 
One should consider them as objects of 
the derived categories 
${\rm D}^{\rm b}{\rm F}_{\rm ctf}(\os{\circ}{\cal X}_{\os{\circ}{\ol{s}}},{\mab Z}/l^n)$'s of 
bounded filtered complexes of ${\mab Z}/l^n$'s-modules 
whose graded complexes have 
finite tor-dimension in 
the \'{e}tale topos $(\os{\circ}{\cal X}_{\os{\circ}{\ol{s}}})_{\rm et}$ such that 
the cohomological sheaves of the graded complexes are constructible. 
(See  \cite{ilc}, \cite{dh3}, \cite{blec} and \cite{nh2} 
for the definition of the filtered derived category.) 
In fact, one should show that 
$\{(A^{\bul}_{l^n},P)\}_{n=1}^{\infty}$ defines 
a well-defined object of the projective 2-limit 
${\rm D}^{\rm b}{\rm F}_{\rm ctf}(\os{\circ}{\cal X}_{\os{\circ}{\ol{s}}},{\mab Z}_l)$ 
of ${\rm D}^{\rm b}{\rm F}_{\rm ctf}(\os{\circ}{\cal X}_{\os{\circ}{\ol{s}}},{\mab Z}/l^n)$'s. 
(If one ignores torsion, the induced filtration on 
$R\Psi({\mab Q}_l)[d]$ of ${\mab Q}_l$ on $\os{\circ}{\cal X}/\os{\circ}{S}$ 
(twisted by $d$) by $P$ is the well-defined monodromy filtration 
by the $l$-adic monodromy operator on the perverse sheaf 
$R\Psi({\mab Q}_l)[d]:=(R\vpl_nR\Psi({\mab Z}/l^n))\otimes^L_{{\mab Z}_l}{\mab Q}_l$, 
where $d=\dim \os{\circ}{\cal X}_{\os{\circ}{s}}$ (\cite{ilad}).)  
In the log crystalline case one has to determine the analogous filtered complex to 
$\{(A^{\bul}_{l^n},P)\}_{n=1}^{\infty}$ 
as an object of what category
because the analogous filtered complex depends on the affine covering of 
$\os{\circ}{\cal X}_{\os{\circ}{s}}$ and the local log smooth embedding a priori
(see \cite{nb} and \cite{nppf} for details). 
\par 
Let $P$ be the induced filtration on 
$H^q_{\rm et}(\os{\circ}{\cal X}_{\ol{\eta}},{\mab Q}_l)$ by (\ref{eqn:rzw}).  
By \cite{sga7} (cf.~\cite{rz}) the action of ${\rm Gal}(\ol{\eta}/\eta)$ 
on $H^q_{\rm et}(\os{\circ}{\cal X}_{\ol{\eta}},{\mab Q}_l)$ is tame. 
Let $N \col H^q_{\rm et}(\os{\circ}{\cal X}_{\ol{\eta}},{\mab Q}_l) 
\lo H^q_{\rm et}(\os{\circ}{\cal X}_{\ol{\eta}},{\mab Q}_l)(-1)$ 
be the $l$-adic monodromy operator.  
P.~Deligne has conjectured (what is called, 
the $l$-adic monodromy-weight conjecture in the case where 
$\os{\circ}{S}$ is of mixed characteristics) 
that $N$ induces an isomorphism 
\begin{equation*} 
N^e \col {\rm gr}_{q+e}^P
H^q_{\rm et}(\os{\circ}{\cal X}_{\ol{\eta}},{\mab Q}_l) 
\os{\sim}{\lo} 
{\rm gr}_{q-e}^P
H^q_{\rm et}(\os{\circ}{\cal X}_{\ol{\eta}},{\mab Q}_l)(-e) 
\quad (e\in {\mab Z}_{\geq 1}), 
\tag{1.0.2}\label{eqn:nkkph} 
\end{equation*} 
where $P$ on $H^q_{\rm et}(\os{\circ}{\cal X}_{\ol{\eta}},{\mab Q}_l)$ is 
the induced filtration by (\ref{eqn:rzw}) (cf.~\cite{dh1}). 
In the case where $\os{\circ}{S}$ is of equal characteristic $p>0$, this conjecture is true by 
\cite{dw2} and \cite{itp}. In the case where $\os{\circ}{S}$ is of mixed characteristics, 
this conjecture has not yet been solved, though this has been proved to be true in special cases 
(e.g., \cite{rz}, \cite{itwm}, \cite{scho}). 
\par 
In \cite{nd} C.~Nakayama has generalized Rapoport-Zink's result above as follows. 
\par 
Let $s$ be the log point of a field $\kap$, 
i.e., $s=({\rm Spec}(\kap),{\mab N}\oplus \kap^*\lo \kap)$, 
where the morphism ${\mab N}\oplus \kap^*\lo \kap$ is defined by 
$(n,a)\lom 0^na$ $(n\in {\mab N}, a\in \kap^*)$, where $0^0=1\in \kap$ and $0^n=0$ for $n\not=0$.  
Let $\kap_{\rm sep}$ be a separable closure of $\kap$ and 
set $\ol{s}:=s\otimes_{\kap}{\kap}_{\rm sep}$. 
Let $X$ be a proper SNCL(=simple normal crossing log) scheme over $s$ defined in \cite{ndw}. 
Set $X_{\ol{s}}:=X\times_s\ol{s}$. 
For a log scheme $Y$, let $\os{\circ}{Y}$ be the underlying scheme of $Y$. 
For a nonnegative integer $j$, let 
$\os{\circ}{X}{}^{(j)}_{\ol{s}}$ be the disjoint union of 
the $(j+1)$-fold intersections of the irreducible components of 
$\os{\circ}{X}_{\ol{s}}$. 
Let $l$ be a prime number which is different from the characteristic of $\kap$. 
For a commutative monoid $Q$ with unit element, denote 
by ${\rm Spec}^{\log}({\mab Z}[Q])$ a log scheme 
whose underlying scheme is 
${\rm Spec}({\mab Z}[Q])$ 
and whose log structure is the association of 
a natural inclusion $Q\os{\sus}{\lo} {\mab Z}[Q]$. 
Set $\ol{s}_{\frac{1}{l^m}}:=\ol{s}\times_{{\rm Spec}^{\log}({\mab Z}[{\mab N}])}
{\rm Spec}^{\log}({\mab Z}[l^{-m}{\mab N}])$ $(m\in {\mab N})$ 
and $\ol{s}_{\frac{1}{l^{\infty}}}:=\vpl_{m}\ol{s}_{\frac{1}{l^m}}$. 
For an fs(=fine and saturated) log scheme $Y$ over $s$, 
set $\ol{Y}_{\frac{1}{l^{\infty}}}:=Y\times_s\ol{s}_{\frac{1}{l^{\infty}}}$. 
In \cite{nd} Nakayama has constructed 
the projective system of filtered complexes producing 
the following $l$-adic weight spectral sequence whose 
convergent term is the Kummer log \'{e}tale cohomology 
$H^q_{\rm ket}(\ol{X}_{\frac{1}{l^{\infty}}},{\mab Z}_l)$ ($q\in {\mab N}$) 
defined in \cite{fk} and \cite{nale}: 
\begin{equation*} 
E^{-k,q+k}_1:=
\us{j\geq {\rm max}\{-k,0\}}{\bigoplus}
H^{q-2j-k}_{\rm et}(\os{\circ}{\ol{X}}{}^{(2j+k)},{\mab Z}_l)(-j-k) 
\Lo H^q_{\rm ket}(\ol{X}_{\frac{1}{l^{\infty}}},{\mab Z}_l). 
\tag{1.0.3}\label{eqn:nkw}
\end{equation*}   
However he has not defined his filtered complexes as objects of what category;  
one should define them as objects of 
the derived categories ${\rm D}^{\rm b}{\rm F}_{\rm ctf}(\os{\circ}{\ol{X}}_{\rm et},{\mab Z}/l^n)$'s of 
bounded filtered complexes of ${\mab Z}/l^n$'s-modules 
whose graded complexes have finite tor-dimension in 
the \'{e}tale topos $\os{\circ}{\ol{X}}_{\rm et}$ such that 
the cohomological sheaves of the graded complexes are constructible. 
If $X$ is the special fiber of $\os{\circ}{\cal X}$ 
with canonical log structure, then 
K.~Fujiwara and K.~Kato have proved that 
\begin{equation*} 
H^q_{\rm ket}(\ol{X}_{\frac{1}{l^{\infty}}},{\mab Z}_l)= 
H^q_{\rm et}(\os{\circ}{\cal X}_{\ol{\eta}},{\mab Z}_l) 
\tag{1.0.4}\label{eqn:fki} 
\end{equation*} 
(\cite{fk}) (See also \cite{nanc}, \cite{nd} and \cite{illl}.). 
Moreover we see that 
Nakayama has essentially proved that his filtered complexes 
are isomorphic to $\{(A^{\bul}_{l^n},P)\}_{n=1}^{\infty}$
in ${\rm D}^{\rm b}{\rm F}_{\rm ctf}(\os{\circ}{\ol{X}}_{\rm et},{\mab Z}_l)$
in \cite{nd}. 
In \cite{ndeg} we have proved that  
the weight spectral sequence (\ref{eqn:rzw}) 
is isomorphic to 
the weight spectral sequence (\ref{eqn:nkw}). 
As a result, (\ref{eqn:nkw}) turns out to be 
a generalization of (\ref{eqn:rzw}). 
In [loc.~cit.] he has also proved  
the degeneration at $E_2$ modulo torsion of (\ref{eqn:nkw}). 
Let 
$N \col H^q_{\rm ket}(\ol{X}_{\frac{1}{l^{\infty}}},{\mab Q}_l) 
\lo H^q_{\rm ket}(\ol{X}_{\frac{1}{l^{\infty}}},{\mab Q}_l)(-1)$ 
be the $l$-adic monodromy operator.  
In \cite{kln} (see also \cite{nd}, \cite{ndeg}) 
Kato has conjectured the following, 
which we call  
the {\it log $l$-adic monodromy-weight conjecture}: 

\begin{conj}[{\rm{\bf  Log $l$-adic monodromy-weight conjecture (\cite{kln})}}]\label{conj:nketph}
If $\os{\circ}{X}$ is projective over $\kap$, 
then $N$ induces an isomorphism 
\begin{equation*} 
N^e \col {\rm gr}_{q+e}^P
H^q_{\rm ket}(\ol{X}_{\frac{1}{l^{\infty}}},{\mab Q}_l) 
\os{\sim}{\lo} 
{\rm gr}_{q-e}^P
H^q_{\rm ket}(\ol{X}_{\frac{1}{l^{\infty}}},{\mab Q}_l)(-e) 
\quad (e\in {\mab Z}_{\geq 1}), 
\tag{1.1.1}\label{eqn:nketph} 
\end{equation*} 
where $P$ on $H^q_{\rm ket}(\ol{X}_{\frac{1}{l^{\infty}}},{\mab Q}_l)$ is 
the induced filtration by (\ref{eqn:nkw}). 
\end{conj}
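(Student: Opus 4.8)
The plan is to give an unconditional argument in the equal characteristic case $p>0$ and to isolate the single deep input that is unavailable in mixed characteristics. The basic object throughout is the weight spectral sequence (\ref{eqn:nkw}) together with its monodromy operator $N$. First I would prove purity of the $E_1$-terms: each summand $H^{q-2j-k}_{\rm et}(\os{\circ}{\ol{X}}{}^{(2j+k)},{\mab Q}_l)(-j-k)$ is the $l$-adic cohomology of a smooth proper scheme over $\kap_{\rm sep}$, Tate-twisted by $-j-k$, so when $\kap$ is finite it is pure of weight $(q-2j-k)+2(j+k)=q+k$ by Deligne's Weil~II (\cite{dw2}); hence $E^{-k,q+k}_1$ is pure of weight $q+k$. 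For $r\geq 2$ the differential on $E_r^{-k,q+k}$ lands in a subquotient of weight $q+k-r+1\neq q+k$, and a Frobenius-equivariant map between pure modules of distinct weights vanishes, so the spectral sequence degenerates at $E_2$ modulo torsion. Consequently ${\rm gr}^P_{q+k}H^q_{\rm ket}(\ol{X}_{\frac{1}{l^{\infty}}},{\mab Q}_l)=E_\infty^{-k,q+k}$ is pure of weight $q+k$; in particular both sides of (\ref{eqn:nketph}) are pure of weight $q+e$, so the twisted $N^e$ is a morphism of pure objects of the same weight.

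The heart of the matter is then the bijectivity of $N^e$. I would first make $N$ explicit on the $E_1$-page: under the decomposition in (\ref{eqn:nkw}) it is, up to the Tate twist, a collection of identity maps shifting the summation index $j$, so that $(E_1^{\bul,\bul},d_1,N)$ becomes a purely combinatorial complex assembled from the cohomologies of the components $\os{\circ}{\ol{X}}{}^{(\bul)}$. On this complex the hard Lefschetz theorem for the smooth proper $\os{\circ}{\ol{X}}{}^{(j)}$ endows the $E_1$-terms with an $\maf{sl}_2$-action whose lowering operator is $N$; this is the $l$-adic transcription of the Steenbrink--El~Zein--Saito mechanism recalled in the introduction, and it produces the isomorphism $N^e\col {\rm gr}^P_{q+e}\os{\sim}{\lo}{\rm gr}^P_{q-e}(-e)$ at the level of $E_1$, hence on the abutment by the $E_2$-degeneration already established. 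Alternatively, when $X$ is the special fiber of a strict semistable family one may pass through the comparison (\ref{eqn:fki}), identify $P$ with the monodromy-weight filtration induced by $N$ on the nearby cycles, and invoke the equal characteristic weight-monodromy theorem of Deligne and Ito (\cite{dw2}, \cite{itp}) directly; a general SNCL scheme is then reduced to this situation by spreading $X$ out to a projective semistable scheme over a smooth curve over a finite field and applying Weil~II globally.

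The step I expect to be the main obstacle is precisely this bijectivity of $N^e$. Purity and $E_2$-degeneration are comparatively formal consequences of Weil~II and weight-strictness, whereas the surjectivity (equivalently injectivity) of $N^e$ between the two pure graded pieces carries the genuine depth of the statement, and it is exactly where the global hard Lefschetz / weight-monodromy input is indispensable. In mixed characteristics that input is not available, which is why the conjecture is open there and must be assumed; the present plan therefore yields an unconditional proof only in the equal characteristic case $p>0$.
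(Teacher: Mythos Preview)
The statement you are addressing is labeled in the paper as a \emph{conjecture}; the paper offers no proof, only the special cases $\dim\os{\circ}{X}\leq 2$ (Proposition~\ref{coro:nm}) and, via the literature, the case where $X$ is the log special fiber of a strict semistable family. So there is no ``paper's own proof'' to compare against, and your explicit restriction to equal characteristic is the appropriate framing. Your treatment of purity of the $E_1$-terms and $E_2$-degeneration by weight-strictness is correct and matches the standard specialization argument the paper uses elsewhere (cf.\ the proof of (\ref{prop:stc})).

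The genuine gap is your reduction step: ``a general SNCL scheme is then reduced to this situation by spreading $X$ out to a projective semistable scheme over a smooth curve over a finite field.'' A projective SNCL scheme over the log point of $\kap$ is \emph{not} known to be the log special fiber of any semistable family over a DVR or curve; this liftability is precisely the gap between Kato's log conjecture (\ref{conj:nketph}) and the classical semistable conjecture, and the paper flags exactly this issue in Remark~\ref{rema:gfs}. Without that reduction, the appeal to Deligne--Ito (\cite{dw2}, \cite{itp}) does not apply, and your argument does not cover the general projective SNCL case even in equal characteristic.

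Your alternative sketch via a Steenbrink--Saito $\maf{sl}_2$-action built from hard Lefschetz on the smooth strata $\os{\circ}{\ol{X}}{}^{(j)}$ is the more promising route, but as written it is only heuristic: you would need to produce a Lefschetz operator on the $(E_1,d_1)$-complex compatible with the \v{C}ech--Gysin description of $d_1$ (cf.\ \S\ref{sec:lbddmlif}) and show the resulting primitive decomposition passes to $E_2$. That is substantial work, and the paper notably does not attempt it even for its $\dim\leq 2$ result, relying instead on Kajiwara--Achinger (\cite{kaj}, \cite{ash}) for $q=1$ and the Rapoport--Zink--Mokrane computation for $q=2$.
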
  
\par 
The aim of this paper is to give a generalized $l$-adic version of 
El Zein-Steenbrink-Zucker's work by using Nakayama's results in \cite{nd} and 
general results for the derived category of 
bifiltered complexes developed in this paper. 
Our work is a generalization of Nakayama's work.  
Our generalization is technically 
more difficult than the generalization \cite{stz} and \cite{ezth} of 
\cite{st} because torsion sheaves appear in our case 
(the existence of the derived tensor product of two bounded above 
(bi)filtered complexes 
is necessary in our case). 
The main result in this paper is to give 
the log $l$-adic relative monodromy-weight conjecture (\ref{conj:nkretph}) below;  
we construct an expected $l$-adic relative monodromy filtration 
on the $l$-adic Kummer log \'{e}tale cohomology in a geometric way.  
\par 
In the rest of this introduction, 
we give a quick explanation for 
our key bifiltered complex and we state 
the log $l$-adic relative monodromy-weight conjecture. 
\par 
Let $D$ be an SNCD on $X/s$ defined in \cite{ny}. 
Let $M(D)$ be the sheaf of invertible functions of $\os{\circ}{X}$ 
outside $\os{\circ}{D}$ 
in the \'{e}tale topos $\os{\circ}{X}_{\rm et}$.  
For a positive integer $j$, 
let $\os{\circ}{D}{}^{(j)}$ be the disjoint union of 
the $j$-fold intersections of the irreducible components of 
$\os{\circ}{D}$. Endow $\os{\circ}{D}{}^{(j)}$ with the pull-back of 
the log structure of $X$ and let $D^{(j)}$ be 
the resulting log scheme. Set $D^{(0)}:=X$. 
Set $(X,D):=X\times_{\os{\circ}{X}}(\os{\circ}{X},M(D))$ and 
$(X_{\frac{1}{l^m}},D_{\frac{1}{l^m}})
:=X_{\frac{1}{l^m}}\times_{\os{\circ}{X}}(\os{\circ}{X},M(D))$  
$(m\in {\mab N})$. 
Set $\ol{D}:=D\times_s\ol{s}$ and $(\ol{X}_{\frac{1}{l^{\infty}}},\ol{D}_{\frac{1}{l^{\infty}}}):=
\vpl_m(\ol{X}_{\frac{1}{l^m}},\ol{D}_{\frac{1}{l^m}})$. 
Let ${\rm D}^{\rm b}{\rm F}^2_{\rm ctf}(\os{\circ}{\ol{X}}_{\rm et},{\mab Z}_l)$ be the derived category 
of bounded bifiltered complexes of ${\mab Z}_l$-modules 
whose graded complexes have 
finite tor-dimension in 
the \'{e}tale topos $\os{\circ}{\ol{X}}_{\rm et}$ such that 
the cohomological sheaves of the graded complexes are constructible. 
(In the text we give the definition of 
${\rm D}^{\rm b}{\rm F}^2_{\rm ctf}(\os{\circ}{\ol{X}}_{\rm et},{\mab Z}_l)$.)
We construct a bifiltered complex  
$$(A_{l^{\infty}}((\ol{X}_{\frac{1}{l^{\infty}}},\ol{D}_{\frac{1}{l^{\infty}}})
/\ol{s}_{\frac{1}{l^{\infty}}}),P^{\ol{D}_{\frac{1}{l^{\infty}}}},P)
:=\{(A_{l^n}((\ol{X}_{\frac{1}{l^{\infty}}},\ol{D}_{\frac{1}{l^{\infty}}})/
\ol{s}_{\frac{1}{l^{\infty}}}),P^{\ol{D}_{\frac{1}{l^{\infty}}}},P)\}_{n=1}^{\infty}\in 
{\rm D}^{\rm b}{\rm F}^2_{\rm ctf}(\os{\circ}{\ol{X}}_{\rm et},{\mab Z}_l)$$ 
which plays a  key role in this paper. 
We call $(A_{l^{\infty}}((\ol{X}_{\frac{1}{l^{\infty}}},\ol{D}_{\frac{1}{l^{\infty}}})/\ol{s}_{\frac{1}{l^{\infty}}}),
P^{\ol{D}_{\frac{1}{l^{\infty}}}},P)$ 
the {\it $l$-adic bifiltered El Zein-Steenbrink-Zucker complex} of $(\ol{X},\ol{D})/\ol{s}$. 
Let $\pi_{(\ol{X}_{\frac{1}{l^{\infty}}},\ol{D}_{\frac{1}{l^{\infty}}})}\col 
(\ol{X}_{\frac{1}{l^{\infty}}},\ol{D}_{\frac{1}{l^{\infty}}})\lo 
(\ol{X},\ol{D})$ be a natural morphism and 
let $\eps_{(\ol{X},\ol{D})}\col (\ol{X},\ol{D})\lo 
(\os{\circ}{\ol{X}},\os{\circ}{\ol{D}})$ be 
a morphism forgetting the log structure of $(\ol{X},\ol{D})$. 
Then we prove that there exists a natural isomorphism 
\begin{align*} 
\theta \col R(\eps_{(\ol{X},\ol{D})}
\pi_{(\ol{X}_{\frac{1}{l^{\infty}}},\ol{D}_{\frac{1}{l^{\infty}}})})_*({\mab Z}_l)
\os{\sim}{\lo}  
A_{l^n}((\ol{X}_{\frac{1}{l^{\infty}}},\ol{D}_{\frac{1}{l^{\infty}}})/\ol{s}_{\frac{1}{l^{\infty}}}). 
\tag{1.1.2}\label{eqn:lexdad}   
\end{align*} 
The bifiltered complex 
$(A_{l^{\infty}}((\ol{X}_{\frac{1}{l^{\infty}}},
\ol{D}_{\frac{1}{l^{\infty}}})/\ol{s}_{\frac{1}{l^{\infty}}}),P^{\ol{D}_{\frac{1}{l^{\infty}}}},P)$ 
produces the following new spectral sequences of 
${\rm Gal}(\ol{s}/s)$-modules, 
respectively: 
\begin{align*} 
E_1^{-k,q+k} & =H^{q-k}_{\rm ket}(\ol{D}{}^{(k)}_{\frac{1}{l^{\infty}}},{\mab Z}_l)(-k)  
\Lo H^q_{\rm ket}((\ol{X}_{\frac{1}{l^{\infty}}},
\ol{D}_{\frac{1}{l^{\infty}}}),{\mab Z}_l)\quad (q\in {\mab N}),
\tag{1.1.3}\label{eqn:lidd}   
\end{align*} 
\begin{align*} 
E_1^{-k,q+k} & =
\bigoplus_{k'\leq k}
\bigoplus_{j\geq \max\{-k',0\}} 
H^{q-2j-k}_{\rm et}(\os{\circ}{\ol{X}}{}^{(2j+k')}\cap 
\os{\circ}{\ol{D}}{}^{(k-k')},{\mab Z}_l)(-j-k) 
\tag{1.1.4}\label{eqn:lintdd} \\ 
{} & \Lo 
H^q_{\rm ket}((\ol{X}_{\frac{1}{l^{\infty}}},\ol{D}_{\frac{1}{l^{\infty}}}),{\mab Z}_l)\quad (q\in {\mab N}). 
\end{align*} 
((\ref{eqn:lintdd}) is a generalization of (\ref{eqn:nkw}).)
Using the specialization argument as in \cite{nd}, 
we prove that (\ref{eqn:lintdd}) degenerates at $E_2$ modulo torsion. 
We also prove that (\ref{eqn:lidd}) degenerates at $E_2$ modulo torsion if 
$(X,D)$ is the log special fiber of 
a proper strict semistable family with a horizontal SNCD over a  
henselian discrete valuation ring of any characteristic.
Though we have not yet proved that 
(\ref{eqn:lidd}) degenerates at $E_2$ modulo torsion in the general case, 
we prove that the edge morphisms $\{d_r^{-k,q+k}\}_{k,q}$of the spectral sequence 
(\ref{eqn:lidd}) are strictly compatible with the induced filtration by the weight filtrations  
on $H^{q-k}_{\rm ket}(\ol{D}{}^{(k)}_{\frac{1}{l^{\infty}}},{\mab Z}_l)(-k)$'s.   
Furthermore, by pursuing the analogue of the log crystalline case in \cite{nppf}, 
we prove the contravariant functoriality of 
$(A_{l^{\infty}}((\ol{X}_{\frac{1}{l^{\infty}}},
\ol{D}_{\frac{1}{l^{\infty}}})/\ol{s}_{\frac{1}{l^{\infty}}}),P^{\ol{D}_{\frac{1}{l^{\infty}}}},P)$ 
with respect to 
the following commutative diagram of log schemes 
\begin{equation*} 
\begin{CD}
(X,D)@>{g}>> (Y,E)\\
@VVV @VVV\\
s@>{v}>>s', 
\end{CD}
\end{equation*} 
where $v$ is a certain morphism of log points such that ${\rm deg}(v)$ 
is not divisible by $l$ (see \cite{nb} for the definition of ${\rm deg}(v)$) and 
$g\col (X,D)/s\lo (Y,E)/s'$ is a morphism of SNCL log schemes with SNCD's. 
As a corollary of this contravariant functoriality, if 
for each irreducible component $\os{\circ}{X}_{\lam}$ of $\os{\circ}{X}$, 
there exists a smooth component $\os{\circ}{Y}_{\lam'}$ of $\os{\circ}{Y}$
such that 
$\os{\circ}{g}(\os{\circ}{X}_{\lam})\subset \os{\circ}{Y}_{\lam'}$
and if, 
for each irreducible component $\os{\circ}{D}_{\mu}$ of $\os{\circ}{D}$, 
there exists a smooth component $\os{\circ}{E}_{\mu'}$ of $\os{\circ}{E}$
such that 
$\os{\circ}{g}(\os{\circ}{D}_{\mu})\subset \os{\circ}{E}_{\mu'}$, 
then we obtain the contravariant functorialities of the spectral sequences (\ref{eqn:lidd}) and 
(\ref{eqn:lintdd}). 
That is, we obtain the following spectral sequences 
\begin{align*} 
E_1^{-k,q+k} & =H^{q-k}_{\rm ket}(\ol{D}{}^{(k)}_{\frac{1}{l^{\infty}}},{\mab Z}_l)(-k;v)  
\Lo H^q_{\rm ket}((\ol{X}_{\frac{1}{l^{\infty}}},\ol{D}_{\frac{1}{l^{\infty}}}),{\mab Z}_l)\quad (q\in {\mab N}),
\tag{1.1.5}\label{eqn:lided}   
\end{align*} 
\begin{align*} 
E_1^{-k,q+k} & =
\bigoplus_{k'\leq k}
\bigoplus_{j\geq \max\{-k',0\}} 
H^{q-2j-k}_{\rm et}(\os{\circ}{\ol{X}}{}^{(2j+k')}\cap 
\os{\circ}{\ol{D}}{}^{(k-k')},{\mab Z}_l)(-j-k';v)(-(k-k');g,\Del,\Del') 
\tag{1.1.6}\label{eqn:lindtdd} \\ 
{} & \Lo H^q_{\rm ket}((\ol{X}_{\frac{1}{l^{\infty}}},\ol{D}_{\frac{1}{l^{\infty}}}),{\mab Z}_l)\quad (q\in {\mab N}), 
\end{align*} 
where $(-k;v)$ is the $l$-adic analogue of the $D$-twist defined in \cite{nb} and \cite{nhir} and 
$(-(k-k');g,\Del,\Del')$ is the $l$-adic $D$-twist defined in the text, where 
$\Del=\{\os{\circ}{D}_{\mu}\}_{\mu}$ 
and $\Del'=\{\os{\circ}{E}_{\mu'}\}_{\mu'}$ are the irreducible components of $\os{\circ}{D}$ 
and $\os{\circ}{E}$, respectively. 
These contravariant functorialities on (\ref{eqn:lided}) and (\ref{eqn:lindtdd}) have 
not been (able) to be considered in \cite{rz}, \cite{nd}, \cite{itp} nor \cite{stwsl} even in 
the case $D=\emptyset$. 
We also prove the base change theorem of $(A_{l^{\infty}}((\ol{X}_{\frac{1}{l^{\infty}}},
\ol{D}_{\frac{1}{l^{\infty}}})/\ol{s}_{\frac{1}{l^{\infty}}}),P^{\ol{D}_{\frac{1}{l^{\infty}}}},P)$
under a mild condition.  
\par 
To construct $(A_{l^{\infty}}((\ol{X}_{\frac{1}{l^{\infty}}},
\ol{D}_{\frac{1}{l^{\infty}}})/\ol{s}_{\frac{1}{l^{\infty}}}),P^{\ol{D}_{\frac{1}{l^{\infty}}}},P)$, 
we use a bifiltered version of Berthelot's theory of 
the filtered derived category in \cite{blec} and \cite{nh2}. 
To give a bifiltered version of Berthelot's theory is a nontrivial work. 
We have to give appropriate definitions and appropriate formulations
about bifiltered complexes and 
we have to make quite complicated calculations about them patiently 
for the construction of our theory.   
The fundamental machines in \cite{blec}, \cite{nh2} and 
the bifiltered version of Berthelot's theory in the first part of this paper 
with fundamental facts for Kummer log \'{e}tale cohomologies in 
\cite{nale}, \cite{ktnk}, \cite{adv} 
give us several properties of 
$(A_{l^{\infty}}((\ol{X}_{\frac{1}{l^{\infty}}},\ol{D}_{\frac{1}{l^{\infty}}})/\ol{s}_{\frac{1}{l^{\infty}}}),
P^{\ol{D}_{\frac{1}{l^{\infty}}}},P)$. 
For example, by virtue of the adjunction formula for the derived homomorphism functor for 
bifiltered complexes proved in the first part,  
we obtain the contravariant functoriality and the base change theorem of 
$(A_{l^{\infty}}((\ol{X}_{\frac{1}{l^{\infty}}},\ol{D}_{\frac{1}{l^{\infty}}})
/\ol{s}_{\frac{1}{l^{\infty}}}),P^{\ol{D}_{\frac{1}{l^{\infty}}}},P)$ ((\ref{theo:func}), (\ref{theo:pwbcsbc})). 
\par 
Let 
$N \col H^q_{\rm ket}((\ol{X}_{\frac{1}{l^{\infty}}},\ol{D}_{\frac{1}{l^{\infty}}}),{\mab Q}_l) 
\lo H^q_{\rm ket}((\ol{X}_{\frac{1}{l^{\infty}}},\ol{D}_{\frac{1}{l^{\infty}}}),{\mab Q}_l)(-1)$ 
be the $l$-adic monodromy operator which will be defined in the text. 
We denote by $P^{\ol{D}_{\frac{1}{l^{\infty}}}}$ and $P$ 
the induced filtrations on 
$H^q_{\rm ket}((\ol{X}_{\frac{1}{l^{\infty}}},\ol{D}_{\frac{1}{l^{\infty}}}),{\mab Q}_l)$ by 
(\ref{eqn:lidd}) and (\ref{eqn:lintdd}), respectively. 
It is not difficult to prove that 
$N\col H^q_{\rm ket}((\ol{X}_{\frac{1}{l^{\infty}}},\ol{D}_{\frac{1}{l^{\infty}}}),{\mab Q}_l)\lo 
H^q_{\rm ket}((\ol{X}_{\frac{1}{l^{\infty}}},\ol{D}_{\frac{1}{l^{\infty}}}),{\mab Q}_l)(-1)$ 
induces a morphism 
$N\col P_kH^q_{\rm ket}((\ol{X}_{\frac{1}{l^{\infty}}},\ol{D}_{\frac{1}{l^{\infty}}}),{\mab Q}_l)\lo 
P_{k-2}H^q_{\rm ket}((\ol{X}_{\frac{1}{l^{\infty}}},\ol{D}_{\frac{1}{l^{\infty}}}),{\mab Q}_l)$ 
$(k\in {\mab Z})$ by using (\ref{eqn:lexdad}) and 
the filtered complex 
$(A_{l^{\infty}}((\ol{X}_{\frac{1}{l^{\infty}}},\ol{D}_{\frac{1}{l^{\infty}}})/\ol{s}_{\frac{1}{l^{\infty}}}),P)$.  
We conjecture the following, which we call
the {\it log $l$-adic relative monodromy-weight conjecture}: 

\begin{conj}[{\rm{\bf  Log $l$-adic relative monodromy-weight conjecture}}]\label{conj:nkretph}
Assume that $\os{\circ}{X}$ is projective over $\kap$. 
Then the $l$-adic relative monodromy filtration 
on $H^q_{\rm ket}((\ol{X}_{\frac{1}{l^{\infty}}},\ol{D}_{\frac{1}{l^{\infty}}}),{\mab Q}_l)$ 
relative to $P^{\ol{D}_{\frac{1}{l^{\infty}}}}$ exists and that it is equal to 
the induced filtration $P$ on 
$H^q_{\rm ket}((\ol{X}_{\frac{1}{l^{\infty}}},\ol{D}_{\frac{1}{l^{\infty}}}),{\mab Q}_l)$. 
This is equivalent to the following$:$ 
the morphism 
$N\col H^q_{\rm ket}((\ol{X}_{\frac{1}{l^{\infty}}},\ol{D}_{\frac{1}{l^{\infty}}}),{\mab Q}_l)\lo 
H^q_{\rm ket}((\ol{X}_{\frac{1}{l^{\infty}}},\ol{D}_{\frac{1}{l^{\infty}}}),{\mab Q}_l)(-1)$ 
induces the following isomorphism 
\begin{equation*}   
N^e \col {\rm gr}_{q+k+e}^P{\rm gr}_k^{P^{\ol{D}_{\frac{1}{l^{\infty}}}}}
H^q_{\rm ket}((\ol{X}_{\frac{1}{l^{\infty}}},\ol{D}_{\frac{1}{l^{\infty}}}),{\mab Q}_l) 
\os{\sim}{\lo} 
{\rm gr}_{q+k-e}^P{\rm gr}_k^{P^{\ol{D}_{\frac{1}{l^{\infty}}}}}
H^q_{\rm ket}((\ol{X}_{\frac{1}{l^{\infty}}},\ol{D}_{\frac{1}{l^{\infty}}}),{\mab Q}_l)(-e) 
\tag{1.2.1}\label{eqn:remtph} 
\end{equation*} 
for $k,q\in {\mab N}, e\in {\mab Z}_{\geq 1}$. 
\end{conj}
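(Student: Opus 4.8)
The plan is to reduce the relative statement (\ref{eqn:remtph}) to the \emph{absolute} Conjecture \ref{conj:nketph} applied to the closed strata $\os{\circ}{D}{}^{(k)}$, mimicking the reduction of El Zein-Steenbrink-Zucker to the case $D=\emptyset$ of M.~Saito. Write $W:=P^{\ol{D}_{\frac{1}{l^{\infty}}}}$ for the weight filtration coming from the divisor. By Deligne's uniqueness theorem recalled in \S\ref{sec:intro}, the monodromy filtration of $N$ relative to $W$ is unique once it exists, so it suffices to verify that the induced filtration $P$, reindexed by the cohomological weight so that $M_j\,{\rm gr}^W_k=P_{j+q}\,{\rm gr}^W_k$, satisfies the two defining properties of that relative monodromy filtration: the compatibility $NM_j\subset M_{j-2}$ and the isomorphisms on the doubly graded pieces.

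The first property, namely $N(P_k)\subset P_{k-2}$, is the point already noted before the statement to be \emph{not difficult}: via the isomorphism (\ref{eqn:lexdad}) the $l$-adic monodromy operator is realized on the filtered complex $(A_{l^{\infty}}(\cdots),P)$ as a morphism strictly compatible with $P$ up to the Tate twist $(-1)$, so it lowers $P$ by two. Under the reindexing $M_j=P_{j+q}$ this is exactly $NM_j\subset M_{j-2}$, and it reduces the whole conjecture to the isomorphism (\ref{eqn:remtph}).

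For the isomorphism I would exploit the key structural fact underlying the construction in \S\ref{sec:lbc}, that the $W$-graded pieces of the bifiltered El Zein-Steenbrink-Zucker complex are (Tate-twisted and shifted) $l$-adic El Zein-Steenbrink-Zucker complexes of the strata: ${\rm gr}^{W}_k$ of $(A_{l^{\infty}}(\cdots),P^{\ol{D}_{\frac{1}{l^{\infty}}}},P)$ recovers the complex producing the weight spectral sequence (\ref{eqn:nkw}) for the SNCL scheme $\os{\circ}{D}{}^{(k)}$. Consequently, on the $E_1$-terms $H^{q-k}_{\rm ket}(\ol{D}{}^{(k)}_{\frac{1}{l^{\infty}}},{\mab Q}_l)(-k)$ of the spectral sequence (\ref{eqn:lidd}), the induced filtration $P$ is precisely the full weight filtration (\ref{eqn:nkw}) of $D^{(k)}$ and the induced nilpotent is the geometric monodromy of $D^{(k)}$. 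Since $\os{\circ}{X}$ is projective over $\kap$, so is each $\os{\circ}{D}{}^{(k)}$, so Conjecture \ref{conj:nketph} applies to $D^{(k)}$ and tells us that $P$ on these $E_1$-terms \emph{is} the monodromy filtration of the induced $N$, i.e.\ $N^e$ induces isomorphisms on the ${\rm gr}^P$-pieces with the twist $(-e)$. It then remains to propagate this $E_1$-level statement to the abutment ${\rm gr}^P{\rm gr}^W H^q$. For this I would invoke the degeneration modulo torsion together with the strict compatibility of the edge morphisms $\{d_r^{-k,q+k}\}$ with the weight filtrations, both established in \S\ref{sec:fpl} and \S\ref{sec:lbddmlif}; strictness is exactly what converts the $E_1$-isomorphism of $N^e$ into the asserted isomorphism (\ref{eqn:remtph}) on ${\rm gr}^P_{q+k\pm e}{\rm gr}^W_k H^q$, whence $P=M$ and the relative monodromy filtration exists and equals $P$.

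The main obstacle is that Conjecture \ref{conj:nketph} is itself open in mixed characteristics; the argument therefore yields the relative conjecture \emph{unconditionally} only in equal characteristic $p>0$, where \ref{conj:nketph} holds for the strata by \cite{dw2} and \cite{itp}, and only \emph{conditionally} (modulo \ref{conj:nketph}) in the mixed characteristics case. Beyond this input, the delicate point is the bookkeeping of the two weight filtrations and the Tate twists: one must check strata by strata that the induced $N$ on ${\rm gr}^W_k$ really is the monodromy of $D^{(k)}$ carrying the correct twist $(-k)$, and that the transfer from $E_1$-terms to the abutment goes through even though the degeneration of (\ref{eqn:lidd}) is known only modulo torsion (and unconditionally only in the semistable case). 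This strictness-plus-degeneration transfer, for which the bifiltered derived-category machinery of Part I is essential, is the crux of the proof.
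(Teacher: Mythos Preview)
Your overall strategy matches the paper's: reduce to the absolute conjecture (\ref{conj:nketph}) on the strata $D^{(k)}$ via the identification of ${\rm gr}^{P^{\ol D}}_k$ with the weight-filtered complex of $D^{(k)}$, use $N(P_k)\subset P_{k-2}$, and then propagate the $E_1$-level isomorphism $N^e$ to the abutment using strict compatibility of the differentials.  But there is a real gap in your propagation step.  You say you would invoke ``the degeneration modulo torsion together with the strict compatibility'' of the edge maps of (\ref{eqn:lidd}), and you call this ``established in \S\ref{sec:fpl}.''  It is not: the paper proves $E_2$-degeneration only for the $P$-spectral sequence (\ref{eqn:lledd}) (Proposition \ref{prop:stc}), and for (\ref{eqn:lehkd}) only in the semistable case (Theorem \ref{theo:e2dgh}); in general, degeneration of (\ref{eqn:lehkd}) is \emph{not known} (see the Remark after Proposition \ref{prop:sfsc}).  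So the step where you pass from $E_1$ to $E_\infty$ does not go through as stated.

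The paper circumvents this precisely by \emph{not} using degeneration.  The mechanism is the key Lemma \ref{lemm:easy}: if one has a complex $U\os{f}{\to}V\os{g}{\to}W$ of filtered objects with $f,g$ strict, a nilpotent $N$ commuting with $f,g$ and lowering the filtration by two, and $N^e\colon{\rm gr}^P_e\Lambda\os{\sim}{\to}{\rm gr}^P_{-e}\Lambda$ for $\Lambda=U,V,W$, then the same isomorphism holds on ${\rm gr}^P_\bullet({\rm Ker}(g)/{\rm Im}(f))$.  Applying this with $(U,V,W)=(E_r^{-k-r,\ast},E_r^{-k,\ast},E_r^{-k+r,\ast})$ and $f=d_r$, $g=d_r$, the strict compatibility (Proposition \ref{prop:sfsc}) alone carries the $E_1$-isomorphism inductively through every $E_r$, hence to $E_\infty$, with no appeal to degeneration.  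This lemma is the crux you are missing; without it your argument is complete only when $(X,D)$ is the log special fiber of a strict semistable family (where degeneration of (\ref{eqn:lehkd}) is available), which is strictly weaker than what the paper obtains in Theorem \ref{theo:mimr}.
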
 
Here note that the index $q+k+e$ of ${\rm gr}_{q+k+e}^P$ is different from 
the index $a+b$ of ${\rm gr}_{a+b}^W$ in \cite[(1.10.1)]{ezth} if $q\not=0$; 
the index $a+b$ in [loc.~cit.] is incorrect. 
If this conjecture is true, then the filtration $P$ on 
$H^q_{\rm ket}((\ol{X}_{\frac{1}{l^{\infty}}},\ol{D}_{\frac{1}{l^{\infty}}}),{\mab Q}_l)$
is equal to the relative monodromy filtration of $N$ with respect to the filtration 
$P^{\ol{D}_{\frac{1}{l^{\infty}}}}$ on 
$H^q_{\rm ket}((\ol{X}_{\frac{1}{l^{\infty}}},\ol{D}_{\frac{1}{l^{\infty}}}),{\mab Q}_l)$. 
Especially the relative monodromy filtration of $N$ with respect to the filtration 
$P^{\ol{D}_{\frac{1}{l^{\infty}}}}$ on 
$H^q_{\rm ket}((\ol{X}_{\frac{1}{l^{\infty}}},\ol{D}_{\frac{1}{l^{\infty}}}),{\mab Q}_l)$ exists.
By showing a key lemma and the strict compatibility 
of the edge morphisms of (\ref{eqn:lided}) with respect to 
the weight filtration which has been already stated, 
we prove that the conjecture (\ref{conj:nkretph}) is true if 
the log $l$-adic monodromy-weight conjecture 
(\ref{conj:nketph}) for $D^{(k)}$ for any $k\in {\mab Z}_{\geq 1}$ is true.  
Because (\ref{conj:nketph}) has been proved 
in the proper strict semistable case in the equal characteristic $p>0$ (\cite{dh2}, \cite{itp}), 
we see that the conjecture (\ref{conj:nkretph}) is true  
if 
$(X,D)/s$  
is the log special fiber 
of a proper strict semistable family over a henselian discrete valuation ring 
of equal characteristic $p>0$.   
We also prove that the conjecture (\ref{conj:nkretph}) is true  
when ${\rm dim}\os{\circ}{X}\leq 2$ by using 
Kajiwara-Achinger's result (\cite[(3.1)]{kaj}, \cite[Theorem 3.6]{ash}) 
and Rapoport-Zink-Mokrane's calculation (\cite{rz}, \cite{msemi}) 
for $D^{(k)}$ $(k\in {\mab Z}_{\geq 1})$. 
\par 
It is very natural to give  
the $p$-adic version of this paper.  
We have already given this in \cite{nppf}. 
Especially we have constructed the log crystalline analogue of 
$(A_{l^{\infty}}((\ol{X}_{\frac{1}{l^{\infty}}},\ol{D}_{\frac{1}{l^{\infty}}})/\ol{s}_{\frac{1}{l^{\infty}}}),
P^{\ol{D}_{\frac{1}{l^{\infty}}}},P)$. 
In the future we would like to discuss the $\infty$-adic version 
of this paper, which is a generalization of \cite{stz}. 
\medskip 
\par 
This paper consists of three parts. 
The aim of the first part 
(${\rm \S\ref{sec:nfil}} \sim {\rm \S\ref{sec:rksfdc}}$)
is to construct a general theory of the derived category of bifiltered complexes
which will be used in the second part. 
This part is the bifiltered version of Berthelot's theory (\cite{blec}, \cite{nh2}). 
For a morphism $f\col ({\cal T},{\cal A})\lo ({\cal T}',{\cal A}')$ 
of ringed topoi, we define four functors $Rf_*$, $Lf^*$, $RHom^{\bul}$, $\otimes^L$ 
in the derived categories of bifiltered complexes of ${\cal A}$-modules among  
so called Grothendieck's six functors. 
The aim of the second part (${\rm \S\ref{sec:lbc}} \sim {\rm \S\ref{sec:ssf}}$) 
is to give the log $l$-adic relative monodromy-weight conjecture 
and to prove that this conjecture is true in the cases already stated. 
This part is a generalization of Nakayama's work. 
The third part (\S\ref{sec:lbddmlif}) is an appendix, 
in which we give the explicit descriptions of 
the edge morphisms of the $E_1$-terms of the spectral sequences  
(\ref{eqn:lidd}) and (\ref{eqn:lintdd}). 
\par 
The content of each section of this paper is as follows. 
\par 
In \S\ref{sec:nfil} we give the definition of the derived category of bifiltered complexes. 
The notion of the strictly exactness of a bifiltered complex is a key notion for the definition. 
Our definition of 
the derived category of bifiltered complexes 
is a generalization of 
the derived category of bounded below biregular bifiltered complexes in \cite{dh3}. 
\par 
In \S\ref{sec:sti} we give the notion of a strictly injective module  
and we prove the existence of the strictly injective resolution of 
a bounded below bifiltered complex and the existence of $Rf_*$. 
\par 
In \S\ref{sfr} we give the notion of a strictly flat module  
and we prove the existence of 
the strictly flat resolution of a bounded above bifiltered complex 
and the existence of $Lf^*$.  
\par 
In \S\ref{sec:rhoml} we prove 
the existence of the derived homomorphism functor ${\rm RHom}^{\bul}$ 
from a bounded above bifiltered complex
to a bounded below bifiltered complex.  
We also prove 
the adjunction formula of the derived homomorphism functor.  
\par 
In \S\ref{sec:otl} we prove the existence of the derived tensor product $\otimes^L$ 
of two bounded above bifiltered complexes. 
To prove the  existence of $\otimes^L$ is a much more nontrivial work than 
to prove the existence of ${\rm RHom}^{\bul}$. 
The derived tensor product $\otimes^L$ in \cite{nh2} and 
the bifiltered derived category in this paper 
is necessary for the second part of this paper. 
\par 
In \S\ref{sec:comp} we give some complements. 
\par 
In \S\ref{sec:rksfdc} 
we give a simple remark related to the general theory of quasi-abelian categories of 
Schneiders (\cite{sc}, \cite{ss}). 
\par 
In \S\ref{sec:lbc} we construct the $l$-adic bifiltered El Zein-Steenbrink-Zucker complex 
$$(A_{l^{\infty}}((\ol{X}_{\frac{1}{l^{\infty}}},\ol{D}_{\frac{1}{l^{\infty}}})/\ol{s}_{\frac{1}{l^{\infty}}}),
P^{\ol{D}_{\frac{1}{l^{\infty}}}},P)
\in {\rm D}^{\rm b}{\rm F}^2_{\rm ctf}(\os{\circ}{\ol{X}}_{\rm et},{\mab Z}_l).$$ 
This section is a main part of this paper.  
\par 
In \S\ref{sec:lwss} we construct the spectral sequences (\ref{eqn:lidd}) and (\ref{eqn:lintdd}). 
\par 
In \S\ref{sec:confu} we prove the contravariant functoriality of 
$(A_{l^{\infty}}((\ol{X}_{\frac{1}{l^{\infty}}},\ol{D}_{\frac{1}{l^{\infty}}})/\ol{s}_{\frac{1}{l^{\infty}}}),
P^{\ol{D}_{\frac{1}{l^{\infty}}}},P)$ and 
we construct the spectral sequences (\ref{eqn:lided}) and (\ref{eqn:lindtdd}). 
\par 
In \S\ref{sec:fpl} we investigate fundamental properties of
the induced filtrations $P^{\ol{D}_{\frac{1}{l^{\infty}}}}$ and $P$ on  
$H^q_{\rm ket}((\ol{X}_{\frac{1}{l^{\infty}}},\ol{D}_{\frac{1}{l^{\infty}}}),{\mab Q}_l)$
by the spectral sequences (\ref{eqn:lidd}) and (\ref{eqn:lintdd}), respectively. 
\par 
In \S\ref{sec:rmwc} we give the conjecture (\ref{eqn:remtph}) 
and we prove that this is true in the case 
$\dim \os{\circ}{X}\leq 2$. 
\par 
In \S\ref{sec:ssf} we construct a similar bifiltered complex to 
$(A_{l^{\infty}}((\ol{X}_{\frac{1}{l^{\infty}}},\ol{D}_{\frac{1}{l^{\infty}}})/\ol{s}_{\frac{1}{l^{\infty}}},
P^{\ol{D}_{\frac{1}{l^{\infty}}}},P)$ in the proper strictly semistable case with a relative SNCD 
over ${\cal V}$ and we give a comparison theorem 
between the similar complex and 
$(A_{l^{\infty}}((\ol{X}_{\frac{1}{l^{\infty}}},\ol{D}_{\frac{1}{l^{\infty}}})/\ol{s}_{\frac{1}{l^{\infty}}}),
P^{\ol{D}_{\frac{1}{l^{\infty}}}},P)$. 
We also prove that the conjecture (\ref{conj:nkretph}) 
is true if $D^{(k)}/s$ for any $k\in {\mab N}$ is the log special fiber of 
a proper strict semistable family over 
a henselian discrete valuation ring 
of equal characteristic.

\par 
In \S\ref{sec:lbddmlif} 
we give the explicit expressions of the edge morphisms between 
the $E_1$-terms of $l$-adic weight spectral sequences (\ref{eqn:lidd}) and 
(\ref{eqn:lintdd}). 
\par 
\par
\bigskip
\parno
{\bf Acknowledgment.}
 I would like to express my sincere thanks to 
 K.~Kato for sending me a letter \cite{kln},  K.~Fujiwara  
for informing me of a technique in the proof of \cite[(6.1)]{itp} before 
the publication of the paper and C.~Nakayama 
for giving me a suggestion in (\ref{rema:gfs}) (2) in the text, respectively.

\bigskip
\parno
{\bf Notations.}
(1) For a log scheme $X$, $\os{\circ}{X}$ denotes the underlying scheme of $X$. 
For a morphism 
$\varphi \col X \lo Y$, $\os{\circ}{\varphi}$ denotes 
the underlying morphism 
$\os{\circ}{X} \lo \os{\circ}{Y}$ of $\varphi$. 
\par
(2) SNC(L)=simple normal crossing (log), 
SNCD=simple normal crossing divisor.  
\par
(3) For a complex $(E^{\bul}, d^{\bul})$ 
of objects in an exact additive category ${\cal A}$, 
we often denote  $(E^{\bul}, d^{\bul})$ only by 
$E^{\bul}$ as usual. 
\par 
(4) For a complex $(E^{\bul}, d^{\bul})$ in (3) and 
for an integer $n$, $(E^{\bul}\{n\}, d^{\bul}\{n\})$ 
denotes the following complex:
\begin{equation*}
\cdots \lo \us{q-1}{E^{q-1+n}}
\os{d^{q-1+n}}{\lo} 
\us{q}{E^{q+n}} \os{d^{q+n}}{\lo} 
\us{q+1}{E^{q+1+n}} \os{d^{q+1+n}}{\lo} 
\cdots.
\end{equation*}
Here the numbers under the objects 
above in ${\cal A}$ mean the degrees.
\par 
(5) For a complex $(E^{\bul}, d^{\bul})$ in (3), 
$\tau=\{\tau_k\}_{k\in {\mab Z}}$ denotes 
the canonical filtration on $(E^{\bul}, d^{\bul})$: 
\begin{equation*}
\tau_k(E^{\bul}):= 
(\cdots \lo E^{k-2} \lo E^{k-1} \lo {\rm Ker}(d^k) \lo 0 \lo 0 
\lo \cdots).    
\end{equation*} 
We fix the following isomorphism 
\begin{align*} 
{\rm gr}_k^{\tau}E^{\bul}\os{\sim}{\lo} {\cal H}^k(E^{\bul})[-k]
\end{align*}
induced by the projection ${\rm Ker}(E^k\lo E^{k+1}) \lo {\cal H}^k(E^{\bul})$. 
\par 
(6)  For a morphism 
$f \col  (E^{\bul},d^{\bul}_E) \lo (F^{\bul},d^{\bul}_F)$ 
of complexes, 
let ${\rm MF}(f)$ (resp.~{\rm MC}(f)) 
be the mapping fiber (resp.~the mapping cone) 
of $f$: ${\rm MF}(f):=E^{\bul} \oplus F^{\bul}[-1]$ 
with boundary morphism ``$(x,y)\lom (d_E(x),-d_F(y)+f(x))$"
(resp.~${\rm MC}(f):=E^{\bul}[1] \oplus F^{\bul}$ 
with boundary morphism ``$(x,y)\lom (-d_E(x),d_F(y)+f(x))$").  
\par
(7) Let $({\cal T},{\cal A})$ be a ringed topos. 
\medskip
\par
(a) $C({\cal T},{\cal A})$ (resp.~${\rm C}^{\pm}({\cal T},{\cal A})$, 
$C^{\rm b}({\cal T},{\cal A})$): the category of 
(resp.~bounded below, bounded above, 
bounded) complexes of ${\cal A}$-modules, 
\par
(b) $K({\cal T},{\cal A})$ (resp.~$K^{\pm}({\cal T},{\cal A})$, 
$K^{\rm b}({\cal T},{\cal A})$): the category of 
(resp.~bounded below, bounded above, 
bounded) complexes of ${\cal A}$-modules modulo homotopy, 
\par
(b) $D({\cal T},{\cal A})$ (resp.~$D^{\pm}({\cal T},{\cal A})$, 
$D^{\rm b}({\cal T},{\cal A})$): the derived category of
$K({\cal T},{\cal A})$ (resp.~$K^{\pm}({\cal T},{\cal A})$, 
$K^{\rm b}({\cal T},{\cal A})$). 
For an object $E^{\bul}$ of 
$C({\cal T},{\cal A})$ (resp.~$C^{\pm}({\cal T},{\cal A})$, 
$C^{\rm b}({\cal T},{\cal A})$), we denote 
simply by $E^{\bul}$
the corresponding object to $E^{\bul}$ in 
$D({\cal T},{\cal A})$ (resp.~$D^{\pm}({\cal T},{\cal A})$, 
$D^{\rm b}({\cal T},{\cal A})$). 
\par
(d) The additional notation ${\rm F}$ 
to the categories above means ``the filtered ''.
Here the filtration is 
an increasing filtration indexed by 
${\mab Z}$.   
For example,  
${\rm K}^+{\rm F}({\cal T},{\cal A})$ is 
the category 
of bounded below filtered complexes 
modulo filtered homotopy. 
\par
(e) ${\rm DF}^2({\cal T},{\cal A})$ 
(resp.~${\rm D}^{\pm}{\rm F}^2({\cal T},{\cal A})$, 
${\rm D}^{\rm b}{\rm F}^2({\cal T},{\cal A})$): 
the derived category of (resp.~bounded below, bounded above,  bounded) 
bifiltered complexes of ${\cal A}$-modules which will be defined. 
\par
(8) For an fs log scheme $X$, $X_{\rm ket}$ denotes 
the Kummer log \'{e}tale topos of $X$ by imitating 
``$(X/S)_{\rm cris}$'' as in the classical crystalline case (\cite{bb}). 
(We do not use the notation 
$\wt{X}_{\rm ket}$ which was used in references.)
For a scheme $Y$, 
$Y_{\rm et}$ denotes the \'{e}tale topos of $Y$ 
($Y_{\rm et}$ is not the \'{e}tale site of $Y$ in this paper).

\bigskip
\par\noindent
{\bf Conventions.}
We make the following conventions about signs  
(cf.~\cite{bbm}, \cite{con}). 
\par
Let ${\cal A}$ be an exact additive category.
\smallskip
\par
(1) (cf.~\cite[0.3.2]{bbm}, \cite[(1.3.2)]{con}) 
For a short exact sequence 
\begin{equation*}0\lo (E^{\bul},d^{\bul}_E) 
\os{f}{\lo} (F^{\bul},d^{\bul}_F) \os{g}{\lo} 
(G^{\bul},d^{\bul}_G) \lo 0
\end{equation*}
of complexes of objects in ${\cal A}$, 
the mapping fiber of $g$. 
We fix an isomorphism 
``$E^{\bul}\owns x \lom (f(x),0)\in 
{\rm MF}(g)=F^{\bul}\oplus G^{\bul}[-1]$'' 
in the derived category $D({\cal A})$.
\smallskip
\par
(2) (\cite[0.3.2]{bbm}, \cite[(1.3.3)]{con}) 
In the situation (1), the boundary morphism
$(G^{\bul},d^{\bul}_G) \lo (E^{\bul}[1],d^{\bul}_E[1])$ 
in $D^+({\cal A})$ 
is the following composite morphism 
\begin{equation*}
(G^{\bul},d^{\bul}_G) 
\os{\sim}{\longleftarrow} {\rm MC}(f) 
\os{{\rm proj.}}{\lo} (E^{\bul}[1],d^{\bul}_E[1])
\os{(-1)\times}{\lo} (E^{\bul}[1],d^{\bul}_E[1]).
\end{equation*}
More generally, we use only the similar boundary morphism 
for a triangle in a derived category. 
\par
(3) For a complex $(E^{\bul}, d^{\bul})$ of objects in 
${\cal A}$, 
the identity ${\rm id} \col E^q \lo E^q$ 
$(\forall q \in {\mab Z})$ induces an isomorphism 
${\cal H}^q((E^{\bul},-d^{\bul})) \os{\sim}{\lo} 
{\cal H}^q((E^{\bul},d^{\bul}))$
$(\forall q \in {\mab Z})$ of cohomologies. 
\par 
\par 
(4) For a ringed topos $({\cal T},{\cal A})$ and 
for two complexes $(A^{\bul},d^{\bul})$ and $(B^{\bul},d^{\bul})$ 
of ${\cal A}$-modules, the boundary morphism $d=\{d^n\}_{n\in {\mab Z}}$ of 
$A^{\bul}\otimes_{\cal A}B^{\bul}$ is defined as usual: 
$$d^n=\sum_{p+q=n}(d^p\otimes {\rm id}+(-1)^p{\rm id}\otimes d^q).$$

\bigskip
\bigskip
\bigskip
\parno 
{\bf {\Large Part I. Derived categories of bifiltered complexes}}
\bigskip
\parno  
In the Part I of this paper we construct 
theory of derived categories of bifiltered complexes. 
We need new various ideas to construct it. 
\bigskip

\section{The definition of the derived category of bifiltered complexes}\label{sec:nfil}
In this section we give the definition of the strictness of a morphism of 
bifiltered modules and the definition of the strictly exactness of bifiltered complexes. 
The latter notion is the most important one for 
the definition of the derived category of bifiltered complexes 
in this paper. 
To give the definitions is not an obvious work (see (\ref{rema:obstm}) below); 
the theory in \cite{blec} and \cite{nh2} does not imply the theory in this paper. 
Unfortunately the author has not yet given the appropriate definition of the strictly exactness of 
a complex with $n$-pieces of filtrations for a general positive integer $n$ 
because of the quite complicated description of the multi-graded complex of a complex with
$n$-pieces of filtrations for the case $n\geq 3$. 
\par
Let $({\cal T}, {\cal A})$ be  a ringed topos. 
Let $E$ be an 
${\cal A}$-module in ${\cal T}$. 
An increasing filtration on $E$ 
is, by definition, a
family $P:=\{P_k\}:=\{P_kE\}_{k\in {\mab Z}}$ of 
${\cal A}$-submodules of 
$E$ such that $P_kE \subset P_{k+1}E$ 
for any $k\in {\mab Z}$. 
As in \cite{blec} and \cite{nh2}, 
filtrations are not necessarily 
exhaustive nor separated unlike biregular filtrations in \cite{dh3}.  
(In \cite{nhir} we have already used results in \cite{blec} and \cite{nh2} essentially 
for non-biregular filtrations for the study of weight filtrations on 
the log crystalline cohomological sheaves of proper SNCL schemes in characteristic $p>0$.)
In this paper we consider only increasing filtrations 
and we call them filtrations shortly. 
Let $n$ be a positive integer and let 
$P^{(i)}$ $(i=1,\ldots, n)$ be filtrations on $E$. 
We denote by $(E,\{P^{(i)}\}_{i=1}^n)$ 
an ${\cal A}$-module $E$ 
with $n$-pieces of filtrations $P^{(1)}, \ldots, P^{(n)}$ on $E$. 
For simplicity of notation, we almost always denote 
$P^{(i)}$ by $E^{(i)}$ and 
$P^{(i)}_kE$ $(k\in {\mab Z})$ by $E^{(i)}_k$, respectively. 
For $1\leq i_1< i_2< \cdots < i_m\leq n$ $(1\leq m\leq n)$ and $k_1,\ldots, k_m\in {\mab Z}$, 
set 
$$E^{(i_1\cdots i_m)}_{k_1\cdots k_m}:=
E^{(i_1)}_{k_1}\cap \cdots \cap E^{(i_m)}_{k_m}.$$
For $1\leq i_1\leq i_2\leq \cdots \leq i_n\leq n$ 
and $k_1,\ldots, k_n\in {\mab Z}$, we also use 
the following convenient notation 
$$E^{(i_1\cdots i_n)}_{k_1\cdots k_n}:=
E^{(i_1)}_{k_1}\cap \cdots \cap E^{(i_n)}_{k_n}.$$ 
\par 
For an ${\cal A}$-module $E$,  
we mean by the trivial filtration on $E$ 
a filtration $\{P_k\}_{k\in {\mab Z}}$ 
such that $P_0E=E$ and $P_{-1}E=0$. 
In \cite{blec}, ${\cal A}$ has 
a nontrivial filtration and 
${\cal A}$ is not necessarily commutative; 
in this paper, we consider only 
the trivial filtration on 
${\cal A}$ and ${\cal A}$ is assumed to be commutative. 
\par  
A morphism $f\col (E,\{E^{(i)}\}_{i=1}^n)\lo (F,\{F^{(i)}\}_{i=1}^n)$ 
of modules of $n$-pieces of filtrations is defined to be 
a morphism $f\col E\lo F$ of ${\cal A}$-modules 
such that $f(E^{(i)}_k)\subset F^{(i)}_k$ for any $1\leq i\leq n$ and any $k\in {\mab Z}$. 
Let ${\rm MF}^n({\cal A})$ be the category of ${\cal A}$-modules with 
$n$-pieces of filtrations. 
Obviously ${\rm MF}^n({\cal A})$ is an additive category. 
We do not say that $f$ is strict even if 
$f\col (E,E^{(i)})\lo (F,F^{(i)})$ is strict 
(i.e., ${\rm Im}(f)\cap F^{(i)}_k=f(E^{(i)}_{k})$ $(k\in {\mab Z})$)
for $1\leq \forall i\leq n$: 

\begin{defi}\label{defi:fef}
Let $f \col (E,\{E^{(i)}\}_{i=1}^n) \lo (F,\{F^{(i)}\}_{i=1}^n)$ 
be a morphism in 
${\rm MF}^n({\cal A})$. 
Then we say that $f$ is {\it strict} if ${\rm Im}(f)\cap 
F^{(i_1\cdots i_n)}_{k_1\cdots k_n}=f(E^{(i_1\cdots i_n)}_{k_1 \cdots k_n})$ 
for $1 \leq i_1 \leq \cdots \leq i_n \leq n$ and $k_1,\ldots, k_n \in {\mab Z}$.
\end{defi}
It is obvious that, if  $f \col (E,\{E^{(i)}\}_{i=1}^n) \lo (F,\{F^{(i)}\}_{i=1}^n)$ is strict, 
then  
the underlying  morphism $f^{(i)}\col (E,E^{(i)})\lo (F,F^{(i)})$ of filtered ${\cal A}$-modules 
is strict for $1\leq \forall i\leq n$. 
\par 
We often use the following simple criterion in key points in the proofs of 
results in this paper:  

\begin{prop}\label{prop:usef}
Let $f \col (E,\{E^{(i)}\}_{i=1}^n) \lo (F,\{F^{(i)}\}_{i=1}^n)$ 
be a morphism in 
${\rm MF}^n({\cal A})$. 
For $1\leq i \leq n$, 
let $f^{(i)}\col (E,E^{(i)}) \lo (F,F^{(i)})$ 
be the filtered morphism. 
If $f$ is injective, then $f$ is strict if and only if 
$f^{(i)}\col (E,E^{(i)}) \lo (F,F^{(i)})$
is strict for $1\leq \forall i \leq n$. 
\end{prop}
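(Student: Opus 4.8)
The plan is to treat the two implications separately, noting that only the reverse one uses injectivity. The forward direction is immediate and is exactly the remark recorded right after Definition \ref{defi:fef}: in the defining equality of strictness one specializes to $i_1=\cdots=i_n=i$ and $k_1=\cdots=k_n=k$, and since $E^{(i)}_k\cap\cdots\cap E^{(i)}_k=E^{(i)}_k$ (and likewise for $F$) this yields ${\rm Im}(f)\cap F^{(i)}_k=f(E^{(i)}_k)$, i.e. strictness of $f^{(i)}$, for every $k$. No hypothesis on $f$ is needed here.

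For the reverse direction I would first rewrite the intersections occurring in Definition \ref{defi:fef}. Since every filtration is increasing, one has $F^{(i)}_k\cap F^{(i)}_{k'}=F^{(i)}_{\min\{k,k'\}}$, so after grouping equal indices each intersection $F^{(i_1\cdots i_n)}_{k_1\cdots k_n}$ takes the form $\bigcap_{i\in S}F^{(i)}_{\ell_i}$ for a nonempty subset $S\sus\{1,\ldots,n\}$ and integers $\ell_i$; conversely every such $S$ and choice of $\ell_i$ is realized by filling the $n$ available slots with repeated indices. Hence strictness of $f$ is equivalent to the family of equalities ${\rm Im}(f)\cap\bigcap_{i\in S}F^{(i)}_{\ell_i}=f(\bigcap_{i\in S}E^{(i)}_{\ell_i})$ ranging over all such $S$ and $\ell_i$.

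The heart of the argument is then a short formal computation using injectivity. Because $f$ is a monomorphism of ${\cal A}$-modules, it induces an isomorphism $E\os{\sim}{\lo}{\rm Im}(f)$, and direct image along a monomorphism commutes with finite intersections of submodules, so $f(\bigcap_{i\in S}E^{(i)}_{\ell_i})=\bigcap_{i\in S}f(E^{(i)}_{\ell_i})$. Substituting the single-filtration strictness $f(E^{(i)}_{\ell_i})={\rm Im}(f)\cap F^{(i)}_{\ell_i}$ and factoring the common ${\rm Im}(f)$ out of the finite intersection gives
$$f\Big(\bigcap_{i\in S}E^{(i)}_{\ell_i}\Big)=\bigcap_{i\in S}\big({\rm Im}(f)\cap F^{(i)}_{\ell_i}\big)={\rm Im}(f)\cap\bigcap_{i\in S}F^{(i)}_{\ell_i},$$
which is exactly the required equality, completing the proof.

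The one point that deserves care --- and the only genuine, if modest, obstacle --- is the claim that direct image along the monomorphism $f$ preserves finite intersections of submodules in the abelian category of ${\cal A}$-modules on ${\cal T}$. This is precisely where injectivity enters: the inclusion $f(\bigcap_i A_i)\sus\bigcap_i f(A_i)$ always holds, but the reverse inclusion can fail without injectivity, which is why the hypothesis cannot be removed. I expect this to be the only step requiring more than bookkeeping; the reduction of the $n$-index notation to subset intersections is routine given the nesting of the filtrations.
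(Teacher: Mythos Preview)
Your proof is correct and follows essentially the same approach as the paper's: both arguments use the individual strictness of $f^{(i)}$ to obtain ${\rm Im}(f)\cap F^{(i_m)}_{k_m}=f(E^{(i_m)}_{k_m})$, and then invoke injectivity of $f$ to conclude $\bigcap_m f(E^{(i_m)}_{k_m})=f(\bigcap_m E^{(i_m)}_{k_m})$. Your preliminary reduction to distinct indices via $F^{(i)}_k\cap F^{(i)}_{k'}=F^{(i)}_{\min\{k,k'\}}$ is harmless but unnecessary, since the paper's argument works directly with the $n$-tuple $(i_1,\ldots,i_n)$ of possibly repeated indices.
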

\begin{proof} 
We have only to prove that ${\rm Im}(f)\cap F^{(i_1\cdots i_n)}_{k_1 \cdots k_n}\subset 
f(E^{(i_1\cdots i_n)}_{k_1 \cdots k_n})$.
Since $F^{(i_1\cdots i_n)}_{k_1 \cdots k_n}\subset F^{(i_m)}_{k_m}$ $(1\leq \forall m \leq n)$ 
and 
since 
$f^{(i)}\col (E,E^{(i)}) \lo (F,F^{(i)})$ $(1\leq i \leq n)$  is strict,  
${\rm Im}(f)\cap F^{(i_1\cdots i_n)}_{k_1 \cdots k_n}\subset \bigcap_{m=1}^n
f(E^{(i_m)}_{k_m})$.  
Since $f \col E\lo F$ is injective, it is very easy to check 
that 
$\bigcap_{m=1}^n
f(E^{(i_m)}_{k_m})=
f(E^{(i_1\cdots i_n)}_{k_1 \cdots k_n})$. 
\end{proof}

\begin{rema}\label{rema:obstm} 
Even if $f^{(i)}$ is strict for $1\leq \forall i \leq n$, 
$f$ is not necessarily strict in general because 
$E^{(i_1\cdots i_n)}_{k_1 \cdots k_n}$ may be too small compared with 
$F^{(i_1\cdots i_n)}_{k_1 \cdots k_n}$. 
Indeed, let $A$ be a nonzero commutative ring with unit element. 
Let $M$ be an $A$-module and let 
$0 \subsetneq N \subsetneq M$ be an $A$-submodule. 
Consider a filtration $P$ on $M$ defined by 
$P_kM:=0$ $(k<0)$, $P_0M:=N$ $(k=0)$ and $P_kM:=M$ $(k>0)$. 
Set $E:=M\oplus M$ and consider two filtrations 
$E^{(1)}$ and $E^{(2)}$ defined by 
\begin{equation*}
E^{(1)}_k= 
\begin{cases} 0 & (k<0), \\
N\oplus 0 & (k=0), \\ 
E & (k>0)  
\end{cases}
\end{equation*}
and 
\begin{equation*}
E^{(2)}_k= 
\begin{cases} 0 & (k<0), \\
0\oplus N & (k=0), \\ 
E & (k>0).  
\end{cases}
\end{equation*} 
Set $F:=M$ and consider two filtrations 
$F^{(1)}$ and $F^{(2)}$ on $F$ defined by 
$F^{(1)}:=P=:F^{(2)}$.  
Then the summation $+ \col M\oplus M \lo M$ 
induces a bifiltered morphism 
$f\col (E,(E^{(1)},E^{(2)})) \lo (F,(F^{(1)},F^{(2)}))$ of $A$-modules. 
The morphism $f^{(i)}\col (E,E^{(i)}) \lo (F,F^{(i)})$ for $i=1,2$ is strict, 
while $f$ is not: 
$f(E^{(12)}_0)=0\not=N={\rm Im}(f)\cap F^{(12)}_{00}$.  
Because 
$${\rm Im}(f)=(f(E), \{f(E)\cap F^{(1)}_k)\}_{k\in {\mab Z}}, 
\{f(E)\cap F^{(2)}_k\}_{k\in {\mab Z}})$$ 
and 
\begin{align*} 
{\rm Coim}(f)&=(E,P^{(1)},P^{(2)})/{\rm Ker}(f) \\
&=(E/f^{-1}(0), \{E^{(1)}_k+f^{-1}(0)/f^{-1}(0)\}_{k\in {\mab Z}},
\{E^{(2)}_k+f^{-1}(0)/f^{-1}(0)\}_{k\in {\mab Z}})\\
&= (f(E),\{f(E^{(1)}_k)\}_{k\in {\mab Z}},\{f(E^{(2)}_k)\}_{k\in {\mab Z}}),
\end{align*}  
${\rm Coim}(f)={\rm Im}(f)$. 
Hence $f$ is strict in the sense of \cite[\S1]{sc}.  
(Consequently our definition of the strictness for a morphism 
of bifiltered modules is not equal to the natural definition of the strictness for a morphism 
of bifiltered modules in [loc.~cit.].)   
However the induced morphism 
$f'\col (E,E^{(1)}, E^{(2)}, E^{(12)})\lo (F,F^{(1)}, F^{(2)}, F^{(12)})$ 
of trifiltered complexes by $f$ is not strict in the sense of \cite[\S1]{sc}. 
Here  
the category of trifiltered complexes obtained by bifiltered complexes is defined suitably. 
Note that the sets of indexes of the filtrations $E^{(12)}$ and $F^{(12)}$ 
are ${\mab Z}^2$. 
More generally, 
$f$ in  (\ref{defi:fef}) is strict if and only if 
the induced morphism 
$$f' \col 
(E,\{E^{(i_1\cdots i_n)}_{k_1 \cdots k_n}\}_{1\leq i_1\leq \cdots \leq i_n\leq n,k_1,\ldots, k_n\in {\mab Z}}) 
\lo (F,\{F^{(i_1\cdots i_n)}_{k_1 \cdots k_n}\}_{1\leq i_1\leq \cdots \leq i_n\leq n,k_1,\ldots, k_n\in {\mab Z}})$$ 
is strict in the sense of [loc.~cit.].
\par 
Note also that the morphism 
$M\owns x\lom (x,-x)\in M\oplus M$   
does not induce a filtered morphism $(F,F^{(i)})\lo (E,E^{(i)})$ $(i=1,2)$ if $N\not=0$. 
Hence we cannot consider an exact sequence 
``$0\lo (F,F^{(i)})\lo (E,E^{(i)})\os{+}{\lo} (F,F^{(i)})\lo 0$''.
\end{rema}

\par 
Let $f \col (E,\{E^{(i)}\}_{i=1}^n) \lo (F,\{F^{(i)}\}_{i=1}^n)$ 
be a morphism in ${\rm MF}^n({\cal A})$. 
We say that $f$ is a {\it strict  injective morphism} 
(resp.~{\it strict surjective morphism}) if $f$ is strict 
and if the induced morphism $E\lo F$ is injective (resp.~if $f$ is strict 
and if the induced morphism $E\lo F$ is surjective).

\par
A complex with $n$-pieces of filtrations is, by definition, a complex 
$(E^{\bul},d)$ with $n$-pieces $\{P^{(i)}\}_{i=1}^n$ of filtrations 
such that 
$d(P^{(i)}_kE^q)\subset P^{(i)}_kE^{q+1}$. 
A morphism of complexes with $n$-pieces of filtrations is defined in an obvious way. 
Let ${\rm CF}^n({\cal A})$ be 
the category of complexes of ${\cal A}$-modules with $n$-pieces of filtrations 
and let 
${\rm C}^+{\rm F}^n({\cal A})$, 
${\rm C}^-{\rm F}^n({\cal A})$, 
${\rm C}^{\rm b}{\rm F}^n({\cal A})$
be the categories of bounded below, 
bounded above and bounded complexes of 
${\cal A}$-modules with $n$-pieces of filtrations, respectively. 
We define the notion of the $n$-filtered homotopy 
in an obvious way. 
\par
Let ${\rm KF}^n({\cal A})$ be 
the category of complexes of ${\cal A}$-modules with $n$-pieces of filtrations 
modulo $n$-filtered homotopies and
let ${\rm K}^+{\rm F}^n({\cal A})$, 
${\rm K}^-{\rm F}^n({\cal A})$, 
${\rm K}^{\rm b}{\rm F}^n({\cal A})$ be 
the categories of  bounded below, bounded above and
bounded complexes of ${\cal A}$-modules with $n$-pieces of filtrations 
modulo $n$-filtered homotopies, respectively.
For an object of ${\rm CF}^n({\cal A})$ 
or ${\rm KF}^n({\cal A})$, we define 
the direct image and the inverse image 
of an object of ${\rm CF}^n({\cal A})$ or ${\rm KF}^n({\cal A})$
by a morphism of ringed 
topoi in an obvious way. 
Since ${\rm MF}^n({\cal A})$ 
is an additive category,
${\rm K}^{\star}{\rm F}^n({\cal A})$ 
($\star=+$, $-$, b, nothing) 
is a triangulated category. 
For a complex 
$(E^{\bul},\{P^{(i)}\}_{i=1}^n)\in {\rm CF}^n({\cal A})$ 
with $n$-pieces of filtrations 
and for a sequence $\ul{l}:=(l_1,\ldots,l_n)$, 
we define the shift 
$(E^{\bul},\{P^{(i)}\}_{i=1}^n)\langle \ul{l} \rangle$ 
by $P^{(i)}\langle l_i \rangle_kE^{\bul}:=P^{(i)}_{l_i+k}E^{\bul}$
(\cite[(1.1)]{dh2}).  
In the following we often denote 
$(E^{\bul},\{P^{(i)}\}_{i=1}^n)$ by 
$(E^{\bul},\{E^{\bul(i)}\}_{i=1}^n)$.  
\par

\begin{prop-defi}\label{prop-defi:we}
$(1)$ For  $1 \leq i_1 \leq \cdots \leq  i_n \leq n$ 
and $k_1,\ldots, k_n \in {\mab Z}$,  
the following {\it intersection functor} 
\begin{equation*}
\bigcap^{(i_1\cdots i_n)}_{k_1\cdots k_n}
\col {\rm KF}^n({\cal A}) \owns 
(E^{\bul},\{E^{\bul(i)}\}_{i=1}^n) 
\lom E^{\bul(i_1\cdots i_n)}_{k_1\cdots k_n}=
\bigcap_{j=1}^{n}E^{\bul(i_j)}_{k_j}
\in K({\cal A})
\tag{2.4.1}\label{prop-defi:intf}
\end{equation*}
is well-defined. Here $K({\cal A})$ is the category of complexes of ${\cal A}$-modules 
modulo homotopy. 
\par 
$(2)$ 
For  $1 \leq i_1 \leq \cdots \leq  i_n \leq n$ 
and $k_1,\ldots, k_n \in {\mab Z}$,  
the following {\it gr functor} 
\begin{equation*}
{\rm gr}^{P^{(i_1)}}_{k_1} \cdots,{\rm gr}^{P^{(i_n)}}_{k_n}
\col {\rm KF}^n({\cal A}) \owns 
(E^{\bul},\{E^{\bul(i)}\}_{i=1}^n) 
\lom {\rm gr}^{P^{(i_1)}}_{k_1} \cdots,{\rm gr}^{P^{(i_n)}}_{k_n} (E^{\bul})
\in  K({\cal A}) 
\tag{2.4.2}\label{prop-defi:igrntf}
\end{equation*}
is well-defined. 
\end{prop-defi}
\begin{proof} 
This is easy to prove. 
\end{proof}

The following is a generalization of the strictly exactness 
defined in \cite{blec} (and \cite{nh2}).

\begin{defi}\label{defi:nfdf}
Assume that $n\leq 2$. 
We say that a complex 
$(E^{\bul},\{E^{\bul(i)}\}_{i=1}^n)\in {\rm CF}^n({\cal A})$ 
with $n$-pieces of filtrations is  
{\it strictly exact} if  the complexes 
$E^{\bul}$ and $E^{\bul(i_1i_2)}_{k_1k_2}$ 
$(1 \leq \forall i_1 \leq  \forall i_2 \leq n, 
\forall k_1,\forall k_2 \in {\mab Z})$ are exact. 
\end{defi}

The following simple remark is very important: 

\begin{rema}\label{rema:whs} 
For the case $n=1$, we have said that   
$(E^{\bul},\{E^{\bul(i)}\}_{i=1}^n)$ is strictly exact if 
$E^{\bul}$ and $E^{\bul(1)}$ is exact in \cite{blec} and \cite{nh2}. 
For the case $n=2$,  
the following two conditions is {\it not} necessary equivalent 
((2) does not imply (1) in general):
\par 
(1) $(E^{\bul},\{E^{\bul(i)}\}_{i=1}^2)$ is strictly exact.  
\par 
(2) $(E^{\bul},E^{\bul(i)})$ is strictly exact for $1\leq \forall i\leq 2$. 
\par 
Indeed, let the notations be as in (\ref{rema:obstm}).  
Let us consider the following sequence 
$$0\lo {\rm Ker}(f) \lo (E,\{E^{(i)}\}_{i=1,2}) \os{f}{\lo} (F,\{F^{(i)}\}_{i=1,2})\lo 0.$$
Then it is easy to check that 
$$0\lo {\rm Ker}(f^{(i)}) \lo (E,E^{(i)}) \os{f}{\lo} (F,F^{(i)})\lo 0$$ 
is exact for $i=1,2$.  Indeed, 
$F_{-1}^{(i)}{\rm Ker}(f^{(i)})=M$, $E^{(i)}_{-1}=M\oplus M$, 
$F^{(i)}_{-1}=M$, 
$F_0^{(i)}{\rm Ker}(f^{(i)})=0$, $E^{(1)}_0=N\oplus 0$, 
$E^{(2)}_0=0\oplus N$ and $F^{(i)}_0=N$. 
However    
$$0\lo {\rm Ker}(f^{(12)}) \lo (E,E^{(12)}) \os{f}{\lo} (F,F^{(12)})\lo 0$$ 
is not exact since $E^{(12)}_0=0$ and $F^{(12)}_0=N\not=0$. 
\end{rema}

\begin{prop}\label{prop:ncc}
Assume that $n\leq 2$. 
If a complex $(E^{\bul},\{E^{\bul(i)}\}_{i=1}^n)$ 
with $n$-pieces of filtrations
is strictly exact and  if 
$(F^{\bul},\{F^{\bul(i)}\}_{i=1}^n)
\simeq 
(E^{\bul},\{E^{\bul(i)}\}_{i=1}^n)$ 
in ${\rm KF}^n({\cal A})$, 
then $(F^{\bul},\{F^{\bul(i)}\}_{i=1}^n)$ is 
also strictly exact. 
\end{prop}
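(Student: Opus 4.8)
The plan is to prove that strict exactness depends only on the isomorphism class of $(E^{\bul},\{E^{\bul(i)}\}_{i=1}^n)$ in ${\rm KF}^n({\cal A})$, by transporting the property along the functors recorded in (\ref{prop-defi:we}). Working in the regime $n \leq 2$ where (\ref{defi:nfdf}) applies, strict exactness of $(E^{\bul},\{E^{\bul(i)}\}_{i=1}^n)$ amounts to exactness of the single underlying complex $E^{\bul}$ together with exactness of every intersection complex $E^{\bul(i_1i_2)}_{k_1k_2}$ ($1 \leq i_1 \leq i_2 \leq n$, $k_1,k_2 \in {\mab Z}$). First I would unwind the hypothesis $(F^{\bul},\{F^{\bul(i)}\}_{i=1}^n) \simeq (E^{\bul},\{E^{\bul(i)}\}_{i=1}^n)$ in ${\rm KF}^n({\cal A})$: it provides morphisms $u$ and $v$ of complexes with $n$-pieces of filtrations, one in each direction, whose two composites are $n$-filtered homotopic to the respective identities.

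The key point is that each intersection functor $\bigcap^{(i_1i_2)}_{k_1k_2}$ of (\ref{prop-defi:intf}), as well as the evident functor forgetting all filtrations, is already well-defined as a functor ${\rm KF}^n({\cal A}) \lo K({\cal A})$ by (\ref{prop-defi:we}). Applying any one of these to the isomorphism $(F^{\bul},\cdots) \simeq (E^{\bul},\cdots)$ of ${\rm KF}^n({\cal A})$ therefore yields an isomorphism in $K({\cal A})$, i.e.\ a genuine homotopy equivalence of complexes of ${\cal A}$-modules. Concretely, $u$ and $v$ restrict to the intersection subcomplexes because they are filtered morphisms, and the $n$-filtered homotopies restrict in the same way, producing homotopy equivalences
$$F^{\bul} \simeq E^{\bul}, \qquad F^{\bul(i_1i_2)}_{k_1k_2} \simeq E^{\bul(i_1i_2)}_{k_1k_2} \quad \text{in } K({\cal A})$$
for all admissible $i_1,i_2,k_1,k_2$.

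Finally I would invoke the standard fact that a homotopy equivalence induces isomorphisms on all cohomology sheaves ${\cal H}^q(-)$, so that exactness (acyclicity) is transported in either direction. Since $E^{\bul}$ and every $E^{\bul(i_1i_2)}_{k_1k_2}$ are exact by hypothesis, the equivalences above force $F^{\bul}$ and every $F^{\bul(i_1i_2)}_{k_1k_2}$ to be exact as well, which is exactly the requirement of (\ref{defi:nfdf}) for $(F^{\bul},\{F^{\bul(i)}\}_{i=1}^n)$ to be strictly exact. I do not anticipate a genuine obstacle: the only step demanding attention is the legitimacy of restricting the $n$-filtered homotopies to each intersection $E^{\bul(i_1)}_{k_1} \cap E^{\bul(i_2)}_{k_2}$, but this is precisely what is encoded in the well-definedness of the intersection functors on the homotopy category, so it may be cited from (\ref{prop-defi:we}) rather than reproved.
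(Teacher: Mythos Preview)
Your proposal is correct and matches what the paper has in mind: the paper's own proof is simply ``This is easy to prove,'' and your argument via the intersection functors of (\ref{prop-defi:we}) together with the homotopy invariance of exactness is exactly the intended easy verification.
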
 
\begin{proof}
This is easy to prove.   
\end{proof} 

\begin{prop}\label{prop:nsq}
Assume that $n\leq 2$. Let $1 \leq  i_1 \leq  i_2 \leq n$ be integers. 
Let $k_1$ and $k_2$ be integers.  
If $E^{\bul(i_1i_2)}_{l_1l_2}$ is exact for $(l_1,l_2)=(k_1,k_2)$, $(k_1-1,k_2)$, $(k_1,k_2-1)$ 
and $(k_1-1,k_2-1)$,  
then 
${\rm gr}^{P^{(i_1)}}_{k_1}{\rm gr}^{P^{(i_2)}}_{k_2}E^{\bul}$ is exact. 
\end{prop}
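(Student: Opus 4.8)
The plan is to reduce the exactness of the double graded complex to that of four ``corner'' subcomplexes, which are exactly the ones assumed to be exact, by first identifying the double graded complex with an explicit quotient and then running the long exact cohomology sequence twice. Throughout I write $A^{\bul}:=E^{\bul(i_1i_2)}_{k_1k_2}$, $B^{\bul}:=E^{\bul(i_1i_2)}_{(k_1-1)k_2}$, $C^{\bul}:=E^{\bul(i_1i_2)}_{k_1(k_2-1)}$ and $D^{\bul}:=E^{\bul(i_1i_2)}_{(k_1-1)(k_2-1)}$; these are precisely the complexes assumed to be exact. Since $P^{(i_1)}_{k_1-1}E^{\bul}\sus P^{(i_1)}_{k_1}E^{\bul}$ and $P^{(i_2)}_{k_2-1}E^{\bul}\sus P^{(i_2)}_{k_2}E^{\bul}$, I have $B^{\bul},C^{\bul}\sus A^{\bul}$ and the key identity $B^{\bul}\cap C^{\bul}=D^{\bul}$, which continues to hold in the degenerate case $i_1=i_2$ where the corners collapse to single pieces of one filtration.

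First I would compute the double graded complex. Writing $Q^{\bul}:=P^{(i_2)}_{k_2-1}E^{\bul}$, the induced $P^{(i_1)}$-filtration on ${\rm gr}^{P^{(i_2)}}_{k_2}E^{\bul}=P^{(i_2)}_{k_2}E^{\bul}/Q^{\bul}$ has $k_1$-th and $(k_1-1)$-th pieces equal to the images of $A^{\bul}$ and $B^{\bul}$, whence
$${\rm gr}^{P^{(i_1)}}_{k_1}{\rm gr}^{P^{(i_2)}}_{k_2}E^{\bul}=\frac{A^{\bul}+Q^{\bul}}{B^{\bul}+Q^{\bul}}.$$
By the second isomorphism theorem (using $B^{\bul}\sus A^{\bul}$) this equals $A^{\bul}/(A^{\bul}\cap(B^{\bul}+Q^{\bul}))$, and the modular law together with $A^{\bul}\cap Q^{\bul}=C^{\bul}$ gives $A^{\bul}\cap(B^{\bul}+Q^{\bul})=B^{\bul}+C^{\bul}$. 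Thus
$${\rm gr}^{P^{(i_1)}}_{k_1}{\rm gr}^{P^{(i_2)}}_{k_2}E^{\bul}\simeq A^{\bul}/(B^{\bul}+C^{\bul}).$$

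It remains to prove that $A^{\bul}/(B^{\bul}+C^{\bul})$ is exact. For this I would use the Mayer--Vietoris short exact sequence of complexes
$$0\lo D^{\bul}\lo B^{\bul}\oplus C^{\bul}\lo B^{\bul}+C^{\bul}\lo 0,$$
in which the first map is $x\mapsto(x,-x)$ and the second is $(b,c)\mapsto b+c$; its exactness is exactly the identity $B^{\bul}\cap C^{\bul}=D^{\bul}$. Since $B^{\bul}$, $C^{\bul}$ and $D^{\bul}$ are exact, the associated long exact cohomology sequence forces $B^{\bul}+C^{\bul}$ to be exact. Then the short exact sequence $0\lo B^{\bul}+C^{\bul}\lo A^{\bul}\lo A^{\bul}/(B^{\bul}+C^{\bul})\lo 0$, with $A^{\bul}$ and $B^{\bul}+C^{\bul}$ now both exact, yields the exactness of $A^{\bul}/(B^{\bul}+C^{\bul})$, which proves the proposition.

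The only genuinely delicate point I anticipate is the identification in the second step: it rests on a careful unwinding of the induced filtrations and on the modular law, and one must check that $B^{\bul}\cap C^{\bul}=D^{\bul}$ still holds when $i_1=i_2$ (so that two of the four corners coincide). Once this isomorphism is established, the remainder consists of two routine invocations of the long exact sequence of cohomology in the abelian category of ${\cal A}$-modules.
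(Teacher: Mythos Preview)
Your proof is correct and follows essentially the same approach as the paper: both identify ${\rm gr}^{P^{(i_1)}}_{k_1}{\rm gr}^{P^{(i_2)}}_{k_2}E^{\bul}$ with $A^{\bul}/(B^{\bul}+C^{\bul})$ and then use the Mayer--Vietoris sequence $0\to D^{\bul}\to B^{\bul}\oplus C^{\bul}\to B^{\bul}+C^{\bul}\to 0$ to deduce exactness of $B^{\bul}+C^{\bul}$. Your write-up is slightly more detailed in justifying the quotient identification (via the modular law) and in making the final short exact sequence explicit, but the argument is the same.
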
 
\begin{proof} 
In the case $n=1$, this is obvious.  
Because 
\begin{align*} 
{\rm gr}^{P^{(i_1)}}_{k_1}{\rm gr}^{P^{(i_2)}}_{k_2}E^{\bul}&=
E^{\bul(i_1i_2)}_{k_1k_2}/(E^{{\bul(i_1i_2)}}_{k_1-1,k_2}+E^{\bul(i_1i_2)}_{k_1,k_2-1}), 
\tag{2.8.1}\label{ali:ppe}
\end{align*} 
we have only to prove that 
$E^{{\bul(i_1i_2)}}_{k_1-1,k_2}+E^{\bul(i_1i_2)}_{k_1,k_2-1}$ is exact. 
This follows from the following exact sequence
\begin{align*} 
0& \lo E^{{\bul(i_1i_2)}}_{k_1-1,k_2-1}\os{{\rm inc}.\oplus -{\rm inc}.}{\lo}  
E^{{\bul(i_1i_2)}}_{k_1-1,k_2}\oplus E^{\bul(i_1 i_2)}_{k_1,k_2-1}
\os{+}{\lo}  
E^{{\bul(i_1i_2)}}_{k_1-1,k_2}+E^{\bul(i_1i_2)}_{k_1,k_2-1}\lo 0.
\tag{2.8.2}\label{ali:pipe}
\end{align*}
\end{proof}

\begin{rema}
Consider the case $n=3$. 
By the definition of the quotient filtration, we obtain the following formulas 
for $1\leq i_1\leq  i_2  \leq i_3\leq n$:  
\begin{align*} 
{\rm gr}^{P^{(i_1)}}_{k_1}{\rm gr}^{P^{(i_2)}}_{k_2} {\rm gr}^{P^{(i_3)}}_{k_3}E^{\bul}=&
E^{\bul(i_1i_2i_3)}_{k_1k_2k_3}/\{E^{\bul(i_1i_2i_3)}_{k_1-1,k_2k_3}
+
E^{\bul(i_1)}_{k_1}\cap (E^{\bul(i_2i_3)}_{k_2-1,k_3}+E^{\bul(i_2i_3)}_{k_2,k_3-1})\}. 
\end{align*} 
Hence we would like to consider the following ${\cal A}$-module: 
\begin{align*} 
E^{\bul(i_1i_2i_3)}_{k_1-1,k_2k_3}\cap 
\{E^{\bul(i_1)}_{k_1}\cap (E^{\bul(i_2i_3)}_{k_2-1,k_3}+E^{\bul(i_2i_3)}_{k_2,k_3-1})\}
&=E^{\bul(i_1)}_{k_1-1}\cap 
E^{\bul(i_2i_3)}_{k_2k_3}\cap 
(E^{\bul(i_2i_3)}_{k_2-1,k_3}+E^{\bul(i_2i_3)}_{k_2,k_3-1})\\
&=E^{\bul(i_1i_2i_3)}_{k_1-1,k_2k_3}\cap 
(E^{\bul(i_2i_3)}_{k_2-1,k_3}+E^{\bul(i_2i_3)}_{k_2,k_3-1}). 
\end{align*} 
In the case where $n=3$, it is reasonable to say that 
$(E^{\bul},\{E^{\bul(i)}\}_{i=1}^n)$ is strictly exact if 
${\cal H}^q(E^{\bul})=0$, 
${\cal H}^q(E^{\bul(i_1i_2i_3)}_{k_1k_2k_3})=0$ 
and 
${\cal H}^q(E^{\bul(i_1i_2i_3)}_{k_1-1,k_2k_3}\cap 
(E^{\bul(i_2i_3)}_{k_2-1,k_3}+E^{\bul(i_2i_3)}_{k_2,k_3-1}))=0$ 
$(\forall q\in {\mab Z}, 1 \leq \forall i_1 \leq  \forall i_2 \leq \forall i_3 \leq n=3, 
\forall k_1,\forall k_2,\forall k_3\in {\mab Z})$. 
However I have not yet proved the existence of 
the strictly injective resolution of a bounded below complex 
with $3$-pieces of filtrations because of the complicated term 
$E^{\bul(i_1i_2i_3)}_{k_1-1,k_2k_3}\cap 
(E^{\bul(i_2i_3)}_{k_2-1,k_3}+E^{\bul(i_2i_3)}_{k_2,k_3-1})$. 
In this reason we discuss the derived category of complexes with $n$-pieces of filtrations 
under the assumption $n\leq 2$ in this paper. 
\end{rema}

\begin{prop}\label{prop:exbcb}
Let the notations 
be as in {\rm (\ref{prop:nsq})}. 
Let $p$ be an integer. 
If the boundary morphism 
$d^{p-1}\col (E^{p-1},\{E^{p-1(i)}\}_{i=1}^n)\lo (E^{p},\{E^{p(i)}\}_{i=1}^n)$ 
is strict, 
then 
\begin{align*} 
{\rm Im}({\rm gr}^{P^{(i_1)}}_{k_1}{\rm gr}^{P^{(i_2)}}_{k_2}E^{p-1}
\lo {\rm gr}^{P^{(i_1)}}_{k_1}{\rm gr}^{P^{(i_2)}}_{k_2}E^{p})
={\rm gr}^{P^{(i_1)}}_{k_1}{\rm gr}^{P^{(i_2)}}_{k_2}
{\rm Im}(E^{p-1}\lo E^p).
\tag{2.10.1}\label{ali:pilpe}
\end{align*} 
\end{prop}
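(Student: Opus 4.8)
The plan is to evaluate both members of (\ref{ali:pilpe}) as submodules of ${\rm gr}^{P^{(i_1)}}_{k_1}{\rm gr}^{P^{(i_2)}}_{k_2}E^p$ by unwinding the explicit description (\ref{ali:ppe}) of a bigraded piece. Write $A:=E^{p-1}$ and $B:=E^p$ for the two modules (each with the induced filtrations $E^{p-1(i)}$, $E^{p(i)}$), put $d:=d^{p-1}$, and abbreviate the denominator occurring in (\ref{ali:ppe}) by $Q:=B^{(i_1i_2)}_{k_1-1,k_2}+B^{(i_1i_2)}_{k_1,k_2-1}$. By (\ref{ali:ppe}) the induced morphism ${\rm gr}^{P^{(i_1)}}_{k_1}{\rm gr}^{P^{(i_2)}}_{k_2}A\lo {\rm gr}^{P^{(i_1)}}_{k_1}{\rm gr}^{P^{(i_2)}}_{k_2}B$ carries the class of $a\in A^{(i_1i_2)}_{k_1k_2}$ to the class of $d(a)$, so its image (the left-hand side of (\ref{ali:pilpe})) is $(d(A^{(i_1i_2)}_{k_1k_2})+Q)/Q$. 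Since ${\rm Im}(d)$ carries the filtrations induced from $B$, one has $({\rm Im}(d))^{(i_1i_2)}_{k_1k_2}={\rm Im}(d)\cap B^{(i_1i_2)}_{k_1k_2}$, and the canonical image in ${\rm gr}^{P^{(i_1)}}_{k_1}{\rm gr}^{P^{(i_2)}}_{k_2}B$ of ${\rm gr}^{P^{(i_1)}}_{k_1}{\rm gr}^{P^{(i_2)}}_{k_2}{\rm Im}(d)$ (the right-hand side) is $(({\rm Im}(d)\cap B^{(i_1i_2)}_{k_1k_2})+Q)/Q$.

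First I would observe that the two expressions share the common denominator $Q$, so the assertion reduces to the single equality of numerators $d(A^{(i_1i_2)}_{k_1k_2})={\rm Im}(d)\cap B^{(i_1i_2)}_{k_1k_2}$. This is exactly the strictness of $d=d^{p-1}$ at the indices $(i_1,i_2)$ and $(k_1,k_2)$ in the sense of (\ref{defi:fef}), which holds by hypothesis. Plugging this identity into the description above yields (\ref{ali:pilpe}) immediately. The degenerate indices $i_1=i_2$, as well as the case $n=1$, collapse to a single filtration and are handled by the same one-line computation, with $B^{(i_1i_2)}_{k_1k_2}$ read as $B^{(i)}_{\min\{k_1,k_2\}}$; so they require nothing new.

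The one point that really must be gotten right --- and the place where the bifiltered situation departs from the classical filtered one --- is that the right-hand side of (\ref{ali:pilpe}) has to be understood as the \emph{image} of ${\rm gr}^{P^{(i_1)}}_{k_1}{\rm gr}^{P^{(i_2)}}_{k_2}{\rm Im}(d)$ under the canonical morphism to ${\rm gr}^{P^{(i_1)}}_{k_1}{\rm gr}^{P^{(i_2)}}_{k_2}E^p$, i.e.\ as a submodule of the latter, and not as the abstract bigraded module of ${\rm Im}(d)$. For a single filtration these two coincide, because ${\rm gr}$ of a submodule injects into ${\rm gr}$ of the ambient module; for two filtrations the corresponding morphism need not be injective --- the modularity identity ${\rm Im}(d)\cap Q=({\rm Im}(d)\cap B^{(i_1i_2)}_{k_1-1,k_2})+({\rm Im}(d)\cap B^{(i_1i_2)}_{k_1,k_2-1})$ can fail even when $d$ is strict, which is precisely the pathology already signalled in (\ref{rema:obstm}) and (\ref{rema:whs}). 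Because the argument only ever touches the numerators, this failure is harmless here: strictness at the top index $(k_1,k_2)$ suffices, and I would be careful to state the conclusion as an equality of submodules of ${\rm gr}^{P^{(i_1)}}_{k_1}{\rm gr}^{P^{(i_2)}}_{k_2}E^p$. That interpretive subtlety, rather than any computation, is the real obstacle.
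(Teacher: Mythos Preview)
Your proof is correct and follows essentially the same approach as the paper: both unwind the description (\ref{ali:ppe}) of the bigraded piece, identify the two sides explicitly, and reduce the equality to the strictness condition $d(A^{(i_1i_2)}_{k_1k_2})={\rm Im}(d)\cap B^{(i_1i_2)}_{k_1k_2}$. The only difference is one of presentation: the paper phrases the conclusion as a natural morphism between the two sides that becomes an isomorphism under strictness, whereas you read both sides as submodules of ${\rm gr}^{P^{(i_1)}}_{k_1}{\rm gr}^{P^{(i_2)}}_{k_2}E^p$ and compare numerators over the common denominator $Q$; your explicit remark that ${\rm gr}\,{\rm gr}$ of a bifiltered submodule need not inject (the failure of modularity) is a useful clarification that the paper's terser proof leaves implicit.
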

\begin{proof}
The left hand side of (\ref{ali:pilpe}) is equal to 
\begin{align*} 
{\rm Im}(E^{{p-1(i_1i_2)}}_{k_1k_2}/
(E^{{p-1(i_1i_2)}}_{k_1-1,k_2}+ E^{p-1(i_1 i_2)}_{k_1,k_2-1})
\lo 
E^{{p(i_1i_2)}}_{k_1k_2}/
(E^{{p(i_1i_2)}}_{k_1-1,k_2}+ E^{p(i_1 i_2)}_{k_1,k_2-1})).
\end{align*}
The right hand side of (\ref{ali:pilpe}) is equal to 
\begin{align*} 
{\rm Im}(E^{p-1}\lo E^p)^{(i_1i_2)}_{k_1k_2}/
({\rm Im}(E^{p-1}\lo E^p)^{(i_1i_2)}_{k_1-1,k_2}+
{\rm Im}(E^{p-1}\lo E^p)^{(i_1i_2)}_{k_1,k_2-1}). 
\end{align*}
Hence we have a natural morphism from the left hand side of (\ref{ali:pilpe}) 
to the right hand side of (\ref{ali:pilpe}). 
By the assumption of the strictness of $d^{p-1}$, 
we see that this natural morphism is an isomorphism. 
\end{proof}

\begin{defi}\label{defi:nfm}
Assume that $n\leq 2$. 
We say that a filtered morphism 
\begin{align*} 
f \col (E^{\bul},\{E^{\bul(i)}\}_{i=1}^n) 
\lo (F^{\bul},\{F^{\bul(i)}\}_{i=1}^n)
\tag{2.11.1}\label{eqn:gefz}
\end{align*} 
in ${\rm CF}^n({\cal A})$ is an $n$-{\it filtered quasi-isomorphism} 
(or simply a {\it filtered quasi-isomorphism}) if 
the induced morphisms
\begin{align*} 
f\col  E\lo F
\tag{2.11.2}\label{eqn:gbaqz}
\end{align*} 
and   
\begin{align*} 
f\col  E^{\bul(i_1i_n)}_{k_1 k_n}
\lo F^{\bul(i_1i_n)}_{k_1 k_n}
\tag{2.11.3}\label{eqn:gbqz}
\end{align*}   
are quasi-isomorphisms for 
$1 \leq  \forall i_1 \leq  \forall i_n \leq n$ and 
$\forall k_1, \forall k_n \in {\mab Z}$. 
\end{defi}


\begin{rema}\label{rema:ba}
For the morphism (\ref{eqn:gefz}) we can define the $n$-filtered complex 
$({\rm MC}(f), \{{\rm MC}(f)^{(i)}\}_{i=1}^n)\in {\rm CF}^n({\cal A})$ in a natural way.  
It is obvious that $f$ is an $n$-filtered quasi-isomorphism 
if and only if $({\rm MC}(f), \{{\rm MC}(f)^{(i)}\}_{i=1}^n)$ is strictly exact. 
\end{rema}

\par
By the following proposition, we see that our definition above 
is equivalent to the definition 
in \cite[(1.3.6) (i), (ii)]{dh2} in the case $n=1, 2$ 
if the filtrations are biregular. 

\begin{prop}\label{prop:pq}
Assume that $n\leq 2$. 
Let 
$f \col (E^{\bul},\{P^{(i)}\}_{i=1}^n)\lo (F^{\bul},\{Q^{(i)}\}_{i=1}^n)$ 
be an $n$-filtered morphism in ${\rm CF}^n({\cal A})$. 
Assume that the filtrations $P^{(i)}$ and $Q^{(i)}$
$(1 \leq i \leq n)$ are biregular.
Then the following are equivalent$:$
\par 
$(1)$ The morphism $f$ is an $n$-filtered quasi-isomorphism. 
\par 
$(2)$ 
The morphism 
\begin{equation*} 
{\rm gr}(f) \col 
{\rm gr}^{P^{(1)}}_{k_1}{\rm gr}^{P^{(m)}}_{k_m}
E^{\bul} \lo 
{\rm gr}^{Q^{(1)}}_{k_1}{\rm gr}^{Q^{(m)}}_{k_m}F^{\bul} 
\tag{2.13.1}\label{eqn:dgrfis}
\end{equation*}
for any $k_1, k_m \in {\mab Z}$ and for $m\leq n$ 
is a quasi-isomorphism.  
\par 
Furthermore, if $n=2$, then $(1)$ 
is equivalent to the following$:$
\par 
$(3)$ 
The morphism 
\begin{equation*} 
{\rm gr}(f) \col 
{\rm gr}^{P^{(1)}}_{k_1}{\rm gr}^{P^{(2)}}_{k_2}
E^{\bul} \lo 
{\rm gr}^{Q^{(1)}}_{k_1}{\rm gr}^{Q^{(2)}}_{k_2}F^{\bul} 
\tag{2.13.2}\label{eqn:dgrfqis}
\end{equation*}
for any $k_1, k_2 \in {\mab Z}$
is a quasi-isomorphism.  
\end{prop}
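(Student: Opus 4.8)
The plan is to reduce the whole statement to a single bifiltered complex via the mapping cone. By Remark~\ref{rema:ba}, $f$ is an $n$-filtered quasi-isomorphism if and only if the mapping cone $(C^{\bul},\{C^{\bul(i)}\}_{i=1}^n):=({\rm MC}(f),\{{\rm MC}(f)^{(i)}\}_{i=1}^n)$ is strictly exact, i.e. (Definition~\ref{defi:nfdf}) $C^{\bul}$ and all intersections $C^{\bul(i_1i_2)}_{k_1k_2}$ are exact. Since the filtrations on ${\rm MC}(f)=E^{\bul}[1]\oplus F^{\bul}$ are $P^{(i)}_kE^{\bul}[1]\oplus Q^{(i)}_kF^{\bul}$, they are again biregular, and each graded complex of $C^{\bul}$ is the mapping cone of the corresponding ${\rm gr}(f)$; hence ${\rm gr}(f)$ in \ref{eqn:dgrfis} (resp. \ref{eqn:dgrfqis}) is a quasi-isomorphism precisely when the relevant graded complex of $C^{\bul}$ is exact. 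Thus the proposition becomes: for a complex $C^{\bul}$ with biregular $n$-pieces of filtrations ($n\leq 2$), strict exactness is equivalent to vanishing of the cohomology of the relevant graded complexes.

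The implication $(1)\Rightarrow(2)$ is immediate from Proposition~\ref{prop:nsq}: strict exactness gives exactness of $C^{\bul(i_1i_2)}_{l_1l_2}$ at all four indices, hence of ${\rm gr}^{P^{(i_1)}}_{k_1}{\rm gr}^{P^{(i_2)}}_{k_2}C^{\bul}$, and the single-filtration ($m=1$) case follows the same way. For the converse the essential tool, which I would isolate as a lemma, is the elementary fact that if $G$ is a \emph{biregular} filtration on a complex $K^{\bul}$ with ${\rm gr}^G_kK^{\bul}$ exact for every $k$, then every $G_kK^{\bul}$ (and hence $K^{\bul}$) is exact; this is proved by ascending induction on $k$ via the long exact cohomology sequence of $0\lo G_{k-1}K^{\bul}\lo G_kK^{\bul}\lo {\rm gr}^G_kK^{\bul}\lo 0$, the induction starting and terminating because $G$ is biregular. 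Biregularity is exactly what makes the converse valid, which is why Remark~\ref{rema:obstm} and Remark~\ref{rema:whs} do not contradict the statement.

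To prove $(2)\Rightarrow(1)$ I would first apply the lemma to $P^{(1)}$ on $C^{\bul}$: since ${\rm gr}^{P^{(1)}}_{k_1}C^{\bul}$ is exact, all $C^{\bul(1)}_{k_1}=P^{(1)}_{k_1}C^{\bul}$ and $C^{\bul}$ are exact. Next, on ${\rm gr}^{P^{(2)}}_{k_2}C^{\bul}$ the induced $P^{(1)}$-filtration is biregular with graded pieces ${\rm gr}^{P^{(1)}}_{k_1}{\rm gr}^{P^{(2)}}_{k_2}C^{\bul}$ — a short modular-lattice computation (second isomorphism theorem) identifies ${\rm gr}^{P^{(1)}}_{k_1}({\rm gr}^{P^{(2)}}_{k_2}C^{\bul})$ with the symmetric bigraded piece of \ref{ali:ppe} — so the lemma gives ${\rm gr}^{P^{(2)}}_{k_2}C^{\bul}$ exact, and a second application gives all $C^{\bul(2)}_{k_2}=P^{(2)}_{k_2}C^{\bul}$ exact. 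The remaining and most delicate point is exactness of the genuine intersection $C^{\bul(12)}_{k_1k_2}=P^{(1)}_{k_1}C^{\bul}\cap P^{(2)}_{k_2}C^{\bul}$: here I would fix $k_2$ and run the lemma for the $P^{(1)}$-filtration on the already-exact complex $P^{(2)}_{k_2}C^{\bul}$, using the identities ${\rm gr}^{P^{(1)}}_{k_1}(P^{(2)}_{k_2}C^{\bul})=P^{(2)}_{k_2}({\rm gr}^{P^{(1)}}_{k_1}C^{\bul})$, which is exact because ${\rm gr}^{P^{(1)}}_{k_1}C^{\bul}$ carries a biregular $P^{(2)}$-filtration with exact graded pieces. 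The main obstacle is precisely the correct bookkeeping of these lattice identities — manipulating $P^{(1)}_{k_1}\cap P^{(2)}_{k_2}$ against sums such as $C^{\bul(12)}_{k_1-1,k_2}+C^{\bul(12)}_{k_1,k_2-1}$ — rather than any homological difficulty.

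Finally, for $n=2$ the equivalence with (3): trivially (2) implies (3). Conversely, given only that ${\rm gr}^{P^{(1)}}_{k_1}{\rm gr}^{P^{(2)}}_{k_2}C^{\bul}$ is exact, the complex ${\rm gr}^{P^{(1)}}_{k_1}C^{\bul}$ carries a biregular $P^{(2)}$-filtration whose graded pieces ${\rm gr}^{P^{(2)}}_{k_2}{\rm gr}^{P^{(1)}}_{k_1}C^{\bul}$ coincide, by the symmetry of the bigraded piece of \ref{ali:ppe}, with the exact complexes in (3); the lemma then yields that ${\rm gr}^{P^{(1)}}_{k_1}C^{\bul}$ is exact, which is the missing $m=1$ part of (2). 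Hence (3) recovers (2), and all three conditions are equivalent.
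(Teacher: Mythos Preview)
Your proof is correct and, at its core, uses the same induction-on-filtration-index argument as the paper: both rely on biregularity plus the short exact sequences relating successive filtration steps to pass from exactness of graded pieces back to exactness of the intersections $C^{\bul(12)}_{k_1k_2}$. The difference is packaging: you first reduce to a single bifiltered complex via the mapping cone and then repeatedly invoke a clean abstract lemma (biregular filtration with exact gradeds $\Rightarrow$ exact subcomplexes), whereas the paper works directly with the morphism $f$, explicitly computing ${\rm gr}^{P^{(1)}}_{k_1+m}{\rm gr}^{P^{(2)}}_{k_2+l}E^{\bul}$ via \ref{ali:ppe} and \ref{ali:pipe} and climbing the double grid of indices by a hands-on double induction. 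Your formulation is somewhat cleaner and makes the logical structure more transparent; the paper's has the minor advantage that the relevant lattice identities are made fully explicit rather than delegated to a ``short modular-lattice computation''.
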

\begin{proof} 
Because (\ref{prop:pq})  is easy to prove in the case $n=1$, 
we prove (\ref{prop:pq}) only in the case $n=2$. 
First assume that $f$ is a bifiltered quasi-isomorphism. 
Then the morphism ${\rm gr}(f)$'s in (\ref{eqn:dgrfis}) and (\ref{eqn:dgrfqis})
are quasi-isomorphisms by 
(\ref{ali:ppe}) and (\ref{ali:pipe}). 
\par 
Because (2) implies (3), it suffices to prove that (3) implies (1). 
Assume that (\ref{eqn:dgrfqis}) is a quasi-isomorphism. 
Let $q$ be an integer. 
Let $\bul$ be $q$ or $q\pm 1$. 
Let $k_1, k_2, l_1, l_2$ be integers such that 
$E^{\bul(1)}_{k_1}=0$, $E^{\bul(1)}_{l_1}=E^{\bul}$, $E^{\bul(2)}_{k_2}=0$ 
and $E^{\bul(2)}_{l_2}=E^{\bul}$. 
Then 
$${\rm gr}^{P^{(1)}}_{k_1+1}{\rm gr}^{P^{(2)}}_{k_2+1}E^{\bul}=
E^{\bul(12)}_{k_1+1,k_2+1}$$
and  
$${\rm gr}^{P^{(1)}}_{k_1+m}{\rm gr}^{P^{(2)}}_{k_2+1}E^{\bul}=
E^{\bul(12)}_{k_1+m,k_2+1}/E^{\bul}_{k_1+m-1,k_2+1}\quad (m\in {\mab N}).$$ 
Using the induction on $k_1+m$, we see that the morphism 
$$E^{\bul(i_1i_2)}_{m_1,k_2+1}\lo F^{\bul(i_1i_2)}_{m_1,k_2+1}$$
is a quasi-isomorphism for any $m_1\in {\mab Z}$. 
Furthermore, 
$${\rm gr}^{P^{(1)}}_{k_1+1}{\rm gr}^{P^{(2)}}_{k_2+2}E^{\bul}=
E^{\bul(12)}_{k_1+1,k_2+2}/E^{\bul(12)}_{k_1+1,k_2+1}$$ 
and 
$${\rm gr}^{P^{(1)}}_{k_1+m}{\rm gr}^{P^{(2)}}_{k_2+2}E^{\bul}=
E^{\bul(12)}_{k_1+m,k_2+2}/
(E^{\bul(12)}_{k_1+m-1,k_2+2}+E^{\bul(12)}_{k_1+m,k_2+1}) \quad (m\in {\mab N}).$$ 
By using (\ref{ali:pipe}), the induction on $k_1+m$ tells us that the morphism 
$$E^{\bul(i_1i_2)}_{m_1,k_2+2}\lo F^{\bul(i_1i_2)}_{m_1,k_2+2}$$
is a quasi-isomorphism for any $m_1\in {\mab Z}$. 
More generally, 
\begin{align*} 
{\rm gr}^{P^{(i_1)}}_{k_1+m}{\rm gr}^{P^{(i_2)}}_{k_2+l}E^{\bul}&=
E^{\bul(i_1i_2)}_{k_1+m,k_2+l}/(E^{\bul(i_1i_2)}_{k_1+m-1,k_2+l}+
E^{\bul(i_1i_2)}_{k_1+m,k_2+l-1}). 
\end{align*} 
By using (\ref{ali:pipe}) again, the induction tells us that 
the morphism 
$$ E^{\bul(i_1i_2)}_{m_1m_2}\lo F^{\bul(i_1i_2)}_{m_1m_2}$$
is a quasi-isomorphism for any $m_1,m_2\in {\mab Z}$. 
Since the filtrations $P^{(1)}$ and $P^{(2)}$ (resp.~$Q^{(1)}$ and $Q^{(2)}$) on 
$E^{\bul}$ (resp.~$F^{\bul}$) for $\bul= q, q\pm 1$ are finite, 
the morphism (\ref{eqn:gbaqz})  is a quasi-isomorphism. 
\end{proof}

\par 
Assume that $n\leq 2$. 
Let us consider the set of morphisms 
$({\rm F}^n{\rm Qis})$ whose elements are 
the $n$-filtered quasi-isomorphisms  in 
${\rm KF}^n({\cal A})$.
Then it is easy to see that 
$({\rm F}^n{\rm Qis})$ forms a 
saturated multiplicative system 
which is compatible with the triangulation
in the sense of 
\cite[II SM1)$\sim$SM6) p.~112]{v}. 
Set  
${\rm D}^{\star}{\rm F}^n({\cal A})
:={\rm K}^{\star}{\rm F}^n({\cal A})_{({\rm F}^n{\rm Qis})}$ 
($\star=+$, $-$,  b, nothing).

\begin{defi} 
We call ${\rm D}^+{\rm F}^n({\cal A})$, ${\rm D}^-{\rm F}^n({\cal A})$ 
and ${\rm D}^{\rm b}{\rm F}^n({\cal A})$ the {\it derived category of bounded below 
complexes of ${\cal A}$-modules with $n$-pieces of filtrations}, 
the {\it derived category of bounded above 
complexes of ${\cal A}$-modules with $n$-pieces of filtrations} 
and the {\it derived category of bounded complexes of ${\cal A}$-modules 
with $n$-pieces of filtrations}, respectively. 
\end{defi}

\par
In the rest of this section, we give notations which will be necessary in later sections. 
\par 
Assume that $n\leq 2$. 
Let ${\rm K}^{\star}(\Gam({\cal T},{\cal A}))$ 
($\star=+$, $-$, b, nothing) be the category of 
complexes of $\Gam({\cal T},{\cal A})$-modules 
modulo homotopies  
with respect to $\star=+,-, {\rm b}, {\rm nothing}$.  
Let ${\rm D}^{\star}(\Gam({\cal T},{\cal A})):=
{\rm K}^{\star}(\Gam({\cal T},{\cal A}))_{({\rm Qis})}$ 
be its derived category. 
Let 
${\rm MF}^n(\Gam({\cal T},{\cal A}))$ and 
${\rm C}^{\star}{\rm F}^n(\Gam({\cal T},{\cal A}))$   
be the categories of 
$\Gam({\cal T},{\cal A})$-modules with $n$-pieces of filtrations 
and that of complexes of  
$\Gam({\cal T},{\cal A})$-modules with $n$-pieces of filtrations, respectively, 
with respect to $\star=+,-, {\rm b}, {\rm nothing}$.   
Let 
${\rm K}^{\star}{\rm F}^n(\Gam({\cal T},{\cal A}))$ 
be the category of complexes of  
$\Gam({\cal T},{\cal A})$-modules with $n$-pieces of filtrations
modulo $n$-filtered homotopies with respect to $\star=+,-, {\rm b}, {\rm nothing}$.  
By abuse of notation, let $({\rm F}^n{\rm Qis})$
be the set of morphisms whose elements are 
the $n$-filtered quasi-isomorphisms  in 
${\rm K}^{\star}{\rm F}^n(\Gam({\cal T},{\cal A}))$.
Let 
${\rm D}^{\star}{\rm F}^n(\Gam({\cal T},{\cal A}))$ 
be the derived 
category of ${\rm K}^{\star}{\rm F}^n
(\Gam({\cal T},{\cal A}))$ 
localized by the set $({\rm F}^n{\rm Qis})$ of $n$-filtered quasi-isomorphisms of 
complexes of $\Gam({\cal T},{\cal A})$-modules. 

\section{Strictly injective resolutions and $Rf_*$}\label{sec:sti} 
In this section we give the definitions of the specially injective resolution and 
the strictly injective resolution of a complex with $n$-pieces of filtrations, 
which are generalizations of the definitions in \cite{blec} and \cite{nh2}. 
We also define the right derived functor 
$Rf_*\col {\rm D}^+{\rm F}^n({\cal A})\lo {\rm D}^+{\rm F}^n({\cal A}')$ $(n=1,2)$
for a morphism $f\col ({\cal T},{\cal A})\lo ({\cal T}',{\cal A}')$ 
of ringed topoi.  
\par
The {\it special filtered module} in \cite{blec} (and \cite{nh2}) 
can be generalized in the following way for a general positive integer $n$. 
For ${\cal A}$-modules $F$, $F^{(1)}_{l_1},\ldots, F^{(n)}_{l_n}$ ($l_1,\ldots,l_n\in {\mab Z}$), 
we set 
$$\prod_{\cal A}(F,\{F^{(i)}\}_{i=1}^n):=
F\times  \us{l_1 \in {\mab Z}}{\prod}F^{(1)}_{l_1}  
\times \cdots \times \us{l_n \in {\mab Z}}{\prod}F^{(n)}_{l_n}$$ 
with $n$-pieces of filtrations  $\prod^{(i)}_{\cal A}$'s on $\prod_{\cal A}(F,\{F^{(i)}\}_{i=1}^n)$ 
defined by 
$$(\prod^{(i)}_{\cal A}(F,\{F^{(j)}\}_{j=1}^n))_k:= 
F\times  \prod_{m=1}^{i-1}\us{l_m \in {\mab Z}}{\prod}F^{(m)}_{l_m} 
\times 
\us{l_i \leq k}{\prod}F^{(i)}_{l_i} \times 
\prod_{m=i+1}^{n}\us{l_m \in {\mab Z}}{\prod}F^{(m)}_{l_m},$$
where $F^{(i)}:= \{F^{(i)}_{l_i}\}_{l_i\in {\mab Z}}$ $(i=1,\ldots,n)$. 
Then we have an ${\cal A}$-module with $n$-pieces of filtrations: 
$$
(\prod_{\cal A}(F,\{F^{(i)}\}_{i=1}^n)
,\{\prod^{(i)}_{\cal A}\}_{i=1}^n).$$
By abuse of notation, we denote this filtered module 
simply by $\prod_{\cal A}(F,\{F^{(i)}\}_{i=1}^n)$. 
We call $\prod_{\cal A}(F,\{F^{(i)}\}_{i=1}^n)$ the {\it special $n$-filtered module} of 
$F$, $F^{(1)}_{l_1},\ldots, F^{(n)}_{l_n}$.

The following formula is a generalization of the formula 
in \cite{blec} (and \cite[(1.1.0.2)]{nh2}):

\begin{prop}\label{prop:mor}
The following formula holds$:$ 
\begin{align*}
{\rm Hom}_{{\rm MF}^n({\cal A})}((E,\{E^{(i)}\}_{i=1}^n), 
\prod_{\cal A}(F,\{F^{(i)}\}_{i=1}^n))= 
& {\rm Hom}_{\cal A}(E,F)\times 
\tag{3.1.1}\label{eqn:imspm}
\\ 
{} & {\prod}_{i=1}^n
{\prod}_{k\in {\mab Z}}{\rm Hom}_{\cal A}
(E/E^{(i)}_{k-1}, F^{(i)}_k). 
\end{align*} 
\end{prop}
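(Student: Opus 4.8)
The plan is to unwind the universal property of the product that defines $\prod_{\cal A}(F,\{F^{(i)}\}_{i=1}^n)$ and then translate each of the $n$ filtration-compatibility conditions into a vanishing condition on a single family of components. First I would observe that, ignoring filtrations, a homomorphism $\vphi\col E\lo \prod_{\cal A}(F,\{F^{(i)}\}_{i=1}^n)$ of ${\cal A}$-modules is, by the universal property of the direct product, the same datum as its components: one component $\vphi_F\col E\lo F$, together with a component $\vphi^{(i)}_{l_i}\col E\lo F^{(i)}_{l_i}$ for every $i$ with $1\leq i\leq n$ and every $l_i\in{\mab Z}$. This already accounts for the factor ${\rm Hom}_{\cal A}(E,F)$ and produces an a priori larger product $\prod_{i=1}^n\prod_{l_i\in{\mab Z}}{\rm Hom}_{\cal A}(E,F^{(i)}_{l_i})$; the whole content of the formula is that the $n$ filtration conditions cut this down to the stated product.

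The key step is to analyze, for a fixed $i$, the requirement $\vphi(E^{(i)}_k)\sus (\prod^{(i)}_{\cal A})_k$ for all $k\in{\mab Z}$. By the definition of $\prod^{(i)}_{\cal A}$, the factor $F$ and every factor $F^{(m)}_{l_m}$ with $m\not=i$ lie in $(\prod^{(i)}_{\cal A})_k$ for all $k$, so the $i$-th filtration imposes no constraint whatsoever on $\vphi_F$ or on the components $\vphi^{(m)}_{l_m}$ with $m\not=i$; it constrains only the $i$-th family. There it requires precisely that the factor $F^{(i)}_{l_i}$ be dropped whenever $l_i>k$, i.e.~that $\vphi^{(i)}_{l_i}(E^{(i)}_k)=0$ for all $k<l_i$. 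Since the $E^{(i)}_k$ increase with $k$, the strongest of these conditions occurs at $k=l_i-1$, so the $i$-th filtration condition is equivalent to $\vphi^{(i)}_{l_i}(E^{(i)}_{l_i-1})=0$ for every $l_i$, that is, to the assertion that $\vphi^{(i)}_{l_i}$ factors through $E/E^{(i)}_{l_i-1}$. This identifies the admissible $i$-th components with $\prod_{l_i\in{\mab Z}}{\rm Hom}_{\cal A}(E/E^{(i)}_{l_i-1},F^{(i)}_{l_i})$.

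Finally I would assemble these observations. Because the $n$ conditions are mutually independent — the $i$-th constrains only the $i$-th family of components — a homomorphism $\vphi$ is a morphism in ${\rm MF}^n({\cal A})$ if and only if $\vphi_F$ is arbitrary and each $\vphi^{(i)}_{l_i}$ factors through $E/E^{(i)}_{l_i-1}$. Relabelling $l_i$ as $k$ yields exactly the right-hand side of (\ref{eqn:imspm}), and one checks that the resulting bijection is additive and natural in the evident way. I do not expect a genuine obstacle here; the proof is essentially bookkeeping, and the only point demanding care is the reindexing inherent in passing from the truncation $l_i\leq k$ in $\prod^{(i)}_{\cal A}$ to the quotient by $E^{(i)}_{k-1}$, which is precisely the mechanism already present in the one-filtration case of \cite{blec} and \cite{nh2}.
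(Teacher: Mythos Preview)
Your proof is correct and follows essentially the same approach as the paper's: decompose a map into the product into its components via the universal property, and then observe that the $i$-th filtration condition constrains only the $i$-th family of components, forcing each $\vphi^{(i)}_{l_i}$ to factor through $E/E^{(i)}_{l_i-1}$. Your presentation is somewhat more systematic in spelling out why the constraint reduces to vanishing on $E^{(i)}_{l_i-1}$, but the underlying argument is the same bookkeeping as in the paper.
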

\begin{proof} 
Assume that  a morphism 
$f\col (E,\{E^{(i)}\}_{i=1}^n)\lo \prod_{\cal A}(F,\{F^{(i)}\}_{i=1}^n)$ 
in ${\rm MF}^n({\cal A})$ is given.   
Then we have morphisms $E\lo F$ and $E/E^{(i)}_{k-1}\lo F^{(i)}_k$ of 
${\cal A}$-modules by using the projections 
$\prod_{\cal A}(F,\{F^{(i)}\}_{i=1}^n)\lo F$ and 
$\prod_{\cal A}(F,\{F^{(i)}\}_{i=1}^n)\lo F^{(i)}_k$, respectively. 
\par 
Assume that morphisms 
$E\lo F$ and $E/E^{(i)}_{k-1}\lo F^{(i)}_k$ for any $1\leq i\leq n$ and $k\in {\mab Z}$ 
are given. 
Then we have a morphism 
$E\os{{\rm proj}.}{\lo}  E/E^{(i)}_{k-1}\lo F^{(i)}_k$. 
Hence we have a morphism 
$E\lo F\times  \us{l_1 \in {\mab Z}}{\prod}F^{(1)}_{l_1}   
\times \cdots \times \us{l_n \in {\mab Z}}{\prod}F^{(n)}_{l_n}$. 
This morphism induces a filtered morphism 
$(E,\{E^{(i)}\}_{i=1}^n)\lo \prod_{\cal A}(F,\{F^{(j)}\}_{j=1}^n)$ 
since the composite morphism $E/E^{(i)}_{k}\lo E/E^{(i)}_{l-1}\lo F^{(i)}_l$ 
for $l>k$ is a zero morphism.  
\end{proof}

\par 
For two objects $(E,\{E^{(i)}\}_{i=1}^n)$, 
$(F,\{F^{(i)}\}_{i=1}^n))\in {\rm MF}^n({\cal A})$, 
we define
\begin{align*} 
{\rm Hom}_{\cal A}((E,\{E^{(i)}\}_{i=1}^n),(F,\{F^{(i)}\}_{i=1}^n))
:=&
({\rm Hom}_{\cal A}(E,F), \{{\rm Hom}^{(i)}_{\cal A}(E,F)\}_{i=1}^n)
\tag{3.1.2}\label{ali:efa}\\
& \in {\rm MF}^n
(\Gam({\cal T},{\cal A}))
\end{align*} 
in a well-known way:  
\begin{align*}
{\rm Hom}^{(i)}_{\cal A}(E,F)_k:=
\{f \in {\rm Hom}_{\cal A}(E,F)~\vert~
f(E^{(i)}_l) \subset F^{(i)}_{l+k}~(\forall l \in {\mab Z})\}.
\tag{3.1.3}\label{ali:eifa}
\end{align*} 
For $1\leq i_1 \leq \cdots \leq i_n\leq n$ and $k_1, \ldots, k_n\in {\mab Z}$, 
it is obvious that the following formula holds: 
\begin{align*} 
{\rm Hom}^{(i_1)}_{\cal A}(E,F)_{k_1}  \cap \cdots \cap 
{\rm Hom}^{(i_n)}_{\cal A}(E,F)_{k_n}=
{\rm Hom}_{{\rm MF}^n({\cal A})}((E,\{E^{(i)}\}_{i=1}^n,
(F,\{F^{(i)}\}_{i=1}^n)\langle k_1,\ldots,k_n\rangle). 
\tag{3.1.4}\label{ali:ikpm}
\end{align*} 
\par
We obtain a similar object 
${\cal H}{\it om}_{\cal A}((E,\{E^{(i)}\}_{i=1}^n),
(F,\{F^{(i)}\}_{i=1}^n))\in 
{\rm MF}^n({\cal A})$
in a similar way.
\par
In the following we assume that $n\leq 2$. 
The following definition is a nontrivial generalization of 
the definition due to Berthelot (\cite{blec}):

\begin{defi}[{\bf \cite{blec} for the case $n=1$}]\label{defi:stinj}
Assume that $n\leq 2$. 
We say that  an object 
$(J,\{J^{(i)}\}_{i=1}^n)$ of ${\rm MF}^n({\cal A})$ is 
{\it strictly injective} if it satisfies the 
following two conditions:
\par 
(1) $J$ and $J^{(i_1i_n)}_{k_1k_n}$
are injective ${\cal A}$-modules 
for $1\leq \forall i_1 \leq  \forall i_n \leq n, 
\forall k_1, \forall k_n \in {\mab Z}$.  
\par 
(2) For a strictly injective morphism 
$$(E,\{E^{(i)}\}_{i=1}^n) 
\os{\sus}{\lo} (F,\{F^{(i)}\}_{i=1}^n)$$
in  ${\rm MF}^n({\cal A})$, 
the induced morphism 
$${\rm Hom}_{\cal A}((F,\{F^{(i)}\}_{i=1}^n),
(J,\{J^{(i)}\}_{i=1}^n)) \lo 
{\rm Hom}_{\cal A}((E,\{E^{(i)}\}_{i=1}^n),
(J,\{J^{(i)}\}_{i=1}^n))$$ 
is an epimorphism.
\end{defi}
\par
In the following we assume that $n\leq 2$. 
Set 
\begin{align*}
{\cal I}^n_{\rm flas}({\cal A})
:=\{(J,\{J^{(i)}\}_{i=1}^n)\in {\rm MF}^n({\cal A})~\vert & ~
J \text{ and } J^{(i_1i_n)}_{k_1k_n} \text{ are flasque } 
{\cal A}{\textrm -}\text{modules} \tag{3.2.1}\\  
{} & (1\leq 
\forall i_1 \leq \forall i_n \leq n, \forall k_1, \forall k_n \in {\mab Z})\}, 
\end{align*}

\begin{rema}
One need not assume that $J$ is an injective ${\cal A}$-module 
in the definition (\ref{defi:stinj}) 
because one can prove that the property (2) in (\ref{defi:stinj}) implies that 
$J$ is an injective ${\cal A}$-module. 
Indeed, assume that 
$(J,\{J^{(i)}\}_{i=1}^n)$ enjoys the property (2). 
Let $J\os{\subset}{\lo} K$ be an injective morphism into an injective ${\cal A}$-module. 
Consider the identity morphism ${\rm id} \col (J,\{J^{(i)}\}_{i=1}^n)\lo (J,\{J^{(i)}\}_{i=1}^n)$. 
Endow $K$ with the filtration $K^{(i)}:={\rm Im}(J^{(i)}\lo K)$ $(i=1,2)$. 
The induced morphism 
$(J,\{J^{(i)}\}_{i=1}^n) \lo (K,\{K^{(i)}\}_{i=1}^n)$ is strict by (\ref{prop:usef}). 
Hence there exists a projection 
$p\col (K,\{K^{(i)}\}_{i=1}^n\lo (J,\{J^{(i)}\}_{i=1}^n)$ of the strict injective morphism 
$(J,\{J^{(i)}\}_{i=1}^n) \lo (K,\{K^{(i)}\}_{i=1}^n)$. 
Set $L:={\rm Ker}(p\col K\lo J)$.  
Now it is obvious that $K=J\oplus L$  and it is easy to see that 
$J$ is an injective ${\cal A}$-module. 
\end{rema}

\begin{align*}
{\cal I}^n_{\rm inj}({\cal A})
:=\{(J,\{J^{(i)}\}_{i=1}^n)\in {\rm MF}^n({\cal A})~\vert & ~
J \text{ and } J^{(i_1i_n)}_{k_1k_n} \text{ are injective } 
{\cal A}{\textrm -}\text{modules} \tag{3.3.1}\\  
{} & (\forall k_1, \forall k_n \in {\mab Z},~1\leq \forall 
i_1 \leq \forall i_n \leq n)\}, 
\end{align*}

\begin{equation*}
{\cal I}^n_{\rm stinj}({\cal A}):=\{(J,\{J^{(i)}\}_{i=1}^n)\in {\rm MF}^n({\cal A})~\vert~
\text{$(J,\{J^{(i)}\}_{i=1}^n)$ 
is a strictly injective  
${\cal A}${\textrm -}module$\}$}, 
\tag{3.3.2}
\end{equation*}

\begin{equation*}
{\cal I}^n_{\rm spinj}({\cal A})
:=\{ \prod_{\cal A}(I,\{I^{(i)}\}_{i=1}^n)~\vert~
\text{$I$  and $I^{(i)}_k$ are injective 
${\cal A}${\textrm -}modules}~ 
(\forall k\in {\mab Z},1 \leq \forall i \leq n)\}. 
\tag{3.3.3}
\end{equation*}

\parno
Then ${\cal I}^n_{\rm stinj}({\cal A})
\subset {\cal I}^n_{\rm inj}({\cal A})
\subset {\cal I}^n_{\rm flas}({\cal A})$.
The following is a generalization of  a result of Berthelot 
\cite{blec} (and \cite[(1.1.2)]{nh2}):

\begin{prop}[{\bf \cite{blec} for the case $n=1$}]\label{prop:spijstij} 
Assume that $\prod_{\cal A}(I,\{I^{(i)}\}_{i=1}^n) 
\in {\cal I}^n_{\rm spinj}({\cal A})$.
Then $\prod_{\cal A}(I,\{I^{(i)}\}_{i=1}^n)\in 
{\cal I}^n_{\rm stinj}({\cal A})$.
\end{prop}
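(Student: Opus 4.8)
The plan is to verify the two conditions of the definition of strict injectivity (\ref{defi:stinj}) for $J:=\prod_{\cal A}(I,\{I^{(i)}\}_{i=1}^n)$, with the adjunction-type formula (\ref{eqn:imspm}) of (\ref{prop:mor}) serving as the main engine. Condition $(1)$ is immediate: by the very construction of $\prod_{\cal A}$, the underlying module $J$ and each filtration-intersection $J^{(i_1i_n)}_{k_1k_n}$ is a product of copies of $I$ and of the modules $I^{(i)}_l$ (keeping, in the $i$-th block, only those factors whose index does not exceed the relevant $k$), all of which are injective ${\cal A}$-modules by hypothesis. Since an arbitrary product of injective ${\cal A}$-modules is injective, condition $(1)$ holds.

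The substance is condition $(2)$. I would fix a strict injective morphism $u\col (E,\{E^{(i)}\}_{i=1}^n)\os{\sus}{\lo}(F,\{F^{(i)}\}_{i=1}^n)$ and show that the induced map of filtered $\Gam({\cal T},{\cal A})$-modules is an epimorphism, reading this as surjectivity on every filtration-intersection piece. First I would reduce to a single such piece: by (\ref{ali:ikpm}) the piece indexed by $(i_1\cdots i_n)$ and $(k_1,\ldots,k_n)$ of ${\rm Hom}_{\cal A}((F,\{F^{(i)}\}),(J,\{J^{(i)}\}))$ is canonically ${\rm Hom}_{{\rm MF}^n({\cal A})}((F,\{F^{(i)}\}),J\langle k_1,\ldots,k_n\rangle)$, and likewise for $E$. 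The key observation is that the shift $J\langle k_1,\ldots,k_n\rangle$ is again a special $n$-filtered module $\prod_{\cal A}(I,\{I'^{(i)}\}_{i=1}^n)$ with $I'^{(i)}_l=I^{(i)}_{l+k_i}$, whose components are still injective. Hence (\ref{eqn:imspm}) applies verbatim and exhibits the map on this piece as the product of the restriction maps
$${\rm Hom}_{\cal A}(F,I)\lo {\rm Hom}_{\cal A}(E,I),\qquad
{\rm Hom}_{\cal A}(F/F^{(i)}_{k-1},I^{(i)}_{k+k_i})\lo {\rm Hom}_{\cal A}(E/E^{(i)}_{k-1},I^{(i)}_{k+k_i}),$$
induced by $u$ and by the maps $E/E^{(i)}_{k-1}\lo F/F^{(i)}_{k-1}$ (using naturality of (\ref{eqn:imspm}) in the source).

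It then remains to see that each restriction map is surjective. For the first this is the injectivity of $I$ together with injectivity of $u$. For the others I would use that $u$ is strict: by the remark following (\ref{defi:fef}) the underlying filtered morphism $u^{(i)}\col (E,E^{(i)})\lo (F,F^{(i)})$ is strict, so ${\rm Im}(u)\cap F^{(i)}_{k-1}=u(E^{(i)}_{k-1})$; combined with injectivity of $u$ this forces $E/E^{(i)}_{k-1}\lo F/F^{(i)}_{k-1}$ to be injective. Injectivity of each $I^{(i)}_{k+k_i}$ then yields surjectivity of the corresponding restriction map, and a product of epimorphisms is an epimorphism; this gives surjectivity on the chosen piece, and since the piece was arbitrary, condition $(2)$ follows.

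I expect the main obstacle to be conceptual rather than computational: recognizing that the correct reading of ``epimorphism'' in condition $(2)$ is surjectivity at the level of each filtration-intersection piece — the underlying-module statement being merely injectivity of $J$ and involving no strictness of $u$ whatsoever — and then pinning down that the shift of a special injective module is once more a special injective module, so that (\ref{eqn:imspm}) can be reapplied after the reduction (\ref{ali:ikpm}). Once this bookkeeping is in place, the strictness of $u$ enters precisely (and only) to make the quotient maps $E/E^{(i)}_{k-1}\lo F/F^{(i)}_{k-1}$ injective, which is exactly what the injectivity of the $I^{(i)}_l$ requires as input.
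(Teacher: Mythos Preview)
Your proof is correct and follows essentially the same route as the paper's: verify condition~(1) by noting that each $J^{(i_1i_n)}_{k_1k_n}$ is a product of the injectives $I$ and $I^{(i)}_l$, and for condition~(2) reduce via (\ref{eqn:imspm}) to the surjectivity of the factorwise restriction maps, which follows from the injectivity of $I$ and the $I^{(i)}_l$ together with the injectivity of $E/E^{(i)}_{k-1}\to F/F^{(i)}_{k-1}$ forced by the strictness of $u$. The paper's argument is terser and only writes out the ${\rm Hom}_{{\rm MF}^n}$ piece (the $(0,\ldots,0)$ filtration level), leaving the shift to general $(k_1,\ldots,k_n)$ implicit; your explicit use of (\ref{ali:ikpm}) and the observation that $J\langle k_1,\ldots,k_n\rangle$ is again specially injective makes this step visible, but it is the same mechanism.
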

\begin{proof}
Note that, for $1\leq i_1< i_m\leq n$ $(1\leq m\leq n)$ 
and $k_1, \ldots, k_m\in {\mab Z}$,  
$$((\prod_{\cal A}(I,\{I^{(i)}\}_{i=1}^n)))^{(i_1i_m)}_{k_1k_m} 
=I\times  \us{l_1\leq k_1, l_m\leq k_m}{\prod}
(I^{(i_1)}_{l_1}  \times I^{(i_m)}_{l_m})
\times
\us{j \not\in \{i_1, i_m\}}{\prod}\us{k\in {\mab Z}}
{\prod}I^{(j)}_k.$$ 
Hence $(\prod_{\cal A}(I,\{I^{(i)}\}_{i=1}^n)))^{(i_1i_m)}_{k_1k_m}$ is 
an injective ${\cal A}$-module. 
\par 
Let $(E,\{E^{(i)}\}_{i=1}^n) 
\os{\sus}{\lo} (F,\{F^{(i)}\}_{i=1}^n)$ be a strictly injective morphism. 
Then the induced morphism  
$E/E^{(i)}_{k-1}\lo F/F^{(i)}_{k-1}$ is injective. 
Now (\ref{prop:spijstij}) is obvious by (\ref{eqn:imspm}) 
since 
$I$ and $I^{(i)}_k$ are injective. 
\end{proof}

\begin{defi}
We say that an ${\cal A}$-module 
$(J,\{J^{(i)}\}_{i=1}^n)\in {\rm MF}^n({\cal A})$ 
with $n$-pieces of filtrations is 
$n$-{\it filteredly flasque}, $n$-{\it filteredly injective} 
and $n$-{\it specially injective} 
if $(J,\{J^{(i)}\}_{i=1}^n)\in 
{\cal I}^n_{\rm flas}({\cal A})$, 
$\in {\cal I}^n_{\rm inj}({\cal A})$ 
and
$\in {\cal I}^n_{\rm spinj}({\cal A})$, 
respectively.
\end{defi}

\parno
The category ${\rm MF}^n({\cal A})$ has 
enough special injectives in the following sense
(the following is a generalization of a result in \cite{blec}
(and \cite[(1.1.4)]{nh2})):

\begin{prop}[{\bf \cite{blec} for the case $n=1$}]\label{prop:enspij}
For an object  
$(E,\{E^{(i)}\}_{i=1}^n)\in {\rm MF}^n({\cal A})$, 
there exists a strictly injective morphism 
$(E,\{E^{(i)}\}_{i=1}^n) 
\os{\sus}{\lo} \prod_{\cal A}(I,\{I^{(i)}\}_{i=1}^n)$ 
with 
$\prod_{\cal A}(I,\{I^{(i)}\}_{i=1}^n)
\in {\cal I}^n_{\rm spinj}({\cal A})$.
\end{prop}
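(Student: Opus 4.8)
The plan is to imitate Berthelot's construction in the case $n=1$ (which we may assume) and to reduce the verification of strictness to that case by means of (\ref{prop:usef}), thereby avoiding the complicated multi-index intersections that the definition of strictness would otherwise involve. First I would use that the category of ${\cal A}$-modules in ${\cal T}$ has enough injectives. Choose a monomorphism $\io \col E \os{\sus}{\lo} I$ into an injective ${\cal A}$-module $I$, and for each $1\leq i\leq n$ and each $k\in {\mab Z}$ choose a monomorphism $\io^{(i)}_k \col E/E^{(i)}_{k-1}\os{\sus}{\lo} I^{(i)}_k$ into an injective ${\cal A}$-module $I^{(i)}_k$. By the isomorphism (\ref{eqn:imspm}) of (\ref{prop:mor}), the datum $(\io,\{\io^{(i)}_k\}_{i,k})$ is precisely the datum of a morphism
$$f\col (E,\{E^{(i)}\}_{i=1}^n)\lo \prod_{\cal A}(I,\{I^{(i)}\}_{i=1}^n)$$
in ${\rm MF}^n({\cal A})$, and $\prod_{\cal A}(I,\{I^{(i)}\}_{i=1}^n)\in {\cal I}^n_{\rm spinj}({\cal A})$ by construction since $I$ and the $I^{(i)}_k$ are injective. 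It then remains to check that $f$ is a strict injective morphism.

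Next I would observe that $f$ is injective on underlying ${\cal A}$-modules, because the composite of $f$ with the projection $\prod_{\cal A}(I,\{I^{(i)}\}_{i=1}^n)\lo I$ is $\io$, which is a monomorphism. Since $f$ is injective, (\ref{prop:usef}) reduces the strictness of $f$ to the strictness of each single-filtered morphism $f^{(i)}\col (E,E^{(i)})\lo (\prod_{\cal A}(I,\{I^{(i)}\}_{i=1}^n),\prod^{(i)}_{\cal A})$ for $1\leq i\leq n$. This is exactly where (\ref{prop:usef}) does the essential work: it lets us bypass the intersection terms $(\prod_{\cal A}(\cdots))^{(i_1i_n)}_{k_1k_n}$ and treat each filtration separately.

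Finally, for the single-filtered strictness I would compute $f^{-1}((\prod^{(i)}_{\cal A})_k)$. Unwinding the proof of (\ref{prop:mor}), the $I^{(i)}_l$-component of $f(x)$ equals $\io^{(i)}_l(x\bmod E^{(i)}_{l-1})$, and an element of the special module lies in $(\prod^{(i)}_{\cal A})_k$ exactly when all its $I^{(i)}_l$-components with $l>k$ vanish. Because each $\io^{(i)}_l$ is injective, $f(x)\in (\prod^{(i)}_{\cal A})_k$ holds if and only if $x\in E^{(i)}_{l-1}$ for all $l>k$, i.e.\ (taking $l=k+1$, the remaining conditions being automatic) if and only if $x\in E^{(i)}_k$. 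Hence $f^{-1}((\prod^{(i)}_{\cal A})_k)=E^{(i)}_k$, and since $f$ is injective this yields ${\rm Im}(f)\cap (\prod^{(i)}_{\cal A})_k=f(E^{(i)}_k)$, the strictness of $f^{(i)}$. By (\ref{prop:usef}), $f$ is strict, hence a strict injective morphism, which finishes the proof. The only real obstacle is the bookkeeping of the filtration on the special module; once the decoupled product structure of $\prod_{\cal A}$ is combined with (\ref{prop:usef}), every verification factors through the classical $n=1$ computation, so no genuinely new difficulty arises beyond that reduction.
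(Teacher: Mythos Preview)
Your proof is correct and follows essentially the same approach as the paper: choose injective embeddings $E\hookrightarrow I$ and $E/E^{(i)}_{k-1}\hookrightarrow I^{(i)}_k$, assemble them via (\ref{eqn:imspm}) into a morphism into the special module, observe injectivity from the $I$-component, reduce strictness to the single-filtered case by (\ref{prop:usef}), and verify the latter by the same kernel computation (the paper phrases it as ${\rm Im}(E\to J)\cap J^{(i)}_k$ being the kernel of $E\to \prod_{l>k}I^{(i)}_l$, which is your $f^{-1}((\prod^{(i)}_{\cal A})_k)=E^{(i)}_k$).
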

\begin{proof}
Though the following proof is similar to that of \cite[(1.1.4)]{nh2}, 
we give the complete proof because we use (\ref{prop:usef}) 
and we use the following proof in the proof of (\ref{prop:exsir}) below. 
\par 
Let $E \os{\sus}{\lo} I$ and 
$E/E^{(i)}_{l-1} \os{\sus}{\lo} I^{(i)}_l$ 
be  injective morphisms into injective ${\cal A}$-modules.
Set $J :=I\times {\prod}_{i=1}^n\us{l \in{\mab Z}}{\prod}I^{(i)}_{l}$ and 
$J^{(i)}_k :=I\times 
\prod_{m=1}^{i-1}\us{l_m \in {\mab Z}}{\prod}I^{(m)}_{l_m} 
\times \us{l_i \leq k}{\prod}I^{(i)}_{l_i} \times 
\prod_{m=i+1}^{n}\us{l_m \in {\mab Z}}{\prod}I^{(m)}_{l_m}$. 
There are natural injective morphisms
$J^{(i)}_k \os{\subset}{\lo} J^{(i)}_{k+1}$ and $J^{(i)}_k 
\os{\subset}{\lo} J$. Since $I$ and $I^{(i)}_k$ 
are injective ${\cal A}$-modules, 
$(J, \{J^{(i)}\}_{i=1}^n)$ is an object of 
${\cal I}^{n}_{\rm spinj}({\cal A})$.
\par
Two morphisms $E \os{\sus}{\lo} I$ and 
$E\os{\rm proj.}{\lo}E/E^{(i)}_{l-1} \os{\sus}{\lo} I^{(i)}_l$ 
induce an injective morphism
$E \lo J$. Furthermore, two composite morphisms 
$E^{(i)}_k \os{\subset}{\lo}E\os{\sus}{\lo} I$ and 
$E^{(i)}_k\os{\subset}{\lo}E \os{{\rm proj}.}{\lo}
E/E^{(i)}_{l-1} \os{\sus}{\lo} I^{(i)}_l$  induce an injective morphism
$E^{(i)}_k \lo J^{(i)}_k$. 
\par
It remains to prove that
the morphism $(E,\{E^{(i)}\}_{i=1}^n) \lo 
(J,\{J^{(i)}\}_{i=1}^n)$ is strict. 
By (\ref{prop:usef}), it suffices to prove that 
the morphism $(E,E^{(i)}) \lo (J,J^{(i)})$ is strict.
Set $N^{(i)}_k:={\rm Im}(E \lo J)\cap J^{(i)}_k$. 
Then  $N^{(i)}_k$ is isomorphic to 
the kernel of the following composite morphism 
$$E \lo J \os{\rm proj.}{\lo} \us{l> k}{\prod}I^{(i)}_l.$$
This kernel is nothing but $E^{(i)}_k$ 
by the definition of $I^{(i)}_l$ $(l >k)$. 
Hence the morphism $(E,E^{(i)}) \lo (J,J^{(i)})$ is strict. 
\end{proof}

\begin{defi}
Assume that $n\leq 2$. Let $(E^{\bul},\{E^{\bul(i)}\}_{i=1}^n)$ be an object of 
${\rm K}^+{\rm F}^n({\cal A})$.  
\par 
$(1)$ {\bf (\cite{blec} for the case $n=1$)} 
We say that an object  
$(J^{\bul},\{J^{\bul(i)}\}_{i=1}^n)\in 
{\rm K}^+{\rm F}^n({\cal A})$ 
with an $n$-filtered morphism 
$(E^{\bul},\{E^{\bul(i)}\}_{i=1}^n) \lo 
(J^{\bul},\{J^{\bul(i)}\}_{i=1}^n)$
is a {\it strictly injective resolution} of  
$(E^{\bul},\{E^{\bul(i)}\}_{i=1}^n)$ if 
$(J^{q},\{J^{q(i)}\}_{i=1}^n)\in 
{\cal I}^n_{\rm stinj}({\cal A})$ 
for any $q\in {\mab Z}$  
and if the morphism 
$(E^{\bul},\{E^{\bul(i)}\}_{i=1}^n) 
{\lo} (J^{\bul},\{J^{\bul(i)}\}_{i=1}^n)$ 
is an $n$-filtered quasi-isomorphism which 
induces a strictly injective morphism 
$(E^q,\{E^{q(i)}\}_{i=1}^n) 
{\lo} (J^q,\{J^{q(i)}\}_{i=1}^n)$
for any $q \in {\mab Z}$. 
\par
$(2)$ We say that an object  
$(J^{\bul},\{J^{\bul(i)}\}_{i=1}^n)\in 
{\rm K}^{+}{\rm F}^n({\cal A})$ 
with an $n$-filtered morphism 
$(E^{\bul},\{E^{\bul(i)}\}_{i=1}^n) \lo 
(J^{\bul},\{J^{\bul(i)}\}_{i=1}^n)$ 
is an $n$-{\it filtered flasque resolution}, 
an $n$-{\it filtered injective resolution}
and an $n$-{\it specially injective resolution} 
of  
$(E^{\bul},\{E^{\bul(i)}\}_{i=1}^n)$ 
if $(J^{q},\{J^{q(i)}\}_{i=1}^n)
\in {\cal I}^n_{\rm flas}({\cal A})$, 
$\in {\cal I}^n_{\rm inj}({\cal A})$ and 
$\in {\cal I}^n_{\rm spinj}({\cal A})$, respectively, 
for any $q\in {\mab Z}$ 
and if the morphism 
$(E^{\bul},\{E^{\bul(i)}\}_{i=1}^n) 
{\lo} (J^{\bul},\{J^{\bul(i)}\}_{i=1}^n)$ 
is an $n$-filtered quasi-isomorphism which induces 
a strictly injective morphism 
$(E^q,\{E^{q(i)}\}_{i=1}^n) 
{\lo} (J^q,\{J^{q(i)}\}_{i=1}^n)$
for any $q \in {\mab Z}$. 
\end{defi}

\begin{rema}
Assume that $n\leq 2$. 
Let $({\cal T}, {\cal A})$ be 
a ringed topos with enough points.
Let $(F^{\bul},\{F^{\bul(i)}\}_{i=1}^n)$ be 
an object of 
${\rm C}^+{\rm F}^n({\cal A})$. 
For an integer $p$, let 
$(I^{pq}, I^{pq(i)}_k,d^{pq})_{q\in {\mab Z}_{\geq 0}}$
be the Godement resolution of $(F^p,F^{p(i)}_k)$. 
Then  the sequence 
$$0\lo (F^{p},F^{p(i)}_k) \lo 
(I^{p0(i)},I^{p0(i)}_k)\os{(-1)^pd^{p0}}{\lo} 
(I^{p1(i)},I^{p1(i)}_k) \os{(-1)^pd^{p1}}{\lo} 
\cdots \quad (p,k\in {\mab Z})$$ 
gives an $n$-filtered flasque resolution 
$s(I^{\bul \bul},\{I^{\bul \bul(i)}_k\}_{i=1}^n)$ of 
$(F^{\bul},\{F^{\bul(i)}\}_{i=1}^n)$ because 
taking a point of ${\cal T}$ and the direct image of abelian sheaves by 
a morphism of ringed topoi are compatible 
with the finite intersection of ${\cal A}$-modules. 
\end{rema}

The example in (\ref{rema:obstm}) and the remark (\ref{rema:ba}) tell us that, 
even if $f\col  (E^{\bul},E^{\bul(i)})\lo (F^{\bul},F^{\bul(i)})$ 
is a filtered quasi-isomorphism for $1\leq \forall i\leq n$,  
$f \col (E^{\bul},\{E^{\bul(i)}\}_{i=1}^n) \lo (F^{\bul},\{F^{\bul(i)}\}_{i=1}^n)$ 
is not necessarily an $n$-filtered quasi-isomorphism in ${\rm CF}^n({\cal A})$. 
The proposition \cite[(1.1.7)]{nh2}(=the following proposition for the case $n=1$)  
does not imply the following proposition for the case $n=2$.

\begin{prop}\label{prop:exsir}
Assume that $n\leq 2$. 
For an object  
$(E^{\bul},\{E^{\bul(i)}\}_{i=1}^n)
\in {\rm K}^+{\rm F}^n({\cal A})$, 
there exists a 
specially injective resolution 
$(I^{\bul},\{I^{\bul(i)}\}_{i=1}^n)$ of 
$(E^{\bul},\{E^{\bul(i)}\}_{i=1}^n)$. 
\end{prop}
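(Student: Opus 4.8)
The plan is to imitate the degreewise inductive construction of the specially injective resolution in the filtered case \cite[(1.1.7)]{nh2} (and Berthelot \cite{blec}), reducing every step to the existence of enough special injectives established in (\ref{prop:enspij}), while paying careful attention to the two features that are genuinely new for $n=2$: the bifiltered strictness of the structural monomorphisms and the \emph{simultaneous} resolution of all the intersection pieces $E^{\bul(12)}_{k_1k_2}$. First I would reduce to the bounded-below situation, so that $E^q=0$ for $q\ll 0$, and construct the pair $(I^{\bul},\{I^{\bul(i)}\}_{i=1}^n)$ together with an $n$-filtered morphism from $(E^{\bul},\{E^{\bul(i)}\}_{i=1}^n)$ by induction on the degree $q$.

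At the bottom degree I apply (\ref{prop:enspij}) directly to embed $(E^q,\{E^{q(i)}\}_{i=1}^n)$ strictly into an object $\prod_{\cal A}(I,\{I^{(i)}\}_{i=1}^n)\in {\cal I}^n_{\rm spinj}({\cal A})$. For the inductive step, having produced the $I^{j}$ and strict injections $\phi^j$ for $j<q$ compatible with the differentials, I form the natural cokernel object $P^q$ (equipped with its quotient filtrations) assembled from $E^q$ and ${\rm coker}(I^{q-2}\lo I^{q-1})$ exactly as in the $n=1$ construction, and embed it strictly into a special injective $I^q$ again by (\ref{prop:enspij}); this simultaneously produces the differential $d_I^{q-1}\col I^{q-1}\lo I^q$ and the map $\phi^q\col E^q\lo I^q$. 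The one delicate point here is that each embedding must be strict as a \emph{bifiltered} morphism, not merely filtration-by-filtration; but since every $\phi^q$ is injective, (\ref{prop:usef}) reduces bifiltered strictness to the strictness of the two underlying filtered morphisms, and the latter is obtained precisely as in the argument used in the proof of (\ref{prop:enspij}), via the identification of ${\rm Im}(\phi^q)\cap J^{(i)}_k$ with the kernel of the projection onto $\prod_{l>k}I^{(i)}_l$.

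It then remains to verify that the resulting morphism is an $n$-filtered quasi-isomorphism, i.e.\ that both $E^{\bul}\lo I^{\bul}$ and $E^{\bul(12)}_{k_1k_2}\lo I^{\bul(12)}_{k_1k_2}$ are quasi-isomorphisms for all $k_1,k_2\in{\mab Z}$. The underlying statement is the classical fact that the construction yields an injective resolution of $E^{\bul}$. For the intersection pieces I would use the explicit product description of $(\prod_{\cal A}(I,\{I^{(i)}\}_{i=1}^n))^{(12)}_{k_1k_2}$ recorded in the proof of (\ref{prop:spijstij}): forming the intersection of the two filtration pieces of the $I^{\bul}$ produced above yields, degreewise, an injective resolution of the corresponding intersection $E^{\bul(12)}_{k_1k_2}$, because the special injective at each stage was built from injective hulls of \emph{all} of the relevant quotients $E/E^{(i)}_{k-1}$ at once. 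Equivalently, by (\ref{rema:ba}) it suffices to check that $({\rm MC}(\phi),\{{\rm MC}(\phi)^{(i)}\}_{i=1}^n)$ is strictly exact.

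The main obstacle is exactly this last point for $n=2$. As (\ref{rema:whs}) and (\ref{rema:obstm}) show, strict exactness (and bifiltered strictness) is strictly stronger than the corresponding conditions imposed on each filtration separately, since $E^{\bul(12)}_{k_1k_2}$ may be far too small relative to $I^{\bul(12)}_{k_1k_2}$; the construction therefore cannot be carried out by resolving the two filtrations independently. The whole force of the argument lies in the fact that the special module $\prod_{\cal A}(I,\{I^{(i)}\}_{i=1}^n)$ of (\ref{prop:enspij}) resolves the module, all of its filtration steps, and hence, by the product formula in the proof of (\ref{prop:spijstij}), all of their pairwise intersections simultaneously. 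Once this simultaneity is secured, the inductive bookkeeping is formally identical to the $n=1$ case of \cite[(1.1.7)]{nh2}, so I expect the remaining verifications to be routine.
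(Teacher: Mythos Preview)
Your approach is essentially the paper's: inductive pushout construction via (\ref{prop:enspij}), bifiltered strictness of each $\phi^q$ via (\ref{prop:usef}), then verification that the result is an $n$-filtered quasi-isomorphism. The paper even uses exactly the object $J^q\oplus_{E^q}E^{q+1}$ with filtration pieces $J^{q(i)}_k\oplus_{E^{q(i)}_k}E^{q+1(i)}_k$ that you describe.

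However, you have misidentified which step is routine. The strictness of $\phi^q$ is indeed handled by (\ref{prop:usef}) as you say, but the $n$-filtered quasi-isomorphism check is \emph{not} reducible to ``the product formula in the proof of (\ref{prop:spijstij})'' plus the $n=1$ bookkeeping. The product formula tells you what $J^{q+1(12)}_{k_1k_2}$ looks like, but the real issue is computing ${\rm Ker}(J^{q(12)}_{k_1k_2}\to J^{q+1(12)}_{k_1k_2})$. By construction this kernel equals $\bigcap_{l}{\rm Ker}(J^{q(i_l)}_{k_l}\to J^{q(i_l)}_{k_l}\oplus_{E^{q(i_l)}_{k_l}}E^{q+1(i_l)}_{k_l})$, and the question is whether this intersection equals ${\rm Ker}(J^{q(12)}_{k_1k_2}\to J^{q(12)}_{k_1k_2}\oplus_{E^{q(12)}_{k_1k_2}}E^{q+1(12)}_{k_1k_2})$. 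A local section $s$ of the former satisfies $s=g^q(t_l)$ with $d(t_l)=0$ for each $l$ separately; the paper's additional argument (flagged explicitly as ``we need an additional argument in order to obtain (\ref{ali:kriin})'') is that the injectivity of $g^q$ forces all the $t_l$ to coincide, so the common witness lies in $E^{q(12)}_{k_1k_2}$. This is short but it is precisely the place where $n=2$ differs from $n=1$, and your sketch skips it.
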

\begin{proof} 
Because the following proof is not only an obvious imitation of \cite[(1.1.7)]{nh2}, 
we give the complete proof 
(we have to use the simple argument in the proof (\ref{prop:usef}) 
and we need an additional argument in order to 
obtain (\ref{ali:kriin}) below).  
\par 
We may assume that $E^q=0$ for $q<0$. 
Assume that we are given 
$$(J^0,\{J^{0(i)}\}_{i=1}^n), (J^1,\{J^{1(i)}\}_{i=1}^n), 
\ldots, (J^q,\{J^{q(i)}\}_{i=1}^n)\in {\cal I}_{\rm spinj}({\cal A}).$$
We consider ${\cal A}$-modules 
$J^q{\oplus}_{E^{q}}E^{q+1}$ and 
$J^{q(i)}_k{\oplus}_{E^{q(i)}_{k}}E^{q+1(i)}_k$ $(i=1,\ldots, n)$.
Using the strictness of the morphism 
$(E^q,E^{q(i)}) \lo (J^q,J^{q(i)})$, we can easily check that  
the natural morphism 
$J^{q(i)}_k{\oplus}_{E^{q(i)}_{k}}E^{q+1(i)}_k
\lo J^q{\oplus}_{E^{q}}E^{q+1}$ is injective. 
Hence 
$\{J^{q(i)}_k{\oplus}_{E^{q(i)}_{k}}E^{q+1(i)}_{k}\}_{k\in {\mab Z}}$ 
defines a filtration on $J^q{\oplus}_{E^{q}}E^{q+1}$. 
The natural morphism 
$E^{q+1}\owns s \lom 
(0,s) \in J^q{\oplus}_{E^{q}}E^{q+1}$ induces a 
filtered morphism 
$$(E^{q+1},\{E^{q+1(i)}\}_{i=1}^n) \lo 
(J^q{\oplus}_{E^{q}}E^{q+1}, 
\{\{J^{q(i)}_k{\oplus}_{E^{q(i)}_{k}}E^{q+1(i)}_k\}_{k\in {\mab Z}}\}_{i=1}^n).$$
It is immediate to check
that, for each $i$, the filtered morphism 
$$(E^{q+1},E^{q+1(i)}) \lo 
(J^q{\oplus}_{E^{q}}E^{q+1}, \{J^{q(i)}_k{\oplus}_{E^{q(i)}_{k}}E^{q+1(i)}_k\}_{k\in {\mab Z}})$$ 
is strict. 
Let $I^{q+1}$ and $I^{q+1(i)}_k$ be injective ${\cal A}$-modules such 
that there exist the following injective morphisms of 
${\cal A}$-modules:
\begin{equation}
J^q{\oplus}_{E^{q}}E^{q+1} \os{\subset}{\lo} I^{q+1}, \quad 
J^q{\oplus}_{E^{q}}E^{q+1}/J^{q(i)}_k{\oplus}_{E^{q(i)}_k}E^{q+1(i)}_k 
\os{\subset}{\lo} I^{q+1(i)}_{k+1}.
\tag{3.9.1}
\label{eqn:jeesi}
\end{equation} 
Set 
$J^{q+1} :=I^{q+1}\times  {\prod}_{i=1}^n\us{k \in{\mab Z}}{\prod}I^{q+1(i)}_k$ 
and 
$$J^{q+1(i)}_k :=I^{q+1}\times 
\prod_{1\leq m\not=i \leq n}\us{l_m \in {\mab Z}}{\prod}I^{q+1(m)}_{l_m} 
\times \us{l_i \leq k}{\prod}I^{q+1(i)}_{l_i}$$ 
for each $1\leq i\leq n$. 
Then $(J^{q+1},\{J^{q+1(i)}\}_{i=1}^n) \allowbreak \in {\cal I}_{\rm spinj}({\cal A})$. 
By (\ref{eqn:imspm}) and (\ref{eqn:jeesi}), 
we have a natural injective morphism
\begin{equation}
(J^q{\oplus}_{E^{q}}E^{q+1}, 
\{\{J^{q(i)}_k{\oplus}_{E^{q(i)}_k}E^{q+1(i)}_k\}_{k\in {\mab Z}}\}_{i=1}^n)
\os{\sus}{\lo} (J^{q+1},\{J^{q+1(i)}\}_{i=1}^n).
\tag{3.9.2}
\label{eqn:jq1cnst}
\end{equation}
Since the morphism $E^q \lo J^q$ is injective,  
the morphism $E^{q+1} \lo J^q{\oplus}_{E^q}E^{q+1}$ is injective, 
and so is the following composite morphism 
$$E^{q+1} \os{\sus}{\lo} J^q{\oplus}_{E^q}E^{q+1} 
\os{\subset}{\lo} J^{q+1}.$$ 
In fact, we have a morphism
$(E^{q+1},\{E^{q+1(i)}\}_{i=1}^n) \lo (J^{q+1},\{J^{q+1(i)}\}_{i=1}^n)$. 
Using a morphism 
$J^q\owns s \lom (s,0) \in
J^q{\oplus}_{E^{q}}E^{q+1}$ and the morphism 
(\ref{eqn:jq1cnst}), 
we have a morphism 
$(J^q,\{J^{q(i)}\}_{i=1}^n) \lo 
(J^{q+1},\{J^{q+1(i)}\}_{i=1}^n)$. 
For a fixed $i$, by the proof of (\ref{prop:enspij}) 
and by the strictness of the morphism 
$(E^{q+1},\{E^{q+1(i)}_k\}_{k\in {\mab Z}}) \lo 
(J^q{\oplus}_{E^q}E^{q+1},\{J^{q(i)}_k{\oplus}_{E^{q(i)}_k} \allowbreak 
E^{q+1(i)}_k\}_{k\in {\mab Z}})$,  
the morphism 
$(E^{q+1},\{E^{q+1(i)}_k\}_{k\in {\mab Z}}) 
\lo (J^{q+1},\{J^{q+1(i)}_k\}_{k\in {\mab Z}})$ is strict.
Hence we obtain $(J^{\bul}, \allowbreak \{J^{\bul(i)}_k\}_{k\in {\mab Z}})$ inductively. 
\par
We claim that $(J^{\bul},\{J^{\bul(i)}\}_{i=1}^n)$ 
is filteredly quasi-isomorphic to $(E^{\bul},\{E^{\bul(i)}\}_{i=1}^n)$. 
To prove this, we first note that 
\begin{align*} 
{\rm Ker}(J^{q(i_1 i_n)}_{k_1  k_n} \lo 
J^{q+1(i_1 i_n)}_{k_1  k_n}) 
={\rm Ker}(J^{q(i_1 i_n)}_{k_1  k_n}
{\oplus}_{E^{q(i_1 i_n)}_{k_1  k_n}}E^{q+1(i_1 i_n)}_{k_1  k_n}) \quad (q\in {\mab Z}). 
\tag{3.9.3}\label{ali:krn}
\end{align*} 
Indeed, the problem is local. 
Because the morphism 
\begin{align*} 
J^{q(i_l)}_{k_l}{\oplus}_{E^{q(i_l)}_{k_l}}E^{q+1(i_l)}_{k_l}
\lo J^{q+1(i_l)}_{k_l} \quad (1\leq l\leq n)
\end{align*} 
is injective, 
\begin{align*} 
{\rm Ker}(J^{q(i_1i_n)}_{k_1 k_n} \lo J^{q+1(i_1i_n)}_{k_1  k_n})=
\bigcap_{l=1}^n{\rm Ker}(J^{q(i_l)}_{k_l}\lo J^{q(i_l)}_{k_l}{\oplus}_{E^{q(i_l)}_{k_l}} E^{q+1(i_l)}_{k_l}).  
\tag{3.9.4}\label{ali:kjrn}
\end{align*} 
Let $s$ be a local section of the right hand side of (\ref{ali:kjrn}). 
Let $g^q\col E^{q}\lo J^{q}$ be the constructed morphism. 
Then we may assume that there exists a local section $t_l$ of $E^{q(i_l)}_{k_l}$ 
such that $s=g^q(t_l)$ and $d(t_l)=0$ for $1\leq \forall l\leq n$. 
Hence $g^q(t_l)=g^q(t_m)$ for $1\leq \forall l,\forall m\leq n$. 
Because $g^q$ is injective, $t_l=t_m\in 
E^{q(i_1i_n)}_{k_1  k_n}$. 
Hence 
\begin{align*} 
&{\rm Ker}(J^{q(i_1i_n)}_{k_1  k_n} 
\lo J^{q(i_1i_n)}_{k_1  k_n}
{\oplus}_{E^{q(i_1i_n)}_{k_1  k_n}}E^{q+1(i_1i_n)}_{k_1  k_n})
\tag{3.9.5}\label{ali:kriin}\\
&=\bigcap_{l=1}^n{\rm Ker}(J^{q(i_l)}_{k_l}\lo J^{q(i_l)}_{k_l}{\oplus}_{E^{q(i_l)}_{k_l}} E^{q+1(i_l)}_{k_l})\\
& ={\rm Ker}(J^{q(i_1i_n)}_{k_1k_n} \lo J^{q+1(i_1i_n)}_{k_1k_n}). 
\end{align*} 
Let $\triangle$ be nothing or $(i_1i_n)$ for $1\leq i_1 \leq i_n\leq n$ and 
let $\circ$ be nothing or $n$-pieces of integers $k_1  k_n$. 
By (\ref{ali:kriin}) we see that 
the morphism 
${\rm Ker}(E^{q\triangle }_{\circ} \lo E^{q+1\triangle}_{\circ}) 
\lo 
{\rm Ker}(J^{q\triangle}_{\circ} \lo J^{q+1\triangle}_{\circ}) $ 
is an epimorphism. In particular, 
the morphism 
${\cal H}^q(E^{\bul}_{\circ}) \lo {\cal H}^q(J^{\bul}_{\circ})$ 
is an epimorphism. 
Furthermore, the morphism 
$J^{q-1}_{\circ}  \lo J^{q}_{\circ}$ factors through 
$J^{q-1}_{\circ}  \lo 
J^{q-1}_{\circ}{\oplus}_{E^{q-1}_{\circ}}E^{q}_{\circ}$ by (\ref{ali:kriin}). 
Note again that 
$J^{q-1}_{\circ}{\oplus}_{E^{q-1}_{\circ}}E^{q}_{\circ} \lo 
J^q_{\circ}$ is an injective morphism. 
Because the inverse image of 
${\rm Im}(J^{q-1}_{\circ} \lo J^{q}_{\circ})$ by the morphism 
$E^{q}_{\circ} \lo J^{q}_{\circ}$ 
is equal to
the inverse image of 
${\rm Im}(J^{q-1}_{\circ} \lo 
J^{q-1}_{\circ}{\oplus}_{E^{q-1}_{\circ}}E^{q}_{\circ})$, 
the morphism 
${\rm Ker}(E^q_{\circ} \lo E^{q+1}_{\circ})
/{\rm Im}(E^{q-1}_{\circ} 
\lo E^{q}_{\circ}) \lo 
{\cal H}^q(J^{\bul}_{\circ})$ 
is an injective morphism. Consequently the morphism 
${\cal H}^q(E^{\bul}_{\circ}) 
{\lo} 
{\cal H}^q(J^{\bul}_{\circ})$ is an isomorphism. 
\end{proof}

\begin{prop}\label{prop:comf}
Assume that $n\leq 2$. 
Let $f^{\bul} \col 
(E^{\bul},\{E^{\bul(i)}\}_{i=1}^n) \lo 
(F^{\bul},\{F^{\bul(i)}\}_{i=1}^n)$ be a morphism in 
${\rm C}^+{\rm F}^n({\cal A})$. Then there exists a 
morphism 
$g^{\bul} \col (J^{\bul},\{J^{\bul(i)}\}_{i=1}^n) \lo 
(K^{\bul},\{K^{\bul(i)}\}_{i=1}^n)$ 
in ${\rm C}^+{\rm F}^n({\cal A})$ such that 
$(J^{\bul},\{J^{\bul(i)}\}_{i=1}^n)$ 
$($resp.~$(K^{\bul},\{K^{\bul(i)}\}_{i=1}^n))$ 
is a specially injective resolution of 
$(E^{\bul},\{E^{\bul(i)}\}_{i=1}^n)$ 
$($resp.~$(F^{\bul},\{F^{\bul(i)}\}_{i=1}^n))$
and such that the following diagram is commutative$:$
\begin{equation*}
\begin{CD}
(E^{\bul},\{E^{\bul(i)}\}_{i=1}^n) @>{\subset}>> 
(J^{\bul},\{J^{\bul(i)}\}_{i=1}^n)\\ 
 @V{f^{\bul}}VV  @VV{g^{\bul}}V \\
(F^{\bul},\{F^{\bul(i)}\}_{i=1}^n) @>{\subset}>> 
(K^{\bul},\{K^{\bul(i)}\}_{i=1}^n).
\end{CD}
\end{equation*}
\end{prop}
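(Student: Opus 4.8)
The plan is to run the inductive construction of a specially injective resolution from (\ref{prop:exsir}) simultaneously for $(E^{\bul},\{E^{\bul(i)}\}_{i=1}^n)$ and $(F^{\bul},\{F^{\bul(i)}\}_{i=1}^n)$, threading the morphism $f^{\bul}$ through the induction so that at each degree the comparison map $g^{\bul}$ is produced by the extension property of strictly injective modules. After shifting and padding with zeros we may assume $E^q=F^q=0$ for $q<0$. The induction hypothesis at level $q$ is: specially injective objects $(J^j,\{J^{j(i)}\}_{i=1}^n)$, $(K^j,\{K^{j(i)}\}_{i=1}^n)$ for $0\leq j\leq q$, forming the truncations of the desired resolutions, together with filtered morphisms $g^j\col (J^j,\{J^{j(i)}\}_{i=1}^n)\lo (K^j,\{K^{j(i)}\}_{i=1}^n)$ commuting with the differentials already built and with the embeddings $E^j\os{\sus}{\lo} J^j$, $F^j\os{\sus}{\lo} K^j$ via $f^j$.

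For the inductive step I would first form, exactly as in (\ref{prop:exsir}), the pushouts
$$P_E:=J^q\oplus_{E^q}E^{q+1},\qquad P_F:=K^q\oplus_{F^q}F^{q+1}$$
with their induced $n$-pieces of filtrations and the strict filtered maps $E^{q+1}\lo P_E$, $F^{q+1}\lo P_F$. Since $g^q$ and $f^{q+1}$ agree over $E^q$ by the induction hypothesis, the universal property of the pushout supplies a filtered morphism $\phi\col P_E\lo P_F$ (it is filtered because $g^q$ and $f^{q+1}$ are, so $\phi$ sends $P_E{}^{(i)}_k$ into $P_F{}^{(i)}_k$) fitting into the evident commutative square. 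Then, as in (\ref{prop:exsir}), I choose a specially injective $(J^{q+1},\{J^{q+1(i)}\}_{i=1}^n)$ with a strict injective morphism $P_E\os{\sus}{\lo} J^{q+1}$, which gives the degree-$(q+1)$ term on the $E$-side and the differential $J^q\lo P_E\os{\sus}{\lo} J^{q+1}$; its strictness is verified via (\ref{prop:usef}) exactly as in the proof of (\ref{prop:enspij}).

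The crux is the $F$-side term together with $g^{q+1}$. I would apply (\ref{prop:enspij}) to $P_F$ to obtain a strict injective morphism $P_F\os{\sus}{\lo} K^{q+1}$ with $(K^{q+1},\{K^{q+1(i)}\}_{i=1}^n)\in {\cal I}^n_{\rm spinj}({\cal A})$; by (\ref{prop:spijstij}) the object $K^{q+1}$ is then strictly injective. Now $P_E\os{\phi}{\lo} P_F\os{\sus}{\lo} K^{q+1}$ is a filtered morphism out of $P_E$, while $P_E\os{\sus}{\lo} J^{q+1}$ is a strict injective morphism, so the defining property (\ref{defi:stinj})(2) yields a filtered extension $g^{q+1}\col J^{q+1}\lo K^{q+1}$ whose restriction to $P_E$ equals the composite $P_F\os{\sus}{\lo} K^{q+1}$ precomposed with $\phi$. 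This is exactly where the bifiltered theory enters: it is (\ref{prop:spijstij}) that makes the specially injective $K^{q+1}$ an admissible extension target, and the strictness of $P_E\os{\sus}{\lo}J^{q+1}$ that makes the source admissible.

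It then remains to check three compatibilities, all formal consequences of the identity $g^{q+1}\circ(P_E\os{\sus}{\lo}J^{q+1})=(P_F\os{\sus}{\lo}K^{q+1})\circ\phi$: the square with $f^{q+1}$ commutes because $\phi$ restricts to $f^{q+1}$ on $E^{q+1}\subset P_E$; the differential square $g^{q+1}\circ d_J^q=d_K^q\circ g^q$ commutes because both differentials factor through the pushout maps $J^q\lo P_E$, $K^q\lo P_F$ and $\phi$ is induced by $g^q$. That $K^{\bul}$ is genuinely a specially injective resolution of $F^{\bul}$ is no extra burden: the data $F^{q+1}\lo P_F\os{\sus}{\lo} K^{q+1}$ and $K^q\lo P_F\os{\sus}{\lo} K^{q+1}$ are precisely the output of the construction in (\ref{prop:exsir}) applied to $F^{\bul}$, so the quasi-isomorphism and strictness statements follow verbatim from the kernel computation (\ref{ali:kriin}) there; the base case $q=0$ is the same argument with the pushouts replaced by $E^0\os{\sus}{\lo}J^0$ and $F^0\os{\sus}{\lo}K^0$. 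The main obstacle, and the only genuinely new point beyond (\ref{prop:exsir}), is producing $g^{q+1}$ as a \emph{filtered} morphism rather than a mere map of underlying ${\cal A}$-modules; for this the epimorphism in (\ref{defi:stinj})(2) must be read on the filtered ${\rm Hom}$ objects of (\ref{ali:efa}), whose degree-zero part records exactly the filtered morphisms.
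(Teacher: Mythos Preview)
Your proposal is correct and follows essentially the same approach as the paper, which simply cites \cite[(1.1.8)]{nh2} together with the construction in (\ref{prop:exsir}); you have accurately reconstructed that argument in the bifiltered setting. One small remark: rather than parsing what ``epimorphism'' means in (\ref{defi:stinj})(2), it is cleaner to extend the filtered map $P_E\to K^{q+1}$ along $P_E\os{\sus}{\lo}J^{q+1}$ directly via the formula (\ref{eqn:imspm}), since the strict injectivity of $P_E\os{\sus}{\lo}J^{q+1}$ makes each $P_E/P_E^{(i)}_{k-1}\to J^{q+1}/J^{q+1(i)}_{k-1}$ injective and the components $I^{(i)}_k$ of the specially injective $K^{q+1}$ are injective ${\cal A}$-modules.
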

\begin{proof}The proof is the same as that of 
\cite[(1.1.8)]{nh2} by using the proof of (\ref{prop:exsir}). 
\end{proof}
\par 
For an additive full subcategory 
${\cal I}$ of ${\rm MF}^n({\cal A})$, 
let ${\rm K}^+{\rm F}^n({\cal I})$  
be the category of 
the bounded below complexes with $n$-pieces of filtrations 
whose components belong to 
${\cal I}$. 

\begin{lemm}\label{lemm:hom0}
Assume that $n\leq 2$. 
Let $(E^{\bul},\{E^{\bul(i)}\}_{i=1}^n)$ be 
a complex of ${\cal A}$-modules with $n$-pieces of filtrations 
and let 
$(I^{\bul},\{I^{\bul(i)}\}_{i=1}^n)$ be an object of 
${\rm K}^+{\rm F}^n({\cal I}^n_{\rm stinj}({\cal A}))$. 
Assume that $(E^{\bul},\{E^{\bul(i)}\}_{i=1}^n)$ is strictly exact. 
Let $f \col (E^{\bul},\{E^{\bul(i)}\}_{i=1}^n) \lo 
(I^{\bul},\{I^{\bul(i)}\}_{i=1}^n)$ 
be a morphism of complexes with $n$-pieces of filtrations. 
Then $f$ is $n$-filteredly homotopic to zero.
\end{lemm}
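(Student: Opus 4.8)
The plan is to follow the classical argument that a morphism out of a strictly exact complex into a complex of strictly injective objects is null-homotopic, building the contracting homotopy $h=\{h^q\col E^q\lo I^{q-1}\}$ by induction on $q$ while keeping each $h^q$ an $n$-filtered morphism. Since $(E^{\bul},\{E^{\bul(i)}\}_{i=1}^n)$ is bounded below we may assume $E^q=0$ for $q<0$ and put $h^q=0$ for $q\leq 0$. Suppose $n$-filtered morphisms $h^j$ have been constructed for $j\leq q$ so that the homotopy identity $f^{j-1}=d_I^{j-2}h^{j-1}+h^jd_E^{j-1}$ holds for all $j\leq q$. Writing $u^q:=f^q-d_I^{q-1}h^q\col E^q\lo I^q$, the goal of the inductive step is to produce an $n$-filtered $h^{q+1}\col E^{q+1}\lo I^q$ with $h^{q+1}d_E^q=u^q$, which yields the identity at level $q$ and advances the induction.

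First I would check that $u^q$ kills cocycles, so that it factors through $d_E^q$. If $z\in{\rm Ker}(d_E^q)$, exactness of $E^{\bul}$ gives $z=d_E^{q-1}(w)$; using that $f$ is a morphism of complexes, the inductive relation $h^qd_E^{q-1}=f^{q-1}-d_I^{q-2}h^{q-1}$, and $d_Id_I=0$, a short computation yields $u^q(z)=d_I^{q-1}f^{q-1}(w)-d_I^{q-1}f^{q-1}(w)=0$. Hence $u^q$ descends to $\bar u^q\col{\rm Im}(d_E^q)\lo I^q$, where ${\rm Im}(d_E^q)\sus E^{q+1}$ carries the induced filtrations ${\rm Im}(d_E^q)\cap E^{q+1(i)}_k$. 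The crux of the lemma, and the point where strict exactness is genuinely needed (as the remarks (\ref{rema:whs}) and (\ref{rema:obstm}) warn), is to verify that $\bar u^q$ is itself $n$-filtered. Given $y\in{\rm Im}(d_E^q)\cap E^{q+1(i)}_k$, I note that $y$ is a cocycle of the single-filtration subcomplex $E^{\bul(ii)}_{kk}=E^{\bul(i)}_k$, which is exact by the definition (\ref{defi:nfdf}) of strict exactness; therefore $y=d_E^q(x')$ with $x'\in E^{q(i)}_k$. Since $u^q$ is $n$-filtered (being assembled from the $n$-filtered maps $f^q$, $d_I^{q-1}$, $h^q$), this gives $\bar u^q(y)=u^q(x')\in I^{q(i)}_k$, so $\bar u^q\bigl({\rm Im}(d_E^q)\cap E^{q+1(i)}_k\bigr)\sus I^{q(i)}_k$ for every $i$ and $k$.

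It then remains to extend $\bar u^q$ along the inclusion of its domain. The inclusion $\bigl({\rm Im}(d_E^q),\{{\rm Im}(d_E^q)\cap E^{q+1(i)}\}\bigr)\hookrightarrow (E^{q+1},\{E^{q+1(i)}\})$ is injective, and each single-filtration inclusion is strict by the construction of the induced filtration, so by (\ref{prop:usef}) it is a strictly injective morphism in ${\rm MF}^n({\cal A})$. Because $(I^q,\{I^{q(i)}\}_{i=1}^n)$ is strictly injective, property (2) of (\ref{defi:stinj}) furnishes an extension of $\bar u^q$ to an $n$-filtered morphism $h^{q+1}\col E^{q+1}\lo I^q$ with $h^{q+1}|_{{\rm Im}(d_E^q)}=\bar u^q$, that is, $h^{q+1}d_E^q=u^q=f^q-d_I^{q-1}h^q$. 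This closes the induction and exhibits $h=\{h^q\}$ as an $n$-filtered homotopy from $f$ to $0$. I expect the only real obstacle to be the filtered-descent step of the second paragraph: it is exactly where exactness of each $E^{\bul(i)}_k$ (and not merely exactness of $E^{\bul}$) is indispensable, and it is why (\ref{prop:usef}) must be invoked, so as to recognize the relevant inclusion as strictly injective and apply the defining property of strict injectivity verbatim, as in the case $n=1$ of \cite{nh2}.
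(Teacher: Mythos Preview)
Your approach is exactly the one the paper intends: it simply says ``by the definition of the strict injectivity, the same argument as that in the classical case works,'' and you have spelled out that classical induction in the $n$-filtered setting, correctly isolating the two places where the hypotheses bite (strict exactness to see that $\bar u^q$ is filtered, and strict injectivity of $I^q$ via (\ref{defi:stinj}) to extend along the strictly injective inclusion supplied by (\ref{prop:usef})).

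One small slip: you write ``since $(E^{\bul},\{E^{\bul(i)}\}_{i=1}^n)$ is bounded below,'' but the lemma does not assume this. What is given is that $(I^{\bul},\{I^{\bul(i)}\}_{i=1}^n)\in {\rm K}^+{\rm F}^n({\cal I}^n_{\rm stinj}({\cal A}))$, i.e.\ $I^{\bul}$ is bounded below. That is what anchors the induction: choose $N$ with $I^q=0$ for $q<N$, set $h^q=0$ for $q\leq N$, and note that $f^{q-1}=0$ automatically for $q\leq N$ since its target vanishes. The rest of your argument then runs verbatim.
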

\begin{proof}
By the definition  of the strict injectivity, 
the same argument as 
that in the classical case works.
\end{proof}

\begin{lemm}\label{lemm:filis}
Assume that $n\leq 2$. Then the following hold$:$
\par 
$(1)$ Let $(I^{\bul},\{I^{\bul(i)}\}_{i=1}^n)$ be an object of 
${\rm K}^+{\rm F}^n({\cal I}^n_{\rm stinj}({\cal A}))$. 
Let 
$$s \col (E^{\bul},\{E^{\bul(i)}\}_{i=1}^n) 
\lo (F^{\bul},\{F^{\bul(i)}\}_{i=1}^n)$$ 
be an $n$-filtered quasi-isomorphism.
Then $s$ induces an isomorphism 
$$s^* \col 
{\rm Hom}_{{\rm KF}^n({\cal A})}((F^{\bul},\{F^{\bul(i)}\}_{i=1}^n), 
(I^{\bul},\{I^{\bul(i)}\}_{i=1}^n))$$ 
$$\os{\sim}{\lo}{\rm Hom}_{{\rm KF}^n({\cal A})}((E^{\bul},\{E^{\bul(i)}\}_{i=1}^n), 
(I^{\bul},\{I^{\bul(i)}\}_{i=1}^n)).$$ 
\par
$(2)$ If a morphism $s \col(I^{\bul},\{I^{\bul(i)}\}_{i=1}^n) \lo
(E^{\bul},\{E^{\bul(i)}\}_{i=1}^n)$ is 
an $n$-filtered quasi-isomorphism from an object of 
${\rm K}^+{\rm F}^n({\cal I}^n_{\rm stinj}({\cal A}))$ to 
a complex of ${\cal A}$-modules with $n$-pieces of filtrations, 
then $s$ has an $n$-filtered homotopy inverse. 
\end{lemm}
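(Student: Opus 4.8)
The plan for (1) is to read off $s^{*}$ from a distinguished triangle and to annihilate the error terms with the vanishing Lemma \ref{lemm:hom0}. First I would form the mapping cone, giving a distinguished triangle $(E^{\bul},\{E^{\bul(i)}\}_{i=1}^{n}) \os{s}{\lo} (F^{\bul},\{F^{\bul(i)}\}_{i=1}^{n}) \lo {\rm MC}(s) \lo (E^{\bul},\{E^{\bul(i)}\})[1]$ in the triangulated category ${\rm KF}^{n}({\cal A})$. Since $s$ is an $n$-filtered quasi-isomorphism, Remark \ref{rema:ba} shows ${\rm MC}(s)$ is strictly exact, and hence so is every shift ${\rm MC}(s)[j]$. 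Applying the contravariant cohomological functor ${\rm Hom}_{{\rm KF}^{n}({\cal A})}(-,(I^{\bul},\{I^{\bul(i)}\}))$ to the triangle yields a long exact sequence in which $s^{*}$ is flanked by the groups ${\rm Hom}_{{\rm KF}^{n}({\cal A})}({\rm MC}(s)[j],(I^{\bul},\{I^{\bul(i)}\}))$. By Lemma \ref{lemm:hom0} every morphism from a strictly exact complex to the complex $(I^{\bul},\{I^{\bul(i)}\})$ of strict injectives is $n$-filteredly homotopic to zero, so all these flanking groups vanish and $s^{*}$ is an isomorphism.

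For (2) the backbone is a sub-lemma extracted from (1): any $n$-filtered quasi-isomorphism $\phi\col (I^{\bul},\{I^{\bul(i)}\})\lo (J^{\bul},\{J^{\bul(i)}\})$ between bounded below complexes lying in ${\cal I}^{n}_{\rm stinj}({\cal A})$ is an $n$-filtered homotopy equivalence. Indeed, applying (1) to $\phi$ with target $(I^{\bul},\{I^{\bul(i)}\})$ produces $\psi$ with $\psi\circ\phi\simeq{\rm id}$; applying (1) to $\phi$ with target $(J^{\bul},\{J^{\bul(i)}\})$ makes the pre-composition map $\phi^{*}$ on ${\rm Hom}_{{\rm KF}^{n}}(-,(J^{\bul},\{J^{\bul(i)}\}))$ an isomorphism, and from $\phi^{*}(\phi\circ\psi)=\phi\circ(\psi\circ\phi)\simeq\phi=\phi^{*}({\rm id})$ its injectivity forces $\phi\circ\psi\simeq{\rm id}$. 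The point I want to stress is that this uses (1) symmetrically, with BOTH complexes strictly injective, so that both cancellations are licensed; no injectivity of the target $E^{\bul}$ is invoked.

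Now for the stated $s\col (I^{\bul},\{I^{\bul(i)}\})\lo (E^{\bul},\{E^{\bul(i)}\})$ with arbitrary $E^{\bul}$, I would first apply (1) with target $(I^{\bul},\{I^{\bul(i)}\})$ to obtain $t\col E^{\bul}\lo I^{\bul}$ with $t\circ s\simeq{\rm id}$; since $s$ is an $n$-filtered quasi-isomorphism, $t$ is one as well. To promote this one-sided homotopy to a genuine inverse I would choose, by \ref{prop:exsir} together with \ref{prop:spijstij}, a specially injective resolution $u\col (E^{\bul},\{E^{\bul(i)}\})\lo (J^{\bul},\{J^{\bul(i)}\})$ with $(J^{\bul},\{J^{\bul(i)}\})$ in ${\cal I}^{n}_{\rm stinj}({\cal A})$. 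Then $u\circ s$ is an $n$-filtered quasi-isomorphism between bounded below strictly injective complexes, hence a homotopy equivalence by the sub-lemma, with inverse $\psi$; redefining $t:=\psi\circ u$ gives $t\circ s=\psi\circ(u\circ s)\simeq{\rm id}$, and a direct computation yields $u\circ(s\circ t)=(u\circ s)\circ\psi\circ u\simeq u=u\circ{\rm id}$.

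The hard part is the final step: passing from $u\circ(s\circ t)\simeq u\circ{\rm id}$ to $s\circ t\simeq{\rm id}$, that is, cancelling the resolution morphism $u$ on the left. Statement (1) governs only pre-composition into complexes of strict injectives and gives no direct handle on post-composition by $u$, so this left-cancellation is the genuine obstacle, and it is precisely here that the strict injectivity of the resolving complex $J^{\bul}$ must be fed back in. My plan is to carry it out by transporting the entire comparison along $u$ and organizing the two homotopies $\psi\circ(u\circ s)\simeq{\rm id}$ and $(u\circ s)\circ\psi\simeq{\rm id}$ compatibly, thereby reducing (2) to the sub-lemma for the strictly injective complexes $I^{\bul}$ and $J^{\bul}$; keeping the bookkeeping of these homotopies consistent is the only delicate point, the remaining manipulations being formally identical to those already used in (1).
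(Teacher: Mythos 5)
Your proof of (1) is correct and is in substance the paper's own argument (the paper simply refers to \cite[(1.1.10)]{nh2}: one applies ${\rm Hom}_{{\rm KF}^n({\cal A})}(-,(I^{\bul},\{I^{\bul(i)}\}_{i=1}^n))$ to the triangle of ${\rm MC}(s)$, which is strictly exact by (\ref{rema:ba}), and annihilates the outer terms by (\ref{lemm:hom0})). Your sub-lemma for (2) --- that an $n$-filtered quasi-isomorphism between two objects of ${\rm K}^+{\rm F}^n({\cal I}^n_{\rm stinj}({\cal A}))$ is an $n$-filtered homotopy equivalence --- is also correctly proved by the double application of (1), and the extraction of a one-sided inverse $t$ with $t\circ s\simeq {\rm id}_{I^{\bul}}$ is fine.

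The final step of your (2), which you leave as a ``plan,'' is a genuine gap, and it is not a matter of bookkeeping: it cannot be closed in the generality you are working in. From $u\circ(s\circ t)\simeq u$ one cannot cancel $u$ on the left, because post-composition with the resolution $u$ is far from injective on $n$-filtered homotopy classes; indeed (\ref{lemm:hom0}) makes $u$ itself null-homotopic as soon as $(E^{\bul},\{E^{\bul(i)}\}_{i=1}^n)$ is strictly exact. Concretely, take $I^{\bul}=0$ and $E^{\bul}=(0\lo {\mab Z}\os{2}{\lo}{\mab Z}\lo {\mab Z}/2\lo 0)$ with trivial filtrations: then $s=0$ is an $n$-filtered quasi-isomorphism from an object of ${\rm K}^+{\rm F}^n({\cal I}^n_{\rm stinj}({\cal A}))$, every intermediate relation of your argument holds ($t=0$, $t\circ s\simeq{\rm id}$, $u\circ(s\circ t)=0\simeq u$), and yet $s\circ t=0$ is not homotopic to ${\rm id}_{E^{\bul}}$, since $E^{\bul}$ is exact but not contractible (the complex is not split). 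So no organization of the two homotopies can produce a two-sided inverse for an arbitrary target: a two-sided inverse would force ${\rm MC}(s)$ to be contractible, whereas the hypothesis only gives that it is strictly exact. The two-sided conclusion is available exactly when the target also lies in ${\rm K}^+{\rm F}^n({\cal I}^n_{\rm stinj}({\cal A}))$ --- and in that case your sub-lemma already constitutes the complete proof; this both-sides-injective situation is what the paper actually uses downstream (e.g.\ in the proof of (\ref{coro:dfkfmykf})), and the paper's own proof of (2) proceeds by the explicit homotopy construction of \cite[I (4.5)]{hard} (with the sign correction recorded in \cite[(1.1.11)]{nh2}) using (\ref{lemm:hom0}), not by transport along a resolution. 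A smaller point in the same direction: your appeal to (\ref{prop:exsir}) presupposes that $E^{\bul}$ is bounded below, while the statement allows an arbitrary complex as target, so even the resolution $u$ need not exist in the stated generality.
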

\begin{proof}
By using (\ref{lemm:hom0}),  
the proof is the same as that of \cite[(1.1.10)]{nh2}. 
\par
(2): The proof is the same as that of 
\cite[I (4.5)]{hard} by using 
(\ref{lemm:hom0}), though 
there is an error in signs in 
the proof of \cite[I (4.5)]{hard}
(see \cite[(1.1.11)]{nh2} for this).
\end{proof}

\begin{coro}\label{coro:dfkfmykf}
Assume that $n\leq 2$. 
\par 
$(1)$ The following equalities hold$:$
\begin{align*}
{\rm D}^+{\rm F}^n({\cal A})& =
{\rm K}^+{\rm F}^n({\cal I}^n_{\rm flas}({\cal A}))_{({\rm F}^n{\rm Qis})} =
{\rm K}^+{\rm F}^n({\cal I}^n_{\rm inj}({\cal A}))_{({\rm F}^n{\rm Qis})} \\
{} & =
{\rm K}^+{\rm F}^n({\cal I}^n_{\rm stinj}({\cal A}))=
{\rm K}^+{\rm F}^n({\cal I}^n_{\rm spinj}({\cal A})).
\end{align*}
\par
$(2)$ Set  ${\cal I}:={\cal I}^n_{\rm flas}({\cal A})$, 
${\cal I}^n_{\rm inj}({\cal A})$, 
${\cal I}^n_{\rm stinj}({\cal A})$ or ${\cal I}^n_{\rm spinj}({\cal A})$. 
Let $f \col ({\cal T}, {\cal A}) 
\lo  ({\cal T}', {\cal A}')$  be a morphism of ringed topoi.
Then there exists the right derived functor 
$$Rf_* \col{\rm D}^+{\rm F}^n({\cal A}) \lo {\rm D}^+{\rm F}^n({\cal A}')$$  
of $f_*$ such that
$Rf_*([(I^{\bul},\{I^{\bul(i)}\}_{i=1}^n)])=
[(f_*(I^{\bul}),\{f_*(I^{\bul(i)})\}_{i=1}^n)]$  
for an object $(I^{\bul},\{I^{\bul(i)}\}_{i=1}^n)\in  {\rm K}^+{\rm F}^n({\cal I})$. 
Here $[~]$ is  
the localization functor.  
\par
$(3)$ Let 
$f \col ({\cal T}, {\cal A}) \lo  ({\cal T}', {\cal A}')$ and 
$g \col ({\cal T}', {\cal A}') 
\lo  ({\cal T}'', {\cal A}'')$  be morphisms of ringed topoi.  
Then $R(gf)_*=Rg_*Rf_*$.
\end{coro}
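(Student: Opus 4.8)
The plan is to follow the standard construction of a derived functor through an adapted resolving class, transporting the arguments of \cite{nh2} (the case $n=1$) and \cite[I (4.5)]{hard} to the bifiltered setting; the propositions \ref{prop:exsir} and \ref{prop:comf} together with the lemmas \ref{lemm:hom0} and \ref{lemm:filis} supply precisely the inputs that this construction needs.

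For (1) I would first record the chain of inclusions ${\cal I}^n_{\rm spinj}({\cal A}) \subset {\cal I}^n_{\rm stinj}({\cal A}) \subset {\cal I}^n_{\rm inj}({\cal A}) \subset {\cal I}^n_{\rm flas}({\cal A})$, where the leftmost is \ref{prop:spijstij} and the rest are recorded just before it. The crux is that localization is vacuous on the strictly injective homotopy category: by \ref{lemm:filis} (2) every $n$-filtered quasi-isomorphism between objects of ${\rm K}^+{\rm F}^n({\cal I}^n_{\rm stinj}({\cal A}))$ admits an $n$-filtered homotopy inverse, so inverting $({\rm F}^n{\rm Qis})$ changes nothing there. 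Coupling \ref{prop:exsir} (every complex has a specially, hence strictly, injective resolution, giving essential surjectivity) with \ref{lemm:filis} (1) (which renders ${\rm Hom}_{{\rm KF}^n({\cal A})}(E^{\bul},I^{\bul}) \to {\rm Hom}_{{\rm D}^+{\rm F}^n({\cal A})}(E^{\bul},I^{\bul})$ bijective for strictly injective $I^{\bul}$) identifies ${\rm K}^+{\rm F}^n({\cal I}^n_{\rm stinj}({\cal A}))$ with ${\rm D}^+{\rm F}^n({\cal A})$. Because specially injective objects are strictly injective, the same two inputs yield the equality for ${\cal I}^n_{\rm spinj}({\cal A})$; for the two intermediate classes the existence of strictly injective resolutions lying inside them, together with \ref{lemm:filis}, produces a quasi-inverse after inverting $({\rm F}^n{\rm Qis})$, so all four displayed categories coincide with ${\rm D}^+{\rm F}^n({\cal A})$.

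For (2) I would define $Rf_*(E^{\bul}) := f_*(I^{\bul})$ after choosing, via (1) and \ref{prop:exsir}, a resolution $E^{\bul} \to I^{\bul}$ with $I^{\bul} \in {\rm K}^+{\rm F}^n({\cal I})$; functoriality and independence of the choice follow from \ref{prop:comf} and \ref{lemm:hom0} exactly as for $n=1$. The one substantive verification is that $f_*$ descends to the localization, i.e.\ sends an $n$-filtered quasi-isomorphism between objects of ${\rm K}^+{\rm F}^n({\cal I})$ to one; passing to the $n$-filtered mapping cone (\ref{rema:ba}) this reduces to showing that $f_*$ carries a strictly exact bounded-below complex with components in ${\cal I}$ to a strictly exact complex. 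Here the underlying complex and each intersected complex $E^{\bul(i_1 i_2)}_{k_1 k_2}$ are exact bounded-below complexes of flasque ${\cal A}$-modules, and since $f_*$ commutes with the finite intersections defining the bifiltration (as noted after \ref{prop:comf}) and preserves exactness of bounded-below flasque complexes, strict exactness is preserved. For (3) I would use that $f_*$ preserves flasqueness and is compatible with the products and finite intersections cutting out ${\cal I}^n_{\rm flas}$, whence $f_*({\cal I}^n_{\rm flas}({\cal A})) \subset {\cal I}^n_{\rm flas}({\cal A}')$; thus for a resolution $E^{\bul} \to I^{\bul}$ over ${\cal A}$ the complex $f_*(I^{\bul})$ already lies in ${\rm K}^+{\rm F}^n({\cal I}^n_{\rm flas}({\cal A}'))$ and computes $Rg_*$ directly, giving $Rg_*Rf_*(E^{\bul}) = g_*f_*(I^{\bul}) = (gf)_*(I^{\bul}) = R(gf)_*(E^{\bul})$.

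The main obstacle is the strictness bookkeeping in (2). For $n=2$ one must preserve not merely exactness of the underlying complex but simultaneous exactness of every intersection $E^{\bul(i_1 i_2)}_{k_1 k_2}$, and the phenomenon flagged in \ref{rema:whs} --- that strict exactness of a bifiltered complex is strictly stronger than strict exactness with respect to each filtration separately --- means this cannot be reduced to the single-filtration case one filtration at a time. What rescues the argument is exactly the compatibility of $f_*$ with finite intersections together with flasque acyclicity, which lets all the intersected complexes be treated uniformly and keeps the bifiltered structure intact under $f_*$.
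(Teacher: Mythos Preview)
Your proposal is correct and follows essentially the same approach as the paper's proof, which tersely cites \cite[I (4.7), I (5.1)]{hard} together with (\ref{prop:exsir}) and (\ref{lemm:filis}) for (1) and (2), and deduces (3) by taking ${\cal I}={\cal I}^n_{\rm flas}({\cal A})$ in (2). Your write-up is more explicit---in particular, your paragraph on why $f_*$ preserves strict exactness via compatibility with finite intersections and flasque acyclicity spells out the point that the paper leaves implicit in its Hartshorne citation---but the underlying argument is the same.
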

\begin{proof}
(1): The first 
two equalities follow from 
(\ref{prop:exsir}) and the proof of 
\cite[I (5.1)]{hard}.
The last two equalities follow  
from the proof of \cite[I (4.7)]{hard} and 
(\ref{lemm:filis}) (2).
\par
(2): (2) follows from the argument in the proof of \cite[I (5.1)]{hard}.
\par
(3): (3) follows by setting 
${\cal I}:={\cal I}^n_{\rm flas}({\cal A})$ in (2).
\end{proof}

\begin{prop}\label{prop:stpmb}
Assume that $n\leq 2$. 
Let $\star$ be $+$, $-$, {\rm b} or nothing. 
For $1 \leq i_1  \leq i_n \leq n$  
and $k_1,k_n \in {\mab Z}$, 
the intersection functor 
\begin{equation*}
\bigcap^{(i_1i_n)}_{k_1k_n}
\col {\rm DF}^{\star n}({\cal A}) \owns 
[(E^{\bul},\{E^{\bul(i)}\}_{i=1}^n)] 
\lom 
[E^{\bul(i_1i_n)}_{k_1k_n}]\in 
D^{\star}({\cal A}) 
\tag{3.14.1}\label{prop-defi:indtf}
\end{equation*}
is well-defined. Here $D^{\star}({\cal A})
:=K^{\star}({\cal A})_{({\rm FQis})}$ as usual.  
\end{prop}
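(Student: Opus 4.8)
The plan is to exhibit the intersection functor on the derived categories as the factorization, through the localization by $({\rm F}^n{\rm Qis})$, of a functor already available on the homotopy category; this reduces everything to the universal property of Verdier localization, the one substantive input being the definition of an $n$-filtered quasi-isomorphism. First I would recall from (\ref{prop-defi:we}) (1) that the intersection functor
$$\bigcap^{(i_1i_n)}_{k_1k_n} \col {\rm KF}^n({\cal A}) \lo K({\cal A})$$
is well-defined at the level of homotopy categories. Since $E^{\bul(i_1i_n)}_{k_1k_n}$ is a subcomplex of $E^{\bul}$, it carries a bounded below (resp.~bounded above, bounded) complex to a complex of the same type, so it restricts to a functor ${\rm K}^{\star}{\rm F}^n({\cal A})\lo K^{\star}({\cal A})$ for each $\star$. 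Composing with the localization functor $Q\col K^{\star}({\cal A})\lo D^{\star}({\cal A})$ produces a functor
$$Q\circ \bigcap^{(i_1i_n)}_{k_1k_n} \col {\rm K}^{\star}{\rm F}^n({\cal A}) \lo D^{\star}({\cal A}).$$

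The key step is to verify that this composite sends every element of $({\rm F}^n{\rm Qis})$ to an isomorphism in $D^{\star}({\cal A})$. This is immediate from (\ref{defi:nfm}): if $s\col (E^{\bul},\{E^{\bul(i)}\}_{i=1}^n)\lo (F^{\bul},\{F^{\bul(i)}\}_{i=1}^n)$ is an $n$-filtered quasi-isomorphism, then the morphism (\ref{eqn:gbqz}), namely $E^{\bul(i_1i_n)}_{k_1k_n}\lo F^{\bul(i_1i_n)}_{k_1k_n}$, is by definition a quasi-isomorphism. But this morphism is exactly $\bigcap^{(i_1i_n)}_{k_1k_n}(s)$, so $Q(\bigcap^{(i_1i_n)}_{k_1k_n}(s))$ is an isomorphism. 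Since $({\rm F}^n{\rm Qis})$ is a saturated multiplicative system compatible with the triangulation (\cite[II SM1)$\sim$SM6) p.~112]{v}), the universal property of the localization ${\rm K}^{\star}{\rm F}^n({\cal A})_{({\rm F}^n{\rm Qis})}={\rm DF}^{\star n}({\cal A})$ then yields a unique functor through which $Q\circ\bigcap^{(i_1i_n)}_{k_1k_n}$ factors; on objects it sends $[(E^{\bul},\{E^{\bul(i)}\}_{i=1}^n)]$ to $[E^{\bul(i_1i_n)}_{k_1k_n}]$, which is precisely the asserted intersection functor.

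I do not expect a serious obstacle here: the conditions defining an $n$-filtered quasi-isomorphism in (\ref{defi:nfm}) were tailored exactly so that the intersection complexes are related by quasi-isomorphisms, which is the sole hypothesis the localization argument requires. The only minor verifications are that the intersection respects the boundedness $\star$ (clear, being a subcomplex) and the standard compatibility of the induced functor with composition of roofs, both of which follow formally from the Gabriel--Zisman description of morphisms in the localized category.
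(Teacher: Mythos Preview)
Your proof is correct and is essentially the same approach as the paper's, which simply states ``This follows from the definition of ${\rm DF}^{\star n}({\cal A})$.'' You have spelled out explicitly what that terse sentence means: the definition (\ref{defi:nfm}) of an $n$-filtered quasi-isomorphism requires precisely that the induced morphisms on each $E^{\bul(i_1i_n)}_{k_1k_n}$ be quasi-isomorphisms, so the intersection functor at the homotopy level sends $({\rm F}^n{\rm Qis})$ to $({\rm Qis})$ and hence factors through the localization.
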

\begin{proof} 
This follows from the definition of ${\rm DF}^{\star n}({\cal A})$. 
\end{proof}

\section{Strictly flat resolutions and $Lf^*$}\label{sfr} 
In this section we give the definitions of 
the specially flat resolution and 
the strictly flat resolution of 
a bifiltered complex. These are generalizations of the definitions in 
\cite{blec} and \cite{nh2}. 
The definitions in this section are more complicated than those in the previous section 
because the definitions which are dual to those in the previous section are not appropriate. 
We also define the left derived functor 
$Lf^*\col {\rm D}^-{\rm F}^n({\cal A}')\lo {\rm D}^-{\rm F}^n({\cal A})$ $(n=1,2)$
for a morphism $f\col ({\cal T},{\cal A})\lo ({\cal T}',{\cal A}')$ 
of ringed topoi.  
\par 
First we define another {\it special filtered module}  
(see \cite{blec} and \cite{nh2} for the case $n=1$). 
\par 
For two objects $(E,\{E^{(i)}\}_{i=1}^n)$, 
$(F,\{F^{(i)}\}_{i=1}^n)\in {\rm MF}^n({\cal A})$, 
we define the $n$-filtered tensor product
$(E{\otimes}_{\cal A}F,\{(E\otimes_{\cal A} F)^{(i)}\}_{i=1}^n)$ 
of $(E,\{E^{(i)}\}_{i=1}^n)$ and
$(F,\{F^{(i)}\}_{i=1}^n)$ as follows: 
$$(E\otimes_{\cal A}F)^{(i)}_k:={\rm Im}(\bigoplus_{l+m=k}
E^{(i)}_l\otimes_{\cal A}F^{(i)}_m\lo E\otimes_{\cal A}F).$$ 
\par
Let ${\cal P}(n)$ be the set of nonempty subsets of the set $\{1,\ldots,n\}$ 
and set ${\cal P}(n)_i:=\{P\in {\cal P}(n)~\vert~i\in P\}$ for $1\leq i\leq n$. 
For $P\in {\cal P}(n)$, 
set $m(P):=\# P$. Let  
$i_1<\ldots <i_{m(P)}$ be the elements of $P$: $\{i_1,\ldots, i_{m(P)}\}=P$.
Assume that, for any $P\in {\cal P}(n)$, we are given an 
${\cal A}$-module $E^{(i_1\cdots  i_{m(P)})}_{l_1\cdots  l_{m(P)}}$.  
We set $E^{P}:=
\{E^{(i_1\cdots  i_{m(P)})}_{l_1\cdots  l_{m(P)}}\}_{l_1,\ldots,l_{m(P)}\in {\mab Z}}$
and 
$$\Sigma_{\cal A}(E,\{E^{P}\}_{P\in {\cal P}(n)})
:=E\oplus \bigoplus_{P\in {\cal P}(n)}
\bigoplus_{\{l_1,\ldots, l_{m(P)}\in {\mab Z}\}}
E^{P}_{l_1\cdots  l_{m_{(P)}}} $$ 
with $n$-pieces of filtrations $\Sigma^{(i)}_{\cal A}$'s defined by 
\begin{align*} 
&(\Sigma^{(i)}_{\cal A}(E,\{E^{P}\}_{P\in {\cal P}(n)}))_k:=\\
&
\bigoplus_{P=\{i_1,\ldots,i_{m(P)}\}\in {\cal P}(n)_i}
\bigoplus_{\{l_1\in {\mab Z},\ldots, l_{m(P)}\in {\mab Z}~\vert~l_{m}\leq k~
{\rm for}~i_m=i~{\rm for}~1\leq m\leq m(P)\}}E^P_{l_1\cdots  l_{m(P)}}
\end{align*} 
for $1\leq i\leq n$. 
Then $(\Sigma_{\cal A}(E,\{E^{P}\}_{P\in {\cal P}(n)}),
\Sigma^{(i)}_{\cal A}(E,\{E^{P}\}_{P\in {\cal P}(n)}))$ is 
an object of ${\rm MF}^n({\cal A})$. 
For simplicity of notation, we denote this filtered module by 
$\Sigma_{\cal A}(E,\{E^{P}\}_{P\in {\cal P}(n)})$. 
The filtered module $\Sigma_{\cal A}(E,\{E^{P}\}_{P\in {\cal P}(n)})$ 
is a highly nontrivial generalization of 
the special filtered module defined in \cite{blec} (and \cite{nh2}). 
By the definition of the filtration $\Sigma^{(i)}_{\cal A}(E,\{E^{P}\}_{P\in {\cal P}(n)})$, 
we obtain the following formula: 
\begin{align*} 
&(\Sigma^{(i_1\ldots i_n)}_{\cal A}(E,\{E^{P}\}_{P\in {\cal P}(n)}))_{k_1\cdots  k_n}
\tag{4.0.1}\label{eqn:espflhm}\\
&:=\bigcap_{j=1}^n(\Sigma^{(i_j)}_{\cal A}(E,\{E^{P}\}_{P\in {\cal P}(n)}))_{k_j}\\
&=\bigoplus_{P=\{e_1,\ldots,e_{m(P)}\}\in \cap_{j=i_1}^{i_n}{\cal P}(n)_j}
\bigoplus_{\{l_1\in {\mab Z},\ldots, l_{m(P)}\in {\mab Z}~\vert~l_j\leq k_m~
{\rm for}~e_j=i_m~{\rm for}~{\rm some}~1\leq m\leq n\}}
{\bigoplus}E^{P}_{l_1,\ldots,l_{{m(P)}}}. 
\end{align*} 
The following is a generalization of a formula 
in \cite{blec} and \cite[(1.1.12.1)]{nh2}: 

\begin{prop}\label{prop:sfa}
The following formula holds$:$ 
\begin{align*}  
&{\rm Hom}_{{\rm MF}^n({\cal A})}
({\Sigma}_{\cal A}(E,\{E^{P}\}_{P\in {\cal P}(n)}), 
(F,\{F^{(i)}\}_{i=1}^n))= \tag{4.1.1}\label{eqn:espflahm} \\ 
&{\rm Hom}_{\cal A}(E,F) \times 
{}  {\prod}_{P\in {\cal P}(n)}
\us{l_1\cdots l_{m(P)}\in {\mab Z}}{\prod}
{\rm Hom}_{\cal A}(E^{P}_{l_1\cdots l_{m(P)}},F^{P}_{l_1\cdots l_{m(P)}}) 
\end{align*}
\end{prop}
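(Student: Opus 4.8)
The plan is to mirror the proof of Proposition \ref{prop:mor}, but with the roles of source and target exchanged: there the special module sat in the \emph{target} and one exploited its description as a product, whereas here $\Sigma_{\cal A}(E,\{E^P\}_{P\in {\cal P}(n)})$ sits in the \emph{source} and one exploits its description as a direct sum. First I would observe that the underlying ${\cal A}$-module of $\Sigma_{\cal A}(E,\{E^P\}_{P\in {\cal P}(n)})$ is the direct sum $E\oplus\bigoplus_{P}\bigoplus_{\underline{l}}E^P_{l_1\cdots l_{m(P)}}$, so that an ${\cal A}$-linear map out of it into $F$ is nothing but a family consisting of one map $f_E\col E\lo F$ together with one map $f_{P,\underline{l}}\col E^P_{l_1\cdots l_{m(P)}}\lo F$ for each $P\in {\cal P}(n)$ and each multi-index $\underline{l}=(l_1,\ldots,l_{m(P)})$. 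This already produces, at the level of ambient ${\rm Hom}_{\cal A}$-groups, the product decomposition on the right of (\ref{eqn:espflahm}); the whole content of the proposition is to check that the $n$-filtered compatibility condition on $f$ translates \emph{precisely} into the requirement that each $f_{P,\underline{l}}$ factor through $F^P_{l_1\cdots l_{m(P)}}$, while $f_E$ remains unconstrained.

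Next I would read off that constraint. By definition $f$ lies in ${\rm Hom}_{{\rm MF}^n({\cal A})}$ iff $f((\Sigma^{(i)}_{\cal A})_k)\subset F^{(i)}_k$ for every $1\leq i\leq n$ and $k\in {\mab Z}$. By the description of this filtration step together with (\ref{eqn:espflhm}), the module $(\Sigma^{(i)}_{\cal A})_k$ is the direct sum of exactly those summands $E^P_{\underline{l}}$ for which $i\in P$, say $i=i_j$, and $l_j\leq k$; and a map out of a direct sum carries it into a submodule iff it does so on each summand. Hence the condition is equivalent to $f_{P,\underline{l}}(E^P_{\underline{l}})\subset F^{(i_j)}_k$ for all such summands and all $k\geq l_j$. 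Taking $k=l_j$ and letting $j$ range over $1,\ldots,m(P)$ (equivalently $i_j$ over $P=\{i_1<\cdots<i_{m(P)}\}$) shows these conditions amount to $f_{P,\underline{l}}(E^P_{\underline{l}})\subset F^{(i_j)}_{l_j}$ for each $j$, i.e.\ to $f_{P,\underline{l}}(E^P_{\underline{l}})\subset\bigcap_{j=1}^{m(P)}F^{(i_j)}_{l_j}=F^P_{l_1\cdots l_{m(P)}}$; the conditions for larger $k$ are automatic because $l_j\leq k$ gives $F^{(i_j)}_{l_j}\subset F^{(i_j)}_k$. Since the filtrations need not be exhaustive, the summand $E$ lies in no $(\Sigma^{(i)}_{\cal A})_k$, so $f_E$ is subject to no condition and ranges over all of ${\rm Hom}_{\cal A}(E,F)$.

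Assembling these observations yields the asserted bijection: $f\longmapsto(f_E,\{f_{P,\underline{l}}\}_{P,\underline{l}})$ is additive, the analysis above identifies its image with ${\rm Hom}_{\cal A}(E,F)\times\prod_{P}\prod_{\underline{l}}{\rm Hom}_{\cal A}(E^P_{\underline{l}},F^P_{\underline{l}})$, and the inverse sends such a family to the corresponding direct-sum map, which the very same computation shows to be $n$-filtered. I expect the only real care to be in the bookkeeping of the second step: matching each summand $E^P_{\underline{l}}$ of the filtration step $(\Sigma^{(i)}_{\cal A})_k$ against the correct index in the intersection defining $F^P_{l_1\cdots l_{m(P)}}$, and verifying via (\ref{eqn:espflhm}) that evaluating at $k=l_j$ recovers all and only the membership conditions $f_{P,\underline{l}}(E^P_{\underline{l}})\subset F^{(i_j)}_{l_j}$. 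There is no genuine obstacle beyond this combinatorial translation, exactly as in the case $n=1$ treated in \cite{blec} and \cite{nh2}.
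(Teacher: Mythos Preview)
Your proposal is correct and follows essentially the same approach as the paper's proof: decompose a map out of the direct sum into its components, then use the definition of $\Sigma^{(i)}_{\cal A}$ to see that the filtered condition on $f$ is equivalent to each component $f_{P,\ul{l}}$ landing in $F^{P}_{l_1\cdots l_{m(P)}}$. The paper's argument is simply a terser version of yours, stating only the two directions without unpacking the index bookkeeping that you spell out.
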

\begin{proof} 
Assume that we are given a filtered morphism 
${\Sigma}_{\cal A}(E,\{E^{P}\}_{P\in {\cal P}(n)})\lo (F,\{F^{(i)}\}_{i=1}^n)$. 
Then we have morphisms  
$E\lo F$ and $E^{P}_{l_1\cdots l_{m(P)}}\lo F$. 
By the definition of $\Sigma^{(i)}_{\cal A}(E,\{E^{P}\}_{P\in {\cal P}(n)})$, 
the latter morphism factors through $F^{P}_{l_1\cdots l_{m(P)}}$. 
\par 
Conversely assume that we are are given morphisms 
$E\lo F$ and $E^{P}_{l_1\cdots l_{m(P)}}\lo F^{P}_{l_1\cdots l_{m(P)}}$. 
Obviously we have the composite morphism 
$E^{P}_{l_1\cdots l_{m(P)}}\lo  F^{P}_{l_1\cdots l_{m(P)}}\os{\subset}{\lo} F$. 
This composite morphism and the morphism $E\lo F$
induces a filtered morphism 
${\Sigma}_{\cal A}(E,\{E^{P}\}_{P\in {\cal P}(n)})\lo (F,\{F^{(i)}\}_{i=1}^n)$. 
\end{proof} 

\par
The following is a highly nontrivial generalization of the definition of 
the strictly flatness in \cite{blec} (and \cite{nh2}). 
This definition plays a central role in the definition of 
the derived tensor product $\otimes^L_{\cal A}$ for two complexes of ${\cal A}$-modules 
with $n$-pieces of filtrations defined 
in \S\ref{sec:otl} below. 

\begin{defi}\label{defi:stfl} 
Assume that $n\leq 2$. 
We say that  an object 
$(Q,\{Q^{(i)}\}_{i=1}^n)$ of ${\rm MF}^n({\cal A})$ is 
{\it strictly flat} if it satisfies the 
following two conditions:
\par
(1)  $Q$  and 
$Q/\sum_{j=1}^NQ^{(i^j_1i^j_n)}_{k^j_1k^j_n}$ 
$(N\in {\mab Z}_{\geq 1},1\leq \forall i^j_1 \leq \forall i^j_n \leq n, 
\forall k^j_1, \forall k^j_n \in {\mab Z})$ 
are flat ${\cal A}${\textrm -}modules. 
\par
(2) For a strictly injective morphism  
$(E,\{E^{(i)\bul}\}_{i=1}^n) \os{\subset}{\lo} (F,\{F^{(i)\bul}\}_{i=1}^n)$, 
the induced morphism
$$(Q{\otimes}_{\cal A}E,\{(Q\otimes_{\cal A} E)^{(i)}\}_{i=1}^n)  
\lo (Q{\otimes}_{\cal A}F,\{(Q\otimes_{\cal A} F)^{(i)}\}_{i=1}^n)$$ 
is a strictly injective morphism.
\end{defi}

The following remark (1) is very important.

\begin{rema}\label{rema:dual}
(1) The dual definition of the first property of $(J,\{J^{(i)}\}_{i=1}^n)$ 
in (\ref{defi:stinj}) (1) is the following statement: 
\medskip 
\parno
``$Q$  and $Q/Q^{(i_1i_n)}_{k_1k_n}$ 
$(1\leq \forall i_1 \leq \forall i_n \leq n, 
\forall k_1, \forall k_n \in {\mab Z})$ 
are flat ${\cal A}${\textrm -}modules.'' 
\medskip 
\parno 
However this notion is not appropriate in the definition of the derived tensor product 
$\otimes^L_{\cal A}$ below. 
\par 
(2) Let $(J,\{J^{(i)}\}_{i=1}^n)$ be an object of 
${\cal I}^n_{\rm flas}({\cal A})$ for any $n\in{\mab Z}_{\geq 1}$ 
Then, for any positive integer $N$, 
$\sum_{j=1}^NJ^{(i^j_1\cdots i^j_n)}_{k^j_1\cdots k^j_n}$ is automatically flasque for 
$1\leq 
\forall i^j_1 \leq  \cdots \leq \forall i^j_n \leq n, \forall k^j_1,\ldots, \forall k^j_n \in {\mab Z}$. 
\end{rema}

In the following we assume that $n\leq 2$. 
Let us consider 
the following additive full subcategories 
of ${\rm MF}^n({\cal A})$: 

\begin{align*}
{\cal Q}^n_{\rm fl}({\cal A})
:=\{ (Q,\{Q^{(i)}\}_{i=1}^n)~\vert~ &
Q \text{ and } Q/\sum_{j=1}^NQ^{(i^j_1i^j_n)}_{k^j_1k^j_n} \text{ are flat } 
{\cal A}{\textrm -}\text{modules}
\tag{4.3.1}\label{eqn:qfla}\\
{} & ~(N\in {\mab Z}_{\geq 1},1\leq \forall i^j_1\leq \forall i^j_n \leq n, 
\forall k^j_1, \forall k^j_n \in {\mab Z})\}, 
\end{align*}

\begin{equation*}
{\cal Q}^n_{\rm stfl}({\cal A})
:=\{ (Q,\{Q^{(i)}\}_{i=1}^n)~\vert~
\text{$(Q,\{Q^{(i)}\}_{i=1}^n)$ is a strictly 
flat ${\cal A}${\textrm -}module$\}$}, 
\tag{4.3.2}\label{eqn:qstfla}
\end{equation*}

\begin{align*}
{\cal Q}^n_{\rm spfl}({\cal A}):=\{ 
\Sigma_{\cal A}(Q,\{Q^{P}\}_{P\in {\cal P}(n)})~\vert~&
\text{$Q$  and $Q^P_{k_1 k_{m(P)}}$ 
are flat ${\cal A}${\textrm -}modules}~{\rm for}~\tag{4.3.3}\label{eqn:qspfla}\\
& \forall P\in {\cal P}(n)~{\rm and}~\forall k_1,\forall k_{m(P)}\in {\mab Z}\}. 
\end{align*}
\parno
Then ${\cal Q}^n_{\rm stfl}({\cal A}) \subset 
{\cal Q}^n_{\rm fl}({\cal A})$.

\begin{defi}
Assume that $n\leq 2$. 
We say that an object 
$(Q,\{Q^{(i)}\}_{i=1}^n)\in {\rm MF}^n({\cal A})$ is 
$n$-{\it filteredly  flat} 
(resp.~{\it $n$-specially flat})
if $(Q,\{Q^{(i)}\}_{i=1}^n)\in {\cal Q}^n_{\rm fl}({\cal A})$ 
(resp.~$(Q,\{Q^{(i)}\}_{i=1}^n)\in 
{\cal Q}^n_{\rm spfl}({\cal A})$).
\end{defi}

\begin{lemm}[{\bf \cite{blec} for the case $n=1$}]\label{lemm:spfistf} 
Assume that $n\leq 2$. 
Then 
${\cal Q}^n_{\rm spfl}({\cal A}) \subset {\cal Q}^n_{\rm stfl}({\cal A})$.
\end{lemm}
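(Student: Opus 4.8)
The plan is to verify the two defining conditions of (\ref{defi:stfl}) directly for the object $Q_{\Sigma}:=\Sigma_{\cal A}(Q,\{Q^{P}\}_{P\in {\cal P}(n)})$, exploiting throughout that, by construction, $Q_{\Sigma}$ is a direct sum of the flat ${\cal A}$-modules $Q$ and $Q^{P}_{l_1\cdots l_{m(P)}}$, and that each filtration step $(\Sigma^{(i)}_{\cal A})_k$ is the direct sum of exactly the subfamily of those summands singled out by the index condition in its definition. The case $n=1$ is Berthelot's result in \cite{blec}, so the point is to check that the same scheme survives for $n=2$. First I would dispose of condition (\ref{defi:stfl})(1). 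By the formula (\ref{eqn:espflhm}), every intersection $(Q_{\Sigma})^{(i_1i_n)}_{k_1k_n}=(\Sigma^{(i_1i_n)}_{\cal A})_{k_1k_n}$ is again the direct sum of a subfamily of the summands $Q^{P}_{l_1\cdots l_{m(P)}}$ (those $P$ containing all of $i_1,\dots,i_n$, with the displayed bounds), hence is flat. A sum $\sum_{j=1}^N (Q_{\Sigma})^{(i^j_1i^j_n)}_{k^j_1k^j_n}$ of such coordinate sub-sums is the direct sum of the subfamily indexed by the union of the corresponding index sets, so the quotient $Q_{\Sigma}/\sum_{j}(\cdots)$ is the direct sum of the complementary subfamily of the $Q^{P}_{\underline{l}}$ (together with $Q$ when no term removes it) and is therefore flat.

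The substance is condition (\ref{defi:stfl})(2). Given a strictly injective morphism $(E,\{E^{(i)}\}_{i=1}^n)\os{\sus}{\lo}(F,\{F^{(i)}\}_{i=1}^n)$, the morphism $Q_{\Sigma}\otimes_{\cal A}E\lo Q_{\Sigma}\otimes_{\cal A}F$ is injective because $Q_{\Sigma}$ is flat; hence by (\ref{prop:usef}) it suffices to prove that the induced filtered morphism is strict with respect to each single filtration $(Q_{\Sigma}\otimes E)^{(i)}$ separately, which is exactly the step that avoids the higher intersection terms. For this I would first record the explicit shape of the filtration on the tensor product: unwinding $(Q_{\Sigma}\otimes E)^{(i)}_k={\rm Im}(\bigoplus_{a+b=k}(\Sigma^{(i)}_{\cal A})_a\otimes E^{(i)}_b\lo Q_{\Sigma}\otimes E)$ and using that $E^{(i)}$ is increasing, one obtains
\begin{equation*}
(Q_{\Sigma}\otimes E)^{(i)}_k=\bigoplus_{P\ni i}\bigoplus_{\underline{l}}
{\rm Im}(Q^{P}_{\underline{l}}\otimes E^{(i)}_{k-l(i)}\lo Q^{P}_{\underline{l}}\otimes E),
\end{equation*}
where $l(i)$ denotes the entry of $\underline{l}$ at the slot of $P$ equal to $i$, the summands with $P\not\ni i$ and the base $Q$ contributing nothing. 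This is a direct sum precisely because the various $Q^{P}_{\underline{l}}\otimes E$ are direct summands of $Q_{\Sigma}\otimes E$.

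Finally I would compare with $F$ summand by summand. Viewing $Q_{\Sigma}\otimes E\sus Q_{\Sigma}\otimes F$, the intersection $(Q_{\Sigma}\otimes F)^{(i)}_k\cap(Q_{\Sigma}\otimes E)$ is homogeneous for the grading by the summands $Q^{P}_{\underline{l}}$, so it decomposes as the direct sum over $P\ni i$ of ${\rm Im}(Q^{P}_{\underline{l}}\otimes F^{(i)}_{k-l(i)})\cap(Q^{P}_{\underline{l}}\otimes E)$ inside $Q^{P}_{\underline{l}}\otimes F$. Here the two inputs enter: flatness of $Q^{P}_{\underline{l}}$ turns this into $Q^{P}_{\underline{l}}\otimes(F^{(i)}_{k-l(i)}\cap E)$, since a flat module preserves intersections of submodules (tensor the exact sequence $0\lo F^{(i)}_{k-l(i)}\cap E\lo F^{(i)}_{k-l(i)}\oplus E\lo F^{(i)}_{k-l(i)}+E\lo 0$), and strictness of the filtered morphism $f^{(i)}\col (E,E^{(i)})\lo(F,F^{(i)})$ gives $F^{(i)}_{k-l(i)}\cap E=E^{(i)}_{k-l(i)}$. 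Summing over all summands yields $(Q_{\Sigma}\otimes F)^{(i)}_k\cap(Q_{\Sigma}\otimes E)=(Q_{\Sigma}\otimes E)^{(i)}_k$, i.e. $f^{(i)}$ is strict, and (\ref{prop:usef}) then upgrades this to strictness of $f$.

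I expect the only genuinely delicate step to be the bookkeeping behind the displayed formula for $(Q_{\Sigma}\otimes E)^{(i)}_k$: one must check that the image taken over all $a+b=k$ collapses, in each summand, onto the single largest filtration step $E^{(i)}_{k-l(i)}$, and that the result stays a direct sum respecting the summand decomposition, so that the later summand-wise intersection with $Q_{\Sigma}\otimes E$ is legitimate. The remaining ingredients — flatness of direct sums, flatness preserving finite intersections, and the reduction (\ref{prop:usef}) — are routine once this direct-sum description is in hand, and this is precisely the mechanism by which the awkward higher intersection terms that obstruct the theory for $n\geq 3$ never intervene here.
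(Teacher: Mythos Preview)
Your proposal is correct and follows essentially the same route as the paper: both check condition~(1) via the direct-sum structure, then for condition~(2) reduce via (\ref{prop:usef}) to single-filtration strictness, decompose summand-wise over the $Q^{P}_{\underline{l}}$, and use flatness of each summand together with strictness of the input $f^{(i)}$ to identify $(Q^{P}_{\underline{l}}\otimes F^{(i)}_{j})\cap(Q^{P}_{\underline{l}}\otimes E)$ with $Q^{P}_{\underline{l}}\otimes E^{(i)}_{j}$. The only difference is presentational --- you write down the explicit direct-sum formula for $(Q_{\Sigma}\otimes E)^{(i)}_k$ and argue globally, while the paper phrases the same computation in terms of local sections.
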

\begin{proof} 
Let $\Sigma_{\cal A}(Q,\{Q^{P}\}_{P\in {\cal P}(n)})$ 
be an object of ${\cal Q}_{\rm spfl}({\cal A})$. 
It is easy to see that 
$\Sigma_{\cal A}(Q,\{Q^{P}\}_{P\in {\cal P}(n)})$ and 
$\Sigma_{\cal A}(Q,\{Q^{P}\}_{P\in {\cal P}(n)})
/\sum_{j=1}^N
(\Sigma_{\cal A}(Q,\{Q^{P}\}_{P\in {\cal P}(n)}))^{(i^j_1 i^j_n)}_{k^j_1k^j_n}$ are flat ${\cal A}$-modules.   
\par 
Let $\iota \col (E,\{E^{(i)}\}_{i=1}^n) \os{\subset}{\lo} 
(F,\{F^{(i)}\}_{i=1}^n)$ be a strictly injective morphism 
and let $k$ be an integer. 
Denote by the same symbol $\iota$ the induced injective 
morphism 
$\Sigma_{\cal A}(Q,\{Q^{P}\}_{P\in {\cal P}(n)})\otimes_{\cal A}E \os{\subset}{\lo}
\Sigma_{\cal A}(Q,\{Q^{P}\}_{P\in {\cal P}(n)})\otimes_{\cal A}F$.
Let $s$ be a local section of 
$\iota(\Sigma_{\cal A}(Q,\{Q^{P}\}_{P\in {\cal P}(n)})\otimes_{\cal A}E) \cap
(\Sigma^{(i)}_{\cal A}(Q,\{Q^{P}\}_{P\in {\cal P}(n)})\otimes_{\cal A}F)_k$.
By the definition of the filtration on the filtered tensor product, 
$s$ is a finite sum of  local sections of 
$$\bigoplus_{P=\{i_1i_{m(P)}\}\in {\cal P}(n)_i}
\bigoplus_{\{l_1, l_{m(P)}\in {\mab Z}~\vert~l_m\leq l~
{\rm for}~i_m=i~{\rm for}~1\leq m\leq m(P)\}}Q^P_{l_1 l_{m(P)}}\otimes_{\cal A}F_j$$
$(l+j \allowbreak \leq k)$. Because 
\begin{align}
(Q^P_{l_1 l_{m(P)}}\otimes_{\cal A}F_j)\cap (Q^P_{l_1 l_{m(P)}}\otimes_{\cal A}\iota(E))&  
=Q^P_{l_1 l_{m(P)}}\otimes_{\cal A}(F_j\cap \iota(E))\notag \\
{} &=Q^P_{l_1 l_{m(P)}}\otimes_{\cal A}\iota(E_j),\notag
\end{align}
$s$ is  a local section of 
$\iota((\Sigma^{(i)}_{\cal A}(Q,\{Q^{P}\}_{P\in {\cal P}(n)})\otimes_{\cal A}E)_k)$.
Now we can complete the proof of (\ref{lemm:spfistf})
by (\ref{prop:usef}). 
\end{proof}

\begin{prop}[{\bf \cite{blec} for the case $n=1$}]\label{prop:exspfl} 
Assume that $n\leq 2$. 
For an  ${\cal A}$-module 
$(E,\{E^{(i)}\}_{i=1}^n)$ with $n$-pieces of filtrations, 
there exists a strict epimorphism 
$$\Sigma_{\cal A}(Q,\{Q^{P}\}_{P\in {\cal P}(n)}) \lo 
(E,\{E^{(i)}\}_{i=1}^n)$$ 
with 
$\Sigma_{\cal A}(Q,\{Q^{P}\}_{P\in {\cal P}(n)})\in 
{\cal Q}^n_{\rm spfl}({\cal A})$.
\end{prop}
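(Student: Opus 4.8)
The plan is to dualize the construction of enough special injectives in~(\ref{prop:enspij}): where that proof embedded the pieces of $E$ into injective modules and took a product, here I would cover the pieces of $E$ by flat modules and take the direct sum $\Sigma_{\cal A}$. First I would invoke that the category of ${\cal A}$-modules has enough flat modules---every ${\cal A}$-module is a quotient of a free, hence flat, ${\cal A}$-module, exactly as in Berthelot's case $n=1$---and fix for each ${\cal A}$-module $M$ a flat module $L(M)$ with an epimorphism $\pi_M\col L(M)\lo M$.

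For $n=2$ I would then put $Q:=L(E)$, $Q^{(i)}_l:=L(E^{(i)}_l)$ $(i=1,2,\ l\in{\mab Z})$ and $Q^{(12)}_{l_1l_2}:=L(E^{(12)}_{l_1l_2})$ $(l_1,l_2\in{\mab Z})$, and form $\Sigma_{\cal A}(Q,\{Q^P\}_{P\in{\cal P}(2)})$. Since every building block is flat by construction, this object lies in ${\cal Q}^2_{\rm spfl}({\cal A})$ by the very definition~(\ref{eqn:qspfla}); note that only the building blocks need be flat here, the stronger flatness of $\Sigma_{\cal A}$ and its quotients being supplied separately by~(\ref{lemm:spfistf}). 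I would define $f$ summandwise: on $Q$ by $\pi_E$, and on each $Q^P_{l_1\cdots l_{m(P)}}$ by the composite $L(E^P_{l_1\cdots l_{m(P)}})\os{\pi}{\lo}E^P_{l_1\cdots l_{m(P)}}\os{\sus}{\lo}E$. Each step of the source filtration is a direct sum of building blocks whose images land in the corresponding step of $E$ (e.g.\ for $l_1\leq k$ the block $Q^{(12)}_{l_1l_2}$ maps into $E^{(12)}_{l_1l_2}\subset E^{(1)}_{l_1}\subset E^{(1)}_k$), so $f$ is a morphism in ${\rm MF}^2({\cal A})$; and $f$ is already an epimorphism on underlying modules because $\pi_E$ is.

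The remaining point is the strictness of $f$, namely~(\ref{defi:fef}) that $f(\Sigma^{(i_1i_2)}_{k_1k_2})=E^{(i_1i_2)}_{k_1k_2}$ for $1\leq i_1\leq i_2\leq 2$. The main obstacle, and the reason one cannot simply reduce to the single filtrations via~(\ref{prop:usef}) as in the injective case, is that $f$ is surjective rather than injective, so the full intersection case $(i_1,i_2)=(1,2)$ must be checked by hand. Here I would use the explicit formula~(\ref{eqn:espflhm}), which gives $\Sigma^{(12)}_{k_1k_2}=\bigoplus_{l_1\leq k_1,\,l_2\leq k_2}Q^{(12)}_{l_1l_2}$; applying $f$ yields $\sum_{l_1\leq k_1,\,l_2\leq k_2}E^{(12)}_{l_1l_2}=E^{(12)}_{k_1k_2}$, the last equality because the filtration is increasing and the top block $Q^{(12)}_{k_1k_2}=L(E^{(12)}_{k_1k_2})$ already surjects onto all of $E^{(12)}_{k_1k_2}$. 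The cases $(1,1)$ and $(2,2)$ reduce, via $E^{(ii)}_{k_1k_2}=E^{(i)}_{\min\{k_1,k_2\}}$, to $f(\Sigma^{(i)}_k)=E^{(i)}_k$, which holds since the single block $Q^{(i)}_k$ alone surjects onto $E^{(i)}_k$. This is precisely where the abundance of building blocks encoded in $\Sigma_{\cal A}$---one $Q^{(12)}_{l_1l_2}$ for every pair of indices, in contrast to the coarser, and inadequate, dual data warned against in~(\ref{rema:dual})(1)---is indispensable. The case $n=1$ is Berthelot's (\cite{blec}), recovered by the same construction.
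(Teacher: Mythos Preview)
Your proposal is correct and takes essentially the same approach as the paper: choose flat covers of $E$ and of each $E^P_{l_1\cdots l_{m(P)}}$, form the special filtered module $\Sigma_{\cal A}$, and verify that the resulting map is a strict epimorphism. The paper dispatches strictness in a single line (``Obviously the morphism $R\lo E$ is strict'') after noting that each $Q^P_{l_1\cdots l_{m(P)}}$ surjects onto $E^P_{l_1\cdots l_{m(P)}}$, whereas you spell out the computation of $\Sigma^{(12)}_{k_1k_2}$ via~(\ref{eqn:espflhm}) and correctly observe that~(\ref{prop:usef}) is unavailable here since $f$ is surjective rather than injective---a useful clarification that the paper leaves implicit.
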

\begin{proof}
Recall the functor 
$L^0 \col \{{\cal A}\text{-modules}\}
\lo \{\text{flat }{\cal A}\text{-modules}\}$ 
(\cite[\S7]{bob}):
for an ${\cal A}$-module, 
$L^0(E)$ is, by definition, the sheafification of the presheaf 
$$(U \lom  \text{a } \text{free }\Gam(U,{\cal A})\text{-module 
with basis }\Gam(U,E)\setminus \{0\}).$$
The natural morphism $L^0(E) \lo E$ is an epimorphism.
\par
Let $Q \lo E$ and 
$Q^P_{l_1 l_{m (P)}}
\lo E^P_{l_1 l_{m(P)}}$ 
($P\in {\cal P}$)  
be epimorphisms from flat ${\cal A}$-modules.
Set 
$$R :=Q\oplus \bigoplus_{P\in {\cal P}}
\us{l_1, l_{m(P)}\in {\mab Z}}{\bigoplus}Q^P_{l_1 l_{m(P)}}$$  
and 
$$R^{(i)}_k :=\bigoplus_{P=\{i_1,i_{m(P)}\}\in {\cal P}_i}
\bigoplus_{\{l_1, l_{m(P)}\in {\mab Z}~\vert~l_j\leq k~
{\rm for}~i_m=i~{\rm for}~{\rm any}~1\leq m\leq m(P)\}}Q^P_{l_1 l_{m(P)}}.$$ 
Then  
$(R,\{R^{(i)}\}_{i=1}^n)$ is an object of ${\cal Q}_{\rm spfl}({\cal A})$. 
The morphisms $Q \lo E$ and 
$Q^P_{l_1 l_{m(P)}} {\lo} E^P_{l_1 l_{m(P)}} \os{\subset}{\lo} E$ 
induce an epimorphism
$R \lo E$. The morphism 
$Q^P_{l_1 l_{m(P)}} \lo E^P_{l_1 l_{m(P)}} \os{\subset}{\lo}E^P_{k_1 k_{m(P)}}$ 
$(l_j\leq k_j)$  induces an epimorphism
$R^P_{k_1 k_{m(P)}} \lo E^P_{k_1 k_{m(P)}}$. 
Obviously the morphism $R \lo E$ is strict. 
Thus (\ref{prop:exspfl}) follows. 
\end{proof}

\begin{defi}
Assume that $n\leq 2$. 
Let $(E^{\bul},\{E^{\bul(i)}\}_{i=1}^n)$ be an object of  
${\rm K}^-{\rm F}^n({\cal A})$.  
\par 
(1)  We say that an object 
$(Q^{\bul},\{Q^{\bul(i)}\}_{i=1}^n)\in {\rm K}^-{\rm F}^n({\cal A})$ 
with an $n$-filtered morphism 
$(Q^{\bul},\{Q^{\bul(i)}\}_{i=1}^n)\lo (E^{\bul},\{E^{\bul(i)}\}_{i=1}^n)$ 
is a {\it strictly flat resolution} of  
$(E^{\bul},\{E^{\bul(i)}\}_{i=1}^n)$ 
if $(Q^{q},Q^{q(i)}_k)\in {\cal Q}^n_{\rm stfl}({\cal A})$ for any 
$q\in {\mab Z}$ and if the morphism  
$(Q^{\bul},Q^{\bul(i)})\lo (E^{\bul},\{E^{\bul(i)}\}_{i=1}^n)$ is 
an $n$-filtered quasi-isomorphism  which induces  
a strict epimorphism 
$(Q^q,Q^{q(i)})\lo (E^q,\{E^{q(i)}\}_{i=1}^n)$
for any $q \in {\mab Z}$. 
\par
(2) We say that  an object 
$(Q^{\bul},\{Q^{\bul(i)}\}_{i=1}^n)\in {\rm K}^-{\rm F}^n({\cal A})$ 
with an $n$-filtered morphism 
$(Q^{\bul},\{Q^{\bul(i)}\}_{i=1}^n)\lo (E^{\bul},\{E^{\bul(i)}\}_{i=1}^n)$ 
is a {\it filtered flat resolution} 
(resp.~{\it specially flat resolution}) 
of  $(E^{\bul},\{E^{\bul(i)}\}_{i=1}^n)$ 
if $(Q^{q},\{Q^{q(i)}\}_{i=1}^n) \in {\cal Q}^n_{\rm fl}({\cal A})$ 
(resp.~$(Q^{q},\{Q^{q(i)}\}_{i=1}^n)\in {\cal Q}^n_{\rm spfl}({\cal A})$) 
for any $q\in {\mab Z}$ and if the morphism 
$(Q^{\bul},\{Q^{\bul(i)}\}_{i=1}^n)\lo (E^{\bul},\{E^{\bul(i)}\}_{i=1}^n)$ 
is an $n$-filtered quasi-isomorphism 
which induces a strict epimorphism 
$(Q^q,Q^{q(i)})\lo (E^q,\{E^{q(i)}\}_{i=1}^n)$
for any $q \in {\mab Z}$. 
\end{defi}

The following is a more nontrivial result than (\ref{prop:exsir}) at first glance 
because we cannot use (\ref{prop:usef}): 

\begin{prop}[{\bf \cite{blec} for the case $n=1$}]\label{prop:esfrl}
Assume that $n\leq 2$. 
For an object  
$(E^{\bul},\{E^{\bul(i)}\}_{i=1}^n)\in {\rm K}^-{\rm F}^n({\cal A})$, 
there exists a specially flat resolution 
$(Q^{\bul},\{Q^{\bul(i)}\}_{i=1}^n)$ of $(E^{\bul},\{E^{\bul(i)}\}_{i=1}^n)$.
\end{prop}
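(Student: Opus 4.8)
The plan is to dualize the construction of the strictly injective resolution in (\ref{prop:exsir}), replacing the special injective envelopes supplied by (\ref{prop:enspij}) with the special flat covers supplied by (\ref{prop:exspfl}), and replacing the pushouts $J^q\os{}{\oplus}_{E^q}E^{q+1}$ by the dual fiber products. First I would reduce to the case $E^q=0$ for $q>0$ and construct $(Q^{\bul},\{Q^{\bul(i)}\}_{i=1}^n)$ by descending induction on the degree. For the top term, (\ref{prop:exspfl}) provides a strict epimorphism $\Sigma_{\cal A}(Q,\{Q^P\}_{P\in{\cal P}(n)})\lo (E^0,\{E^{0(i)}\}_{i=1}^n)$ from an object of ${\cal Q}^n_{\rm spfl}({\cal A})$, which I take as $Q^0$.

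For the inductive step, suppose $Q^0,\ldots,Q^q$ (with $q\leq 0$) together with the filtered maps to $E^{\bul}$ and the differentials have been built. Writing $Z^q:={\rm Ker}(Q^q\lo Q^{q+1})$, I would form the fiber product
$$M^{q-1}:=Z^q\times_{{\rm Ker}(E^q\lo E^{q+1})}E^{q-1},$$
where $Z^q\lo {\rm Ker}(E^q\lo E^{q+1})$ is the (restricted) resolution map and $E^{q-1}\lo{\rm Ker}(E^q\lo E^{q+1})$ is the differential $d_E^{q-1}$; passing through the cocycles is what forces the composite $Q^{q-1}\lo Q^q\lo Q^{q+1}$ to vanish, so that $d^2=0$. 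I equip $M^{q-1}$ with the $n$-pieces of filtrations
$$M^{q-1(i)}_k:=Z^{q(i)}_k\times_{{\rm Ker}(E^q\lo E^{q+1})^{(i)}_k}E^{q-1(i)}_k,$$
and then apply (\ref{prop:exspfl}) once more to obtain a strict epimorphism $Q^{q-1}\lo M^{q-1}$ with $Q^{q-1}\in{\cal Q}^n_{\rm spfl}({\cal A})$; the two projections of $M^{q-1}$ define the resolution map $Q^{q-1}\lo E^{q-1}$ and the differential $Q^{q-1}\lo Q^q$. By (\ref{lemm:spfistf}) each component is moreover strictly flat.

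It then remains to check that $Q^{\bul}\lo E^{\bul}$ is an $n$-filtered quasi-isomorphism, i.e.\ that both $Q^{\bul}\lo E^{\bul}$ and $Q^{\bul(i_1i_n)}_{k_1k_n}\lo E^{\bul(i_1i_n)}_{k_1k_n}$ are quasi-isomorphisms for all $1\leq i_1\leq i_n\leq n$ and all $k_1,k_n\in{\mab Z}$. I would carry this out by dualizing the cohomology chase (\ref{ali:krn})–(\ref{ali:kriin}) of (\ref{prop:exsir}): one shows that the induced maps on cocycles are epimorphisms and the induced maps on cohomology are monomorphisms on \emph{each} intersection $Q^{\bul(i_1i_n)}_{k_1k_n}$ simultaneously, so that ${\cal H}^q$ is preserved in every filtration degree at once.

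The main obstacle is precisely the point flagged before the statement: in (\ref{prop:exsir}) the strictness of every constructed morphism was read off from (\ref{prop:usef}), which applies to \emph{injective} maps, whereas here the constructed maps are strict \emph{epimorphisms} and (\ref{prop:usef}) is unavailable. Consequently the strictness of the epimorphisms $Q^q\lo E^q$ and the well-definedness of the fiber-product filtrations $\{M^{q-1(i)}_k\}$ must be verified by hand, directly from the definition (\ref{defi:fef}), using the strict surjectivity furnished by (\ref{prop:exspfl}) together with the compatibility of fiber products with the finite intersections $\bigcap_{j}Q^{q(i_j)}_{k_j}$ that define the filtrations. Keeping track of these intersections, rather than of the single-index filtrations $Q^{q(i)}$, is exactly what makes the case $n=2$ genuinely harder than the filtered case $n=1$ of (\cite{blec}, \cite{nh2}).
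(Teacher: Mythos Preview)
Your outline is sound and close in spirit to the paper's argument, but it differs in two respects and leaves one genuine step unjustified.

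First, the paper takes the fiber product $Q^q\times_{E^q}E^{q-1}$ over the full $E^q$, not your $Z^q\times_{{\rm Ker}(d_E^q)}E^{q-1}$, and then does not simply apply (\ref{prop:exspfl}) to the result: it isolates specific kernel submodules $I^{q-1\triangle}_\circ:={\rm Ker}\bigl({\rm Ker}({\rm proj}_1)^\triangle_\circ\to{\cal H}^{q-1}(E^{\bul\triangle}_\circ)\bigr)$ and builds $Q^{q-1}$ \emph{explicitly} as a $\Sigma$-module from $L^0(I^{q-1})$ and the $L^0(I^{P,q-1}_{\bul})$. Your variant via $Z^q$ has the real advantage of making $d^2=0$ automatic, and your more abstract use of (\ref{prop:exspfl}) is perfectly legitimate; the paper's explicit recipe is designed so that the cohomology verification (\ref{ali:eqq}) becomes a direct reading-off.

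The point you underestimate is the ``cocycles are epimorphisms'' step. You need $Z^{q(i_1i_n)}_{k_1k_n}\to{\rm Ker}(d_E^q)^{(i_1i_n)}_{k_1k_n}$ surjective, both so that $M^{q-1(i_1i_n)}_{k_1k_n}\to E^{q-1(i_1i_n)}_{k_1k_n}$ is onto (hence $Q^{q-1}\to E^{q-1}$ a strict epimorphism) and so that ${\cal H}^q$ is surjective. This does \emph{not} follow by dualizing (\ref{ali:krn})--(\ref{ali:kriin}): that argument used the injectivity of $g^q\colon E^q\hookrightarrow J^q$ to force distinct single-index lifts $t_l$ to coincide, and there is no surjective analogue of that trick. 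What works instead is an honest induction on $q$ (descending): assuming the surjectivity at stage $q{+}1$, lift $e\in{\rm Ker}(d_E^q)^{(i_1i_n)}_{k_1k_n}$ to $a\in Q^{q(i_1i_n)}_{k_1k_n}$ by strict surjectivity; then $d_Q^q(a)$ lies in ${\rm Ker}\bigl(Z^{q+1(i_1i_n)}_{k_1k_n}\to E^{q+1(i_1i_n)}_{k_1k_n}\bigr)$, and the fiber-product description of $M^q$ from the previous step identifies this kernel with $d_Q^q\bigl({\rm Ker}(Q^{q(i_1i_n)}_{k_1k_n}\to E^{q(i_1i_n)}_{k_1k_n})\bigr)$, so one can correct $a$ to land in $Z^{q(i_1i_n)}_{k_1k_n}$. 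This is where the work lives, and it is not captured by the phrase ``dualizing the cohomology chase''; once you supply it, your route goes through.
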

\begin{proof}
Let ${\triangle}$ and $\circ$ be as in the proof of (\ref{prop:exsir}). 
We may assume that $E^q=0$ for $q>0$.  
Assume that we are given 
$(Q^q,\{Q^{q(i)}\}_{i=1}^n),\ldots,(Q^0,\{Q^{0(i)}\}_{i=1}^n)$ for $q\in {\mab Z}_{<0}$. 
Let the notations be as in (\ref{prop:exsir}). 
Consider the fiber product 
$(Q^q\times_{E^q}E^{q-1},\{Q^{q(i)}\times_{E^{q(i)}}E^{q-1(i)}\}_{i=1}^n)$.
Obviously the morphism 
$${\rm Ker}(Q^{q\triangle}_{\circ}\times_{E^{q\triangle}_{\circ}}E^{q-1\triangle}_{\circ}\lo 
Q^{q\triangle}_{\circ})
\lo 
{\rm Ker}(E^{q-1\triangle}_{\circ}\lo E^{q\triangle}_{\circ})$$ 
is surjective. 
Consider the kernel $I^{q-1\triangle}_{\circ}$ of the following morphism 
\begin{align*} 
{\rm Ker}(Q^{q\triangle}_{\circ}
\times_{E^{q\triangle}_{\circ}}E^{q-1\triangle}_{\circ}\lo Q^{q\triangle}_{\circ})
\lo {\cal H}^{q-1}(E^{\bul\triangle}_{\circ}) 
\end{align*} 
for the case where $\triangle$ is nothing or $(i)$ for $1\leq i\leq n$. 
Set 
\begin{align*} 
Q^{q-1}:=L^0(I^{q-1})\bigoplus_{P\in {\cal P}}
\bigoplus_{\{l_1, l_{m(P)}\in {\mab Z}\}}
L^0(I^{P,q-1}_{l_1 l_{m(P)}})
\end{align*} 
and 
\begin{align*} 
Q^{q-1(i)}_k:=\bigoplus_{P=\{i_1,i_{m(P)}\}\in {\cal P}_i}
\bigoplus_{\{l_1\in {\mab Z},\ldots, l_{m-1}\in {\mab Z}, l_m\leq k, l_{m+1}\in {\mab Z},
\ldots,l_{{m(P)}}\in {\mab Z}~\vert~i_m=i~{\rm for}~{\rm some}~1\leq m\leq m(P)\}}
L^0(I^{P,q-1}_{l_1,l_{{m(P)}}})
\end{align*}
for $1\leq i\leq n$. 
Then $(Q^{q-1}, \{Q^{q-1(i)}\}_{i=1}^n)$ is an object of ${\cal Q}_{\rm spfl}({\cal A})$. 
By the definition of $Q^{q-1(i)}_k$ (cf.~(\ref{eqn:espflhm})), 
\begin{align*} 
Q^{q-1(i_1i_n)}_{k_1 k_n}
=
\bigoplus_{P=\{j_1,j_{m(P)}\}\in {\cal P}_i}
\bigoplus_{\{l_1\in {\mab Z},\ldots, l_{m-1}\in {\mab Z}, l_p\leq k_p , l_{m+1}\in {\mab Z},
\ldots,l_{{m(P)}}\in {\mab Z}~\vert~j_p=i_p~{\rm for}~p=1,n\}}
L^0(I^{P,q-1}_{l_1,l_{{m(P)}}}). 
\end{align*} 
Hence we see that the morphism 
\begin{align*} 
{\cal H}^{q-1}(Q^{\bul\triangle}_{\circ}) 
\lo {\cal H}^{q-1}(E^{\bul\triangle}_{\circ}) 
\tag{4.8.1}\label{ali:eqq}
\end{align*} 
is an isomorphism for the case $\triangle$ is nothing or $(i_1 i_n)$. 
\end{proof}

For an additive full subcategory 
${\cal Q}$ of ${\rm MF}^n({\cal A})$, 
let ${\rm K}^-{\rm F}^n({\cal Q})$  
be the category of 
the bounded above complexes with $n$-pieces of filtrations 
whose components belong to 
${\cal Q}$.

\begin{coro}\label{coro:lcal}
Assume that $n\leq 2$. Then the following hold$:$
\par 
$(1)$ The following equalities hold$:$
$${\rm D}^-{\rm F}^n({\cal A})={\rm K}^-{\rm F}^n
({\cal Q}^n_{\rm fl}({\cal A}))_{({\rm F}^n{\rm Qis})}=
{\rm K}^-{\rm F}^n
({\cal Q}^n_{\rm stfl}({\cal A}))_{({\rm F}^n{\rm Qis})}
={\rm K}^-{\rm F}^n
({\cal Q}^n_{\rm spfl}({\cal A}))_{({\rm F}^n{\rm Qis})}.$$
\par
$(2)$ Let ${\cal Q}'{}^n
:={\cal Q}^n_{\rm fl}({\cal A}')$, 
${\cal Q}^n_{\rm stfl}({\cal A}')$ 
or ${\cal Q}^n_{\rm spfl}({\cal A}')$.
Let $f \col ({\cal T}, {\cal A}) 
\lo  ({\cal T}', {\cal A}')$ be  
a morphism of ringed topoi. Then 
there exists the  left derived functor 
$Lf^* \col
{\rm D}^-{\rm F}^n({\cal A}') 
\lo {\rm D}^-{\rm F}^n({\cal A})$ such that 
$Lf^*[(Q^{\bul},\{Q^{\bul(i)}\}_{i=1}^n)]=
[(f^*(Q^{\bul}),\{f^*(Q^{\bul(i)})\}_{i=1}^n)]$ 
for an object 
$(Q^{\bul},\{Q^{\bul(i)}\}_{i=1}^n) \in 
{\rm K}^-{\rm F}^n({\cal Q}'{}^n)$.
\par
$(3)$  Let 
$f \col ({\cal T}, {\cal A}) 
\lo  ({\cal T}', {\cal A}')$ 
and $g \col ({\cal T}', {\cal A}') 
\lo  ({\cal T}'', {\cal A}'')$ 
be morphisms of ringed topoi.
Then $L(gf)^*=Lf^*Lg^*$.
\end{coro}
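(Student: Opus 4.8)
The plan is to establish (\ref{coro:lcal}) in exact parallel with the $Rf_*$-statement (\ref{coro:dfkfmykf}), replacing the injective and flasque resolutions used there by the flat resolutions built in this section and dualizing the cited results of \cite{hard}. Throughout I would use that the three flat classes are nested, ${\cal Q}^n_{\rm spfl}({\cal A})\subset {\cal Q}^n_{\rm stfl}({\cal A})\subset {\cal Q}^n_{\rm fl}({\cal A})$, where the first inclusion is (\ref{lemm:spfistf}) and the second is the inclusion noted after (\ref{eqn:qstfla}), and that by (\ref{prop:esfrl}) every object of ${\rm K}^-{\rm F}^n({\cal A})$ receives an $n$-filtered quasi-isomorphism from a bounded above complex lying in each of these three classes.

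For (1), I would first obtain ${\rm D}^-{\rm F}^n({\cal A})={\rm K}^-{\rm F}^n({\cal Q}^n_{\rm fl}({\cal A}))_{({\rm F}^n{\rm Qis})}$ from (\ref{prop:esfrl}) and the dual of the proof of \cite[I (5.1)]{hard}. The remaining two equalities then hold because ${\cal Q}^n_{\rm stfl}({\cal A})$ and ${\cal Q}^n_{\rm spfl}({\cal A})$ are full additive subcategories of ${\cal Q}^n_{\rm fl}({\cal A})$ through which the resolutions of (\ref{prop:esfrl}) already factor, so the dual of the proof of \cite[I (4.7)]{hard} shows that the evident functors ${\rm K}^-{\rm F}^n({\cal Q}^n_{\rm spfl}({\cal A}))_{({\rm F}^n{\rm Qis})}\lo {\rm K}^-{\rm F}^n({\cal Q}^n_{\rm stfl}({\cal A}))_{({\rm F}^n{\rm Qis})}\lo {\rm K}^-{\rm F}^n({\cal Q}^n_{\rm fl}({\cal A}))_{({\rm F}^n{\rm Qis})}$ are equivalences. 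Here I would stress the one genuine difference from the strictly injective case: the localization $({\rm F}^n{\rm Qis})$ must be retained on all three terms, since an $n$-filtered quasi-isomorphism between bounded above complexes of (strictly) flat modules need not be an $n$-filtered homotopy equivalence, so there is and should be no analogue of (\ref{lemm:filis}) (2) dropping the localization.

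For (2), I would define $Lf^*$ by the standard adapted-class construction (the dual of the argument in the proof of \cite[I (5.1)]{hard}): set $Lf^*[(Q^\bul,\{Q^{\bul(i)}\}_{i=1}^n)]:=[(f^*(Q^\bul),\{f^*(Q^{\bul(i)})\}_{i=1}^n)]$ for $(Q^\bul,\{Q^{\bul(i)}\}_{i=1}^n)\in {\rm K}^-{\rm F}^n({\cal Q}'{}^n)$, and extend to all of ${\rm D}^-{\rm F}^n({\cal A}')$ using the identification in (1). \emph{The main obstacle} is to verify that $f^*$ sends $n$-filtered quasi-isomorphisms between complexes of ${\cal Q}'{}^n$ to $n$-filtered quasi-isomorphisms; by the mapping-cone criterion (\ref{rema:ba}) this is the same as showing that $f^*$ carries a strictly exact bounded above complex of objects of ${\cal Q}'{}^n$ to a strictly exact complex. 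On underlying complexes this is the classical fact that $f^*$ preserves exactness of bounded above complexes of flat modules. On the intersection pieces one must check that $f^*$ commutes with the formation of $Q^{\bul(i_1i_n)}_{k_1k_n}$ and keeps these pieces and their quotients flat, so that each $f^*(Q^{\bul(i_1i_n)}_{k_1k_n})$ stays exact; this is precisely what the flatness of the quotients $Q/\sum_{j}Q^{(i^j_1i^j_n)}_{k^j_1k^j_n}$ in the definition of strict flatness (\ref{defi:stfl}), (\ref{eqn:qfla}) is designed to guarantee, since it makes the short exact sequences cutting out the filtration compatible with the right exact functor $f^*$. Well-definedness and functoriality of $Lf^*$ then follow formally.

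For (3), I would run the computation through the class ${\cal Q}^n_{\rm fl}$, which is the flat analogue of taking ${\cal I}:={\cal I}^n_{\rm flas}({\cal A})$ in (\ref{coro:dfkfmykf}) (3). Given $(E^\bul,\{E^{\bul(i)}\}_{i=1}^n)$ in ${\rm D}^-{\rm F}^n({\cal A}'')$, choose a specially flat resolution $(Q^\bul,\{Q^{\bul(i)}\}_{i=1}^n)\lo (E^\bul,\{E^{\bul(i)}\}_{i=1}^n)$ by (\ref{prop:esfrl}), so that $Lg^*[(E^\bul,\ldots)]=[(g^*(Q^\bul),\{g^*(Q^{\bul(i)})\}_{i=1}^n)]$. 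Since $g^*$ preserves flatness and commutes with the direct-sum filtration of a special flat module, $(g^*(Q^\bul),\{g^*(Q^{\bul(i)})\}_{i=1}^n)$ lies in ${\rm K}^-{\rm F}^n({\cal Q}^n_{\rm fl}({\cal A}'))$, hence is adapted for computing $Lf^*$ by (2). Therefore $Lf^*Lg^*[(E^\bul,\ldots)]=[(f^*g^*(Q^\bul),\ldots)]=[((gf)^*(Q^\bul),\ldots)]=L(gf)^*[(E^\bul,\ldots)]$, which yields $L(gf)^*=Lf^*Lg^*$.
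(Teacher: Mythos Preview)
Your proposal is correct and follows the same approach as the paper, which disposes of (1) and (2) in a single word (``obvious'') and derives (3) by setting ${\cal Q}:={\cal Q}^n_{\rm fl}({\cal A})$ in (2). You have usefully spelled out the content behind ``obvious'': the existence of specially flat resolutions (\ref{prop:esfrl}) yields all three equalities in (1) via the dual of \cite[I (5.1)]{hard}, and your explicit verification that $f^*$ preserves strict exactness of bounded above complexes in ${\cal Q}'{}^n$ (using the flatness of the quotients $Q/\sum_jQ^{(i^j_1i^j_n)}_{k^j_1k^j_n}$ to make $f^*$ commute with the intersection pieces) is exactly the point the paper suppresses. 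Your remark that the localization must be retained on all three flat classes, in contrast to (\ref{coro:dfkfmykf}), is also correct and matches the statement as written.
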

\begin{proof}
(1) and (2) are obvious. (3) follows 
by setting ${\cal Q}:=
{\cal Q}^n_{\rm fl}({\cal A})$ in (2).
\end{proof}

\section{${\rm RHom}^{\bul}$}\label{sec:rhoml}
In this section we define 
the derived homomorphism functor ${\rm RHom}^{\bul}$
from bounded above complexes with $n$-pieces of filtrations 
to bounded below complexes with $n$-pieces of filtrations for $n\leq 2$. 
The results in this section are generalizations of results in  \cite[(1.2)]{nh2}. 
\par 

\par
As in \cite[p.~63]{hard}, we set  
$${\rm Hom}^m_{\cal A}
((E^{\bul},\{E^{\bul(i)}\}_{i=1}^n), 
(F^{\bul},\{F^{\bul(i)}\}_{i=1}^n)):=$$ 
$$\us{q\in {\mab Z}}{\prod}{\rm Hom}_{\cal A}
((E^q,\{E^{q(i)}\}_{i=1}^n),(F^{q+m},\{F^{q+m(i)}\}_{i=1}^n))$$
for 
$(E^{\bul},\{E^{\bul(i)}\}_{i=1}^n), 
(F^{\bul},\{F^{\bul(i)}\}_{i=1}^n)\in {\rm CF}^n({\cal A})$.
Then we have an object  
$${\rm Hom}^{\bul}_{\cal A}
((E^{\bul},\{E^{\bul(i)}\}_{i=1}^n), 
(F^{\bul},\{F^{\bul(i)}\}_{i=1}^n)) \in 
{\rm CF}^n(\Gam({\cal T},{\cal A}))$$ 
of $\Gam({\cal T},{\cal A})$-modules;
the boundary morphism 
$${\rm Hom}^m_{\cal A}((E^{\bul},\{E^{\bul(i)}\}_{i=1}^n), 
(F^{\bul},\{F^{\bul(i)}\}_{i=1}^n)) \lo $$ 
$${\rm Hom}^{m+1}_{\cal A}
((E^{\bul},\{E^{\bul(i)}\}_{i=1}^n), (F^{\bul},\{F^{\bul(i)}\}_{i=1}^n))$$ 
is defined as in \cite[p.~4]{bbm} and \cite[p.~10]{con}: 
$$d^m:=\us{q\in {\mab Z}}{\prod}((-1)^{m+1}d_E^q+d_F^{q+m}).$$
(Recall the filtration (\ref{ali:eifa}).)
\par
For a sequence $\ul{k}=(k_1, k_n)$ of integers, 
an $m$-cocycle of 
$$\bigcap_{j=1}^n{\rm Hom}^{\bul(j)}_{\cal A}
((E^{\bul},\{E^{\bul(i)}\}_{i=1}^n), 
(F^{\bul},\{F^{\bul(i)}\}_{i=1}^n))_{k_j}$$ 
corresponds to an $n$-filtered morphism 
$E^{\bul} \lo F^{\bul}[m]
\langle \ul{k} \rangle$. 
An $m$-coboundary of
$$\bigcap_{j=1}^n{\rm Hom}^{\bul(j)}_{\cal A}
((E^{\bul},\{E^{\bul(i)}\}_{i=1}^n), 
(F^{\bul},\{F^{\bul(i)}\}_{i=1}^n))_{k_j}$$
corresponds to a morphism 
$E^{\bul} \lo F^{\bul}[m]\langle \ul{k}\rangle$ 
which is homotopic to zero.  
Hence  
\begin{equation*}
H^m(\bigcap_{j=1}^n{\rm Hom}^{\bul(j)}_{\cal A}
((E^{\bul},\{E^{\bul(i)}\}_{i=1}^n), 
(F^{\bul},\{F^{\bul(i)}\}_{i=1}^n))_{k_j})= 
\tag{5.0.1}\label{eqn:homcoh}
\end{equation*} 
$${\rm Hom}_{{\rm KF}^n({\cal A})}
((E^{\bul},\{E^{\bul(i)}\}_{i=1}^n), (F^{\bul},\{F^{\bul(i)}\}_{i=1}^n)
[m]\langle \ul{k} \rangle).$$
In particular,  
\begin{equation*}
H^0(\bigcap_{j=1}^n{\rm Hom}^{\bul(j)}_{\cal A}
((E^{\bul},\{E^{\bul(i)}\}_{i=1}^n), 
(F^{\bul},\{F^{\bul(i)}\}_{i=1}^n))_{0})
=
\tag{5.0.2}\label{eqn:h0homch}
\end{equation*}
$${\rm Hom}_{{\rm KF}^n({\cal A})}
((E^{\bul},\{E^{\bul(i)}\}_{i=1}^n), (F^{\bul},\{F^{\bul(i)}\}_{i=1}^n)).$$
More generally, 
for  $1\leq i_1<  i_p \leq n$ $(1\leq p\leq n)$ 
and a sequence $\ul{k}=(k_1,k_p)$ of integers, 
we have  
\begin{align*}
&H^m(\bigcap_{k_1k_p}^{(i_1i_p)}
{\rm Hom}^{\bul}_{\cal A}
((E^{\bul},\{E^{\bul(i)}\}_{i=1}^n), 
(F^{\bul},\{F^{\bul(i)}\}_{i=1}^n)))
\tag{5.0.3}\label{eqn:ghcoh}\\
&= 
{\rm Hom}_{{\rm KF}^n({\cal A})}
((E^{\bul},\{E^{\bul(i_q)}\}_{q=1}^p), 
(F^{\bul},\{F^{\bul(i_q)}\}_{q=1}^p)[m]\langle \ul{k}\rangle).
\end{align*}  
\par 
To define the derived functor of the functor
$${\rm Hom}^{\bul}_{\cal A}(\bul, \bul) \col
{\rm KF}^n({\cal A})^{\circ} \times {\rm K}^+{\rm F}^n({\rm A}) 
\lo {\rm KF}^n(\Gam({\cal T},{\cal A})),$$
we have to check the following:

\begin{lemm}\label{lemm:stex2}
Let $(E^{\bul},\{E^{\bul(i)}\}_{i=1}^n)$ be an object of ${\rm KF}^n({\cal A})$ 
and let $(I^{\bul},\{I^{\bul(i)}\}_{i=1}^n)$ be an object of 
${\rm K}^+{\rm F}^n({\cal I}^n_{\rm stinj}({\cal A}))$. 
Assume that one of the following two 
conditions holds. 
\par
$(1)$ $(I^{\bul},\{I^{\bul(i)}\}_{i=1}^n)$ is strictly exact.
\par
$(2)$ $(E^{\bul},\{E^{\bul(i)}\}_{i=1}^n)$ is strictly exact. 
\parno
Then  
${\rm Hom}^{\bul}_{\cal A}((E^{\bul},\{E^{\bul(i)}\}_{i=1}^n),
(I^{\bul},\{I^{\bul(i)}\}_{i=1}^n))$ 
is strictly exact.
\end{lemm}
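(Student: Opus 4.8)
The plan is to unwind the definition of strict exactness for the $\Gam({\cal T},{\cal A})$-valued complex ${\rm Hom}^{\bul}_{\cal A}((E^{\bul},\{E^{\bul(i)}\}_{i=1}^n),(I^{\bul},\{I^{\bul(i)}\}_{i=1}^n))$ via (\ref{defi:nfdf}) and to compute its relevant cohomologies by the formula (\ref{eqn:ghcoh}). Since $n\leq 2$, strict exactness amounts to the exactness of the underlying complex ${\rm Hom}^{\bul}_{\cal A}(E^{\bul},I^{\bul})$ together with the exactness of each intersection complex $\bigcap^{(i_1i_2)}_{k_1k_2}{\rm Hom}^{\bul}_{\cal A}((E^{\bul},\{E^{\bul(i)}\}_{i=1}^n),(I^{\bul},\{I^{\bul(i)}\}_{i=1}^n))$ for $1\leq i_1\leq i_2\leq n$ and $k_1,k_2\in{\mab Z}$. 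By (\ref{eqn:ghcoh}) the $m$-th cohomology of such an intersection complex is the group ${\rm Hom}_{{\rm KF}^n({\cal A})}((E^{\bul},\{E^{\bul(i_q)}\}_{q=1}^p),(I^{\bul},\{I^{\bul(i_q)}\}_{q=1}^p)[m]\langle\ul{k}\rangle)$ of homotopy classes of morphisms respecting only the chosen $p$ filtrations ($p\leq 2$), while the underlying complex is the case $p=0$, i.e.\ ${\rm Hom}_{K({\cal A})}(E^{\bul},I^{\bul}[m])$. Thus it suffices to show that all these Hom-groups vanish.

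First I would dispose of hypothesis $(2)$ of the statement, that $(I^{\bul},\{I^{\bul(i)}\}_{i=1}^n)$ is strictly exact, which is the cleaner case. Since this object lies in ${\rm K}^+{\rm F}^n({\cal I}^n_{\rm stinj}({\cal A}))$ and is strictly exact, the morphism $(I^{\bul},\{I^{\bul(i)}\}_{i=1}^n)\lo 0$ is an $n$-filtered quasi-isomorphism (its mapping cone is $(I^{\bul},\{I^{\bul(i)}\}_{i=1}^n)[1]$, still strictly exact, cf.\ (\ref{rema:ba})); by (\ref{lemm:filis}) $(2)$ it has an $n$-filtered homotopy inverse, so ${\rm id}$ is $n$-filteredly homotopic to $0$ and $(I^{\bul},\{I^{\bul(i)}\}_{i=1}^n)\simeq 0$ in ${\rm KF}^n({\cal A})$. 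Because ${\rm Hom}^{\bul}_{\cal A}((E^{\bul},\{E^{\bul(i)}\}_{i=1}^n),-)$ carries $n$-filtered homotopies to $n$-filtered homotopies, it follows that ${\rm Hom}^{\bul}_{\cal A}((E^{\bul},\{E^{\bul(i)}\}_{i=1}^n),(I^{\bul},\{I^{\bul(i)}\}_{i=1}^n))\simeq 0$ in ${\rm KF}^n(\Gam({\cal T},{\cal A}))$, and then (\ref{prop:ncc}) forces strict exactness, $0$ being trivially strictly exact.

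Next I would treat hypothesis $(1)$, that $(E^{\bul},\{E^{\bul(i)}\}_{i=1}^n)$ is strictly exact. Here I would prove the vanishing of each Hom-group above by applying (\ref{lemm:hom0}) at the level of the subfamily of filtrations indexed by $\{i_1,\ldots,i_p\}\subset\{1,\ldots,n\}$. This requires two facts about passing to a subfamily $S$ of the filtrations: that restriction preserves strict exactness, which is immediate from (\ref{defi:nfdf}) because the intersection complexes attached to $S$ form a subfamily of those attached to the full index set, and that restriction preserves strict injectivity. The underlying case $p=0$ is the classical statement that a morphism from an exact complex into a bounded-below complex of injective ${\cal A}$-modules is null-homotopic. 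Granting these, (\ref{lemm:hom0}) (resp.\ its classical unfiltered form) shows every such morphism is $p$-filteredly homotopic to zero, so all the Hom-groups vanish and the conclusion follows.

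The hard part will be the restriction statement needed in case $(1)$: that the restriction of a strictly injective $n$-filtered module to a subfamily $S$ of its filtrations is again strictly injective. I would establish this by reducing to special injectives. By (\ref{prop:enspij}) a strictly injective module embeds strictly into a special injective $\prod_{\cal A}(I,\{I^{(i)}\}_{i=1}^n)$, and the lifting property (\ref{defi:stinj}) $(2)$ splits this embedding, so every strictly injective module is a direct summand in ${\rm MF}^n({\cal A})$ of a special injective one. A direct computation then shows the restriction of $\prod_{\cal A}(I,\{I^{(i)}\}_{i=1}^n)$ to $S$ is again special injective: one absorbs the factors $\prod_l I^{(i)}_l$ with $i\notin S$ into the injective base $I$, the new base staying injective as a product of injective modules. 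Since being a direct summand of a special injective is stable under restriction, and special injectives are strictly injective by (\ref{prop:spijstij}) while retracts of strictly injective modules remain strictly injective, the restricted module is strictly injective, as needed.
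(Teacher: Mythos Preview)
Your argument is correct, but note that you have swapped the labels: in the statement, condition $(1)$ is that $(I^{\bul},\{I^{\bul(i)}\}_{i=1}^n)$ is strictly exact and condition $(2)$ is that $(E^{\bul},\{E^{\bul(i)}\}_{i=1}^n)$ is strictly exact, the reverse of how you refer to them.

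Beyond this, your proof follows the paper's line but differs in packaging. For the case $I^{\bul}$ strictly exact, the paper argues directly: using the splitting property (\ref{defi:stinj}) (2) one decomposes $(I^q,\{I^{q(i)}\}_{i=1}^n)\simeq (J^{q-1},\{J^{q-1(i)}\}_{i=1}^n)\oplus (J^q,\{J^{q(i)}\}_{i=1}^n)$ with the differential given by the obvious projection--inclusion, and then writes down the contracting homotopy on ${\rm Hom}^{\bul}$ by hand. Your route via (\ref{lemm:filis}) (2) reaches the same contractibility statement through an already-proved lemma; this is legitimate and arguably cleaner, at the cost of invoking a result whose own proof rests on (\ref{lemm:hom0}). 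For the case $E^{\bul}$ strictly exact, the paper's one-line proof (``by (\ref{eqn:ghcoh}) and by the definition of the strict injectivity, the same argument as that in the classical case works'') is exactly what you have unpacked: reduce via (\ref{eqn:ghcoh}) to vanishing of ${\rm Hom}_{{\rm KF}^p}$ for each subfamily of filtrations, then apply (\ref{lemm:hom0}) degreewise. Your explicit verification that restriction to a subfamily preserves strict injectivity---via the retract-of-special-injective argument---fills in precisely what the paper leaves tacit, and is a genuine clarification.
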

\begin{proof} 
(1): By the definition of the strict injectivity, 
there exist ${\cal A}$-modules 
$J^q$ and $J^{q(i)}_k$ $(i=1, n, q,k \in {\mab Z})$ 
satisfying the following three conditions:
\medskip 
\par
(i) $J^{q(i)}_{k-1} \subset J^{q(i)}_k\subset J^{q(i)}$,  
\par 
(ii) $(I^{q(i)},\{I^{q(i)}\}_{i=1}^n)\simeq 
(J^{q-1},\{J^{q-1(i)}\}_{i=1}^n)\oplus (J^q,\{J^{q(i)}\}_{i=1}^n)$, 
\par
(iii)  the boundary morphism $d \col (I^q,\{I^{q(i)}\}_{i=1}^n) \lo (I^{q+1},\{I^{q+1(i)}\}_{i=1}^n)$
is identified with the induced morphism by the morphisms 
$J^{q-1} \lo 0$ and $J^q \os{\rm id}{\lo} J^q$.  
By (\ref{eqn:ghcoh}), we have only 
to construct a filtered homotopy for a morphism 
$f\in 
{\rm Hom}_{{\rm CF}^n({\cal A})}((E^{\bul},\{E^{\bul(i)}\}_{i=1}^n), 
(I^{\bul},\{I^{\bul(i)}\}_{i=1}^n))$, 
which is easy.
\par
(2): By (\ref{eqn:ghcoh}) and by 
the definition  of the strict injectivity, 
the same argument as 
that in the classical case works. 
\end{proof}

\parno
By (\ref{lemm:stex2}) we obtain 
the following derived functor
$${\rm RHom}^{\bul}_{\cal A} 
\col {\rm DF}^n({\cal A})^{\circ} \times 
{\rm D}^+{\rm F}^n({\cal A}) 
\lo {\rm DF}^n(\Gam({\cal T},{\cal A})).$$ 
By (\ref{prop:stpmb}) 
we obtain the following functor 
$$\bigcap^{(i_1i_n)}_{k_1 k_n}
{\rm RHom}^{\bul}_{\cal A} 
\col {\rm DF}^n({\cal A})^{\circ} \times 
{\rm D}^+{\rm F}^n({\cal A}) 
\lo D(\Gam({\cal T},{\cal A}))$$ 
for $1 \leq i_1 \leq i_n \leq n$ 
and $k_1, k_n \in {\mab Z}$.  

The following includes the adjunction formula in \cite{blec} and 
\cite[(1.2.2)]{nh2}: 

\begin{theo}[{\bf Adjunction formula}]\label{theo:adj} 
Let 
$f \col ({\cal T}, {\cal A}) 
\lo ({\cal T}', {\cal A}')$ be 
a morphism of ringed topoi.
Let $(E^{\bul},\{E^{\bul(i)}\}_{i=1}^n)$ 
$($resp.~$(F^{\bul},\{F^{\bul(i)}\}_{i=1}^n))$ 
be an object of 
${\rm K}^-{\rm F}^n({\cal A}')$ 
and ${\rm K}^+{\rm F}^n({\cal A})$.
Then there exists a canonical isomorphism
$${\rm RHom}^{\bul}_{\cal A}
(Lf^*((E^{\bul},\{E^{\bul(i)}\}_{i=1}^n)), 
(F^{\bul},\{F^{\bul(i)}\}_{i=1}^n)) 
\os{=}{\lo} $$ 
$${\rm RHom}^{\bul}_{{\cal A}'}
((E^{\bul},\{E^{\bul(i)}\}_{i=1}^n), 
Rf_*((F^{\bul},\{F^{\bul(i)}\}_{i=1}^n)))$$
in ${\rm DF}^n(\Gam({\cal T},{\cal A}))$. 
The isomorphism above satisfies 
the transitive condition {\rm (cf.~\cite[V Proposition 3.3.1]{bb})}.
\end{theo}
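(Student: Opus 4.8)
\emph{Proof plan.}
The plan is to reduce the derived statement to the underived bifiltered adjunction between $f^*$ and $f_*$, and then to control the passage to the derived categories by the resolutions constructed in \S\ref{sec:sti} and \S\ref{sfr}. First I would upgrade the ordinary adjunction ${\rm Hom}_{\cal A}(f^*E,F)={\rm Hom}_{{\cal A}'}(E,f_*F)$ to an isomorphism in ${\rm MF}^n(\Gam({\cal T},{\cal A}))$. For $(E,\{E^{(i)}\}_{i=1}^n)\in {\rm MF}^n({\cal A}')$ and $(F,\{F^{(i)}\}_{i=1}^n)\in {\rm MF}^n({\cal A})$ one sets $(f^*E)^{(i)}_k:={\rm Im}(f^*E^{(i)}_k\lo f^*E)$ and uses the left exactness of $f_*$ to get $(f_*F)^{(i)}_k=f_*(F^{(i)}_k)$; the correspondence $g\lom (E\lo f_*f^*E\os{f_*g}{\lo}f_*F)$ and its inverse, built from the counit $f^*f_*F\lo F$, then respect the $i$-th filtration with every shift, by the very definition (\ref{ali:eifa}). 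Applying this degreewise and matching the boundary morphism of ${\rm Hom}^{\bul}$ yields a natural isomorphism ${\rm Hom}^{\bul}_{\cal A}(f^*E^{\bul},F^{\bul})\cong {\rm Hom}^{\bul}_{{\cal A}'}(E^{\bul},f_*F^{\bul})$ in ${\rm CF}^n(\Gam({\cal T},{\cal A}))$.

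Next I would pass to resolutions. Choose a specially flat resolution $(Q^{\bul},\ldots)\lo (E^{\bul},\ldots)$ by (\ref{prop:esfrl}) and a strictly injective resolution $(F^{\bul},\ldots)\lo (I^{\bul},\ldots)$ by (\ref{prop:exsir}), so that $Lf^*(E^{\bul},\ldots)=(f^*Q^{\bul},\ldots)$ by (\ref{coro:lcal}) and $Rf_*(F^{\bul},\ldots)=(f_*I^{\bul},\ldots)$ by (\ref{coro:dfkfmykf}), the latter because ${\cal I}^n_{\rm stinj}({\cal A})\subset {\cal I}^n_{\rm flas}({\cal A})$. By the definition of ${\rm RHom}^{\bul}$ (a strictly injective resolution of the second variable) together with (\ref{lemm:filis}) (1), which makes the answer independent of the chosen representative of $Lf^*$, the complex ${\rm Hom}^{\bul}_{\cal A}(f^*Q^{\bul},I^{\bul})$ represents ${\rm RHom}^{\bul}_{\cal A}(Lf^*E^{\bul},F^{\bul})$, and the isomorphism of the first paragraph identifies it with ${\rm Hom}^{\bul}_{{\cal A}'}(Q^{\bul},f_*I^{\bul})$.

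It remains to identify ${\rm Hom}^{\bul}_{{\cal A}'}(Q^{\bul},f_*I^{\bul})$ with ${\rm RHom}^{\bul}_{{\cal A}'}(E^{\bul},Rf_*F^{\bul})$, and this is where I expect the main obstacle. One cannot simply match resolutions: $f_*I^{\bul}$ is in general only filteredly flasque, not strictly injective, so ${\rm Hom}^{\bul}_{{\cal A}'}(Q^{\bul},f_*I^{\bul})$ does not, by itself, compute the derived functor through a direct resolution argument. Instead I would construct the canonical comparison morphism from the unit $E^{\bul}\lo Rf_*Lf^*E^{\bul}$ and the counit $Lf^*Rf_*F^{\bul}\lo F^{\bul}$ — both of which respect the $n$ filtrations because $f^*$ and $f_*$ do — and prove it is an $n$-filtered quasi-isomorphism. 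By (\ref{defi:nfm}) and the intersection functors of (\ref{prop:stpmb}) this may be tested on the underlying complex and on each filtration-intersection piece $\bigcap^{(i_1i_n)}_{k_1k_n}$; on such a piece $f_*$ commutes with the defining finite intersection, giving $(f_*I)^{(i_1i_n)}_{k_1k_n}=f_*(I^{(i_1i_n)}_{k_1k_n})$, while $f^*$ is compatible with the corresponding quotients of the specially flat $Q^{\bul}$, so the assertion collapses to the ordinary derived-category adjunction between $Lf^*$ and $Rf_*$ for ${\cal A}$-modules, whose filtered prototype is \cite[V Proposition 3.3.1]{bb} and \cite[(1.2.2)]{nh2}. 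The strictness criterion (\ref{prop:usef}), the equivalence (\ref{prop:pq}) and the acyclicity (\ref{lemm:stex2}) are precisely what keep the two filtrations decoupled through this reduction.

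Finally, the transitive condition follows from the uniqueness of this canonical morphism together with $L(gf)^*=Lf^*Lg^*$ (\ref{coro:lcal} (3)) and $R(gf)_*=Rg_*Rf_*$ (\ref{coro:dfkfmykf} (3)), by the same diagram chase as in \cite[V Proposition 3.3.1]{bb}. I expect the genuine difficulty to lie entirely in the bifiltered bookkeeping of the third paragraph — verifying that the single isomorphism of underlying complexes is simultaneously compatible with both filtrations and with all of their intersections — rather than in any homological input beyond the $n\leq 2$ theory already developed.
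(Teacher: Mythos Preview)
Your first two paragraphs are fine and match the paper: the underived bifiltered adjunction ${\rm Hom}^{\bul}_{\cal A}(f^*Q^{\bul},I^{\bul})\cong {\rm Hom}^{\bul}_{{\cal A}'}(Q^{\bul},f_*I^{\bul})$ and the choice of resolutions are exactly the setup of (\ref{eqn:cmadlr}). The transitivity argument in your last paragraph is also correct.

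The gap is in your third paragraph, and it is precisely the step you flagged as the main obstacle. Your proposal to test the comparison on intersection pieces $\bigcap^{(i_1i_n)}_{k_1k_n}$ and then ``collapse to the ordinary derived-category adjunction'' does not work as stated. By (\ref{ali:ikpm}) the piece $\bigcap^{(i_1i_n)}_{k_1k_n}{\rm Hom}^{\bul}_{{\cal A}'}(Q^{\bul},f_*I^{\bul})$ is ${\rm Hom}_{{\rm MF}^n({\cal A}')}((Q^{\bul},\ldots),(f_*I^{\bul},\ldots)\langle k_1,k_n\rangle)$, which is still an $n$-filtered ${\rm Hom}$, not an unfiltered one; it does not split into ${\rm Hom}$'s between the individual intersection pieces $Q^{\bul(i_1i_n)}_{l_1l_n}$ and $(f_*I)^{\bul(i_1i_n)}_{l'_1l'_n}$. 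Even if you exploit the special-flat structure of $Q^{\bul}$ via (\ref{eqn:espflahm}) to decompose this into a product of unfiltered ${\rm Hom}_{\cal A'}(Q^P_{l_1l_{m(P)}},-)$'s, you are left applying ${\rm Hom}(Q^P_{l},-)$ with $Q^P_l$ merely flat to the quasi-isomorphism $f_*I^{\bul}\to J^{\bul}$ between complexes that are neither strictly injective; this functor has no reason to preserve quasi-isomorphisms, so (\ref{lemm:stex2}) does not apply and the reduction fails.

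The paper resolves exactly this obstruction by a different route: it uses transitivity to reduce to the case $f={\rm id}_{\cal T}$ (a pure ring change ${\cal B}\to{\cal A}$), then introduces the right adjoint $f^!={\cal H}{\it om}_{\cal B}(f_*{\cal A},-)$ of $f_*$. The point is that $f^!$ sends strictly injective ${\cal B}$-modules to strictly injective ${\cal A}$-modules (via (\ref{eqn:bikkuri})), so one may \emph{choose} $I^{\bul}=f^!K^{\bul}$ with $K^{\bul}$ strictly injective over ${\cal B}$. Then a flat resolution $R^{\bul}$ of $f_*{\cal A}$ with \emph{trivial} filtration furnishes ${\cal H}{\it om}^{\bul}_{\cal B}(R^{\bul},K^{\bul})$ as an explicit strictly injective resolution $J^{\bul}$ of $f_*f^!K^{\bul}=f_*I^{\bul}$, and the comparison ${\rm Hom}^{\bul}(Q^{\bul},f_*I^{\bul})\to{\rm Hom}^{\bul}(Q^{\bul},J^{\bul})$ becomes the isomorphism (\ref{eqn:qffkj}), which one checks degreewise in $Q^{\bul}$ using (\ref{lemm:stex2}) (2) and then globalizes by a mapping-cone argument exploiting that $Q^{\bul}$ is bounded above. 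The triviality of the filtration on $R^{\bul}$ is what makes the filtered bookkeeping go through; this replaces the failed reduction you were attempting.
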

\begin{proof}
The following proof is the bifiltered version of that of \cite[(1.2.2)]{nh2} (cf.~\cite[V Proposition 3.3.1]{bb}).  
\par 
Let $(I^{\bul},\{I^{\bul(i)}\}_{i=1}^n)$
be  a strictly injective resolution of $(F^{\bul},\{F^{\bul(i)}\}_{i=1}^n)$.
Let $(Q^{\bul},\{Q^{\bul(i)}\}_{i=1}^n)$  be a filtered flat resolution 
of $(E^{\bul},\{E^{\bul(i)}\}_{i=1}^n)$. 
Let $(J^{\bul},\{J^{\bul(i)}\}_{i=1}^n)\in {\rm K}^+{\rm F}({\cal I}_{\rm stinj})$ 
be a strictly injective resolution of $f_*((I^{\bul},\{I^{\bul(i)}\}_{i=1}^n))$.
Then we have the following composite morphism 
\begin{equation*}
{\rm Hom}^{\bul}_{\cal A}(f^*((Q^{\bul},\{Q^{\bul(i)}\}_{i=1}^n)),
(I^{\bul},\{I^{\bul(i)}\}_{i=1}^n))
={\rm Hom}^{\bul}_{{\cal A}'}((Q^{\bul},\{Q^{\bul(i)}\}_{i=1}^n),
f_*((I^{\bul},\{I^{\bul(i)}\}_{i=1}^n)))
\tag{5.2.1}\label{eqn:cmadlr}
\end{equation*}
$$\lo {\rm Hom}^{\bul}_{{\cal A}'}
((Q^{\bul},\{Q^{\bul(i)}\}_{i=1}^n),(J^{\bul},\{J^{\bul(i)}\}_{i=1}^n))
\os{\sim}{\longleftarrow}
{\rm Hom}^{\bul}_{{\cal A}'}((E^{\bul},\{E^{\bul(i)}\}),
(J^{\bul},\{J^{\bul(i)}\}_{i=1}^n)).$$
Here the last quasi-isomorphism follows from (\ref{lemm:stex2}) (2).
\par
As in \cite[V Proposition 3.3.1]{bb}, by the transitive condition, 
we have only to prove 
that (\ref{theo:adj}) holds 
for a morphism 
$f \col ({\cal T}, {\cal A}) 
\lo  ({\cal T}, {\cal B})$ of ringed topoi such that 
$f={\rm id}_{\cal T}$ as 
a morphism of topoi. 
As in the trivial filtered case,  
consider the following 
functor $f^!$:
\begin{align*} 
f^{!} \col {\rm MF}({\cal B}) 
\owns (K,\{K^{(i)}\}_{i=1}^n) \lom & 
{\cal H}{\it om}_{\cal B}
(f_*({\cal A}), (K,\{K^{(i)}\}_{i=1}^n))\\
&={\cal H}{\it om}_{\cal B}({\cal A}, (K,\{K^{(i)}\}_{i=1}^n))
\in {\rm MF}({\cal A}).
\end{align*} 
Here we endow $f_*({\cal A})(={\cal A})$ 
with the trivial filtration.
The functor $f^!$ is the right adjoint functor of $f_*$:
\begin{align*}
{\rm Hom}_{\cal A}((M,\{M^{(i)}\}_{i=1}^n), 
f^{!}((K,\{K^{(i)}\}_{i=1}^n)))=
{\rm Hom}_{\cal B}(f_*((M,& \{M^{(i)}\}_{i=1}^n)), 
(K,\{K^{(i)}\}_{i=1}^n)) 
\tag{5.2.2}\label{eqn:bikkuri} \\ 
& (M,\{M^{(i)}\}_{i=1}^n)\in {\rm MF}({\cal A})).  
\end{align*}
By (\ref{eqn:bikkuri}), we see that, if $(K,\{K^{(i)}\}_{i=1}^n)\in {\rm MF}({\cal B})$ is 
a strictly injective ${\cal B}$-module, 
then $f^{!}((K,\{K^{(i)}\}_{i=1}^n))$ is a strictly injective ${\cal A}$-module.
Moreover, for a strictly injective morphism  
$f_*((M,\{M^{(i)}\}_{i=1}^n) \os{\subset}{\lo} (K,\{K^{(i)}\}_{i=1}^n)$ of ${\cal B}$-modules, 
the corresponding morphism
$(M,\{M^{(i)}\}_{i=1}^n) \os{\subset}{\lo} f^{!}((K,\{K^{(i)}\}_{i=1}^n)$ is 
a strictly injective morphism of 
${\cal A}$-modules, which is easily checked.
Hence, by the same proof as that of  
(\ref{prop:exsir})
(especially, by noting that the functor $f^{!}$ 
commutes with the direct product), we can take
$f^{!}((K^{\bul},\{K^{\bul(i)}\}_{i=1}^n))$ as $(I^{\bul},\{I^{\bul(i)}\}_{i=1}^n)$, 
where $(K^{\bul},\{K^{\bul(i)}\}_{i=1}^n)$ is a bounded below 
complex of strictly injective ${\cal B}$-modules.
\par
Let $R^{\bul}$ be a flat 
resolution of $f_*({\cal A})$ 
with the trivial 
filtration. Since the filtration 
on $R^{\bul}$ is trivial,  it is obvious 
that  the morphism
$(Q^q,\{Q^{q(i)}\}_{i=1}^n){\otimes}_{\cal B}R^{\bul}
{\lo} \allowbreak (Q^q,\{Q^{q(i)}\}_{i=1}^n ){\otimes}_{\cal B} f_*({\cal A})$
is a filtered quasi-isomorphism $(q\in {\mab Z})$.
By (\ref{lemm:stex2}) (2) we have the following isomorphism 
\begin{align*}
& {\rm Hom}^{\bul}_{\cal B}((Q^q,\{Q^{q(i)}\}_{i=1}^n){\otimes}_{\cal B}
f_*({\cal A}),(K^{\bul}, \{K^{\bul(i)}\}_{i=1}^n)) \tag{5.2.3}\label{eqn:qar} \\
\os{\sim}{\lo} & {\rm Hom}^{\bul}_{\cal B}((Q^q,\{Q^{q(i)}\}_{i=1}^n){\otimes}_{\cal B}
R^{\bul}, (K^{\bul},\{K^{\bul(i)}\}_{i=1}^n)). 
\end{align*}
(\ref{eqn:qar}) is equal to the following:
\begin{align*}
& {\rm Hom}^{\bul}_{\cal B}((Q^q,\{Q^{q(i)}\}_{i=1}^n), 
{\cal H}{\it om}_{\cal B}(f_*({\cal A}),(K^{\bul},\{K^{\bul(i)}\}_{i=1}^n))) 
\tag{5.2.4}\label{eqn:qfak} \\ 
\os{\sim}{\lo} & {\rm Hom}^{\bul}_{\cal B}((Q^q,\{Q^{q(i)}\}_{i=1}^n), 
{\cal H}{\it om}_{\cal B}^{\bul}(R^{\bul},(K^{\bul},\{K^{\bul(i)}\}_{i=1}^n))). 
\nonumber 
\end{align*}
Here 
${\cal H}{\it om}_{\cal B}
(f_*({\cal A}),(K^{\bul},\{K^{\bul(i)}\}_{i=1}^n))$ is 
considered as a filtered ${\cal B}$-module, which is nothing but 
$f_*f^{!}(K^{\bul},\{K^{\bul(i)}\}_{i=1}^n)$.
It is easy to check that 
${\cal H}{\it om}_{\cal B}(R^{q},(K^{q+n},\{K^{q+n(i)}\}_{i=1}^n))$ is a strictly 
injective ${\cal B}$-module; so 
is ${\cal H}{\it om}_{\cal B}^n(R^{\bul},(K^{\bul},\{K^{\bul(i)}\}_{i=1}^n))$ 
$(n\in {\mab Z})$. Therefore 
${\cal H}{\it om}_{\cal B}^{\bul}(R^{\bul},(K^{\bul},\{K^{\bul(i)}\}_{i=1}^n))$ 
is a strictly injective resolution of $f_*f^{!}(K^{\bul},\{K^{\bul(i)}\}_{i=1}^n)$ 
by the sheafification of (\ref{lemm:stex2}) (2).
Hence we can take 
${\cal H}{\it om}_{\cal B}^{\bul}
(R^{\bul},(K^{\bul},\{K^{\bul(i)}\}_{i=1}^n))$ as 
$(J^{\bul},\{J^{\bul(i)}\}_{i=1}^n)$, and 
we have a filtered quasi-isomorphism
\begin{equation*}
{\rm Hom}^{\bul}_{\cal B}
((Q^q,\{Q^{q(i)}\}_{i=1}^n), 
f_*f^{!}(K^{\bul},\{K^{\bul(i)}\}_{i=1}^n))
\os{\sim}{\lo} {\rm Hom}^{\bul}_{\cal B}((Q^q,\{Q^{q(i)}\}_{i=1}^n), (J^{\bul},\{J^{\bul(i)}\}_{i=1}^n)) 
\tag{5.2.5}\label{eqn:qffkj}
\end{equation*}
by (\ref{eqn:qfak}).
\par
Let $(C^{\bul},\{C^{\bul(i)}\}_{i=1}^n)$ be 
the mapping cone of the morphism 
$f_*f^{!}(K^{\bul},\{K^{\bul(i)}\}_{i=1}^n)\lo (J^{\bul},\{J^{\bul(i)}\}_{i=1}^n)$.
Then we have a triangle 
$${\rm Hom}_{\cal B}^{\bul}((Q^{\bul},\{Q^{\bul(i)}\}_{i=1}^n),f_*f^{!}(K^{\bul},\{K^{\bul(i)}\}_{i=1}^n)) 
\lo {\rm Hom}_{\cal B}^{\bul}((Q^{\bul},\{Q^{\bul(i)}\}_{i=1}^n),(J^{\bul},\{J^{\bul(i)}\}_{i=1}^n))$$
$${\lo}{\rm Hom}_{\cal B}^{\bul}((Q^{\bul},\{Q^{\bul(i)}\}_{i=1}^n),
(C^{\bul},\{C^{\bul(i)}\}_{i=1}^n))\os{+1}{\lo} \cdots.$$
By (\ref{eqn:qffkj}) the filtered complex 
${\rm Hom}_{\cal B}^{\bul}((Q^q,\{Q^{q(i)}\}_{i=1}^n),
(C^{\bul},\{C^{\bul(i)}\}_{i=1}^n))$ is strictly exact.
As in \cite[p.~327]{bb}, by noting that 
$(Q^{\bul},\{Q^{\bul(i)}\}_{i=1}^n)$ is 
bounded above, one can easily check that 
$${\rm Hom}_{\cal B}^{\bul}((Q^{\bul},\{Q^{\bul(i)}\}_{i=1}^n),
(C^{\bul},\{C^{\bul(i)}\}_{i=1}^n))$$ 
is also strictly exact.
Therefore we obtain
$${\rm Hom}^{\bul}_{\cal B}((Q^{\bul},\{Q^{\bul(i)}\}_{i=1}^n), 
f_*f^{!}(K^{\bul},\{K^{\bul(i)}\}_{i=1}^n))
\os{\sim}{\lo} 
{\rm Hom}^{\bul}_{\cal B}((Q^{\bul},\{Q^{\bul(i)}\}_{i=1}^n), 
(J^{\bul},\{J^{\bul(i)}\}_{i=1}^n)),$$ 
which enables us to 
finish the proof of (\ref{theo:adj}).
\end{proof}

Let $(E^{\bul},\{E^{\bul(i)}\}_{i=1}^n)$ 
(resp.~$(F^{\bul},\{F^{\bul(i)}\}_{i=1}^n)$) be 
an object of ${\rm KF}^n({\cal A})$ 
(resp.~${\rm K}^+{\rm F}^n({\cal A})$).
Set 
$${\rm Ext}^q_{{\cal A}}
((E^{\bul},\{E^{\bul(i)}\}_{i=1}^n),(F^{\bul},\{F^{\bul(i)}\}_{i=1}^n)):=
{\rm Hom}_{{\rm DF}^n({\cal A})}
((E^{\bul},\{E^{\bul(i)}\}_{i=1}^n),(F^{\bul},\{F^{\bul(i)}\}_{i=1}^n)[q]).$$

The following lemma is a 
$n$-filtered version of a classical lemma  
\cite[I (6.4)]{hard}.
\begin{lemm}\label{lemm:hqrhomdf}
The following formula holds$:$
\begin{equation*}
H^q(\bigcap^{(1,n)}_{0,0}{\rm RHom}^{\bul}_{\cal A}
((E^{\bul},\{E^{\bul(i)}\}_{i=1}^n),(F^{\bul},\{F^{\bul(i)}\}_{i=1}^n)))= 
\tag{5.3.1}\label{eqn:hqgnderhm}
\end{equation*}
$${\rm Ext}^q_{{\cal A}}
((E^{\bul},\{E^{\bul(i)}\}_{i=1}^n),(F^{\bul},\{F^{\bul(i)}\}_{i=1}^n)).$$
In particular,
\begin{equation*}
H^0(\bigcap^{(1,n)}_{0,0}
{\rm RHom}^{\bul}_{\cal A}((E^{\bul},\{E^{\bul(i)}\}_{i=1}^n), 
(F^{\bul},\{F^{\bul(i)}\}_{i=1}^n)))=
\tag{5.3.2}\label{eqn:h0derhm}
\end{equation*} 
$${\rm Hom}_{{\rm DF}^n({\cal A})}
((E^{\bul},\{E^{\bul(i)}\}_{i=1}^n), 
(F^{\bul},\{F^{\bul(i)}\}_{i=1}^n)).$$
\end{lemm}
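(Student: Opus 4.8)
The plan is to compute the derived functor through a strictly injective resolution of the second variable and then to extract cohomology via the intersection formula (\ref{eqn:ghcoh}); this is the $n$-filtered transcription of the proof of \cite[I (6.4)]{hard}. First I would choose, by (\ref{prop:exsir}), a strictly injective resolution $(I^{\bul},\{I^{\bul(i)}\}_{i=1}^n)$ of $(F^{\bul},\{F^{\bul(i)}\}_{i=1}^n)$. By the construction of ${\rm RHom}^{\bul}_{\cal A}$ furnished by (\ref{lemm:stex2}) and by (\ref{prop:stpmb}), the object $\bigcap^{(1,n)}_{0,0}{\rm RHom}^{\bul}_{\cal A}((E^{\bul},\ldots),(F^{\bul},\ldots))$ of $D(\Gam({\cal T},{\cal A}))$ is represented by the complex $\bigcap^{(1,n)}_{0,0}{\rm Hom}^{\bul}_{\cal A}((E^{\bul},\ldots),(I^{\bul},\ldots))$ of $\Gam({\cal T},{\cal A})$-modules.

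Next I would apply the formula (\ref{eqn:ghcoh}) with $p=n$, $(i_1,i_n)=(1,n)$ and $\ul{k}=(0,0)$ to this complex, which yields
\begin{equation*}
H^q(\bigcap^{(1,n)}_{0,0}{\rm Hom}^{\bul}_{\cal A}((E^{\bul},\ldots),(I^{\bul},\ldots)))
={\rm Hom}_{{\rm KF}^n({\cal A})}((E^{\bul},\ldots),(I^{\bul},\ldots)[q]).
\end{equation*}
The crux is then to identify this homotopy Hom-group with a morphism group in ${\rm DF}^n({\cal A})$. Because $(I^{\bul},\ldots)$ lies in ${\rm K}^+{\rm F}^n({\cal I}^n_{\rm stinj}({\cal A}))$, the equality ${\rm D}^+{\rm F}^n({\cal A})={\rm K}^+{\rm F}^n({\cal I}^n_{\rm stinj}({\cal A}))$ of (\ref{coro:dfkfmykf}) (1), together with (\ref{lemm:filis}) (1), shows that the localization map
\begin{equation*}
{\rm Hom}_{{\rm KF}^n({\cal A})}((E^{\bul},\ldots),(I^{\bul},\ldots)[q])
\lo
{\rm Hom}_{{\rm DF}^n({\cal A})}((E^{\bul},\ldots),(I^{\bul},\ldots)[q])
\end{equation*}
is bijective; intuitively, a roof representing a morphism in ${\rm DF}^n({\cal A})$ has its backward $n$-filtered quasi-isomorphism absorbed, the target being strictly injective. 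Since $(F^{\bul},\ldots)\to(I^{\bul},\ldots)$ is an $n$-filtered quasi-isomorphism, hence an isomorphism in ${\rm DF}^n({\cal A})$, I may replace the target by $(F^{\bul},\ldots)$, and the right-hand side becomes ${\rm Ext}^q_{\cal A}((E^{\bul},\ldots),(F^{\bul},\ldots))$ by the definition of ${\rm Ext}^q_{\cal A}$. The statement (\ref{eqn:h0derhm}) is the specialization to $q=0$.

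The only nonformal point is the identification above: that nothing is lost in passing from the homotopy category ${\rm KF}^n({\cal A})$ to the derived category ${\rm DF}^n({\cal A})$ when mapping an arbitrary object into a strictly injective complex. This is exactly the place where the $n$-filtered theory must reproduce the classical mechanism, and it rests on (\ref{lemm:filis}), which itself relies on (\ref{lemm:hom0}) and on the strict-injectivity formalism developed in \S\ref{sec:sti}. The remaining steps are bookkeeping with (\ref{eqn:ghcoh}) and with the compatibility of the intersection functor with localization, as provided by (\ref{prop:stpmb}).
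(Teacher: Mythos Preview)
Your proposal is correct and follows essentially the same route as the paper's proof: resolve the second variable strictly injectively, invoke the cohomology identification (\ref{eqn:ghcoh}) (equivalently (\ref{eqn:homcoh})), and then use (\ref{lemm:filis}) to pass from ${\rm KF}^n$ to ${\rm DF}^n$. The paper cites both parts (1) and (2) of (\ref{lemm:filis}) whereas you invoke only (1) together with (\ref{coro:dfkfmykf}); this is a harmless difference of bookkeeping, since (1) alone already yields the bijection ${\rm Hom}_{{\rm KF}^n({\cal A})}(-,I^{\bul})\os{\sim}{\lo}{\rm Hom}_{{\rm DF}^n({\cal A})}(-,I^{\bul})$ via left fractions.
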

\begin{proof} 
By using  (\ref{lemm:filis}) (1), (2)  and (\ref{eqn:homcoh}),  
the proof is the same as that of \cite[(1.2.3)]{nh2}. 
\end{proof}

\section{$\otimes^L_{\cal A}$}\label{sec:otl}
In this section we define the $n$-filtered derived tensor product $\otimes^L_{\cal A}$ of 
two bounded below complexes with $n$-pieces of filtrations.  
As in the previous section, we assume that $n\leq 2$. 
The results in this section are generalizations of results in  \cite[(1.2)]{nh2}. 
The construction of $\otimes^L_{\cal A}$ is more nontrivial than the construction of 
${\rm RHom}^{\bul}$ 
in the previous section because the dual notion (\ref{rema:dual}) (1) does not work 
in this section.

\par 
The following (2) is a key lemma for the definition $\otimes^L_{\cal A}$. 

\begin{lemm}\label{lemm:tensit}
Let $1\leq i_1< i_m\leq n$ $(1\leq m\leq n)$ and $k_1,k_m$ be 
integers. Then the following hold$:$
\par 
$(1)$ 
Let $(E,\{E^{(i)}\}_{i=1}^n)$ be an ${\cal A}$-module with $n$-pieces of filtrations.
Then 
\begin{align*} 
&{\rm gr}^{(i_{m})}_{q_m}E
/{\rm Fil}^{(i_{1})}_{q_1}({\rm gr}^{(i_{m})}_{q_m}E)=
E^{(i_m)}_{q_m}
/(E^{(i_1i_m)}_{q_1q_m}+E^{(i_m)}_{q_m-1}). 
\tag{6.1.1}\label{ali:grfr}
\end{align*} 
\par
$(2)$ 
Let $(E,\{E^{(i)}\}_{i=1}^n)$ 
and $(F,\{F^{(i)}\}_{i=1}^n)$ be ${\cal A}$-modules 
with $n$-pieces of filtrations.
Assume that $(F,\{F^{(i)}\}_{i=1}^n) 
\in {\cal Q}_{\rm fl}({\cal A})$ 
$((\ref{eqn:qfla}))$. 
Then the natural morphism 
\begin{align*} 
\bigoplus_{p_1+q_1=k_1, p_m+q_m=k_m}
{\rm gr}^{(i_1)}_{p_1}  {\rm gr}^{(i_m)}_{p_m}
E {\otimes}_{\cal A} {\rm gr}^{(i_1)}_{q_1} {\rm gr}^{(i_m)}_{q_m}F
{\lo} {\rm gr}^{(i_1)}_{k_1}  {\rm gr}^{(i_m)}_{k_m}(E\otimes_{\cal A}F)
\tag{6.1.2}\label{ali:grm}
\end{align*} 
is an isomorphism. 
\end{lemm}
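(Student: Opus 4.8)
The plan is to treat the two parts separately: part (1) is immediate from the definition of the induced filtration, while for part (2) I reduce to the single-filtration K\"unneth isomorphism of \cite{blec} and \cite{nh2} and then iterate it once for each filtration.

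For (1) I would simply unwind the definition of the quotient filtration. On ${\rm gr}^{(i_m)}_{q_m}E=E^{(i_m)}_{q_m}/E^{(i_m)}_{q_m-1}$ the filtration induced by $E^{(i_1)}$ is, by definition, the image of $E^{(i_1)}_{q_1}\cap E^{(i_m)}_{q_m}=E^{(i_1i_m)}_{q_1q_m}$, so that ${\rm Fil}^{(i_1)}_{q_1}({\rm gr}^{(i_m)}_{q_m}E)=(E^{(i_1i_m)}_{q_1q_m}+E^{(i_m)}_{q_m-1})/E^{(i_m)}_{q_m-1}$. Dividing ${\rm gr}^{(i_m)}_{q_m}E$ by this submodule and applying the isomorphism theorem yields the right-hand side of (\ref{ali:grfr}); no flatness hypothesis is needed here.

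For (2) I would first note that, since $n\leq 2$, the only genuinely bifiltered case is $(i_1,i_m)=(1,2)$, the case $m=1$ being exactly the single-filtration K\"unneth formula for the tensor-product filtration. I record the latter as a building block (\cite{blec}, \cite{nh2}): if $F$ and all quotients $F/F^{(i)}_{\bul}$ are flat, then ${\rm gr}^{(i)}_k(E\otimes_{\cal A}F)\os{\sim}{\lo}\bigoplus_{p+q=k}{\rm gr}^{(i)}_pE\otimes_{\cal A}{\rm gr}^{(i)}_qF$. The proof of (2) then runs in two steps. First apply the building block to the filtration $(2)$, using the flatness of $F$, to get ${\rm gr}^{(2)}_{k_2}(E\otimes_{\cal A}F)\os{\sim}{\lo}\bigoplus_{p_2+q_2=k_2}{\rm gr}^{(2)}_{p_2}E\otimes_{\cal A}{\rm gr}^{(2)}_{q_2}F$. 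Then take ${\rm gr}^{(1)}_{k_1}$ of both sides; on the right the building block applied to the induced filtration $(1)$ on each summand ${\rm gr}^{(2)}_{p_2}E\otimes_{\cal A}{\rm gr}^{(2)}_{q_2}F$ produces $\bigoplus_{p_1+q_1=k_1}{\rm gr}^{(1)}_{p_1}{\rm gr}^{(2)}_{p_2}E\otimes_{\cal A}{\rm gr}^{(1)}_{q_1}{\rm gr}^{(2)}_{q_2}F$, which is the source of (\ref{ali:grm}) after reindexing.

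The main obstacle is the compatibility of filtrations between the two steps: I must verify that the $(1)$-filtration induced on ${\rm gr}^{(2)}_{k_2}(E\otimes_{\cal A}F)$ agrees, under the first isomorphism, with the tensor-product $(1)$-filtration on $\bigoplus_{p_2+q_2=k_2}{\rm gr}^{(2)}_{p_2}E\otimes_{\cal A}{\rm gr}^{(2)}_{q_2}F$, for only then does applying ${\rm gr}^{(1)}_{k_1}$ yield the same module on both sides. This is where part (1) and the full strength of ${\cal Q}_{\rm fl}({\cal A})$ (\ref{eqn:qfla}) enter: part (1) identifies the subquotients of the induced $(1)$-filtration with modules of the form $E^{(2)}_{q_2}/(E^{(12)}_{q_1q_2}+E^{(2)}_{q_2-1})$, while the flatness of $F$ together with that of the quotients $F/\sum_jF^{(i^j_1i^j_n)}_{k^j_1k^j_n}$ guarantees that tensoring preserves both the intersections $(E\otimes_{\cal A}F)^{(12)}_{k_1k_2}=(E\otimes_{\cal A}F)^{(1)}_{k_1}\cap(E\otimes_{\cal A}F)^{(2)}_{k_2}$ and the sums appearing in (\ref{ali:ppe}), so that no spurious torsion is introduced on passing to graded pieces. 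Carrying out this bookkeeping, essentially a doubly-graded K\"unneth compatibility verified through the exact sequence (\ref{ali:pipe}), is the only nonformal part of the argument.
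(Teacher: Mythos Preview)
Your proposal is correct and follows essentially the same two-step iteration of the single-filtration K\"unneth formula \cite[(1.2.4)]{nh2} as the paper: first for the filtration $(i_m)$, then for $(i_1)$ on each summand ${\rm gr}^{(i_m)}_{p_m}E\otimes_{\cal A}{\rm gr}^{(i_m)}_{q_m}F$. The only difference is one of emphasis: you frame the filtration-compatibility of the first isomorphism as the ``main obstacle'', whereas the paper treats that passage as immediate and instead singles out as ``the point of this proof'' the verification---via part (1)---that ${\rm gr}^{(i_m)}_{q_m}F$ and ${\rm gr}^{(i_m)}_{q_m}F/{\rm Fil}^{(i_1)}_{q_1}({\rm gr}^{(i_m)}_{q_m}F)=F^{(i_m)}_{q_m}/(F^{(i_1i_m)}_{q_1q_m}+F^{(i_m)}_{q_m-1})$ are flat, which is exactly what the strengthened definition of ${\cal Q}_{\rm fl}({\cal A})$ provides and what licenses the second application of \cite[(1.2.4)]{nh2}.
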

\begin{proof}
(1):  We omit the proof. 
\par 
(2): 
We proceed on induction on $m$. 
When $m=1$, (2) is nothing but \cite[(1.2.4)]{nh2}. 
Hence we have only to prove (2) for the case $m=n=2$. 
Because $F/ {\rm Fil}^{(i_m)}_{q_m}F$ is a flat ${\cal A}$-module, 
we have the following isomorphism:  
\begin{align*} 
\bigoplus_{p_m+q_m=k_m}
{\rm gr}^{(i_m)}_{p_m}E {\otimes}_{\cal A}  {\rm gr}^{(i_m)}_{q_m}F
\os{\sim}{\lo} {\rm gr}^{(i_m)}_{k_m}(E\otimes_{\cal A}F)
\tag{6.1.3}\label{ali:garm}
\end{align*}  
by \cite[(1.2.4)]{nh2}.  
Hence we have the following isomorphism 
\begin{align*} 
\bigoplus_{p_m+q_m=k_m}{\rm gr}^{(i_1)}_{k_1}
({\rm gr}^{(i_m)}_{p_m}E {\otimes}_{\cal A} {\rm gr}^{(i_m)}_{q_m}F)
\os{\sim}{\lo} {\rm gr}^{(i_1)}_{k_1}{\rm gr}^{(i_m)}_{k_m}(E\otimes_{\cal A}F). 
\tag{6.1.4}\label{ali:gdmrm}
\end{align*} 
Because  
${\rm gr}^{(i_m)}_{q_m}F$ and 
$${\rm gr}^{(i_{m})}_{q_m}F
/{\rm Fil}^{(i_{1})}_{q_1}({\rm gr}^{(i_{m})}_{q_m}F)
=E^{(i_m)}_{q_m}/(E^{(i_1i_m)}_{q_1q_m}+E^{(i_m)}_{q_m-1})$$ 
are flat ${\cal A}$-modules 
by the definition of ${\cal Q}_{\rm fl}({\cal A})$, 
we have the following isomorphism by \cite[(1.2.4)]{nh2}: 
\begin{align*} 
{\rm gr}^{(i_1)}_{k_1}({\rm gr}^{(i_m)}_{p_m}
E {\otimes}_{\cal A}{\rm gr}^{(i_m)}_{q_m}F)
=\bigoplus_{p_1+q_1=k_1}{\rm gr}^{(i_1)}_{p_1} {\rm gr}^{(i_m)}_{p_m}
E {\otimes}_{\cal A} {\rm gr}^{(i_1)}_{q_1}{\rm gr}^{(i_m)}_{q_m}F. 
\tag{6.1.5}\label{ali:gdmprm}
\end{align*} 
(This is the point of this proof.)
By (\ref{ali:gdmrm}) and (\ref{ali:gdmprm}) we have the following isomorphism: 
\begin{align*} 
\bigoplus_{p_1+q_1=k_1, p_m+q_m=k_m}
{\rm gr}^{(i_1)}_{p_1} {\rm gr}^{(i_m)}_{p_m}
E {\otimes}_{\cal A} {\rm gr}^{(i_1)}_{q_1} {\rm gr}^{(i_m)}_{q_m}F
\os{\sim}{\lo} {\rm gr}^{(i_1)}_{k_1}  {\rm gr}^{(i_m)}_{k_m}(E\otimes_{\cal A}F). 
\end{align*} 
\end{proof}

\par 
Let $(E^{\bul},\{E^{\bul(i)}\}_{i=1}^{n})$ and 
$(F^{\bul},\{F^{\bul(i)}\}_{i=1}^{n})$ be objects of ${\rm CF}^{n}({\cal A})$. 
Set 
\begin{equation*}
(E^{\bul}\otimes_{\cal A}F^{\bul})_k^{r(i)}:=
{\rm Im}(\bigoplus_{l+m=k}\bigoplus_{p+q=r}
E^{p(i)}_l\otimes_{\cal A}F^{q(i)}_m\lo 
\bigoplus_{p+q=r}E^p\otimes_{\cal A}F^q).  
\tag{6.1.6} 
\end{equation*}
Then we have a complex 
$$(E^{\bul},\{E^{\bul(i)}\}_{i=1}^{n})
\otimes_{\cal A}(F^{\bul},\{F^{(i)\bul}\}_{i=1}^{n}):=
(E^{\bul}\otimes_{\cal A}F^{\bul},
\{(E^{\bul}\otimes_{\cal A}F^{\bul})_k^{(i)}\}_{i=1}^n)$$ 
of ${\cal A}$-modules with $n$-pieces of filtrations, 
where the boundary morphism
is defined by the following formula 
\begin{equation*}
d\vert_{E^p\otimes_{\cal A}F^q}
=(d^p_E\otimes 1) +(-1)^p(1\otimes d^q_F).
\tag{6.1.7} 
\end{equation*} 
The functor
$$ \otimes \col {\rm CF}^{n}({\cal A})\times {\rm CF}^{n}({\cal A}) \lo 
{\rm CF}^{n}({\cal A})$$
induces a functor 
$$ \otimes \col {\rm KF}^{n}({\cal A})\times {\rm KF}^{n}({\cal A}) \lo {\rm KF}^{n}({\cal A}).$$

\par 
As in \cite[II (4.1)]{hard}, \cite{blec} and \cite[(1.2.5)]{nh2}, 
we need the following key theorem   
to define the following $n$-filtered derived functor 
$$\otimes^L_{\cal A} \col {\rm D}^-{\rm F}^{n}({\cal A})\times 
{\rm D}^-{\rm F}^{n}({\cal A}) \lo {\rm D}^-{\rm F}^{n}({\cal A}).$$

\begin{theo}\label{theo:prtdrt}
Let $(E^{\bul},\{E^{\bul(i)}\}_{i=1}^n)$ 
and $(F^{\bul},\{F^{\bul(i)}\}_{i=1}^{n})$ be two complexes of  
${\cal A}$-modules with $n$-pieces of filtrations. 
Assume that $(F^{\bul},\{F^{\bul(i)}\}_{i=1}^{n})\in {\rm K}^-{\rm F}^{n}({\cal Q}^n_{\rm fl}({\cal A}))$. 
Assume that either 
\par
$({\rm a})$ $(E^{\bul},\{E^{\bul(i)}\}_{i=1}^n)$ is strictly exact 
\par
or 
\par
$({\rm b})$ $(F^{\bul},\{F^{\bul(i)}\}_{i=1}^n)$ is strictly exact 
\parno 
and assume also that 
either
\par
$({\rm c})$ $E^{\bul}$ is bounded above 
\par
or
\par
$({\rm d})$ $F^{\bul}$ is bounded below.
\parno
Then 
$(E^{\bul}\otimes_{\cal A}F^{\bul}, 
\{(E^{\bul}\otimes_{\cal A}F^{\bul})^{(i)}\}_{i=1}^n)$ 
is strictly exact.
\end{theo}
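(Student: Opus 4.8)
The plan is to verify the two conditions of strict exactness (\ref{defi:nfdf}) for $(E^{\bul}\otimes_{\cal A}F^{\bul}, \{(E^{\bul}\otimes_{\cal A}F^{\bul})^{(i)}\}_{i=1}^n)$ separately: that the underlying complex $E^{\bul}\otimes_{\cal A}F^{\bul}$ is exact, and that each intersection complex $(E^{\bul}\otimes_{\cal A}F^{\bul})^{(i_1i_2)}_{k_1k_2}$ $(1\leq i_1\leq i_2\leq n)$ is exact. Throughout I keep $F^{\bul}$ as the flat factor, so that I never use the symmetry $E\otimes F\simeq F\otimes E$ to interchange the hypotheses. The cases $i_1=i_2$ concern a single filtration only and are handled by the argument of \cite[(1.2.5)]{nh2}; the genuinely new point is the mixed intersection $i_1=1$, $i_2=2$, which has no counterpart in the singly filtered theory.

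First I would dispose of the underlying exactness. Form the double complex $K^{p,q}:=E^p\otimes_{\cal A}F^q$, whose total complex is $E^{\bul}\otimes_{\cal A}F^{\bul}$. Under $({\rm a})$, since each $F^q$ is flat, taking cohomology in the $p$-direction gives ${\cal H}^p(E^{\bul}\otimes_{\cal A}F^q)={\cal H}^p(E^{\bul})\otimes_{\cal A}F^q=0$, so the associated spectral sequence, which converges by the boundedness hypothesis $({\rm c})$ or $({\rm d})$, has vanishing $E_1$-page and the total complex is exact. Under $({\rm b})$, I would first observe that a bounded above exact complex of flat modules has flat cycles, by descending induction on the short exact sequences $0\lo Z^q\lo F^q\lo Z^{q+1}\lo 0$, so that $E^p\otimes_{\cal A}F^{\bul}$ is exact for every $p$; the other spectral sequence then yields exactness of the total complex.

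For the filtered pieces I would pass to the associated graded. By (\ref{lemm:tensit})(2) the bigraded pieces of $E^{\bul}\otimes_{\cal A}F^{\bul}$ are the direct sums $\bigoplus{\rm gr}(E^{\bul})\otimes_{\cal A}{\rm gr}(F^{\bul})$ of bigraded pieces of the two factors, and the relevant graded pieces of $F^{\bul}$ are flat by the very definition of ${\cal Q}^n_{\rm fl}({\cal A})$ (cf.~the proof of (\ref{lemm:tensit})(1)). When $({\rm a})$ holds, strict exactness of $E^{\bul}$ makes every ${\rm gr}(E^{\bul})$ exact via (\ref{prop:nsq}); tensoring an exact complex with a flat one, using again that the boundedness $({\rm c})$/$({\rm d})$ is inherited by the graded complexes, shows every ${\rm gr}^{P^{(i_1)}}_{k_1}{\rm gr}^{P^{(i_2)}}_{k_2}(E^{\bul}\otimes_{\cal A}F^{\bul})$ is exact; the case $({\rm b})$ is symmetric, the graded pieces of $F^{\bul}$ being both exact and flat. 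Thus all single and double graded complexes of $E^{\bul}\otimes_{\cal A}F^{\bul}$ are exact.

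It remains to upgrade graded exactness to exactness of the intersection complex $(E^{\bul}\otimes_{\cal A}F^{\bul})^{(12)}_{k_1k_2}$, and here the main difficulty lies: the filtrations are not assumed biregular, so the inductive reconstruction of a filtration step from its graded quotients used in (\ref{prop:pq}) is unavailable. The mechanism I would use is to exploit the flatness built into ${\cal Q}^n_{\rm fl}({\cal A})$; flatness of $F$ together with flatness of the quotients $F/\sum_jF^{(i^j_1i^j_2)}_{k^j_1k^j_2}$ forces the relevant ${\rm Tor}$-groups to vanish, which is precisely the reason ${\cal Q}^n_{\rm fl}({\cal A})$ is defined with sums of intersection pieces rather than by the naive dual condition (\ref{rema:dual})(1). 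Concretely, these vanishings make the canonical maps $E^{(i)}_l\otimes_{\cal A}F^{(i)}_m\lo E^{\bul}\otimes_{\cal A}F^{\bul}$ strict injective and, more importantly, make intersection commute with the tensor filtration, so that $(E^{\bul}\otimes_{\cal A}F^{\bul})^{(12)}_{k_1k_2}$ is the sum of the images of $E^{(12)}_{l_1l_2}\otimes_{\cal A}F^{(12)}_{m_1m_2}$ over $l_1+m_1=k_1$, $l_2+m_2=k_2$. Since $E^{(12)}_{l_1l_2}$ is exact under $({\rm a})$ and $F^{(12)}_{m_1m_2}$ is flat, being a submodule of the flat module $F$ with flat quotient by the case $N=1$ of (\ref{eqn:qfla}), each such tensor product is exact, and the sum over the two indices is controlled by the short exact sequences of the shape (\ref{ali:pipe}), which flatness keeps exact after tensoring; under $({\rm b})$ the roles are exchanged, $F^{(12)}_{m_1m_2}$ then being exact and flat while $E^{(12)}_{l_1l_2}$ is arbitrary. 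This gives exactness of $(E^{\bul}\otimes_{\cal A}F^{\bul})^{(12)}_{k_1k_2}$ and completes the verification; I expect the hardest and most technical step to be establishing that intersection commutes with the tensor filtration, for which the precise form of ${\cal Q}^n_{\rm fl}({\cal A})$ is essential.
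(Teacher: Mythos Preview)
Your passage to the bigraded pieces via (\ref{lemm:tensit}) is correct and is indeed the technical heart, but the return from graded information to the intersection complex has a genuine gap. Your claimed identity $(E^{\bul}\otimes_{\cal A}F^{\bul})^{(12)}_{k_1k_2}=\sum{\rm Im}(E^{(12)}_{l_1l_2}\otimes_{\cal A}F^{(12)}_{m_1m_2})$ is precisely what would follow from (\ref{ali:grm}) by downward induction on $(k_1,k_2)$ \emph{if} the filtrations were bounded below; but they are not, as you yourself note. The flatness of the quotients in ${\cal Q}^n_{\rm fl}({\cal A})$ delivers the graded isomorphism (\ref{ali:grm}), not the intersection formula, and the distance between the two is exactly the missing base case. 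Even granting the formula, your appeal to ``short exact sequences of the shape (\ref{ali:pipe})'' does not control the infinite sum over $\{(l_1,l_2,m_1,m_2):l_i+m_i=k_i\}$.

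The paper circumvents this by a descent that replaces the missing lower bound with an upper bound coming from exhaustiveness. After a spectral-sequence reduction to exactness of $(E^{\bul}\otimes_{\cal A}F^q)^{(i_1i_2)}_{k_1k_2}$ in case (a), it passes to $E^{\bul(i_1i_2)}_{\infty\infty}$ and $F^{q(i_1i_2)}_{\infty\infty}$, making both tensor filtrations exhaustive. With $B^p:={\rm Im}(E^{p-1}\to E^p)$, any cycle $s$ at level $(k_1,k_2)$ then lies in $(B^p\otimes_{\cal A}F^q)^{(i_1i_2)}_{k'_1k'_2}$ for some finite $(k'_1,k'_2)\geq(k_1,k_2)$, and the \emph{injectivity} of ${\rm gr}^{(i_1)}_{k'_1}{\rm gr}^{(i_2)}_{k'_2}(B^p\otimes_{\cal A}F^q)\to{\rm gr}^{(i_1)}_{k'_1}{\rm gr}^{(i_2)}_{k'_2}(E^p\otimes_{\cal A}F^q)$, which (\ref{lemm:tensit})(2) together with flatness of ${\rm gr}(F^q)$ supplies, lets one step $(k'_1,k'_2)$ down to $(k_1,k_2)$ in finitely many moves. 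This exhaustive reduction is the device your outline lacks; it converts an unbounded problem into a finite chase and is what the definition of ${\cal Q}^n_{\rm fl}({\cal A})$ is designed to support.
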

\begin{proof}
By \cite[II (4.1)]{hard}, 
$E^{\bul}\otimes_{\cal A}F^{\bul}$ 
is exact. 
By \cite[(1.2.5)]{nh2} 
$(E^{\bul}\otimes_{\cal A}F^{\bul})^{(i_1)}_{k_1}$ is exact for 
$k_1\in {\mab Z}$. 
Hence we have only to prove that 
$(E^{\bul}\otimes_{\cal A}F^{\bul})^{(i_1i_m)}_{k_1 k_m}$ 
is exact for $1\leq i_1 < i_m \leq n$ $(1\leq m\leq n)$ and 
$k_1, k_m\in {\mab Z}$.
\par
Let $G^{\bul \bul}$ be a double complex defined by
$G^{pq}:=E^p\otimes_{\cal A}F^q$ with $n$-pieces of filtrations
$G^{pq(i)}_k:={\rm Im}(\us{l+m=k}{\bigoplus}E^{p(i)}_l\otimes_{\cal A}F^{q(i)}_m\lo G^{pq})$ 
$(1\leq i\leq n)$. Set 
$G^{pq(i_1i_m)}_{k_1 k_m}:=
G^{pq(i_1)}_{k_1}\cap G^{pq(i_m)}_{k_m}$. 
Then we have the following two spectral sequences
$$E_2^{pq}={\cal H}^p_{\rm II}
{\cal H}^q_{\rm I}(G^{\bul \bul(i_1i_m)}_{k_1 k_m}) 
\Lo
{\cal H}^{p+q}
((E^{\bul}\otimes_{\cal A}F^{\bul})^{(i_1i_m)}_{k_1 k_m}),$$
$$E_2^{pq}={\cal H}^p_{\rm I}
{\cal H}^q_{\rm II}(G^{\bul \bul(i_1i_m)}_{k_1 k_m}) \Lo
{\cal H}^{p+q}
((E^{\bul}\otimes_{\cal A}F^{\bul})^{(i_1i_m)}_{k_1 k_m}).$$
The assumption (c) or (d) implies 
that the two spectral sequences above are 
bounded and regular.
\par
First, assume that (a) holds. 
Set 
$E_{\infty}^{p(i)}:=\bigcup_{k\in {\mab Z}}E_{k}^{p(i)}$ 
and $F_{\infty}^{q(i)}:=
\bigcup_{k\in {\mab Z}}F_{k}^{q(i)}$  $(p, q\in {\mab Z})$. 
Since $F^{q(i)}_k$ 
$(\forall k \in {\mab Z})$ is a flat ${\cal A}$-module, $F_{\infty}^{q(i)}$ is also so. 
Because the complex $E^{\bul(i)}_{\infty}$ is exact by (a), 
$E^{\bul(i)}_{\infty}\otimes_{\cal A}F^{q(i)}_{\infty}$ is also so.
We prove that $(E^{\bul}\otimes_{\cal A}F^q)^{(i_1i_m)}_{k_1 k_m}$ 
for any $1\leq i_1 < i_m\leq n$ $(1\leq m\leq n)$ 
and $k_1, k_m\in {\mab Z}$ is exact. 
We have only to prove that 
\begin{equation*}
{\rm Im}((E^{p-1}\otimes_{\cal A}F^{q})^{(i_1i_m)}_{k_1 k_m} 
\!\rightarrow\!
(E^{p}\otimes_{\cal A}F^{q})^{(i_1i_m)}_{k_1 k_m} ) \supset 
{\rm Ker}((E^p\otimes_{\cal A}F^q)^{(i_1i_m)}_{k_1 k_m} 
\!\rightarrow\! (E^{p+1}\otimes_{\cal A}F^q)^{(i_1i_m)}_{k_1 k_m}).
\tag{6.2.1}\label{eqn:imaske}
\end{equation*} 
Set 
$$E^{\bul(i_1i_m)}_{\infty \infty}:=\bigcap_{j=1}^m\bigcup_{k_j\in {\mab Z}}E^{\bul(i_j)}_{k_j}
=\bigcup_{k_1,k_m\in {\mab Z}}E^{\bul(i_1i_m)}_{k_1 k_m}$$ 
and 
$$F^{\bul(i_1i_m)}_{\infty \infty}:=\bigcap_{j=1}^m\bigcup_{k_j\in {\mab Z}}F^{\bul(i_j)}_{k_j}
=\bigcup_{k_1,k_m\in {\mab Z}}F^{\bul(i_1i_m)}_{k_1 k_m}.$$   
Because 
$(F^{\bul},\{F^{\bul(i)}\}_{i=1}^{n})\in {\rm K}^-{\rm F}^{n}({\cal Q}^n_{\rm fl}({\cal A}))$, 
$F^q/F^{q(i_1i_m)}_{k_1 k_m}$ and $F^{q(i_1i_m)}_{k_1 k_m}$ are flat ${\cal A}$-modules, 
$F^q/F^{q(i_1i_m)}_{\infty \infty}$ and 
$F^{q(i_1i_m)}_{\infty \infty}$ are also flat ${\cal A}$-modules. 
Consequently the following natural composite morphism 
$$E^{\bul(i_1i_m)}_{\infty \infty}\otimes_{\cal A}F^{q(i_1i_m)}_{\infty \infty}
\lo E^{\bul}\otimes_{\cal A}F^{q(i_1i_m)}_{\infty \infty}\lo 
E^{\bul}\otimes_{\cal A}F^q$$ 
is injective. 
Because the lower horizontal sequence of 
the following commutative diagram 
\begin{equation*}
\begin{CD}
(E^{p-1} {\otimes}_{\cal A} F^q)^{(i_1i_m)}_{k_1 k_m}
@>>> 
(E^p {\otimes}_{\cal A} F^q)^{(i_1i_m)}_{k_1 k_m} @>>> 
(E^{p+1} {\otimes}_{\cal A} F^q)^{(i_1i_m)}_{k_1 k_m}  \\ 
@VVV @VVV @VVV  \\ 
E^{p-1(i_1i_m)}_{\infty \infty} {\otimes}_{\cal A} F^{q(i_1i_m)}_{\infty \infty} 
@>>> E^{p(i_1i_m)}_{\infty \infty} {\otimes}_{\cal A} F^{q(i_1 i_m)}_{\infty \infty} 
@>>> E^{p+1(i_1 i_m)}_{\infty \infty} {\otimes}_{\cal A} F^{q(i_1 i_m)}_{\infty \infty} 
\end{CD}
\tag{6.2.2}\label{cd:ef6}
\end{equation*}
is exact by the assumption (a), we may assume that  
$E^{\bul}=E^{\bul(i_1i_m)}_{\infty \infty}$ and 
$F^{q}=F^{q(i_1i_m)}_{\infty \infty}$.
Set $B^{p(i_1i_m)}_{\circ}:={\rm Im}(E^{p-1(i_1i_m)}_{\circ} \lo 
E^{p(i_1 i_m)}_{\circ})$ $(\circ=k_1 k_m \in {\mab Z}^m$ or nothing).
Then the sequence 
$0\lo B^{p-1(i_1 i_m)}_{\circ} \lo E^{p(i_1 i_m)}_{\circ} \lo B^{p+1(i_1 i_m)}_{\circ} \lo 0$
is exact by the assumption (a). 
Moreover we have the following commutative diagram 
with lower exact row:

\begin{equation}
\begin{CD}
@.(B^{p} {\otimes}_{\cal A} F^q)^{(i_1 i_m)}_{k_1 k_m} @>>>
(E^p {\otimes}_{\cal A} F^q)^{(i_1 i_m)}_{k_1 k_m}  @>>>
(B^{p+1} {\otimes}_{\cal A} F^q)^{(i_1 i_m)}_{k_1 k_m}  @. \\ 
@. @V{\bigcap}VV @V{\bigcap}VV @V{\bigcap}VV  @.\\ 
0 @>>>\!\! B^{p} {\otimes}_{\cal A} F^q \!@>>>\! 
E^p {\otimes}_{\cal A} F^q \!@>>>\! 
B^{p+1} {\otimes}_{\cal A} F^q \!\!@>>> 0.
\end{CD}
\tag{6.2.3}\label{cd:kef2}
\end{equation}
We claim that, to prove (\ref{eqn:imaske}),  
it suffices to prove that the following sequence 
\begin{equation*}
0\lo {\rm gr}^{(i_1)}_{k_1} {\rm gr}^{(i_m)}_{k_m} (B^{p}\otimes_{\cal A}F^q)
\lo {\rm gr}^{(i_1)}_{k_1} {\rm gr}^{(i_m)}_{k_m}(E^p\otimes_{\cal A}F^q)
\lo {\rm gr}^{(i_1)}_{k_1} {\rm gr}^{(i_m)}_{k_m}(B^{p+1}\otimes_{\cal A}F^q) 
\tag{6.2.4}\label{eqn:grkef}
\end{equation*}
is exact. Indeed, let $s$ be a local 
section of the sheaf on the right hand 
side of (\ref{eqn:imaske}). 
Then, by the lower exact sequence of (\ref{cd:kef2}) 
and by the 
assumptions $E^{\bul(i_1 i_m)}=E^{\bul(i_1 i_m)}_{\infty}$ 
and $F^{q(i_1 i_m)}=F^{q(i_1 i_m)}_{\infty}$, 
there exist integers $k'_1 \geq k_1, k'_m\geq k_m$  such that 
$s\in (B^p\otimes_{\cal A}F^q)^{(i_1 i_m)}_{k'_1 k'_m}$. 
If $k'_j=k_j$ for any $1\leq j\leq m$, there is nothing to prove.
If there exists $k'_j> k_j$ for some $1\leq j\leq m$, 
then (\ref{eqn:grkef}) for $k'_j$ and either of $k'_{j+1}$ or $k'_{j-1}$ implies 
that 
$s\in (B^p\otimes_{\cal A}F^q)^{(i_1 i_m)}_{k'_1 k'_{j-1}k'_j-1k'_{j+1} k'_m}$ 
by the injectivity of the morphism 
${\rm gr}^{(i_1)}_{k'_1} {\rm gr}^{(i_m)}_{k'_m} (B^p\otimes_{\cal A}F^q)
\lo {\rm gr}^{(i_1)}_{k'_1} {\rm gr}^{(i_m)}_{k'_m}(E^p\otimes_{\cal A}F^q)$ in 
(\ref{eqn:grkef}) with the replacement of $k_1$ and $k_m$ by $k'_1$ and $k'_m$, respectively, 
since the image of $s$ in 
${\rm gr}^{(i_1)}_{k'_1} {\rm gr}^{(i_m)}_{k'_m}(E^p\otimes_{\cal A}F^q)$ is zero.
Repeating this process, we see that 
$s\in (B^p\otimes_{\cal A}F^q)^{(i_1 i_m)}_{k_1 k_m}$. 
This means that 
$s\in 
{\rm Im}((E^{p-1}\otimes_{\cal A}F^{q})^{(i_1 i_m)}_{k_1 k_m} 
\!\rightarrow\!
(E^{p}\otimes_{\cal A}F^{q})^{(i_1 i_m)}_{k_1 k_m} )$. 
\par
Now let us prove the exactness of (\ref{eqn:grkef}).
\par
By (\ref{lemm:tensit}) (2) we have only to 
prove that the following sequence 
\begin{align*} 
0&\lo 
\bigoplus_{l_1+l'_1=k_1,l_m+l'_m=k_m}
{\rm gr}^{(i_1)}_{l_1} {\rm gr}^{(i_m)}_{l_m}B^p\otimes_{\cal A}
{\rm gr}^{(i_1)}_{l'_1} {\rm gr}^{(i_m)}_{l'_m}(F^q)
\tag{6.2.5}\label{eqn:dsgrkef}\\
& \lo 
\bigoplus_{l_1+l'_1=k_1,l_m+l'_m=k_m}
{\rm gr}^{(i_1)}_{l_1} {\rm gr}^{(i_m)}_{l_m}E^p\otimes_{\cal A}
{\rm gr}^{(i_1)}_{l'_1} {\rm gr}^{(i_m)}_{l'_m}(F^q)\\
& \lo 
\bigoplus_{l_1+l'_1=k_1,l_m+l'_m=k_m}
{\rm gr}^{(i_1)}_{l_1} {\rm gr}^{(i_m)}_{l_m}B^{p+1}\otimes_{\cal A}
{\rm gr}^{(i_1)}_{l'_1} {\rm gr}^{(i_m)}_{l'_m}(F^q)
\end{align*} 
is exact. 
Because $(F^{\bul},\{F^{\bul(i)}\}_{i=1}^{n})\in {\rm K}^-{\rm F}^{n}({\cal Q}^n_{\rm fl}({\cal A}))$, 
${\rm gr}^{(i_1)}_{l'_1} {\rm gr}^{(i_m)}_{l'_m}(F^q)$ is a  
flat ${\cal A}$-module by (\ref{ali:ppe}). 
Hence (\ref{eqn:dsgrkef}) is 
exact by the assumption (a), (\ref{prop:nsq}) and (\ref{prop:exbcb}). 
\par
Next assume that (b) holds.
Since $F_{\infty \infty}^{\bul(i_1i_m)}$ is bounded above, 
$E^p_{\infty \infty}\otimes_{\cal A}F^{\bul(i_1i_m)}_{\infty \infty}$ is exact.
We prove that $(E^p_{\infty \infty}\otimes_{\cal A}
F^{\bul(i_1i_m)}_{\infty \infty})_{k_1 k_m}$ is exact. 
Set $K^q_{\circ}:={\rm Im}(F^{q-1}_{\circ} \lo 
F^{q}_{\circ})$ $( \circ=k_1 k_m\in {\mab Z}^m$ or $\infty \infty$).
Then $K^q_{\circ}$ is a flat ${\cal A}$-module 
since $F^{\bul}_{\circ}$ is bounded above.
As in the case (a), we have only 
to prove that the following sequence
\begin{align*} 
0&\lo \bigoplus_{l_1+l'_1=k_1,l_m+l'_m=k_m}
{\rm gr}^{(i_1)}_{l_1} {\rm gr}^{(i_m)}_{l_m}E^p\otimes_{\cal A}
{\rm gr}^{(i_1)}_{l'_1} {\rm gr}^{(i_m)}_{l'_m}K^{q}
\tag{6.2.6}\label{eqn:grefl}\\
&\lo \bigoplus_{l_1+l'_1=k_1,l_m+l'_m=k_m}
{\rm gr}^{(i_1)}_{l_1} {\rm gr}^{(i_m)}_{l_m}
E^p\otimes_{\cal A}{\rm gr}^{(i_1)}_{l'_1} {\rm gr}^{(i_m)}_{l'_m}F^q \\
& \lo \bigoplus_{l_1+l'_1=k_1,l_m+l'_m=k_m}
{\rm gr}^{(i_1)}_{l_1} {\rm gr}^{(i_m)}_{l_m}E^p\otimes_{\cal A}
{\rm gr}^{(i_1)}_{l'_1} {\rm gr}^{(i_m)}_{l'_m}K^{q+1} 
\lo 0
\end{align*}
is exact.
By the assumption (b), (\ref{prop:nsq}) and (\ref{prop:exbcb}), the following sequence 
\begin{align*} 
0\lo {\rm gr}^{(i_1)}_{l'_1} {\rm gr}^{(i_m)}_{l'_m}K^{q}
\lo  {\rm gr}^{(i_1)}_{l'_1} {\rm gr}^{(i_m)}_{l'_m}F^q \lo 
{\rm gr}^{(i_1)}_{l'_1} {\rm gr}^{(i_m)}_{l'_m}K^{q+1} 
\lo 0
\end{align*}
is exact. Since $F^{\bul}$ is bounded above, we see that 
${\rm gr}^{(i_1)}_{l'_1} {\rm gr}^{(i_m)}_{l'_m}K^{q}$ is a flat ${\cal A}$-module 
by descending induction on $q$.  
Hence (\ref{eqn:grefl}) is exact.
\par
We finish the proof. 
\end{proof}

\parno
By using (\ref{theo:prtdrt}) (b),   
we have the following derived functor:  
\begin{equation*}
\otimes^L_{\cal A} \col 
{\rm K}^-{\rm F}^n({\cal A})\times 
{\rm D}^-{\rm F}^n({\cal A}) \lo
{\rm D}^-{\rm F}^n({\cal A}).  
\tag{6.2.7}\label{eqn:dirtsr}
\end{equation*}
By (\ref{theo:prtdrt}) (a) 
the functor above induces the following derived functor 
(cf.~\cite[II \S4]{hard})
\begin{equation*}
\otimes^L_{\cal A} \col 
{\rm D}^-{\rm F}^n({\cal A})\times 
{\rm D}^-{\rm F}^n({\cal A}) \lo
{\rm D}^-{\rm F}^n({\cal A}).  
\tag{6.2.8}\label{eqn:drtsr}
\end{equation*} 


\begin{rema}
The derived tensor product $\otimes^L_{\cal A}$ for the case $n=1$ 
(resp.~$n=2$) has 
a key role for the construction of the $l$-adic weight spectral sequence 
(resp.~the construction of a fundamental bifiltered complex) of 
a proper SNCL scheme 
over a family of log points with a relative horizontal SNCD  
in the second part of this paper.   
\end{rema}

\section{Complements}\label{sec:comp}
This section is a complement of \S\ref{sec:sti} and \S\ref{sfr}.
Let the notations be as in \S\ref{sec:nfil}.  
\par
Assume that $n\leq 2$. 
Let $f \col ({\cal T},{\cal A}) \lo ({\cal T}',{\cal A}')$ 
be a morphism of ringed topoi. 
As in \cite[\S3]{nh2} we consider the following  larger 
full subcategory category ${\cal I}^n_{f_*{\textrm -}{\rm acyc}}({\cal A})$ than 
${\cal I}^n_{\rm flas}({\cal A})$ in 
${\rm MF}^n({\cal A})$ (cf.~\cite[(1.4.5)]{dh2}):
\begin{align*} 
{\cal I}^n_{f_*{\textrm -}{\rm acyc}}
({\cal A}):=\{(J,\{J^{(i)}\}_{i=1}^n)~\vert~&J~\text{and}~ 
J^{(i_1i_n)}_{k_1k_n}~\text{are}~f_*\text{-acyclic}~\text{for}
~1\leq i_1 \leq i_n\leq n\\
&~\text{and}~k_1,k_n \in {\mab Z}\}
\end{align*}

Then the following holds by 
(\ref{prop:exsir}):

\begin{prop}
The canonical morphism 
${\rm K}^+{\rm F}^n({\cal I}^n_{f_*{\textrm -}{\rm acyc}}({\cal A}))
\lo {\rm D}^+{\rm F}^n({\cal A})$ induces an equivalence  
$${\rm K}^+{\rm F}^n
({\cal I}^n_{f_*{\textrm -}{\rm acyc}}
({\cal A}))_{{(\rm F}^n{\rm Qis})}
\os{\sim}{\lo}
{\rm D}^+{\rm F}^n({\cal A})$$ 
of categories 
and the right derived functor
$Rf_*$ is calculated by the following formula
$Rf_*([(J^{\bul},\{J^{\bul(i)}\}_{i=1}^n)])=[f_*((J^{\bul},\{J^{\bul(i)}\}_{i=1}^n))]$ 
$((J^{\bul},\{J^{\bul(i)}\}_{i=1}^n) \in 
{\rm K}^+{\rm F}^n
({\cal I}^n_{f_*{\textrm -}{\rm acyc}}({\cal A})))$.
\end{prop}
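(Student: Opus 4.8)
The plan is to follow the strategy of the case $n=1$ in \cite{nh2} (and the analogue \cite[(1.4.5)]{dh2}), reducing everything to two inputs: the existence of strictly injective resolutions whose terms lie in ${\cal I}^n_{f_*{\textrm -}{\rm acyc}}({\cal A})$, and a dévissage lemma computing $f_*$ on strictly exact complexes. First I would record the inclusions ${\cal I}^n_{\rm stinj}({\cal A})\subset {\cal I}^n_{\rm flas}({\cal A})\subset {\cal I}^n_{f_*{\textrm -}{\rm acyc}}({\cal A})$, which hold because a strictly injective ${\cal A}$-module has injective (hence flasque, hence $f_*$-acyclic) terms and intersections by (\ref{prop:spijstij}) and the definition of ${\cal I}^n_{\rm stinj}$, together with the fact that $f_*$, being left exact, commutes with the finite intersections $J^{(i_1i_n)}_{k_1k_n}=J^{(i_1)}_{k_1}\cap J^{(i_n)}_{k_n}$ appearing in the acyclicity condition. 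Since by (\ref{prop:exsir}) every object of ${\rm K}^+{\rm F}^n({\cal A})$ has a specially injective, a fortiori strictly injective, resolution whose terms lie in ${\cal I}^n_{f_*{\textrm -}{\rm acyc}}({\cal A})$, the localization functor $\beta\col {\rm K}^+{\rm F}^n({\cal I}^n_{f_*{\textrm -}{\rm acyc}}({\cal A}))_{({\rm F}^n{\rm Qis})}\lo {\rm D}^+{\rm F}^n({\cal A})$ is essentially surjective.

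For the equivalence I would factor $\beta$ through the functor $\alpha\col {\rm K}^+{\rm F}^n({\cal I}^n_{\rm stinj}({\cal A}))\lo {\rm K}^+{\rm F}^n({\cal I}^n_{f_*{\textrm -}{\rm acyc}}({\cal A}))_{({\rm F}^n{\rm Qis})}$ induced by the inclusion, so that $\beta\alpha$ is the identification ${\rm K}^+{\rm F}^n({\cal I}^n_{\rm stinj}({\cal A}))={\rm D}^+{\rm F}^n({\cal A})$ of (\ref{coro:dfkfmykf})(1). The functor $\alpha$ is essentially surjective because every acyclic complex is identified in the target with its strictly injective resolution; and $\alpha$ is fully faithful because, for strictly injective $J^{\bul}$, the isomorphism $s^*$ of (\ref{lemm:filis})(1) allows one to rewrite every roof $I^{\bul}\getsfrom C^{\bul}\lo J^{\bul}$ (with left leg an ${\rm F}^n{\rm Qis}$ and $C^{\bul}$ acyclic) as a genuine homotopy class $I^{\bul}\lo J^{\bul}$, so that the localization map on $\mathrm{Hom}$'s out of a strictly injective complex is bijective. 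Thus $\alpha$ is an equivalence; since $\beta\alpha$ is an equivalence, $\beta$ is an equivalence, proving the first assertion.

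For the formula $Rf_*([(J^{\bul},\{J^{\bul(i)}\}_{i=1}^n)])=[f_*((J^{\bul},\{J^{\bul(i)}\}_{i=1}^n))]$ I would invoke the computation of $Rf_*$ via flasque, equivalently strictly injective, resolutions given by (\ref{coro:dfkfmykf})(2). Choosing a strictly injective resolution $(J^{\bul},\{J^{\bul(i)}\}_{i=1}^n)\os{\sim}{\lo}(I^{\bul},\{I^{\bul(i)}\}_{i=1}^n)$ inside ${\cal I}^n_{f_*{\textrm -}{\rm acyc}}({\cal A})$, it remains to show that $f_*((J^{\bul},\{J^{\bul(i)}\}_{i=1}^n))\lo f_*((I^{\bul},\{I^{\bul(i)}\}_{i=1}^n))$ is an $n$-filtered quasi-isomorphism. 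By (\ref{rema:ba}) the mapping cone of this resolution is strictly exact, and its terms and filtration intersections are finite direct sums of $f_*$-acyclic modules, hence again $f_*$-acyclic; so the cone is a strictly exact object of ${\rm C}^+{\rm F}^n({\cal I}^n_{f_*{\textrm -}{\rm acyc}}({\cal A}))$. Since $f_*$ is additive and commutes with the finite intersections, $f_*$ of this cone is the mapping cone of $f_*((J^{\bul},\{J^{\bul(i)}\}))\lo f_*((I^{\bul},\{I^{\bul(i)}\}))$; thus by (\ref{rema:ba}) the assertion is equivalent to the strict exactness of $f_*$ applied to the cone.

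The \emph{key lemma}, and the main obstacle, is therefore: if $(K^{\bul},\{K^{\bul(i)}\}_{i=1}^n)$ is a strictly exact object of ${\rm C}^+{\rm F}^n({\cal I}^n_{f_*{\textrm -}{\rm acyc}}({\cal A}))$, then $f_*((K^{\bul},\{K^{\bul(i)}\}_{i=1}^n))$ is strictly exact. I would prove this by the classical dévissage applied separately to $K^{\bul}$ and to each intersection complex $K^{\bul(i_1i_n)}_{k_1k_n}$: each is a bounded below exact complex of $f_*$-acyclic ${\cal A}$-modules, so cutting it into short exact sequences $0\lo Z^q\lo K^q\lo Z^{q+1}\lo 0$ and descending on $q$ shows, via the long exact sequence of $R^{\bul}f_*$, that each cocycle $Z^q$ is $f_*$-acyclic and that $f_*$ preserves exactness of these sequences. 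Because $f_*$ commutes with the intersection $K^{(i_1)}_{k_1}\cap K^{(i_n)}_{k_n}$, one gets $f_*(K^{\bul(i_1i_n)}_{k_1k_n})=(f_*K^{\bul})^{(i_1i_n)}_{k_1k_n}$ exact as well, which is precisely the strict exactness of $f_*((K^{\bul},\{K^{\bul(i)}\}_{i=1}^n))$ in the sense of (\ref{defi:nfdf}). The only genuine novelty beyond the case $n=1$ of \cite{nh2} is the bookkeeping of these intersections, which is harmless here because $f_*$ is left exact and the acyclicity hypothesis was imposed exactly on all the pieces $K^{(i_1i_n)}_{k_1k_n}$.
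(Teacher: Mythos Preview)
Your proposal is correct and follows essentially the same route as the paper, which simply asserts that the proposition holds ``by (\ref{prop:exsir})'' and relies implicitly on the standard \cite[I (5.1)]{hard}-type argument you have spelled out. One small slip: in your d\'evissage for the key lemma you write ``descending on $q$'', but since the complexes are bounded below the induction on the cocycles $Z^q$ is ascending (starting from $Z^{q_0}=0$); the argument is otherwise exactly right.
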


\parno
We can consider the dual notion of the above as follows. 
Set 
\begin{align*}
{\cal Q}^n_{f^*{\textrm -}{\rm acyc}}({\cal A})
:=\{ (Q,\{Q^{(i)}\}_{i=1}^n)~\vert~ &
Q \text{ and } Q/\sum_{j=1}^NQ^{(i^j_1i^j_n)}_{k^j_1k^j_n}
\text{ are }f^*{\textrm-}\text{ acyclic}  \\
{} & ~(N\in {\mab Z}_{\geq 1},1\leq \forall i^j_1 \leq \forall i^j_n \leq n, 
\forall k^j_1, \forall k^j_n \in {\mab Z})\}. 
\end{align*}
Then the following holds by (\ref{prop:esfrl}):

\begin{prop}
The canonical morphism 
${\rm K}^-{\rm F}^n({\cal Q}^n_{f^*{\textrm -}{\rm acyc}}({\cal A}'))
\lo {\rm D}^-{\rm F}^n({\cal A}')$ induces an equivalence  
$${\rm K}^-{\rm F}^n
({\cal Q}^n_{f^*{\textrm -}{\rm acyc}}({\cal A}'))_{({\rm F}^n{\rm Qis})}
\os{\sim}{\lo}
{\rm D}^-{\rm F}^n({\cal A}')$$ 
of categories 
and the left derived functor
$Lf^*$ is calculated by the following formula
$Lf^*[(Q^{\bul},\{Q^{\bul(i)}\}_{i=1}^n)]=[f^*((Q^{\bul},\{Q^{\bul(i)}\}_{i=1}^n))]$
$((Q^{\bul},\{Q^{\bul(i)}\}_{i=1}^n) \in 
{\rm K}^-{\rm F}^n({\cal Q}^n_{f^*{\textrm -}{\rm acyc}}({\cal A}')))$.
\end{prop}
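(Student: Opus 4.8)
The plan is to recognize ${\cal Q}^n_{f^*{\textrm -}{\rm acyc}}({\cal A}')$ as a subcategory adapted to the computation of $Lf^*$ which is sandwiched between the specially flat objects and all of ${\rm MF}^n({\cal A}')$, and to transport the equivalence already established in (\ref{coro:lcal}) across this sandwich. Concretely, I would first record the chain of inclusions of additive full subcategories
\begin{equation*}
{\cal Q}^n_{\rm spfl}({\cal A}')\sus {\cal Q}^n_{f^*{\textrm -}{\rm acyc}}({\cal A}')\sus {\rm MF}^n({\cal A}').
\end{equation*}
The only nonformal point is the left inclusion, which follows from the already recorded chain ${\cal Q}^n_{\rm spfl}({\cal A}')\sus {\cal Q}^n_{\rm stfl}({\cal A}')\sus {\cal Q}^n_{\rm fl}({\cal A}')$ (via (\ref{lemm:spfistf})) together with the observation that a flat ${\cal A}'$-module is $f^*$-acyclic: since $f^{-1}$ is exact and preserves flatness, $Lf^*$ reduces to $f^*$ on flat modules. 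Comparing the defining condition (\ref{eqn:qfla}) of ${\cal Q}^n_{\rm fl}({\cal A}')$ with that of ${\cal Q}^n_{f^*{\textrm -}{\rm acyc}}({\cal A}')$, the flat modules $Q$ and $Q/\sum_{j=1}^NQ^{(i^j_1i^j_n)}_{k^j_1k^j_n}$ attached to a specially flat object are therefore $f^*$-acyclic, so the left inclusion holds.

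Next I would prove that localizing this chain at $({\rm F}^n{\rm Qis})$ yields, for the middle term, the category ${\rm D}^-{\rm F}^n({\cal A}')$. The resulting composite
\begin{equation*}
{\rm K}^-{\rm F}^n({\cal Q}^n_{\rm spfl}({\cal A}'))_{({\rm F}^n{\rm Qis})}\lo
{\rm K}^-{\rm F}^n({\cal Q}^n_{f^*{\textrm -}{\rm acyc}}({\cal A}'))_{({\rm F}^n{\rm Qis})}\lo
{\rm D}^-{\rm F}^n({\cal A}')
\end{equation*}
is an equivalence by (\ref{coro:lcal}) (1). Essential surjectivity of the second functor is then immediate, and in fact follows directly from (\ref{prop:esfrl}): every object of ${\rm D}^-{\rm F}^n({\cal A}')$ is represented by a bounded above bifiltered complex which, by (\ref{prop:esfrl}), receives an $n$-filtered quasi-isomorphism from a specially flat complex, a fortiori from a complex in ${\rm K}^-{\rm F}^n({\cal Q}^n_{f^*{\textrm -}{\rm acyc}}({\cal A}'))$. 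For full faithfulness I would verify that the $n$-filtered quasi-isomorphisms inside ${\rm K}^-{\rm F}^n({\cal Q}^n_{f^*{\textrm -}{\rm acyc}}({\cal A}'))$ form a localizing multiplicative system compatible with the triangulation, and that the apex of every roof of such quasi-isomorphisms can be refined within the subcategory by applying (\ref{prop:esfrl}); this is the exact dual of the argument giving the corresponding statement for $Rf_*$ and $f_*$-acyclic objects out of (\ref{prop:exsir}), and it follows the pattern of \cite[\S3]{nh2} and \cite[(1.4.5)]{dh2}.

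Finally, for the computation $Lf^*[(Q^{\bul},\{Q^{\bul(i)}\}_{i=1}^n)]=[f^*((Q^{\bul},\{Q^{\bul(i)}\}_{i=1}^n))]$, the decisive input is the acyclicity lemma that $f^*$ carries strictly exact bounded above complexes with components in ${\cal Q}^n_{f^*{\textrm -}{\rm acyc}}({\cal A}')$ to strictly exact complexes. Granting this, the formula follows by the usual derived-functor bookkeeping: by (\ref{coro:lcal}) (2) the functor $Lf^*$ may already be computed on specially flat resolutions, and any two $f^*$-acyclic resolutions of the same complex are connected by quasi-isomorphisms whose $n$-filtered mapping cones are strictly exact complexes of $f^*$-acyclic objects. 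I expect this acyclicity lemma to be the main obstacle: $f^*=f^{-1}(-)\otimes_{f^{-1}{\cal A}'}{\cal A}$ is a tensor operation, so one must verify that it preserves the $f^*$-acyclicity of the single intersections $(Q^{\bul})^{(i_1i_n)}_{k_1k_n}$ \emph{and} the exactness of the finite sums $\sum_{j=1}^N(Q^{\bul})^{(i^j_1i^j_n)}_{k^j_1k^j_n}$, and that it commutes with the passage from these sums and intersections to the associated graded pieces. This is precisely the bifiltered subtlety underlying (\ref{rema:dual}) (1) and (\ref{theo:prtdrt}): the naive dual condition on the quotients $Q/Q^{(i_1i_n)}_{k_1k_n}$ alone does not suffice, which is why ${\cal Q}^n_{f^*{\textrm -}{\rm acyc}}({\cal A}')$ is phrased through the sums $\sum_{j=1}^NQ^{(i^j_1i^j_n)}_{k^j_1k^j_n}$. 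The verification should then reduce, via the exact sequence (\ref{ali:pipe}) and (\ref{prop:exbcb}), to the flat and $f^*$-acyclic behaviour of the graded pieces, exactly as in the proof of (\ref{theo:prtdrt}).
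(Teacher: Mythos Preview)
Your approach is correct and is essentially a detailed unpacking of what the paper intends: the paper's ``proof'' consists solely of the phrase ``Then the following holds by (\ref{prop:esfrl})'', i.e.\ the existence of specially flat resolutions is declared to be the only nontrivial input, exactly as you identify. Your sandwich argument via ${\cal Q}^n_{\rm spfl}({\cal A}')\sus {\cal Q}^n_{f^*{\textrm -}{\rm acyc}}({\cal A}')$, the appeal to (\ref{coro:lcal}), and the acyclicity lemma for the computation formula are the standard way to flesh this out, and your observation that the $f^*$-acyclic class must be defined through the sums $\sum_{j=1}^NQ^{(i^j_1i^j_n)}_{k^j_1k^j_n}$ (rather than single intersections) in order for the acyclicity lemma to go through is exactly the point behind (\ref{rema:dual}) (1).
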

\par
Next we define the {\it gr}-{\it functor}. 
For a sequence $\ul{k}=(k_1, k_n)$ of integers 
and for $(i_1, i_n)$ $(1\leq i_1\leq i_n \leq n)$, 
there exists the following functor 
\begin{align*}
{\rm gr}^{(i_1)}_{k_1} {\rm gr}^{(i_n)}_{k_n}
 \col  
{\rm K}^{\star}{\rm F}^n({\cal A})\owns 
(E^{\bul},\{E^{\bul(i)}\}_{i=1}^n)\lom &  
{\rm gr}^{(i_1)}_{k_1}{\rm gr}^{(i_n)}_{k_n}E^{\bul}
\in{\rm K}^{\star}({\cal A}) \tag{7.2.1}\label{ali:grb}\\
{} & (\star=+,-,{\rm b},~{\rm nothing}), 
\end{align*}
which we call the {\it gr}-{\it functor}. 

\begin{lemm}\label{lemm:gris}
If $f \col (E^{\bul},\{E^{\bul(i)}\}_{i=1}^n) \lo 
(F^{\bul},\{F^{\bul(i)}\}_{i=1}^n)$ is 
an $n$-filtered quasi-isomorphism in ${\rm KF}^n({\cal A})$,  then 
${\rm gr}^{(i_1)}_{k_1} 
{\rm gr}^{(i_n)}_{k_n}(f) \col 
{\rm gr}^{(i_1)}_{k_1} 
{\rm gr}^{(i_n)}_{k_n}E^{\bul} \lo 
{\rm gr}^{(i_1)}_{k_1} {\rm gr}^{(i_n)}_{k_n}F^{\bul}$ 
is a quasi-isomorphism.
\end{lemm}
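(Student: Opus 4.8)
The plan is to express each graded complex ${\rm gr}^{(i_1)}_{k_1}{\rm gr}^{(i_n)}_{k_n}E^{\bul}$ as a subquotient built out of the intersection complexes $E^{\bul(i_1i_n)}_{k_1k_n}$, on which $f$ is already a quasi-isomorphism by the very definition of an $n$-filtered quasi-isomorphism (\ref{defi:nfm}), and then to propagate this through short exact sequences of complexes by means of the five lemma applied to the associated long exact cohomology sequences. Throughout I work under the standing assumption $n\leq 2$, and I use that $f$, being a morphism in ${\rm CF}^n({\cal A})$, carries every piece $E^{\bul(i_1i_n)}_{k_1k_n}$ into the corresponding piece $F^{\bul(i_1i_n)}_{k_1k_n}$, so that all the short exact sequences below form commutative ladders under $f$.

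For the simplest case (which includes $n=1$, and the degenerate indices $i_1=i_n$ reducing to a single filtration) I would use the short exact sequence of complexes
\begin{equation*}
0\lo E^{\bul(i_1)}_{k_1-1}\lo E^{\bul(i_1)}_{k_1}\lo {\rm gr}^{(i_1)}_{k_1}E^{\bul}\lo 0
\end{equation*}
together with its counterpart for $F^{\bul}$. Since $f$ is a quasi-isomorphism on $E^{\bul(i_1)}_{k_1-1}$ and on $E^{\bul(i_1)}_{k_1}$ by (\ref{defi:nfm}), the five lemma applied to the two long exact cohomology sequences shows that ${\rm gr}^{(i_1)}_{k_1}(f)$ is a quasi-isomorphism.

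For the genuinely bifiltered case $n=2$ with $i_1<i_2$ (the argument in fact works uniformly for all $1\leq i_1\leq i_2\leq 2$ through (\ref{ali:ppe}) and (\ref{ali:pipe})), I would first treat the subcomplex $E^{\bul(i_1i_2)}_{k_1-1,k_2}+E^{\bul(i_1i_2)}_{k_1,k_2-1}$. The exact sequence (\ref{ali:pipe}),
\begin{equation*}
0\lo E^{\bul(i_1i_2)}_{k_1-1,k_2-1}\lo E^{\bul(i_1i_2)}_{k_1-1,k_2}\oplus E^{\bul(i_1i_2)}_{k_1,k_2-1}\lo E^{\bul(i_1i_2)}_{k_1-1,k_2}+E^{\bul(i_1i_2)}_{k_1,k_2-1}\lo 0,
\end{equation*}
and the same sequence for $F^{\bul}$, are compatible with $f$; since $f$ is a quasi-isomorphism on each of $E^{\bul(i_1i_2)}_{k_1-1,k_2-1}$, $E^{\bul(i_1i_2)}_{k_1-1,k_2}$ and $E^{\bul(i_1i_2)}_{k_1,k_2-1}$ by (\ref{defi:nfm}), the five lemma shows that $f$ is a quasi-isomorphism on the sum $E^{\bul(i_1i_2)}_{k_1-1,k_2}+E^{\bul(i_1i_2)}_{k_1,k_2-1}$. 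Then, by the formula (\ref{ali:ppe}), the gr-complex sits in the short exact sequence
\begin{equation*}
0\lo E^{\bul(i_1i_2)}_{k_1-1,k_2}+E^{\bul(i_1i_2)}_{k_1,k_2-1}\lo E^{\bul(i_1i_2)}_{k_1k_2}\lo {\rm gr}^{(i_1)}_{k_1}{\rm gr}^{(i_2)}_{k_2}E^{\bul}\lo 0,
\end{equation*}
and $f$ is a quasi-isomorphism on $E^{\bul(i_1i_2)}_{k_1k_2}$ (again by (\ref{defi:nfm})) and on the sum (just established); a final application of the five lemma yields the claim.

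Since both the hypotheses and the conclusion are purely homological, I do not expect a genuine obstacle here; the only points requiring care are the bookkeeping of the four indices $(k_1,k_2)$, $(k_1-1,k_2)$, $(k_1,k_2-1)$, $(k_1-1,k_2-1)$ needed in the bifiltered step and the verification that $f$ respects each short exact sequence, both of which are immediate from $f$ being an $n$-filtered morphism. The proof should therefore amount to a short two-step diagram chase.
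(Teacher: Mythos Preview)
Your proof is correct and follows exactly the approach the paper intends: the paper's own proof simply reads ``Using (\ref{ali:ppe}) and (\ref{ali:pipe}), one obtains (\ref{lemm:gris}),'' and your argument is precisely the unpacking of that sentence via two five-lemma steps. There is nothing to add.
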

\begin{proof}
Using (\ref{ali:ppe}) and (\ref{ali:pipe}), one obtains
(\ref{lemm:gris}).
\end{proof}
By (\ref{lemm:gris}) the gr-functor (\ref{ali:grb}) induces 
a functor 
\begin{equation*}
{\rm gr}^{(i_1)}_{k_1} {\rm gr}^{(i_n)}_{k_n} \col  
{\rm D}^{\star}{\rm F}^n({\cal A}) 
\lo {\rm D}^{\star}({\cal A}).
\tag{7.3.1}\label{eqn:grdfa}
\end{equation*}
We also call this functor the {\it gr}-{\it functor}.

\begin{lemm}\label{lemm:grrdld}
For a morphism $f \col ({\cal T},{\cal A}) \lo 
({\cal T}',{\cal A}')$ of ringed topoi, 
the following diagrams are commutative$:$
\begin{equation*}
\begin{CD}
{\rm D}^+{\rm F}^n({\cal A}) 
@>{{\rm gr}^{(i_1)}_{k_1} 
{\rm gr}^{(i_n)}_{k_n}}>> {\rm D}^+({\cal A})\\ 
@V{Rf_*}VV  @VV{Rf_*}V \\
{\rm D}^+{\rm F}^n({\cal A}')  
@>{{\rm gr}^{(i_1)}_{k_1} {\rm gr}^{(i_n)}_{k_n}}>> {\rm D}^+({\cal A}'),
\end{CD}
\tag{7.4.1}\label{cd:grfcty}
\end{equation*} 
\begin{equation*}
\begin{CD}
{\rm D}^-{\rm F}^n({\cal A}) 
@>{{\rm gr}^{(i_1)}_{k_1}{\rm gr}^{(i_n)}_{k_n}}>> {\rm D}^-({\cal A})\\ 
@A{Lf^*}AA  @AA{Lf^*}A \\
{\rm D}^-{\rm F}^n({\cal A}')  
@>{{\rm gr}^{(i_1)}_{k_1} {\rm gr}^{(i_n)}_{k_n}}>> {\rm D}^-({\cal A}').
\end{CD}
\tag{7.4.2}\label{eqn:grfld}
\end{equation*}
\end{lemm}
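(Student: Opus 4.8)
The plan is to compute each derived functor by a resolution that is adapted to the $\mathrm{gr}$-functor, and then to verify that $f_*$ (resp.\ $f^*$) commutes with the finite intersections, sums and quotients that occur in the definition of the double graded pieces (recall (\ref{ali:ppe}) and (\ref{ali:pipe})). The two diagrams are formally dual, so the same pattern handles both, once the appropriate exactness of $f_*$ (resp.\ $f^*$) on the relevant sequences is established.

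For the diagram (\ref{cd:grfcty}), given $(E^{\bul},\{E^{\bul(i)}\}_{i=1}^n)\in {\rm D}^+{\rm F}^n({\cal A})$ I would choose, by (\ref{coro:dfkfmykf}), a filteredly flasque resolution $(E^{\bul},\{E^{\bul(i)}\}_{i=1}^n)\lo (I^{\bul},\{I^{\bul(i)}\}_{i=1}^n)$ with terms in ${\cal I}^n_{\rm flas}({\cal A})$, so that $Rf_*$ is represented by $(f_*I^{\bul},\{f_*I^{\bul(i)}\}_{i=1}^n)$. Since $f_*$ is left exact it commutes with the finite intersections defining the filtration, giving $(f_*I)^{(i_1i_n)}_{k_1k_n}=f_*(I^{(i_1i_n)}_{k_1k_n})$. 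Next I would use the exact sequence (\ref{ali:pipe}): the terms $I^{(i_1i_n)}_{k_1k_n}$, the sum $I^{(i_1i_n)}_{k_1-1,k_n}+I^{(i_1i_n)}_{k_1,k_n-1}$ (flasque by (\ref{rema:dual}) (2)) and the graded piece ${\rm gr}^{(i_1)}_{k_1}{\rm gr}^{(i_n)}_{k_n}I^{\bul}$ are all flasque, hence $f_*$ is exact on (\ref{ali:pipe}) and on the short exact sequence defining the quotient in (\ref{ali:ppe}). This yields $f_*(I^{(i_1i_n)}_{k_1-1,k_n})+f_*(I^{(i_1i_n)}_{k_1,k_n-1})=f_*(I^{(i_1i_n)}_{k_1-1,k_n}+I^{(i_1i_n)}_{k_1,k_n-1})$ and therefore ${\rm gr}^{(i_1)}_{k_1}{\rm gr}^{(i_n)}_{k_n}(f_*I^{\bul})=f_*({\rm gr}^{(i_1)}_{k_1}{\rm gr}^{(i_n)}_{k_n}I^{\bul})$. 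Finally ${\rm gr}^{(i_1)}_{k_1}{\rm gr}^{(i_n)}_{k_n}I^{\bul}$ is a complex of flasque (hence $f_*$-acyclic) sheaves which, by (\ref{lemm:gris}), is a resolution of ${\rm gr}^{(i_1)}_{k_1}{\rm gr}^{(i_n)}_{k_n}E^{\bul}$; so $f_*$ applied to it computes $Rf_*$ of the graded piece, and chasing the identifications proves the commutativity.

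For the diagram (\ref{eqn:grfld}) the argument is dual. Given $(E^{\bul},\{E^{\bul(i)}\}_{i=1}^n)\in {\rm D}^-{\rm F}^n({\cal A}')$ I would pick, by (\ref{coro:lcal}), a filteredly flat resolution with terms in ${\cal Q}^n_{\rm fl}({\cal A}')$ (\ref{eqn:qfla}), so $Lf^*$ is represented by $(f^*Q^{\bul},\{f^*Q^{\bul(i)}\}_{i=1}^n)$. The inverse image $f^*={\cal A}\otimes_{f^{-1}{\cal A}'}f^{-1}(\bul)$ is right exact, hence commutes with the cokernels computing the graded pieces. The flatness built into ${\cal Q}^n_{\rm fl}({\cal A}')$ — that $Q$ and each $Q/\sum Q^{(i^j_1i^j_n)}_{k^j_1k^j_n}$ are flat — guarantees that the short exact sequences expressing each filtration step, each intersection, and the sum $Q^{(i_1i_n)}_{k_1-1,k_n}+Q^{(i_1i_n)}_{k_1,k_n-1}$ as a submodule with flat quotient stay exact after $f^*$, because $f^{-1}$ is exact and carries flat ${\cal A}'$-modules to flat $f^{-1}{\cal A}'$-modules, killing the relevant ${\rm Tor}$. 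Thus $f^*$ commutes with intersections, sums and the $\mathrm{gr}$-quotient, giving ${\rm gr}^{(i_1)}_{k_1}{\rm gr}^{(i_n)}_{k_n}(f^*Q^{\bul})=f^*({\rm gr}^{(i_1)}_{k_1}{\rm gr}^{(i_n)}_{k_n}Q^{\bul})$. The double graded pieces are themselves flat (a short ${\rm Tor}$ computation from the flat quotients, exactly as in the case $n=1$ treated in \cite[(1.2.4)]{nh2}), hence $f^*$-acyclic, and by (\ref{lemm:gris}) form a resolution of ${\rm gr}^{(i_1)}_{k_1}{\rm gr}^{(i_n)}_{k_n}E^{\bul}$; so $f^*$ applied to them computes $Lf^*$ of the graded piece.

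The main obstacle is the commutation of $f_*$ (resp.\ $f^*$) with the sum $I^{(i_1i_n)}_{k_1-1,k_n}+I^{(i_1i_n)}_{k_1,k_n-1}$ and with the intersections, where left-exactness of $f_*$ (resp.\ right-exactness of $f^*$) alone does not suffice. The resolution of this difficulty is precisely the flasqueness (resp.\ flatness) conditions encoded in ${\cal I}^n_{\rm flas}({\cal A})$ together with (\ref{rema:dual}) (2) (resp.\ in ${\cal Q}^n_{\rm fl}({\cal A}')$), applied to the exact sequence (\ref{ali:pipe}), which force $f_*$ (resp.\ $f^*$) to be exact on exactly those sequences. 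Checking that the double graded pieces are again flasque (resp.\ flat), so that they compute the derived functor on the nose, is the secondary point requiring care.
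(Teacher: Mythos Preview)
Your proposal is correct and follows essentially the same approach as the paper: compute $Rf_*$ (resp.\ $Lf^*$) via an $n$-filtered flasque (resp.\ $n$-filtered flat) resolution, use the flasqueness of $\sum_j I^{(i_1i_n)}_{k_1,\ldots,k_j-1,\ldots,k_n}$ from (\ref{rema:dual})~(2) (resp.\ the flatness of the relevant quotients encoded in ${\cal Q}^n_{\rm fl}$) to see that $f_*$ (resp.\ $f^*$) is exact on the short exact sequence defining ${\rm gr}^{(i_1)}_{k_1}{\rm gr}^{(i_n)}_{k_n}$, and conclude via (\ref{lemm:gris}) that the graded pieces of the resolution compute the derived functor of the graded piece. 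Your write-up is in fact a bit more careful than the paper's on one point: for $Lf^*$ you explicitly address why $f^*$ commutes with the intersections $Q^{(i_1i_n)}_{k_1k_n}$ (via flatness of the quotients), a step the paper's proof uses but leaves implicit.
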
  
\begin{proof} 
First we prove that the diagram (\ref{cd:grfcty}) is commutative. 
Let $(E^{\bul},\{E^{\bul(i)}\}_{i=1}^n)$ be 
an object of ${\rm K}^+{\rm F}^n({\cal A})$ 
and 
let $(I^{\bul},\{I^{\bul(i)}\}_{i=1}^n)$ be 
an $n$-filtered flasque resolution of 
$(E^{\bul},\{E^{\bul(i)}\}_{i=1}^n)$. 
Then 
${\rm gr}^{(i_1)}_{k_1} 
{\rm gr}^{(i_n)}_{k_n}I^{\bul}$ 
is a flasque resolution of 
${\rm gr}^{(i_1)}_{k_1} 
{\rm gr}^{(i_n)}_{k_n}E^{\bul}$ 
by (\ref{lemm:gris}). Hence 
$Rf_*({\rm gr}^{(i_1)}_{k_1} 
{\rm gr}^{(i_n)}_{k_n}E^{\bul})= 
f_*({\rm gr}^{(i_1)}_{k_1} 
{\rm gr}^{(i_n)}_{k_n}I^{\bul})$. 
Because 
$\sum_{j=1}^n\bigcap^{(i_1,i_n)}_{k_1,k_j-1, k_n}
(I^{q},\{I^{q(i)}\}_{i=1}^n)$ is flasque,  
$$R^1f_*(\sum_{j=1}^n\bigcap^{(i_1,i_n)}_{k_1,k_j-1, k_n}
(I^{q},\{I^{q(i)}\}_{i=1}^n))=0 \quad (\forall q \in {\mab Z}).$$ 
Hence the following sequence
$$0\lo 
f_*(\sum_{j=1}^n\bigcap^{(i_1, i_n)}_{k_1,k_j-1, k_n}
(I^{q},\{I^{q(i)}\}_{i=1}^n))  \lo  
f_*(\bigcap^{(i_1,i_n)}_{k_1,k_n}
(I^{q},\{I^{q(i)}\}_{i=1}^n)) $$ 
$$\lo f_*({\rm gr}^{(i_1)}_{k_1} {\rm gr}^{(i_n)}_{k_n}I^q) \lo 0 
\quad (\forall q \in {\mab Z})$$ 
is exact and 
\begin{align*}
{\rm gr}^{(i_1)}_{k_1} 
{\rm gr}^{(i_n)}_{k_n}f_*((I^{\bul},\{I^{\bul(i)}\}_{i=1}^n))&=f_*(\bigcap^{(i_1, i_n)}_{k_1,k_n}
(I^{\bul},\{I^{\bul(i)}\}_{i=1}^n))
/f_*(\sum_{j=1}^n\bigcap^{(i_1, i_n)}_{k_1,k_j-1,k_n}
(I^{\bul},\{I^{\bul(i)}\}_{i=1}^n))\\
&=
f_*({\rm gr}^{(i_1)}_{k_1} {\rm gr}^{(i_n)}_{k_n}I^{\bul}). 
\end{align*} 
Hence 
$${\rm gr}^{(i_1)}_{k_1} 
{\rm gr}^{(i_n)}_{k_n}Rf_*((E^{\bul},\{E^{\bul(i)}\}_{i=1}^n))=
f_*({\rm gr}^{(i_1)}_{k_1} {\rm gr}^{(i_n)}_{k_n}I^{\bul})=
Rf_*({\rm gr}^{(i_1)}_{k_1} {\rm gr}^{(i_n)}_{k_n}E^{\bul}).$$
\par
Next  we prove that the diagram (\ref{eqn:grfld}) is commutative. 
Let $(E^{\bul},\{E^{\bul(i)}\}_{i=1}^n)$ be 
an object of ${\rm K}^+{\rm F}^n({\cal A}')$ 
and 
let $(Q^{\bul},\{Q^{\bul(i)}\}_{i=1}^n)$ be 
an $n$-filtered flat resolution of 
$(E^{\bul},\{E^{\bul(i)}\}_{i=1}^n)$. 
Then ${\rm gr}^{(i_1)}_{k_1} 
{\rm gr}^{(i_n)}_{k_n}Q^{\bul}$ 
is a flat resolution of ${\rm gr}^{(i_1)}_{k_1} 
{\rm gr}^{(i_n)}_{k_n}E^{\bul}$ 
by (\ref{lemm:gris}). Hence 
$Lf^*({\rm gr}^{(i_1)}_{k_1} 
{\rm gr}^{(i_n)}_{k_n}E^{\bul})= 
f^*({\rm gr}^{(i_1)}_{k_1} 
{\rm gr}^{(i_n)}_{k_n}Q^{\bul})$ 
and  
the following sequence
$$0\lo 
f^*(\sum_{j=1}^n\bigcap^{(i_1, i_n)}_{k_1,k_j-1,k_n}
(Q^{q},\{Q^{q(i)}\}_{i=1}^n))  \lo  
f^*(\bigcap^{(i_1,i_n)}_{k_1,k_n}
(Q^{q},\{Q^{q(i)}\}_{i=1}^n)) $$ 
$$\lo f^*({\rm gr}^{(i_1)}_{k_1} {\rm gr}^{(i_n)}_{k_n}Q^q) \lo 0 
\quad (\forall q \in {\mab Z})$$ 
is exact. 
This shows that 
\begin{align*}
{\rm gr}^{(i_1)}_{k_1} 
{\rm gr}^{(i_n)}_{k_n}
f^*((Q^{\bul},\{Q^{\bul(i)}\}_{i=1}^n))
&=f^*(\bigcap^{(i_1, i_n)}_{k_1,k_n}
(Q^{\bul},\{Q^{\bul(i)}\}_{i=1}^n))
/f^*(\sum_{j=1}^n\bigcap^{(i_1, i_n)}_{k_1,k_j-1,k_n}
(Q^{\bul},\{Q^{\bul(i)}\}_{i=1}^n))\\
&=
f^*({\rm gr}^{(i_1)}_{k_1} {\rm gr}^{(i_n)}_{k_n}Q^{\bul}).  
\end{align*} 
Hence 
$${\rm gr}^{(i_1)}_{k_1} 
{\rm gr}^{(i_n)}_{k_n}Lf^*((E^{\bul},\{E^{\bul(i)}\}_{i=1}^n))=
f^*({\rm gr}^{(i_1)}_{k_1} {\rm gr}^{(i_n)}_{k_n}Q^{\bul})=
Lf^*({\rm gr}^{(i_1)}_{k_1} {\rm gr}^{(i_n)}_{k_n}E^{\bul}).$$
\end{proof}

\par
Lastly we give the definition of  
the {\it taking the filtration functor} $\pi^{(i)}_k$ 
and 
the {\it forgetting filtration functor} $\pi$. 
\par 
Let $k$ be an integer.
The following morphisms 
\begin{equation*}
\pi^{(i)}_k \col 
{\rm K}^{\star}{\rm F}^n({\cal A})
\owns (E^{\bul},\{E^{\bul(i)}\}_{i=1}^n) 
\lom E^{\bul(i)}_{k} \in {\rm K}^{\star}({\cal A}) 
\quad (\star=+,-,\text{ b, nothing})
\tag{7.4.3}\label{eqn:ffltfl}
\end{equation*}
and 
\begin{equation*}
\pi \col 
{\rm K}^{\star}{\rm F}^n({\cal A})
\owns (E^{\bul},\{E^{\bul(i)}\}_{i=1}^n) 
\lom E^{\bul} \in {\rm K}^{\star}({\cal A}) 
\quad (\star=+,-,\text{ b, nothing})
\tag{7.4.4}\label{eqn:fflnfl}
\end{equation*} 
induce morphisms
\begin{equation*}
\pi^{(i)}_k \col 
{\rm D}^{\star}{\rm F}^n({\cal A}) 
\lo D^{\star}({\cal A}) 
\tag{7.4.5}\label{eqn:dcfn}
\end{equation*}
and 
\begin{equation*}
\pi \col {\rm D}^{\star}{\rm F}^n({\cal A}) 
\lo D^{\star}({\cal A}),
\tag{7.4.6}\label{eqn:dcnfn}
\end{equation*}
respectively. 
It is easy to check the following 
diagrams are commutative:
\begin{equation*}
\begin{CD}
{\rm D}^+{\rm F}^n({\cal A}) 
@>{\pi^{(i)}_{k}}>> D^+({\cal A})\\ 
@V{Rf_*}VV  @VV{Rf_*}V \\
{\rm D}^+{\rm F}^n({\cal A}') 
@>{\pi^{(i)}_{k}}>> D^+({\cal A}'),
\end{CD}
\tag{7.4.7}\label{cd:pidffdn}
\end{equation*}
\begin{equation*}
\begin{CD}
{\rm D}^+{\rm F}^n({\cal A}) 
@>{\pi}>> D^+({\cal A})\\ 
@V{Rf_*}VV  @VV{Rf_*}V \\
{\rm D}^+{\rm F}^n({\cal A}') 
@>{\pi}>> D^+({\cal A}'),
\end{CD}
\tag{7.4.8}\label{cd:pidffn}
\end{equation*}
\begin{equation*}
\begin{CD}
{\rm D}^-{\rm F}^n({\cal A}) 
@>{\pi^{(i)}_{k}}>> {\rm D}^-({\cal A})\\ 
 @A{Lf^*}AA  @AA{Lf^*}A \\
{\rm D}^-{\rm F}^n({\cal A}') 
@>{\pi^{(i)}_{k}}>> D^-({\cal A}')
\end{CD}
\tag{7.4.9}\label{cd:pildff}
\end{equation*}
\begin{equation*}
\begin{CD}
{\rm D}^-{\rm F}^n({\cal A}) 
@>{\pi}>> D^-({\cal A})\\ 
 @A{Lf^*}AA  @AA{Lf^*}A \\
{\rm D}^-{\rm F}^n({\cal A}') 
@>{\pi}>> D^-({\cal A}'). 
\end{CD}
\tag{7.4.10}\label{cd:pnildff}
\end{equation*}

\section{A remark on bifiltered derived categories}\label{sec:rksfdc}
In this section we prove some 
properties of complexes 
of ${\cal A}$-modules with $n$-pieces of filtrations 
by using the formulation of 
the derived category of complexes of objects of quasi-abelian categories in \cite{sc}.
See also \cite{ss} for the general theory of 
the derived category of complexes of objects of 
quasi-abelian categories. 
 
\par
Let the notations be as in \S\ref{sec:nfil}.
Let $n\leq 2$ be a positive integer. 
Let ${\cal I}$ be an additive full 
subcategory of ${\rm MF}^n({\cal A})$ satisfying 
the following three conditions which are the dual part of 
\cite[Definition 1.3.2 (a), (b), (c)]{sc}.
\bigskip
\parno
(8.0.1):  For any object 
$(E,\{E^{(i)}\}_{i=1}^n) \in{\rm MF}^n({\cal A})$, 
there exists an object  
$(I,\{I^{(i)}\}_{i=1}^n)\in {\cal I}$ with a 
strictly injective morphism 
$(E,\{E^{(i)}\}_{i=1}^n) \os{\subset}{\lo} (I,\{I^{(i)}\}_{i=1}^n)$.
\smallskip
\parno
(8.0.2):  In any strictly exact sequence 
$$0\lo (I,\{I^{(i)}\}_{i=1}^n) \lo (J,\{J^{(i)}\}_{i=1}^n) 
\lo (K,\{K^{(i)}\}_{i=1}^n) \lo 0$$
in ${\rm MF}^n({\cal A})$, if $(I,\{I^{(i)}\}_{i=1}^n) \in {\cal I}$ 
and $(J,\{J^{(i)}\}_{i=1}^n) \in {\cal I}$,
then  $(K,\{K^{(i)}\}_{i=1}^n) \in {\cal I}$.
\smallskip
\parno
(8.0.3): If a sequence 
$$0\lo (I,\{I^{(i)}\}_{i=1}^n) \lo (J,\{J^{(i)}\}_{i=1}^n) 
\lo (K,\{K^{(i)}\}_{i=1}^n) \lo 0$$
in ${\rm MF}^n({\cal A})$ 
is a strictly exact sequence  with 
$(I,\{I^{(i)}\}_{i=1}^n), (J,\{J^{(i)}\}_{i=1}^n), (K,\{K^{(i)}\}_{i=1}^n) 
\in {\cal I}$,
then, for any morphism 
$f \col ({\cal T},{\cal A})\lo 
({\cal T}',{\cal A}')$ of ringed topoi, 
the sequence 
$$0\lo f_*(I,\{I^{(i)}\}_{i=1}^n) \lo f_*(J,\{J^{(i)}\}_{i=1}^n) 
\lo f_*(K,\{K^{(i)}\}_{i=1}^n) \lo 0$$
is strictly exact.
\bigskip

\begin{prop}\label{prop:flijst}Let 
${\cal I}$ be ${\cal I}^n_{\rm flas}({\cal A})$, 
${\cal I}^n_{\rm inj}({\cal A})$ 
or ${\cal I}^n_{\rm stinj}({\cal A})$. 
Then ${\cal I}$ satisfies the 
conditions $(8.0.1)$,  $(8.0.2)$ and $(8.0.3)$.
\end{prop}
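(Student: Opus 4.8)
The plan is to verify $(8.0.1)$, $(8.0.2)$ and $(8.0.3)$ in turn, handling the flasque and injective cases by a piece-by-piece computation and saving the strictly injective case for last. I would dispose of $(8.0.1)$ simultaneously for all three categories: by (\ref{prop:enspij}) every object $(E,\{E^{(i)}\}_{i=1}^n)$ of ${\rm MF}^n({\cal A})$ admits a strictly injective morphism into an object $\prod_{\cal A}(I,\{I^{(i)}\}_{i=1}^n)$ of ${\cal I}^n_{\rm spinj}({\cal A})$, and by (\ref{prop:spijstij}) this object lies in ${\cal I}^n_{\rm stinj}({\cal A})$. Since ${\cal I}^n_{\rm stinj}({\cal A})\sus {\cal I}^n_{\rm inj}({\cal A})\sus {\cal I}^n_{\rm flas}({\cal A})$, the very same object witnesses $(8.0.1)$ for each of the three choices of ${\cal I}$.

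For $(8.0.2)$ with ${\cal I}={\cal I}^n_{\rm flas}({\cal A})$ or ${\cal I}^n_{\rm inj}({\cal A})$, the point is that strict exactness of $0\lo (I,\{I^{(i)}\}_{i=1}^n)\lo (J,\{J^{(i)}\}_{i=1}^n)\lo (K,\{K^{(i)}\}_{i=1}^n)\lo 0$ yields, for all $1\leq i_1\leq i_n\leq n$ and $k_1,k_n\in {\mab Z}$, short exact sequences
$$0\lo I^{(i_1i_n)}_{k_1k_n}\lo J^{(i_1i_n)}_{k_1k_n}\lo K^{(i_1i_n)}_{k_1k_n}\lo 0$$
of ${\cal A}$-modules, alongside $0\lo I\lo J\lo K\lo 0$. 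In the flasque case the first two terms of each such sequence are flasque, so the quotient $K^{(i_1i_n)}_{k_1k_n}$ (and $K$) is flasque. In the injective case the kernel of each such sequence is injective, so it splits and $K^{(i_1i_n)}_{k_1k_n}$ is a direct summand of the injective module $J^{(i_1i_n)}_{k_1k_n}$, hence injective. Either way $(K,\{K^{(i)}\}_{i=1}^n)\in {\cal I}$.

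The substantive case is $(8.0.2)$ for ${\cal I}^n_{\rm stinj}({\cal A})$, and this is where I expect the only real obstacle. Strict exactness makes $\iota\col (I,\{I^{(i)}\}_{i=1}^n)\lo (J,\{J^{(i)}\}_{i=1}^n)$ a strict injective morphism, so property (2) of the strict injectivity of $(I,\{I^{(i)}\}_{i=1}^n)$, applied to $\iota$ and to the identity of $(I,\{I^{(i)}\}_{i=1}^n)$, furnishes a retraction $p$ with $p\iota={\rm id}$; the sequence thus splits in the merely additive category ${\rm MF}^n({\cal A})$, and strict exactness identifies the complementary summand ${\rm Ker}(p)$ with $(K,\{K^{(i)}\}_{i=1}^n)$. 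It then remains to record the auxiliary fact that strict injectives are closed under direct summands: if $(J,\{J^{(i)}\}_{i=1}^n)$ is strictly injective and decomposes as $(K,\{K^{(i)}\}_{i=1}^n)\oplus (L,\{L^{(i)}\}_{i=1}^n)$, then since the intersection pieces of a direct sum decompose accordingly each $K^{(i_1i_n)}_{k_1k_n}$ is a summand of the injective $J^{(i_1i_n)}_{k_1k_n}$ (property (1)), while any morphism into $(K,\{K^{(i)}\}_{i=1}^n)$ from a strict injective source is extended by including $(K,\{K^{(i)}\}_{i=1}^n)$ into $(J,\{J^{(i)}\}_{i=1}^n)$, extending there by the strict injectivity of $(J,\{J^{(i)}\}_{i=1}^n)$, and projecting back (property (2)). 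Hence $(K,\{K^{(i)}\}_{i=1}^n)$ is strictly injective. The essential difficulty, as opposed to the flasque and injective cases, is precisely that this case cannot be checked piece-by-piece: it invokes the extension property (2) of strict injectivity twice, once to split the sequence and once to close up under summands.

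Finally, $(8.0.3)$ can be treated for all three categories at once, since in every case the objects and all their intersection pieces $I^{(i_1i_n)}_{k_1k_n}$, $J^{(i_1i_n)}_{k_1k_n}$, $K^{(i_1i_n)}_{k_1k_n}$ are flasque (injective and strictly injective objects having injective, hence flasque, pieces). Being left exact, $f_*$ commutes with the finite intersections defining the filtration pieces, so $f_*(J^{(i_1i_n)}_{k_1k_n})=(f_*J)^{(i_1i_n)}_{k_1k_n}$ and similarly for $I$ and $K$; and $f_*$ carries a short exact sequence of flasque sheaves to a short exact sequence. Applying this to $0\lo I\lo J\lo K\lo 0$ and to each of its pieces gives precisely the strict exactness of the $f_*$-image of the sequence.
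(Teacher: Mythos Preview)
Your proposal is correct and follows essentially the same approach as the paper: both use (\ref{prop:enspij}) for $(8.0.1)$, treat the flasque and injective cases of $(8.0.2)$ by passing to the exact sequences on intersection pieces, handle the strictly injective case of $(8.0.2)$ by using the extension property of $(I,\{I^{(i)}\}_{i=1}^n)$ to split the sequence and then observing that a direct summand of a strictly injective object is strictly injective, and dispatch $(8.0.3)$ via flasqueness of the pieces. You have simply spelled out in full the steps the paper abbreviates as ``easy to check'' and ``easy to see''.
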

\begin{proof}
By (\ref{prop:enspij}), ${\cal I}$ satisfies the condition (8.0.1);
${\cal I}$ also satisfies the condition (8.0.3).
It is easy to check that 
the categories ${\cal I}^n_{\rm flas}({\cal A})$ and 
${\cal I}^n_{\rm inj}({\cal A})$
satisfy the condition (8.0.2).
Consider the exact sequence in (8.0.2) with
$(I,\{I^{(i)}\}_{i=1}^n)$, 
$(J,\{J^{(i)}\}_{i=1}^n)\in {\cal I}^n_{\rm stinj}({\cal A})$.
Then, by the definition of 
${\cal I}^n_{\rm stinj}({\cal A})$, 
there exists a splitting 
of the strictly injective morphism 
$(I,\{I^{(i)}\}_{i=1}^n) \os{\sus}{\lo} (J,\{J^{(i)}\}_{i=1}^n)$.
Hence $(J,\{J^{(i)}\}_{i=1}^n)
\simeq (I,\{I^{(i)}\}_{i=1}^n)\oplus (K,\{K^{(i)}\}_{i=1}^n)$.
Now it is easy to see that 
$(K,\{K^{(i)}\}_{i=1}^n) \in {\cal I}^n_{\rm stinj}({\cal A})$.
\end{proof}

\par
Let ${\cal I}$ be 
${\cal I}^n_{\rm flas}({\cal A})$, 
${\cal I}^n_{\rm inj}({\cal A})$ or 
${\cal I}^n_{\rm stinj}({\cal A})$.
Let ${\rm N}^+{\rm F}^n({\cal I}^n)$ be a full 
subcategory of ${\rm K}^+{\rm F}^n({\cal I})$ 
which consists of the strictly exact sequences of 
${\rm K}^+{\rm F}^n({\cal I})$.
We can prove the following as in the classical case 
(\cite[(1.3.4)]{sc}):
\begin{coro}\label{coro:knd}
The canonical functor 
$${\rm K}^+{\rm F}^n({\cal I})/{\rm N}^+{\rm F}^n({\cal I}) \lo 
{\rm D}^+{\rm F}^n({\cal A})$$ is an equivalence of categories.  
\end{coro}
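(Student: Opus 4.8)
The plan is to recognize the assertion as an instance of the general equivalence \cite[(1.3.4)]{sc} for the quasi-abelian category ${\rm MF}^n({\cal A})$, and to reduce it to results already obtained in \S\ref{sec:nfil} and \S\ref{sec:sti}. First I would note that \ref{prop:flijst} verifies, for each of the three choices ${\cal I}={\cal I}^n_{\rm flas}({\cal A})$, ${\cal I}^n_{\rm inj}({\cal A})$, ${\cal I}^n_{\rm stinj}({\cal A})$, the hypotheses $(8.0.1)$--$(8.0.3)$ dual to \cite[Definition 1.3.2]{sc}; in particular ${\cal I}$ receives every object of ${\rm MF}^n({\cal A})$ by a strictly injective morphism and is closed under cokernels of strictly injective morphisms in strictly exact sequences. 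The one point requiring care at the outset is the discrepancy noted in \ref{rema:obstm}: our strictness (\ref{defi:fef}) is not the naive strictness of \cite{sc} on bifiltered modules, so to place ourselves in that framework I would pass through the multi-graded reformulation $f\mapsto f'$ of \ref{rema:obstm}, in which our strict morphisms and our strictly exact complexes (\ref{defi:nfdf}) become exactly the strict morphisms and strictly exact complexes in the sense of \cite{sc}. This identification is compatible with kernels, cokernels, direct images and the $n$-filtered homotopy relation, so the structural results transport.

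Rather than reproving \cite[(1.3.4)]{sc} in full, the key steps are as follows. First I would show that the full subcategory ${\rm N}^+{\rm F}^n({\cal I})$ of strictly exact complexes is a null system in the triangulated category ${\rm K}^+{\rm F}^n({\cal I})$: it is closed under the shift by \ref{prop:ncc}, and the two-out-of-three property in a distinguished triangle follows because the intersection functors $\bigcap^{(i_1i_n)}_{k_1k_n}$ and the forgetting functor $\pi$ are triangulated --- they commute with mapping cones, the filtrations on ${\rm MC}(f)$ being the direct-sum filtrations --- and send each distinguished triangle to a distinguished triangle of $K^+({\cal A})$. The resulting long exact cohomology sequences reduce strict exactness, which by \ref{defi:nfdf} means exactness of $E^{\bul}$ and of every $E^{\bul(i_1i_n)}_{k_1k_n}$, to the ordinary two-out-of-three for exactness of complexes of ${\cal A}$-modules. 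That ${\rm N}^+{\rm F}^n({\cal I})$ consists of ${\cal I}$-complexes and that the cone of a morphism between ${\cal I}$-complexes is again an ${\cal I}$-complex uses the closure condition $(8.0.2)$ from \ref{prop:flijst}.

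With the null system in hand, I would invoke \ref{rema:ba}: a morphism $f$ in ${\rm K}^+{\rm F}^n({\cal I})$ is an $n$-filtered quasi-isomorphism if and only if $({\rm MC}(f),\{{\rm MC}(f)^{(i)}\}_{i=1}^n)$ is strictly exact, i.e. lies in ${\rm N}^+{\rm F}^n({\cal I})$. Hence the saturated multiplicative system $({\rm F}^n{\rm Qis})$ in ${\rm K}^+{\rm F}^n({\cal I})$ is precisely the class of morphisms whose cone belongs to ${\rm N}^+{\rm F}^n({\cal I})$, and the standard comparison between a Verdier quotient by a null system and the localization at the associated multiplicative system yields a canonical equivalence ${\rm K}^+{\rm F}^n({\cal I})/{\rm N}^+{\rm F}^n({\cal I})\os{\sim}{\lo}{\rm K}^+{\rm F}^n({\cal I})_{({\rm F}^n{\rm Qis})}$. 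Finally \ref{coro:dfkfmykf} (1) identifies the right-hand side with ${\rm D}^+{\rm F}^n({\cal A})$ for each of the three subcategories ${\cal I}$; composing the two equivalences gives the claim, and one checks it is the canonical functor by tracing the localization functors.

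I expect the main obstacle to be the first step --- verifying that ${\rm N}^+{\rm F}^n({\cal I})$ is genuinely a null system --- because strict exactness is not detected by a single cohomological functor but by the whole family of intersection functors, so the two-out-of-three argument must be run simultaneously over all indices $1\leq i_1\leq i_n\leq n$ and $k_1,k_n\in{\mab Z}$, and one must check that $\bigcap^{(i_1i_n)}_{k_1k_n}$ really commutes with the formation of mapping cones at the level of complexes, not merely up to homotopy. Once this compatibility is pinned down, the remaining comparison of the quotient with the localization is formal and parallels \cite[(1.3.4)]{sc}.
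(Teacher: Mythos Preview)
Your proposal is correct and follows essentially the same route as the paper, which simply defers to \cite[(1.3.4)]{sc}; you have unpacked the formal argument (null system, Verdier quotient versus localization, then \ref{coro:dfkfmykf}(1)) that this reference encodes. One minor point: the mapping cone of a morphism of ${\cal I}$-complexes is an ${\cal I}$-complex simply because ${\cal I}$ is additive, not because of $(8.0.2)$.
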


\bigskip
\bigskip
\bigskip
\parno 
{\bf {\Large Part II.  $l$-adic relative monodromy-weight conjecture}}
\bigskip
\parno
In the Part II of this paper we construct a fundamental bifiltered complex which gives us 
the $l$-adic relative monodromy-weight conjecture.

\section{$l$-adic bifiltered El Zein-Steenbrink-Zucker complex}\label{sec:lbc}
Let $S$ be a family of log points defined in \cite[(1.1)]{nb}. 
That is, $S$ is locally isomorphic to 
$(\os{\circ}{S},{\mab N}\oplus {\cal O}_S^*\lo {\cal O}_S)$, 
where the morphism 
${\mab N}\oplus {\cal O}_S^*\lo {\cal O}_S$ is 
given by $(n,a)\lom 0^na$ 
$(n\in {\mab N}, a\in {\cal O}_S^*)$, where $0^n=0\in {\cal O}_S$ 
for $n\not =0$ and $0^0:=1\in {\cal O}_S$. 
In this paper we assume that $S$ is isomorphic to 
$(\os{\circ}{S},{\mab N}\oplus {\cal O}_S^*\lo {\cal O}_S)$ 
and we fix an isomorphism $S\os{\sim}{\lo} 
(\os{\circ}{S},{\mab N}\oplus {\cal O}_S^*\lo {\cal O}_S)$. 
We do not assume a condition on the characteristic of $S$. 
This $S$ is different from the $S$ in the introduction. 
For a monoid $P$ and a commutative ring of $A$, 
we denote by ${\rm Spec}^{\log}(A[P])$ the log scheme whose underlying scheme 
is ${\rm Spec}(A[P])$ and whose log structure is the association of 
the natural morphism 
$P\lo A[P]$. We have a natural morphism 
$S\lo {\rm Spec}^{\log}({\mab Z}[{\mab N}])$. 
Let $l$ be a prime number 
which is invertible on $\os{\circ}{S}$. 
Let $\mu_{l^m}$ $(m\in {\mab N})$ be the group of 
$l^m$-th roots of unity in $\ol{\mab Q}$. 
Set $\mu_{l^{\infty}}:=\vil_m\mu_{l^m}$. 
Assume that $\os{\circ}{S}$ is a scheme over 
${\rm Spec}({\mab Z}[l^{-1},\zeta_{l^\infty}~\vert~\zeta_{l^\infty}\in \mu_{l^{\infty}}])$. 
That is, ${\cal O}_S$ is assumed to be a ${\mab Z}[l^{-1}][T_m]/(T^{l^m}_{m}-1)$-algebra  
with $T_m\lom T_{m+1}^l$ for any $m\in {\mab N}$. 
Let $X$ be an SNCL scheme with a relative SNCD $D$ over $S$ 
defined in \cite[(6.1)]{ny}. 
(In \cite[\S2, \S3]{nppf} we have also recalled 
the definition of an SNCL scheme with a relative SNCD.)
Assume that $\os{\circ}{X}$ is quasi-compact. 
In this section we construct a bifiltered complex 
$(A_{l^{\infty}}((X_{\frac{1}{l^{\infty}}},D_{\frac{1}{l^{\infty}}})/S),P^{D_{\frac{1}{l^{\infty}}}},P)$
producing the generalizations of the $l$-adic weight spectral sequences
(\ref{eqn:lidd}) and (\ref{eqn:lintdd}). 
\par 
Let $\Del:=
\{\os{\circ}{X}_{\lam}\}_{\lam \in \Lam}$ and $\{\os{\circ}{D}_{\mu}\}_{\mu \in M}$ 
be decompositions of $\os{\circ}{X}$ and $\os{\circ}{D}$  
by smooth components, respectively (\cite[(1.1.9)]{nb}, \cite[\S6]{ny}). 
As in \cite[(9.13.1), (9.13.2)]{nh2}, 
for a nonnegative integer $k$ and a subset  
$\ul{\lam}=\{\lam_0,\cdots, \lam_k\}$ 
$(\lam_i \not= \lam_j~{\rm if}~i\not= j, \lam_i\in \Lam)$ of $\Lam$,  
set 
\begin{equation*}
\os{\circ}{X}_{\ul{\lam}} 
:=\os{\circ}{X}_{\lam_0}\cap \os{\circ}{X}_{\lam_2} \cap \cdots 
\cap \os{\circ}{X}_{\lam_k} 
\tag{9.0.1}\label{eqn:paxrlm}
\end{equation*} 
and 
\begin{equation*}
\os{\circ}{X}{}^{(k)} =  
\us{\# \ul{\lam}=k+1}{\coprod}
\os{\circ}{X}_{\ul{\lam}}. 
\tag{9.0.2}\label{eqn:kxntd} 
\end{equation*} 
For a positive integer $k$ and a subset  
$\ul{\mu}=\{\mu_1,\cdots, \mu_k\}$ 
$(\mu_i \not= \mu_j~{\rm if}~i\not= j, \mu_i\in M)$ of $M$, 
set 
\begin{equation*}
\os{\circ}{D}_{\ul{\mu}} 
:=\os{\circ}{D}_{\mu_1}\cap \os{\circ}{D}_{\mu_2} \cap \cdots 
\cap \os{\circ}{D}_{\mu_k} \quad (\mu_i \not= \mu_j~{\rm if}~i\not= j) 
\tag{9.0.3}\label{eqn:dparlm}
\end{equation*}
and 
\begin{equation*}
\os{\circ}{D}{}^{(k)} = 
\us{\# \ul{\mu}=k}{\coprod}
\os{\circ}{D}_{\ul{\mu}}.  
\tag{9.0.4}\label{eqn:kftd}
\end{equation*}
Set $\os{\circ}{D}{}^{(0)}:= \os{\circ}{X}$.  
Let $M(D)$ be the log structure in $\os{\circ}{X}_{\rm et}$ 
which is the pull-back of the log structure 
obtained by  $\os{\circ}{D}$ in \cite[\S6]{ny} by 
the natural morphism of topoi $\os{\circ}{X}_{\rm et}\lo \os{\circ}{X}_{\rm zar}$.

\begin{prop}
\label{prop:dkwdef}
For a positive integer $k$,  
$\os{\circ}{X}{}^{(k)}$ $($resp.~$\os{\circ}{D}{}^{(k)})$ 
is independent of the choice of the decomposition of 
$\os{\circ}{X}$ $($resp.~$\os{\circ}{D})$ by smooth components of 
$\os{\circ}{X}$ $($resp.~$\os{\circ}{D})$. 
In particular, the log scheme $D^{(k)}$ whose underlying scheme 
is $\os{\circ}{D}{}^{(k)}$ and whose log structure is the pull-back of 
$X$ is independent of the choice of  the decomposition of 
$\os{\circ}{D}$ by smooth components of $\os{\circ}{D}$.
\end{prop}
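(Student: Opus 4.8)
The plan is to give a description of $\os{\circ}{X}{}^{(k)}$ that makes no reference to the chosen decomposition $\Del$, and then to check that every such decomposition reproduces this canonical object. First I would reduce the statement to a local one: since the assertion concerns only the underlying scheme of $\os{\circ}{X}{}^{(k)}$ and since the formation of the \'etale-local branches of an SNCL scheme is intrinsic, it suffices to work on a (Zariski or \'etale) neighborhood on which $\os{\circ}{X}$ is isomorphic to the standard model $V(x_0\cdots x_r)\subset {\mathbb A}^{d}_{\os{\circ}{S}}$ over $\os{\circ}{S}$, where the smooth branches are the coordinate divisors $V(x_i)$ $(0\leq i\leq r)$.

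Next I would introduce the intrinsic reduced closed subscheme $\os{\circ}{X}{}^{[k]}\subset \os{\circ}{X}$ whose points are exactly those through which at least $k+1$ local branches of $\os{\circ}{X}$ pass (equivalently, the points at which $\os{\circ}{X}$ is, \'etale-locally, a normal crossing of at least $k+1$ smooth divisors). By its very definition $\os{\circ}{X}{}^{[k]}$ depends only on $\os{\circ}{X}/\os{\circ}{S}$ and not on $\Del$. In the local model $\os{\circ}{X}{}^{[k]}=\bigcup_{\# I=k+1}V(x_i\mid i\in I)$, and its irreducible, already smooth, components are precisely the maximal strata $V(x_i\mid i\in I)$ with $\# I=k+1$ (for two distinct index sets of equal size, neither coordinate subspace contains the other). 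Hence the normalization of $\os{\circ}{X}{}^{[k]}$ is locally the disjoint union $\coprod_{\# I=k+1}V(x_i\mid i\in I)$, and globally it is an intrinsic scheme equipped with a canonical morphism to $\os{\circ}{X}$.

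I would then verify that, for an arbitrary decomposition $\Del=\{\os{\circ}{X}_{\lam}\}_{\lam}$ by smooth components, the scheme $\os{\circ}{X}{}^{(k)}=\coprod_{\#\ul{\lam}=k+1}\os{\circ}{X}_{\ul{\lam}}$ coincides with this normalization. The key local input is that on each small neighborhood every branch $V(x_i)$ lies in exactly one $\os{\circ}{X}_{\lam}$, and distinct branches meeting at a point lie in distinct components; this is forced by the smoothness of the $\os{\circ}{X}_{\lam}$ together with the normal-crossing condition. Consequently each $(k+1)$-fold intersection $\os{\circ}{X}_{\ul{\lam}}$ restricts locally to a disjoint union of maximal strata $V(x_i\mid i\in I)$ with $\# I=k+1$, and the total disjoint union $\os{\circ}{X}{}^{(k)}$ recovers, compatibly under gluing, the normalization of $\os{\circ}{X}{}^{[k]}$. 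Since the normalization of the intrinsic scheme $\os{\circ}{X}{}^{[k]}$ and its morphism to $\os{\circ}{X}$ are independent of $\Del$, so is $\os{\circ}{X}{}^{(k)}$; the same argument applied to $\os{\circ}{D}$ (with its own index convention $\#\ul{\mu}=k$, and using that $\os{\circ}{D}$ is a relative SNCD on $X/S$ so that its smooth components admit the analogous local coordinate description) gives the independence of $\os{\circ}{D}{}^{(k)}$. The final clause about $D^{(k)}$ then follows at once, because its log structure is the pull-back of that of $X$ along the now-canonical morphism $\os{\circ}{D}{}^{(k)}\lo \os{\circ}{X}$. The step I expect to be the main obstacle is the careful local bookkeeping in this last comparison: one must confirm that an arbitrary smooth-component decomposition is, locally, nothing more than a grouping of the canonical branches into disjoint unions, so that forming $(k+1)$-fold intersections and then the full disjoint union exactly undoes this grouping and reproduces the normalization.
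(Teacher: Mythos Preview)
Your argument is correct and follows the standard route: identify $\os{\circ}{X}{}^{(k)}$ with the normalization of the intrinsic ``depth $\geq k+1$'' stratum and check that any smooth-component decomposition reproduces it. The paper does not spell this out but simply cites \cite[(2.2.14), (2.2.15)]{nh2}, where exactly this normalization argument is carried out, so your approach coincides with the one the paper has in mind.
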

\begin{proof}
The proof is the same as that of \cite[(2.2.14), (2.2.15)]{nh2}. 
\end{proof}

\par 
As in \cite[(3.1.4)]{dh2}, let us define the orientation 
sheaves of the sets $\{\os{\circ}{X}_{\lam}\}_{\lam \in \Lam}$ 
and $\{\os{\circ}{D}_{\mu}\}_{\mu \in M}$. 
\par
Let $E$ be a finite set with cardinality $k\geq 0$.  
Set $\vp_E:=\bigwedge^k{\mab Z}^E$ if $k \geq 1$ 
and $\vp_E:={\mab Z}$ if $k =0$.
\par 
Let $k$ be a nonnegative integer. 
Let $P$ be a point of $\os{\circ}{X}{}^{(k)}$.
Let $\os{\circ}{X}_{\lam_0}, \ldots, \os{\circ}{X}_{\lam_{k}}$ 
be different smooth components of $\os{\circ}{X}$ such that
$\os{\circ}{X}_{\lam_0}\cap  \cdots \cap \os{\circ}{X}_{\lam_{k}}$ 
contains $P$.
Then the set 
$E:=\{\os{\circ}{X}_{\lam_0},\ldots,\os{\circ}{X}_{\lam_{k}}\}$ 
gives an abelian sheaf 
$$\vp_{\lam_0\cdots \lam_{k}{\rm zar}}
(\os{\circ}{X}/\os{\circ}{S}):=
\bigwedge^{k+1}
{\mab Z}^E_{\os{\circ}{X}_{\lam_0}\cap \cdots \cap 
\os{\circ}{X}_{\lam_{k}}}$$ 
on a local neighborhood of $P$ in $\os{\circ}{X}{}^{(k)}$. 
The sheaf 
$\vp_{\lam_0\cdots \lam_{k}{\rm zar}}
(\os{\circ}{X}/\os{\circ}{S})$ 
is globalized on $\os{\circ}{X}{}^{(k)}$; 
we denote this globalized abelian 
sheaf by the same symbol
$\vp_{\lam_0\cdots \lam_{k}{\rm zar}}
(\os{\circ}{X}/\os{\circ}{S})$. 
We denote a local 
section of 
$\vp_{\lam_0\cdots \lam_{k}{\rm zar}}
(\os{\circ}{X}/\os{\circ}{S})$
by the following way: 
$m(\lam_0\cdots \lam_{k})$ $(m\in {\mab Z})$.
Set $\vp^{(k)}_{\rm zar}(\os{\circ}{X}/\os{\circ}{S}):=
\bigoplus_{\{\lam_0,\ldots \lam_{k}\}}
\vp_{\lam_0\cdots \lam_{k}{\rm zar}}
(\os{\circ}{X}/\os{\circ}{S})$.
Set $\vp^{(0)}_{\rm zar}(\os{\circ}{X}/\os{\circ}{S})
:={\mab Z}_{\os{\circ}{X}}$.
\par
Let $k$ be a nonnegative integer. 
Let $Q$ be a point of $\os{\circ}{D}{}^{(k)}$.
Let $\os{\circ}{D}_{\mu_1}, \ldots, \os{\circ}{D}_{\mu_k}$ 
be different smooth components of $\os{\circ}{D}$ such that
$\os{\circ}{D}_{\mu_1}\cap  \cdots \cap \os{\circ}{D}_{\mu_k}$ 
contains $Q$.
Then the set 
$F:=\{\os{\circ}{D}_{\mu_1},\ldots,\os{\circ}{D}_{\mu_k}\}$ 
gives an abelian sheaf 
$$\vp_{\mu_1\cdots \mu_k{\rm zar}}(\os{\circ}{D}/\os{\circ}{S}):=
\bigwedge^{k}
{\mab Z}^F_{\os{\circ}{D}_{\mu_1}\cap \cdots 
\cap \os{\circ}{D}_{\mu_k}}$$ 
on a local neighborhood of $Q$ in $\os{\circ}{D}{}^{(k)}$. 
The sheaf 
$\vp_{\mu_1\cdots \mu_k{\rm zar}}(\os{\circ}{D}/\os{\circ}{S})$ 
is globalized on $\os{\circ}{D}{}^{(k)}$. 
By using $\vp_{\mu_1\cdots \mu_k{\rm zar}}
(\os{\circ}{D}/\os{\circ}{S})$, 
we have an analogous orientation sheaf 
$\vp^{(k)}_{\rm zar}(\os{\circ}{D}/\os{\circ}{S})
:=\bigoplus_{\{\mu_1,\ldots \mu_{k}\}}
\vp_{\mu_1\cdots \mu_{k}{\rm zar}}
(\os{\circ}{D}/\os{\circ}{S})$ 
in $\os{\circ}{D}{}^{(k)}_{\rm zar}$ 
for $k\in {\mab Z}_{\geq 0}$. 
Set 
$\vp^{(k),(k')}_{\rm zar}((\os{\circ}{X},\os{\circ}{D})/\os{\circ}{S}):=
\vp^{(k)}_{\rm zar}(\os{\circ}{X}/\os{\circ}{S})
\vert_{\os{\circ}{X}{}^{(k)}\cap \os{\circ}{D}{}^{(k')}} 
\otimes 
\vp^{(k')}_{\rm zar}(\os{\circ}{D}/\os{\circ}{S})
\vert_{\os{\circ}{X}{}^{(k)}\cap \os{\circ}{D}{}^{(k')}}$. 
The orientation sheaves  
$\vp^{(k)}_{\rm zar}(\os{\circ}{X}/\os{\circ}{S})$, 
$\vp^{(k)}_{\rm zar}(\os{\circ}{D}/\os{\circ}{S})$ and 
$\vp^{(k),(k')}_{\rm zar}((\os{\circ}{X},\os{\circ}{D})/\os{\circ}{S}))$
are non-canonically isomorphic to 
${\mab Z}_{\os{\circ}{X}{}^{(k)}}$, ${\mab Z}_{\os{\circ}{D}{}^{(k)}}$ 
and 
${\mab Z}_{\os{\circ}{X}{}^{(k)}\cap \os{\circ}{D}{}^{(k')}}$, respectively.  
Let $a^{(k)} \col \os{\circ}{X}{}^{(k)} \lo \os{\circ}{X}$, 
$c^{(k)} \col \os{\circ}{D}{}^{(k)} \lo \os{\circ}{X}$ and 
$a^{(k),(k')} \col \os{\circ}{X}{}^{(k)} \cap \os{\circ}{D}{}^{(k')} 
\lo \os{\circ}{X}$ be natural morphisms of schemes. 
(We do not use a symbol $b^{(k)}$ because we use the symbol 
$b^{(k)}$ for another morphism in the $p$-adic case in \cite{nppf}.)
Let $M_X$ be the log structure of $X$ in $\os{\circ}{X}_{\rm et}$. 
The orientation sheaves 
$\vp^{(k)}_{\rm zar}(\os{\circ}{X}/\os{\circ}{S})$,  
$\vp^{(k)}_{\rm zar}(\os{\circ}{D}/\os{\circ}{S})$ and 
$\vp^{(k),(k')}_{\rm zar}((\os{\circ}{X},\os{\circ}{D})/\os{\circ}{S})$
define \'{e}tale sheaves 
$\vp^{(k)}_{\rm et}(\os{\circ}{X}/\os{\circ}{S})$, 
$\vp^{(k)}_{\rm et}(\os{\circ}{D}/\os{\circ}{S})$ and 
$\vp^{(k),(k')}_{\rm et}((\os{\circ}{X},\os{\circ}{D})/\os{\circ}{S})$
in $(\os{\circ}{X}{}^{(k)})_{\rm et}$, 
$(\os{\circ}{D}{}^{(k)})_{\rm et}$, 
$(\os{\circ}{X}{}^{(k)}\cap \os{\circ}{D}{}^{(k')})_{\rm et}$, respectively.  
We have the following canonical isomorphisms  
\begin{equation*} 
a^{(k)}_{{\rm et}*}
(\vp^{(k)}_{\rm et}(\os{\circ}{X}/\os{\circ}{S})) 
\os{\sim}{\lo} 
\bigwedge^{k+1}(M_X^{\rm gp}/{\cal O}_X^*),   
\tag{9.1.1}\label{eqn:oremx} 
\end{equation*} 
\begin{equation*} 
c^{(k)}_{{\rm et}*}(\vp^{(k)}_{\rm et}(\os{\circ}{D}/\os{\circ}{S})) 
\os{\sim}{\lo} 
\bigwedge^k(M(D)^{\rm gp}/{\cal O}_X^*)   
\tag{9.1.2}\label{eqn:oremo} 
\end{equation*} 
and 
\begin{equation*} 
a^{(k),(k')}_{{\rm et}*}(\vp^{(k),(k')}_{\rm et}
((\os{\circ}{X},\os{\circ}{D})/\os{\circ}{S})) 
\os{\sim}{\lo} 
\bigwedge^{k+1}(M_X^{\rm gp}/{\cal O}_X^*)\otimes_{\mab Z}
\bigwedge^{k'}(M(D)^{\rm gp}/{\cal O}_X^*)    
\tag{9.1.3}\label{eqn:lco} 
\end{equation*} 
as abelian sheaves in $\os{\circ}{X}_{\rm et}$. 
We have a canonical isomorphism 
\begin{equation*} 
a^{(k),(k')}_{{\rm et}*}
(\vp^{(k),(k')}_{\rm et}((\os{\circ}{X},\os{\circ}{D})/\os{\circ}{S}))
\os{\sim}{\longleftarrow} 
a^{(k)}_{{\rm et}*}(\vp^{(k)}_{\rm et}(\os{\circ}{X}/\os{\circ}{S}))
\otimes_{\mab Z} 
c^{(k)}_{{\rm et}*}(\vp^{(k)}_{\rm et}(\os{\circ}{D}/\os{\circ}{S})).  
\tag{9.1.4}\label{eqn:etcab}
\end{equation*} 
Henceforth, we denote 
$a^{(k)}_{{\rm et}*}$, $c^{(k)}_{{\rm et}*}$ 
and $a^{(k),(k')}_{{\rm et}*}$ simply by 
$a^{(k)}_{*}$, $c^{(k)}_{*}$  
and $a^{(k),(k')}_{*}$, respectively. 
We apply the same rule for 
$a^{(k)*}_{\rm et}$, $c^{(k)*}_{\rm et}$ 
and $a^{(k),(k')*}_{\rm et}$. 
\par 
Set $(X,D):=X\times_{\os{\circ}{X}}(\os{\circ}{X},M(D))$ as in \cite[\S6]{ny} and 
$(\os{\circ}{X},\os{\circ}{D}):=(\os{\circ}{X},M(D))$. 
Let 
\begin{equation*} 
\eps_{D} \col (X,D) \lo X, \quad  
\eps_X \col X  \lo \os{\circ}{X} \quad 
\text{and} \quad  
\os{\circ}{\eps}_{D} \col 
(\os{\circ}{X},\os{\circ}{D}) \lo \os{\circ}{X} 
\end{equation*} 
be natural morphisms of log schemes forgetting log structures. 
Set 
${\eps}_{(X,D)}:=\eps_X \circ \eps_{D}$. 
The morphisms above induce 
the following morphisms of topoi:  
\begin{equation*} 
\eps_{D} \col (X,D)_{\rm ket} \lo X_{\rm ket}, \quad 
\eps_X \col X_{\rm ket}  \lo \os{\circ}{X}_{\rm et}, 
\end{equation*}  
\begin{equation*} 
{\eps}_{(X,D)}:={\eps}_{X} \circ {\eps}_{D} 
\col (X,D)_{\rm ket} \lo \os{\circ}{X}_{\rm et}
\quad   
\text{and}  \quad 
\os{\circ}{\eps}_{D} \col 
(\os{\circ}{X},\os{\circ}{D})_{\rm et} 
\lo \os{\circ}{X}_{\rm et}. 
\end{equation*}  
\par 
Following \cite{nd}, set $\os{\circ}{S}_{\frac{1}{l^m}}
:=\os{\circ}{S}\otimes_{{\mab Z}[{\mab N}]}
{\mab Z}[(l^m)^{-1}{\mab N}^{}]$.  
(In [loc.~cit.] C.~Nakayama has used the symbol 
${\mab N}^{1/l^m}$ instead of $(l^m)^{-1}{\mab N}$ in ${\mab Q}$.) 
The inclusion morphism  
$(l^m)^{-1}{\mab N}^{} \os{\sus}{\lo} 
{\mab Z}[(l^m)^{-1}{\mab N}^{}]$ gives an 
fs log structure on $\os{\circ}{S}_{\frac{1}{l^m}}$. 
Set $S_{\frac{1}{l^{\infty}}}:=\vpl_{m}S_{\frac{1}{l^m}}$. 
Set also $X_{\frac{1}{l^m}}:=X\times_SS_{\frac{1}{l^m}}$ and 
$X_{\frac{1}{l^{\infty}}}:=\vpl_{m}X_{\frac{1}{l^m}}$. 
Then we have the following natural morphisms of log schemes: 
\begin{equation*}  
\pi_{X_{\frac{1}{l^m}}} \col X_{\frac{1}{l^m}} \lo X \quad 
\text{and} \quad  
\pi_{X_{\frac{1}{l^{\infty}}}} \col X_{\frac{1}{l^{\infty}}} \lo X.
\end{equation*}   
Denote the group scheme 
$\mu_{l^m}:=\ul{\rm Spec}_{\os{\circ}{S}}({\cal O}_S[T_m]/(T^{l^m}_{m}-1))$ 
over $\os{\circ}{S}$ by ${\mab Z}/l^m(1)$. 
There exists a natural morphism 
$\mu_{l^{m+1}}\lo \mu_{l^m}$ defined by $T_m\lom T_{m+1}^l$. 
Set ${\mab Z}_l(1):=\vpl_n{\mab Z}/l^m(1)$. 
The group scheme ${\mab Z}_l(1)=\vpl_m\mu_{l^m}$ 
acts naturally on $S_{\frac{1}{l^{\infty}}}$: 
\begin{align*} 
(\zeta_{l^m})_{m\geq 1}\cdot (1\otimes \dfrac{1}{l^m}):=\zeta_{l^m}\otimes \dfrac{1}{l^m}. 
\tag{9.1.5}\label{ali:iin}
\end{align*} 
Here $(\zeta_{l^m})_{m\geq 1}(=(T_m)_{m\geq 1})$ in 
the left hand side is an element of ${\mab Z}_l(1)$ and 
$\zeta_{l^m}(=(T_m)_{m\geq 1})$ in the right hand side is the image of 
$\zeta_{l^m}\in {\mab Z}[l^{-1},\zeta_{l^\infty}~\vert~\zeta_{l^\infty}\in \mu_{l^{\infty}}]$ by 
the pull-back of the morphism 
$S\lo {\rm Spec}({\mab Z}[l^{-1},\zeta_{l^\infty}~\vert~\zeta_{l^\infty}\in \mu_{l^{\infty}}])$. 
This action of ${\mab Z}_l(1)$ on $S_{\frac{1}{l^{\infty}}}$ induces the action 
of ${\mab Z}_l(1)$ on $X_{\frac{1}{l^{\infty}}}$. 
For a sheaf $F$ in $X_{\rm ket}$, 
${\mab Z}_l(1)$ acts on 
$\pi_{X_{\frac{1}{l^{\infty}}}*}\pi_{X_{\frac{1}{l^{\infty}}}^{*}}(F)$ since 
${\mab Z}_l(1)$ acts on $X_{\frac{1}{l^{\infty}}}$.  
The group schemes ${\mab Z}/l^m(1)$ and 
${\mab Z}_l(1)=\vpl_m\mu_{l^m}$ define abelian sheaves in 
$\os{\circ}{X}_{\rm et}$. 
Set 
\begin{equation*} 
M_{(X_{\frac{1}{l^m}},D_{\frac{1}{l^m}})}
:=M_{X_{\frac{1}{l^m}}}\oplus_{{\cal O}^*_{X_{\frac{1}{l^m}}}}\os{\circ}{\pi}{}^{*}_{X_{\frac{1}{l^m}}}(M(D)) 
\end{equation*} 
and  
\begin{equation*} 
M_{(X_{\frac{1}{l^{\infty}}},D_{\frac{1}{l^{\infty}}})}:=
M_{X_{\frac{1}{l^{\infty}}}}
\oplus_{{\cal O}^*_{X_{\frac{1}{l^{\infty}}}}}\os{\circ}{\pi}{}^{*}_{X_{\frac{1}{l^{\infty}}}}(M(D)).
\end{equation*}  
Denote $(\os{\circ}{X}_{\frac{1}{l^m}},M_{(X_{\frac{1}{l^m}},D_{\frac{1}{l^m}})})$ by 
$(X_{\frac{1}{l^m}},D_{\frac{1}{l^m}})$  
and 
$(\os{\circ}{X}_{\frac{1}{l^{\infty}}},M_{(X_{\frac{1}{l^{\infty}}},D_{\frac{1}{l^{\infty}}})})$ 
by $(X_{\frac{1}{l^{\infty}}},D_{\frac{1}{l^{\infty}}})$.  
We also have the following natural morphisms of log schemes: 
\begin{equation*}  
\pi_{(X_{\frac{1}{l^m}},D_{\frac{1}{l^m}})} \col (X_{\frac{1}{l^m}},D_{\frac{1}{l^m}}) 
\lo (X,D) \quad 
\text{and} \quad 
\pi_{(X_{\frac{1}{l^{\infty}}},D_{\frac{1}{l^{\infty}}})} 
\col (X_{\frac{1}{l^{\infty}}},D_{\frac{1}{l^{\infty}}}) \lo (X,D). 
\end{equation*}  
By using the trivial action of ${\mab Z}_l(1)$ 
on $(\os{\circ}{X},\os{\circ}{D})$, 
we also have a natural action of ${\mab Z}_l(1)$ on 
$(X_{\frac{1}{l^{\infty}}},D_{\frac{1}{l^{\infty}}})$. 
Let $I^{\bul}_{\os{\circ}{D},l^n}$ $(n\in {\mab N})$ 
be an injective resolution of ${\mab Z}/l^n$ in $(\os{\circ}{X},\os{\circ}{D})_{\rm ket}$. 
Then we have a projective system 
$\{I^{\bul}_{\os{\circ}{D},l^n}\}_{n=1}^{\infty}$ fitting into the following commutative diagram 
\begin{equation*} 
\begin{CD}
{\mab Z}/l^{n+1}@>>>I^{\bul}_{\os{\circ}{D},l^{n+1}}\\
@V{\rm proj}.VV @VVV \\
{\mab Z}/l^{n}@>>>I^{\bul}_{\os{\circ}{D},l^n}. 
\end{CD}
\tag{9.1.6}\label{cd:iin}
\end{equation*} 

The following non-difficult proposition is important in this paper. 

\begin{prop}\label{prop:bav}  
There exists a projective system 
$\{(M^{\bul}_{l^n}(\os{\circ}{D}/\os{\circ}{S}),Q)\}_{n=1}^{\infty}$ 
of bounded filtered flat resolutions {\rm (\cite[(1.1.17) (2)]{nh2})} of the complexes 
$(\os{\circ}{\eps}_{D*}(I^{\bul}_{\os{\circ}{D},l^n}),\tau)$'s in $\os{\circ}{X}_{\rm et}$. 
\end{prop}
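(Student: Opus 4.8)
The plan is to produce the system $\{(M^{\bul}_{l^n}(\os{\circ}{D}/\os{\circ}{S}),Q)\}_{n=1}^{\infty}$ by first analyzing the complexes $\os{\circ}{\eps}_{D*}(I^{\bul}_{\os{\circ}{D},l^n})$ cohomologically, and then invoking the case $n=1$ of the flat-resolution machinery of \S\ref{sfr}, dealing with boundedness and with the compatibility in $n$ as separate points.

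First I would identify the cohomology sheaves. Since $I^{\bul}_{\os{\circ}{D},l^n}$ is an injective resolution of ${\mab Z}/l^n$ in $(\os{\circ}{X},\os{\circ}{D})_{\rm ket}$, the complex $\os{\circ}{\eps}_{D*}(I^{\bul}_{\os{\circ}{D},l^n})$ represents $R\os{\circ}{\eps}_{D*}({\mab Z}/l^n)$, and the Kummer log \'{e}tale purity computation (\cite{nale}, via the log Kummer sequence and \eqref{eqn:oremo}) gives
\[
\mathcal{H}^q(\os{\circ}{\eps}_{D*}(I^{\bul}_{\os{\circ}{D},l^n}))
\cong c^{(q)}_*(\vp^{(q)}_{\rm et}(\os{\circ}{D}/\os{\circ}{S})\otimes_{\mab Z}{\mab Z}/l^n)(-q).
\]
Each such sheaf is the pushforward along $c^{(q)}$ from $\os{\circ}{D}{}^{(q)}$ of a locally free ${\mab Z}/l^n$-module of rank one, hence is a flat ${\mab Z}/l^n$-module (flatness being stalk-local, the stalks being free or zero). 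Because $\os{\circ}{X}$ is quasi-compact, the number of smooth components of $\os{\circ}{D}$ meeting at a point is bounded, so these sheaves vanish for $q$ outside a range $[0,d]$ with $d$ independent of $n$. In particular the canonical filtration $\tau$ (Notations (5)) is finite on $\os{\circ}{\eps}_{D*}(I^{\bul}_{\os{\circ}{D},l^n})$, and ${\rm gr}^{\tau}_q=\mathcal{H}^q[-q]$ is a flat sheaf placed in a single degree.

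Next I would build the resolution. Applying the case $n=1$ of (\ref{prop:esfrl}) to $(\os{\circ}{\eps}_{D*}(I^{\bul}_{\os{\circ}{D},l^n}),\tau)$ yields a bounded-above filtered flat resolution, whose graded pieces resolve the flat sheaves $\mathcal{H}^q[-q]$ by (\ref{lemm:gris}). The essential point is that flat cohomology forces finite tor-dimension: by d\'{e}vissage along the finite filtration $\tau$, each ${\rm gr}^{\tau}_q$ has tor-amplitude $[q,q]$, and the iterated mapping-cone structure of $\tau$ shows that $\os{\circ}{\eps}_{D*}(I^{\bul}_{\os{\circ}{D},l^n})$ has finite tor-dimension even over the non-regular ring ${\mab Z}/l^n$. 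Hence, truncating the bounded-above resolution at a degree beyond this tor-dimension, the kernel sheaf at the cut-off is again flat (checked on stalks, $\os{\circ}{X}_{\rm et}$ having enough points), producing a genuinely bounded complex $M^{\bul}_{l^n}(\os{\circ}{D}/\os{\circ}{S})$ of flat ${\mab Z}/l^n$-modules. Carrying out this truncation filteredly, so that every ${\rm gr}^Q_k$ is simultaneously truncated to a bounded flat complex, supplies the filtration $Q$; since the graded pieces stay flat and $Q$ is finite, each $Q_kM^q$ and $M^q/Q_kM^q$ is flat (an iterated extension of flats), so $(M^{\bul}_{l^n},Q)$ meets the flatness condition \eqref{eqn:qfla} for $n=1$, i.e. is a filtered flat resolution in the sense of \cite[(1.1.17)(2)]{nh2}; the filtered quasi-isomorphism and the degreewise strict-epimorphism property survive the truncation by (\ref{lemm:gris}).

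Finally, to obtain a projective system I would perform the construction functorially in $n$. The transition maps \eqref{cd:iin} induce a compatible system $\os{\circ}{\eps}_{D*}(I^{\bul}_{\os{\circ}{D},l^{n+1}})\lo \os{\circ}{\eps}_{D*}(I^{\bul}_{\os{\circ}{D},l^n})$ of $\tau$-filtered complexes, and the functoriality inherent in (\ref{prop:esfrl}) (the functor $L^0$ of \S\ref{sfr} being functorial), applied in the manner of (\ref{prop:comf}) together with a uniform choice of truncation degree, lifts these to morphisms $(M^{\bul}_{l^{n+1}},Q)\lo (M^{\bul}_{l^n},Q)$, giving the desired system. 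The main obstacle is precisely the boundedness step: the general theory of \S\ref{sfr} delivers only bounded-above filtered flat resolutions, and truncation to a bounded flat complex over ${\mab Z}/l^n$ is available only through the finite tor-dimension coming from the flatness of the cohomology sheaves; arranging this truncation compatibly with both the filtration $Q$ and the projective structure in $n$ is where the real work lies.
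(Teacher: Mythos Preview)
Your proposal is correct and follows essentially the same route as the paper: identify the cohomology sheaves of $R\os{\circ}{\eps}_{D*}({\mab Z}/l^n)$ as the flat sheaves $c^{(k)}_*(\vp^{(k)}_{\rm et}(\os{\circ}{D}/\os{\circ}{S}))\otimes_{\mab Z}{\mab Z}/l^n(-k)$ (the paper invokes \cite[(2.4)]{ktnk} for this rather than \cite{nale} directly), use quasi-compactness of $\os{\circ}{X}$ to bound the range, take a bounded-above filtered flat resolution (the paper cites \cite[(1.1.18)]{nh2} rather than (\ref{prop:esfrl})), obtain finite tor-dimension by d\'evissage along $\tau$, truncate as in \cite[II (4.2)]{hard} to a bounded flat complex, and build the projective system in $n$ by the dual of the functorial-resolution argument (the paper says ``the dual argument of the proof of \cite[(1.1.8)]{nh2}'', which is exactly your ``in the manner of (\ref{prop:comf})''). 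Your closing caveat that the simultaneous filtered/projective truncation is where the real work lies is apt; the paper is terse at the same point.
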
 
\begin{proof} 
Let $k$ be an integer. 
Because the rank of $\ol{M}(D):=M(D)/{\cal O}_X^*$ is finite 
(since $\os{\circ}{X}$ is quasi-compact) and 
because $\os{\circ}{\eps}_D$ is proper, 
\begin{align*} 
{\rm gr}^\tau_k
(\os{\circ}{\eps}_{D*}(I^{\bul}_{\os{\circ}{D},l^n}))
& =R^k\os{\circ}{\eps}_{D*}({\mab Z}/l^n)[-k]=0
\end{align*} 
for $k>>0$ which depends only on the relative dimension of $X/S$ and 
the rank of $\ol{M}(D)$ by \cite[(7.2) (1)]{adv}. 
One may prove this vanishing in the following way in this case by \cite[(2.4)]{ktnk}. 
\par 
Consider the following exact sequence 
\begin{align*} 
0\lo {\mab Z}/l^m(1)\lo M^{\rm gp}(D)\os{l^m}{\lo}M^{\rm gp}(D)\lo 0. 
\end{align*}  
By \cite[(2.4)]{ktnk} we have 
\begin{align*} 
{\rm gr}^\tau_k
(\os{\circ}{\eps}_{D*}(I^{\bul}_{\os{\circ}{D},l^n}))
& =R^k\os{\circ}{\eps}_{D*}({\mab Z}/l^n)[-k]
=\bigwedge^k(M(D)^{\rm gp}/{\cal O}_X^*)
\otimes_{\mab Z}{\mab Z}/l^n(-k)[-k]\tag{9.2.1}\label{eqn:tktk1}\\ 
{} & =
c^{(k)}_{*}(\varpi^{(k)}_{\rm et}(\os{\circ}{D}/\os{\circ}{S}))\otimes_{\mab Z}{\mab Z}/l^n(-k)[-k].  
\end{align*} 
Because $\os{\circ}{X}$ is quasi-compact, there exists a positive integer $k_0$ such that 
$\varpi^{(k)}_{\rm et}(\os{\circ}{D}/\os{\circ}{S})=0$ for 
any $k > k_0$.  
Hence, if $k>k_0$, then 
$R^k\os{\circ}{\eps}_{D*}({\mab Z}/l^n)=0$.  
Consequently there exists a bounded above flat complex 
$M'^{\bul}_{l^n}$ of ${\mab Z}/l^n$-modules in $\os{\circ}{X}_{\rm et}$ 
with a filtered quasi-isomorphism 
$(M'^{\bul}_{l^n},\tau) 
\lo (\os{\circ}{\eps}_{D*}(I^{\bul}_{\os{\circ}{D},l^n}),\tau)$ 
(cf.~\cite[(1.1.18)]{nh2}). 
In fact, we can take the projective system 
$\{M'^{\bul}_{l^n}\}_{n=1}^{\infty}$ 
fitting into the following commutative diagram 
\begin{equation*} 
\begin{CD}
(M'^{\bul}_{l^{n+1}},\tau)@>>>(\os{\circ}{\eps}_{D*}(I^{\bul}_{\os{\circ}{D},l^{n+1}}),\tau)\\
@VVV @VVV \\
(M'^{\bul}_{l^n},\tau) @>>>(\os{\circ}{\eps}_{D*}(I^{\bul}_{\os{\circ}{D},l^{n}}),\tau)
\end{CD}
\end{equation*} 
by the dual argument of the proof of \cite[(1.1.8)]{nh2}. 
\par 
We claim that there exists an integer $q_0$ such that 
\begin{align*} 
{\cal T}{\it or}_q^{{\mab Z}/l^n}(\os{\circ}{\eps}_{D*}(I^{\bul}_{\os{\circ}{D},l^n}), G)
:={\cal H}^{-q}(\os{\circ}{\eps}_{D*}(I^{\bul}_{\os{\circ}{D},l^n})\otimes^L_{{\mab Z}/l^n}G)
= {\cal H}^{-q}(M'^{\bul}_{l^n}\otimes_{{\mab Z}/l^n}G)
=0
\end{align*}  
for any $q>q_0$ 
and for any ${\mab Z}/l^n$-module 
$G$ in $\os{\circ}{X}_{\rm et}$. 
Let $q_1$ be an integer such 
that $M'^q_{l^n}(\os{\circ}{D}/\os{\circ}{S})=0$ for any $q>q_1$. 
Obviously $\tau_{q_1}M'^{\bul}_{l^n}=M'^{\bul}_{l^n}$.  
Consider the following exact sequence 
$$0\lo \tau_{q_1-1}M'^{\bul}_{l^n}\lo \tau_{q_1}M'^{\bul}_{l^n} 
\lo{\rm gr}^{\tau}_{q_1}M'^{\bul}_{l^n}\lo 0.$$
By (\ref{eqn:tktk1}),  
${\cal T}{\it or}_q^{{\mab Z}/l^n}(\tau_{q_1}\os{\circ}{\eps}_{D*}
(I^{\bul}_{\os{\circ}{D},l^n}), G)
=
{\cal T}{\it or}_q^{{\mab Z}/l^n}(\tau_{q_1-1}\os{\circ}{\eps}_{D*}(I^{\bul}_{\os{\circ}{D},l^n}), G)$ 
except for finitely many $q$'s. Making this argument repeatedly until $\tau_0$, 
we see that 
${\cal T}{\it or}_q^{{\mab Z}/l^n}(\os{\circ}{\eps}_{D*}(I^{\bul}_{\os{\circ}{D},l^n}), G)
=0$ except for finitely many $q$'s. Hence our claim holds. 
By the same argument as that of \cite[II (4.2)]{hard}, 
there exists a bounded complex 
$M^{\bul}_{l^n}(\os{\circ}{D}/\os{\circ}{S})$
of flat ${\mab Z}/l^n$-modules which is isomorphic to 
$\os{\circ}{\eps}_{D*}(I^{\bul}_{\os{\circ}{D},l^n})$. 
By (\ref{eqn:tktk1}) again,  
we see that 
$(M^{\bul}_{l^n}(\os{\circ}{D}/\os{\circ}{S}),\tau)$ 
is a filtered flat resolution of 
$(\os{\circ}{\eps}_{D*}(I^{\bul}_{\os{\circ}{D},l^n}).\tau)$. 
\end{proof}

\begin{defi}\label{defi:dcd}
Let $({\cal T},{\cal A})$ be a ringed topos. Let $m\leq 2$ and $n$ be positive integers.  
Let $(K^{\bul},\{P^{(i)}\}_{i=1}^m)$ be an $m$-filtered complex 
of ${\cal A}$-modules. 
\par 
(1) We say that the cohomological sheaves of $(K^{\bul},\{P^{(i)}\}_{i=1}^m)$ are {\it constructible} if 
${\cal H}^q(K^{\bul})$ and ${\cal H}^q((P^{(1)}_{k_1}\cap P^{(m)}_{k_m})K^{\bul})$ 
$(\forall q, \forall k_1, \forall k_m\in {\mab Z})$ are constructible. 
\par 
(2) We say that $(K^{\bul},\{P^{(i)}\}_{i=1}^m)$ has {\it finite tor-dimension} if the 
${\cal A}$-modules 
$K^{\bul}$ and $K^{\bul}/(P^{(1)}_{k_1}\cap P^{(m)}_{k_m})K^{\bul}$ 
$(\forall k_1, \forall k_m\in {\mab Z})$ 
have finite tor-dimension. 
\end{defi}  

In the following  let $m$ be a positive integer less than or equal to $2$.  

\begin{prop}\label{prop:cfd}
Assume that the filtrations $P^{(1)}$ and $P^{(m)}$ are biregular. 
Then the following hold$:$
\par 
$(1)$ The cohomological sheaves of  $(K^{\bul},\{P^{(i)}\}_{i=1}^m)$ are constructible 
if and only if 
the cohomological sheaves of 
${\rm gr}^{P^{(1)}}_{k_1}{\rm gr}^{P^{(m)}}_{k_m}K^{\bul}$ is constructible 
for any $k_1,k_m\in {\mab Z}$. 
\par 
$(2)$ The filtered complex $(K^{\bul},\{P^{(i)}\}_{i=1}^m)$ has finite tor-dimension 
if and only if ${\rm gr}^{P^{(1)}}_{k_1}{\rm gr}^{P^{(m)}}_{k_m}K^{\bul}$ has finite tor-dimension 
for any $k_1,k_m\in {\mab Z}$.
\end{prop}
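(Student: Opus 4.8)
The plan is to exploit the fact that both properties in question --- constructibility of the cohomology sheaves and finiteness of the tor-dimension --- are stable under the ``two-out-of-three'' principle in a short exact sequence of complexes: if $0\lo A^{\bul}\lo B^{\bul}\lo C^{\bul}\lo 0$ is exact and two of $A^{\bul},B^{\bul},C^{\bul}$ enjoy the property, then so does the third. For constructibility this follows from the long exact sequence of the cohomology sheaves ${\cal H}^q$, and for finite tor-dimension from the long exact sequences of the sheaves ${\cal T}{\it or}^{\bul}_{\cal A}(-,G)$ attached to ${\cal A}$-modules $G$, together with the usual characterization of finite tor-dimension. Since the case $m=1$ is immediate from the single-filtered theory, I would treat only $m=2$ and write $K^{(12)}_{k_1k_2}:=(P^{(1)}_{k_1}\cap P^{(2)}_{k_2})K^{\bul}$. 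For (2) I would first trade the quotients $K^{\bul}/K^{(12)}_{k_1k_2}$ for the subcomplexes $K^{(12)}_{k_1k_2}$: applying the ``two-out-of-three'' principle to $0\lo K^{(12)}_{k_1k_2}\lo K^{\bul}\lo K^{\bul}/K^{(12)}_{k_1k_2}\lo 0$ shows that the condition ``$K^{\bul}$ and all $K^{\bul}/K^{(12)}_{k_1k_2}$ have finite tor-dimension'' is equivalent to ``$K^{\bul}$ and all $K^{(12)}_{k_1k_2}$ have finite tor-dimension''. Thus (1) and (2) reduce to the same statement about the subcomplexes $K^{(12)}_{k_1k_2}$ and $K^{\bul}$.

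For the ``only if'' direction I would argue directly from the two exact sequences already recorded in the text. Assuming the property for $K^{\bul}$ and for every $K^{(12)}_{k_1k_2}$, the short exact sequence (\ref{ali:pipe}) with indices shifted by one,
$$0\lo K^{(12)}_{k_1k_2}\lo K^{(12)}_{k_1,k_2+1}\oplus K^{(12)}_{k_1+1,k_2}\lo K^{(12)}_{k_1,k_2+1}+K^{(12)}_{k_1+1,k_2}\lo 0,$$
yields the property for the sum $K^{(12)}_{k_1,k_2+1}+K^{(12)}_{k_1+1,k_2}$; feeding this into the exact sequence $0\lo K^{(12)}_{k_1,k_2+1}+K^{(12)}_{k_1+1,k_2}\lo K^{(12)}_{k_1+1,k_2+1}\lo {\rm gr}^{P^{(1)}}_{k_1+1}{\rm gr}^{P^{(2)}}_{k_2+1}K^{\bul}\lo 0$ coming from (\ref{ali:ppe}) then gives the property for each double-graded piece.

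For the ``if'' direction I would run the double induction used in the proof of (\ref{prop:pq}). By biregularity, in each fixed degree only finitely many steps of $P^{(1)}$ and $P^{(2)}$ are nontrivial, so there are integers with $K^{(i)}_{k}=0$ for $k$ small and $K^{(i)}_k=K^{\bul}$ for $k$ large. Starting from the bottom, where $K^{(12)}_{k_1k_2}$ coincides with a single double-graded piece, I would climb up first in $k_1$ and then in $k_2$ using the very same sequences (\ref{ali:ppe}) and (\ref{ali:pipe}): at each step the property for ${\rm gr}^{P^{(1)}}_{k_1}{\rm gr}^{P^{(2)}}_{k_2}K^{\bul}$ and for the previously-handled smaller intersections propagates, via the ``two-out-of-three'' principle, to $K^{(12)}_{k_1k_2}$. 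Because the filtrations are finite, after finitely many steps one reaches $K^{(12)}_{k_1k_2}=K^{\bul}$, so $K^{\bul}$ inherits the property too.

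The main obstacle I expect is the organization of this reverse induction and the verification that the ``two-out-of-three'' principle applies uniformly in the finite-tor-dimension case (not just for constructibility), where one must keep the long exact ${\cal T}{\it or}$-sequences under control simultaneously for every coefficient sheaf $G$. The geometric input is light; the care lies entirely in the bookkeeping of the double filtration indices --- exactly as in (\ref{prop:pq}) --- the rest being a routine transcription of (\ref{ali:ppe}), (\ref{ali:pipe}) and (\ref{prop:nsq}) into the two cohomological settings.
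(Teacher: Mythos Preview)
Your proposal is correct and follows essentially the same route as the paper: the paper's proof simply invokes stability under extensions (\cite[IX (2.6) (ii)]{sga4-3} for constructibility, the definition for finite tor-dimension) together with ``the easy argument in the proof of (\ref{prop:pq})'' and, for (2), also (\ref{prop:nsq}). Your write-up spells out exactly this argument --- the two-out-of-three principle applied to the sequences (\ref{ali:ppe}) and (\ref{ali:pipe}), and the double induction of (\ref{prop:pq}) using biregularity --- so there is nothing to correct.
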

\begin{proof} 
(1): By \cite[IX (2.6) (ii)]{sga4-3}, the constructivility is stable 
under the extension of an exact sequence 
of ${\cal A}$-modules. 
By this fact and the easy argument in the proof of (\ref{prop:pq}), we obtain (1). 
\par 
(2): By the definition of the finite tor-dimension, the property of the finite tor-dimension is 
stable under the extension of an exact sequence of ${\cal A}$-modules. 
By this fact and the easy arguments in the proof of (\ref{prop:nsq}) and (\ref{prop:pq}), 
we obtain (2). 
\end{proof}


\begin{defi} 
Let $({\cal T},{\cal A})$ be a ringed topos. 
\par 
(1) Let ${\rm D}{\rm F}^m({\cal T},{\cal A})$ be the derived category of
$m$-filtered complexes of ${\cal A}$-modules. 
Let ${\rm D}^{\rm b}{\rm F}^m_{\rm ctf}({\cal T},{\cal A})$ be the full-subcategory of 
the derived category of bounded filtered complexes whose objects 
$(K^{\bul},\{P_k\}_{k=1}^m)$'s have finite tor-dimension and 
whose cohomological sheaves are constructible. 
\par 
(2) Assume that ${\cal A}$ is the projective limit ${\cal A}=\vpl_{n\in {\mab N}}{\cal A}_n$ for 
sheaves of commutative rings with unit elements in ${\cal T}$.  
Let ${\rm D}^{\rm b}{\rm F}_{\rm ctf}({\cal T},{\cal A})$ be the projective 2-limit of 
${\rm D}^{\rm b}{\rm F}_{\rm ctf}({\cal T},{\cal A}_n)$'s: 
an object of ${\rm D}^{\rm b}{\rm F}_{\rm ctf}({\cal T},{\cal A})$ 
is a projective system 
$\{(K^{\bul}_n,\{P_k\}_{k=1}^m)\}_{n=0}^{\infty}$, 
where $(K^{\bul}_n,\{P_k\}_{k=1}^m)$ is 
an object of ${\rm D}^{\rm b}{\rm F}_{\rm ctf}({\cal T},{\cal A}_n)$ 
such that 
$$(K^{\bul}_{n+1},\{P_k\}_{k=1}^m)\otimes^L_{{\cal A}_{n+1}}{\cal A}_n
=(K^{\bul}_n,\{P_k\}_{k=1}^m) \quad (\forall n\in {\mab N})$$
in ${\rm D}^{\rm b}{\rm F}_{\rm ctf}({\cal T},{\cal A}_n)$. 
Here $\otimes^L_{{\cal A}_{n+1}}$ is the derived tensor product of bounded below 
$m$-filtered complexes defined in (\ref{eqn:drtsr}) (cf.~\cite[(1.2.5.7)]{nh2}). 
A morphism in ${\rm D}^{\rm b}{\rm F}^m_{\rm ctf}({\cal T},{\cal A})$ 
is obviously defined. 
\end{defi} 

In the case $m=1$, we denote ${\rm D}^{\rm b}{\rm F}^m_{\rm ctf}$ 
by simply ${\rm D}^{\rm b}{\rm F}_{\rm ctf}$. 

\begin{coro}\label{coro:dpp}
The filtered complexes 
$(R\os{\circ}{\eps}_{D*}({\mab Z}/l^n),\tau)\in 
{\rm D}^+{\rm F}(\os{\circ}{X}_{\rm et},{\mab Z}/l^n)$'s in 
$\os{\circ}{X}_{\rm et}$ define an object 
$(R\os{\circ}{\eps}_{D*}({\mab Z}_l),\tau)$ of 
${\rm D}^{\rm b}{\rm F}_{\rm ctf}(\os{\circ}{X}_{\rm et},{\mab Z}_l)$.  
\end{coro}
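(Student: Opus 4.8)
The plan is to verify, for each $n$, that $(R\os{\circ}{\eps}_{D*}({\mab Z}/l^n),\tau)$ is a bounded filtered complex of finite tor-dimension whose cohomology sheaves are constructible, and then to check the transition isomorphism $(R\os{\circ}{\eps}_{D*}({\mab Z}/l^{n+1}),\tau)\otimes^L_{{\mab Z}/l^{n+1}}{\mab Z}/l^n=(R\os{\circ}{\eps}_{D*}({\mab Z}/l^n),\tau)$ demanded by the definition of ${\rm D}^{\rm b}{\rm F}_{\rm ctf}(\os{\circ}{X}_{\rm et},{\mab Z}_l)$. Since $I^{\bul}_{\os{\circ}{D},l^n}$ is an injective resolution, $R\os{\circ}{\eps}_{D*}({\mab Z}/l^n)=\os{\circ}{\eps}_{D*}(I^{\bul}_{\os{\circ}{D},l^n})$, and by (\ref{prop:bav}) this filtered complex admits a bounded filtered flat resolution $(M^{\bul}_{l^n}(\os{\circ}{D}/\os{\circ}{S}),Q)$. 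In particular the filtration $\tau$ is biregular, so both properties may be tested on graded pieces by (\ref{prop:cfd}).

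For boundedness, finite tor-dimension and constructibility I would invoke the computation (\ref{eqn:tktk1}), which identifies ${\rm gr}^\tau_k(\os{\circ}{\eps}_{D*}(I^{\bul}_{\os{\circ}{D},l^n}))$ with $c^{(k)}_*(\varpi^{(k)}_{\rm et}(\os{\circ}{D}/\os{\circ}{S}))\otimes_{\mab Z}{\mab Z}/l^n(-k)[-k]$. As noted after (\ref{eqn:tktk1}), quasi-compactness of $\os{\circ}{X}$ forces $\varpi^{(k)}_{\rm et}(\os{\circ}{D}/\os{\circ}{S})=0$ for $k>k_0$, giving boundedness. The orientation sheaf $\varpi^{(k)}_{\rm et}(\os{\circ}{D}/\os{\circ}{S})$ is locally isomorphic to ${\mab Z}_{\os{\circ}{D}{}^{(k)}}$, so $\varpi^{(k)}_{\rm et}(\os{\circ}{D}/\os{\circ}{S})\otimes_{\mab Z}{\mab Z}/l^n(-k)$ is a locally constant constructible ${\mab Z}/l^n$-sheaf that is flat over ${\mab Z}/l^n$; since $c^{(k)}$ is finite, its pushforward remains constructible and of tor-dimension $0$ over ${\mab Z}/l^n$. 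Hence each graded piece is constructible and of finite tor-dimension, and (\ref{prop:cfd}) yields $(R\os{\circ}{\eps}_{D*}({\mab Z}/l^n),\tau)\in{\rm D}^{\rm b}{\rm F}_{\rm ctf}(\os{\circ}{X}_{\rm et},{\mab Z}/l^n)$.

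For the transition isomorphism I would compute $\otimes^L_{{\mab Z}/l^{n+1}}{\mab Z}/l^n$ using the bounded filtered flat resolution $(M^{\bul}_{l^{n+1}}(\os{\circ}{D}/\os{\circ}{S}),Q)$ of (\ref{prop:bav}); by flatness the derived tensor product (\ref{eqn:drtsr}) is computed by the underived filtered tensor product $(M^{\bul}_{l^{n+1}},Q)\otimes_{{\mab Z}/l^{n+1}}{\mab Z}/l^n$. The commutative diagram (\ref{cd:iin}) of the projective system $\{I^{\bul}_{\os{\circ}{D},l^n}\}_n$ furnishes a comparison morphism to $(M^{\bul}_{l^n}(\os{\circ}{D}/\os{\circ}{S}),Q)$, and I would check that it is a filtered quasi-isomorphism by passing to graded pieces via (\ref{prop:pq}): on ${\rm gr}^\tau_k$ the map becomes $c^{(k)}_*(\varpi^{(k)}_{\rm et})\otimes_{\mab Z}{\mab Z}/l^{n+1}(-k)[-k]\otimes^L_{{\mab Z}/l^{n+1}}{\mab Z}/l^n\to c^{(k)}_*(\varpi^{(k)}_{\rm et})\otimes_{\mab Z}{\mab Z}/l^n(-k)[-k]$, which is an isomorphism because ${\mab Z}/l^{n+1}(-k)$ is flat over ${\mab Z}/l^{n+1}$ (so no higher Tor appears) and ${\mab Z}/l^{n+1}\otimes_{{\mab Z}/l^{n+1}}{\mab Z}/l^n={\mab Z}/l^n$.

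The main obstacle is this last step: one must match the $\tau$-filtration surviving the reduction $\otimes^L_{{\mab Z}/l^{n+1}}{\mab Z}/l^n$ with the $\tau$-filtration on $R\os{\circ}{\eps}_{D*}({\mab Z}/l^n)$ as filtered objects, not merely degreewise. Here the bounded flat resolutions of (\ref{prop:bav}), the graded criterion (\ref{prop:pq}) for filtered quasi-isomorphisms, and the flatness of the Tate-twisted orientation sheaves—which guarantees the vanishing of the higher tor-sheaves ${\cal T}{\it or}^{{\mab Z}/l^{n+1}}_{>0}$—are all essential; the bookkeeping of the twists $(-k)$ and shifts $[-k]$ through the derived tensor product must be carried out so that the resulting transition maps are compatible across all $n$.
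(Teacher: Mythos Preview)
Your proposal is correct and follows essentially the same approach as the paper: both compute the graded pieces via (\ref{eqn:tktk1}), use the bounded filtered flat resolution of (\ref{prop:bav}) to handle the derived tensor product, and verify the transition isomorphism on graded pieces using the flatness of the orientation sheaves. The paper carries out the cohomology computation (\ref{eqn:gmmqa}) a bit more explicitly, while you invoke (\ref{prop:cfd}) and (\ref{prop:pq}) as the formal criteria, but the content is the same.
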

\begin{proof} 
By the definition of $(M^{\bul}_{l^n}(\os{\circ}{D}/\os{\circ}{S}),Q)$ and 
by (\ref{eqn:tktk1}), we have the following formula: 
\begin{equation*} 
{\rm gr}_k^QM^{\bul}_{l^n}(\os{\circ}{D}/\os{\circ}{S})
=
{\mab Z}/l^n(-k)
\otimes_{\mab Z}
c^{(k)}_{*}(\varpi^{(k)}_{\rm et}(\os{\circ}{D}/\os{\circ}{S}))[-k] 
\tag{9.6.1}\label{eqn:grkqa}
\end{equation*} 
in $D^{\rm b}(\os{\circ}{X}_{\rm et},{\mab Z}/l^n)$. 
Hence the cohomological sheaves of the graded complexes of 
$(M^{\bul}_{l^n}(\os{\circ}{D}/\os{\circ}{S}),Q)$ are flat and smooth. 
We have to prove that the natural morphism 
$(M^{\bul}_{l^{n+1}}(\os{\circ}{D}/\os{\circ}{S}),Q)
\otimes_{{\mab Z}/l^{n+1}}{\mab Z}/l^n
\lo 
(M^{\bul}_{l^n}(\os{\circ}{D}/\os{\circ}{S}),Q)$ 
is a quasi-isomorphism. 
Because $(M^{\bul}_{l^{n+1}}(\os{\circ}{D}/\os{\circ}{S}),Q)$ 
is a bounded above filtered flat complex and the cohomological sheaves of 
${\rm gr}^Q_kM^{\bul}_{l^{n+1}}(\os{\circ}{D}/\os{\circ}{S})$ are flat, 
\begin{align*} 
{\cal H}^q({\rm gr}^Q_k(M^{\bul}_{l^{n+1}}(\os{\circ}{D}/\os{\circ}{S})
\otimes_{{\mab Z}/l^{n+1}}{\mab Z}/l^n))
&={\cal H}^q({\rm gr}^Q_kM^{\bul}_{l^{n+1}}(\os{\circ}{D}/\os{\circ}{S})
\otimes_{{\mab Z}/l^{n+1}}{\mab Z}/l^n) \tag{9.6.2}\label{eqn:gmmqa}\\
& ={\cal H}^q({\rm gr}^Q_kM^{\bul}_{l^{n+1}}(\os{\circ}{D}/\os{\circ}{S}))
\otimes_{{\mab Z}/l^{n+1}}{\mab Z}/l^n. 
\end{align*} 
The last sheaf is isomorphic to 
\begin{equation*} 
\begin{cases} 
0 \quad (q\not=k)\\
{\mab Z}/l^n(-k)
\otimes_{\mab Z}
c^{(k)}_{*}(\varpi^{(k)}_{\rm et}(\os{\circ}{D}/\os{\circ}{S})) \quad (q=k). 
\end{cases}
\end{equation*}  
This is nothing but 
${\cal H}^q({\rm gr}^Q_kM^{\bul}_{l^{n}}(\os{\circ}{D}/\os{\circ}{S}))$. 
Because the filtration $Q$ on $M^{\bul}_{l^{n}}(\os{\circ}{D}/\os{\circ}{S})$ is biregular, 
this means that 
$(M^{\bul}_{l^{n+1}}(\os{\circ}{D}/\os{\circ}{S}),Q)\otimes_{{\mab Z}/l^{n+1}}{\mab Z}/l^n
=(M^{\bul}_{l^{n}}(\os{\circ}{D}/\os{\circ}{S}),Q)$ 
in ${\rm D}^{\rm b}{\rm F}(\os{\circ}{X}_{\rm et},{\mab Z}/l^n)$.
By (\ref{eqn:tktk1}) the claim about the constructivility and the finite tor-dimension is obvious. 
\end{proof}

Next let us recall the Rapoport-Zink-Nakayama's 
double complex (\cite{rz}, \cite{nd}). 
\par 
Let $I^{\bul}_{X,l^n}$ $(n\in {\mab N})$ be an injective resolution of ${\mab Z}/l^n$ 
in $X_{\rm ket}$. 
Then we have a projective system 
$\{I^{\bul}_{X,l^n}\}_{n=1}^{\infty}$ fitting into the following commutative diagram 
\begin{equation*} 
\begin{CD}
{\mab Z}/l^{n+1}@>>>I^{\bul}_{X,l^{n+1}}\\
@V{\rm proj}.VV @VVV \\
{\mab Z}/l^{n}@>>>I^{\bul}_{X,l^n}. 
\end{CD}
\tag{9.6.3}\label{cd:ikin}
\end{equation*} 
Set 
\begin{equation*} 
K^{\bul}_{l^n}(X_{\frac{1}{l^{\infty}}}/S_{\frac{1}{l^{\infty}}}):=\eps_{X*}\pi_{X_{\frac{1}{l^{\infty}}}*}
\pi_{X_{\frac{1}{l^{\infty}}}}^*(I^{\bul}_{X,l^n})\in C^+(\os{\circ}{X}_{\rm et},{\mab Z}/l^n).  
\tag{9.6.4}\label{eqn:kdef}
\end{equation*} 
By (\ref{cd:ikin}) we have the projective system 
$\{K^{\bul}_{l^n}(X_{\frac{1}{l^{\infty}}}/S_{\frac{1}{l^{\infty}}})\}_{n=1}^{\infty}$.

\par 
The following has been stated in \cite[p.~723]{nd}: 

\begin{prop}\label{prop:lji} 
Let $I^{\bul}_{X_{\frac{1}{l^{\infty}}},l^n}$ be an injective resolution of 
${\mab Z}/l^n$ in $(X_{\frac{1}{l^{\infty}}})_{\rm ket}$. 
Then  the natural morphism 
\begin{equation*} 
(\eps_X\pi_{X_{\frac{1}{l^{\infty}}}})_*
\pi^{*}_{X_{\frac{1}{l^{\infty}}}}(I^{\bul}_{X,l^n}) 
\lo (\eps_X \pi_{X_{\frac{1}{l^{\infty}}}})_*(I^{\bul}_{X_{\frac{1}{l^{\infty}}},l^n}). 
\tag{9.7.1}\label{eqn:eij}
\end{equation*} 
is a quasi-isomorphism. Consequently 
the complex 
$K^{\bul}_{l^n}(X_{\frac{1}{l^{\infty}}}/S_{\frac{1}{l^{\infty}}})$ is quasi-isomorphic to 
$(\eps_X\pi_{X_{\frac{1}{l^{\infty}}}})_*(I^{\bul}_{X_{\frac{1}{l^{\infty}}},l^n})$. 
\end{prop}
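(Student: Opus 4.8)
The plan is to prove the quasi-isomorphism in (\ref{eqn:eij}) by reducing to the statement that the projection $\pi_{X_{\frac{1}{l^{\infty}}}}$ behaves well on injective (or at least $\pi_*$-acyclic) resolutions after pushing forward. First I would observe that the morphism is induced by functoriality: the target $I^{\bul}_{X_{\frac{1}{l^{\infty}}},l^n}$ is an injective resolution of ${\mab Z}/l^n$ in $(X_{\frac{1}{l^{\infty}}})_{\rm ket}$, while $\pi^{*}_{X_{\frac{1}{l^{\infty}}}}(I^{\bul}_{X,l^n})$ is the pull-back of an injective resolution on $X_{\rm ket}$, which is a resolution of $\pi^{*}_{X_{\frac{1}{l^{\infty}}}}({\mab Z}/l^n)={\mab Z}/l^n$ by sheaves that need not be injective but whose pushforward $(\eps_X\pi_{X_{\frac{1}{l^{\infty}}}})_*$ I must show computes the derived pushforward. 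Thus the content is that
$$
(\eps_X\pi_{X_{\frac{1}{l^{\infty}}}})_*\pi^{*}_{X_{\frac{1}{l^{\infty}}}}(I^{\bul}_{X,l^n})
\simeq R(\eps_X\pi_{X_{\frac{1}{l^{\infty}}}})_*({\mab Z}/l^n),
$$
and the right-hand side is by definition computed by the genuinely injective resolution $I^{\bul}_{X_{\frac{1}{l^{\infty}}},l^n}$.

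The key steps, in order, would be the following. First, I would reduce the claim to the assertion that each term $\pi^{*}_{X_{\frac{1}{l^{\infty}}}}(I^{q}_{X,l^n})$ is acyclic for $(\eps_X\pi_{X_{\frac{1}{l^{\infty}}}})_*$, since a resolution by $f_*$-acyclic objects computes $Rf_*$. Second, since $\eps_X$ is the forgetting-log-structure morphism and $\pi_{X_{\frac{1}{l^{\infty}}}}$ is the projective limit over the Kummer covers $\pi_{X_{\frac{1}{l^m}}}$, I would analyze the acyclicity of each $\pi^{*}_{X_{\frac{1}{l^{\infty}}}}(I^{q}_{X,l^n})$ by passing to the limit $X_{\frac{1}{l^{\infty}}}=\vpl_m X_{\frac{1}{l^m}}$. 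The essential input is that an injective ${\mab Z}/l^n$-sheaf on $X_{\rm ket}$, pulled back along a Kummer log étale morphism, remains $\pi_*$-acyclic; this is where I would invoke the structural results on Kummer log étale cohomology from \cite{nale}, \cite{ktnk}, \cite{adv}, together with the fact that $\mu_{l^{\infty}}$ is available in ${\cal O}_S$ so the Kummer covers are genuinely étale locally split. Third, I would check that the higher direct images $R^q\pi_{X_{\frac{1}{l^{\infty}}}*}$ of the injective pull-backs vanish in positive degree, so that the Leray/composition spectral sequence for $\eps_X\circ\pi_{X_{\frac{1}{l^{\infty}}}}$ degenerates and the underived pushforward of the pull-back complex agrees with the derived pushforward.

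The hard part will be controlling the behavior of cohomology under the projective limit defining $X_{\frac{1}{l^{\infty}}}$: one must verify that forming $\pi_{X_{\frac{1}{l^{\infty}}}*}\pi^{*}_{X_{\frac{1}{l^{\infty}}}}$ commutes with cohomology and that no $\varprojlim^1$ obstruction appears when comparing the limit over $m$ with the computation on $X_{\frac{1}{l^{\infty}}}$ directly. Concretely, I expect the main obstacle to be showing that the transition maps in the Kummer system are compatible enough that the pull-back of an injective remains acyclic in the limit, which relies on the tameness and the explicit description of Kummer log étale cohomology of the log points $S_{\frac{1}{l^m}}$; here the hypothesis that ${\cal O}_S$ contains all $l$-power roots of unity is exactly what is needed. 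Once this acyclicity is secured, the comparison (\ref{eqn:eij}) follows formally, and the final sentence—that $K^{\bul}_{l^n}(X_{\frac{1}{l^{\infty}}}/S_{\frac{1}{l^{\infty}}})$ is quasi-isomorphic to $(\eps_X\pi_{X_{\frac{1}{l^{\infty}}}})_*(I^{\bul}_{X_{\frac{1}{l^{\infty}}},l^n})$—is immediate from the definition (\ref{eqn:kdef}) of $K^{\bul}_{l^n}$ combined with the just-proved quasi-isomorphism.
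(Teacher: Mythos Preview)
Your overall strategy is right, but you are working harder than necessary and you have one direction reversed. The paper's proof is shorter and rests on two general facts from \cite{sga4-2} rather than on any specific computation in Kummer log \'{e}tale cohomology.

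First, on acyclicity versus injectivity: you aim to show that $\pi^{*}_{X_{\frac{1}{l^{\infty}}}}(I^{q}_{X,l^n})$ is $(\eps_X\pi_{X_{\frac{1}{l^{\infty}}}})_*$-acyclic. The paper observes something stronger at each finite level: since $X_{\frac{1}{l^m}}$ is an {\it object} of the Kummer \'{e}tale site of $X$, the functor $\pi^{*}_{X_{\frac{1}{l^m}}}$ is restriction to a slice topos, and such restriction preserves injectives outright \cite[V (4.11) (1)]{sga4-2}. So one may simply take $I^{\bul}_{X_{\frac{1}{l^m}},l^n}:=\pi^{*}_{X_{\frac{1}{l^m}}}(I^{\bul}_{X,l^n})$ as the injective resolution at level $m$. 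No appeal to \cite{nale}, \cite{ktnk}, \cite{adv}, to tameness, or to the presence of $\mu_{l^{\infty}}$ in ${\cal O}_S$ is needed for this step.

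Second, on the limit: you anticipate a $\varprojlim^1$ obstruction, but the relevant limit on cohomology is a filtered {\it inductive} limit. The scheme $X_{\frac{1}{l^{\infty}}}$ is $\vpl_m X_{\frac{1}{l^m}}$, and the comparison one needs is
\[
\vil_{m}(\eps_X\pi_{X_{\frac{1}{l^m}}})_*(I^{\bul}_{X_{\frac{1}{l^m}},l^n})\;\os{\sim}{\lo}\;(\eps_X\pi_{X_{\frac{1}{l^{\infty}}}})_*(I^{\bul}_{X_{\frac{1}{l^{\infty}}},l^n}),
\]
which is a quasi-isomorphism by the standard limit theorem \cite[VI (8.7.3.1)]{sga4-2}. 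On the other side, using \cite[VI (8.5.5)]{sga4-2} one identifies $\vil_m(\eps_X\pi_{X_{\frac{1}{l^m}}})_*\pi^{*}_{X_{\frac{1}{l^m}}}(I^{\bul}_{X,l^n})$ with $\eps_{X*}\pi_{X_{\frac{1}{l^{\infty}}}*}\pi^{*}_{X_{\frac{1}{l^{\infty}}}}(I^{\bul}_{X,l^n})$. Combining these two facts with the injectivity preservation above gives (\ref{eqn:eij}) immediately. So the ``hard part'' you flag does not arise: filtered colimits are exact, and there is no derived-limit term to control.
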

\begin{proof}  
Let $I^{\bul}_{X_{\frac{1}{l^m}},l^n}$ be 
an injective resolution of ${\mab Z}/l^n$ 
in $(X_{\frac{1}{l^{m}}})_{\rm ket}$ for $m\in {\mab N}$. 
Then we have the following natural morphism 
\begin{equation*}  
\vil_{m\in {\mab N}}(\eps_X \pi_{X_{\frac{1}{l^m}}})_*(I^{\bul}_{X_{\frac{1}{l^{m}}},l^n})
\lo (\eps_X \pi_{X_{\frac{1}{l^{\infty}}}})_*(I^{\bul}_{X_{\frac{1}{l^{\infty}}},l^n}). 
\tag{9.7.2}\label{eqn:epxj}
\end{equation*} 
This morphism is a quasi-isomorphism by 
\cite[VI (8.7.3.1)]{sga4-2}. 
Since $X_{l^m}$ is an object of the Kummer \'{e}tale site of $X$, 
$\pi^{*}_{X_{\frac{1}{l^{m}}}}(I^{\bul}_{X,l^n})$ is injective 
(\cite[V (4.11) (1)]{sga4-2}). 
Hence we can set 
$I^{\bul}_{X_{\frac{1}{l^{m}}},l^n}:=\pi^{*}_{X_{\frac{1}{l^{m}}}}(I^{\bul}_{X,l^n})$ $(m\in {\mab N})$. 
In this case 
\begin{align*}  
\vil_{m\in {\mab N}}(\eps_X \pi_{X_{\frac{1}{l^{m}}}})_*(I^{\bul}_{X_{\frac{1}{l^{m}}},l^n})
& = \vil_{m\in {\mab N}}(\eps_X \pi_{X_{\frac{1}{l^{m}}}})_*
\pi^{*}_{X_{\frac{1}{l^{m}}}}(I^{\bul}_{X,l^n})
=\eps_{X*} \vil_{m\in {\mab N}}\pi_{X_{\frac{1}{l^{m}}}*}
\pi^{*}_{X_{\frac{1}{l^{m}}}}(I^{\bul}_{X,l^n}) \tag{9.7.3}\label{eqn:ep}\\ 
{} & =\eps_{X*} \pi_{X_{\frac{1}{l^{\infty}}}*}
\pi^{*}_{X_{\frac{1}{l^{\infty}}}}(I^{\bul}_{X,l^n}). 
\end{align*} 
Here we have used \cite[VI (8.5.5)]{sga4-2} as in \cite[p.~723]{nd}. 
Hence the morphism (\ref{eqn:eij}) is a quasi-isomorphism. 
\end{proof}

Denote by ${\mab Z}/l^m(1)$ the abelian sheaf in $\os{\circ}{X}_{\rm et}$ defined by the group scheme 
$\mu_{l^m}:=\ul{\rm Spec}_{\os{\circ}{X}}({\cal O}_X[T_m]/(T^{l^m}_{m}-1))$. 
Set ${\mab Z}_l(1):=\vpl_n{\mab Z}/l^m(1)$. 
Fix a ``generator'' $T:=(T_m)_{m\in {\mab N}}$ of ${\mab Z}_l(1)$. Because 
${\mab Z}_l(1)$ acts on $X_{\frac{1}{l^{\infty}}}$ naturally, 
we have an endomorphism 
\begin{equation*} 
T -1 \col \{K^{\bul}_{l^n}(X_{\frac{1}{l^{\infty}}}/S_{\frac{1}{l^{\infty}}})\}_{n=1}^{\infty} \lo 
\{K^{\bul}_{l^n}(X_{\frac{1}{l^{\infty}}}/S_{\frac{1}{l^{\infty}}})\}_{n=1}^{\infty} 
\tag{9.7.4}\label{eqn:knnd}
\end{equation*} 
of $\{K^{\bul}_{l^n}(X_{\frac{1}{l^{\infty}}}/S_{\frac{1}{l^{\infty}}})\}_{n=1}^{\infty}$. 
Let ${\rm MF}_{l^n}(T -1)={\rm MF}_{l^n}(T -1)^{\bul}$ 
be the mapping fiber of this endomorphism:  
\begin{align*}
{\rm MF}_{l^n}(T -1)& :=K^{\bul}_{l^n}(X_{\frac{1}{l^{\infty}}}/S_{\frac{1}{l^{\infty}}})\oplus 
K^{\bul}_{l^n}(X_{\frac{1}{l^{\infty}}}/S_{\frac{1}{l^{\infty}}})[-1]\tag{9.7.5}\label{ali:mftkdf}  \\
&=
s((K^{\bul}_{l^n}(X_{\frac{1}{l^{\infty}}}/S_{\frac{1}{l^{\infty}}}),d) 
\os{T -1}{\lo} (K^{\bul}_{l^n}(X_{\frac{1}{l^{\infty}}}/S_{\frac{1}{l^{\infty}}}),-d)).  
\end{align*} 
Here $d$ on the right hand side 
is the boundary morphism of 
$K^{\bul}_{l^n}(X_{\frac{1}{l^{\infty}}}/S_{\frac{1}{l^{\infty}}})$ 
and $s$ means the single complex of a double complex. 
Obviously we have the projective system 
$\{{\rm MF}_{l^n}(T-1)\}_{n=1}^{\infty}$.

\par 
Let $\theta \col \{{\rm MF}_{l^n}(T-1)\}_{n=1}^{\infty}  \lo 
\{{\rm MF}_{l^n}(T-1)(1)[1]\}_{n=1}^{\infty} $ be 
the following vertical morphism
\begin{equation*}
\footnotesize{
\begin{CD}
\{(K^{\bul}_{l^n}(X_{\frac{1}{l^{\infty}}}/S_{\frac{1}{l^{\infty}}})(1),-d(1))\}_{n=1}^{\infty}  @>{-(T-1)}>> 
\{(K^{\bul}_{l^n}(X_{\frac{1}{l^{\infty}}}/S_{\frac{1}{l^{\infty}}})(1),d(1))]\}_{n=1}^{\infty} 
@. \\
@. @A{{\rm id}\otimes T}AA  @.\\ 
@.  \{(K^{\bul}_{l^n}(X_{\frac{1}{l^{\infty}}}/S_{\frac{1}{l^{\infty}}}),d)]\}_{n=1}^{\infty}  
@>{T-1}>>\{(K^{\bul}_{l^n}(X_{\frac{1}{l^{\infty}}}/S_{\frac{1}{l^{\infty}}}),-d)]\}_{n=1}^{\infty}
\end{CD}
\tag{9.7.7}\label{cd:lttarl}}
\end{equation*} 
(\cite[(1.6)]{stwsl}).

\begin{lemm}[{\bf \cite[p.~723]{nd}}]\label{lemm:fq0}
The complex ${\rm MF}_{l^n}(T-1)$ is isomorphic to 
$R\eps_{X*}({\mab Z}/l^n)$ in $D^+(\os{\circ}{X}_{\rm et},{\mab Z}/l^n)$. 
In particular, if $q>>0$, then ${\cal H}^q({\rm MF}_{l^n}(T-1))=0$ for all $n\in {\mab N}$. 
\end{lemm}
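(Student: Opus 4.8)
The plan is to identify the mapping fiber ${\rm MF}_{l^n}(T-1)$ with the nearby-cycle-type complex $R\eps_{X*}({\mab Z}/l^n)$ by reducing to the fundamental computation of the cohomology sheaves of $R\eps_{X*}$, which measures the difference between the Kummer log \'etale topos of $X$ and the \'etale topos of $\os{\circ}{X}$. First I would recall from (\ref{prop:lji}) that $K^{\bul}_{l^n}(X_{\frac{1}{l^{\infty}}}/S_{\frac{1}{l^{\infty}}})$ computes $R(\eps_X\pi_{X_{\frac{1}{l^{\infty}}}})_*({\mab Z}/l^n)$, so that the endomorphism $T-1$ of (\ref{eqn:knnd}) is precisely the action of the chosen topological generator of ${\mab Z}_l(1)={\rm Gal}(X_{\frac{1}{l^{\infty}}}/X)$ on this complex. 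The mapping fiber of $T-1$ should then recover the invariants/coinvariants of this Galois action, and this is exactly the mechanism by which Kummer covers compute $R\eps_{X*}$ after forgetting the log structure of $X$ (i.e. passing to $\os{\circ}{X}$). This is the $l$-adic analogue of the nearby cycle computation of Rapoport--Zink, here in the log setting following Nakayama.

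The key steps, in order, would be: (i) express $\eps_X\pi_{X_{\frac{1}{l^{\infty}}}}$ as the composite giving $R(\eps_X\pi_{X_{\frac{1}{l^{\infty}}}})_*$, and use the pro-Kummer-\'etale covering structure so that ${\mab Z}_l(1)$ acts with the pro-$l$ part of the tame fundamental group governing the log structure; (ii) compute the cohomology sheaves of $R\eps_{X*}({\mab Z}/l^n)$ via the Kummer-theoretic formula $R^q\eps_{X*}({\mab Z}/l^n)=\bigwedge^q(M_X^{\rm gp}/{\cal O}_X^*)\otimes{\mab Z}/l^n(-q)$ (the analogue of (\ref{eqn:tktk1}), here for $X$ rather than $(\os{\circ}{X},\os{\circ}{D})$), which relies on \cite[(2.4)]{ktnk}; (iii) identify, via the distinguished triangle attached to the mapping fiber, the cohomology sheaves of ${\rm MF}_{l^n}(T-1)$ as the kernel and cokernel of $T-1$ acting on $R^q(\eps_X\pi_{X_{\frac{1}{l^{\infty}}}})_*({\mab Z}/l^n)$, and check that these match the Kummer formula for $R^q\eps_{X*}$. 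The boundedness conclusion ${\cal H}^q({\rm MF}_{l^n}(T-1))=0$ for $q\gg 0$ then follows because $M_X^{\rm gp}/{\cal O}_X^*$ has locally bounded rank (as $\os{\circ}{X}$ is quasi-compact), exactly as in the proof of (\ref{prop:bav}) where finiteness of the rank of $\ol{M}(D)$ was used.

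I would carry out step (iii) through the long exact cohomology sequence of the triangle
\begin{equation*}
{\rm MF}_{l^n}(T-1)\lo K^{\bul}_{l^n}(X_{\frac{1}{l^{\infty}}}/S_{\frac{1}{l^{\infty}}})
\os{T-1}{\lo} K^{\bul}_{l^n}(X_{\frac{1}{l^{\infty}}}/S_{\frac{1}{l^{\infty}}})\os{+1}{\lo},
\end{equation*}
so that for each $q$ there is a short exact sequence relating ${\cal H}^q({\rm MF}_{l^n}(T-1))$ to $\operatorname{coker}(T-1\mid {\cal H}^{q-1})$ and $\ker(T-1\mid {\cal H}^q)$. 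Since the chosen generator $T$ acts through the natural Tate-twist weighting, the invariants and coinvariants of $T-1$ on the $\bigwedge^q$-terms reproduce the Kummer-theoretic description of $R^q\eps_{X*}({\mab Z}/l^n)$; this matching is the essential content and is the point where the reference to \cite[p.~723]{nd} for $n=1$ is being generalized to all $n$ compatibly in the projective system.

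The main obstacle I expect is step (ii)--(iii): verifying that the ${\mab Z}_l(1)$-action coming from the deck transformations of $X_{\frac{1}{l^{\infty}}}/X$ induces precisely the correct maps on the Kummer cohomology sheaves so that $\ker(T-1)$ and $\operatorname{coker}(T-1)$ assemble to $R\eps_{X*}({\mab Z}/l^n)$, rather than merely having abstractly isomorphic graded pieces. This requires a careful analysis of how the monodromy operator interacts with the local charts ${\rm Spec}^{\log}({\mab Z}[{\mab N}])$ and the log structure $M_X$, and one must ensure the identification is functorial in $n$ so that the conclusion is a genuine statement about the projective system $\{{\rm MF}_{l^n}(T-1)\}_{n=1}^{\infty}$; I would lean on Nakayama's computation in \cite{nd} and the fundamental facts for Kummer log \'etale cohomology in \cite{ktnk}, \cite{adv} to pin down this compatibility.
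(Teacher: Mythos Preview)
Your outline identifies the right ingredients but misses the key step that makes the paper's proof immediate, and your proposed route has a genuine gap.

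The paper does not work through the long exact cohomology sequence of the triangle at all. Instead it invokes the short exact sequence
\begin{equation*}
0 \lo I^{\bul}_{X,l^n} \lo
\pi_{X_{\frac{1}{l^{\infty}}}*}\pi^{*}_{X_{\frac{1}{l^{\infty}}}}(I^{\bul}_{X,l^n})
\os{T-1}{\lo}
\pi_{X_{\frac{1}{l^{\infty}}}*}\pi^{*}_{X_{\frac{1}{l^{\infty}}}}(I^{\bul}_{X,l^n})
\lo 0
\end{equation*}
in $X_{\rm ket}$, taken from \cite[(1.3.1)]{nd}. Because $I^{\bul}_{X,l^n}$ consists of injectives, the inclusion splits termwise, so the sequence remains short exact after applying $\eps_{X*}$. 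This exhibits $\eps_{X*}(I^{\bul}_{X,l^n})=R\eps_{X*}({\mab Z}/l^n)$ directly as the termwise kernel of $T-1$ on $K^{\bul}_{l^n}(X_{\frac{1}{l^{\infty}}}/S_{\frac{1}{l^{\infty}}})$, hence as ${\rm MF}_{l^n}(T-1)$ in $D^+(\os{\circ}{X}_{\rm et},{\mab Z}/l^n)$. No cohomology-sheaf bookkeeping is needed; the identification happens at the level of complexes. The vanishing for $q\gg 0$ then follows from the Kummer formula (\ref{eqn:tkx}) and quasi-compactness, as you correctly anticipate.

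Your step~(iii) only computes ${\cal H}^q({\rm MF}_{l^n}(T-1))$ as an extension of $\ker(T-1\mid {\cal H}^q(K^{\bul}_{l^n}))$ by $\operatorname{coker}(T-1\mid {\cal H}^{q-1}(K^{\bul}_{l^n}))$. Even if you compute these pieces and find they assemble to $R^q\eps_{X*}({\mab Z}/l^n)$ abstractly, that does not produce an isomorphism ${\rm MF}_{l^n}(T-1)\simeq R\eps_{X*}({\mab Z}/l^n)$ in the derived category: two bounded-below complexes with isomorphic cohomology sheaves need not be isomorphic in $D^+$. You would still have to construct an actual morphism (for instance, the map $\eps_{X*}(I^{\bul}_{X,l^n})\to {\rm MF}_{l^n}(T-1)$ coming from the unit $I^{\bul}_{X,l^n}\to \pi_*\pi^*I^{\bul}_{X,l^n}$ landing in the $T$-invariants) and then check it is a quasi-isomorphism. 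Once you do that, you are essentially forced back to the surjectivity of $T-1$ on $\pi_*\pi^*I^{\bul}_{X,l^n}$---that is, back to Nakayama's exact sequence above. So the short exact sequence is not a detail you can bypass; it is the mechanism that upgrades a cohomological coincidence to a derived-category isomorphism.
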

\begin{proof}
By \cite[(1.3.1)]{nd} the following  sequence 
\begin{equation*}  
0 \lo I^{\bul}_{X,l^n} \lo 
\pi_{X_{l^{\infty}}*}\pi^{*}_{X_{l^{\infty}}}(I^{\bul}_{X,l^n}) 
\os{T-1}{\lo} 
\pi_{X_{l^{\infty}_*}}\pi^{*}_{X_{l^{\infty}}}(I^{\bul}_{X,l^n})  
\lo 0
\tag{9.8.1}\label{eqn:tiix}\\ 
\end{equation*}  
is exact. Because $I^{\bul}_{X,l^n}$ is a complex of injective ${\mab Z}/l^n$-modules in 
$\os{\circ}{X}_{\rm et}$, the sequence 
\begin{equation*}  
0 \lo \eps_{X*}(I^{\bul}_{X,l^n}) \lo 
(\eps_X\pi_{X_{l^{\infty}}})_*\pi^{*}_{X_{l^{\infty}}}(I^{\bul}_{X,l^n}) 
\os{T-1}{\lo} 
(\eps_X\pi_{X_{l^{\infty}}})_*\pi^{*}_{X_{l^{\infty}}}(I^{\bul}_{X,l^n})  
\lo 0
\tag{9.8.2}\label{eqn:tkix}\\ 
\end{equation*}  
is exact. Hence ${\rm MF}_{l^n}(T-1)$ is quasi-isomorphic to 
$\eps_{X*}(I^{\bul}_{X,l^n})$.  
Consequently, by \cite[(2.4)]{ktnk},  
\begin{align*} 
{\cal H}^q({\rm MF}_{l^n}(T-1))&=R^q\eps_{X*}({\mab Z}/l^n)=
\bigwedge^q(M_X^{\rm gp}/{\cal O}_X^*)
\otimes_{\mab Z}{\mab Z}/l^n(-q)
\tag{9.8.3}\label{eqn:tkx}\\ 
{} & =a^{(q-1)}_*(({\mab Z}/l^n)_{\os{\circ}{X}{}^{(q-1)}}(-q)
\otimes_{\mab Z}
\varpi^{(q-1)}_{\rm et}(X/S)).  
\end{align*} 

Because $\os{\circ}{X}$ is quasi-compact, 
${\cal H}^q({\rm MF}_{l^n}(T-1))=0$ 
($\exists q_0>0$, $\forall q>q_0$, $\forall n\in {\mab N}$). 
\end{proof}

\begin{coro}\label{coro:iind}
The projective system 
$\{({\rm MF}_{l^n}(T-1),\tau)\}_{n=1}^{\infty}$ defines a well-defined object of 
${\rm D}^{\rm b}{\rm F}_{\rm ctf}(\os{\circ}{X}_{\rm et},{\mab Z}_l)$. 
$($We denote the resulting filtered complex by 
${(\rm MF}_{l^{\infty}}(T-1),\tau)$ by abuse of notation.$)$
\end{coro}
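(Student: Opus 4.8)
The plan is to show that the projective system $\{({\rm MF}_{l^n}(T-1),\tau)\}_{n=1}^{\infty}$ satisfies the two defining conditions of an object of ${\rm D}^{\rm b}{\rm F}_{\rm ctf}(\os{\circ}{X}_{\rm et},{\mab Z}_l)$: first, each term is a well-defined object of ${\rm D}^{\rm b}{\rm F}_{\rm ctf}(\os{\circ}{X}_{\rm et},{\mab Z}/l^n)$ (boundedness, constructibility of cohomology, finite tor-dimension), and second, the transition maps are compatible with the derived tensor product in the sense that $({\rm MF}_{l^{n+1}}(T-1),\tau)\otimes^L_{{\mab Z}/l^{n+1}}{\mab Z}/l^n\simeq ({\rm MF}_{l^n}(T-1),\tau)$. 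The strategy mirrors the proof of (\ref{coro:dpp}) for the $\os{\circ}{\eps}_D$-case, but now using the computation of the graded pieces of $\tau$ via the cohomology sheaves of ${\rm MF}_{l^n}(T-1)$.

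First I would record, using the fixed isomorphism ${\rm gr}_k^{\tau}E^{\bul}\os{\sim}{\lo}{\cal H}^k(E^{\bul})[-k]$ from the Notations, the explicit formula for the graded pieces. By (\ref{lemm:fq0}), specifically (\ref{eqn:tkx}), we have
\begin{equation*}
{\rm gr}_k^{\tau}{\rm MF}_{l^n}(T-1)
\simeq
a^{(k-1)}_*(({\mab Z}/l^n)_{\os{\circ}{X}{}^{(k-1)}}(-k)\otimes_{\mab Z}\varpi^{(k-1)}_{\rm et}(X/S))[-k]
\end{equation*}
in $D^{\rm b}(\os{\circ}{X}_{\rm et},{\mab Z}/l^n)$. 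The boundedness of $({\rm MF}_{l^n}(T-1),\tau)$ follows since $\os{\circ}{X}$ is quasi-compact, so these graded pieces vanish for $k$ outside a finite range (the same vanishing used in (\ref{lemm:fq0})). Since $a^{(k-1)}$ is finite and $({\mab Z}/l^n)_{\os{\circ}{X}{}^{(k-1)}}\otimes\varpi^{(k-1)}_{\rm et}$ is a locally constant constructible flat sheaf on $\os{\circ}{X}{}^{(k-1)}$, each graded piece has constructible cohomology and finite tor-dimension. I would then invoke (\ref{prop:cfd}) (1) and (2): since $\tau$ is biregular in the bounded range and constructibility and finite tor-dimension of $(K^{\bul},\tau)$ are equivalent to the same properties for all ${\rm gr}^{\tau}_kK^{\bul}$, both conditions for membership in ${\rm D}^{\rm b}{\rm F}_{\rm ctf}(\os{\circ}{X}_{\rm et},{\mab Z}/l^n)$ hold.

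For the compatibility with $\otimes^L$ under the transition maps, I would argue graded-piece by graded-piece exactly as in (\ref{coro:dpp}). Because ${\rm MF}_{l^{n+1}}(T-1)$ is built from $K^{\bul}_{l^{n+1}}$ via a mapping fiber and the graded sheaves ${\rm gr}^{\tau}_k$ are flat ${\mab Z}/l^{n+1}$-modules, one has
\begin{equation*}
{\cal H}^q({\rm gr}^{\tau}_k({\rm MF}_{l^{n+1}}(T-1)\otimes^L_{{\mab Z}/l^{n+1}}{\mab Z}/l^n))
={\cal H}^q({\rm gr}^{\tau}_k{\rm MF}_{l^{n+1}}(T-1))\otimes_{{\mab Z}/l^{n+1}}{\mab Z}/l^n,
\end{equation*}
which by the displayed formula equals ${\cal H}^q({\rm gr}^{\tau}_k{\rm MF}_{l^n}(T-1))$; since $\tau$ is biregular this forces the derived-tensor compatibility in ${\rm D}^{\rm b}{\rm F}_{\rm ctf}(\os{\circ}{X}_{\rm et},{\mab Z}/l^n)$. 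The main obstacle I anticipate is the careful bookkeeping needed to pass from the cohomology-sheaf identity to a genuine filtered quasi-isomorphism of complexes (rather than a mere termwise agreement of graded cohomology); this is precisely the point where one must use that $\tau$ is biregular together with (\ref{prop:pq}) (or its consequence via $E_2$-degeneration of the associated spectral sequence) to conclude that the comparison morphism is a $1$-filtered quasi-isomorphism. As in (\ref{coro:dpp}), the flatness of the graded pieces — guaranteed by the constructions (\ref{prop:bav}) and the Künneth-type behavior of $\otimes^L$ on flat complexes — is what makes the derived tensor product compute correctly, so I would verify that ${\rm MF}_{l^n}(T-1)$ can be represented by a bounded complex of flat ${\mab Z}/l^n$-modules, invoking (\ref{prop:lji}) and the boundedness established above.
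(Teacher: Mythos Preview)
Your proposal is correct and follows essentially the same approach as the paper: the paper's proof is the single sentence ``By using (\ref{eqn:tkx}), the proof is the same as that of (\ref{coro:dpp}),'' and you have faithfully unpacked exactly that argument, computing ${\rm gr}_k^{\tau}{\rm MF}_{l^n}(T-1)$ via (\ref{eqn:tkx}) and then running the constructibility, finite tor-dimension, and $\otimes^L$-compatibility checks graded-piece by graded-piece as in (\ref{coro:dpp}). Your extra caution about needing a bounded filtered flat representative of $({\rm MF}_{l^n}(T-1),\tau)$ is appropriate and is handled by the same mechanism as in (\ref{prop:bav}); the paper leaves this implicit.
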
 
\begin{proof}By using (\ref{eqn:tkx}), 
the proof is the same as that of  (\ref{coro:dpp}). 
\end{proof}

\par 
Denote also by $\theta$ the following natural  morphism 
\begin{equation*} 
K^{\bul}_{l^n}(X_{\frac{1}{l^{\infty}}}/S_{\frac{1}{l^{\infty}}}) \owns x\lom (0,x\otimes T)\in 
{\rm MF}_{l^n}(T-1)(1)[1]. 
\tag{9.9.1}\label{ali:fln}
\end{equation*} 

\begin{rema}
The section $T$ in $(0,x\otimes T)$ in (\ref{ali:fln}) 
will turn out to be very important when we consider 
the contravariant functoriality in (\ref{theo:func}) below. 
\end{rema}

Let $A^{\bul \bul}_{l^n}(X_{\frac{1}{l^{\infty}}}/S_{\frac{1}{l^{\infty}}})$ 
be the double complex defined by 
the following formula and 
the following boundary morphisms; 
\begin{equation*} 
A^{ij}_{l^n}(X_{\frac{1}{l^{\infty}}}/S_{\frac{1}{l^{\infty}}}):=({\rm MF}_{l^n}(T-1)(j+1)/
\tau_j{\rm MF}_{l^n}(T-1)(j+1))^{i+j+1} \quad 
(i\in {\mab Z},j\in {\mab N}), 
\tag{9.10.1}\label{eqn:anxsmf}
\end{equation*} 
\begin{equation*} 
\begin{CD}
A^{i,j+1}_{l^n}(X_{\frac{1}{l^{\infty}}}/S_{\frac{1}{l^{\infty}}}) 
@>{-d}>> A^{i+1,+1}_{l^n}(X_{\frac{1}{l^{\infty}}}/S_{\frac{1}{l^{\infty}}})\\
@A{\theta}AA @AA{\theta}A \\
A^{ij}_{l^n}(X_{\frac{1}{l^{\infty}}}/S_{\frac{1}{l^{\infty}}}) @>{-d}>> 
A^{i+1,j}_{l^n}(X_{\frac{1}{l^{\infty}}}/S_{\frac{1}{l^{\infty}}}). 
\end{CD}
\tag{9.10.2}\label{cd:dlsgn}
\end{equation*} 
Note that the signs of our boundary morphisms are different from that of 
the boundary morphisms in the proof of \cite[p.~29]{rz} and \cite[the proof of (1.4)]{nd}. 
(We think that the signs in (\ref{cd:dlsgn}) are the best.) 
Let $A^{\bul}_{l^n}(X_{\frac{1}{l^{\infty}}}/S_{\frac{1}{l^{\infty}}})$ be the single complex of 
$A^{\bul \bul}_{l^n}(X_{\frac{1}{l^{\infty}}}/S_{\frac{1}{l^{\infty}}})$: 
\begin{align*} 
A^{\bul}_{l^n}(X_{\frac{1}{l^{\infty}}}/S_{\frac{1}{l^{\infty}}})= & 
s\{(({\rm MF}_{l^n}(T-1)(1)/\tau_0{\rm MF}_{l^n}(T-1)(1))^{\bul+1},-d)
\os{\theta}{\lo} \tag{9.10.3}\label{eqn:tdfef}\\
{} & (({\rm MF}_{l^n}(T-1)(2)/
\tau_1{\rm MF}_{l^n}(T-1)(2))^{\bul+2},-d)
\os{\theta}{\lo}  \cdots\}. 
\end{align*}  
Then we have the projective system 
$\{A^{\bul}_{l^n}(X_{\frac{1}{l^{\infty}}}/S_{\frac{1}{l^{\infty}}})\}_{n=1}^{\infty}$ and a morphism 
$\theta \col \{K^{\bul}_{l^n}(X_{\frac{1}{l^{\infty}}}/S_{\frac{1}{l^{\infty}}})\}_{n=1}^{\infty}\lo 
\{A^{\bul}_{l^n}(X_{\frac{1}{l^{\infty}}}/S_{\frac{1}{l^{\infty}}})\}_{n=1}^{\infty}$ 
of projective systems of complexes. 
\par 
In the proof of \cite[(1.4)]{nd} 
Nakayama has proved that the morphism (\ref{ali:fln}) 
induces the following isomorphism
\begin{align*}  
\theta \col R(\eps_X \pi_{X_{\frac{1}{l^{\infty}}}})_*({\mab Z}/l^n)
\os{\sim}{\lo}  
A^{\bul}_{l^n}(X_{\frac{1}{l^{\infty}}}/S_{\frac{1}{l^{\infty}}})
\tag{9.10.4}\label{ali:fiss}
\end{align*} 
in $D^+(\os{\circ}{X}_{\rm et},{\mab Z}/l^n)$. 
Here we prove this fact by the same proof as that of \cite[(5.13)]{stz} 
(and \cite[(3.17)]{fn}):

\begin{prop}\label{prop:qislws} 
The morphism 
$\theta \col K^{\bul}_{l^n}(X_{\frac{1}{l^{\infty}}}/S_{\frac{1}{l^{\infty}}})\lo  {\rm MF}_{l^n}(T-1)(1)[1]$ 
induces a quasi-isomorphism 
\begin{equation*} 
\theta \col 
K^{\bul}_{l^n}(X_{\frac{1}{l^{\infty}}}/S_{\frac{1}{l^{\infty}}})\lo 
A^{\bul}_{l^n}(X_{\frac{1}{l^{\infty}}}/S_{\frac{1}{l^{\infty}}})
\tag{9.11.1}\label{eqn:fqis} 
\end{equation*} 
in $C^+(\os{\circ}{X}_{\rm et},{\mab Z}/l^n)$.  
\end{prop}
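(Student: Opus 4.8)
The plan is to follow the proof of \cite[(5.13)]{stz} (see also \cite[(3.17)]{fn}): to exhibit the mapping cone of $\theta$ as the total complex of a double complex whose associated graded for the column filtration is a truncated Koszul complex on the monodromy class, and then to invoke the local exactness of that Koszul complex. Throughout I write $L^{\bul}:={\rm MF}_{l^n}(T-1)$ and abbreviate $K^{\bul}_{l^n}:=K^{\bul}_{l^n}(X_{\frac{1}{l^{\infty}}}/S_{\frac{1}{l^{\infty}}})$ and $A^{\bul}_{l^n}:=A^{\bul}_{l^n}(X_{\frac{1}{l^{\infty}}}/S_{\frac{1}{l^{\infty}}})$.

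First I would reduce to a bounded, local problem. By (\ref{lemm:fq0}) the complex $L^{\bul}$ is quasi-isomorphic to $R\eps_{X*}({\mab Z}/l^n)$, and by (\ref{eqn:tkx}) its cohomology sheaves $R^q\eps_{X*}({\mab Z}/l^n)=\bigwedge^q(M_X^{\rm gp}/{\cal O}_X^*)\otimes_{\mab Z}{\mab Z}/l^n(-q)$ vanish for $q\gg 0$ since $\os{\circ}{X}$ is quasi-compact; hence the canonical filtration $\tau$ on $L^{\bul}$ is finite, the double complex $A^{\bul\bul}_{l^n}$ (cf.~(\ref{eqn:anxsmf})) has only finitely many nonzero columns in each total degree, and $A^{\bul}_{l^n}$ is bounded. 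As (\ref{eqn:fqis}) concerns cohomology sheaves, which are computed \'etale-locally, I may pass to the local description (\ref{eqn:tkx}). Set $\ol{M}:=M_X^{\rm gp}/{\cal O}_X^*$.

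Next I would identify the cone. Let $C^{\bul}:={\rm MC}(\theta)$ with $\theta\col K^{\bul}_{l^n}\lo A^{\bul}_{l^n}$; proving (\ref{eqn:fqis}) is equivalent to showing $C^{\bul}$ acyclic. By the definitions (\ref{eqn:anxsmf})--(\ref{eqn:tdfef}) of $A^{\bul\bul}_{l^n}$ and the description (\ref{ali:fln}) of $\theta$, the complex $C^{\bul}$ is, up to the shift in the cone, the total complex of the double complex whose extra ($j=-1$) column is $K^{\bul}_{l^n}$ and whose column $j\geq 0$ is $(L(j+1)/\tau_jL(j+1))^{\bul+j+1}$, with vertical differential $\theta$ and horizontal differential $\pm d$. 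I then filter $C^{\bul}$ by the column index $j$ and run the associated spectral sequence. Using ${\rm gr}^{\tau}_k(L(j+1))=R^k\eps_{X*}({\mab Z}/l^n)(j+1)[-k]$ together with ${\cal H}^k(L/\tau_jL)={\cal H}^k(L)$ for $k>j$ and $0$ otherwise, the $E_1$-terms reduce to the sheaves $R^{\bul}\eps_{X*}$ with maps induced by $\theta$. Concretely, for each fixed horizontal degree $i\geq 0$ the $d_1$-complex (in $j$), after factoring out the uniform twist ${\mab Z}/l^n(-i)$, becomes
$${\cal H}^i(K^{\bul}_{l^n})(i)\lo
\bigwedge^{i+1}\ol{M}\os{e\we}{\lo}\bigwedge^{i+2}\ol{M}\os{e\we}{\lo}\cdots,$$
where $e\in \ol{M}$ is the local monodromy class (the image of the generator of ${\mab N}\subset M_S$) and the first arrow is induced by (\ref{ali:fln}).

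The heart of the argument --- and the step I expect to be the main obstacle --- is the exactness of this last complex for every $i$. Since $\ol{M}$ is, locally on $\os{\circ}{X}_{\rm et}$, a free ${\mab Z}$-module in which $e$ is unimodular, the Koszul tail $\bigwedge^{i+1}\ol{M}\to\bigwedge^{i+2}\ol{M}\to\cdots$ with differential $e\we$ is exact except at its leftmost term, where its cohomology is ${\rm Im}(e\we\col\bigwedge^{i}\ol{M}\to\bigwedge^{i+1}\ol{M})$; thus acyclicity of $C^{\bul}$ amounts to the assertion that $\theta$ identifies ${\cal H}^i(K^{\bul}_{l^n})=R^i(\eps_X\pi_{X_{\frac{1}{l^{\infty}}}})_*({\mab Z}/l^n)$ (cf.~(\ref{prop:lji})) with this kernel. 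This is exactly the local computation of the Kummer log \'etale higher direct images by the monodromy carried out in \cite[(5.13)]{stz} and \cite[(3.17)]{fn}, resting on the sheaf computation \cite[(2.4)]{ktnk} already used in (\ref{eqn:tkx}). Organizing the spectral sequence through the successive truncation quotients $L(j+1)/\tau_jL(j+1)$ while keeping the twists, shifts and the signs of (\ref{cd:dlsgn}) consistent is the delicate bookkeeping; once the local identification is in place, $C^{\bul}$ is acyclic and $\theta$ is a quasi-isomorphism. Finally, since the resolutions, $\theta$, the truncations and the filtration are all compatible with the transition maps in $n$ (cf.~(\ref{cd:ikin})), the conclusion holds for the projective system, as asserted in (\ref{eqn:fqis}).
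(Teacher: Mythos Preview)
Your approach and the paper's are genuinely different: you filter the augmented double complex by the \emph{column} index $j$, pass to $E_1$, and try to recognize a Koszul complex on the class $e$; the paper instead filters by the \emph{row} index $i$ and shows directly that every row is exact on the chain level. For $i\geq 0$ the row
\[
0\lo K^{i}_{l^n}\os{\theta}{\lo} A^{i0}_{l^n}\os{\theta}{\lo} A^{i1}_{l^n}\os{\theta}{\lo}\cdots
\]
is exact for elementary reasons (the map $\theta\col(x,y)\mapsto(0,x\otimes T)$ makes this a telescoping sequence). The only nontrivial row is $i=-1$, and there the paper proves acyclicity by first showing, via the Leray spectral sequence for $\eps_X\circ\pi_{X_{\frac{1}{l^{\infty}}}}$ together with the log proper base change theorem and the computation $R^j\pi_{X_{\frac{1}{l^{\infty}}}*}({\mab Z}/l^n)_{\ol{x}(\log)}=H^j({\mab Z}_l,{\mab Z}/l^n)$, that $T$ acts \emph{trivially} on ${\cal H}^q(K^{\bul}_{l^n})$; an explicit element argument then finishes. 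No identification of ${\cal H}^i(K^{\bul}_{l^n})$ with a Koszul kernel is needed.

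There is a genuine gap in your route at exactly the step you flag as the ``main obstacle''. The exactness of the Koszul tail is fine (locally $e=\sum_je_j\in\ol{M}$ is primitive), and Lemma~\ref{lemm:lbdtrho} does identify the maps ${\cal H}^{i+j+1}(L)\to{\cal H}^{i+j+2}(L)$ with $\iota^{(\bul)*}$, hence with $e\we$. But the assertion that $\theta$ identifies ${\cal H}^i(K^{\bul}_{l^n})=R^i(\eps_X\pi_{X_{\frac{1}{l^{\infty}}}})_*({\mab Z}/l^n)$ with $\mathrm{Ker}\,(e\we\col\bigwedge^{i+1}\ol{M}\to\bigwedge^{i+2}\ol{M})$ is not supplied by your citations: \cite[(2.4)]{ktnk} computes $R^q\eps_{X*}$, not $R^q(\eps_X\pi_{X_{\frac{1}{l^{\infty}}}})_*$, and \cite[(5.13)]{stz}, \cite[(3.17)]{fn} prove the \emph{analogue} of the proposition in other cohomological settings by the very argument the paper uses (triviality of the monodromy action on cohomology), not by an independent local computation you can import. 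In effect, to carry out your step you would still need the key ingredient of the paper's proof (the triviality of $T$ on ${\cal H}^q(K^{\bul}_{l^n})$), then feed it into the long exact sequence coming from (\ref{eqn:tkix}) to produce short exact sequences $0\to{\cal H}^{i}(K^{\bul}_{l^n})\to R^{i+1}\eps_{X*}({\mab Z}/l^n)\to{\cal H}^{i+1}(K^{\bul}_{l^n})\to 0$, and finally check that the connecting map agrees with the one induced by $\theta$. That is strictly more work than the paper's direct row-by-row argument, which bypasses any explicit description of ${\cal H}^i(K^{\bul}_{l^n})$.
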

\begin{proof} 
It is obvious that, for $i\geq 0$, the sequence  
\begin{equation*} 
0 \lo K^i_{l^n}(X_{\frac{1}{l^{\infty}}}/S_{\frac{1}{l^{\infty}}}) \os{\theta}{\lo} 
A^{i0}_{l^n}(X_{\frac{1}{l^{\infty}}}/S_{\frac{1}{l^{\infty}}}) \os{\theta}{\lo} 
A^{i1}_{l^n}(X_{\frac{1}{l^{\infty}}}/S_{\frac{1}{l^{\infty}}}) \os{\theta}{\lo} \cdots 
\end{equation*}
is exact. 
We have to prove that 
the complex $A^{-1\bul}_{l^n}(X_{\frac{1}{l^{\infty}}}/S_{\frac{1}{l^{\infty}}})$ is acyclic.
Consider the action of $T-1$  on 
$R^j\pi_{X_{\frac{1}{l^{\infty}}}*}({\mab Z}/l^n)$. 
Let $\ol{x}(\log)$ be the log geometric point of $X$. 
Then, 
by the log proper base change theorem 
\cite[(5.1)]{nale} and \cite[(4.1)]{nale}, 
$R^j\pi_{X_{\frac{1}{l^{\infty}}}*}({\mab Z}/l^n)_{\ol{x}(\log)}
=H^j({\mab Z}_l,{\mab Z}/l^n)$. 
This is equal to 
${\mab Z}/l^n$ $(j=0, 1)$ and $0$ $(j\geq 2)$ and the action of 
${\mab Z}_l$ on ${\mab Z}/l^n$ is trivial. 
Hence $T$ acts trivially on 
$R^j\pi_{X_{\frac{1}{l^{\infty}}}*}({\mab Z}/l^n)_{\ol{x}(\log)}$. 
Consequently $T$ acts trivially on  $R^j\pi_{X_{\frac{1}{l^{\infty}}}*}({\mab Z}/l^n)$. 
By the following Leray spectral sequence 
\begin{equation*}
E_2^{ij}=R^i\eps_{X*}R^j\pi_{X_{\frac{1}{l^{\infty}}}*}({\mab Z}/l^n)
\Lo 
R^{i+j}(\eps_X\pi_{X_{\frac{1}{l^{\infty}}}})_*({\mab Z}/l^n), 
\end{equation*} 
we see that $T$ acts trivially on 
${\cal H}^q(K^{\bul}_{l^n}(X_{\frac{1}{l^{\infty}}}/S_{\frac{1}{l^{\infty}}}))
=R^q(\eps_X\pi_X)_*({\mab Z}/l^n)$ $(q\in {\mab N})$. 
Hence $T={\rm id}$ on ${\cal H}^q(K^{\bul}_{l^n}(X_{\frac{1}{l^{\infty}}}/S_{\frac{1}{l^{\infty}}}))$. 
Set $K^{\bul}_{l^n}:=K^{\bul}_{l^n}(X_{\frac{1}{l^{\infty}}}/S_{\frac{1}{l^{\infty}}})$. 
Let $(x,y)$ be a local section of $K_{l^n}^j(j+1)\oplus K_{l^n}^{j-1}(j+1)$ 
$(j\in {\mab N})$ such that $\theta(x,y)=(0,x\otimes T)
\in {\rm Ker}(d: {\rm MF}_{l^n}(T-1)^{j+1}(j+2)
\lo {\rm MF}_{l^n}(T-1)^{j+2}(j+2))$. Then $dx=0$ and hence 
$x$ defines a local section of ${\cal H}^j(K^{\bul}_{l^n})(j+1)$. 
Because $T$ acts trivially on 
${\cal H}^j(K^{\bul}_{l^n})$, $(T-1)(x)=0$. 
Therefore there exists a local section $z\in K^{j-1}_{l^n}(j+1)$ 
such that $(T-1)(x)=dz$. We immediately see that 
$d(x,z)=0$ and $(x,y)=(x,z)+
\theta((y-z)\otimes \check{T},0) \in 
{\rm Ker}\,d+{\rm Im}(\theta)$, where 
$\check{T}$ is the dual basis of $T$.  
The last formula tells us that $A^{-1\bul}_{l^n}(X_{\frac{1}{l^{\infty}}}/S_{\frac{1}{l^{\infty}}})$ is acyclic. 
\par 
We complete the proof of (\ref{prop:qislws}). 
\end{proof}

\begin{coro}\label{coro:wda}
The complex $A^{\bul}_{l^n}(X_{\frac{1}{l^{\infty}}}
/S_{\frac{1}{l^{\infty}}})\in D^+(\os{\circ}{X}_{\rm et},{\mab Z}_l)$ 
is independent of the choice of $I^{\bul}_{X,l^n}$. 
\end{coro}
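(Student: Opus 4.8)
The plan is to pin down $A^{\bul}_{l^n}(X_{\frac{1}{l^{\infty}}}/S_{\frac{1}{l^{\infty}}})$, as an object of $D^+(\os{\circ}{X}_{\rm et},{\mab Z}/l^n)$, to the canonical derived direct image $R(\eps_X\pi_{X_{\frac{1}{l^{\infty}}}})_*({\mab Z}/l^n)$, which by the general theory of derived functors is defined independently of any resolution used to compute it. Since the only dependence of $A^{\bul}_{l^n}$ on $I^{\bul}_{X,l^n}$ enters through the auxiliary complexes $K^{\bul}_{l^n}$ and ${\rm MF}_{l^n}(T-1)$, identifying $A^{\bul}_{l^n}$ in the derived category with a resolution-free object settles the claim immediately.

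First I would invoke the quasi-isomorphism produced in \ref{prop:qislws}, namely
$$\theta \col K^{\bul}_{l^n}(X_{\frac{1}{l^{\infty}}}/S_{\frac{1}{l^{\infty}}}) \os{\sim}{\lo} A^{\bul}_{l^n}(X_{\frac{1}{l^{\infty}}}/S_{\frac{1}{l^{\infty}}})$$
in $C^+(\os{\circ}{X}_{\rm et},{\mab Z}/l^n)$, so that in the derived category $A^{\bul}_{l^n}$ becomes isomorphic to $K^{\bul}_{l^n}=\eps_{X*}\pi_{X_{\frac{1}{l^{\infty}}}*}\pi_{X_{\frac{1}{l^{\infty}}}}^*(I^{\bul}_{X,l^n})$. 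Next I would rewrite $K^{\bul}_{l^n}$ by means of \ref{prop:lji}: it is quasi-isomorphic to $(\eps_X\pi_{X_{\frac{1}{l^{\infty}}}})_*(I^{\bul}_{X_{\frac{1}{l^{\infty}}},l^n})$ for an injective resolution $I^{\bul}_{X_{\frac{1}{l^{\infty}}},l^n}$ of ${\mab Z}/l^n$ in $(X_{\frac{1}{l^{\infty}}})_{\rm ket}$; since injective sheaves are acyclic for $(\eps_X\pi_{X_{\frac{1}{l^{\infty}}}})_*$, this last complex represents $R(\eps_X\pi_{X_{\frac{1}{l^{\infty}}}})_*({\mab Z}/l^n)$. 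Combining the two steps yields canonical isomorphisms
$$A^{\bul}_{l^n}(X_{\frac{1}{l^{\infty}}}/S_{\frac{1}{l^{\infty}}}) \os{\sim}{\longleftarrow} K^{\bul}_{l^n}(X_{\frac{1}{l^{\infty}}}/S_{\frac{1}{l^{\infty}}}) \os{\sim}{\lo} R(\eps_X\pi_{X_{\frac{1}{l^{\infty}}}})_*({\mab Z}/l^n)$$
in $D^+(\os{\circ}{X}_{\rm et},{\mab Z}/l^n)$.

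The final step is a routine appeal to homological algebra: any two injective resolutions $I^{\bul}_{X,l^n}$ and $I'^{\bul}_{X,l^n}$ of ${\mab Z}/l^n$ in $X_{\rm ket}$ compute the same object $R(\eps_X\pi_{X_{\frac{1}{l^{\infty}}}})_*({\mab Z}/l^n)$ up to canonical isomorphism, whence the two resulting complexes $A^{\bul}_{l^n}$ and $A'^{\bul}_{l^n}$ are canonically isomorphic in $D^+$. I do not expect a genuine obstacle here, since the substantive input, that $\theta$ is a quasi-isomorphism, is already supplied by \ref{prop:qislws}; the only mild point to watch is that the displayed isomorphisms be taken as the canonical ones, induced by a comparison morphism of injective resolutions lifting ${\rm id}_{{\mab Z}/l^n}$, so that the independence obtained is canonical rather than merely existential.
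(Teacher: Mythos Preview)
Your proposal is correct and follows exactly the route the paper intends: the corollary is placed immediately after \ref{prop:qislws} precisely because that quasi-isomorphism, combined with the identification of $K^{\bul}_{l^n}$ with $R(\eps_X\pi_{X_{\frac{1}{l^{\infty}}}})_*({\mab Z}/l^n)$ from \ref{prop:lji} (equivalently, the isomorphism (\ref{ali:fiss})), pins $A^{\bul}_{l^n}$ to a resolution-independent object of the derived category. The paper gives no separate proof because this is the immediate consequence.
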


\par 
For $i\in {\mab Z}$ and $j\in {\mab N}$, 
set 
\begin{equation*} 
\{P_kA^{ij}_{l^n}(X_{\frac{1}{l^{\infty}}}/S_{\frac{1}{l^{\infty}}})\}_{n=1}^{\infty}
:=\{((\tau_{2j+k+1}+\tau_j){\rm MF}_{l^n}(T-1)(j+1)/
\tau_j{\rm MF}_{l^n}(T-1)(j+1))^{i+j+1}\}_{n=1}^{\infty} \quad 
\tag{9.12.1}\label{eqn:tdjef}
\end{equation*} 
and 
\begin{align*}
&\{P_kA^{\bul}_{l^n}(X_{\frac{1}{l^{\infty}}}/S_{\frac{1}{l^{\infty}}})\}_{n=1}^{\infty}
:=
\{s\{((\tau_{k+1}+\tau_0){\rm MF}_{l^n}(T-1)(1)/\tau_0{\rm MF}_{l^n}(T-1)(1))^{\bul+1},-d)
\os{\theta}{\lo} \tag{9.12.2}\label{eqn:tdef}\\
{} &
\cdots \os{\theta}{\lo}  
(\tau_{2j+k+1}+\tau_j){\rm MF}_{l^n}(T-1)(j)/\tau_j{\rm MF}_{l^n}(T-1)(j))^{\bul+j},-d)
\os{\theta}{\lo} \cdots  \}\}_{n=1}^{\infty}. 
\end{align*}    
(Note that $\tau_{2j+k+1}{\rm MF}_{l^n}(T-1)(j)/\tau_j{\rm MF}_{l^n}(T-1)(j)$ in a lot of literatures 
is incorrect since $2j+k+1< j$ may occur.) We have a filtered complex 
$(A^{\bul}_{l^n}(X_{\frac{1}{l^{\infty}}}/S_{\frac{1}{l^{\infty}}}),P)
\in {\rm C}^+{\rm F}(\os{\circ}{X}_{\rm et},{\mab Z}/l^n)$. 

\begin{prop}\label{prop:exf}
If $k>>0$, then the natural inclusion 
$P_kA^{\bul}_{l^n}(X_{\frac{1}{l^{\infty}}}
/S_{\frac{1}{l^{\infty}}})\os{\subset}{\lo}  A^{\bul}_{l^n}(X_{\frac{1}{l^{\infty}}}/S_{\frac{1}{l^{\infty}}})$
is a quasi-isomorphism for all $n\in {\mab N}$. 
If $k<<0$, then 
$P_kA^{\bul}_{l^n}(X_{\frac{1}{l^{\infty}}}/S_{\frac{1}{l^{\infty}}})$ 
is exact for all $n\in {\mab N}$. 
\end{prop}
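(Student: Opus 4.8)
The plan is to reduce everything to the boundedness of the cohomology of $B := {\rm MF}_{l^n}(T-1)$ established in (\ref{lemm:fq0}): there is an integer $q_0$, independent of $n$ because $\os{\circ}{X}$ is quasi-compact, such that ${\cal H}^q(B)=0$ for $q>q_0$. The key computational observation is that, since $\tau$ is an increasing filtration, one has $\tau_a+\tau_b=\tau_{\max\{a,b\}}$; hence by (\ref{eqn:anxsmf}) and (\ref{eqn:tdjef}) the $j$-th column of the double complex underlying $A^{\bul}_{l^n}(X_{\frac{1}{l^{\infty}}}/S_{\frac{1}{l^{\infty}}})$ is $(B(j+1)/\tau_jB(j+1))$ reindexed so that position $i$ carries its component in internal degree $i+j+1$, while the $j$-th column of $P_kA^{\bul}_{l^n}(X_{\frac{1}{l^{\infty}}}/S_{\frac{1}{l^{\infty}}})$ is the $\tau$-band $(\tau_{\max\{2j+k+1,\,j\}}B/\tau_jB)(j+1)$. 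This band vanishes when $2j+k+1<j$, and when $2j+k+1\ge j$ its cohomology is concentrated in the internal degrees $m$ with $j<m\le 2j+k+1$. Throughout, the Tate twists $(j+1)$ and the internal-degree shift affect neither acyclicity nor the range of nonvanishing cohomology, so I track only the variable $m$ in $B$.

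First I would treat the case $k\gg 0$. Take $k\ge q_0-1$, so that $2j+k+1\ge k+1\ge q_0$ and $2j+k+1\ge j$ for every $j\ge 0$. Then the $j$-th column of the quotient double complex underlying $A^{\bul}_{l^n}/P_kA^{\bul}_{l^n}$ is $(B/\tau_{2j+k+1}B)(j+1)$ up to shift, whose cohomology lives in internal degrees $m>2j+k+1\ge q_0$ and hence vanishes by (\ref{lemm:fq0}). Since the double complex is bounded below in both indices ($j\ge 0$, and the internal degree is bounded below because $B\in C^+(\os{\circ}{X}_{\rm et},{\mab Z}/l^n)$), the spectral sequence of the column filtration converges; with all its $E_1$-terms zero, the total complex $A^{\bul}_{l^n}/P_kA^{\bul}_{l^n}$ is acyclic, i.e. the inclusion $P_kA^{\bul}_{l^n}\os{\subset}{\lo}A^{\bul}_{l^n}$ is a quasi-isomorphism.

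Next I would treat the case $k\ll 0$. Take $k\le -q_0$. For a column index $j$ with $2j+k+1\le j$, i.e. $j\le -k-1$, the $j$-th column of $P_kA^{\bul}_{l^n}$ vanishes. For $j\ge -k\ge q_0$ the column is the band $(\tau_{2j+k+1}B/\tau_jB)(j+1)$, whose cohomology sits in internal degrees $m$ with $j<m\le 2j+k+1$; since $m>j\ge q_0$, these cohomology sheaves again vanish by (\ref{lemm:fq0}). Thus every column of the double complex underlying $P_kA^{\bul}_{l^n}$ is acyclic, and the same convergent spectral-sequence argument shows that $P_kA^{\bul}_{l^n}$ is exact. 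Because $q_0$ is independent of $n$, both conclusions hold simultaneously for all $n\in{\mab N}$.

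Given (\ref{lemm:fq0}), this proposition is essentially a bookkeeping argument rather than a deep one; the only genuine point requiring care is the identification $\tau_a+\tau_b=\tau_{\max\{a,b\}}$ together with the precise description of the columns of $P_kA^{\bul}_{l^n}$ as $\tau$-bands, after which the choices $k\ge q_0-1$ and $k\le -q_0$ drive the cohomology of every column into the vanishing range $m>q_0$. The convergence of the column spectral sequence used to pass from ``all columns acyclic'' to ``total complex acyclic'' is immediate from the two-sided boundedness of the double complex, so I do not expect any substantive obstacle beyond organizing the index ranges correctly.
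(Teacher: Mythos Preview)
Your argument is correct and follows essentially the same approach as the paper: both reduce to column-wise acyclicity via the uniform bound $q_0$ on ${\cal H}^q({\rm MF}_{l^n}(T-1))$ from (\ref{lemm:fq0}), using the column spectral sequence of the double complex. The paper phrases the $k\gg 0$ case as a comparison of the $E_1$-terms of the column spectral sequences for $P_kA^{\bul}_{l^n}$ and $A^{\bul}_{l^n}$, while you equivalently show the quotient $A^{\bul}_{l^n}/P_kA^{\bul}_{l^n}$ is acyclic; the $k\ll 0$ case is handled identically in both, via the observation that the nonzero columns begin only at $j\geq -k$.
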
 
\begin{proof}
First we prove the former statement. 
Consider the following commutative diagram 
\begin{equation*} 
\begin{CD} 
E_1^{pq}={\cal H}^q((\tau_{2p+k+1}+\tau_p)
({\rm MF}_{l^n}(T-1)(p+1)/\tau_p{\rm MF}_{l^n}(T-1)(p+1))^{\bul+1})\\
@VVV \\
E_1^{pq}={\cal H}^q(({\rm MF}_{l^n}(T-1)(p+1)/\tau_{p}{\rm MF}_{l^n}(T-1)(p+1))^{\bul+1})\\
\Lo {\cal  H}^{p+q}(P_kA^{\bul}_{l^n}(X_{\frac{1}{l^{\infty}}}/S_{\frac{1}{l^{\infty}}}))\\
@VVV \\
\Lo {\cal  H}^{p+q}(A^{\bul}_{l^n}(X_{\frac{1}{l^{\infty}}}/S_{\frac{1}{l^{\infty}}})). 
\end{CD} 
\tag{9.13.1}\label{eqn:stpa}
\end{equation*} 
It suffices to prove that, for a fixed $p$, then 
${\cal H}^r(({\rm MF}_{l^n}(T-1)(p+1)/\tau_p{\rm MF}_{l^n}(T-1)(p+1))^{\bul+1})=0$ 
for $r>>0$. 
By the following exact sequence 
$$0\lo \tau_{p}{\rm MF}_{l^n}(T-1)\lo {\rm MF}_{l^n}(T-1)\lo 
{\rm MF}_{l^n}(T-1)/\tau_{p}{\rm MF}_{l^n}(T-1)\lo 0$$
and by (\ref{lemm:fq0}), 
it suffice to prove that 
${\cal H}^r(\tau_{p}{\rm MF}_{l^n}(T-1))=0$ if $r>>0$ $(\forall n\in {\mab N})$. 
However this is obvious by (\ref{eqn:tkx}). 
\par 
The latter statement follows from (\ref{lemm:fq0}) and  (\ref{eqn:tkx})
since,  
if $2j+k+1\geq j$ and if $k<<0$, 
then $j\geq -k>>0$ in (\ref{eqn:tdjef}). 
\end{proof}


\begin{prop}\label{prop:ni}
The filtered complex 
$(A^{\bul}_{l^n}(X_{\frac{1}{l^{\infty}}}/S_{\frac{1}{l^{\infty}}}),P)\in 
{\rm D}^+{\rm F}(\os{\circ}{X}_{\rm et},{\mab Z}/l^n)$ is independent of the choice of $I^{\bul}_{X,l^n}$. 
\end{prop}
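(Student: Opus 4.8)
The plan is to compare the objects attached to two injective resolutions $I^{\bul}_{X,l^n}$ and $I'^{\bul}_{X,l^n}$ of ${\mab Z}/l^n$ in $X_{\rm ket}$ by a functorially induced morphism and to prove that this morphism is a filtered quasi-isomorphism in the sense of (\ref{defi:nfm}) for $n=1$. First I would choose a morphism $\phi\col I^{\bul}_{X,l^n}\lo I'^{\bul}_{X,l^n}$ over the identity of ${\mab Z}/l^n$; such a $\phi$ exists and is unique up to homotopy because the target is a bounded below complex of injectives, and it is automatically a quasi-isomorphism. Applying $\eps_{X*}\pi_{X_{\frac{1}{l^{\infty}}}*}\pi^{*}_{X_{\frac{1}{l^{\infty}}}}$ to $\phi$ gives a quasi-isomorphism $K^{\bul}_{l^n}(X_{\frac{1}{l^{\infty}}}/S_{\frac{1}{l^{\infty}}})\lo K'^{\bul}_{l^n}(X_{\frac{1}{l^{\infty}}}/S_{\frac{1}{l^{\infty}}})$ which is equivariant for the ${\mab Z}_l(1)$-action, hence commutes with $T-1$ and with the morphism $\theta$ of (\ref{ali:fln}). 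Consequently $\phi$ induces a morphism of mapping fibres $({\rm MF}_{l^n}(T-1),\tau)\lo ({\rm MF}'_{l^n}(T-1),\tau)$ compatible with the canonical filtrations, and therefore, through the formulas (\ref{eqn:tdfef}) and (\ref{eqn:tdjef})--(\ref{eqn:tdef}), a filtered morphism $\wt{\phi}\col (A^{\bul}_{l^n}(X_{\frac{1}{l^{\infty}}}/S_{\frac{1}{l^{\infty}}}),P)\lo (A'^{\bul}_{l^n}(X_{\frac{1}{l^{\infty}}}/S_{\frac{1}{l^{\infty}}}),P)$. Since $\tau$ is functorial and $P$ is built from $\tau_{2j+k+1}+\tau_j$, the morphism $\wt{\phi}$ respects $P$; and since every step above is functorial up to homotopy, a homotopic choice of $\phi$ yields a filteredly homotopic $\wt{\phi}$, so $\wt{\phi}$ is a well-defined morphism in ${\rm D}^+{\rm F}(\os{\circ}{X}_{\rm et},{\mab Z}/l^n)$.

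By the definition (\ref{defi:nfm}) of a filtered quasi-isomorphism for a single filtration, it then suffices to show that $\pi(\wt{\phi})\col A^{\bul}_{l^n}\lo A'^{\bul}_{l^n}$ and $P_k(\wt{\phi})\col P_kA^{\bul}_{l^n}\lo P_kA'^{\bul}_{l^n}$ are quasi-isomorphisms for every $k\in {\mab Z}$. For the first I would argue exactly as in (\ref{coro:wda}): by (\ref{prop:qislws}) and (\ref{lemm:fq0}) both $A^{\bul}_{l^n}$ and $A'^{\bul}_{l^n}$ are quasi-isomorphic to $R(\eps_X\pi_{X_{\frac{1}{l^{\infty}}}})_*({\mab Z}/l^n)$, and under these identifications $\pi(\wt{\phi})$ corresponds to the identity, since $\phi$ lifts ${\rm id}_{{\mab Z}/l^n}$.

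For the second, the key tool is the spectral sequence of (\ref{eqn:stpa}),
\begin{equation*}
E_1^{pq}={\cal H}^q((\tau_{2p+k+1}+\tau_p)({\rm MF}_{l^n}(T-1)(p+1)/\tau_p{\rm MF}_{l^n}(T-1)(p+1))^{\bul+1})\Lo {\cal  H}^{p+q}(P_kA^{\bul}_{l^n}),
\end{equation*}
and its analogue for $A'^{\bul}_{l^n}$. These $E_1$-terms are built from the graded pieces ${\rm gr}^{\tau}_r{\rm MF}_{l^n}(T-1)$, which by (\ref{eqn:tkx}) are the canonical sheaves $a^{(r-1)}_*(({\mab Z}/l^n)_{\os{\circ}{X}{}^{(r-1)}}(-r)\otimes_{\mab Z}\varpi^{(r-1)}_{\rm et}(X/S))[-r]$, independent of the resolution; moreover $\wt{\phi}$ induces the identity on them because it induces the identity on the cohomology sheaves $R^r\eps_{X*}({\mab Z}/l^n)$. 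Hence $\wt{\phi}$ gives an isomorphism of these two spectral sequences at $E_1$, and by the comparison theorem for spectral sequences $P_k(\wt{\phi})$ is a quasi-isomorphism. The main obstacle is precisely this last point: one must verify carefully that the morphism induced by $\phi$ on the graded pieces ${\rm gr}^{\tau}{\rm MF}_{l^n}(T-1)$ coincides with the canonical identification (\ref{eqn:tkx}) --- equivalently that $\phi$ acts as the identity on $R^r\eps_{X*}({\mab Z}/l^n)$ --- so that the $E_1$-comparison is genuinely an isomorphism; granting this, everything else reduces to the formal functoriality of the truncation and mapping-fibre constructions.
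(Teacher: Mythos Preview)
Your argument is correct and follows essentially the same strategy as the paper: construct the comparison morphism from a map of injective resolutions, note that it yields a filtered quasi-isomorphism $({\rm MF}_{l^n}(T-1),\tau)\to({\rm MF}'_{l^n}(T-1),\tau)$, and then use the identification (\ref{eqn:tkx}) of the graded pieces to conclude. The only technical difference is that the paper deduces the filtered quasi-isomorphism on $(A^{\bul}_{l^n},P)$ by computing ${\rm gr}^P_kA^{\bul}_{l^n}$ explicitly via (\ref{ali:anxqads}) and running a descending induction on $k$ starting from a large $k'$ (using (\ref{prop:exf})), whereas you argue directly on each $P_kA^{\bul}_{l^n}$ via the column spectral sequence (\ref{eqn:stpa}); these two d\'evissages are interchangeable, and the ``obstacle'' you flag is not one, since a filtered quasi-isomorphism on $({\rm MF}_{l^n}(T-1),\tau)$ already forces isomorphisms on all the relevant $E_1$-terms.
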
 
\begin{proof} 
Let $k'$ be a large integer. 
By (\ref{prop:exf}) it suffices to prove that 
$$(P_{k'}A^{\bul}_{l^n}(X_{\frac{1}{l^{\infty}}}/S_{\frac{1}{l^{\infty}}}),\{P_k\}_{k\leq k'})
\in {\rm D}^+{\rm F}(\os{\circ}{X}_{\rm et},{\mab Z}/l^n)$$
is independent of the choice of 
$I^{\bul}_{X,l^n}$. Let $I'{}^{\bul}_{\! \! \! X,l^n}$ 
be another injective resolution 
of ${\mab Z}/l^n$ in $X_{\rm ket}$. 
Then we have analogous complexes  
$K'{}^{\bul}_{\! \! \!  l^n}(X_{\frac{1}{l^{\infty}}}/S_{\frac{1}{l^{\infty}}})$ and ${\rm MF}'_{l^n}(T-1)$ as in 
(\ref{eqn:kdef}) and (\ref{ali:mftkdf}), respectively. 
Since we have a quasi-isomorphism 
$I^{\bul}_{X,l^n} \lo I'{}^{\bul}_{\! \! \! X,l^n}$ 
fitting into the following commutative diagram 
\begin{equation*} 
\begin{CD} 
{\mab Z}/l^n @>>> I^{\bul}_{X,l^n} \\
@| @VVV \\
{\mab Z}/l^n @>>> I'{}^{\bul}_{\! \! \! X,l^n}, 
\end{CD}
\end{equation*} 
we have natural morphisms  
$K^{\bul}_{l^n}(X_{\frac{1}{l^{\infty}}}/S)\lo K'^{\bul}_{l^n}(X_{\frac{1}{l^{\infty}}}/S)$ and  
${\rm MF}_{l^n}(T-1) \lo {\rm MF}'_{l^n}(T-1)$. 
In fact we have a filtered quasi-isomorphism
\begin{equation*} 
({\rm MF}_{l^n}(T-1),\tau)
\lo 
({\rm MF}'_{l^n}(T-1),\tau) 
\tag{9.14.1}\label{eqn:tmjmf} 
\end{equation*}  
By the definition of $P$, we have the following: 
\begin{align*} 
{\rm gr}_k^PA^{\bul}_{l^n}(X_{\frac{1}{l^{\infty}}}/S_{\frac{1}{l^{\infty}}})
=\bigoplus_{j\geq \max\{-k,0\}}
{\rm gr}_{2j+k+1}^{\tau}{\rm MF}_{l^n}(T-1)(j)\{j\}[1]
\tag{9.14.2}\label{ali:anxqads}
\end{align*} 
because the induced morphism 
$$\theta \col {\rm gr}_{2j+k+1}^{\tau}{\rm MF}_{l^n}(T-1)(j)\{j\}[1]
\lo {\rm gr}_{2(j+1)+k+1}^{\tau}{\rm MF}_{l^n}(T-1)(j+1)\{j+1\}[1]$$  
is a zero morphism. 
The isomorphism (\ref{ali:fiss}), (\ref{prop:exf}), 
(\ref{eqn:tkx}) and the descending induction on $k$ show (\ref{prop:ni}). 
\end{proof} 
By (\ref{eqn:tkx}) and (\ref{ali:anxqads}) we have 
\begin{align*} 
{\rm gr}^P_kA^{\bul}_{l^n}(X_{\frac{1}{l^{\infty}}}/S_{\frac{1}{l^{\infty}}})
& =\us{j\geq {\rm max}\{-k,0\}}{\bigoplus}
{\rm gr}^{\tau}_{2j+k+1}(A^{\bul j}_{l^n}(X_{\frac{1}{l^{\infty}}}/S_{\frac{1}{l^{\infty}}})\{-j\},-d) 
\tag{9.14.3}\label{eqn:grf}\\ 
{} & = \us{j\geq {\rm max}\{-k,0\}}{\bigoplus} 
{\cal H}^{2j+k+1}({\rm MF}_{l^n}(T-1),-d)
[-2j-k-1]\{1\}(j+1) \\ 
{} & \simeq 
\us{j\geq {\rm max}\{-k,0\}}{\bigoplus}
R^{2j+k+1}\eps_{X*}({\mab Z}/l^n)[-2j-k](j+1) \\ 
{} & =\us{j\geq {\rm max}\{-k,0\}}{\bigoplus}
{\mab Z}/l^n(-j-k)\otimes_{\mab Z}a^{(2j+k)}_*
(\varpi^{(2j+k)}_{\rm et}(X/S))[-2j-k]
\end{align*} 
in $D^+(\os{\circ}{X}_{\rm et},{\mab Z}/l^n)$. 
If $k>>0$, then $2j+k\geq k>>0$. 
Because $\os{\circ}{X}$ is quasi-compact, 
$\os{\circ}{X}{}^{(2j+k)}=\emptyset$ if $2j+k>>0$. 
Hence 
${\rm gr}^{P_k}A^{\bul}_{l^n}
(X_{\frac{1}{l^{\infty}}}/S_{\frac{1}{l^{\infty}}})=0$ if $k>>0$ $(\forall n\in {\mab N})$. 
If $k<<0$, then $2j+k\geq -2k+k=-k>>0$. 
Hence $\os{\circ}{X}{}^{(2j+k)}=\emptyset$ and 
${\rm gr}^P_kA^{\bul}_{l^n}
(X_{\frac{1}{l^{\infty}}}/S_{\frac{1}{l^{\infty}}})=0$ $(\forall n\in {\mab N})$. 
For a fixed $k\in {\mab Z}$, there are only finitely many $j$'s such that 
the last term in (\ref{eqn:grf}) is nonzero. 

\begin{coro}\label{coro:cnz}
$(1)$ The filtered complexes 
$(A^{\bul}_{l^n}(X_{\frac{1}{l^{\infty}}}/S_{\frac{1}{l^{\infty}}}),P)$'s 
for $n$'s define a well-defined object 
$(A^{\bul}_{l^{\infty}}(X_{\frac{1}{l^{\infty}}}/S_{\frac{1}{l^{\infty}}}),P)$ of 
${\rm D}^{\rm b}{\rm F}_{\rm ctf}(\os{\circ}{X}_{\rm et},{\mab Z}_l)$. 
\par
$(2)$ The filtered complexes $K^{\bul}_{l^n}(X_{\frac{1}{l^{\infty}}}/S_{\frac{1}{l^{\infty}}})$'s for $n$'s 
define a well-defined object of 
$K^{\bul}_{l^{\infty}}(X_{\frac{1}{l^{\infty}}}/S_{\frac{1}{l^{\infty}}})$ of 
$D^{\rm b}_{\rm ctf}(\os{\circ}{X}_{\rm et},{\mab Z}_l)$. 
\end{coro}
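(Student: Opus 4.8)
The plan is to follow closely the proofs of (\ref{coro:dpp}) and (\ref{prop:ni}), reducing every assertion to the explicit description of the graded pieces recorded in (\ref{eqn:grf}). First I would verify, for part (1), that each $(A^{\bul}_{l^n}(X_{\frac{1}{l^{\infty}}}/S_{\frac{1}{l^{\infty}}}),P)$ is an object of ${\rm D}^{\rm b}{\rm F}_{\rm ctf}(\os{\circ}{X}_{\rm et},{\mab Z}/l^n)$. Boundedness of the filtered complex and the biregularity of $P$ follow from the discussion immediately preceding the statement: by (\ref{eqn:grf}) together with the quasi-compactness of $\os{\circ}{X}$, the complex ${\rm gr}^P_kA^{\bul}_{l^n}(X_{\frac{1}{l^{\infty}}}/S_{\frac{1}{l^{\infty}}})$ vanishes for $k>>0$ and for $k<<0$, and for each fixed $k$ only finitely many $j$ contribute. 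To establish constructibility and finite tor-dimension I would invoke (\ref{prop:cfd}) with $m=1$, which reduces these two properties to the same properties for the graded pieces ${\rm gr}^P_kA^{\bul}_{l^n}$. By (\ref{eqn:grf}) each such piece is a finite direct sum of shifts and Tate twists of $a^{(2j+k)}_*(\varpi^{(2j+k)}_{\rm et}(X/S))\otimes_{\mab Z}{\mab Z}/l^n$; since $\varpi^{(2j+k)}_{\rm et}(X/S)$ is a smooth (locally free) abelian sheaf on the stratum $\os{\circ}{X}{}^{(2j+k)}$ and $a^{(2j+k)}$ is finite, these sheaves are constructible and flat over ${\mab Z}/l^n$, hence of finite tor-dimension.

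Next I would check the projective-system condition, namely that $(A^{\bul}_{l^{n+1}}(X_{\frac{1}{l^{\infty}}}/S_{\frac{1}{l^{\infty}}}),P)\otimes^L_{{\mab Z}/l^{n+1}}{\mab Z}/l^n=(A^{\bul}_{l^n}(X_{\frac{1}{l^{\infty}}}/S_{\frac{1}{l^{\infty}}}),P)$ in ${\rm D}^{\rm b}{\rm F}_{\rm ctf}(\os{\circ}{X}_{\rm et},{\mab Z}/l^n)$. The approach is again to pass to graded pieces: because $\otimes^L_{\cal A}$ is computed compatibly with the gr-functor by (\ref{ali:ppe}) and (\ref{ali:pipe}) (cf.\ (\ref{lemm:grrdld})), it suffices to identify ${\rm gr}^P_k$ of both sides. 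Since the graded pieces of $A^{\bul}_{l^{n+1}}$ are flat over ${\mab Z}/l^{n+1}$, the derived tensor product reduces to the ordinary one, and the computation ${\mab Z}/l^{n+1}(-j-k)\otimes_{{\mab Z}/l^{n+1}}{\mab Z}/l^n={\mab Z}/l^n(-j-k)$ inside (\ref{eqn:grf}) produces exactly the graded pieces of $A^{\bul}_{l^n}$. The biregularity of $P$ and a descending induction on $k$, carried out precisely as in (\ref{prop:ni}), then upgrade this graded-level identity to the required isomorphism of filtered complexes, completing part (1).

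For part (2) I would argue that the unfiltered statement is formally a consequence of part (1). The key input is (\ref{prop:qislws}), which provides a quasi-isomorphism $\theta\col K^{\bul}_{l^n}(X_{\frac{1}{l^{\infty}}}/S_{\frac{1}{l^{\infty}}})\lo A^{\bul}_{l^n}(X_{\frac{1}{l^{\infty}}}/S_{\frac{1}{l^{\infty}}})$ for every $n$, compatibly with the transition maps. Applying the forgetting-filtration functor $\pi$ of (\ref{eqn:dcnfn}) to the object constructed in (1), and using that $\pi$ is compatible with the reduction morphisms $\otimes^L_{{\mab Z}/l^{n+1}}{\mab Z}/l^n$, one obtains a well-defined object of $D^{\rm b}_{\rm ctf}(\os{\circ}{X}_{\rm et},{\mab Z}_l)$; transporting along $\theta$ shows it is represented by the system $\{K^{\bul}_{l^n}(X_{\frac{1}{l^{\infty}}}/S_{\frac{1}{l^{\infty}}})\}_{n=1}^{\infty}$.

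I expect the main obstacle to be precisely the verification of the derived-tensor / projective-system compatibility in the filtered setting. This is where the flatness of the graded pieces and the biregularity of $P$ are both essential: without flatness the identification of $A^{\bul}_{l^{n+1}}\otimes^L_{{\mab Z}/l^{n+1}}{\mab Z}/l^n$ with $A^{\bul}_{l^n}$ could fail at the level of higher Tor sheaves, and without biregularity the passage from a graded-level isomorphism back to an isomorphism of filtered complexes in ${\rm D}^{\rm b}{\rm F}_{\rm ctf}$ would not be legitimate. Everything else is a direct transcription of the arguments already established for (\ref{coro:dpp}) and (\ref{coro:iind}).
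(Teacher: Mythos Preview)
Your proposal is correct and follows essentially the same strategy as the paper. The paper's proof is more economical: for (1) it directly invokes (\ref{coro:iind}) (which already packages the graded computation for $({\rm MF}_{l^n}(T-1),\tau)$) together with (\ref{prop:exf}), and then observes that $(A^{\bul}_{l^n},P)$ is built functorially from a chosen representative $\{(F^{\bul}_n,\tau)\}_n$ of that system; for (2) it cites (\ref{prop:qislws}) exactly as you do. Your version unfolds the argument one level further by working directly with the graded pieces of $(A^{\bul}_{l^n},P)$ via (\ref{eqn:grf}) rather than reducing to the already-established result for $({\rm MF},\tau)$, but this is the same computation carried out in a different location.
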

\begin{proof} 
(1): 
As in (\ref{coro:dpp}), this follows from (\ref{prop:exf}) and (\ref{coro:iind})
and the definition of $(A^{\bul}_{l^n}(X_{\frac{1}{l^{\infty}}}/S_{\frac{1}{l^{\infty}}}),P)$. 
Indeed, let $\{(F^{\bul}_n,\tau)\}_{n=1}^{\infty}$ be 
a representative of the resulting filtered complex 
obtained in (\ref{coro:iind}). 
The projective system of 
\begin{align*} 
\{(A_n^{\bul},P)\}_{n=1}^{\infty}:=\{(s(F^{\bul}_n/\tau_0\lo F^{\bul}_n/\tau_1\lo \cdots), 
\{s(\cdots (\tau_{2j+k+1}+\tau_j)/
\tau_jF_n^{i+j+1}\cdots )_{i\in {\mab Z},j\in {\mab N}}\}_{k\in {\mab Z}})\}_{n=1}^{\infty} 
\end{align*}
defines a well-defined object of 
${\rm D}^{\rm b}{\rm F}_{\rm ctf}(\os{\circ}{X}_{\rm et},{\mab Z}_l)$. 
\par 
(2): (2) follows from (1) and (\ref{prop:qislws}). 
\end{proof}

Henceforth we take representatives of 
$K^{\bul}_{l^{\infty}}(X_{\frac{1}{l^{\infty}}}/S_{\frac{1}{l^{\infty}}})
=\{K^{\bul}_{l^n}(X_{\frac{1}{l^{\infty}}}/S_{\frac{1}{l^{\infty}}})\}_{n=1}^{\infty}\in 
{\rm D}^{\rm b}_{\rm ctf}(\os{\circ}{X}_{\rm et},{\mab Z}_l)$ 
in $\vpl_n C^{\rm b}(\os{\circ}{X}_{\rm et},{\mab Z}/l^n)$
and 
$(A^{\bul}_{l^{\infty}}(X_{\frac{1}{l^{\infty}}}/S_{\frac{1}{l^{\infty}}}),P)
=\{(A^{\bul}_{l^n}(X_{\frac{1}{l^{\infty}}}/S_{\frac{1}{l^{\infty}}}),P)\}_{n=1}^{\infty}\in  
{\rm D}^{\rm b}{\rm F}_{\rm ctf}(\os{\circ}{X}_{\rm et},{\mab Z}_l)$ in 
$\vpl_n {\rm C}^{\rm b}{\rm F}(\os{\circ}{X}_{\rm et},{\mab Z}/l^n)$, 
respectively.
By abuse of notation, we use the same symbols for them: 
$$K^{\bul}_{l^n}(X_{\frac{1}{l^{\infty}}}/S_{\frac{1}{l^{\infty}}})
\in C^{\rm b}(\os{\circ}{X}_{\rm et},{\mab Z}/l^n) 
~{\textrm a}{\textrm n}{\textrm d}~  
(A^{\bul}_{l^n}(X_{\frac{1}{l^{\infty}}}/S_{\frac{1}{l^{\infty}}}),P)
\in {\rm C}^{\rm b}{\rm F}(\os{\circ}{X}_{\rm et},{\mab Z}/l^n).$$

Consider the following filtered complex
$(K^{\bul}_{l^n}((X_{\frac{1}{l^{\infty}}},D_{\frac{1}{l^{\infty}}})/S_{\frac{1}{l^{\infty}}}),
P^{D_{\frac{1}{l^{\infty}}}})$:  
\begin{equation*}
(K^{\bul}_{l^n}((X_{\frac{1}{l^{\infty}}},D_{\frac{1}{l^{\infty}}})/S_{\frac{1}{l^{\infty}}}),P^{D_{\frac{1}{l^{\infty}}}}):=
s(K^{\bul}_{l^n}(X_{\frac{1}{l^{\infty}}}/S_{\frac{1}{l^{\infty}}})\otimes_{{\mab Z}/l^n}
(M^{\bul}_{l^n}(\os{\circ}{D}/\os{\circ}{S}),Q))\in {\rm C}^{\rm b}{\rm F}(\os{\circ}{X}_{\rm et},{\mab Z}/l^n). 
\end{equation*} 
Here $K^{\bul}_{l^n}(X_{\frac{1}{l^{\infty}}}/S_{\frac{1}{l^{\infty}}})$ means the filtered complex 
$K^{\bul}_{l^n}(X_{\frac{1}{l^{\infty}}}/S_{\frac{1}{l^{\infty}}})$ with trivial filtration by abuse of notation. 
Obviously we have the projective system 
$\{(K^{\bul}_{l^n}((X_{\frac{1}{l^{\infty}}},D_{\frac{1}{l^{\infty}}})/S_{\frac{1}{l^{\infty}}}),
P^{D_{\frac{1}{l^{\infty}}}})\}_{n=1}^{\infty}$.

\begin{prop}\label{prop:lnvc} 
\begin{equation*} 
\{R(\eps_{(X,D)}\pi_{(X_{\frac{1}{l^{\infty}}},D_{\frac{1}{l^{\infty}}})})_*({\mab Z}/l^n)\}_{n=1}^{\infty}
=\{K^{\bul}_{l^n}((X_{\frac{1}{l^{\infty}}},D_{\frac{1}{l^{\infty}}})/S_{\frac{1}{l^{\infty}}})\}_{n=1}^{\infty}
\tag{9.16.1}\label{eqn:axdkf}
\end{equation*}
in $D^{\rm b}_{\rm ctf}(\os{\circ}{X}_{\rm et},{\mab Z}_l)$. 
\end{prop}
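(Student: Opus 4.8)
The plan is to reduce the left-hand side, via a factorization of the structure morphism, to a Künneth decomposition separating the ``vertical'' (SNCL, over $S$) and ``horizontal'' ($D$) log directions, and then to recognize the right-hand side as the explicit flat model of the resulting derived tensor product.

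First I would record the basic commutation. Since $M(D)$ is pulled back from $\os{\circ}{X}$ and the Kummer cover $\pi$ adjoins only $l$-power roots of the $S$-parameter (the ${\mab N}$-direction), forgetting the horizontal log structure commutes with $\pi$. Concretely, writing $\eps_{D,\frac{1}{l^{\infty}}}\col (X_{\frac{1}{l^{\infty}}},D_{\frac{1}{l^{\infty}}})\lo X_{\frac{1}{l^{\infty}}}$ for the morphism forgetting the $D$-log structure on the cover, one has $\eps_{D}\circ \pi_{(X_{\frac{1}{l^{\infty}}},D_{\frac{1}{l^{\infty}}})}=\pi_{X_{\frac{1}{l^{\infty}}}}\circ \eps_{D,\frac{1}{l^{\infty}}}$, hence $\eps_{(X,D)}\pi_{(X_{\frac{1}{l^{\infty}}},D_{\frac{1}{l^{\infty}}})}=\eps_X\pi_{X_{\frac{1}{l^{\infty}}}}\eps_{D,\frac{1}{l^{\infty}}}$. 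By the composition rule for derived direct images (the unfiltered analogue of (\ref{coro:dfkfmykf}) (3)) this gives
$$R(\eps_{(X,D)}\pi_{(X_{\frac{1}{l^{\infty}}},D_{\frac{1}{l^{\infty}}})})_*({\mab Z}/l^n)=R(\eps_X\pi_{X_{\frac{1}{l^{\infty}}}})_*R\eps_{D,\frac{1}{l^{\infty}}*}({\mab Z}/l^n).$$

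The heart of the argument is the Künneth step. Using the definition of an SNCL scheme with a relative SNCD (\cite{ny}), the characteristic monoid splits as $M_{(X,D)}^{\rm gp}/{\cal O}_X^*=(M_X^{\rm gp}/{\cal O}_X^*)\oplus (M(D)^{\rm gp}/{\cal O}_X^*)$, reflecting the transversality of $\os{\circ}{D}$ to $\os{\circ}{X}$; consequently the relative characteristic monoid of $\eps_{D,\frac{1}{l^{\infty}}}$ is pulled back from the horizontal direction alone. I would establish the base-change identity $R\eps_{D,\frac{1}{l^{\infty}}*}({\mab Z}/l^n)=(\eps_X\pi_{X_{\frac{1}{l^{\infty}}}})^*R\os{\circ}{\eps}_{D*}({\mab Z}/l^n)$: both sides are computed by the Kato--Nakayama formula \cite[(2.4)]{ktnk}, having $k$-th cohomology sheaf $\bigwedge^k(M(D)^{\rm gp}/{\cal O}_X^*)\otimes_{\mab Z}{\mab Z}/l^n(-k)$ (the one on the cover being the pullback of the one on $\os{\circ}{X}$, since $D_{\frac{1}{l^{\infty}}}$ is pulled back from $D$), and this explicit, degenerate description promotes the equality of cohomology sheaves to an isomorphism of the natural base-change morphism. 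Feeding this into the projection formula for $\eps_X\pi_{X_{\frac{1}{l^{\infty}}}}$, with input the constant sheaf ${\mab Z}/l^n$, yields
$$R(\eps_{(X,D)}\pi_{(X_{\frac{1}{l^{\infty}}},D_{\frac{1}{l^{\infty}}})})_*({\mab Z}/l^n)\os{\sim}{\lo} R(\eps_X\pi_{X_{\frac{1}{l^{\infty}}}})_*({\mab Z}/l^n)\otimes^L_{{\mab Z}/l^n}R\os{\circ}{\eps}_{D*}({\mab Z}/l^n).$$
Finally I would identify this derived tensor product with the explicit single complex. By (\ref{prop:lji}) the first factor is represented by $K^{\bul}_{l^n}(X_{\frac{1}{l^{\infty}}}/S_{\frac{1}{l^{\infty}}})$, and by (\ref{prop:bav}) the complex $M^{\bul}_{l^n}(\os{\circ}{D}/\os{\circ}{S})$ is a bounded flat resolution of $\os{\circ}{\eps}_{D*}(I^{\bul}_{\os{\circ}{D},l^n})=R\os{\circ}{\eps}_{D*}({\mab Z}/l^n)$; flatness of one factor makes the naive tensor product compute $\otimes^L_{{\mab Z}/l^n}$, so $s(K^{\bul}_{l^n}(X_{\frac{1}{l^{\infty}}}/S_{\frac{1}{l^{\infty}}})\otimes_{{\mab Z}/l^n}M^{\bul}_{l^n}(\os{\circ}{D}/\os{\circ}{S}))$ represents the right-hand side above, which is exactly $K^{\bul}_{l^n}((X_{\frac{1}{l^{\infty}}},D_{\frac{1}{l^{\infty}}})/S_{\frac{1}{l^{\infty}}})$. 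Compatibility of all these isomorphisms with the transition maps in $n$ (using the compatible choices of resolutions in (\ref{cd:iin}) and (\ref{cd:ikin}), and the flatness giving the required $\otimes^L_{{\mab Z}/l^{n+1}}{\mab Z}/l^n$-compatibility) then upgrades the statement to an equality of projective systems in ${\rm D}^{\rm b}_{\rm ctf}(\os{\circ}{X}_{\rm et},{\mab Z}_l)$.

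The main obstacle is the Künneth step: one must produce an honest morphism in the derived category (for instance via the cup product coming from the two morphisms $M_X,M(D)\lo M_{(X,D)}$, which also constructs the base-change comparison) rather than merely matching cohomology sheaves abstractly, and then check it is a quasi-isomorphism. This is where the transversality decomposition $M_{(X,D)}^{\rm gp}/{\cal O}_X^*=(M_X^{\rm gp}/{\cal O}_X^*)\oplus(M(D)^{\rm gp}/{\cal O}_X^*)$, the bookkeeping of the Tate twists $(-k)$, and the base-change/projection-formula machinery for Kummer log \'etale cohomology (\cite{nale}, \cite{ktnk}, \cite{adv}) all have to be marshalled carefully. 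The factorization and the flat-resolution identification are, by contrast, routine.
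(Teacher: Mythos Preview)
Your proposal is correct and follows the same overall strategy as the paper: reduce to a K\"unneth decomposition separating the vertical and horizontal log directions, then identify the result with the explicit single complex using (\ref{prop:lji}) and the flat resolution from (\ref{prop:bav}). The difference lies in how the K\"unneth step is executed. The paper works at finite Kummer level $m$ and applies the log K\"unneth formula \cite[(6.1)]{nale} directly to the cartesian square
\begin{equation*}
\begin{CD}
(X_{\frac{1}{l^{m}}},D_{\frac{1}{l^{m}}}) @>>> (\os{\circ}{X},\os{\circ}{D}) \\
@VVV @VV{\os{\circ}{\eps}_D}V \\
X_{\frac{1}{l^{m}}} @>>> \os{\circ}{X},
\end{CD}
\end{equation*}
obtaining $R(\eps_{(X,D)}\pi_{(X_{\frac{1}{l^{m}}},D_{\frac{1}{l^{m}}})})_*({\mab Z}/l^n)=R(\eps_X\pi_{X_{\frac{1}{l^{m}}}})_*({\mab Z}/l^n)\otimes^L_{{\mab Z}/l^n}R\os{\circ}{\eps}_{D*}({\mab Z}/l^n)$ in one stroke, and only then passes to the limit in $m$ via (\ref{eqn:epxj}). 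You instead work at the $l^{\infty}$-level from the outset and assemble the K\"unneth isomorphism by hand from a base-change identity (verified via \cite[(2.4)]{ktnk}) plus the projection formula. Both are valid; the paper's route is shorter because the log K\"unneth formula already packages the base-change and projection-formula ingredients you unpack, and staying at finite level keeps everything inside the world of fs log schemes where \cite{nale} applies directly. Your approach has the minor virtue of making the role of the transversality splitting $M_{(X,D)}^{\rm gp}/{\cal O}_X^*=(M_X^{\rm gp}/{\cal O}_X^*)\oplus(M(D)^{\rm gp}/{\cal O}_X^*)$ explicit, but you should be aware that the ``honest morphism'' you flag as the main obstacle is precisely what \cite[(6.1)]{nale} supplies off the shelf.
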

\begin{proof}  
Let $\eps\col (X,D)\lo (\os{\circ}{X},\os{\circ}{D})$ be the natural morphism of log schemes. 
Consider the following cartesian diagram: 
\begin{equation*} 
\begin{CD} 
(X_{\frac{1}{l^{m}}},D_{\frac{1}{l^{m}}}) 
@>{\eps \circ \pi_{(X_{\frac{1}{l^{m}}},D_{\frac{1}{l^{m}}})}}>> (\os{\circ}{X},\os{\circ}{D}) \\ 
@VVV @VV{\os{\circ}{\eps}_D}V \\
X_{\frac{1}{l^{m}}}@>{\eps_X \circ \pi_{X_{\frac{1}{l^{m}}}}}>> \os{\circ}{X}. 
\end{CD}
\tag{9.16.2}\label{eqn:xdcdx}
\end{equation*}  
Then, by the finiteness of the cohomological dimension 
(\cite[(7.2) (1)]{adv}) and the log K\"{u}nneth formula 
(\cite[(6.1)]{nale}),  we obtain the following 
\begin{align*}
R(\eps_{(X,D)}\pi_{(X_{\frac{1}{l^{m}}},D_{\frac{1}{l^{m}}})})_*
({\mab Z}/l^n) & = R(\eps_{X} \pi_{X_{\frac{1}{l^{m}}}})_*({\mab Z}/l^n)
\otimes^L_{{\mab Z}/l^n}
R\os{\circ}{\eps}_{D*}({\mab Z}/l^n)
\tag{9.16.3}\label{eqn:xdkf}\\ 
{} & =
(\eps_{X}\pi_{X_{\frac{1}{l^{m}}}})_*(I^{\bul}_{X_{\frac{1}{l^{m}}},l^n})
\otimes_{{\mab Z}/l^n}M^{\bul}_{l^n}(\os{\circ}{D}/\os{\circ}{S}) \\
{}&=
(\eps_{X}\pi_{X_{\frac{1}{l^{m}}}})_*(\pi^{*}_{X_{\frac{1}{l^{m}}}}(I^{\bul}_{X,l^n}))
\otimes_{{\mab Z}/l^n}M^{\bul}_{l^n}(\os{\circ}{D}/\os{\circ}{S}) 
\end{align*}
in $D^+_{\rm ctf}(\os{\circ}{X},{\mab Z}/l^n)$. 
Taking the inductive limit of (\ref{eqn:xdkf}) with respect to $m$ 
and using the 
quasi-isomorphism (\ref{eqn:epxj}), 
we have the formula (\ref{eqn:axdkf}). 
\end{proof}

Consider the following double complex and the single complex: 
\begin{equation*} 
(A^{\bul \bul}_{l^n}((X_{\frac{1}{l^{\infty}}},D_{\frac{1}{l^{\infty}}})/S_{\frac{1}{l^{\infty}}}),P^{D_{\frac{1}{l^{\infty}}}})
:=A^{\bul}_{l^n}(X_{\frac{1}{l^{\infty}}}/S_{\frac{1}{l^{\infty}}})
\otimes_{{\mab Z}/l^n}(M^{\bul}_{l^n}(\os{\circ}{D}/\os{\circ}{S}),Q) 
\tag{9.16.4}\label{eqn:atbnxds} 
\end{equation*} 
and 
\begin{equation*} 
(A^{\bul}_{l^n}((X_{\frac{1}{l^{\infty}}},D_{\frac{1}{l^{\infty}}})/S_{\frac{1}{l^{\infty}}}),P^{D_{\frac{1}{l^{\infty}}}})
:=s(A^{\bul \bul}_{l^n}((X_{\frac{1}{l^{\infty}}},D_{\frac{1}{l^{\infty}}})/S_{\frac{1}{l^{\infty}}}),P^{D_{\frac{1}{l^{\infty}}}})\in 
{\rm C}^{\rm b}{\rm F}(\os{\circ}{X}_{\rm et},{\mab Z}/l^n).    
\tag{9.16.5}\label{eqn:asbnxds} 
\end{equation*}  
Here $A^{\bul}_{l^n}(X_{\frac{1}{l^{\infty}}}/S_{\frac{1}{l^{\infty}}})$ means the filtered complex 
$A^{\bul}_{l^n}(X_{\frac{1}{l^{\infty}}}/S_{\frac{1}{l^{\infty}}})$ with trivial filtration by abuse of notation. 
\par 
Consider also the following filtered double complex and the single complex: 
\begin{equation*} 
(A^{\bul \bul}_{l^n}((X_{\frac{1}{l^{\infty}}},D_{\frac{1}{l^{\infty}}})/S_{\frac{1}{l^{\infty}}}),P)
:=(A^{\bul}_{l^n}(X_{\frac{1}{l^{\infty}}}/S_{\frac{1}{l^{\infty}}}),P)
\otimes_{{\mab Z}/l^n}(M^{\bul}_{l^n}(\os{\circ}{D}/\os{\circ}{S}),Q) 
\tag{9.16.6}\label{eqn:bapt}
\end{equation*}  
and 
\begin{equation*} 
(A^{\bul}_{l^n}((X_{\frac{1}{l^{\infty}}},D_{\frac{1}{l^{\infty}}})/S_{\frac{1}{l^{\infty}}}),P)
:=s(A^{\bul}_{l^n}((X_{\frac{1}{l^{\infty}}},D_{\frac{1}{l^{\infty}}})/S_{\frac{1}{l^{\infty}}}),P)
\in {\rm C}^{\rm b}{\rm F}(\os{\circ}{X}_{\rm et},{\mab Z}/l^n). 
\tag{9.16.7}\label{eqn:bast}
\end{equation*}  
Then we obtain the following bifiltered complex 
\begin{equation*} 
(A^{\bul}_{l^n}((X_{\frac{1}{l^{\infty}}},D_{\frac{1}{l^{\infty}}})/S_{\frac{1}{l^{\infty}}}),P^{D_{\frac{1}{l^{\infty}}}},P)
:=(A^{\bul}_{l^n}((X_{\frac{1}{l^{\infty}}},D_{\frac{1}{l^{\infty}}})/S_{\frac{1}{l^{\infty}}}),P^{D_{\frac{1}{l^{\infty}}}},P)\in 
{\rm C}^{\rm b}{\rm F}^2(\os{\circ}{X}_{\rm et},{\mab Z}/l^n). 
\tag{9.16.8}\label{eqn:bdast}
\end{equation*} 
Obviously we have the projective system 
$\{(A^{\bul}_{l^n}((X_{\frac{1}{l^{\infty}}},D_{\frac{1}{l^{\infty}}})/S_{\frac{1}{l^{\infty}}}),
P^{D_{\frac{1}{l^{\infty}}}},P)\}_{n=1}^{\infty}$.


\begin{prop}\label{prop:aqislws} 
The morphism 
\begin{equation*} 
\theta \otimes 1 \col K^{\bul}_{l^n}((X_{\frac{1}{l^{\infty}}},D_{\frac{1}{l^{\infty}}})/S_{\frac{1}{l^{\infty}}})=
K^{\bul}_{l^n}(X_{\frac{1}{l^{\infty}}}/S_{\frac{1}{l^{\infty}}})\otimes_{{\mab Z}/l^n}
M^{\bul}_{l^n}(\os{\circ}{D}/\os{\circ}{S}) \lo  
{\rm MF}_{l^n}(T-1)(1)[1]\otimes_{{\mab Z}/l^n}
M^{\bul}_{l^n}(\os{\circ}{D}/\os{\circ}{S}) 
\end{equation*} 
induces a filtered quasi-isomorphism 
\begin{equation*} 
\theta \otimes 1\col 
(K^{\bul}_{l^n}((X_{\frac{1}{l^{\infty}}},D_{\frac{1}{l^{\infty}}})/S_{\frac{1}{l^{\infty}}}),P^{D_{\frac{1}{l^{\infty}}}}) 
\lo (A^{\bul}_{l^n}((X_{\frac{1}{l^{\infty}}},D_{\frac{1}{l^{\infty}}})/S_{\frac{1}{l^{\infty}}}),P^{D_{\frac{1}{l^{\infty}}}}) 
\tag{9.17.1}\label{eqn:fqais} 
\end{equation*} 
in ${\rm C}^+{\rm F}(\os{\circ}{X}_{\rm et},{\mab Z}/l^n)$.  
\end{prop}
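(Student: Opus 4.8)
The plan is to reduce the claimed filtered quasi-isomorphism (\ref{eqn:fqais}) to the already-established non-filtered quasi-isomorphism (\ref{eqn:fqis}) of (\ref{prop:qislws}) by checking the statement on graded pieces. By the criterion (\ref{prop:pq}), since both filtrations $P^{D_{\frac{1}{l^{\infty}}}}$ in question arise from the biregular filtration $Q$ on $M^{\bul}_{l^n}(\os{\circ}{D}/\os{\circ}{S})$ tensored against a trivially-filtered factor, a morphism is a filtered quasi-isomorphism once it induces a quasi-isomorphism on each ${\rm gr}^{P^{D_{\frac{1}{l^{\infty}}}}}_k$. So first I would compute the graded pieces of both sides.

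The key computation is that $P^{D_{\frac{1}{l^{\infty}}}}$ on both $K^{\bul}_{l^n}((X_{\frac{1}{l^{\infty}}},D_{\frac{1}{l^{\infty}}})/S_{\frac{1}{l^{\infty}}})$ and $A^{\bul}_{l^n}((X_{\frac{1}{l^{\infty}}},D_{\frac{1}{l^{\infty}}})/S_{\frac{1}{l^{\infty}}})$ is by definition the filtration induced from $Q$ via the tensor product construction in \S\ref{sec:otl}, with the other tensor factor ($K^{\bul}_{l^n}(X_{\frac{1}{l^{\infty}}}/S_{\frac{1}{l^{\infty}}})$ or $A^{\bul}_{l^n}(X_{\frac{1}{l^{\infty}}}/S_{\frac{1}{l^{\infty}}})$) carrying the trivial filtration. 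Because the trivially-filtered factor is a complex of flat ${\mab Z}/l^n$-modules (the $K^{\bul}_{l^n}$ are built from $\eps_{X*}\pi_*\pi^*$ of injectives, and the $A^{ij}_{l^n}$ are flat by (\ref{coro:cnz})), (\ref{lemm:tensit}) (2) applies and yields
\begin{equation*}
{\rm gr}^{P^{D_{\frac{1}{l^{\infty}}}}}_k(C^{\bul}\otimes_{{\mab Z}/l^n}(M^{\bul}_{l^n},Q))
=C^{\bul}\otimes_{{\mab Z}/l^n}{\rm gr}^Q_kM^{\bul}_{l^n}(\os{\circ}{D}/\os{\circ}{S})
\end{equation*}
for $C^{\bul}$ either of the two factors. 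Using the explicit formula (\ref{eqn:grkqa}) for ${\rm gr}^Q_kM^{\bul}_{l^n}$, which identifies it with a smooth flat sheaf supported in a single degree, the graded piece of $\theta\otimes 1$ becomes $\theta\otimes{\rm id}$ on $C^{\bul}\otimes_{{\mab Z}/l^n}{\rm gr}^Q_kM^{\bul}_{l^n}$. Since ${\rm gr}^Q_kM^{\bul}_{l^n}$ is a flat ${\mab Z}/l^n$-module, tensoring the quasi-isomorphism (\ref{eqn:fqis}) of (\ref{prop:qislws}) with it preserves the quasi-isomorphism property; hence each graded morphism is a quasi-isomorphism.

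I would then invoke (\ref{prop:pq}) (or directly the definition (\ref{defi:nfm}) combined with the flatness argument above) to conclude that $\theta\otimes 1$ is a filtered quasi-isomorphism, i.e.\ that it induces a quasi-isomorphism on each filtration step $P^{D_{\frac{1}{l^{\infty}}}}_kK^{\bul}_{l^n}(\cdots)\to P^{D_{\frac{1}{l^{\infty}}}}_kA^{\bul}_{l^n}(\cdots)$ as well as on the total complexes. One minor point needing care is the biregularity hypothesis of (\ref{prop:pq}): this holds because $Q$ on $M^{\bul}_{l^n}(\os{\circ}{D}/\os{\circ}{S})$ is biregular (as noted in the proof of (\ref{coro:dpp})), and the trivial filtration on the other factor is finite, so the induced tensor-product filtration is finite on each degree.

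The main obstacle I anticipate is the bookkeeping of how taking the single complex $s(-)$ of the double complex interacts with the induced filtration, so that the formula for ${\rm gr}^{P^{D_{\frac{1}{l^{\infty}}}}}_k$ on the single complex really is the single complex of the termwise graded pieces; one must verify that the boundary morphisms of the tensor double complex (the $\theta$-direction and the $M^{\bul}_{l^n}$-direction) respect the $Q$-filtration strictly enough that (\ref{lemm:tensit}) (2) can be applied degreewise and assembled. Once the flatness of the trivially-filtered tensor factor is in hand this is routine, so I expect the proof to be short, essentially reducing to (\ref{prop:qislws}), (\ref{lemm:tensit}) (2), (\ref{eqn:grkqa}) and flatness of the graded pieces of $Q$.
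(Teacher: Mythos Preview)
Your overall strategy---reduce to graded pieces and invoke the flatness of one tensor factor to transport the quasi-isomorphism (\ref{eqn:fqis})---is exactly the content of the paper's one-line proof. However, you have placed the flatness hypothesis on the wrong factor. You claim that $K^{\bul}_{l^n}(X_{\frac{1}{l^{\infty}}}/S_{\frac{1}{l^{\infty}}})$ and $A^{ij}_{l^n}(X_{\frac{1}{l^{\infty}}}/S_{\frac{1}{l^{\infty}}})$ are flat, but neither justification holds: pushforwards of injectives need not be flat, and (\ref{coro:cnz}) only asserts that the system defines an object of ${\rm D}^{\rm b}{\rm F}_{\rm ctf}$, not that any chosen representative consists of flat modules.

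The correct input is that $(M^{\bul}_{l^n}(\os{\circ}{D}/\os{\circ}{S}),Q)$ is a \emph{filtered flat} complex by construction (this is precisely what (\ref{prop:bav}) produces). With this, (\ref{lemm:tensit}) (2) applies with $(F,\{F^{(i)}\})=(M^{\bul}_{l^n},Q)\in {\cal Q}_{\rm fl}$ and $E$ equal to either trivially-filtered factor, yielding ${\rm gr}^{P^{D_{\frac{1}{l^{\infty}}}}}_k(C^{\bul}\otimes M^{\bul}_{l^n})=C^{\bul}\otimes{\rm gr}^Q_kM^{\bul}_{l^n}$ without any flatness assumption on $C^{\bul}$. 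Since each ${\rm gr}^Q_kM^{\bul}_{l^n}$ is flat (as a subquotient of a filtered flat complex), tensoring the quasi-isomorphism (\ref{eqn:fqis}) by it remains a quasi-isomorphism, and (\ref{prop:pq}) finishes the argument. The paper compresses all of this into the single sentence that filtered flatness of $(M^{\bul}_{l^n},Q)$ plus (\ref{prop:qislws}) suffices; your graded-piece unpacking is a correct elaboration once the flatness is attributed to the right factor.
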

\begin{proof} 
Because $(M^{\bul}_{l^n}(\os{\circ}{D}/\os{\circ}{S}),Q)$ is 
a filtered flat complex of ${\mab Z}/l^n$-modules in $\os{\circ}{X}_{\rm et}$, 
(\ref{prop:aqislws}) follows from (\ref{prop:qislws}). 
\end{proof}

The  bifiltered complex (\ref{eqn:bdast}) defines an object 
of ${\rm D}^{\rm b}{\rm F}^2(\os{\circ}{X}_{\rm et},{\mab Z}/l^n)$,
By abuse of notation, we denote it 
by 
$(A^{\bul}_{l^n}((X_{\frac{1}{l^{\infty}}},D_{\frac{1}{l^{\infty}}})
/S_{\frac{1}{l^{\infty}}}),P^{D_{\frac{1}{l^{\infty}}}},P)$.

\begin{prop}\label{prop:era} 
There exists an isomorphism 
\begin{equation*}
R(\eps_{(X,D)}\pi_{(X_{{\frac{1}{l^{\infty}}}},D_{\frac{1}{l^{\infty}}})})_*({\mab Z}/l^n)
\os{\sim}{\lo} A^{\bul}_{l^n}((X_{\frac{1}{l^{\infty}}},D_{\frac{1}{l^{\infty}}})/S_{\frac{1}{l^{\infty}}})
\tag{9.18.1}\label{eqn:epaxds}
\end{equation*}
in $D^{\rm b}(\os{\circ}{X}_{\rm et},{\mab Z}/l^n)$.
\end{prop}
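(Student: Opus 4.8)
The plan is to obtain (\ref{eqn:epaxds}) by composing the two immediately preceding propositions, so that all of the genuine analytic content is already carried by (\ref{prop:lnvc}) and (\ref{prop:aqislws}) and only a bookkeeping argument remains. First I would invoke (\ref{prop:lnvc}), which identifies $R(\eps_{(X,D)}\pi_{(X_{\frac{1}{l^{\infty}}},D_{\frac{1}{l^{\infty}}})})_*({\mab Z}/l^n)$ with the total complex $K^{\bul}_{l^n}((X_{\frac{1}{l^{\infty}}},D_{\frac{1}{l^{\infty}}})/S_{\frac{1}{l^{\infty}}})$ in $D^{\rm b}(\os{\circ}{X}_{\rm et},{\mab Z}/l^n)$, now regarded with its $P^{D_{\frac{1}{l^{\infty}}}}$-filtration forgotten. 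This is the step into which the log K\"unneth formula and the finiteness of the cohomological dimension enter, via the computation $(\ref{eqn:xdkf})$; nothing further of that nature is needed afterward.

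Next I would feed in (\ref{prop:aqislws}), which furnishes the explicit morphism $\theta \otimes 1$ and asserts that it is a filtered quasi-isomorphism from $(K^{\bul}_{l^n}((X_{\frac{1}{l^{\infty}}},D_{\frac{1}{l^{\infty}}})/S_{\frac{1}{l^{\infty}}}),P^{D_{\frac{1}{l^{\infty}}}})$ to $(A^{\bul}_{l^n}((X_{\frac{1}{l^{\infty}}},D_{\frac{1}{l^{\infty}}})/S_{\frac{1}{l^{\infty}}}),P^{D_{\frac{1}{l^{\infty}}}})$. By the very definition of an $n$-filtered quasi-isomorphism in (\ref{defi:nfm}), clause (\ref{eqn:gbaqz}), the underlying morphism of complexes --- that is, the image of $\theta \otimes 1$ under the forgetting-filtration functor $\pi$ of (\ref{eqn:fflnfl}) --- is itself a quasi-isomorphism, and hence an isomorphism in $D^{\rm b}(\os{\circ}{X}_{\rm et},{\mab Z}/l^n)$. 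Composing this with the identification of the first step produces (\ref{eqn:epaxds}). Boundedness in $D^{\rm b}$ is automatic because $\os{\circ}{X}$ is quasi-compact, exactly as used repeatedly above.

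The honest remark is that there is no genuine obstacle left at this stage: the labor has been front-loaded into (\ref{prop:lnvc}) and into the acyclicity computation behind (\ref{prop:qislws})/(\ref{prop:aqislws}). The one point deserving a moment's care is to ensure that the object called $A^{\bul}_{l^n}((X_{\frac{1}{l^{\infty}}},D_{\frac{1}{l^{\infty}}})/S_{\frac{1}{l^{\infty}}})$ in the statement is precisely the underlying complex of the $P^{D_{\frac{1}{l^{\infty}}}}$-filtered object built in (\ref{eqn:asbnxds}), and that what (\ref{prop:aqislws}) delivers really forgets to a quasi-isomorphism rather than merely to a $\operatorname{gr}^{P^{D_{\frac{1}{l^{\infty}}}}}$-isomorphism; this is exactly clause (\ref{eqn:gbaqz}) of (\ref{defi:nfm}), so the implication is immediate. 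To make the composite canonical I would keep both invocations on the same representatives, using a single injective resolution $I^{\bul}_{X,l^n}$ together with the flat resolution $M^{\bul}_{l^n}(\os{\circ}{D}/\os{\circ}{S})$ of $R\os{\circ}{\eps}_{D*}({\mab Z}/l^n)$ from (\ref{prop:bav}); independence of these choices is then guaranteed by (\ref{prop:ni}), so that the resulting isomorphism is well defined in $D^{\rm b}(\os{\circ}{X}_{\rm et},{\mab Z}/l^n)$.
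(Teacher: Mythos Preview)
Your proposal is correct and follows exactly the same route as the paper: the paper's proof is the single line ``(\ref{prop:era}) immediately follows from (\ref{prop:lnvc}) and (\ref{prop:aqislws}),'' and you have simply unpacked this by composing the identification $R(\eps_{(X,D)}\pi_{(X_{\frac{1}{l^{\infty}}},D_{\frac{1}{l^{\infty}}})})_*({\mab Z}/l^n)\simeq K^{\bul}_{l^n}((X_{\frac{1}{l^{\infty}}},D_{\frac{1}{l^{\infty}}})/S_{\frac{1}{l^{\infty}}})$ with the underlying quasi-isomorphism of $\theta\otimes 1$. Your additional remarks on well-definedness and the passage from filtered to unfiltered quasi-isomorphisms are sound but not required for the argument.
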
 
\begin{proof} 
(\ref{prop:era}) immediately follows from 
(\ref{prop:lnvc}) and (\ref{prop:aqislws}). 
\end{proof} 

The following is a main result in this section. 

\begin{theo}\label{theo:wdbst}  
$(1)$ The bifiltered complex 
$(A^{\bul}_{l^n}((X_{\frac{1}{l^{\infty}}},D_{\frac{1}{l^{\infty}}})/S_{\frac{1}{l^{\infty}}}),P^{D_{\frac{1}{l^{\infty}}}},P)\in 
{\rm D}^{\rm b}{\rm F}^2(\os{\circ}{X}_{\rm et},{\mab Z}/l^n)$ 
is independent of the choices of $I^{\bul}_{X,l^n}$ and 
$(M^{\bul}_{l^n}(\os{\circ}{D}/\os{\circ}{S}),Q)$.  
This bifiltered complex is also independent of the choice of $T$.  
This is an object of ${\rm D}^{\rm b}{\rm F}^2_{\rm ctf}(\os{\circ}{X}_{\rm et},{\mab Z}/l^n)$.  
\par 
$(2)$ 
\begin{align*} 
(A^{\bul}_{l^{n+1}}((X_{\frac{1}{l^{\infty}}},D_{\frac{1}{l^{\infty}}})/S_{\frac{1}{l^{\infty}}}),P^{D_{\frac{1}{l^{\infty}}}},P)
\otimes^L_{{\mab Z}/l^{n+1}}{\mab Z}/l^{n}
=
(A^{\bul}_{l^{n}}((X_{\frac{1}{l^{\infty}}},D_{\frac{1}{l^{\infty}}})/S_{\frac{1}{l^{\infty}}}),P^{D_{\frac{1}{l^{\infty}}}},P)
\end{align*} 
in ${\rm D}^{\rm b}{\rm F}^2_{\rm ctf}(\os{\circ}{X}_{\rm et},{\mab Z}/l^n)$. 
Here $\otimes^L_{{\mab Z}/l^{n+1}}$ is the derived tensor product of bounded below 
bifiltered complexes defined in {\rm (\ref{eqn:drtsr})}. 
\end{theo}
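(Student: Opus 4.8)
The plan is to exploit that the bifiltered complex is built, by (\ref{eqn:bapt})--(\ref{eqn:bdast}), as the single complex of the tensor product of $(A^{\bul}_{l^n}(X_{\frac{1}{l^{\infty}}}/S_{\frac{1}{l^{\infty}}}),P)$ (carrying the trivial filtration for $P^{D_{\frac{1}{l^{\infty}}}}$) with the filtered flat complex $(M^{\bul}_{l^n}(\os{\circ}{D}/\os{\circ}{S}),Q)$ (carrying the trivial filtration for $P$, so that $P^{D_{\frac{1}{l^{\infty}}}}=Q$). For the independence statements in Part (1) I would argue through the bifiltered mapping cone and (\ref{theo:prtdrt}). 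The independence of $I^{\bul}_{X,l^n}$ reduces to (\ref{prop:ni}): the filtered quasi-isomorphism there makes its bifiltered mapping cone $\mathrm{Cone}_A$ strictly exact (by (\ref{rema:ba})), and since $(M^{\bul}_{l^n}(\os{\circ}{D}/\os{\circ}{S}),Q)$ is filtered flat while $\mathrm{Cone}_A$ is bounded, (\ref{theo:prtdrt}) (cases (a),(c)) shows $\mathrm{Cone}_A\otimes_{{\mab Z}/l^n}M^{\bul}_{l^n}$ is strictly exact, i.e.\ the two bifiltered complexes are bifiltered quasi-isomorphic. The independence of $(M^{\bul}_{l^n}(\os{\circ}{D}/\os{\circ}{S}),Q)$ is symmetric: two filtered flat resolutions of $(\os{\circ}{\eps}_{D*}(I^{\bul}_{\os{\circ}{D},l^n}),\tau)$ are filtered quasi-isomorphic, their cone is strictly exact and filtered flat, and (\ref{theo:prtdrt}) (cases (b),(c)) applied after tensoring with the bounded $A^{\bul}_{l^n}(X/S)$ gives the result. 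Independence of $T$ I would deduce by observing that a second generator $T'$ of ${\mab Z}_l(1)$ differs from $T$ by a unit of ${\mab Z}_l^*$, inducing compatible automorphisms of ${\rm MF}_{l^n}(T-1)$, of $\theta$, and hence of the whole construction (\ref{eqn:tdfef}), (\ref{eqn:tdef}).

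For the ctf-assertion I would use (\ref{prop:cfd}): since $P$ and $P^{D_{\frac{1}{l^{\infty}}}}$ are biregular, it suffices that each double-graded piece ${\rm gr}^P_k{\rm gr}^{P^{D_{\frac{1}{l^{\infty}}}}}_{k'}$ has constructible cohomology and finite tor-dimension. By (\ref{lemm:tensit}) (2), applied with the flat factor $M$, this piece is
$${\rm gr}^P_k A^{\bul}_{l^n}(X_{\frac{1}{l^{\infty}}}/S_{\frac{1}{l^{\infty}}})\otimes_{{\mab Z}/l^n}{\rm gr}^Q_{k'}M^{\bul}_{l^n}(\os{\circ}{D}/\os{\circ}{S}).$$
By (\ref{eqn:grf}) and (\ref{eqn:grkqa}) both factors are finite direct sums of shifts and Tate twists of the pushforwards $a^{(\bul)}_*(\varpi^{(\bul)}_{\rm et}(X/S))$ and $c^{(\bul)}_*(\varpi^{(\bul)}_{\rm et}(\os{\circ}{D}/\os{\circ}{S}))$ of orientation sheaves, which are smooth (hence flat and constructible) ${\mab Z}/l^n$-sheaves; their tensor product is again of this type, giving the ctf-property.

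For Part (2) I would note that $\otimes^L_{{\mab Z}/l^{n+1}}{\mab Z}/l^n$ is the functor $Lf^*$ for the morphism of ringed topoi $f\col(\os{\circ}{X}_{\rm et},{\mab Z}/l^n)\lo(\os{\circ}{X}_{\rm et},{\mab Z}/l^{n+1})$ induced by ${\mab Z}/l^{n+1}\lo{\mab Z}/l^n$ on the identity topos. The projective-system transition maps furnish a morphism
$$(A^{\bul}_{l^{n+1}}((X_{\frac{1}{l^{\infty}}},D_{\frac{1}{l^{\infty}}})/S_{\frac{1}{l^{\infty}}}),P^{D_{\frac{1}{l^{\infty}}}},P)\otimes^L_{{\mab Z}/l^{n+1}}{\mab Z}/l^n\lo(A^{\bul}_{l^n}((X_{\frac{1}{l^{\infty}}},D_{\frac{1}{l^{\infty}}})/S_{\frac{1}{l^{\infty}}}),P^{D_{\frac{1}{l^{\infty}}}},P),$$
and since both filtrations are biregular it suffices, by (\ref{prop:pq}) and (\ref{prop:cfd}), to check it is a quasi-isomorphism on each double-graded piece. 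By the commutation of the gr-functor with $Lf^*$ in (\ref{eqn:grfld}) (equivalently (\ref{lemm:grrdld})), the double-graded piece of the left-hand side is $({\rm gr}^P_k{\rm gr}^{P^{D_{\frac{1}{l^{\infty}}}}}_{k'}A^{\bul}_{l^{n+1}}((X,D)/S))\otimes^L_{{\mab Z}/l^{n+1}}{\mab Z}/l^n$. Rewriting this via (\ref{lemm:tensit}) (2) as a tensor product of the single-graded pieces, which are smooth (flat) so that the derived base change agrees with the ordinary one, I reduce to the two single-factor compatibilities already available: $(A^{\bul}_{l^{n+1}}(X/S),P)\otimes^L_{{\mab Z}/l^{n+1}}{\mab Z}/l^n=(A^{\bul}_{l^n}(X/S),P)$ from (\ref{coro:cnz}) (1), and $(M^{\bul}_{l^{n+1}}(\os{\circ}{D}/\os{\circ}{S}),Q)\otimes_{{\mab Z}/l^{n+1}}{\mab Z}/l^n=(M^{\bul}_{l^n}(\os{\circ}{D}/\os{\circ}{S}),Q)$ from the proof of (\ref{coro:dpp}). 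Combining these yields the graded quasi-isomorphism, hence Part (2).

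The main obstacle I anticipate is the bookkeeping in Part (2): because the $A$-factor is not flat, one cannot tensor representatives naively, and the base-change identity must be made compatible with both filtrations at once. The safest route, and the step I would develop most carefully, is the gr-reduction above — using (\ref{eqn:grfld}) to push $\otimes^L{\mab Z}/l^n$ past the gr-functor and (\ref{lemm:tensit}) (2) to linearize the double-graded pieces — since flatness is manifest only after passing to the graded quotients $a^{(\bul)}_*(\varpi)\otimes_{{\mab Z}}{\mab Z}/l^n$ and $c^{(\bul)}_*(\varpi)\otimes_{{\mab Z}}{\mab Z}/l^n$, where the derived and ordinary tensor products coincide and the base change is transparent.
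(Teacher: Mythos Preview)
Your approach is broadly correct and parallels the paper's: both reduce everything to the double-graded pieces via biregularity and the explicit formulas (\ref{eqn:grf}), (\ref{eqn:grkqa}). The paper, however, computes more directly on representatives (writing out ${\rm gr}_{k'}^{P}{\rm gr}_{k}^{P^{D_{1/l^{\infty}}}}A^{\bul}_{l^n}$ explicitly and checking the comparison quasi-isomorphisms by hand), whereas you package the independence arguments through (\ref{theo:prtdrt}) and the bifiltered mapping cone, and Part~(2) through (\ref{eqn:grfld}). Your route is a bit more categorical and avoids re-deriving the graded formula each time; the paper's route is more concrete and makes the ctf-verification immediate from the displayed formula. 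Both arrive at the same endpoint.

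There is one genuine gap in your Part~(1): when you write ``two filtered flat resolutions of $(\os{\circ}{\eps}_{D*}(I^{\bul}_{\os{\circ}{D},l^n}),\tau)$ are filtered quasi-isomorphic, their cone is strictly exact and filtered flat,'' you are assuming a direct filtered morphism $M\to M'$. Flat resolutions are not injective-like, so such a map need not exist a~priori; the paper handles this by constructing (via $L^0$) a third filtered flat resolution $M''$ with maps $M \os{\sim}{\longleftarrow} M'' \os{\sim}{\lo} M'$, and then comparing through the roof. You should insert this zigzag before invoking (\ref{theo:prtdrt}). Your treatment of $T$-independence is also looser than the paper's: rather than tracking how a unit in ${\mab Z}_l^*$ acts on the whole double complex, the paper simply observes via (\ref{eqn:tkix}) that $({\rm MF}_{l^n}(T-1),\tau)\simeq (R\eps_{X*}({\mab Z}/l^n),\tau)$ canonically, so the filtered input to the construction is $T$-independent; your sketch would benefit from making this reduction explicit.
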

\begin{proof} 
Let $({\cal T},{\cal A})$ be a ringed topos. Let 
$$L^0\col 
\{{\cal A}\text{-modules}\}\lo  \{\text{flat }{\cal A}\text{-modules}\}$$
be the the sheafification of the presheaf 
$$U \lom \text{free }\Gam(U,{\cal A})\text{-module 
with basis }\Gam(U,E)\setminus \{0\}.$$ 
First we fix a generator $T$ of ${\mab Z}_l(1)$. 
Let $I'^{\bul}_{X,l^n}$ be another injective resolution 
of ${\mab Z}/l^n$ in $X_{\rm ket}$ as in the proof of (\ref{prop:ni}). 
Let ${\rm MF}'_{l^n}(T-1)$ be the complex in the proof of (\ref{prop:ni}). 
Let $(M'^{\bul}_{l^n}(\os{\circ}{D}/\os{\circ}{S}),Q')$ be an analogous filtered flat complex to  
$(M^{\bul}_{l^n}(\os{\circ}{D}/\os{\circ}{S}),Q)$. 
Using $L^0$, we see that 
there exists an analogous filtered flat complex 
$(M''^{\bul}_{l^n}(\os{\circ}{D}/\os{\circ}{S})),Q'')$ to  
$(M^{\bul}_{l^n}(\os{\circ}{D}/\os{\circ}{S})),Q)$ 
such that there exist the following filtered quasi-isomorphisms: 
\begin{align*} 
(M^{\bul}_{l^n}(\os{\circ}{D}/\os{\circ}{S}),Q) \os{\sim}{\longleftarrow} 
(M''^{\bul}_{l^n}(\os{\circ}{D}/\os{\circ}{S}),Q'')\os{\sim}{\lo}  
(M'^{\bul}_{l^n}(\os{\circ}{D}/\os{\circ}{S}),Q').
\tag{9.19.1}\label{ali:anbxqds} 
\end{align*} 
Hence we may assume that there exists a filtered quasi-isomorphism
\begin{align*} 
(M^{\bul}_{l^n}(\os{\circ}{D}/\os{\circ}{S}),Q) 
\os{\sim}{\lo}  
(M'^{\bul}_{l^n}(\os{\circ}{D}/\os{\circ}{S}),Q').
\tag{9.19.2}\label{ali:anxqfids} 
\end{align*} 
Because the bifiltrations 
$P$ and $P^{D_{\frac{1}{l^{\infty}}}}$ are biregular, 
we have only to prove that  
${\rm gr}_{k'}^{P}{\rm gr}_{k}^{P^{D_{\frac{1}{l^{\infty}}}}}
A^{\bul}_{l^n}((X_{\frac{1}{l^{\infty}}},D_{\frac{1}{l^{\infty}}})/S_{\frac{1}{l^{\infty}}})$ is independent of the choice of 
$I^{\bul}_{X,l^n}$ and $(M^{\bul}_{l^n}(\os{\circ}{D}/\os{\circ}{S}),Q)$  
(cf.~the proof of (\ref{prop:pq})). 
Since 
${\rm gr}^Q_{k}M^{\bul}_{l^n}(\os{\circ}{D}/\os{\circ}{S})$ 
is a complex of flat ${\mab Z}/l^n$-modules, 
we have the following formula
\begin{align*} 
{\rm gr}_{k'}^{P}{\rm gr}_{k}^{P^{D_{\frac{1}{l^{\infty}}}}}
A^{\bul}_{l^n}((X_{\frac{1}{l^{\infty}}},D_{\frac{1}{l^{\infty}}})/S_{\frac{1}{l^{\infty}}})& ={\rm gr}_{k'}^{P}
(A^{\bul}_{l^n}(X_{\frac{1}{l^{\infty}}}/S_{\frac{1}{l^{\infty}}})\otimes_{{\mab Z}/l^n}
{\rm gr}^Q_{k}M^{\bul}_{l^n}(\os{\circ}{D}/\os{\circ}{S})) 
\tag{9.19.3}\label{ali:anxqds}\\ 
{} & =\bigoplus_{j\geq \max\{-k',0\}}
{\rm gr}_{2j+k'+1-k}^{\tau}{\rm MF}_{l^n}(T-1)(j)\{j\}[1]
\otimes_{{\mab Z}/l^n} 
{\rm gr}^Q_{k}M^{\bul}_{l^n}(\os{\circ}{D}/\os{\circ}{S})
\end{align*} 
by \cite[(1.2.4) (2)]{nh2}.  
Since 
${\rm gr}^Q_{k}M^{\bul}_{l^n}(\os{\circ}{D}/\os{\circ}{S})$ 
is a complex of flat ${\mab Z}/l^n$-modules, 
the following morphism 
\begin{align*} 
& {\rm gr}_{2j+k'+1-k}^{\tau}{\rm MF}_{l^n}(T-1)(j)\{j\}[1]
\otimes_{{\mab Z}/l^n} 
{\rm gr}^Q_{k}M^{\bul}_{l^n}(\os{\circ}{D}/\os{\circ}{S}) \\
& \lo 
({\rm gr}_{2j+k'+1-k}^{\tau}{\rm MF}'_{l^n}(T-1)(j)\{j\}[1]
\otimes_{{\mab Z}/l^n} 
{\rm gr}^Q_{k}M^{\bul}_{l^n}(\os{\circ}{D}/\os{\circ}{S}) 
\end{align*}
is a quasi-isomorphism by (\ref{eqn:tkx}). 
Furthermore, 
the following morphism 
\begin{align*} 
& {\rm gr}_{2j+k'+1-k}^{\tau}{\rm MF}'_{l^n}(T-1)(j)\{j\}[1]
\otimes_{{\mab Z}/l^n} 
{\rm gr}^{Q}_kM^{\bul}_{l^n}(\os{\circ}{D}/\os{\circ}{S}) \\
& \lo 
({\rm gr}_{2j+k'+1-k}^{\tau}{\rm MF}'_{l^n}(T-1)(j)\{j\}[1]
\otimes_{{\mab Z}/l^n} 
{\rm gr}^{Q'}_{k}M'^{\bul}_{l^n}(\os{\circ}{D}/\os{\circ}{S}) 
\end{align*}
is a quasi-isomorphism by (\ref{eqn:grkqa}). 
Now we have proved the independence of the choices of $I^{\bul}_{X,l^n}$ and 
$(M^{\bul}_{l^n}(\os{\circ}{D}/\os{\circ}{S}),Q)$. 
\par 
The independence of the choice of $T$ follows from (\ref{eqn:tkix}). 
\par 
By (\ref{prop:cfd}), (\ref{eqn:tkx}), (\ref{eqn:grf}) and  (\ref{ali:anxqds}),  
we see that the claim about 
the constructivility and the finite tor-dimension holds. 
\par 
(2): Obviously we have a natural morphism 
\begin{align*} 
(A^{\bul}_{l^{n+1}}((X_{\frac{1}{l^{\infty}}},D_{\frac{1}{l^{\infty}}})/S_{\frac{1}{l^{\infty}}}),P^{D_{\frac{1}{l^{\infty}}}},P)
\otimes^L_{{\mab Z}/l^{n+1}}{\mab Z}/l^{n}
\lo
(A^{\bul}_{l^{n}}((X_{\frac{1}{l^{\infty}}},D_{\frac{1}{l^{\infty}}})/S_{\frac{1}{l^{\infty}}}),P^{D_{\frac{1}{l^{\infty}}}},P).
\end{align*}
Let $R^{\bul}$ be a flat resolution of 
${\mab Z}/l^{n}$ of ${\mab Z}/l^{n+1}$-modules. 
Then 
$(A^{\bul}_{l^{n+1}}((X_{\frac{1}{l^{\infty}}},D_{\frac{1}{l^{\infty}}})/S_{\frac{1}{l^{\infty}}}),P^{D_{\frac{1}{l^{\infty}}}},P)
\otimes^L_{{\mab Z}/l^{n+1}}{\mab Z}/l^{n}=
(A^{\bul}_{l^{n+1}}((X_{\frac{1}{l^{\infty}}},D_{\frac{1}{l^{\infty}}})/S_{\frac{1}{l^{\infty}}}),
P^{D_{\frac{1}{l^{\infty}}}},P)\otimes_{{\mab Z}/l^{n+1}}R^{\bul}$. 
As in (1), it suffices to prove that 
\begin{align*} 
&{\rm gr}_{k'}^{P}{\rm gr}_{k}^{P^{D_{\frac{1}{l^{\infty}}}}}
\{(A^{\bul}_{l^{n+1}}((X_{\frac{1}{l^{\infty}}},D_{\frac{1}{l^{\infty}}})/S_{\frac{1}{l^{\infty}}}),
P^{D_{\frac{1}{l^{\infty}}}},P)\otimes_{{\mab Z}/l^{n+1}}R^{\bul}\}\\
& ={\rm gr}_{k'}^{P}{\rm gr}_{k}^{P^{D_{\frac{1}{l^{\infty}}}}}
(A^{\bul}_{l^{n}}((X_{\frac{1}{l^{\infty}}},D_{\frac{1}{l^{\infty}}})/S_{\frac{1}{l^{\infty}}}),P^{D_{\frac{1}{l^{\infty}}}},P).
\end{align*}  
This is obtained as follows: 
\begin{align*} 
&{\rm gr}_{k'}^{P}{\rm gr}_{k}^{P^{D_{\frac{1}{l^{\infty}}}}}
\{(A^{\bul}_{l^{n+1}}((X_{\frac{1}{l^{\infty}}},D_{\frac{1}{l^{\infty}}})/S_{\frac{1}{l^{\infty}}}),
P^{D_{\frac{1}{l^{\infty}}}},P)\otimes_{{\mab Z}/l^{n+1}}R^{\bul}\}\\
& = {\rm gr}_{k'}^{P}{\rm gr}_{k}^{P^{D_{\frac{1}{l^{\infty}}}}}
(A^{\bul}_{l^{n+1}}((X_{\frac{1}{l^{\infty}}},D_{\frac{1}{l^{\infty}}})/S_{\frac{1}{l^{\infty}}}),
P^{D_{\frac{1}{l^{\infty}}}},P)\otimes_{{\mab Z}/l^{n+1}}R^{\bul}\\
& =
\bigoplus_{j\geq \max\{-k',0\}}
{\rm gr}_{2j+k'+1-k}^{\tau}{\rm MF}_{l^{n+1}}(T-1)(j)\{j\}[1]
\otimes_{{\mab Z}/l^{n+1}} 
{\rm gr}^Q_{k}M^{\bul}_{l^{n+1}}(\os{\circ}{D}/\os{\circ}{S})
\otimes_{{\mab Z}/l^{n+1}}R^{\bul}\\
&\os{\sim}{\lo} 
\bigoplus_{j\geq \max\{-k',0\}}
{\rm gr}_{2j+k'+1-k}^{\tau}{\rm MF}_{l^{n}}(T-1)(j)\{j\}[1]
\otimes_{{\mab Z}/l^{n}} 
{\rm gr}^Q_{k}M^{\bul}_{l^{n}}(\os{\circ}{D}/\os{\circ}{S}). 
\end{align*} 
Here, to obtain the last quasi-isomorphism, we have used  (\ref{eqn:tkx}) and (\ref{eqn:grf}). 
\end{proof} 

\begin{defi}\label{defi:wtfd}   
We call the bifiltered complex 
$$(A^{\bul}_{l^{\infty}}((X_{\frac{1}{l^{\infty}}},D_{\frac{1}{l^{\infty}}})/S_{\frac{1}{l^{\infty}}}),
P^{D_{\frac{1}{l^{\infty}}}},P) :=\{(A^{\bul}_{l^n}((X_{\frac{1}{l^{\infty}}},D_{\frac{1}{l^{\infty}}})/S_{\frac{1}{l^{\infty}}}),
P^{D_{\frac{1}{l^{\infty}}}},P)\}_{n=1}^{\infty} 
\in {\rm D}^{\rm b}{\rm F}^2_{\rm ctf}(\os{\circ}{X}_{\rm et},{\mab Z}_l)$$ 
the $l$-{\it adic bifiltered El Zein-Steenbrink-Zucker complex} of $(X,D)$ 
over $S$. We call  $P^{D_{\frac{1}{l^{\infty}}}}$ and $P$ 
{\it the weight filtration with respect} to $D$ 
and 
{\it the weight filtration} on 
$A^{\bul}_{l^{\infty}}((X_{\frac{1}{l^{\infty}}},D_{\frac{1}{l^{\infty}}})/S_{\frac{1}{l^{\infty}}})$, respectively. 
(We do not use a traditional notation 
$W$ in this paper because 
we have to use the symbol $W$ 
for the Witt ring in other papers, 
e.~g.,~\cite{msemi}, \cite{ndw}.)
\end{defi}

We also have the following objects 
\begin{align*}
{\rm gr}_k^{P^{\os{\circ}{D}}}A^{\bul}_{l^{\infty}}((X_{\frac{1}{l^{\infty}}},D_{\frac{1}{l^{\infty}}})/S_{\frac{1}{l^{\infty}}})
:=\{{\rm gr}_k^{P^{\os{\circ}{D}}}A^{\bul}_{l^n}((X_{\frac{1}{l^{\infty}}},D_{\frac{1}{l^{\infty}}})/S_{\frac{1}{l^{\infty}}})\}_{n=1}^{\infty} 
\in D^{\rm b}_{\rm ctf}(\os{\circ}{X}_{\rm et},{\mab Z}_l)
\end{align*} 
and 
\begin{align*}
{\rm gr}_k^{P}A^{\bul}_{l^{\infty}}((X_{\frac{1}{l^{\infty}}},D_{\frac{1}{l^{\infty}}})/S_{\frac{1}{l^{\infty}}})
:=\{{\rm gr}_k^{P}A^{\bul}_{l^{\infty}}((X_{\frac{1}{l^{\infty}}},D_{\frac{1}{l^{\infty}}})/S_{\frac{1}{l^{\infty}}})\}_{n=1}^{\infty} 
\in D^{\rm b}_{\rm ctf}(\os{\circ}{X}_{\rm et},{\mab Z}_l). 
\end{align*} 


\begin{rema}\label{rema:cde}  
Assume that $\os{\circ}{S}$ is regular of dimension $\leq 1$. 
Let $f\col X\lo S$ be the structural morphism. 
Let ${\rm D}^{\rm bb}_{\rm ctf}(\os{\circ}{X}_{\rm et},{\mab Z}/l^n)$ 
and 
${\rm D}^{\rm bb}{\rm F}^2_{\rm ctf}(\os{\circ}{X}_{\rm et},{\mab Z}/l^n)$ 
be the derived category of bounded biregular complexes of 
${\mab Z}/l^n$-modules 
in $\os{\circ}{X}_{\rm et}$ and 
the derived category of bounded biregular bifiltered complexes of 
${\mab Z}/l^n$-modules. 
Let ${\rm D}^{\rm bb}{\rm F}^2_{\rm ctf}(\os{\circ}{X}_{\rm et},{\mab Z}_l)$ 
be the projective 2-limit of 
${\rm D}^{\rm bb}{\rm F}^2_{\rm ctf}(\os{\circ}{X}_{\rm et},{\mab Z}/l^n)$.  
As remarked in \cite[(1.1.2) c)]{dw2},  
${\rm D}^{\rm bb}_{\rm ctf}(\os{\circ}{X}_{\rm et},{\mab Z}/l^n)$ 
is stable under 
$R\os{\circ}{f}_*$, $\os{\circ}{f}{}^*$, $R\os{\circ}{f}_!$, $R\os{\circ}{f}{}^!$, 
$\otimes^L$ and ${\rm RHom}^{\bul}$ and 
commutes with reduction mod $l^n{\mab Z}$. 
Hence so is for 
${\rm D}^{\rm bb}{\rm F}^2_{\rm ctf}(\os{\circ}{X}_{\rm et},{\mab Z}_l)$.  
Because $(A^{\bul}_{l^{\infty}}((X_{\frac{1}{l^{\infty}}},D_{\frac{1}{l^{\infty}}})/S_{\frac{1}{l^{\infty}}}),P^{D_{\frac{1}{l^{\infty}}}},P)\in 
{\rm D}^{\rm bb}{\rm F}^2_{\rm ctf}(\os{\circ}{X}_{\rm et},{\mab Z}_l)$, 
we can define 
$$R\os{\circ}{f}_*((A^{\bul}_{l^{\infty}}((X_{\frac{1}{l^{\infty}}},D_{\frac{1}{l^{\infty}}})/S_{\frac{1}{l^{\infty}}}),
P^{D_{\frac{1}{l^{\infty}}}},P))\in 
{\rm D}^{\rm bb}{\rm F}^2_{\rm ctf}(\os{\circ}{S}_{\rm et},{\mab Z}_l).$$  
\end{rema}

\section{$l$-adic weight spectral sequences}\label{sec:lwss}
In this section we give generalizations of  
(\ref{eqn:lidd}) and  (\ref{eqn:lintdd}).

\begin{prop}\label{prop:zxbc}  
Let $\os{\circ}{Z}$ be a closed SNC scheme of 
$\os{\circ}{X}$ over $\os{\circ}{S}$. 
Set $Z:=X\times_{\os{\circ}{X}}\os{\circ}{Z}$. 
Assume that $Z$ is an SNCL scheme over $S$. 
Let $\iota \col Z \os{\sus}{\lo} X$ and 
$\iota_{\frac{1}{l^{\infty}}} \col Z_{\frac{1}{l^{\infty}}} \os{\sus}{\lo} X_{\frac{1}{l^{\infty}}}$
be the natural closed immersions. 
Let $I^{\bul}_{Z,l^n}$ be an injective resolution of 
${\mab Z}/l^n$ in $Z_{\rm ket}$. 
Set 
$K^{\bul}_{l^n}(Z_{\frac{1}{l^{\infty}}}/S_{\frac{1}{l^{\infty}}}):=\eps_{Z*}\pi_{Z_{\frac{1}{l^{\infty}}}*}
\pi_{Z_{\frac{1}{l^{\infty}}}}^*(I^{\bul}_{Z,l^n})\in C^+(\os{\circ}{Z}_{\rm et},{\mab Z}/l^n)$.  
Then the natural morphism 
$\os{\circ}{\iota}{}^{*}(K^{\bul}_{l^n}(X_{\frac{1}{l^{\infty}}}/S_{\frac{1}{l^{\infty}}})) 
\lo K^{\bul}_{l^n}(Z_{\frac{1}{l^{\infty}}}/S_{\frac{1}{l^{\infty}}})$ 
induced by a natural morphism 
$\iota^{*}(I^{\bul}_{X,l^n}) \lo I^{\bul}_{Z,l^n}$ 
is a quasi-isomorphism fitting into the following commutative diagram$:$
\begin{equation*} 
\begin{CD}
\os{\circ}{\iota}{}^{*}(K^{\bul}_{l^{n+1}}(X_{\frac{1}{l^{\infty}}}/S_{\frac{1}{l^{\infty}}})) @>>> 
K^{\bul}_{l^{n+1}}(Z_{\frac{1}{l^{\infty}}}/S_{\frac{1}{l^{\infty}}})\\
@VVV @VVV\\
\os{\circ}{\iota}{}^{*}(K^{\bul}_{l^n}(X_{\frac{1}{l^{\infty}}}/S_{\frac{1}{l^{\infty}}})) @>>> K^{\bul}_{l^n}(Z_{\frac{1}{l^{\infty}}}/S_{\frac{1}{l^{\infty}}}). 
\end{CD}
\end{equation*} 
\end{prop}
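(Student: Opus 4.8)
The plan is to read the asserted morphism as a base change morphism and to verify that it is an isomorphism on cohomology sheaves, reusing the stalk computations already present in the excerpt. First I would recall that by (\ref{prop:lji}) the complex $K^{\bullet}_{l^n}(X_{\frac{1}{l^{\infty}}}/S_{\frac{1}{l^{\infty}}})$ represents $R(\eps_X\pi_{X_{\frac{1}{l^{\infty}}}})_*({\mathbb Z}/l^n)$ in $D^+(\os{\circ}{X}_{\rm et},{\mathbb Z}/l^n)$, and similarly $K^{\bullet}_{l^n}(Z_{\frac{1}{l^{\infty}}}/S_{\frac{1}{l^{\infty}}})$ represents $R(\eps_Z\pi_{Z_{\frac{1}{l^{\infty}}}})_*({\mathbb Z}/l^n)$. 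Since $\os{\circ}{\iota}$ is a closed immersion, the functor $\os{\circ}{\iota}{}^*$ is exact, so the cohomology sheaves of the source are $\os{\circ}{\iota}{}^*R^q(\eps_X\pi_{X_{\frac{1}{l^{\infty}}}})_*({\mathbb Z}/l^n)$. Thus the proposition amounts to the base change isomorphism $\os{\circ}{\iota}{}^*R^q(\eps_X\pi_{X_{\frac{1}{l^{\infty}}}})_*({\mathbb Z}/l^n)\cong R^q(\eps_Z\pi_{Z_{\frac{1}{l^{\infty}}}})_*({\mathbb Z}/l^n)$ together with the identification of this isomorphism with the one induced by $\iota^*(I^{\bullet}_{X,l^n})\to I^{\bullet}_{Z,l^n}$.

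To prove the base change isomorphism I would factor the composite morphism of topoi and treat $\pi$ and $\eps$ separately, via the Leray spectral sequences $R^i\eps_{X*}R^j\pi_{X_{\frac{1}{l^{\infty}}}*}({\mathbb Z}/l^n)\Lo R^{i+j}(\eps_X\pi_{X_{\frac{1}{l^{\infty}}}})_*({\mathbb Z}/l^n)$ and its $Z$-analogue. For the inner functor, each $\os{\circ}{\pi}_{X_{\frac{1}{l^m}}}\col \os{\circ}{X}_{\frac{1}{l^m}}\to \os{\circ}{X}$ is finite (as $\os{\circ}{S}_{\frac{1}{l^m}}$ is a finite thickening of the log point $\os{\circ}{S}$), hence proper, so the log proper base change theorem (\cite[(5.1)]{nale}, \cite[(4.1)]{nale}) gives $\iota_{\frac{1}{l^m}}^*R\pi_{X_{\frac{1}{l^m}}*}\cong R\pi_{Z_{\frac{1}{l^m}}*}\iota_{\frac{1}{l^m}}^*$ at the finite level; passing to the limit over $m$ by \cite[VI (8.5.5)]{sga4-2}, as in the proof of (\ref{prop:lji}), yields the same for $\pi_{X_{\frac{1}{l^{\infty}}}}$. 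This matches the stalk computation in the proof of (\ref{prop:qislws}): $R^j\pi_{X_{\frac{1}{l^{\infty}}}*}({\mathbb Z}/l^n)$ equals ${\mathbb Z}/l^n$ for $j=0,1$ and $0$ otherwise, a description manifestly stable under the immersion pullback.

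For the outer functor $\eps_X$ I would use the formula $R^q\eps_{X*}({\mathbb Z}/l^n)=\bigwedge^q(M_X^{\rm gp}/{\cal O}_X^*)\otimes_{\mathbb Z}{\mathbb Z}/l^n(-q)$ of \cite[(2.4)]{ktnk}. Because $Z=X\times_{\os{\circ}{X}}\os{\circ}{Z}$ carries the pull-back log structure, one has $\os{\circ}{\iota}{}^*(M_X^{\rm gp}/{\cal O}_X^*)=M_Z^{\rm gp}/{\cal O}_Z^*$, and since $\bigwedge^q$ and the Tate twist commute with $\os{\circ}{\iota}{}^*$, this formula pulls back to the corresponding one for $\eps_Z$, giving the base change isomorphism for $\eps$. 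Feeding both isomorphisms into the two Leray spectral sequences, which are bounded and regular by the finiteness of the cohomological dimension (\cite[(7.2) (1)]{adv}, using quasi-compactness of $\os{\circ}{X}$), I conclude that the natural morphism is an isomorphism on every $E_2$-term and hence a quasi-isomorphism. That this quasi-isomorphism is the one induced by $\iota^*(I^{\bullet}_{X,l^n})\to I^{\bullet}_{Z,l^n}$ follows by unwinding that the latter is the composite of the base change transformations $\os{\circ}{\iota}{}^*\eps_{X*}\to \eps_{Z*}\iota^*$, $\iota^*\pi_{X_{\frac{1}{l^{\infty}}}*}\to \pi_{Z_{\frac{1}{l^{\infty}}}*}\iota_{\frac{1}{l^{\infty}}}^*$ and the equality $\iota_{\frac{1}{l^{\infty}}}^*\pi^*_{X_{\frac{1}{l^{\infty}}}}=\pi^*_{Z_{\frac{1}{l^{\infty}}}}\iota^*$ of composed pullbacks.

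Finally, the commutativity of the displayed square with the reduction maps modulo $l^n$ is a formal consequence of functoriality: using the projective systems $\{I^{\bullet}_{X,l^n}\}_n$ and $\{I^{\bullet}_{Z,l^n}\}_n$ fitting into diagrams of the shape (\ref{cd:ikin}), one chooses the morphisms $\iota^*(I^{\bullet}_{X,l^n})\to I^{\bullet}_{Z,l^n}$ compatibly with the transition maps by the dual of the lifting argument of \cite[(1.1.8)]{nh2} already invoked in (\ref{prop:bav}). The main obstacle is less any single computation than organizing the two base changes so that their composite is literally the morphism in the statement; the genuinely substantive input is the log proper base change theorem for the finite Kummer covers $\pi_{X_{\frac{1}{l^m}}}$, which I would handle carefully in the passage to the limit over $m$.
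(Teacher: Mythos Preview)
Your argument is correct, but it is more elaborate than the paper's. The paper does not factor the composite: it applies Nakayama's log proper base change theorem \cite[(5.1)]{nale} in a single step to the proper morphism $\eps_X\pi_{X_{\frac{1}{l^{m}}}}\colon X_{\frac{1}{l^{m}}}\to \os{\circ}{X}$ for each finite $m$ (after checking the chart condition in \emph{loc.~cit.} for the cartesian squares $Z_{\frac{1}{l^{m}}}\subset X_{\frac{1}{l^{m}}}$, $Z\subset X$, $\os{\circ}{Z}\subset \os{\circ}{X}$), and then passes to the limit via (\ref{eqn:eij}) and (\ref{eqn:epxj}). Your decomposition into a $\pi$-step (log proper base change at finite level) and an $\eps$-step (the explicit Kato--Nakayama formula) works and has the merit of making the cohomology sheaves visible; its cost is that when you feed the two pieces into the Leray spectral sequence you must know that $R^j\pi_{X_{\frac{1}{l^{\infty}}}*}({\mab Z}/l^n)$ is globally the constant sheaf ${\mab Z}/l^n$ (for $j=0,1$), not merely on stalks, before invoking the formula for $R^i\eps_{X*}$ of that coefficient. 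This is true, but you should state it rather than rely on the stalk calculation alone. The paper's one-shot application of log proper base change avoids this bookkeeping entirely.
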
 
\begin{proof} 
Consider the following two cartesian diagrams:  
\begin{equation*} 
\begin{CD} 
Z_{\frac{1}{l^{m}}} @>{\subset}>> X_{\frac{1}{l^{m}}} \\ 
@V{\pi_{Z_{\frac{1}{l^{m}}}}}VV @VV{\pi_{X_{\frac{1}{l^{m}}}}}V \\ 
Z @>{\subset}>> X \\ 
@V{\eps_Z}VV @VV{\eps_X}V \\ 
\os{\circ}{Z} @>{\subset}>> \os{\circ}{X}. 
\end{CD} 
\end{equation*} 
In $D^{\rm b}_{\rm ctf}(\os{\circ}{X}_{\rm et},{\mab Z}/l^n)$,  
$(\eps_X\pi_{X_{\frac{1}{l^{m}}}})_*\pi^*_{X_{\frac{1}{l^{m}}}}(I^{\bul}_{X,l^n})$ is equal to  
$R(\eps_X\pi_{X_{\frac{1}{l^{m}}}})_*({\mab Z}/l^n)$.  
Now (\ref{prop:zxbc}) follows from the quasi-isomorphisms (\ref{eqn:eij}), (\ref{eqn:epxj}) and 
the log proper base change theorem of 
Nakayama (\cite[(5.1)]{nale}).  
(It is easy to check the condition on the charts of the log structures of 
$X_{\frac{1}{l^{m}}}$, $X$ and $Z$ in [loc.~cit.] is satisfied.)
\end{proof}

\begin{prop}\label{prop:zxa}  
Let the notations be as in {\rm (\ref{prop:zxbc})}.   
Then 
$\os{\circ}{\iota}{}^{*}((A^{\bul}_{l^{\infty}}(X_{\frac{1}{l^{\infty}}}/S_{\frac{1}{l^{\infty}}}),P))
=(A^{\bul}_{l^{\infty}}(Z_{\frac{1}{l^{\infty}}}/S_{\frac{1}{l^{\infty}}}),P)$ 
in $D^{\rm b}_{\rm ctf}(\os{\circ}{Z}_{\rm et},{\mab Z}_l)$.  
\end{prop}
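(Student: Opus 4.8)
The statement to prove is Proposition (\ref{prop:zxa}): for a closed SNC subscheme $\os{\circ}{Z}\os{\sus}{\lo}\os{\circ}{X}$ with $Z:=X\times_{\os{\circ}{X}}\os{\circ}{Z}$ an SNCL scheme over $S$, we have
$$\os{\circ}{\iota}{}^{*}((A^{\bul}_{l^{\infty}}(X_{\frac{1}{l^{\infty}}}/S_{\frac{1}{l^{\infty}}}),P))
=(A^{\bul}_{l^{\infty}}(Z_{\frac{1}{l^{\infty}}}/S_{\frac{1}{l^{\infty}}}),P)$$
in $D^{\rm b}_{\rm ctf}(\os{\circ}{Z}_{\rm et},{\mab Z}_l)$. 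The plan is to trace the inverse image functor $\os{\circ}{\iota}{}^{*}$ (which is exact, since $\os{\circ}{\iota}$ is a closed immersion, hence $L\os{\circ}{\iota}{}^*=\os{\circ}{\iota}{}^*$) through the entire construction of the weight-filtered complex $(A^{\bul}_{l^n},P)$ in Section \ref{sec:lbc}, reducing everything to the already-established quasi-isomorphism of Proposition (\ref{prop:zxbc}) at the level of the Kummer complexes $K^{\bul}_{l^n}$.

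First I would reduce to finite level $n$ and to graded pieces. Because the bifiltration-free filtration $P$ is biregular (by the analysis following (\ref{eqn:grf}) showing ${\rm gr}^P_k A^{\bul}_{l^n}=0$ for $|k|>>0$), and since $\os{\circ}{\iota}{}^*$ is exact and commutes with the formation of graded pieces and with reduction mod $l^n$, it suffices to prove
$$\os{\circ}{\iota}{}^{*}\,{\rm gr}^P_k A^{\bul}_{l^n}(X_{\frac{1}{l^{\infty}}}/S_{\frac{1}{l^{\infty}}})
={\rm gr}^P_k A^{\bul}_{l^n}(Z_{\frac{1}{l^{\infty}}}/S_{\frac{1}{l^{\infty}}})$$
compatibly in $n$, exactly in the spirit of the independence proof (\ref{prop:ni}) and the reduction in (\ref{theo:wdbst}). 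Then I would invoke the explicit formula (\ref{eqn:grf}):
$${\rm gr}^P_kA^{\bul}_{l^n}(X_{\frac{1}{l^{\infty}}}/S_{\frac{1}{l^{\infty}}})
=\us{j\geq {\rm max}\{-k,0\}}{\bigoplus}
{\mab Z}/l^n(-j-k)\otimes_{\mab Z}a^{(2j+k)}_*(\varpi^{(2j+k)}_{\rm et}(X/S))[-2j-k].$$
The key compatibility to check is that $\os{\circ}{\iota}{}^*$ carries $a^{(2j+k)}_*(\varpi^{(2j+k)}_{\rm et}(X/S))$ to the corresponding sheaf $a^{(2j+k)}_*(\varpi^{(2j+k)}_{\rm et}(Z/S))$ on $\os{\circ}{Z}$. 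This is a statement purely about orientation sheaves and the combinatorial intersection structure: the smooth components of $\os{\circ}{Z}$ are the restrictions of those of $\os{\circ}{X}$ meeting $\os{\circ}{Z}$ (here the hypothesis that $\os{\circ}{Z}$ is a closed SNC subscheme and $Z$ is SNCL is essential, guaranteeing $\os{\circ}{Z}{}^{(m)}=\os{\circ}{X}{}^{(m)}\cap\os{\circ}{Z}$ as schemes over $\os{\circ}{S}$), so the base change of $a^{(m)}$ and the orientation sheaf $\varpi^{(m)}_{\rm et}$ is immediate from their definitions in Section \ref{sec:lbc}.

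Rather than re-deriving (\ref{eqn:grf}), the cleaner route—and the one I would actually carry out—is to go back one step and prove the compatibility at the level of the mapping-fiber complex ${\rm MF}_{l^n}(T-1)$ and its $\tau$-graded pieces, since $A^{\bul}_{l^n}$ is assembled from ${\rm MF}_{l^n}(T-1)$ via (\ref{eqn:tdfef}) and the $\tau$-truncations, and $\os{\circ}{\iota}{}^*$ commutes with the shift, twist, single-complex formation $s$, and the truncation functor $\tau$. The input ${\rm MF}_{l^n}(T-1)=s(K^{\bul}_{l^n}\os{T-1}{\lo}K^{\bul}_{l^n})$ is built functorially from $K^{\bul}_{l^n}$ and the operator $T-1$, and $\os{\circ}{\iota}{}^*$ is compatible with $T-1$ because the ${\mab Z}_l(1)$-action is defined on the base $S_{\frac{1}{l^\infty}}$ and pulls back along $\os{\circ}{\iota}$; the formula (\ref{eqn:tkx}) for ${\cal H}^q({\rm MF}_{l^n}(T-1))$ in terms of $a^{(q-1)}_*(\varpi^{(q-1)}_{\rm et}(X/S))$ then base-changes correctly under $\os{\circ}{\iota}{}^*$. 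Thus the whole argument is driven by Proposition (\ref{prop:zxbc}), which supplies the quasi-isomorphism $\os{\circ}{\iota}{}^*K^{\bul}_{l^n}(X_{\frac{1}{l^{\infty}}}/S_{\frac{1}{l^{\infty}}})\lo K^{\bul}_{l^n}(Z_{\frac{1}{l^{\infty}}}/S_{\frac{1}{l^{\infty}}})$ compatibly in $n$, together with the exactness of $\os{\circ}{\iota}{}^*$.

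The main obstacle I anticipate is not the quasi-isomorphism $K^{\bul}$-level statement (that is given), but verifying that $\os{\circ}{\iota}{}^*$ is compatible with the filtration $P$ as a filtration, not merely with the underlying complexes—i.e., that $\os{\circ}{\iota}{}^*$ sends the subcomplex $P_k A^{\bul}_{l^n}$ of $X$ to $P_k A^{\bul}_{l^n}$ of $Z$ and that this is a filtered quasi-isomorphism. Since $P_k$ is defined in (\ref{eqn:tdjef})–(\ref{eqn:tdef}) via the combination $(\tau_{2j+k+1}+\tau_j)$ of canonical truncations, the delicate point is that $\os{\circ}{\iota}{}^*$, while exact on sheaves, must be checked to commute with the canonical filtration $\tau$ on the relevant complexes up to the controlled quasi-isomorphisms; this is handled by using a $\os{\circ}{\iota}{}^*$-acyclic (e.g. flat or flasque) representative as in the filtered-flat-resolution machinery of Section \ref{sfr} and the gr-compatibility Lemma (\ref{lemm:gris}), so that ${\rm gr}^\tau$ and $\os{\circ}{\iota}{}^*$ commute and the graded-piece comparison of the previous paragraph upgrades to a genuine filtered quasi-isomorphism. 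Once this is in place, assembling over $j$, over $k$ (descending induction, exactly as in (\ref{prop:ni})), and over $n$ (via (\ref{theo:wdbst}) (2) and (\ref{prop:cfd})) gives the desired equality in $D^{\rm b}_{\rm ctf}(\os{\circ}{Z}_{\rm et},{\mab Z}_l)$.
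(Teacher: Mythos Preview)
Your proposal is correct and your ``cleaner route'' paragraph is exactly the paper's proof: the paper simply invokes Proposition~(\ref{prop:zxbc}) together with the commutative square expressing that $T-1$ on $\os{\circ}{\iota}{}^*K^{\bul}_{l^n}(X_{\frac{1}{l^{\infty}}}/S_{\frac{1}{l^{\infty}}})$ is compatible with $T-1$ on $K^{\bul}_{l^n}(Z_{\frac{1}{l^{\infty}}}/S_{\frac{1}{l^{\infty}}})$, and leaves the passage from ${\rm MF}_{l^n}(T-1)$ through the $\tau$-truncations to $(A^{\bul}_{l^n},P)$ implicit. Your extended discussion of the filtration compatibility and the graded-piece reduction spells out what the paper's two-line proof takes for granted, but the underlying argument is the same.
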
 
\begin{proof} 
(\ref{prop:zxa}) 
follows from (\ref{prop:zxbc}) and 
the following commutative diagram:  
\begin{equation*} 
\begin{CD} 
\os{\circ}{\iota}{}^{*}(K^{\bul}_{l^n}(X_{\frac{1}{l^{\infty}}}/S_{\frac{1}{l^{\infty}}})) 
@>{T-1}>> \os{\circ}{\iota}{}^{*}(K^{\bul}_{l^n}(X_{\frac{1}{l^{\infty}}}/S_{\frac{1}{l^{\infty}}})) \\ 
@V{}VV @VV{}V \\ 
K^{\bul}_{l^n}(Z_{\frac{1}{l^{\infty}}}/S_{\frac{1}{l^{\infty}}})@>{T-1}>> 
K^{\bul}_{l^n}(Z_{\frac{1}{l^{\infty}}}/S_{\frac{1}{l^{\infty}}}). 
\end{CD} 
\end{equation*}   
\end{proof}

\begin{lemm}\label{lemm:grc} 
Let $k$ be a nonnegative integer. 
Then the following hold$:$
\par 
$(1)$ There exists an isomorphism 
\begin{equation*} 
{\rm gr}_k^{P^{D_{\frac{1}{l^{\infty}}}}}
A^{\bul}_{l^{\infty}}((X_{\frac{1}{l^{\infty}}},D_{\frac{1}{l^{\infty}}})
/S_{\frac{1}{l^{\infty}}})
\os{\sim}{\lo}c^{(k)}_{*}(A^{\bul}_{l^{\infty}}(D^{(k)}_{\frac{1}{l^{\infty}}}
/S_{\frac{1}{l^{\infty}}})(-k)\otimes_{\mab Z}
\varpi^{(k)}_{\rm et}(\os{\circ}{D}/\os{\circ}{S}))\{-k\} 
\tag{10.3.1}\label{eqn:grpdx}
\end{equation*} 
in $D^{\rm b}_{\rm ctf}(\os{\circ}{X}_{\rm et},{\mab Z}_l)$. 
\par 
$(2)$ There exists an isomorphism 
\begin{align*} 
{\rm gr}_k^{P}A^{\bul}_{l^{\infty}}((X_{\frac{1}{l^{\infty}}},D_{\frac{1}{l^{\infty}}})
/S_{\frac{1}{l^{\infty}}})&
\os{\sim}{\lo} 
\{\bigoplus^k_{k'=-\infty}
\bigoplus_{j\geq \max\{-k',0\}}
a^{(2j+k'),(k-k')}_{*}(({\mab Z}/l^n)_{\os{\circ}{X}{}^{(2j+k')}
\cap \os{\circ}{D}{}^{(k-k')}} \tag{10.3.2}\label{eqn:axd}\\ 
{} & \quad \otimes_{\mab Z}
\vp^{(2j+k'),(k-k')}_{\rm et}
((\os{\circ}{X},\os{\circ}{D})/\os{\circ}{S}))(-j-k)[-2j-k]\}_n
\end{align*}  
in $D^{\rm b}_{\rm ctf}(\os{\circ}{X}_{\rm et},{\mab Z}_l)$. 
\par 
$(3)$ There exists an isomorphism 
\begin{align*} 
& {\rm gr}_{k}^P{\rm gr}_{k'}^{P^{D_{\frac{1}{l^{\infty}}}}}
A^{\bul}_{l^{\infty}}((X_{\frac{1}{l^{\infty}}},D_{\frac{1}{l^{\infty}}})/S_{\frac{1}{l^{\infty}}})
= 
{\rm gr}_{k'}^{P^{D_{\frac{1}{l^{\infty}}}}}{\rm gr}_{k}^P
A^{\bul}_{l^{\infty}}((X_{\frac{1}{l^{\infty}}},D_{\frac{1}{l^{\infty}}})/S_{\frac{1}{l^{\infty}}})
\os{\sim}{\lo} \tag{10.3.3}\label{eqn:ana}\\ 
&\{\bigoplus_{j\geq \max\{-(k-k'),0\}}
a^{(2j+k-k'),(k')}_{*}(({\mab Z}/l^n)_{\os{\circ}{X}{}^{(2j+k-k')}
\cap \os{\circ}{D}{}^{(k')}} {}  \otimes_{\mab Z}
\vp^{(2j+k-k'),(k')}_{\rm et}
((\os{\circ}{X},\os{\circ}{D})/\os{\circ}{S}))\\
& (-j-k)[-2j-k]\}_n
\end{align*}  
in $D^{\rm b}_{\rm ctf}(\os{\circ}{X}_{\rm et},{\mab Z}_l)$. 
\end{lemm}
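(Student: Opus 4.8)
The plan is to prove the three isomorphisms by reducing everything to the graded pieces computed earlier in this section, exploiting the fact that the bifiltered complex was built as a tensor product of the $P$-filtered complex $(A^{\bul}_{l^n}(X_{\frac{1}{l^{\infty}}}/S_{\frac{1}{l^{\infty}}}),P)$ with the $Q$-filtered flat complex $(M^{\bul}_{l^n}(\os{\circ}{D}/\os{\circ}{S}),Q)$. For part (1), I would first recall from (\ref{eqn:bapt}) and (\ref{eqn:bast}) that the filtration $P^{D_{\frac{1}{l^{\infty}}}}$ is precisely the filtration induced by the tensor factor $(M^{\bul}_{l^n}(\os{\circ}{D}/\os{\circ}{S}),Q)$. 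Since $A^{\bul}_{l^n}(X_{\frac{1}{l^{\infty}}}/S_{\frac{1}{l^{\infty}}})$ carries the trivial filtration in the $P^{D_{\frac{1}{l^{\infty}}}}$-direction and $M^{\bul}_{l^n}(\os{\circ}{D}/\os{\circ}{S})$ is flat, the formula (\ref{ali:ppe}) for $\mathrm{gr}$ of a tensor product gives
\begin{equation*}
{\rm gr}_k^{P^{D_{\frac{1}{l^{\infty}}}}}
A^{\bul}_{l^{\infty}}((X_{\frac{1}{l^{\infty}}},D_{\frac{1}{l^{\infty}}})/S_{\frac{1}{l^{\infty}}})
= A^{\bul}_{l^{\infty}}(X_{\frac{1}{l^{\infty}}}/S_{\frac{1}{l^{\infty}}})\otimes_{{\mab Z}_l}
{\rm gr}_k^QM^{\bul}_{l^{\infty}}(\os{\circ}{D}/\os{\circ}{S}).
\end{equation*}
Then I would substitute the explicit computation (\ref{eqn:grkqa}) of ${\rm gr}_k^Q$, recognize $c^{(k)}_*(\varpi^{(k)}_{\rm et}(\os{\circ}{D}/\os{\circ}{S}))$ as an orientation-sheaf factor, and apply (\ref{prop:zxa}) to identify $c^{(k)*}_{\rm et}(A^{\bul}_{l^{\infty}}(X_{\frac{1}{l^{\infty}}}/S_{\frac{1}{l^{\infty}}}))$ with $A^{\bul}_{l^{\infty}}(D^{(k)}_{\frac{1}{l^{\infty}}}/S_{\frac{1}{l^{\infty}}})$, the $-k$ twist and the shift $\{-k\}$ coming from the twist $(-k)$ and the degree shift $[-k]$ in (\ref{eqn:grkqa}).

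For part (2), I would compute ${\rm gr}_k^P$ directly on the tensor product. Here the roles reverse: $P$ is the filtration coming from the first factor, so ${\rm gr}_k^P$ of the tensor product is the convolution $\bigoplus_{k'+k''=k}{\rm gr}_{k'}^PA^{\bul}_{l^n}(X_{\frac{1}{l^{\infty}}}/S_{\frac{1}{l^{\infty}}})\otimes {\rm gr}_{k''}^QM^{\bul}_{l^n}(\os{\circ}{D}/\os{\circ}{S})$ — but since $P$ on the $D$-factor is trivial, only the $P$-grading of the $A$-factor survives as an overall shift, and the $Q$-direction sums freely. Plugging in (\ref{eqn:grf}) for ${\rm gr}_{k'}^PA^{\bul}_{l^n}(X_{\frac{1}{l^{\infty}}}/S_{\frac{1}{l^{\infty}}})$ (which expresses it as $\bigoplus_{j\geq\max\{-k',0\}}$ of twisted pushforwards from $\os{\circ}{X}{}^{(2j+k')}$) and (\ref{eqn:grkqa}) for the $D$-factor, and using the K\"unneth-type identification (\ref{eqn:etcab}) of the orientation sheaves together with the tensor-product compatibility (\ref{eqn:lco}), I would assemble the double direct sum over $k'\leq k$ and $j$ indexed by $\os{\circ}{X}{}^{(2j+k')}\cap\os{\circ}{D}{}^{(k-k')}$. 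The twist $(-j-k)$ and the shift $[-2j-k]$ follow by adding the twist $(-j-k')$ and shift $[-2j-k']$ from (\ref{eqn:grf}) to the twist $(-(k-k'))$ and shift $[-(k-k')]$ from (\ref{eqn:grkqa}).

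Part (3) is then the iterated version and is essentially formal once (1) and (2) are in hand: I would apply ${\rm gr}_{k}^P$ to the expression from part (1), or equivalently ${\rm gr}_{k'}^{P^{D_{\frac{1}{l^{\infty}}}}}$ to part (2), using (\ref{prop:nsq}) and (\ref{prop:exbcb}) to justify that the two gradings commute (the filtrations are biregular by the boundedness established in (\ref{theo:wdbst}), so (\ref{prop:pq}) applies). Concretely, ${\rm gr}_{k'}^{P^{D_{\frac{1}{l^{\infty}}}}}$ selects the single summand with index $k-k'$ in the outer sum of (\ref{eqn:axd}), leaving the inner $\bigoplus_{j\geq\max\{-(k-k'),0\}}$ over $\os{\circ}{X}{}^{(2j+k-k')}\cap\os{\circ}{D}{}^{(k')}$, which is exactly (\ref{eqn:ana}). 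The main obstacle I anticipate is bookkeeping rather than conceptual: I must keep the two gradings, the Tate twists, and the degree shifts consistent across the tensor-product convolution, and I must verify that the flatness of $M^{\bul}_{l^n}(\os{\circ}{D}/\os{\circ}{S})$ (established in (\ref{prop:bav})) genuinely licenses the use of (\ref{lemm:tensit}) (2) and the splitting of ${\rm gr}$ over a tensor product without derived corrections — this is where the hypothesis $(F^{\bul},\{F^{\bul(i)}\}_{i=1}^n)\in{\rm K}^-{\rm F}^n({\cal Q}^n_{\rm fl}({\cal A}))$ in (\ref{theo:prtdrt}) does the real work, guaranteeing that the naive tensor product already computes $\otimes^L$.
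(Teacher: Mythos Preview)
Your proposal is correct and follows essentially the same approach as the paper's proof: for (1) you use flatness of $(M^{\bul}_{l^n}(\os{\circ}{D}/\os{\circ}{S}),Q)$ to pull ${\rm gr}^Q_k$ out of the tensor product, substitute (\ref{eqn:grkqa}), and invoke (\ref{prop:zxa}) for the restriction to $D^{(k)}$; for (2) you run the convolution formula for ${\rm gr}$ of a filtered tensor product (the paper cites \cite[(1.2.4)~(2)]{nh2}, which is the $n=1$ case of your (\ref{lemm:tensit})~(2)), plug in (\ref{eqn:grf}) and (\ref{eqn:grkqa}), and assemble via (\ref{eqn:etcab}); and for (3) you iterate, just as the paper does when it says the proof ``is similar to that of (2)''. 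One small slip: (\ref{ali:ppe}) is the formula for a double graded piece of a single complex, not for ${\rm gr}$ of a tensor product---the relevant input for your first displayed equality is simply that $A^{\bul}_{l^n}(X_{\frac{1}{l^{\infty}}}/S_{\frac{1}{l^{\infty}}})$ carries the trivial filtration in the $P^{D_{\frac{1}{l^{\infty}}}}$-direction together with flatness of $M^{\bul}_{l^n}$, exactly as the paper phrases it.
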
 
\begin{proof} 
(1): Since $(M^q_{l^n}(\os{\circ}{D}/\os{\circ}{S}),Q)$ is 
a filtered flat ${\mab Z}/l^n$-module in $\os{\circ}{X}_{\rm et}$,  
${\rm gr}^Q_k(M^q_{l^n}(\os{\circ}{D}/\os{\circ}{S}))$ is 
a flat ${\mab Z}/l^n$-module in $\os{\circ}{X}_{\rm et}$. 
Hence we have the following equalities:  
\begin{align*} 
{\rm gr}_k^{P^{D_{\frac{1}{l^{\infty}}}}}
A^{\bul}_{l^n}((X_{\frac{1}{l^{\infty}}},D_{\frac{1}{l^{\infty}}})/S_{\frac{1}{l^{\infty}}}) 
{} & = s(A^{\bul}_{l^n}(X_{\frac{1}{l^{\infty}}}/S_{\frac{1}{l^{\infty}}})\otimes_{{\mab Z}/l^n}
{\rm gr}_k^QM^{\bul}_{l^n}(\os{\circ}{D}/\os{\circ}{S})) 
\tag{10.3.4}\label{ali:kpds}\\
{} & = 
A^{\bul}_{l^n}(X_{\frac{1}{l^{\infty}}}/S_{\frac{1}{l^{\infty}}})\otimes_{\mab Z}
c^{(k)}_{*}(\varpi^{(k)}_{\rm et}(\os{\circ}{D}/\os{\circ}{S}))(-k)\{-k\} \\  
{} & = 
c^{(k)}_{*}(c^{(k)*}
(A^{\bul}_{l^n}(X_{\frac{1}{l^{\infty}}}/S_{\frac{1}{l^{\infty}}})(-k))\otimes_{\mab Z}
\varpi^{(k)}_{\rm et}(\os{\circ}{D}/\os{\circ}{S}))\{-k\} \\ 
{} & = 
c^{(k)}_{*}(A^{\bul}_{l^n}(D^{(k)}_{\frac{1}{l^{\infty}}}/S_{\frac{1}{l^{\infty}}})(-k)\otimes_{\mab Z}
\varpi^{(k)}_{\rm et}(\os{\circ}{D}/\os{\circ}{S}))\{-k\}.  
\end{align*} 
Here, to obtain the second equality (resp.~the last equality), 
we have used (\ref{eqn:tktk1}) (resp.~(\ref{prop:zxa})).   
The compatibility with respect to $n$ is obvious. 
We complete the proof of (1). 
\par 
(2): We have the following equalities:  
\begin{align*} 
& {\rm gr}_k^{P}A^{\bul}_{l^n}((X_{\frac{1}{l^{\infty}}},D_{\frac{1}{l^{\infty}}})/S_{\frac{1}{l^{\infty}}}) 
\tag{10.3.5}\label{ali:anxds}\\
& =  \bigoplus^k_{k'=-\infty}
\bigoplus_{j\geq \max\{-k',0\}}
({\rm gr}_{2j+k'+1}^{\tau}A^{\bul j}_{l^n}(X_{\frac{1}{l^{\infty}}}/S_{\frac{1}{l^{\infty}}})[-j],-d)
\otimes_{{\mab Z}/l^n} 
{\rm gr}^Q_{k-k'}M^{\bul}_{l^n}(\os{\circ}{D}/\os{\circ}{S}) \\ 
{} & = \bigoplus^k_{k'=-\infty}
\bigoplus_{j\geq \max\{-k',0\}}
({\mab Z}/l^n)(-j-k')\otimes_{{\mab Z}}
a^{(2j+k')}_{*}
(\varpi^{(2j+k')}_{\rm et}(\os{\circ}{X}/\os{\circ}{S})) \\
{} & \quad [-2j-k'] \otimes_{{\mab Z}/l^n}
({\mab Z}/l^n)(-(k-k'))\otimes_{{\mab Z}}
c^{(k-k')}_{*}
(\varpi^{(k-k')}_{\rm et}(\os{\circ}{D}/\os{\circ}{S})))[-(k-k')] \\
{} & = \bigoplus^k_{k'=-\infty}
\bigoplus_{j\geq \max\{-k',0\}}
({\mab Z}/l^n)(-j-k)\otimes_{{\mab Z}}
a^{(2j+k')}_{*}(\varpi^{(2j+k')}_{\rm et}(\os{\circ}{X}/\os{\circ}{S}))\\
{} & \quad \otimes_{{\mab Z}}
c^{(k-k')}_{*}(\varpi^{(k-k')}_{\rm et}(\os{\circ}{D}/\os{\circ}{S}))[-2j-k]. 
\end{align*} 
Here, to obtain the first equality and the second one,  
we have used \cite[(1.2.4) (2)]{nh2} and the formula (\ref{eqn:grf}), respectively.  
Hence we obtain (\ref{eqn:axd}) by (\ref{eqn:etcab}). 
\par 
(3): The proof of (3) is similar to that of (2). 
\end{proof} 


Because 
$(A^{\bul}_{l^n}((X_{\frac{1}{l^{\infty}}},D_{\frac{1}{l^{\infty}}})
/S_{\frac{1}{l^{\infty}}}),P^{D_{\frac{1}{l^{\infty}}}},P) 
\in {\rm D}^{\rm b}{\rm F}^2_{\rm ctf}(\os{\circ}{X}_{\rm et},{\mab Z}/l^n)$, 
we can consider 
$$R\os{\circ}{f}_*(A^{\bul}_{l^n}((X_{\frac{1}{l^{\infty}}},
D_{\frac{1}{l^{\infty}}})/S_{\frac{1}{l^{\infty}}}),P^{D_{\frac{1}{l^{\infty}}}},P)\in 
{\rm D}^{\rm b}{\rm F}^2(\os{\circ}{S}_{\rm et},{\mab Z}/l^n).$$

\begin{coro}\label{coro:low} 
Let $f_{\os{\circ}{X}{}^{(k)}\cap 
\os{\circ}{D}{}^{(k')}/\os{\circ}{S}} \col 
\os{\circ}{X}{}^{(k)}\cap 
\os{\circ}{D}{}^{(k')} \lo \os{\circ}{S}$ $(k,k'\in {\mab N})$ 
be the structural morphism. 
Assume that $R^{q}f_{\os{\circ}{X}{}^{(k)}\cap 
\os{\circ}{D}{}^{(k')}/\os{\circ}{S}*}({\mab Z}/l^n
\otimes_{\mab Z}\varpi^{(k),(k')}_{\rm et}
((\os{\circ}{X},\os{\circ}{D})/\os{\circ}{S}))$ has finite stalks. 
Let $f_{(X_{\frac{1}{l^{\infty}}},D_{\frac{1}{l^{\infty}}})/S_{\frac{1}{l^{\infty}}}} \col 
(X_{\frac{1}{l^{\infty}}},D_{\frac{1}{l^{\infty}}}) \lo \os{\circ}{S}$ and 
$f_{D^{(k)}_{\frac{1}{l^{\infty}}}/S_{\frac{1}{l^{\infty}}}} \col D^{(k)}_{\frac{1}{l^{\infty}}} \lo \os{\circ}{S}$ 
be the structural morphisms.  
Then the following hold$:$ 
\par 
$(1)$ 
There exists the following spectral sequence$:$  
\begin{equation*} 
E_1^{-k,q+k}=R^{q-k}f_{D^{(k)}_{\frac{1}{l^{\infty}}}/S_{\frac{1}{l^{\infty}}}*}
({\mab Z}_l\otimes_{\mab Z}
\pi^{*}_{D^{(k)}_{\frac{1}{l^{\infty}}}}\eps^{*}_{D^{(k)}}
(\vp^{(k)}_{\rm et}(\os{\circ}{D}/\os{\circ}{S})))(-k) 
\Lo R^qf_{(X_{\frac{1}{l^{\infty}}},D_{\frac{1}{l^{\infty}}})/S_{\frac{1}{l^{\infty}}}*}({\mab Z}_l). 
\tag{10.4.1}\label{eqn:lehkd}  
\end{equation*} 
Here $R^{q-k}f_{D^{(k)}_{\frac{1}{l^{\infty}}}/S_{\frac{1}{l^{\infty}}}*}
({\mab Z}_l\otimes_{\mab Z}
\pi^{*}_{D^{(k)}_{\frac{1}{l^{\infty}}}}\eps^{*}_{D^{(k)}}
(\vp^{(k)}_{\rm et}(\os{\circ}{D}/\os{\circ}{S})))(-k) :=
\vpl_nR^{q-k}f_{D^{(k)}_{\frac{1}{l^{\infty}}}/S_{\frac{1}{l^{\infty}}}*}
({\mab Z}/l^n\otimes_{\mab Z}
\pi^{*}_{D^{(k)}_{\frac{1}{l^{\infty}}}}\eps^{*}_{D^{(k)}}
(\vp^{(k)}_{\rm et}(\os{\circ}{D}/\os{\circ}{S})))(-k)$ 
and 
$R^qf_{(X_{\frac{1}{l^{\infty}}},D_{\frac{1}{l^{\infty}}})/S_{\frac{1}{l^{\infty}}}*}({\mab Z}_l):=
\vpl_n R^qf_{(X_{\frac{1}{l^{\infty}}},D_{\frac{1}{l^{\infty}}})/S_{\frac{1}{l^{\infty}}}*}({\mab Z}/l^n)$. 
\par 
$(2)$ 
There exists the following spectral sequence$:$  
\begin{align*} 
E_1^{-k,q+k} & =
\bigoplus^k_{k'=-\infty}
\bigoplus_{j\geq \max\{-k',0\}} 
R^{q-2j-k}f_{\os{\circ}{X}{}^{(2j+k')}\cap 
\os{\circ}{D}{}^{(k-k')}/\os{\circ}{S}*}({\mab Z}_l
\otimes_{\mab Z} 
\tag{10.4.2}\label{eqn:lledd} \\ 
{} & \varpi^{(2j+k'),(k-k')}_{\rm et}
((\os{\circ}{X},\os{\circ}{D})/\os{\circ}{S}))(-j-k)\\
{} & \Lo R^qf_{(X_{\frac{1}{l^{\infty}}},D_{\frac{1}{l^{\infty}}})/S_{\frac{1}{l^{\infty}}}*}({\mab Z}_l). 
\end{align*} 
\par 
\par 
$(3)$ 
There exists the following spectral sequence for $k'\in {\mab N}:$  
\begin{align*} 
E_1^{-k,q+k} & =
\bigoplus_{j\geq \max\{-k,0\}} 
R^{q-2j-k}f_{\os{\circ}{X}{}^{(2j+k)}\cap 
\os{\circ}{D}{}^{(k')}/\os{\circ}{S}*}({\mab Z}_l
\otimes_{\mab Z} \varpi^{(2j+k),(k')}_{\rm et}
((\os{\circ}{X},\os{\circ}{D})/\os{\circ}{S}))(-j-k)
\tag{10.4.3}\label{eqn:lleddd} \\ 
{} & \Lo R^qf_{D^{(k')}_{\frac{1}{l^{\infty}}}/S_{\frac{1}{l^{\infty}}}*}({\mab Z}_l). 
\end{align*} 
\end{coro}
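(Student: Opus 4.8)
The plan is to realize each of the three spectral sequences as the standard spectral sequence of a biregularly filtered complex under the derived pushforward $R\os{\circ}{f}_*$, and to read off the $E_1$-terms from the graded computations in (\ref{lemm:grc}). First I would recall the general mechanism. Since the bifiltrations $P$ and $P^{D_{\frac{1}{l^{\infty}}}}$ on $A^{\bul}_{l^n}((X_{\frac{1}{l^{\infty}}},D_{\frac{1}{l^{\infty}}})/S_{\frac{1}{l^{\infty}}})$ are biregular (as established in the proof of (\ref{theo:wdbst})) and the underlying complex is bounded, each single filtration presents the complex as a filtered object of ${\rm D}^{\rm b}{\rm F}(\os{\circ}{X}_{\rm et},{\mab Z}/l^n)$. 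Applying $R\os{\circ}{f}_*$, which preserves boundedness and biregularity by the finiteness of the cohomological dimension (\cite[(7.2)(1)]{adv}), and choosing a filtered flasque representative, the usual spectral sequence of a filtered complex yields, for an increasing filtration $F$,
$$E_1^{-k,q+k}=R^q\os{\circ}{f}_*({\rm gr}_k^FA^{\bul}_{l^n}(\cdots))\Lo R^q\os{\circ}{f}_*(A^{\bul}_{l^n}(\cdots)),$$
where the commutation of ${\rm gr}$ with $R\os{\circ}{f}_*$ at the $E_1$-level is (\ref{lemm:grrdld}). The target is identified with $R^qf_{(X_{\frac{1}{l^{\infty}}},D_{\frac{1}{l^{\infty}}})/S_{\frac{1}{l^{\infty}}}*}({\mab Z}/l^n)$ through the isomorphism (\ref{prop:era}) and the Leray spectral sequence.

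For (1) I would take $F=P^{D_{\frac{1}{l^{\infty}}}}$ and substitute (\ref{lemm:grc}) (1). Since $c^{(k)}$ is finite and $\os{\circ}{f}\circ c^{(k)}=f_{D^{(k)}_{\frac{1}{l^{\infty}}}/S_{\frac{1}{l^{\infty}}}}$, the degree shift $\{-k\}$ turns $R^q\os{\circ}{f}_*$ into $R^{q-k}f_{D^{(k)}/S*}$, and the factor $(-k)$ produces the Tate twist; applying (\ref{prop:era}) to the proper SNCL scheme $D^{(k)}$ (no divisor) identifies $A^{\bul}_{l^n}(D^{(k)}/S)$ with $R(\eps_{D^{(k)}}\pi_{D^{(k)}})_*({\mab Z}/l^n)$, giving the $E_1$-term of (\ref{eqn:lehkd}). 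For (2) I would take $F=P$ and use (\ref{lemm:grc}) (2) directly. For (3) I would first pass to the graded piece ${\rm gr}_{k'}^{P^{D_{\frac{1}{l^{\infty}}}}}A^{\bul}_{l^n}$, whose induced $P$-filtration is the weight filtration relevant to $D^{(k')}$, and run the $P$-spectral sequence on it, whose $E_1$-term is the double-graded object computed in (\ref{lemm:grc}) (3); the symmetry ${\rm gr}_k^P{\rm gr}_{k'}^{P^{D_{\frac{1}{l^{\infty}}}}}={\rm gr}_{k'}^{P^{D_{\frac{1}{l^{\infty}}}}}{\rm gr}_k^P$ recorded in (\ref{eqn:ana}) is what makes this legitimate, and the convergence to $R^qf_{D^{(k')}_{\frac{1}{l^{\infty}}}/S_{\frac{1}{l^{\infty}}}*}({\mab Z}_l)$ again uses (\ref{prop:era}) for $D^{(k')}$.

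The main obstacle is the passage from ${\mab Z}/l^n$ to ${\mab Z}_l$ coefficients. By (\ref{theo:wdbst}) the bifiltered complex is an object of ${\rm D}^{\rm b}{\rm F}^2_{\rm ctf}(\os{\circ}{X}_{\rm et},{\mab Z}_l)$, so every term appearing above is in fact a projective system over $n$; I would form the spectral sequences at each finite level and then pass to the limit. The finite-stalks hypothesis on $R^qf_{\os{\circ}{X}{}^{(k)}\cap\os{\circ}{D}{}^{(k')}/\os{\circ}{S}*}({\mab Z}/l^n\otimes_{\mab Z}\varpi^{(k),(k')}_{\rm et}((\os{\circ}{X},\os{\circ}{D})/\os{\circ}{S}))$ is exactly what guarantees the Mittag-Leffler condition for these systems, so that $\vpl_n$ is exact and commutes termwise with the $E_1$-differentials and with the formation of the abutment. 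Boundedness of the spectral sequences — and hence automatic convergence once Mittag-Leffler is secured — follows from the quasi-compactness of $\os{\circ}{X}$, which forces $\os{\circ}{X}{}^{(j)}$ and $\os{\circ}{D}{}^{(k)}$ to be empty for large indices, as used throughout §\ref{sec:lbc}; this leaves only finitely many nonzero terms in each total degree. The remaining verifications (compatibility of the twists and orientation sheaves under $\os{\circ}{f}_*$, and the claimed $\pi^{*}\eps^{*}$-form of the coefficient sheaf in (\ref{eqn:lehkd})) are routine consequences of (\ref{eqn:etcab}) and (\ref{lemm:grc}).
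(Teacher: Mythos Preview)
Your approach is essentially the same as the paper's: run the filtered spectral sequence for $R\os{\circ}{f}_*$ with respect to $P^{D_{\frac{1}{l^{\infty}}}}$ and $P$, identify the $E_1$-terms via (\ref{lemm:grc}), and pass to the limit over $n$ using Mittag--Leffler. One small logical point: the paper proves (2) \emph{first} and then deduces (1), because the $E_1$-terms of (\ref{eqn:lehkd}) are Kummer log \'{e}tale cohomologies $R^{q-k}f_{D^{(k)}_{\frac{1}{l^{\infty}}}/S_{\frac{1}{l^{\infty}}}*}({\mab Z}/l^n\otimes\cdots)$, which are not directly covered by the finite-stalks hypothesis on the classical \'{e}tale cohomologies of $\os{\circ}{X}{}^{(k)}\cap\os{\circ}{D}{}^{(k')}$; their finiteness is obtained only \emph{after} (2) (or (3)) is in hand, via the spectral sequence (\ref{eqn:lleddd}). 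Your sentence ``the finite-stalks hypothesis \ldots\ is exactly what guarantees the Mittag--Leffler condition for these systems'' is therefore not quite right for (1) as stated---you should reorder and derive (2) first, then invoke it to get Mittag--Leffler for (1), exactly as the paper does.
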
 
\begin{proof}(First we prove (2).)
(2): 
We obtain the following identification of cohomologies: 
\begin{align*} 
& R^q\os{\circ}{f}_*({\rm gr}_k^{P}A^{\bul j}_{l^n}((X_{\frac{1}{l^{\infty}}},D_{\frac{1}{l^{\infty}}})
/S_{\frac{1}{l^{\infty}}})) 
\tag{10.4.4}\label{eqn:ncid}\\
& \os{R^q\os{\circ}{f}_*((\ref{eqn:axd}))}{{\os{\sim}{\lo}}} 
R^q\os{\circ}{f}_*(\bigoplus^k_{k'=-\infty}
\bigoplus_{j\geq \max\{-k',0\}}
a^{(2j+k'),(k-k')}_{*}(({\mab Z}/l^n)_{\os{\circ}{X}{}^{(2j+k')}
\cap \os{\circ}{D}{}^{(k-k')}})(-j-k) \\ 
{} & \quad 
\otimes_{\mab Z}
a^{(2j+k')}_{*}(\varpi^{(2j+k')}_{\rm et}(\os{\circ}{X}/\os{\circ}{S}))
\otimes_{\mab Z}
c^{(k-k')}_{*}(\varpi^{(k-k')}_{\rm et}(\os{\circ}{D}/\os{\circ}{S}))[-2j-k])\\ 
{} & =R^{q-2j-k}\os{\circ}{f}_*(\bigoplus^k_{k'=-\infty}
\bigoplus_{j\geq \max\{-k',0\}}
a^{(2j+k'),(k-k')}_*(({\mab Z}/l^n)_{\os{\circ}{X}{}^{(2j+k')}
\cap \os{\circ}{D}{}^{(k-k')}}\\ 
{} & \quad 
\otimes_{\mab Z}
\vp^{(2j+k'),(k-k')}_{\rm et}((\os{\circ}{X},\os{\circ}{D})/\os{\circ}{S}))(-j-k). 
\end{align*} 
Hence the projective system of $E_1$-terms of (\ref{eqn:lledd}) 
with respect to $n$ satisfies the Mittag-Leffler condition by the assumption of the finite stalks. 
Taking the projective limit of (\ref{eqn:ncid}) 
with respect to $n$,  
we obtain the spectral sequence (\ref{eqn:lledd}).    
\par 
(1): By (2),  
$R^{q}f_{D^{(k')}_{\frac{1}{l^{\infty}}}/S_{\frac{1}{l^{\infty}}}*}
({\mab Z}_l\otimes_{\mab Z}
\pi^{*}_{D^{(k')}_{\frac{1}{l^{\infty}}}}\eps^{*}_{D^{(k')}}
(\vp^{(k')}_{\rm et}(\os{\circ}{D}/\os{\circ}{S})))$ has finite stalks. 
We immediately obtain (\ref{eqn:lehkd}) 
by taking the projective limit of 
the spectral sequence 
obtained by (\ref{lemm:grc}) (1). 
\par 
(3): The proof of (3) is the same as that of (2). 
\end{proof}

\begin{defi}
We call the spectral sequences (\ref{eqn:lehkd}) and (\ref{eqn:lledd}) 
the {\it weight spectral sequence} of   
$R^qf_{(X_{\frac{1}{l^{\infty}}},D_{\frac{1}{l^{\infty}}})/S_{\frac{1}{l^{\infty}}}*}({\mab Z}_l)$ 
{\it relative to} $D_{\frac{1}{l^{\infty}}}$ 
and the {\it weight spectral sequence} of   
$R^qf_{(X_{\frac{1}{l^{\infty}}},D_{\frac{1}{l^{\infty}}})/S_{\frac{1}{l^{\infty}}}*}({\mab Z}_l)$, respectively. 
Set 
{\footnotesize{\begin{align*} 
P^{D_{\frac{1}{l^{\infty}}}}_{q+k}
R^qf_{(X_{\frac{1}{l^{\infty}}},D_{\frac{1}{l^{\infty}}})/S_{\frac{1}{l^{\infty}}}*}({\mab Z}_l)
&:={\rm Im}(R^qf_*(P^{D_{\frac{1}{l^{\infty}}}}_kA^{\bul}_{l^{\infty}}((X_{\frac{1}{l^{\infty}}},
D_{\frac{1}{l^{\infty}}})/S_{\frac{1}{l^{\infty}}}))
\lo R^qf_{*}(A^{\bul}_{l^{\infty}}((X_{\frac{1}{l^{\infty}}},D_{\frac{1}{l^{\infty}}})/S_{\frac{1}{l^{\infty}}})))\tag{10.5.1}\label{ali:rqpd}\\
&  \simeq {\rm Im}(R^qf_*(P^{D_{\frac{1}{l^{\infty}}}}_kA^{\bul}_{l^{\infty}}((X_{\frac{1}{l^{\infty}}},D_{\frac{1}{l^{\infty}}})
/S_{\frac{1}{l^{\infty}}}))
\lo R^qf_{(X_{\frac{1}{l^{\infty}}},D_{\frac{1}{l^{\infty}}})/S_{\frac{1}{l^{\infty}}}*}({\mab Z}_l))
\end{align*}}}
and 
{\footnotesize{\begin{align*} 
P_{q+k}R^qf_{(X_{\frac{1}{l^{\infty}}},D_{\frac{1}{l^{\infty}}})/S_{\frac{1}{l^{\infty}}}*}({\mab Z}_l)
&:={\rm Im}(R^q\os{\circ}{f}_*(P_kA^{\bul}_{l^{\infty}}((X_{\frac{1}{l^{\infty}}},D_{\frac{1}{l^{\infty}}})
/S_{\frac{1}{l^{\infty}}}))
\lo R^q\os{\circ}{f}_{*}
A^{\bul}_{l^{\infty}}((X_{\frac{1}{l^{\infty}}},D_{\frac{1}{l^{\infty}}})/S_{\frac{1}{l^{\infty}}}))\tag{10.5.2}\label{ali:rpaxd}\\
&\simeq 
{\rm Im}(R^q\os{\circ}{f}_*(P_kA^{\bul}_{l^{\infty}}((X_{\frac{1}{l^{\infty}}},D_{\frac{1}{l^{\infty}}})/S))
\lo R^qf_{(X_{\frac{1}{l^{\infty}}},D_{\frac{1}{l^{\infty}}})/S\frac{1}{l^{\infty}}*}({\mab Z}_l)).
\end{align*}}}
We call $P^{D_{\frac{1}{l^{\infty}}}}$ and $P$ the {\it weight filtration}  
on $R^qf_{(X_{\frac{1}{l^{\infty}}},D_{\frac{1}{l^{\infty}}})/S_{\frac{1}{l^{\infty}}}*}({\mab Z}_l)$
{\it relative to} $D_{\frac{1}{l^{\infty}}}$ and the {\it weight filtration} on 
$R^qf_{(X_{\frac{1}{l^{\infty}}},D_{\frac{1}{l^{\infty}}})/S_{\frac{1}{l^{\infty}}}*}({\mab Z}_l)$, 
respectively. 
\end{defi} 

\begin{exem}\label{exem:rd}
Assume that the relative dimension of $\os{\circ}{X}$ over $\os{\circ}{S}$ is of pure dimension $1$. 
Then the $E_1$-terms of (\ref{eqn:lledd}) with $d_1^{\bul \bul}$ are as follows: 
\begin{equation*} 
\footnotesize{\begin{CD}
\oplus_{i+j=1, i,j\geq 0}
R^0f_{\os{\circ}{X}{}^{(i)}\cap \os{\circ}{D}{}^{(j)}/\os{\circ}{S}*}({\mab Z}_l\otimes_{\mab Z}
 \varpi^{(i),(j)}_{\rm et}
((\os{\circ}{X},\os{\circ}{D})/\os{\circ}{S}))(-1)
@>>> R^2f_{\os{\circ}{X}{}^{(0)}/\os{\circ}{S}*}({\mab Z}_l) \\
@. R^1f_{\os{\circ}{X}{}^{(0)}/\os{\circ}{S}*}({\mab Z}_l)@. \\
@. R^0f_{\os{\circ}{X}{}^{(0)}/\os{\circ}{S}*}({\mab Z}_l) @>>> 
R^0f_{\os{\circ}{X}{}^{(1)}/\os{\circ}{S}*}({\mab Z}_l). 
\end{CD}}
\end{equation*} 
\end{exem}

\par 
Let $s$ be the log point of a field $\kap$.  
Let $\kap_{\rm sep}$ be a separable closure of $\kap$ and 
set $\ol{s}:=s\otimes_{\kap}{\kap}_{\rm sep}$. 
Let $(Y,E)$ be an SNCL over $s$ with a relative SNCD $E$ on $Y/s$. 
Set $(\ol{Y},\ol{E}):=(Y,E)\otimes_{\kap}{\kap}_{\rm sep}$. 
The Galois group ${\rm Gal}({\kap}_{\rm sep}/\kap)$ acts on 
$(\ol{Y},\ol{E})$ and hence on $H^q((\ol{Y}_{\frac{1}{l^{\infty}}},\ol{E}_{\frac{1}{l^{\infty}}}),{\mab Z}_l)$. 
It is easy to see that 
${\rm Gal}({\kap}_{\rm sep}/\kap)$ acts on 
$A^{\bul}_{l^{\infty}}((\ol{Y}_{\frac{1}{l^{\infty}}},\ol{E}_{\frac{1}{l^{\infty}}})/S))$. 
By (\ref{eqn:lehkd}), (\ref{eqn:lledd}) and (\ref{eqn:lleddd}) we have the following spectral sequences 
\begin{equation*} 
E_1^{-k,q+k}=H^{q-k}(\ol{E}^{(k)}_{\frac{1}{l^{\infty}}},{\mab Z}_l\otimes_{\mab Z}
\pi^{*}_{\ol{E}{}^{(k)}_{\frac{1}{l^{\infty}}}}\eps^{*}_{\ol{E}{}^{(k)}}
(\vp^{(k)}_{\rm et}(\os{\circ}{E}/\os{\circ}{s})))(-k) 
\Lo H^q((\ol{Y}_{\frac{1}{l^{\infty}}},\ol{E}_{\frac{1}{l^{\infty}}}),{\mab Z}_l), 
\tag{10.6.1}\label{eqn:lhksd}  
\end{equation*}  
\begin{align*} 
E_1^{-k,q+k} & =
\bigoplus^k_{k'=-\infty}
\bigoplus_{j\geq \max\{-k',0\}} H^{q-2j-k}((\os{\circ}{\ol{Y}}{}^{(2j+k')}\cap 
\os{\circ}{\ol{E}}{}^{(k-k')}),{\mab Z}_l
\otimes_{\mab Z} 
\tag{10.6.2}\label{eqn:llecdd} \\ 
{} & \varpi^{(2j+k'),(k-k')}_{\rm et}
((\os{\circ}{Y},\os{\circ}{E})/\os{\circ}{s}))(-j-k)\\
{} & \Lo H^q((\ol{Y}_{\frac{1}{l^{\infty}}},\ol{E}_{\frac{1}{l^{\infty}}}),{\mab Z}_l), 
\end{align*} 
\begin{align*} 
E_1^{-k,q+k} & =
\bigoplus_{j\geq \max\{-k,0\}} 
H^{q-2j-k}(\os{\circ}{\ol{Y}}{}^{(2j+k)}\cap \os{\circ}{\ol{E}}{}^{(k')},{\mab Z}_l
\otimes_{\mab Z} \varpi^{(2j+k),(k')}_{\rm et}
((\os{\circ}{Y},\os{\circ}{E})/\os{\circ}{S}))(-j-k)
\tag{10.6.3}\label{eqn:llecddd} \\ 
{} & \Lo H^q(\ol{E}^{(k')}_{\frac{1}{l^{\infty}}},{\mab Z}_l). 
\end{align*} 
These are spectral sequences of ${\rm Gal}({\kap}_{\rm sep}/\kap)$-modules.

\section{Contravariant functoriality}\label{sec:confu}
In this section we give the generalization of the contravariant functoriality of 
(weight) spectral sequences stated in the Introduction. 
In fact, we show the contravariant functoriality of 
$(A^{\bul}_{l^{\infty}}((X_{\frac{1}{l^{\infty}}},D_{\frac{1}{l^{\infty}}})/S_{\frac{1}{l^{\infty}}}),
P^{D_{\frac{1}{l^{\infty}}}},P)$. 
\par 
Let $v\col S'\lo S$ be a morphism of families of log points. 
Assume that $\os{\circ}{S}$ is a log scheme over 
${\rm Spec}({\mab Z}[l^{-1},\zeta_{l^\infty}~\vert~\zeta_{l^\infty}\in \mu_{l^{\infty}}])$. 
Consider $S'$ as a log scheme over 
${\rm Spec}({\mab Z}[l^{-1},\zeta_{l^\infty}~\vert~\zeta_{l^\infty}\in \mu_{l^{\infty}}])$ 
by the composite morphism 
$$S'\lo S\lo {\rm Spec}({\mab Z}[l^{-1},\zeta_{l^\infty}~\vert~\zeta_{l^\infty}\in \mu_{l^{\infty}}]).$$ 
Assume that the log structures of $S'$ and $S$ are constant and that 
the degree function $\deg v \col \os{\circ}{S}\lo {\mab Z}_{\geq 1}$ for $S'\lo S$ 
{\rm (\cite[(1.1.41)]{nb}, \cite[(2.1.15)]{nhir})}  
is not divisible by $l$.

\begin{defi}\label{defi:amfl} 
Let $w \col E \lo F$ 
be a morphism of  $({\mab Z}/l^n)_{\os{\circ}{X}}$-module. 
Let $k$ be an integer. 
The $D$-{\it twist}(:=degree twist) of $w$ by $k$  with respect to $v\col S'\lo S$ 
$$w(-k) \col {\cal E}(-k;v)\lo {\cal F}(-k;v)$$  
is, by definition, the morphism  
$\deg(v)^kw \col {\cal E} \lo {\cal F}$.  
This definition is well-defined for the derived category 
$D^+(({\mab Z}/l^n)_{\os{\circ}{X}})$. 
\end{defi}


\begin{exem}
Let $F_X \col X\lo X$ be the absolute Frobenius endomorphism 
over the absolute Frobenius endomorphism $F_S\col S\lo S$. 
Then the $D$-twist of $F_S$ is the usual Tate twist.  
We denote $(-k;F_S)$ $(k\in {\mab Z})$ simply by $(-k)$ as usual. 
\end{exem}

By virtue of the adjunction formula of ${\rm RHom}^{\bul}$ of bifiltered complexes 
((\ref{theo:adj})), we can obtain the following without difficulty
(though the proof is not difficult, the precise formulation is not easy): 

\begin{theo}[{\bf Contravariant Functoriality}]\label{theo:func} 
Let $v \col S'\lo S$ be as above. 
Assume that the morphism $v$ fits into 
the following commutative diagram
\begin{equation*} 
\begin{CD}
S'@>{v}>> S\\
@VVV @VVV \\
{\rm Spec}({\mab Z}[l^{-1},\zeta_{l^{\infty}}])@>>> 
{\rm Spec}({\mab Z}[l^{-1},\zeta_{l^{\infty}}]), 
\end{CD}
\tag{11.3.1}\label{cd:zml}
\end{equation*} 
where the lower horizontal morphism is induced by an endomorphism $\sig$ 
of the commutative ring ${\mab Z}[l^{-1},\zeta_{l^{\infty}}]$ such 
that $\sig(\zeta_{l^{\infty}})=\zeta_{l^{\infty}}^{\deg v}$ $(\zeta_{l^{\infty}}\in \mu_{l^{\infty}})$. 
For a commutative diagram 
\begin{equation*} 
\begin{CD}
(Y,E) @>{g}>> (X,D) \\ 
@VVV @VVV \\ 
S' @>{v}>> S
\end{CD} 
\tag{11.3.2}\label{cd:ssf}
\end{equation*} 
over 
\begin{equation*} 
\begin{CD}
(\os{\circ}{Y},\os{\circ}{E}) @>{\os{\circ}{g}}>> 
(\os{\circ}{X},\os{\circ}{D}) \\
@VVV @VVV \\ 
\os{\circ}{S}{}' @>{v}>> \os{\circ}{S}, 
\end{CD} 
\end{equation*} 
where $(Y,E)\lo (X,D)$ 
is a morphism of SNCL schemes with relative SNCD's over 
$v \col S' \lo S$,  
there exists a morphism 
\begin{equation*}
g^* \col 
(A^{\bul}_{l^{\infty}}((X_{\frac{1}{l^{\infty}}},D_{\frac{1}{l^{\infty}}})/S_{\frac{1}{l^{\infty}}}),
P^{D_{\frac{1}{l^{\infty}}}},P) 
\lo R\os{\circ}{g}_*((A^{\bul}_{l^{\infty}}((Y_{\frac{1}{l^{\infty}}},E_{\frac{1}{l^{\infty}}})/S'),
P^{E_{\frac{1}{l^{\infty}}}},P))
\tag{11.3.3}\label{eqn:bfe}
\end{equation*}
of bifiltered complexes in 
${\rm D}^+{\rm F}^2(\os{\circ}{X}_{\rm et},{\mab Z}_l)$ 
fitting into the following commutative diagram 
\begin{equation*}
\begin{CD} 
A^{\bul}_{l^{\infty}}((X_{\frac{1}{l^{\infty}}},D_{\frac{1}{l^{\infty}}})/S_{\frac{1}{l^{\infty}}})
@>{g^*}>> 
R\os{\circ}{g}_*(A^{\bul}_{l^{\infty}}((Y_{\frac{1}{l^{\infty}}},E_{\frac{1}{l^{\infty}}})/S'_{\frac{1}{l^{\infty}}}))\\
@A{\simeq}AA @AA{\simeq}A \\
R(\eps_{(X,D)}
\pi_{(X_{\frac{1}{l^{\infty}}},D_{\frac{1}{l^{\infty}}}))})_*({\mab Z}_l)
@>{g^*}>> R\os{\circ}{g}_* R(\eps_{(Y,E)}
\pi_{(Y_{\frac{1}{l^{\infty}}},E_{\frac{1}{l^{\infty}}})})_*({\mab Z}_l)
\end{CD}
\tag{11.3.4}\label{cd:appzln}
\end{equation*}
in $D^{\rm b}(\os{\circ}{X}_{\rm et},{\mab Z}_l)$.
The morphism $g^*$ in {\rm (\ref{eqn:bfe})} satisfies the obvious transitive law for 
the composition of $g$'s. 
\end{theo}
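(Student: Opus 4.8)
By (\ref{prop:era}) and (\ref{theo:wdbst}) the $l$-adic bifiltered El Zein-Steenbrink-Zucker complex is, at each level $n$, canonically identified with $R(\eps_{(X,D)}\pi_{(X_{\frac{1}{l^{\infty}}},D_{\frac{1}{l^{\infty}}})})_*({\mab Z}/l^n)$ carrying the pair of filtrations $(P^{D_{\frac{1}{l^{\infty}}}},P)$, and likewise over $S'$. The plan is first to produce the underlying morphism in (\ref{cd:appzln}) and then to refine it to a bifiltered morphism. For the underlying morphism I would take the adjunction unit ${\mab Z}/l^n\lo Rg_*({\mab Z}/l^n)$ in $(X_{\frac{1}{l^{\infty}}},D_{\frac{1}{l^{\infty}}})_{\rm ket}$ and push it forward by $\eps_{(X,D)}\pi_{(X_{\frac{1}{l^{\infty}}},D_{\frac{1}{l^{\infty}}})}$, which gives exactly the bottom arrow of (\ref{cd:appzln}). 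The essential point is to lift this to ${\rm D}^+{\rm F}^2(\os{\circ}{X}_{\rm et},{\mab Z}_l)$, and here I would invoke the adjunction formula (\ref{theo:adj}) together with (\ref{lemm:hqrhomdf}): a morphism $A_X\lo R\os{\circ}{g}_*A_Y$ in the bifiltered derived category amounts to a morphism $L\os{\circ}{g}{}^*A_X\lo A_Y$, so it suffices to construct the latter on the explicit bifiltered models of \S\ref{sec:lbc}.

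\textbf{Construction of the bifiltered pullback.} I would work with the representatives built from the mapping fibers ${\rm MF}_{l^n}(T-1)$ and the filtered flat resolutions $(M^{\bul}_{l^n}(\os{\circ}{D}/\os{\circ}{S}),Q)$. The key input is the interaction of $g$ with the monodromy: since $v$ fits into (\ref{cd:zml}) with $\sig(\zeta_{l^{\infty}})=\zeta_{l^{\infty}}^{\deg v}$, the fixed generator $T$ of ${\mab Z}_l(1)$ over $S$ pulls back along $g$ to $T^{\deg v}$ over $S'$. Consequently the pullback of $K^{\bul}_{l^n}(X_{\frac{1}{l^{\infty}}}/S_{\frac{1}{l^{\infty}}})$ intertwines $T-1$ on $X$ with $T^{\deg v}-1$ on $Y$, inducing a morphism of mapping fibers compatible with the canonical filtration $\tau$ and with the morphisms $\theta$ of (\ref{cd:dlsgn}); tensoring with the pullback $\os{\circ}{g}{}^*(M^{\bul}_{l^n}(\os{\circ}{D}/\os{\circ}{S}),Q)\lo (M^{\bul}_{l^n}(\os{\circ}{E}/\os{\circ}{S}{}'),Q)$ then yields a morphism respecting both the auxiliary filtration $Q$ (hence $P^{D_{\frac{1}{l^{\infty}}}}$) and $P$. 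Writing $T^{\deg v}-1=(T-1)(T^{\deg v-1}+\cdots+1)$ and using that $T$ acts as the identity on the cohomology sheaves (as established in the proof of (\ref{prop:qislws})), the second factor acts as multiplication by $\deg v$ on each ${\rm gr}^{\tau}$; this is precisely the $l$-adic $D$-twist $\deg(v)^k$ of (\ref{defi:amfl}) that will surface as the twists $(-k;v)$ in the induced maps of spectral sequences.

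\textbf{Checking that $g^*$ is bifiltered and compatible.} Since $P^{D_{\frac{1}{l^{\infty}}}}$ and $P$ are biregular by (\ref{theo:wdbst}), I would verify the morphism on graded quotients using the explicit descriptions (\ref{eqn:grpdx}), (\ref{ali:anxds}), (\ref{eqn:ana}) together with the computations (\ref{eqn:tkx}) and (\ref{eqn:grf}): on graded pieces $g^*$ reduces to the (degree-twisted) pullback morphisms on $R\os{\circ}{f}_*$ of the constant sheaves on the intersections $\os{\circ}{X}{}^{(\bul)}\cap \os{\circ}{D}{}^{(\bul)}$, which manifestly exist and are compatible. By (\ref{prop:pq}), equivalently by the constructibility and finite tor-dimension criterion (\ref{prop:cfd}), a biregular bifiltered quasi-isomorphism class is controlled by its double-graded pieces, so the above determines a well-defined morphism in ${\rm D}^+{\rm F}^2$. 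Commutativity of (\ref{cd:appzln}) then follows by applying the forgetting-filtration functor $\pi$ and the compatibility of $\pi$ with $R\os{\circ}{g}_*$ (diagram (\ref{cd:pidffn})), which identifies $\pi(g^*)$ with the morphism produced in the first step.

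\textbf{Transitivity and the main obstacle.} The transitive law for the composition of two such $g$'s follows from the transitivity of the adjunction isomorphism asserted in (\ref{theo:adj}), combined with the evident functoriality of the unit ${\mab Z}/l^n\lo Rg_*({\mab Z}/l^n)$ and of the pullbacks of the building blocks ${\rm MF}_{l^n}(T-1)$ and $M^{\bul}_{l^n}$. The genuine difficulty is not conceptual but formulational: one must fix coherent representatives over all $n$, track the generator $T$ and its power $T^{\deg v}$ through the mapping-fiber construction so that strictness is preserved and the morphism really lands in the bifiltered (rather than merely separately filtered) derived category, and record the $\deg(v)^k$ factors precisely enough that the subsequent corollary on the spectral sequences inherits the correct $D$-twists $(-k;v)$ and $(-(k-k');g,\Del,\Del')$. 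This bookkeeping, rather than any single hard estimate, is the main obstacle, and it is exactly where the bifiltered machinery of Part I, and above all the adjunction formula (\ref{theo:adj}), does the essential work.
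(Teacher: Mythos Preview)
Your overall strategy---reduce via the adjunction formula (\ref{theo:adj}) and (\ref{lemm:hqrhomdf}) to constructing a bifiltered morphism $L\os{\circ}{g}{}^*A_X\to A_Y$ on the explicit representatives of \S\ref{sec:lbc}, then verify it component by component---is exactly the paper's approach. But your account of where the factor $\deg v$ enters is off, and this is the one place where the construction is genuinely delicate.

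You claim that the pullback intertwines $T-1$ on $X$ with $T^{\deg v}-1$ on $Y$, and then invoke the factorization $T^{\deg v}-1=(T-1)(T^{\deg v-1}+\cdots+1)$. In fact the hypothesis (\ref{cd:zml}) with $\sig(\zeta_{l^{\infty}})=\zeta_{l^{\infty}}^{\deg v}$ is precisely what guarantees that the \emph{same} automorphism $T$ acts compatibly on $S'_{\frac{1}{l^n}}$ and $S_{\frac{1}{l^n}}$ under $v$; the paper checks the commutativity of the square with $T$ on both horizontal arrows, and this yields a direct filtered morphism $\os{\circ}{g}{}^*({\rm MF}_{l^n}(T-1),\tau)\to({\rm MF}'_{l^n}(T-1),\tau)$ with no factorization needed. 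The factor $\deg v$ enters through a different door: the map $\theta$ of (\ref{ali:fln}) sends $x\mapsto(0,x\otimes T)$, and the pullback of $T$ \emph{as a section of} ${\mab Z}_l(1)$ is $(\deg v)T$. Hence $g^*$ intertwines $\theta_X$ with $(\deg v)\theta_Y$, not with $\theta_Y$. The paper's remedy is to redefine $A^{ij}$ with an extra $D$-twist $(j+1;v)$ and to set $g^{*ij}:=(\deg v)^{-(j+1)}g^{*i+j+1}$, so that both the horizontal square (boundary $-d$, controlled by the $T$-equivariance) and the vertical square (boundary $\theta$, corrected by $(\deg v)^{-1}$ at each level) commute simultaneously. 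Your factorization argument does not produce this correction and would leave the $\theta$-square non-commutative.

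A second, smaller point: in your third paragraph you suggest that checking the map on ${\rm gr}^{P}{\rm gr}^{P^D}$ ``determines a well-defined morphism in ${\rm D}^+{\rm F}^2$.'' Biregularity and (\ref{prop:pq}) let you \emph{test} whether a given bifiltered morphism is an isomorphism by looking at double-graded pieces, but they do not \emph{manufacture} a morphism from compatible maps on the pieces. The morphism has to be built at the level of complexes, which is exactly what the paper's explicit $g^{*ij}$ (together with the pullback on $(M^{\bul}_{l^n}(\os{\circ}{D}/\os{\circ}{S}),Q)$) accomplishes before one ever passes to graded pieces.
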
 
\begin{proof}  
(The proof is not straightforward; we have to be careful for the definition of $g^*$, 
which has not been considered in references.)
Because we ignore the contravariant action $g^*$ in the definition of 
$(A^{\bul}_{l^{\infty}}((X_{\frac{1}{l^{\infty}}},D_{\frac{1}{l^{\infty}}})/S_{\frac{1}{l^{\infty}}}),
P^D,P)$ in previous sections and 
because we ignore the Galois action in this proof, we have to give another definition of 
$(A^{\bul}_{l^{\infty}}((X_{\frac{1}{l^{\infty}}},D_{\frac{1}{l^{\infty}}})/S_{\frac{1}{l^{\infty}}}), P^D,P)$. 
Fix a generator $T$ of ${\mab Z}_l(1)$; $T$ acts on $X_{\frac{1}{l^{\infty}}}$ 
and $Y_{\frac{1}{l^{\infty}}}$ naturally. 
For $i\in {\mab Z}$ and $j\in {\mab N}$,  denote 
${\rm MF}_{l^n}(T-1)(j+1)/
\tau_j{\rm MF}_{l^n}(T-1)(j+1)$ by 
${\rm MF}_{l^n}(T-1)(j+1)/\tau_j$ and 
set 
\begin{equation*} 
A^{ij}_{l^n}(X_{\frac{1}{l^{\infty}}}/S_{\frac{1}{l^{\infty}}}):=({\rm MF}_{l^n}(T-1)(j+1)
/\tau_j)^{i+j+1}(j+1;v) \quad 
(i\in {\mab Z},j\in {\mab N})
\tag{11.3.5}\label{eqn:andtmf}
\end{equation*} 
and 
\begin{align*} 
&\{P_kA^{ij}_{l^n}(X_{\frac{1}{l^{\infty}}}/S_{\frac{1}{l^{\infty}}})\}_{n=1}^{\infty}\tag{11.3.6}\label{eqn:tdajef}\\
&:=\{((\tau_{2j+k+1}+\tau_j){\rm MF}_{l^n}(T-1)(j+1)/\tau_j)^{i+j+1}(j+1;v)\}_{n=1}^{\infty}.  
\end{align*} 
(Note that both $(j+1)$ and $(j+1;v)$ are necessary.)
Consider the same boundary morphisms as those in (\ref{cd:dlsgn}). 
Set $(A^{\bul}_{l^{\infty}}(X_{\frac{1}{l^{\infty}}}/S_{\frac{1}{l^{\infty}}}),P):=
s((A^{\bul \bul}_{l^{\infty}}(X_{\frac{1}{l^{\infty}}}/S_{\frac{1}{l^{\infty}}}),P))$. 
Let $I^{\bul}_{X,l^n}$ (resp.~$I^{\bul}_{Y,l^n}$) 
be an injective resolution 
of ${\mab Z}/l^n$ in $X_{\rm ket}$ (resp.~$Y_{\rm ket}$). 
Then we have a natural morphism 
\begin{equation*} 
g^* \col I^{\bul}_{X,l^n} \lo g_*(I^{\bul}_{Y,l^n})
\tag{11.3.7}\label{eqn:gjj}
\end{equation*} 
of complexes. 
Let  ${\rm MF}'_{l^n}(T-1)$ 
be the mapping fiber of the morphism 
$T-1 \col I^{\bul}_{Y_{\frac{1}{l^{\infty}}},n} \lo I^{\bul}_{Y_{\frac{1}{l^{\infty}}},n}$. 
The morphism (\ref{eqn:gjj}) induces a filtered morphism 
\begin{equation*} 
g^{\bul*} \col \os{\circ}{g}{}^{*}(({\rm MF}_{l^n}(T-1),\tau)) \lo 
({\rm MF}'_{l^n}(T-1),\tau).  
\end{equation*} 
Because the pull-back $v^*(T)$ of the action $T$ on $S_{l^{\infty}}$ by  
$v^*$ is equal to $\deg (v)T$ on $S'_{l^{\infty}}$, 
we define the action $g^*$ on ${\mab Z}_l(1)$ as $\deg (v)T$. 
For $i,j\in {\mab N}$,  
set 
\begin{align*} 
g^{*ij}:=(\deg v)^{-(j+1)}g^{*i+j+1} \col &
\os{\circ}{g}{}^{*}({\rm MF}_{l^n}(T-1)(j+1)/\tau_j)^{i+j+1}(j+1;v)\\
&\lo 
({\rm MF}'_{l^n}(T-1)(j+1)/\tau_j)^{i+j+1}(j+1;v). 
\end{align*} 
Then we obtain the following commutative diagram: 
\begin{equation*}
\begin{CD}
(\os{\circ}{g}{}^{*}({\rm MF}_{l^n}(T-1))(j+2)/\tau_j)^{i+j+2}(j+2;v)@>{g^{*i,j+1}}>>   
({\rm MF}'_{l^n}(T-1)(j+2)/\tau_j)^{i+j+2}(j+2;v) \\ 
@A{g^*(\theta)}AA @AA{\theta}A\\
(\os{\circ}{g}{}^{*}({\rm MF}_{l^n}(T-1))(j+1)/\tau_j)^{i+j+1}(j+1;v)
@>{g^{*ij}}>> ({\rm MF}'_{l^n}(T-1)(j+1)/\tau_j)^{i+j+1}(j+1;v). 
\end{CD}
\tag{11.3.8}\label{cd:tdbjef}
\end{equation*} 
It is easy to check that the following diagram 
\begin{equation*}
\begin{CD}
S'_{\frac{1}{l^n}}@>{T}>> S'_{\frac{1}{l^n}}\\ 
@V{v}VV @VV{v}V\\
S_{\frac{1}{l^n}}@>{T}>> S_{\frac{1}{l^n}}
\end{CD}
\end{equation*} 
is commutative by the action (\ref{ali:iin}) and 
the commutative diagram (\ref{cd:zml}). 
Hence the following diagram is commutative: 
\begin{equation*}
\begin{CD}
(\os{\circ}{g}{}^{*}({\rm MF}_{l^n}(T-1))(j+1)/\tau_j)^{i+j+1}(j+1;v)@>{-d}>>   
(\os{\circ}{g}{}^{*}({\rm MF}_{l^n}(T-1))(j+1)/\tau_j)^{i+j+2}(j+1;v) \\ 
@V{g^{*ij}}VV @VVg^{*i+1,j}V\\
({\rm MF}'_{l^n}(T-1)(j+1)/\tau_j)^{i+j+1}(j+1;v)
@>{-d}>> ({\rm MF}'_{l^n}(T-1)(j+1)/\tau_j)^{i+j+2}(j+1;v). 
\end{CD}
\tag{11.3.9}\label{cd:tdcef}
\end{equation*} 
Taking a filtered injective resolution of 
$({\rm MF}'_{l^n}(T-1),\tau)$ (\cite[(1.1.5)]{nh2}) and using 
(\ref{cd:tdbjef}),  (\ref{cd:tdcef}) and 
(\ref{eqn:tdajef}), 
we have a filtered morphism 
\begin{equation*} 
\os{\circ}{g}{}^*((A^{\bul}_{l^n}(X_{\frac{1}{l^{\infty}}}/S_{\frac{1}{l^{\infty}}}),P))  
\lo 
(A^{\bul}_{l^n}(Y_{\frac{1}{l^{\infty}}}/S'_{\frac{1}{l^{\infty}}}),P).  
\tag{11.3.10}\label{eqn:aaxdgye}
\end{equation*} 
\par 
Let $I^{\bul}_{\os{\circ}{D},l^n}$ (resp.~$I'{}^{\bul}_{\os{\circ}{E},n}$)
be an injective resolution of ${\mab Z}/l^n$ 
in $(\os{\circ}{X},\os{\circ}{D})_{\rm ket}$ 
(resp.~$(\os{\circ}{Y},\os{\circ}{E})_{\rm ket}$). 
Then we have a morphism 
\begin{equation*} 
g^{*}(I^{\bul}_{\os{\circ}{D},l^n}) \lo 
(I'{}^{\bul}_{\os{\circ}{E},n}) 
\tag{11.3.11}\label{eqn:gpii}
\end{equation*} 
of complexes. 
By (\ref{prop:bav}) 
(see also \cite[(2.16), (2.18), (2.20)]{nh2}), 
there exist bounded filtered flat resolutions of 
$(M_{l^n}(\os{\circ}{D}/\os{\circ}{S}),Q)$ 
and 
$(M'_{l^n}(\os{\circ}{E}/\os{\circ}{S'}),Q')$ 
of 
$(I^{\bul}_{\os{\circ}{D},l^n},\tau)$ and 
$(I^{\bul}_{\os{\circ}{E},n},\tau)$, respectively, 
fitting into the following commutative diagram: 
\begin{equation*} 
\begin{CD}
g^*(\os{\circ}{\eps}_{\os{\circ}{D}*}(I^{\bul}_{\os{\circ}{D},l^n}),\tau) 
@>>>
(\os{\circ}{\eps}_{\os{\circ}{E}*}(I^{\bul}_{\os{\circ}{E},n}),\tau) \\
@AAA @AAA \\
g^{*}((M_{l^n}(\os{\circ}{D}/\os{\circ}{S}),Q)) @>>> 
(M_{l^n}(\os{\circ}{E}/\os{\circ}{S'}),Q'). 
\end{CD}
\tag{11.3.12}\label{cd:gqds}
\end{equation*} 
The morphism (\ref{eqn:aaxdgye}) and 
the upper horizontal morphism in (\ref{cd:gqds}) 
induce filtered morphisms  
\begin{align*} 
L\os{\circ}{g}{}^{*}((A^{\bul}_{l^n}(X_{\frac{1}{l^{\infty}}}/S_{\frac{1}{l^{\infty}}}),P)
\otimes^L_{{\mab Z}/l^n} 
(\os{\circ}{\eps}_{\os{\circ}{D}*}(I^{\bul}_{\os{\circ}{D},l^n}),\tau)&= 
L\os{\circ}{g}{}^{*}(A^{\bul}_{l^n}(X_{\frac{1}{l^{\infty}}}/S_{\frac{1}{l^{\infty}}}),P)
\otimes^L_{{\mab Z}/l^n} 
L\os{\circ}{g}{}^{*}
(\os{\circ}{\eps}_{\os{\circ}{D}*}(I^{\bul}_{\os{\circ}{D},l^n}),
\tau) \\
&\lo 
(A^{\bul}_{l^n}(Y_{\frac{1}{l^{\infty}}}/S_{\frac{1}{l^{\infty}}}),P)\otimes^L_{{\mab Z}/l^n} 
(\os{\circ}{\eps}_{\os{\circ}{E}*}(I^{\bul}_{\os{\circ}{E},n}),\tau) 
\end{align*} 
and 
\begin{align*} 
L\os{\circ}{g}{}^{*}(A^{\bul}_{l^n}(X_{\frac{1}{l^{\infty}}}/S_{\frac{1}{l^{\infty}}})
\otimes^L_{{\mab Z}/l^n} 
(\os{\circ}{\eps}_{\os{\circ}{D}*}(I^{\bul}_{\os{\circ}{D},l^n}),\tau)) 
&=
L\os{\circ}{g}{}^{*}A^{\bul}_{l^n}(X_{\frac{1}{l^{\infty}}}/S_{\frac{1}{l^{\infty}}})
\otimes^L_{{\mab Z}/l^n} 
L\os{\circ}{g}{}^{*}
(\os{\circ}{\eps}_{\os{\circ}{D}*}(I^{\bul}_{\os{\circ}{D},l^n}),\tau) \\
&\lo A^{\bul}_{l^n}(Y_{\frac{1}{l^{\infty}}}/S_{\frac{1}{l^{\infty}}})
\otimes^L_{{\mab Z}/l^n} 
(\os{\circ}{\eps}_{\os{\circ}{E}*}(I^{\bul}_{\os{\circ}{E},n}),\tau). 
\end{align*} 
In fact, these are underlying morphisms of the following morphism
\begin{equation*} 
L\os{\circ}{g}{}^{*}((A^{\bul}_{l^n}(((X_{\frac{1}{l^{\infty}}},
D_{\frac{1}{l^{\infty}}})/S_{\frac{1}{l^{\infty}}}),P^{D_{\frac{1}{l^{\infty}}}},P))
\lo 
(A^{\bul}_{l^n}(((Y_{\frac{1}{l^{\infty}}},E_{\frac{1}{l^{\infty}}})/S_{\frac{1}{l^{\infty}}}),
P^{E_{\frac{1}{l^{\infty}}}},P). 
\end{equation*} 
By the adjunction formula (\ref{theo:adj}) and (\ref{eqn:h0derhm}),  
we obtain the following morphism 
\begin{equation*} 
(A^{\bul}_{l^n}(((X_{\frac{1}{l^{\infty}}},D_{\frac{1}{l^{\infty}}})/S_{\frac{1}{l^{\infty}}}),
P^{D_{\frac{1}{l^{\infty}}}},P)
\lo 
R\os{\circ}{g}_*((A^{\bul}_{l^n}((Y_{\frac{1}{l^{\infty}}},E_{\frac{1}{l^{\infty}}})
/S_{\frac{1}{l^{\infty}}}),P^{E_{\frac{1}{l^{\infty}}}},P))
\end{equation*} 
of bifiltered complexes. 
\end{proof}

Let us recall the following: 

\begin{prop}[{\rm {\bf \cite[(4.3)]{nh3}}}]\label{prop:mmoo} 
Let $g\col Y \lo Y'$ be a morphism of 
fs log schemes.  Set $Z:=Y$ or $Y'$. 
Assume that $M_{Z,z}/{\cal O}^*_{Z,z}\simeq {\mab N}^r$
for any point $z$ of $\os{\circ}{Z}$ 
and for some $r\in {\mab N}$ depending on $z$. 
Let $b^{(k)}_Z \col \os{\circ}{D}{}^{(k)}(M_Z) \lo \os{\circ}{Z}$ 
$(k\in {\mab Z})$ be the morphism of schemes defined in 
{\rm \cite[p.~34]{nh3}}. 
Assume that, for each point $y\in \os{\circ}{Y}$ 
and for each member $m$ of the minimal generators 
of $M_{Y,y}/{\cal O}^*_{Y,y}$, there exists 
a unique member $m'$ of the minimal generators of 
$M_{Y',\os{\circ}{g}(y)}/{\cal O}^*_{Y',\os{\circ}{g}(y)}$ 
such that $g^*(m')\in m^{{\mab Z}_{>0}}$. 
Then there exists a canonical morphism 
$\os{\circ}{g}{}^{(k)}\col \os{\circ}{D}{}^{(k)}(M_Y) \lo 
\os{\circ}{D}{}^{(k)}(M_{Y'})$ fitting 
into the following commutative diagram of schemes$:$
\begin{equation*} 
\begin{CD} 
\os{\circ}{D}{}^{(k)}(M_Y) @>{\os{\circ}{g}{}^{(k)}}>> 
\os{\circ}{D}{}^{(k)}(M_{Y'}) \\ 
@V{b^{(k)}_Y}VV @VV{b^{(k)}_{Y'}}V \\ 
\os{\circ}{Y} @>{\os{\circ}{g}}>> \os{\circ}{Y}{}'. 
\end{CD} 
\end{equation*} 
\end{prop}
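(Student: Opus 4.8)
The plan is to construct $\os{\circ}{g}{}^{(k)}$ from the functor-of-points description of $\os{\circ}{D}{}^{(k)}(M_Z)$ as the scheme parametrizing $k$-element subsets of the minimal generators of $\ol{M}_Z:=M_Z/{\cal O}^*_Z$. First I would recall from \cite[p.~34]{nh3} that, under the hypothesis $\ol{M}_{Z,z}\simeq {\mab N}^r$, the minimal generators of $\ol{M}_{Z,z}$ form étale-locally on $\os{\circ}{Z}$ a finite set, that these sets glue into an étale sheaf of finite sets on $\os{\circ}{Z}_{\rm et}$, and that $\os{\circ}{D}{}^{(k)}(M_Z)$ represents the associated sheaf of $k$-element subsets, with $b^{(k)}_Z$ the forgetful projection to $\os{\circ}{Z}$. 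Thus a morphism $\os{\circ}{D}{}^{(k)}(M_Y)\lo \os{\circ}{D}{}^{(k)}(M_{Y'})$ lying over $\os{\circ}{g}$ is the same datum as a map of the corresponding sheaves of $k$-subsets compatible with $\os{\circ}{g}$.

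Second, I would convert the hypothesis into a map on generators. For $y\in \os{\circ}{Y}$ the hypothesis furnishes, for each minimal generator $m$ of $\ol{M}_{Y,y}$, a \emph{unique} minimal generator $m'$ of $\ol{M}_{Y',\os{\circ}{g}(y)}$ with $g^*(m')\in m^{{\mab Z}_{>0}}$; setting $\phi_y(m):=m'$ defines a map of finite sets. Since $g^*$ and the divisibility condition $g^*(m')\in m^{{\mab Z}_{>0}}$ are étale-local and since uniqueness pins down $m'$, the $\phi_y$ are compatible with étale localization and specialization, so they assemble into a morphism $\phi$ from the sheaf of generators of $\ol{M}_Y$ on $\os{\circ}{Y}$ to the pull-back along $\os{\circ}{g}$ of the sheaf of generators of $\ol{M}_{Y'}$.

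Third, I would check that $\phi_y$ is injective. If $m_1\neq m_2$ were distinct minimal generators of $\ol{M}_{Y,y}$ with $\phi_y(m_1)=\phi_y(m_2)=m'$, then the single element $g^*(m')\in \ol{M}_{Y,y}\simeq {\mab N}^r$ would lie in $m_1^{{\mab Z}_{>0}}\cap m_2^{{\mab Z}_{>0}}$; but in ${\mab N}^r$ the sets of positive multiples of two distinct minimal generators are disjoint, a contradiction. Hence $\phi_y$ is injective, so it carries a $k$-element subset of generators of $\ol{M}_{Y,y}$ to a $k$-element subset of generators of $\ol{M}_{Y',\os{\circ}{g}(y)}$. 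Applying the representing-scheme functor to the induced map of sheaves of $k$-subsets produces $\os{\circ}{g}{}^{(k)}\col \os{\circ}{D}{}^{(k)}(M_Y)\lo \os{\circ}{D}{}^{(k)}(M_{Y'})$; since the $b^{(k)}$ are the natural forgetful projections and $\os{\circ}{g}{}^{(k)}$ covers $\os{\circ}{g}$ by construction, one gets $b^{(k)}_{Y'}\circ \os{\circ}{g}{}^{(k)}=\os{\circ}{g}\circ b^{(k)}_Y$. Canonicity is immediate because only $g$ and the monoid structures were used.

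The main obstacle I expect is the sheafy globalization rather than the pointwise set-theoretic map: I must verify that $\phi$ is genuinely a morphism of étale sheaves compatible with the gluing (orientation) data defining $\os{\circ}{D}{}^{(k)}$ as a scheme, and that the passage to $k$-element subsets is functorial for such injective maps of generator-sheaves. A preliminary and equally essential point is to confirm that the functor-of-points description I use matches the precise definition of $\os{\circ}{D}{}^{(k)}(M_Z)$ and $b^{(k)}_Z$ in \cite[p.~34]{nh3}; once that dictionary is fixed, the remaining verifications are local and routine.
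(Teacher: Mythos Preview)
The paper does not supply a proof of this proposition: it is quoted verbatim from \cite[(4.3)]{nh3} and used as a black box, so there is no in-paper argument to compare against. Your reconstruction is a reasonable sketch of the expected proof in the cited reference: interpret $\os{\circ}{D}{}^{(k)}(M_Z)$ as the \'etale sheaf (represented by a scheme finite \'etale over strata of $\os{\circ}{Z}$) of unordered $k$-tuples of minimal generators of $\ol{M}_Z$, produce from the hypothesis an injective map of generator-sheaves, and pass to $k$-subsets. Your injectivity argument is correct, and the commutativity of the square is tautological once $\os{\circ}{g}{}^{(k)}$ is built over $\os{\circ}{g}$.

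The one point you should firm up, as you yourself note, is the dictionary with the actual definition in \cite[p.~34]{nh3}: there $\os{\circ}{D}{}^{(k)}(M_Z)$ is constructed by \'etale-locally choosing a chart ${\mab N}^r\to M_Z$ and taking the disjoint union of the closed subschemes cut out by the images of the $k$-element subsets of generators, then showing independence of the chart. Your functor-of-points phrasing is equivalent, but the construction of $\os{\circ}{g}{}^{(k)}$ in practice goes by picking compatible local charts on $Y$ and $Y'$ (which the hypothesis guarantees exist) and writing the map on the corresponding coordinate pieces; the uniqueness clause in the hypothesis is exactly what makes these local definitions glue. If you rewrite your third paragraph in those terms the argument becomes a direct verification rather than an appeal to representability.
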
 

Let 
\begin{equation*} 
\begin{CD}
Y @>{g}>> X \\ 
@VVV @VVV \\ 
S' @>{v}>> S
\end{CD} 
\tag{11.4.1}\label{cd:ssdgf}
\end{equation*} 
be a commutative diagram of SNCL schemes such that $g$ satisfies the assumption in 
(\ref{prop:mmoo}). In this case, $\os{\circ}{D}{}^{(k)}(M_X)=\os{\circ}{X}{}^{(k)}$ and
 $\os{\circ}{D}{}^{(k)}(M_Y)=\os{\circ}{Y}{}^{(k)}$
and the second assumption in (\ref{prop:mmoo}) is equivalent to 
the following: 
For each smooth component $\os{\circ}{Y}_{\lam'}$ of $\os{\circ}{Y}/\os{\circ}{S}{}'$, 
there exists a smooth component $\os{\circ}{X}_{\lam}$ of $\os{\circ}{X}/\os{\circ}{S}$
such that 
$g(\os{\circ}{Y}_{\lam'})\subset \os{\circ}{X}_{\lam}$ (cf.~\cite[(2.3)]{stwsl}). 
Let $\Lam'$ and $\Lam$ be the sets of indices 
of the $\lam'$'s and the $\lam$'s, respectively. 
Then we obtain a function 
$\phi \col \Lam' \owns \lam' \lom \lam \in \Lam$.
There exist positive integers $e({\lam}')$'s  
$(\lam' \in \Lam')$ such that 
there exist local equations $x_{\lam'}=0$ and 
$x_{\phi(\lam')}=0$ of 
$\os{\circ}{Y}_{\lam}$ and $\os{\circ}{X}_{\phi(\lam)}$, 
respectively,  such that $g^*(x_{\phi(\lam')})=x^{e({\lam'})}_{\lam'}$.  
In \cite[(1.5.7)]{nb} we have proved the following (the proof is easy): 

\begin{prop}\label{prop:xxle} 
Let the assumptions and the notations be as above. 
Set $\Lam'(y):=
\{\lam' \in \Lam'~\vert~y \in \os{\circ}{Y}_{\lam'}\}$.  
Then $\deg(v)_y=e({\lam}')$ for $\lam' \in \Lam'(y)$.  
In particular, 
$e({\lam}')$'s are independent of
the choice of an element of $\Lam'(y)$.  
\end{prop}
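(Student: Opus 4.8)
The plan is to read the commutativity of the square (\ref{cd:ssdgf}) on characteristic monoids and compare exponents. First I would fix $y \in \os{\circ}{Y}$, set $x := \os{\circ}{g}(y)$ and $d := \deg(v)_y$, and record the two local structures at play. Writing $\Lam(x)$ (resp.~$\Lam'(y)$) for the set of smooth components of $\os{\circ}{X}$ (resp.~$\os{\circ}{Y}$) through $x$ (resp.~$y$), the defining property of an SNCL scheme over a family of log points (\cite[(6.1)]{ny}) gives $\ol{M}_{X,x} := M_{X,x}/{\cal O}^*_{X,x} \simeq {\mab N}^{\Lam(x)}$ with the structural map ${\mab N} = \ol{M}_S \lo \ol{M}_{X,x}$ sending the generator $t_S$ to $\prod_{\lam \in \Lam(x)}x_\lam$, and likewise $t_{S'} \lom \prod_{\lam' \in \Lam'(y)}x_{\lam'}$ in $\ol{M}_{Y,y} \simeq {\mab N}^{\Lam'(y)}$. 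By the definition of the degree function (\cite[(1.1.41)]{nb}, \cite[(2.1.15)]{nhir}), the constant-log-structure morphism $v$ induces $\ol{M}_S = {\mab N} \lo \ol{M}_{S'} = {\mab N}$ equal to multiplication by $d$, i.e.~$t_S \lom t_{S'}^{d}$.

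Next I would chase $t_S$ around the square in $\ol{M}_{Y,y}$. Along the composite $\ol{M}_S \lo \ol{M}_{X,x} \lo \ol{M}_{Y,y}$ (the second arrow being $g^*$) one gets $t_S \lom \prod_{\lam \in \Lam(x)}g^*(x_\lam)$, while along $\ol{M}_S \lo \ol{M}_{S'} \lo \ol{M}_{Y,y}$ one gets $t_S \lom \prod_{\lam' \in \Lam'(y)}x_{\lam'}^{d}$. The key local computation is then the evaluation of $g^*(x_\lam)$ at $y$. Here I would first observe that $\phi$ restricts to a bijection $\phi \col \Lam'(y) \os{\sim}{\lo} \Lam(x)$: it is injective because the established relation $g^*(x_{\phi(\lam')}) = x_{\lam'}^{e(\lam')}$ forces $g^*$ of the class of $x_\lam$ to be a pure multiple of a single generator of $\ol{M}_{Y,y}$ (so $\phi(\lam'_1) = \phi(\lam'_2)$ would equate multiples of two distinct basis elements $x_{\lam'_1}, x_{\lam'_2}$), and it is surjective because $x \in \os{\circ}{X}_\lam$ and $y \in \os{\circ}{g}{}^{-1}(\os{\circ}{X}_\lam)$ imply $g^*(x_\lam)$ vanishes at $y$, hence involves some $x_{\lam'}$ with $\lam' \in \Lam'(y)$ and $\phi(\lam') = \lam$. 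With $\phi$ bijective, the relation $g^*(x_{\phi(\lam')}) = x_{\lam'}^{e(\lam')}$ (valid locally at $y$ up to a unit, which is killed in $\ol{M}_{Y,y}$) yields $\prod_{\lam \in \Lam(x)}g^*(x_\lam) = \prod_{\lam' \in \Lam'(y)}x_{\lam'}^{e(\lam')}$.

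Equating the two images of $t_S$ in the free monoid $\ol{M}_{Y,y} = \bigoplus_{\lam' \in \Lam'(y)}{\mab N}\,x_{\lam'}$ and comparing exponents componentwise then gives $e(\lam') = d = \deg(v)_y$ for every $\lam' \in \Lam'(y)$; in particular the $e(\lam')$ are all equal, hence independent of the chosen element of $\Lam'(y)$. The only point requiring genuine care --- and the one I expect to be the main (mild) obstacle --- is the local analysis of $g^*(x_\lam)$, namely establishing the bijectivity of $\phi$ on the germs at $y$ and checking that no extraneous generators or unit factors intrude, so that the clean product formula holds in $\ol{M}_{Y,y}$. Everything else is a direct transcription of the SNCL defining relation and the degree definition together with the commutativity of (\ref{cd:ssdgf}), which is why the argument is short.
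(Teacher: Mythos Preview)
Your overall strategy --- reading the square (\ref{cd:ssdgf}) on characteristic monoids and equating exponents in the free monoid $\ol{M}_{Y,y}$ --- is correct and is the natural one; the paper gives no argument beyond a citation to \cite[(1.5.7)]{nb} with the remark that the proof is easy, so there is nothing further to compare against.

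There is, however, a genuine gap in your surjectivity step. You claim that if $g^*(x_\lam)$ involves $x_{\lam'}$ at $y$ then $\phi(\lam') = \lam$, but the hypothesis of (\ref{prop:mmoo}) \emph{at the single point $y$} does not give this: it only says that for each generator $x_{\lam'}$ there is a unique $x_{\phi(\lam')}$ whose pullback is a \emph{pure} power of $x_{\lam'}$, which by itself does not exclude a further $x_\lam$ with mixed pullback $x_{\lam'}^{c}x_{\mu'}^{c'}$ ($c,c'>0$). One can write down such a monoid map at a single point (e.g.\ ${\mab N}^3 \to {\mab N}^2$ sending $e_1 \mapsto a\,e_1$, $e_2 \mapsto b\,e_2$, $e_3 \mapsto c\,e_1 + c'e_2$ with $a+c=b+c'=d$), and it would yield $e(\lam') = a < d$. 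What rules this out is that the hypothesis of (\ref{prop:mmoo}) is imposed at \emph{every} point of $\os{\circ}{Y}$, in particular at a generic point $y'$ of $\os{\circ}{Y}_{\lam'}$. There $\ol{M}_{Y,y'} \simeq {\mab N}$ with unique minimal generator $x_{\lam'}$, so \emph{any} $x_\lam$ whose pullback has nonzero $x_{\lam'}$-component at $y$ automatically has $g^*(x_\lam) \in x_{\lam'}^{{\mab Z}_{>0}}$ in $\ol{M}_{Y,y'}$; the uniqueness clause then forces $\lam = \phi(\lam')$. Hence for each $\lam'$ there is exactly one $\lam$ contributing to the $x_{\lam'}$-coordinate, and your product relation $\sum_\lam a^{\lam}_{\lam'} = d$ gives $e(\lam') = a^{\phi(\lam')}_{\lam'} = d$ directly --- you do not in fact need surjectivity (or bijectivity) of $\phi$ at all.
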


\par
The following new definition is a generalization of the usual Tate twist and 
the $l$-adic analogue of the $D$-twist(degree-twist) defined in \cite[(1.5.10)]{nb} 
and \cite[(5.1.5)]{nhir}:

\par
Let $E$ and $D$ be horizontal SNCD's on $Y/S'$ and $X/S$, respectively.  
Assume that, 
for each smooth component $\os{\circ}{E}_{\mu'}$ of $\os{\circ}{E}/\os{\circ}{S}{}'$, 
there exists a smooth component $\os{\circ}{D}_{\mu}$ of $\os{\circ}{D}/\os{\circ}{S}$
such that 
$\os{\circ}{g}(\os{\circ}{E}_{\mu'})\subset \os{\circ}{D}_{\mu}$. 
Let $M'$ and $M$ be the sets of indices 
of the $\mu'$'s and the $\mu$'s, respectively. 
Then we obtain a function 
$\psi \col M' \owns \mu' \lom \mu \in M$.
There exist positive integers $e({\mu}')$'s  
$(\mu' \in M')$ such that 
there exist local equations $y_{\mu'}=0$ and 
$y'_{\psi(\mu')}=0$ of 
$\os{\circ}{D}_{\mu}$ and $\os{\circ}{E}_{\psi(\mu)}$, 
respectively,  such that $g^*(y'_{\psi(\mu)})=y^{e({\mu})}_{\mu}$.

\begin{defi}
\label{defi:wcel}
(1) We call 
$\{e(\mu)\}_{\mu \in M}\in {\mab Z}^M_{>0}$ the {\it multi-degree} of 
$g$ with respect to a decomposition 
$\Del:=\{D_{\mu}\}_{\mu \in M}$ 
and $\Del':=\{E_{\mu'}\}_{\mu'\in \psi(M)}$ of $D$ and $E$, respectively.   
We  denote it by ${\rm deg}_{\Del,\Del'}(g) \in {\mab Z}^M_{>0}$. 
\par
(2) 
Let $u \col {\cal E}=\bigoplus_{\{\mu_1,\ldots, \mu_k\}}E_{\mu_1,\ldots, \mu_k}\lo 
{\cal F}=\bigoplus_{\{\mu_1,\ldots, \mu_k\}}F_{\mu_1,\ldots, \mu_k}$ 
be a morphism of  
$({\mab Z}/l^n)_{\os{\circ}{D}{}^{(k)}}$-modules, where 
$E_{\mu_1,\ldots, \mu_k}$ and $F_{\mu_1,\ldots, \mu_k}$ are 
$({\mab Z}/l^n)_{\os{\circ}{D}_{\mu_1,\ldots, \mu_k}}$-modules. 
Let $k$ be a nonnegative integer. 
The $k$-{\it twist}\index{twist} 
$$u(-k) \col {\cal E}(-k;g;\Del,\Del') 
\lo {\cal F}(-k;g;\Del,\Del')$$  
of $u$ with respect to $v$, $\Del$ and $\Del'$ 
is, by definition, the direct sum of the morphisms  
$e_{\mu_1}\cdots e_{\mu_k}u 
\col \allowbreak {\cal E}_{\mu_1,\ldots, \mu_k} \allowbreak \lo {\cal F}_{\mu_1,\ldots, \mu_k}$'s.
\end{defi}

As a corollary of (\ref{theo:func}) we obtain the following:  

\begin{coro}\label{coro:fptl}
The following hold$:$ 
\par 
$(1)$ 
There exists the following spectral sequence$:$  
\begin{equation*} 
E_1^{-k,q+k}=R^{q-k}f_{D^{(k)}_{\frac{1}{l^{\infty}}}/S_{\frac{1}{l^{\infty}}}*}
({\mab Z}_l\otimes_{\mab Z}
\pi^{*}_{D^{(k)}_{\frac{1}{l^{\infty}}}}\eps^{*}_{D^{(k)}}
(\vp^{(k)}_{\rm et}(\os{\circ}{D}/\os{\circ}{S})))(-k;g,\Del,\Del') 
\Lo R^qf_{(X_{\frac{1}{l^{\infty}}},D_{\frac{1}{l^{\infty}}})/S_{\frac{1}{l^{\infty}}}*}({\mab Z}_l). 
\tag{11.7.1}\label{eqn:lethkd}  
\end{equation*}  
\par 
$(2)$ 
There exists the following spectral sequence$:$  
\begin{align*} 
E_1^{-k,q+k} & =
\bigoplus^k_{k'=-\infty}
\bigoplus_{j\geq \max\{-k',0\}} 
R^{q-2j-k}f_{\os{\circ}{X}{}^{(2j+k')}\cap 
\os{\circ}{D}{}^{(k-k')}/\os{\circ}{S}*}({\mab Z}_l
\otimes_{\mab Z} 
\tag{11.7.2}\label{eqn:lletdd} \\ 
{} & \varpi^{(2j+k'),(k-k')}_{\rm et}
((\os{\circ}{X},\os{\circ}{D})/\os{\circ}{S}))(-j-k';v)(-(k-k');g,\Del,\Del')\\
{} & \Lo R^qf_{(X_{\frac{1}{l^{\infty}}},D_{\frac{1}{l^{\infty}}})/S_{\frac{1}{l^{\infty}}}*}({\mab Z}_l). 
\end{align*} 
\par 
\par 
$(3)$ 
Then there exists the following spectral sequence for $k\in {\mab N}:$  
\begin{align*} 
E_1^{-k,q+k} & =
\bigoplus_{j\geq \max\{-k,0\}} 
R^{q-2j-k}f_{\os{\circ}{X}{}^{(2j+k)}\cap 
\os{\circ}{D}{}^{(k')}/\os{\circ}{S}*}({\mab Z}_l
\otimes_{\mab Z} \varpi^{(2j+k),(k')}_{\rm et}
((\os{\circ}{X},\os{\circ}{D})/\os{\circ}{S}))(-j-k;v)
\tag{11.7.3}\label{eqn:lledtdd} \\ 
{} & \Lo R^qf_{D^{(k')}_{\frac{1}{l^{\infty}}}/S_{\frac{1}{l^{\infty}}}*}({\mab Z}_l). 
\end{align*} 
\end{coro}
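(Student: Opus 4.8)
The plan is to obtain Corollary \ref{coro:fptl} as a direct consequence of the Contravariant Functoriality Theorem (\ref{theo:func}) together with the construction of the weight spectral sequences in (\ref{coro:low}). The three spectral sequences in (\ref{eqn:lethkd}), (\ref{eqn:lletdd}), (\ref{eqn:lledtdd}) are precisely the spectral sequences (\ref{eqn:lehkd}), (\ref{eqn:lledd}), (\ref{eqn:lleddd}) (equivalently the geometric versions (\ref{eqn:lhksd}), (\ref{eqn:llecdd}), (\ref{eqn:llecddd})), but now carrying the extra $D$-twist decorations $(-k;g,\Del,\Del')$ and $(-j-k';v)$ coming from the fact that $g$ and $v$ need not be the Frobenius. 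So the real content is to track how the morphism $g^*$ of bifiltered complexes in (\ref{eqn:bfe}) acts on the graded pieces computed in (\ref{lemm:grc}), and to identify the induced twists on the $E_1$-terms.

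First I would recall that, by (\ref{theo:func}), there is a morphism
\begin{equation*}
g^* \col (A^{\bul}_{l^{\infty}}((X_{\frac{1}{l^{\infty}}},D_{\frac{1}{l^{\infty}}})/S_{\frac{1}{l^{\infty}}}),P^{D_{\frac{1}{l^{\infty}}}},P)
\lo R\os{\circ}{g}_*((A^{\bul}_{l^{\infty}}((Y_{\frac{1}{l^{\infty}}},E_{\frac{1}{l^{\infty}}})/S'_{\frac{1}{l^{\infty}}}),P^{E_{\frac{1}{l^{\infty}}}},P))
\end{equation*}
of bifiltered complexes, compatible with the weight filtrations $P^{D_{\frac{1}{l^{\infty}}}}$ and $P$. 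Applying $R\os{\circ}{f}_*$ and using the biregularity of the filtrations, this induces a morphism of the two weight spectral sequences of (\ref{coro:low}) attached to $(X,D)/S$ and to $(Y,E)/S'$. The key computational step is to feed the explicit gr-formulas (\ref{eqn:grpdx}), (\ref{eqn:axd}) and (\ref{eqn:ana}) into this morphism and read off, on each direct summand indexed by $(k',j)$, the scalar by which $g^*$ acts. For the $X$-direction twist this scalar is $\deg(v)^{j+k'}$ by the very definition of the $D$-twist (\ref{defi:amfl}) and the shift $(j+1;v)$ built into the redefined $A^{ij}$ in (\ref{eqn:andtmf}); for the $D$-direction the scalar is the product $e(\mu_1)\cdots e(\mu_{k-k'})$ governing the orientation sheaf $\varpi^{(k-k')}_{\rm et}(\os{\circ}{D}/\os{\circ}{S})$, which is exactly the twist $(-(k-k');g,\Del,\Del')$ of (\ref{defi:wcel}) (2). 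Here (\ref{prop:xxle}) and (\ref{prop:mmoo}) guarantee that these integers $e(\mu')$ and $\deg(v)$ are well-defined and match the combinatorics of the smooth components.

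Concretely, I would argue summand by summand: on ${\rm gr}^P_k{\rm gr}^{P^{D_{\frac{1}{l^{\infty}}}}}_{k'}$, identified by (\ref{eqn:ana}) with a sum of $a^{(2j+k-k'),(k')}_*(\cdots)(-j-k)$, the morphism $g^*$ is the pullback on cohomology of $\os{\circ}{X}{}^{(2j+k-k')}\cap \os{\circ}{D}{}^{(k')}$ twisted by the two scalars above; since the Tate twists $(-j-k)$ are replaced by the honest degree twists when $v\not= F_S$, the $E_1$-terms acquire precisely the decorations $(-j-k';v)$ and $(-(k-k');g,\Del,\Del')$ displayed in (\ref{eqn:lletdd}). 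The same bookkeeping, restricted to the $P^{D_{\frac{1}{l^{\infty}}}}$-filtration alone, yields (\ref{eqn:lethkd}) from (\ref{eqn:grpdx}) and to the $P$-filtration on $D^{(k')}$ yields (\ref{eqn:lledtdd}) via (\ref{prop:zxa}); the transitivity of $g^*$ asserted at the end of (\ref{theo:func}) gives functoriality of these spectral sequences in $g$.

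I expect the main obstacle to be the careful verification that the two twists multiply correctly and independently, i.e.\ that the scalar attached to a summand indexed by a pair $(\ul{\lam},\ul{\mu})$ of intersecting $X$-components and $D$-components genuinely factors as $\deg(v)^{\bul}$ times $\prod_{\mu'\in\ul{\mu}} e(\mu')$, with no cross-terms. This requires matching the combinatorics of the orientation sheaves $\varpi^{(2j+k'),(k-k')}_{\rm et}((\os{\circ}{X},\os{\circ}{D})/\os{\circ}{S}))$ under the canonical decomposition (\ref{eqn:etcab}) against the local description of $g$ provided by (\ref{prop:mmoo}) and the local equations in (\ref{prop:xxle}), and checking that the redefinition of $A^{ij}$ in (\ref{eqn:andtmf}) with both $(j+1)$ and $(j+1;v)$ present — which the proof of (\ref{theo:func}) emphasizes is necessary — produces exactly the advertised power of $\deg(v)$ after passing to $R\os{\circ}{f}_*$ and taking the projective limit over $n$. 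Once this scalar computation is pinned down, the three spectral sequences and their contravariant functoriality follow formally.
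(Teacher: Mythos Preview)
Your proposal is correct and follows essentially the same approach as the paper: the paper's proof is the one-line observation that the corollary follows from the action of $g^*$ on $\bigwedge^k(M(D)^{\rm gp}/{\cal O}_X^*)$ and $\bigwedge^q(M_X^{\rm gp}/{\cal O}_X^*)$ through the isomorphisms (\ref{eqn:tktk1}) and (\ref{eqn:tkx}), and your argument is a fleshed-out version of exactly this, routing through (\ref{theo:func}) and the gr-formulas of (\ref{lemm:grc}) to read off the scalars $\deg(v)^{j+k'}$ and $\prod e(\mu')$ that become the twists $(-j-k';v)$ and $(-(k-k');g,\Del,\Del')$. The ``main obstacle'' you flag about the independent multiplication of the two twists is not addressed separately in the paper either; it is absorbed into the tensor decomposition (\ref{eqn:etcab}) and the definition (\ref{eqn:andtmf}), so your proof is in fact more explicit than the paper's on this point.
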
 
\begin{proof} 
This corollary follows from the consideration of the action $g^*$ on 
$\bigwedge^k(M(D)^{\rm gp}/{\cal O}_X^*)$ and 
$\bigwedge^q(M_X^{\rm gp}/{\cal O}_X^*)$ in 
the isomorphisms (\ref{eqn:tktk1}) and (\ref{eqn:tkx}), respectively. 
\end{proof}

\section{Fundamental properties of the $l$-adic weight filtrations and 
the $l$-adic weight spectral sequences}\label{sec:fpl} 
In this section we prove several fundamental properties of 
the $l$-adic weight filtrations and 
the $l$-adic weight spectral sequences defined in the previous sections. 
\par 
Let the notations be as in the previous section. 
The following is the $l$-adic analogue of the bifiltered base change theorem in 
\cite[(8.2)]{nppf}:  

\begin{theo}[{\bf Base change theorem of $(A,P^D,P)$}]\label{theo:pwbcsbc} 
Let $u \col S' \lo S$ be a solid morphism of log schemes. 
Set $(X'_{\frac{1}{l^{m}}},D'_{\frac{1}{l^{m}}})
:=(X_{\frac{1}{l^{m}}},D_{\frac{1}{l^{m}}})\times_{S}S'$ $(m\in {\mab N}\cup \{\infty\})$. 
Let $f'\col (X',D')\lo S'$ 
be the structural morphism. 
Assume that $\os{\circ}{X}{}'$ is quasi-compact. 
Assume also that $\os{\circ}{u}{}^*\col \os{\circ}{X}_{\rm et}\lo \os{\circ}{X}{}'_{\rm et}$ is exact.  
Then 
\begin{align*} 
L\os{\circ}{u}{}^*R\os{\circ}{f}_*((A^{\bul}_{l^n}((X_{\frac{1}{l^{\infty}}},
D_{\frac{1}{l^{\infty}}})/S_{\frac{1}{l^{\infty}}}),P^{D_{\frac{1}{l^{\infty}}}},P))= 
R\os{\circ}{f}{}'_*((A^{\bul}_{l^n}((X'_{\frac{1}{l^{\infty}}},
D'_{\frac{1}{l^{\infty}}})/S'_{\frac{1}{l^{\infty}}}),P^{D'_{\frac{1}{l^{\infty}}}},P)). 
\tag{12.1.1}\label{ali:bbbc}
\end{align*}   
\end{theo}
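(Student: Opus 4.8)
The plan is to reduce the base change statement to a statement about the graded pieces of the bifiltration, exactly as in the proof of the well-definedness theorem (\ref{theo:wdbst}), and then to invoke the ordinary (unfiltered) base change theorem for Kummer log \'{e}tale cohomology on each graded piece. Since both bifiltrations $P^{D_{\frac{1}{l^{\infty}}}}$ and $P$ on $A^{\bul}_{l^n}((X_{\frac{1}{l^{\infty}}},D_{\frac{1}{l^{\infty}}})/S_{\frac{1}{l^{\infty}}})$ are biregular, by (\ref{prop:cfd}) and the argument in the proof of (\ref{prop:pq}) it suffices to establish the isomorphism (\ref{ali:bbbc}) after applying ${\rm gr}_{k'}^{P}{\rm gr}_{k}^{P^{D_{\frac{1}{l^{\infty}}}}}$ for all $k,k'\in {\mab Z}$. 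The commutation of $L\os{\circ}{u}{}^*$ and $R\os{\circ}{f}_*$ with the gr-functor is furnished by (\ref{lemm:grrdld}) (the diagrams (\ref{cd:grfcty}) and (\ref{eqn:grfld})), so the problem becomes one about the unfiltered complexes ${\rm gr}_{k}^P{\rm gr}_{k'}^{P^{D_{\frac{1}{l^{\infty}}}}}A^{\bul}_{l^n}$.

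First I would invoke the explicit description (\ref{eqn:ana}) of these graded pieces: each is a finite direct sum of complexes of the shape
\begin{equation*}
a^{(2j+k-k'),(k')}_{*}(({\mab Z}/l^n)_{\os{\circ}{X}{}^{(2j+k-k')}\cap \os{\circ}{D}{}^{(k')}}\otimes_{\mab Z}\vp^{(2j+k-k'),(k')}_{\rm et}((\os{\circ}{X},\os{\circ}{D})/\os{\circ}{S}))(-j-k)[-2j-k].
\end{equation*}
The base change problem for $R\os{\circ}{f}_*$ of such a piece reduces, via the natural morphisms $a^{(\bullet),(\bullet)}$ and the proper (or finiteness-of-cohomological-dimension) hypotheses, to the base change theorem for the structural morphisms $f_{\os{\circ}{X}{}^{(k)}\cap \os{\circ}{D}{}^{(k')}/\os{\circ}{S}}$ of the smooth strata, composed with Nakayama's log proper base change theorem (\cite[(5.1)]{nale}) used already in (\ref{prop:zxbc}) and (\ref{prop:era}). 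The orientation sheaves $\vp^{(\bullet),(\bullet)}_{\rm et}$ and the Tate twists are locally constant and pull back compatibly under $\os{\circ}{u}$, so they present no obstruction; the hypothesis that $\os{\circ}{u}{}^*$ is exact guarantees that $L\os{\circ}{u}{}^*$ is simply the pull-back $\os{\circ}{u}{}^*$ and commutes with the finite direct sums and the derived tensor products appearing in (\ref{eqn:atbnxds}).

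The second, more technical, point is to check that the base change isomorphism is compatible with the \emph{boundary morphisms} $\theta$ of the double complex (\ref{cd:dlsgn}) and with the filtered structure, i.e.\ that the stratum-by-stratum base change isomorphisms assemble into an isomorphism of filtered complexes and not merely of the graded quotients. Here I would argue as in (\ref{theo:wdbst}) (2): since each ${\rm gr}^Q_k M^{\bul}_{l^n}(\os{\circ}{D}/\os{\circ}{S})$ is flat and the filtrations are biregular, the identification of graded pieces, being functorial in the structural morphisms, lifts to the filtered level because a filtered morphism of biregularly filtered complexes inducing quasi-isomorphisms on all graded pieces is a filtered quasi-isomorphism (this is precisely (\ref{prop:pq}), (3) $\Rightarrow$ (1)). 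The flatness needed to commute $L\os{\circ}{u}{}^*$ with the tensor product $A^{\bul}_{l^n}(X_{\frac{1}{l^{\infty}}}/S_{\frac{1}{l^{\infty}}})\otimes_{{\mab Z}/l^n}(M^{\bul}_{l^n}(\os{\circ}{D}/\os{\circ}{S}),Q)$ is available because $(M^{\bul}_{l^n}(\os{\circ}{D}/\os{\circ}{S}),Q)$ is a filtered \emph{flat} resolution by (\ref{prop:bav}).

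The hard part will be the compatibility of the base change morphism with the monodromy operator $T-1$ (and hence with $\theta$) across the base change $u\col S'\lo S$: one must verify that the action of ${\mab Z}_l(1)$ on $X_{\frac{1}{l^{\infty}}}$ pulls back correctly to the action on $X'_{\frac{1}{l^{\infty}}}=X_{\frac{1}{l^{\infty}}}\times_S S'$, so that the mapping-fiber description ${\rm MF}_{l^n}(T-1)$ of (\ref{ali:mftkdf}) is preserved and the formula (\ref{eqn:tkx}) for its cohomology base-changes. This is where the hypothesis that $u$ is \emph{solid} (preserving the log structures and the chart ${\mab N}$) enters decisively, exactly as the commutative diagram (\ref{cd:zml}) was needed in the proof of (\ref{theo:func}); once this equivariance is in place, the remaining verifications are the routine sheaf-theoretic bookkeeping already carried out in Sections~\ref{sec:lbc} and~\ref{sec:confu}, and the theorem follows by taking the projective limit over $n$ as in (\ref{coro:cnz}).
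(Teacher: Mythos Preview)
Your reduction to graded pieces via biregularity and (\ref{eqn:ana}) is exactly what the paper does, and the use of (\ref{lemm:grrdld}) to commute ${\rm gr}$ with $R\os{\circ}{f}_*$ and $L\os{\circ}{u}{}^*$ is correct. However, two points diverge from the paper's proof and deserve attention.

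First, the construction of the comparison morphism. The paper does not build it by hand or worry about compatibility with $\theta$ and $T-1$ separately; instead it invokes the adjunction formula (\ref{theo:adj}) together with the already--established contravariant functoriality (\ref{theo:func}) to produce a canonical morphism from the left side of (\ref{ali:bbbc}) to the right side \emph{at the bifiltered level}. Your third paragraph, about $T-1$--equivariance and solidity of $u$, is precisely the content that was packaged once and for all into (\ref{theo:func}); redoing it here is unnecessary. Once the bifiltered morphism exists, (\ref{prop:pq}) reduces the isomorphism check to the bigraded pieces, so your second paragraph about ``assembling'' graded isomorphisms back up is also superfluous: the morphism is already there, you are only testing that it is a filtered quasi-isomorphism.

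Second, and more substantively, the base change theorem you invoke for the strata is the wrong one. The schemes $\os{\circ}{X}{}^{(2j+k-k')}\cap \os{\circ}{D}{}^{(k')}$ are ordinary \emph{smooth} schemes over $\os{\circ}{S}$ with no log structure, and the sheaves involved are (twists of) constant sheaves. The paper therefore appeals to the classical smooth base change theorem \cite[XVI (1.2)]{sga4-3}, not to Nakayama's log proper base change \cite[(5.1)]{nale}. The hypothesis that $\os{\circ}{u}{}^*$ is exact is used exactly as you say, to identify ${\cal H}^q(L\os{\circ}{u}{}^*E^{\bul})$ with $\os{\circ}{u}{}^*{\cal H}^q(E^{\bul})$, so that the cohomology-sheaf comparison coming from smooth base change upgrades to the derived statement. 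With these two corrections your outline matches the paper's short proof.
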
 
\begin{proof} 
Because $R\os{\circ}{f}_*((A^{\bul}_{l^n}((X_{\frac{1}{l^{\infty}}},
D_{\frac{1}{l^{\infty}}})/S_{\frac{1}{l^{\infty}}}),P^{D_{\frac{1}{l^{\infty}}}},P))$ is bounded above, 
the left hand side of (\ref{ali:bbbc}) is well-defined. 
It is easy to see that there exists a canonical morphism from 
the left hand side of (\ref{ali:bbbc}) to the right hand side of (\ref{ali:bbbc})
by using the adjunction formula (\ref{theo:adj}) (and (\ref{eqn:h0derhm}))
and the contravariant functoriality 
(\ref{theo:func}).  
Because the filtrations $P^{D_{\frac{1}{l^{\infty}}}}$ and $P$ are biregular, 
it suffices to prove that 
the canonical morphism 
\begin{align*} 
L\os{\circ}{u}{}^*R\os{\circ}{f}_*({\rm gr}_{k'}^P{\rm gr}_{k}^{P^{D_{\frac{1}{l^{\infty}}}}}
A^{\bul}_{l^n}((X_{\frac{1}{l^{\infty}}},
D_{\frac{1}{l^{\infty}}})/S_{\frac{1}{l^{\infty}}})){\lo} 
R\os{\circ}{f}{}'_*(({\rm gr}_{k'}^P{\rm gr}_{k}^{P^{D'_{\frac{1}{l^{\infty}}}}}
A^{\bul}_{l^n}((X'_{\frac{1}{l^{\infty}}},
D'_{\frac{1}{l^{\infty}}})/S_{\frac{1}{l^{\infty}}})))
\tag{12.1.2}\label{ali:bgbgbc}
\end{align*}   
is an isomorphism. 
This follows from (\ref{eqn:ana}) and 
the smooth base change theorem 
(\cite[XVI (1.2)]{sga4-3}) because  
$\os{\circ}{X}{}^{(2j+k')}\cap 
\os{\circ}{D}{}^{(k-k')}$ is smooth over $\os{\circ}{S}$ 
because ${\cal H}^q(L\os{\circ}{u}{}^*E^{\bul})
=\os{\circ}{u}{}^*{\cal H}^q(E^{\bul})$ for a complex of 
$({\mab Z}/l^n)_{\os{\circ}{S}}$-modules since $\os{\circ}{u}{}^*$ is exact. 
\end{proof}


The following is a generalization of \cite[(1.5)]{nd}:

\begin{prop}\label{prop:pwsbc} 
Let the notations be as in {\rm (\ref{theo:pwbcsbc})}.
Assume that $\os{\circ}{X}{}'$ is quasi-compact. 
$($We do not assume also that $\os{\circ}{u}{}^*$ is exact.$)$   
Let $f_{(X'_{\frac{1}{l^{m}}},D'_{\frac{1}{l^{m}}})/S'_{\frac{1}{l^{\infty}}}} 
\col (X'_{\frac{1}{l^{m}}},D_{\frac{1}{l^{m}}}')\lo \os{\circ}{S}{}'$ 
be the structural morphism. 
Then the spectral sequences {\rm (\ref{eqn:lethkd})} 
and {\rm (\ref{eqn:lletdd})} for 
$R^qf_{(X'_{\frac{1}{l^{\infty}}},D'_{\frac{1}{l^{\infty}}})/S'_{\frac{1}{l^{\infty}}}}({\mab Z}_l)$ 
are canonically isomorphic to the inverse images of  
{\rm (\ref{eqn:lethkd})} and {\rm (\ref{eqn:lletdd})} by $\os{\circ}{u}$, 
respectively. 
\end{prop}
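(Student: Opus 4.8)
The statement to prove (Proposition \ref{prop:pwsbc}) asserts that the weight spectral sequences (\ref{eqn:lethkd}) and (\ref{eqn:lletdd}) associated to the base-changed family $(X'_{\frac{1}{l^\infty}}, D'_{\frac{1}{l^\infty}})/S'_{\frac{1}{l^\infty}}$ are canonically isomorphic to the pullbacks under $\os{\circ}{u}$ of the corresponding spectral sequences for the original family.

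Let me think about what's really being claimed and what machinery is available.

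The key inputs are: (1) the base change theorem \ref{theo:pwbcsbc}, which states that $L\os{\circ}{u}^* R\os{\circ}{f}_*(A, P^D, P) = R\os{\circ}{f}'_*(A', P^{D'}, P)$ as bifiltered complexes — but note this was proven under the hypothesis that $\os{\circ}{u}^*$ is *exact*; (2) the explicit descriptions of the graded pieces in Lemma \ref{lemm:grc}, which express $\mathrm{gr}^P \mathrm{gr}^{P^D}$ (and the individual graded pieces) in terms of the smooth schemes $\os{\circ}{X}^{(2j+k')}\cap\os{\circ}{D}^{(k-k')}$; (3) the smooth base change theorem; and (4) the construction of the spectral sequences in Corollary \ref{coro:low}, which arise by taking $R\os{\circ}{f}_*$ of the weight filtration and passing to the projective limit in $n$.

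Here the hypothesis is weaker: $\os{\circ}{u}^*$ is *not* assumed exact. So I cannot directly invoke Theorem \ref{theo:pwbcsbc}. Let me think about where exactness was used. The crux of the argument in \ref{theo:pwbcsbc} was the reduction, via biregularity of $P^D$ and $P$, to the graded pieces, followed by the identity ${\cal H}^q(L\os{\circ}{u}^* E^{\bul}) = \os{\circ}{u}^* {\cal H}^q(E^{\bul})$ for complexes of $({\mab Z}/l^n)_{\os{\circ}{S}}$-modules, which requires exactness of $\os{\circ}{u}^*$.

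So let me think about what the proposition is actually about. A spectral sequence is more than its abutment: it is the collection of $E_1$-terms together with the differentials, which arise as the cohomology sheaves (on $\os{\circ}{S}$, after applying $R\os{\circ}{f}_*$) of the graded pieces of the filtration, assembled from the exact couple of the filtered complex $R\os{\circ}{f}_*(A, P)$ on $\os{\circ}{S}_{\rm et}$. The point is that these $E_1$-terms are, by Lemma \ref{lemm:grc} and the formation of the spectral sequences in \ref{coro:low}, the higher direct images $R^{q-2j-k}f_{\os{\circ}{X}^{(2j+k')}\cap\os{\circ}{D}^{(k-k')}/\os{\circ}{S}*}({\mab Z}_l \otimes \varpi)$. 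Since $\os{\circ}{X}^{(2j+k')}\cap\os{\circ}{D}^{(k-k')}$ is *smooth* over $\os{\circ}{S}$, the ordinary smooth base change theorem (\cite[XVI (1.2)]{sga4-3}) identifies the pullback $\os{\circ}{u}^*$ of each such $E_1$-term with the corresponding $E_1$-term of the base-changed family — and here only *ordinary* (underived) pullback on these smooth constructible sheaves is needed, which is why exactness of $\os{\circ}{u}^*$ is not required.

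**The plan.** First I would compare the two filtered complexes on $\os{\circ}{S}_{\rm et}$ that give rise to the spectral sequences, namely $R\os{\circ}{f}_*(A^{\bul}_{l^\infty}, P)$ pulled back by $\os{\circ}{u}$, versus $R\os{\circ}{f}'_*(A'^{\bul}_{l^\infty}, P)$. Since a morphism of spectral sequences is induced by a filtered morphism of filtered complexes, I would first produce the canonical comparison morphism: the contravariant functoriality (\ref{theo:func}) applied to the Cartesian square defining $(X', D')$ over $u\col S'\to S$ gives a bifiltered morphism, hence (after $R\os{\circ}{f}_*$ and adjunction, as in the proof of \ref{theo:pwbcsbc}) a morphism $\os{\circ}{u}^* R\os{\circ}{f}_*(A, P^D, P) \to R\os{\circ}{f}'_*(A', P^{D'}, P)$, which induces a morphism of the associated spectral sequences. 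Second, to check this is an isomorphism, I would argue at the level of $E_1$-terms: by the construction in \ref{coro:low} the $E_1$-terms of both spectral sequences are finite direct sums of higher direct images of the constructible sheaves ${\mab Z}_l\otimes\varpi$ on the smooth schemes $\os{\circ}{X}^{(2j+k')}\cap\os{\circ}{D}^{(k-k')}$ (respectively their base changes). Third, I would invoke smooth base change to conclude that $\os{\circ}{u}^*$ of each original $E_1$-term is canonically identified with the corresponding $E_1$-term of the primed family, compatibly with the comparison morphism, and compatibly with the $d_1$-differentials (whose explicit form is given in the Appendix \S\ref{sec:lbddmlif}) and with the projective system in $n$ (via the Mittag-Leffler finiteness already invoked in \ref{coro:low}). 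An isomorphism on $E_1$ forces an isomorphism of spectral sequences.

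**The main obstacle.** The genuinely delicate point is the passage to the projective limit over $n$ and the compatibility of $\os{\circ}{u}^*$ with this limit: smooth base change holds termwise for each ${\mab Z}/l^n$, but one must verify that the identifications are compatible with the transition maps so that they glue to an isomorphism of the ${\mab Z}_l$-coefficient $E_1$-terms, and that the comparison morphism of spectral sequences realizes exactly these smooth-base-change isomorphisms. This requires tracking the $D$-twists and Tate twists (the $(-j-k)$, $(-j-k';v)$ factors) through $\os{\circ}{u}^*$, ensuring they transform correctly — but since $\os{\circ}{u}$ covers $u\col S'\to S$ with the hypotheses on degree and roots of unity already imposed in \S\ref{sec:confu}, the twist bookkeeping matches the one already verified in \ref{theo:func} and \ref{coro:fptl}. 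I would flag that the subtlety is purely in the limit/constructibility bookkeeping, not in any new geometric input, since the geometric heart is the smoothness of the intersections $\os{\circ}{X}^{(2j+k')}\cap\os{\circ}{D}^{(k-k')}$ over $\os{\circ}{S}$ together with ordinary smooth base change — exactly the mechanism that lets us drop the exactness hypothesis on $\os{\circ}{u}^*$ that was needed for the stronger bifiltered statement \ref{theo:pwbcsbc}.
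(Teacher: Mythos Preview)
Your proposal is correct and follows essentially the same route as the paper: produce the comparison morphism via the contravariant functoriality/adjunction mechanism already used in (\ref{theo:pwbcsbc}), then check it is an isomorphism on $E_1$-terms by invoking smooth base change for the smooth schemes $\os{\circ}{X}{}^{(2j+k')}\cap\os{\circ}{D}{}^{(k-k')}$ over $\os{\circ}{S}$. One minor point worth noting: you treat the two spectral sequences uniformly, but the $E_1$-terms of (\ref{eqn:lethkd}) are the Kummer log \'{e}tale sheaves $R^{q-k}f_{D^{(k)}_{\frac{1}{l^{\infty}}}/S_{\frac{1}{l^{\infty}}}*}({\mab Z}_l\otimes\varpi)$, not directly higher direct images along a smooth morphism of ordinary schemes, so smooth base change does not apply to them in one stroke; the paper handles this by first establishing the claim for (\ref{eqn:lletdd}) and then observing that this already yields base change for the $E_1$-terms of (\ref{eqn:lethkd}). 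Your argument implicitly contains this step (since those log cohomologies are themselves built from the smooth pieces via (\ref{eqn:lleddd})), but it is worth making the two-step structure explicit.
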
 
\begin{proof}
As in (\ref{theo:pwbcsbc}) we have morphisms 
from the inverse images of  
(\ref{eqn:lethkd}) and (\ref{eqn:lletdd}) by $\os{\circ}{u}$ 
to the spectral sequences  (\ref{eqn:lethkd})
and (\ref{eqn:lletdd}) for 
$R^qf_{(X'_{\frac{1}{l^{\infty}}},D'_{\frac{1}{l^{\infty}}})/S'_{\frac{1}{l^{\infty}}}}({\mab Z}_l)$, respectively.  
As for (\ref{eqn:lletdd}), the smooth base change theorem 
(\cite[XVI (1.2)]{sga4-3}) does the job because  
$\os{\circ}{X}{}^{(2j+k')}\cap 
\os{\circ}{D}{}^{(k-k')}$ is smooth over $\os{\circ}{S}$. 
As for (\ref{eqn:lehkd}), we have just proved that 
$g^*R^{q-k}f_{D^{(k)}_{\frac{1}{l^{\infty}}}/S*_{\frac{1}{l^{\infty}}}}
({\mab Z}_l) 
= 
R^{q-k}f_{D'{}^{(k)}_{\frac{1}{l^{\infty}}}/S_{\frac{1}{l^{\infty}}}'*}
({\mab Z}_l)$. 
Now the claim for (\ref{eqn:lethkd}) follows. 
\end{proof}

The following is a generalization of the main result of \cite{nd}:

\begin{prop}\label{prop:stc} 
Assume that $\os{\circ}{X}$ is proper over $\os{\circ}{S}$. 
Then the spectral sequence {\rm (\ref{eqn:lledd})} 
degenerates at $E_2$ modulo torsion. 
\end{prop}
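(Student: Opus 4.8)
The plan is to follow the specialization argument of Nakayama \cite{nd}, reducing the relative statement to a weight computation over a finite field. The differentials $d_r$ $(r\geq 2)$ of (\ref{eqn:lledd}) are morphisms of ${\mab Z}_l$-sheaves on $\os{\circ}{S}_{\rm et}$, and whether they vanish modulo torsion can be tested on the stalks at the geometric points of $\os{\circ}{S}$. First I would invoke the base change comparison (\ref{prop:pwsbc}) — whose proof rests only on the smooth base change theorem applied to the $E_1$-terms, which are ordinary \'{e}tale cohomology sheaves of the schemes $\os{\circ}{X}{}^{(2j+k')}\cap \os{\circ}{D}{}^{(k-k')}$ that are smooth and proper over $\os{\circ}{S}$ (here I use the properness hypothesis on $\os{\circ}{X}$). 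It follows that the stalk at a geometric point $\ol{s}\lo \os{\circ}{S}$ of (\ref{eqn:lledd}) is canonically the fiberwise spectral sequence (\ref{eqn:llecdd}) attached to the SNCL scheme with relative SNCD obtained as the fiber over the log point $\ol{s}$. Thus I am reduced to proving the degeneration modulo torsion of (\ref{eqn:llecdd}) over the log point of a separably closed field $\kap$.

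Next I would spread out. After replacing $\kap$ by a finitely generated subfield and using standard limit arguments, the pair of fibers together with all the strata $\os{\circ}{X}{}^{(2j+k')}\cap \os{\circ}{D}{}^{(k-k')}$, the orientation sheaves $\vp^{(2j+k'),(k-k')}_{\rm et}$, and hence the entire $E_1$-page and the differentials of (\ref{eqn:llecdd}), descend to a scheme of finite type over a finitely generated ${\mab Z}[l^{-1}]$-subalgebra $A$ (a domain) over which the strata remain smooth and proper. By the smooth and proper base change theorems the $E_1$-terms become lisse ${\mab Z}_l$-sheaves on ${\rm Spec}(A)$ and each $d_r$ is a morphism of lisse sheaves. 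A morphism of lisse sheaves on a connected scheme is a single $\pi_1$-equivariant map of the underlying representations, so it vanishes modulo torsion as soon as its stalk at one geometric point does. It therefore suffices to verify the vanishing of $d_r$ modulo torsion at a closed point of ${\rm Spec}(A)$, whose residue field is a finite field ${\mab F}_q$ with $l\nmid q$.

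Finally comes the weight computation, which carries the real content. Over ${\mab F}_q$ the differentials are morphisms of the arithmetic (Frobenius-equivariant) sheaves, since the construction of $(A^{\bul}_{l^{\infty}},P)$ and of the spectral sequence is functorial and defined before the base change to $\ol{\mab F}_q$. Each stratum is smooth and proper over ${\mab F}_q$, so by Deligne's purity theorem its cohomology $H^{q-2j-k}$ is pure of weight $q-2j-k$; the Tate twist $(-j-k)$ in the $E_1$-term of (\ref{eqn:llecdd}) raises the weight by $2(j+k)$, and the orientation sheaf is of weight $0$, so $E_1^{-k,q+k}$ is pure of weight $q+k$, independently of $k'$ and $j$. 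Consequently the source $E_r^{-k,q+k}$ of $d_r$ is pure of weight $q+k$ while its target $E_r^{-k+r,q+k-r+1}$, a subquotient of a term pure of weight $q+k-r+1$, is pure of weight $q+k-r+1<q+k$ for $r\geq 2$. A Frobenius-equivariant morphism between pure ${\mab Q}_l$-modules of distinct weights is zero, so $d_r=0$ modulo torsion at this point; by the lisse-sheaf principle and the stalkwise reduction above, $d_r=0$ modulo torsion for (\ref{eqn:lledd}).

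The main obstacle I anticipate is the spreading-out step: one must descend not merely $\os{\circ}{X}$ but the full simple normal crossing log structure together with the relative SNCD, check that all the intersection strata stay smooth and proper and that the orientation sheaves and the filtration $P$ (hence the differentials) descend compatibly, so that the resulting $E_1$-page over ${\rm Spec}(A)$ genuinely consists of lisse sheaves whose specializations recover both the finite-field page and the original page. By contrast, the weight bookkeeping with the Tate twists — verifying that the twist $(-j-k)$ exactly compensates the cohomological degree $q-2j-k$ to produce the uniform weight $q+k$ — is routine once the strata are known to be smooth and proper.
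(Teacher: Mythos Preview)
Your proposal is correct and follows essentially the same specialization argument as the paper: reduce via (\ref{prop:pwsbc}) to a log point, spread out over a finitely generated ${\mab Z}$-algebra, specialize to a closed point with finite residue field, and conclude by Deligne's purity that the Frobenius-equivariant differentials $d_r$ $(r\geq 2)$ vanish modulo torsion. One minor difference: the paper obtains the Frobenius equivariance by applying its contravariant functoriality theorem (\ref{theo:func}) to the $q$-th power Frobenius $F\colon \ol{t}\to \ol{t}$ (checking the compatibility (\ref{cd:zmql}) and passing through the $D$-twisted spectral sequence (\ref{eqn:lletdd})), whereas you argue by descent that the whole construction lives over ${\mab F}_q$; both routes are valid, and your weight bookkeeping $E_1^{-k,q+k}$ pure of weight $q+k$ is in fact the correct value (the paper writes ``weight $q$'', but either value yields distinct weights on source and target of $d_r$ for $r\geq 2$).
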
 
\begin{proof} 
We use a standard specialization argument in \cite[(2.3)]{nd}. 
Though we can give the analogous proof to the proof in \cite{nd}, 
we give a quite or slightly (?) different proof from it because our strategy of 
the proof below is valid in the $p$-adic case in \cite{nppf}. 
We do not use the spectral sequence (\ref{eqn:llecddd}) of 
${\rm Gal}({\kap}_{\rm sep}/\kap)$-modules. 
\par  
It suffices to prove that 
(\ref{eqn:lletdd}) degenerates at $E_2$. 
By (\ref{prop:pwsbc}) we may assume that  
$S$ is a log point 
$s:=({\rm Spec}(\kap),{\mab N}\oplus \kap^*)$. 
As in \cite[(2.3)]{nd} we may assume that 
there exist a finitely generated subring $A$ over ${\mab Z}$ 
of $\kap$ and a proper SNCL scheme ${\cal X}$ with a 
relative SNCD ${\cal D}$ 
on ${\cal X}/S':=({\rm Spec}(A),{\mab N}\oplus A^*\lo A)$ 
such that there exists the following cartesian diagram 
\begin{equation*} 
\begin{CD} 
(X,D) @>>> ({\cal X},{\cal D}) \\
@VVV @VVV \\ 
s @>>> S'. 
\end{CD} 
\tag{12.3.1}\label{cd:xdst}
\end{equation*} 
By (\ref{prop:pwsbc})  
we have only to prove that 
the spectral sequence (\ref{eqn:lletdd}) 
for $({\cal X},{\cal D})/S'$ degenerates at $E_2$ modulo torsion.  
Take a closed point $\os{\circ}{t}={\rm Spec}({\mab F}_q)$ of $\os{\circ}{S'}$. 
Let $t$ be the log point whose underlying scheme is 
$\os{\circ}{t}$ and whose log structure is 
the pull-back of the log structure of $S'$. 
Because $E^{-k,q+k}_1$ is smooth, 
we may assume that $S'=t$. 
Set $\ol{t}:=t\otimes_{{\mab F}_q}\ol{\mab F}_q$. 
Let $F \col \ol{t}\lo \ol{t}$ be the $q$-th power Frobenius endomorphism. 
Then the morphism $F$ fits into 
the following commutative diagram
\begin{equation*} 
\begin{CD}
\ol{t}@>{F}>> \ol{t}\\
@VVV @VVV \\
{\rm Spec}({\mab Z}[l^{-1},\zeta_{l^{\infty}}])@>>> {\rm Spec}({\mab Z}[l^{-1},\zeta_{l^{\infty}}]), 
\end{CD}
\tag{12.3.2}\label{cd:zmql}
\end{equation*} 
where the lower horizontal morphism is induced by an endomorphism $\sig$ 
of the commutative ring ${\mab Z}[l^{-1},\mu_{l^{\infty}}]$ such 
that $\sig(\zeta_{l^{\infty}})=\zeta_{l^{\infty}}^q$ $(\zeta_{l^{\infty}}\in \mu_{l^{\infty}})$. 
Hence we obtain the spectral sequence (\ref{eqn:lletdd}) for 
$({\cal X}_{\ol{t}},{\cal D}_{\ol{t}})$, which is equal to the following 
in this proof: 
\begin{align*} 
E_1^{-k,q+k} & =
\bigoplus^k_{k'=-\infty}
\bigoplus_{j\geq \max\{-k',0\}} 
H^{q-2j-k}(\os{\circ}{\cal X}{}^{(2j+k')}_{\os{\circ}{\ol{t}}}\cap 
\os{\circ}{\cal D}{}^{(k-k')}_{\os{\circ}{\ol{t}}},{\mab Z}_l
\otimes_{\mab Z} 
\tag{12.3.3}\label{eqn:lletdqd} \\ 
{} & \varpi^{(2j+k'),(k-k')}_{\rm et}
((\os{\circ}{\cal X}_{\os{\circ}{\ol{t}}},\os{\circ}{\cal D}_{\os{\circ}{\ol{t}}})/\os{\circ}{\ol{t}}))(-j-k) 
\Lo H^q(({\cal X}_{\ol{t},\frac{1}{l^{\infty}}},{\cal D}_{\ol{t},\frac{1}{l^{\infty}}}),{\mab Z}_l). 
\end{align*} 
The weight filtration on 
$H^q(({\cal X}_{\ol{t},\frac{1}{l^{\infty}}},{\cal D}_{\ol{t},\frac{1}{l^{\infty}}}),{\mab Q}_l)$ 
induced by the spectral sequence (\ref{eqn:lletdqd})  
is the same as the weight filtration defined by the pull-back of 
the  Frobenius endomorphism of 
$({\cal X}_{\ol{t},\frac{1}{l^{\infty}}},{\cal D}_{\ol{t},\frac{1}{l^{\infty}}})$ by  
the purity of weight of the Frobenius endomorphism 
for the $l$-adic cohomology 
of a proper smooth scheme (\cite[(1.6)]{dw1}).  
Because the edge morphism $d^{-k,q+k}_r$ 
commutes with the Frobenius endomorphism, 
we see that 
the spectral sequence (\ref{eqn:lletdqd}) degenerates at $E_2$ modulo torsion because 
$E_1^{-k,q+k}\otimes_{{\mab Z}_l}{\mab Q}_l$ is of pure weight $q$. 
This implies that (\ref{eqn:lledd})
degenerates at $E_2$ modulo torsion. 
\end{proof} 

Set 
{\footnotesize{\begin{align*} 
P^{D_{\frac{1}{l^{\infty}}}}_{q+k}R^qf_{(X_{\frac{1}{l^{\infty}}},D_{\frac{1}{l^{\infty}}})/S_{\frac{1}{l^{\infty}}}*}({\mab Z}/l^n)
&:={\rm Im}(R^q\os{\circ}{f}{}_*(P^{\os{\circ}{D}}_k
A^{\bul}_{l^n}((X_{\frac{1}{l^{\infty}}},D_{\frac{1}{l^{\infty}}})/S_{\frac{1}{l^{\infty}}}))
\lo R^q\os{\circ}{f}_{*}(A^{\bul}_{l^n}((X_{\frac{1}{l^{\infty}}},D_{\frac{1}{l^{\infty}}})/S_{\frac{1}{l^{\infty}}})))\\
&  \simeq {\rm Im}(R^q\os{\circ}{f}{}_*(P^{\os{\circ}{D}}_kA^{\bul}_{l^n}((X_{\frac{1}{l^{\infty}}},
D_{\frac{1}{l^{\infty}}})/S_{\frac{1}{l^{\infty}}}))
\lo R^qf_{(X_{\frac{1}{l^{\infty}}},D_{\frac{1}{l^{\infty}}})/S_{\frac{1}{l^{\infty}}}*}({\mab Z}/l^n))
\end{align*}}} 
and 
{\footnotesize{\begin{align*} 
P_{q+k}R^qf_{(X_{\frac{1}{l^{\infty}}},D_{\frac{1}{l^{\infty}}})/S_{\frac{1}{l^{\infty}}}*}({\mab Z}/l^n)
&:={\rm Im}(R^q\os{\circ}{f}_*(P_kA^{\bul}_{l^n}((X_{\frac{1}{l^{\infty}}},D_{\frac{1}{l^{\infty}}})/S))
\lo R^qf_{*}
A^{\bul}_{l^n}((X_{\frac{1}{l^{\infty}}},D_{\frac{1}{l^{\infty}}})/S_{\frac{1}{l^{\infty}}}))\\
&\simeq 
{\rm Im}(R^q\os{\circ}{f}_*(P_kA^{\bul}_{l^n}((X_{\frac{1}{l^{\infty}}},D_{\frac{1}{l^{\infty}}})/S_{\frac{1}{l^{\infty}}}))
\lo R^qf_{(X_{\frac{1}{l^{\infty}}},D_{\frac{1}{l^{\infty}}})/S_{\frac{1}{l^{\infty}}}*}({\mab Z}/l^n)).
\end{align*}}}

\begin{prop}\label{prop:sms}
Assume that $\os{\circ}{X}$ is proper over $\os{\circ}{S}$.  
Then the following hold$:$ 
\par 
$(1)$ 
The sheaf 
$P^{D_{\frac{1}{l^{\infty}}}}_kR^qf_{(X_{\frac{1}{l^{\infty}}},D_{\frac{1}{l^{\infty}}})
/S_{\frac{1}{l^{\infty}}}*}({\mab Z}/l^n)$ 
$(k\in {\mab Z}, n\in {\mab Z}_{\geq 1})$ is a smooth sheaf in $\os{\circ}{S}_{\rm et}$. 
\par 
$(2)$  The sheaf 
$P_kR^qf_{(X_{\frac{1}{l^{\infty}}},D_{\frac{1}{l^{\infty}}})/S_{\frac{1}{l^{\infty}}}*}({\mab Z}/l^n)$ 
$(k\in {\mab Z}, n\in {\mab Z}_{\geq 1})$ is a smooth sheaf in $\os{\circ}{S}_{\rm et}$. 
\end{prop}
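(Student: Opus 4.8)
The plan is to prove that both filtration sheaves are lisse by identifying their geometric stalks with weight-filtration steps of the cohomology of the geometric fibres, and then showing that these stalks vary as a local system by bootstrapping off the lisseness of the $E_1$-terms. First I would record that $P^{D_{\frac{1}{l^{\infty}}}}_kR^qf_*({\mab Z}/l^n)$ and $P_kR^qf_*({\mab Z}/l^n)$ are at least constructible: each is the image of a morphism between higher direct images, under the proper morphism $\os{\circ}{f}$, of the bounded constructible finite-tor-dimension complexes $P_kA^{\bul}$ and $A^{\bul}$. By the base change theorem (\ref{theo:pwbcsbc}) together with (\ref{prop:pwsbc}), the formation of $R^q\os{\circ}{f}_*$ of the whole bifiltered complex, and hence of the filtration steps, commutes with any base change $\os{\circ}{u}{}^*$ that is exact; in particular it commutes with passage to geometric points and with the specialization maps. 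Applying (\ref{theo:pwbcsbc}) to the inclusion of a geometric point $\bar s$ identifies the stalk of $P_kR^qf_*({\mab Z}/l^n)$ at $\bar s$ with the step $P_kH^q$ of the weight filtration on the cohomology of the geometric fibre over $\bar s$ (and similarly for $P^{D_{\frac{1}{l^{\infty}}}}$).

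Next I would establish that every $E_1$-term of the weight spectral sequences (\ref{eqn:lehkd}) and (\ref{eqn:lledd}) is a lisse ${\mab Z}/l^n$-sheaf on $\os{\circ}{S}_{\rm et}$. Each such term is a higher direct image of ${\mab Z}/l^n\otimes_{\mab Z}\varpi$ along a stratum $\os{\circ}{X}{}^{(2j+k')}\cap\os{\circ}{D}{}^{(k-k')}\lo\os{\circ}{S}$ (or the analogue along $D^{(k)}$), the orientation sheaves $\varpi$ are locally constant, the structural morphisms of the strata are smooth and proper, and $l$ is invertible on $\os{\circ}{S}$. Hence the smooth and proper base change theorem (\cite[XVI (1.2)]{sga4-3}) shows that these direct images are lisse.

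It then remains to upgrade lisseness of the $E_1$-terms to lisseness of the abutment filtration. The graded pieces ${\rm gr}_k^PR^qf_*$ are the $E_\infty$-terms, i.e.\ subquotients of the lisse $E_1$-terms formed by the differentials $d_r$. To see these are lisse I would invoke the degeneration modulo torsion (\ref{prop:stc}) together with the specialization argument of its proof: after reducing, via the base-change commutation just used, to the case where $\os{\circ}{S}$ is of finite type over ${\mab Z}$ and then to a closed point with its Frobenius action, the purity of Frobenius weights forces the ranks of the kernels and images of the $d_r$ to be locally constant, so that each $E_\infty$-term has locally constant rank and, with base-change commutation, is lisse. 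Finally, since the total sheaf $R^qf_*({\mab Z}/l^n)$ and all graded pieces ${\rm gr}_k^P$ (computed through (\ref{lemm:grc}) and (\ref{eqn:axd})) are lisse, the five lemma applied to the specialization maps of the finite filtration $P$ shows that each step $P_kR^qf_*({\mab Z}/l^n)$ is lisse; the identical argument using (\ref{eqn:lehkd}) and (\ref{lemm:grc})(1) disposes of $P^{D_{\frac{1}{l^{\infty}}}}$, giving both (1) and (2).

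The hard part will be the passage from the rational, modulo-torsion control furnished by (\ref{prop:stc}) to the integral ${\mab Z}/l^n$-statement: (\ref{prop:stc}) directly yields constancy of ${\mab Q}_l$-ranks, whereas lisseness of the \emph{image} subsheaf requires that the specialization maps of the ${\mab Z}/l^n$-modules $P_kH^q(\mathrm{fibre})$ be genuine isomorphisms, not merely rationally rank-preserving. I would resolve this by exploiting that the graded pieces are already lisse ${\mab Z}/l^n$-sheaves, so their specialization maps are isomorphisms, and then propagating this up the finite filtration by the five lemma; a preliminary reduction to a normal (geometrically unibranch) base, legitimate by the base-change commutation, ensures that the specialization-map criterion for local constancy is applicable. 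The one point to check carefully is that restriction of the filtration $P$ (and of $P^{D_{\frac{1}{l^{\infty}}}}$) to a geometric fibre recovers the fibre's own weight filtration, which is precisely the content of (\ref{theo:pwbcsbc}) for $\os{\circ}{u}$ the inclusion of a geometric point.
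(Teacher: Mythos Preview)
Your route through (\ref{prop:stc}) and Frobenius weights is an unnecessary detour, and the ``hard part'' you flag is a genuine gap that your proposed resolution does not close. From Frobenius purity you obtain only that the $E_\infty$-terms have locally constant ${\mab Q}_l$-rank; this does not imply that they are lisse as ${\mab Z}/l^n$-sheaves. Your fix then asserts ``the graded pieces are already lisse ${\mab Z}/l^n$-sheaves,'' which is exactly the statement still to be proved, so the argument is circular at that point.

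The paper's proof avoids the rational-versus-integral difficulty altogether and is considerably shorter. After observing that the $E_1$-terms of (\ref{eqn:lledd}) are smooth (by \cite[Arcata V (3.1)]{sga41/2}) and hence that all $E_r$-terms and the filtration steps are at least constructible (stability of constructibility under kernels, images, and extensions), one applies the contravariant functoriality (\ref{theo:func}) to a specialization map $\ol{S}(\ol{x}{}')\lo \ol{S}(\ol{x})$. This yields a morphism of the \emph{entire} spectral sequence (\ref{eqn:lledd}) between the two fibres. Since the $E_1$-terms are lisse, the specialization map is an isomorphism on the $E_1$-page; a map of spectral sequences that is an isomorphism on $E_1$ is formally an isomorphism on every later page, on $E_\infty$, and on the induced filtration of the abutment. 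Hence the specialization map on $P_kR^qf_{(X_{\frac{1}{l^{\infty}}},D_{\frac{1}{l^{\infty}}})/S_{\frac{1}{l^{\infty}}}*}({\mab Z}/l^n)$ is an isomorphism, and the criterion \cite[IX (2.11)]{sga4-2} gives smoothness. No $E_2$-degeneration and no passage through ${\mab Q}_l$ are needed; the argument is purely formal with ${\mab Z}/l^n$-coefficients. Part (1) follows by the same reasoning applied to (\ref{eqn:lehkd}), once (2) guarantees that its $E_1$-terms are lisse.
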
 
\begin{proof}
First we prove (2).  
The $E_1$-terms of the spectral sequence (\ref{eqn:lledd}) 
are smooth by \cite[Arcata V (3.1)]{sga41/2}. 
Especially they are constructible. 
Because the constructivility is stable under the kernel  and the image 
of a morphism of constructible sheaves 
(\cite[IX (2.6) $({\rm i}_{\rm bis})$]{sga4-2}), 
the $E_r$-terms $(r \geq 1)$ are also constructible.  
Because the constructivility is stable under 
an extension of constructible sheaves ([loc.~cit., IX (2.6) (ii)]), 
the sheaf 
$P_kR^qf_{(X_{\frac{1}{l^{\infty}}},D_{\frac{1}{l^{\infty}}})/S_{\frac{1}{l^{\infty}}}*}({\mab Z}/l^n)$ is 
also constructible. 
Let $\ol{S}(\ol{x}{}')\lo \ol{S}(\ol{x})$ be the specialization map 
over $\os{\circ}{S}$ ([loc.~cit., VIII (7.2)]). 
Then we have the following specialization map 
\begin{align*} 
(\ref{eqn:lledd})_{\ol{x}}\lo (\ref{eqn:lledd})_{\ol{x}{}'}
\end{align*} 
of spectral sequences by (\ref{theo:func}). 
Since the $E_1$-terms of the spectral sequence (\ref{eqn:lledd}) 
are smooth, the specialization morphism 
$(E_1^{-k,q+k})_{\ol{x}} \lo (E_1^{-k,q+k})_{\ol{x}{}'}$ 
is an isomorphism. 
Hence the specialization morphism 
$$P_kR^qf_{(X_{\frac{1}{l^{\infty}}},D_{\frac{1}{l^{\infty}}})/S_{\frac{1}{l^{\infty}}}*}({\mab Z}/l^n)_{\ol{x}}\lo 
P_kR^qf_{(X_{\frac{1}{l^{\infty}}},D_{\frac{1}{l^{\infty}}})/S_{\frac{1}{l^{\infty}}}*}({\mab Z}/l^n)_{\ol{x}{}'}$$
is an isomorphism and consequently 
$P_kR^qf_{(X_{\frac{1}{l^{\infty}}},D_{\frac{1}{l^{\infty}}})/S_{\frac{1}{l^{\infty}}}*}({\mab Z}/l^n)$ 
is smooth by the criterion [loc.~cit., IX (2.11)]. 
\par
(1): By (2) we see that the $E_1$-terms of the spectral sequence 
(\ref{eqn:lehkd}) are smooth. The rest of the proof is the same as that of (2). 
\end{proof}

\begin{prop}\label{prop:sfsc}
Assume that $\os{\circ}{X}$ is proper over $\os{\circ}{S}$. 
Endow the $E_1$-terms of {\rm (\ref{eqn:lehkd})} with the weight filtration 
$P$ by using the equality {\rm (\ref{ali:rpaxd})} for the case where the horizontal SNCD is empty. 
Endow the $E_r$-terms of {\rm (\ref{eqn:lehkd})} $(r\geq 2)$ with the induced filtration by $P$. 
Then the edge morphism 
\begin{equation*} 
d^{-k,q+k}_r \col E^{-k,q+k}_r \lo E^{-k+1,q+k}_r 
\quad (r \in {\mab Z}_{\geq 1}) 
\tag{12.5.1}\label{eqn:estkd}
\end{equation*} 
of the spectral sequence 
$(\ref{eqn:lehkd})\otimes_{{\mab Z}_l}{\mab Q}_l$ 
is strictly compatible with the 
weight  filtration $P$ on the $E_r$-terms. 
\end{prop}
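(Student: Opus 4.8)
The plan is to reduce the strictness assertion to the case of a log point over a finite field, where it becomes a consequence of the purity of Frobenius weights together with the general principle that a Frobenius-equivariant morphism between modules of distinct pure weights must vanish, so that every morphism of mixed modules is strictly compatible with its weight filtration.

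First I would carry out the reduction to the finite field case. By (\ref{prop:sms}) the $E_1$-terms of (\ref{eqn:lehkd}) and the subsheaves $P_kE_1^{-k,q+k}$ are smooth sheaves on $\os{\circ}{S}_{\rm et}$, so the edge morphisms $d_r^{-k,q+k}$ and the induced filtrations $P$ on the $E_r$-terms are all given by smooth sheaves. Since strict compatibility of a morphism of smooth sheaves filtered by smooth subsheaves may be checked on stalks at geometric points, and these stalks are constant on each connected component, it suffices to verify the assertion at a single geometric point. Using the spreading-out and specialization argument of the proof of (\ref{prop:stc}), via the base change (\ref{prop:pwsbc}) and the contravariant functoriality (\ref{theo:func}), together with the fact that the specialization maps induce isomorphisms on the smooth $E_1$-terms and hence compatibly on all $E_r$-terms with their filtrations $P$, I may assume that $S$ is a log point $s=({\rm Spec}({\mab F}_q),{\mab N}\oplus {\mab F}_q^*)$ over a finite field and pass to the geometric fiber $\ol{s}$.

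Next I would identify the $P$-graded pieces of the $E_1$-terms as pure Frobenius modules. Over $\ol{s}$ the term $E_1^{-k,q+k}$ is, up to the orientation twist, the cohomology $H^{q-k}(\ol{D}{}^{(k)}_{\frac{1}{l^{\infty}}},{\mab Z}_l)(-k)$, and the filtration $P$ on it is by definition (\ref{ali:rpaxd}), applied to the proper SNCL scheme $D^{(k)}$ with empty horizontal divisor, the weight filtration induced by the weight spectral sequence (\ref{eqn:lletdd}) of $D^{(k)}$. By (\ref{prop:stc}) applied to $D^{(k)}$ this spectral sequence degenerates at $E_2$ modulo torsion, so after $\otimes_{{\mab Z}_l}{\mab Q}_l$ the graded piece ${\rm gr}^P_wE_1^{-k,q+k}$ is a direct sum of twists of the $l$-adic cohomology of the smooth proper schemes $\os{\circ}{\ol{X}}{}^{(2j+k-k')}\cap \os{\circ}{\ol{D}}{}^{(k')}$ (cf.~(\ref{eqn:ana})); by the purity of weight of the Frobenius endomorphism (\cite[(1.6)]{dw1}) each such summand is pure of weight $w$. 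Hence each $E_1^{-k,q+k}\otimes_{{\mab Z}_l}{\mab Q}_l$ is mixed with weight filtration $P$.

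Finally I would invoke the weight argument and induct on $r$. The whole spectral sequence (\ref{eqn:lehkd}) is ${\rm Gal}(\ol{\mab F}_q/{\mab F}_q)$-equivariant, as is visible from (\ref{eqn:lhksd}), and the edge morphisms $d_r^{-k,q+k}$ therefore commute with the $q$-th power Frobenius, the compatibility of the twists being supplied by (\ref{theo:func}) in the situation of the diagram (\ref{cd:zmql}). Since a Frobenius-equivariant morphism between pure ${\mab Q}_l$-modules of distinct weights is zero, $d_1^{-k,q+k}\otimes{\mab Q}_l$ preserves the weight grading and is strictly compatible with $P$. Because a subquotient of a mixed module is again mixed with the induced weight filtration and every morphism of mixed modules is strict (\cite{dw2}), an induction on $r$ shows that each $E_r^{-k,q+k}\otimes{\mab Q}_l$ is mixed with weight filtration equal to the induced $P$ and that $d_r^{-k,q+k}\otimes{\mab Q}_l$ is strict, which is the desired conclusion. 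The main obstacle I anticipate is the careful bookkeeping of the weight indices: one must verify that the index of $P$ on the $E_1$-terms, including the Tate twist $(-k)$ and the shift built into (\ref{ali:rpaxd}), coincides exactly with the Frobenius weight of the corresponding graded piece, so that the vanishing-between-distinct-weights principle applies verbatim; once this matching is pinned down the remainder of the argument is formal.
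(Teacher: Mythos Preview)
Your proposal is correct and follows essentially the same approach as the paper: reduce to a log point over a finite field via smoothness of the $E_1$-terms and the spreading-out/specialization argument of (\ref{prop:stc}), then use purity of Frobenius weights (\cite[(1.6)]{dw1}) to identify $P$ with the Frobenius weight filtration and conclude strictness from Frobenius-equivariance. The paper's proof is simply a two-line pointer to the proof of (\ref{prop:stc}); you have unpacked that pointer correctly.
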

\begin{proof} 
We have already seen that  $E^{-k,q+k}_1$ is smooth on $\os{\circ}{S}$. 
Hence we may assume that $S'=t$ as in the proof of (\ref{prop:stc}). 
The rest of the proof is the same as that of 
(\ref{prop:stc}). 
\end{proof}

\begin{rema}
It is obvious that $d^{-k,q+k}_r$ in (\ref{eqn:estkd}) vanishes for $r\geq 2$ if $\dim \os{\circ}{X}=1$. 
We do not know whether $d^{-k,q+k}_r$ in (\ref{eqn:estkd}) vanishes for $r\geq 2$ in general. 
However we prove that $d^{-k,q+k}_r=0$ for $r\geq 2$ in \S\ref{sec:ssf} below
if $(X,D)$ is a the log special fiber of a proper strict semistable family 
with a horizontal simple normal crossing divisor
over a henselian discrete valuation ring of any characteristic. 
\end{rema}

\begin{theo}[{\bf Strict compatibility}]\label{theo:stpbgb}
Let the notations be as in {\rm (\ref{theo:func})}. 
Let $q$ be an integer. 
Set $R^qf_{(X_{\frac{1}{l^{\infty}}},D_{\frac{1}{l^{\infty}}})/S_{\frac{1}{l^{\infty}}}*}({\mab Q}_l):=
R^qf_{(X_{\frac{1}{l^{\infty}}},D_{\frac{1}{l^{\infty}}})/S_{\frac{1}{l^{\infty}}}*}({\mab Z}_l)\otimes_{{\mab Z}_l}{\mab Q}_l$ 
and 
$R^qf_{(Y_{\frac{1}{l^{\infty}}},E_{\frac{1}{l^{\infty}}})/S_{\frac{1}{l^{\infty}}}'*}({\mab Q}_l):=
R^qf_{(Y_{\frac{1}{l^{\infty}}},E_{\frac{1}{l^{\infty}}})/S'_{\frac{1}{l^{\infty}}}*}({\mab Z}_l)\otimes_{{\mab Z}_l}{\mab Q}_l$. 
Then the induced morphism 
\begin{equation*}
g^* \col v^{*}R^qf_{(X_{\frac{1}{l^{\infty}}},D_{\frac{1}{l^{\infty}}})/S_{\frac{1}{l^{\infty}}}*}({\mab Q}_l)
\lo R^qf'_{(Y_{\frac{1}{l^{\infty}}},E_{\frac{1}{l^{\infty}}})/S_{\frac{1}{l^{\infty}}}'*}({\mab Q}_l) \quad 
(h\in {\mab Z}_{\geq 0})
\tag{12.7.1}\label{eqn:gbstn}
\end{equation*} 
is strictly compatible with the weight filtrations $P$'s.
\end{theo}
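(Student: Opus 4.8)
The plan is to deduce the strict compatibility of the induced morphism
\[
g^* \col v^{*}R^qf_{(X_{\frac{1}{l^{\infty}}},D_{\frac{1}{l^{\infty}}})/S_{\frac{1}{l^{\infty}}}*}({\mab Q}_l)
\lo R^qf'_{(Y_{\frac{1}{l^{\infty}}},E_{\frac{1}{l^{\infty}}})/S_{\frac{1}{l^{\infty}}}'*}({\mab Q}_l)
\]
from the contravariant functoriality morphism of bifiltered complexes constructed in (\ref{theo:func}). The starting point is the morphism $g^*$ of (\ref{eqn:bfe}) in ${\rm D}^+{\rm F}^2(\os{\circ}{X}_{\rm et},{\mab Z}_l)$; I would apply $R\os{\circ}{f}_*$ to it and tensor with ${\mab Q}_l$, so that after passing to the $q$-th cohomology sheaf and using adjunction I obtain the morphism (\ref{eqn:gbstn}) respecting the two filtrations $P^{D_{\frac{1}{l^{\infty}}}}$ and $P$ induced on the cohomology by the spectral sequences (\ref{eqn:lidd}) and (\ref{eqn:lintdd}). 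The only filtration relevant to the statement is the weight filtration $P$; I would define the filtration on each side exactly by the images in (\ref{ali:rpaxd}), so that $g^*$ is automatically filtered. Strictness then amounts to showing $g^*(P_k) = P_k \cap {\rm Im}(g^*)$ on the target, which is the content to be verified.

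The key reduction is to weights. First I would reduce to the case where $S$ (hence $S'$) is a single log point, using the base change result (\ref{prop:pwsbc}): since the weight-graded pieces are smooth sheaves (by (\ref{prop:sms})) and the specialization morphisms are isomorphisms, the strict compatibility over a general base $\os{\circ}{S}$ follows from the strict compatibility fibrewise. Then, as in the proof of (\ref{prop:stc}), I would spread out $(X,D)/s$ to a proper SNCL scheme with relative SNCD $({\cal X},{\cal D})$ over a finitely generated ${\mab Z}$-algebra and specialize to a finite field ${\mab F}_q$, so that both $R^qf_*({\mab Q}_l)$ carry a Frobenius action. The crucial point is that, by (\ref{eqn:grf}) and (\ref{ali:anxqds}), the weight-graded pieces ${\rm gr}^P_{q+k}$ are direct sums of cohomologies of proper smooth schemes Tate-twisted, hence by Deligne's purity (\cite[(1.6)]{dw1}) the subquotient ${\rm gr}^P_{q+k}R^qf_*({\mab Q}_l)$ is pure of weight $q+k$. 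The commutative diagram (\ref{cd:zml}) (equivalently (\ref{cd:zmql})) guarantees that $g^*$ intertwines the Frobenii up to the degree twist $\deg(v)$ appearing in (\ref{defi:amfl}), so $g^*$ is a morphism of Frobenius modules after the appropriate twist.

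With purity in hand, strict compatibility is formal: any morphism between mixed modules whose weight-graded pieces are pure of the indicated weights is automatically strict with respect to the weight filtration, because a morphism cannot send a pure-of-weight-$w$ subquotient to a subquotient of a different weight. Concretely, I would argue that $g^*$ decomposes compatibly with the canonical weight decomposition of the Frobenius action on $R^qf_*({\mab Q}_l)$ (the weight filtration splits over $\ol{\mab F}_q$ after $\otimes {\mab Q}_l$ by the distinctness of Frobenius weights), and a filtered morphism between objects whose weight filtrations split by weight is strict. This yields $g^*(P_k)=P_k\cap {\rm Im}(g^*)$, which is the assertion.

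The main obstacle I anticipate is the bookkeeping of the degree twist. The morphism $g^*$ is not Frobenius-equivariant on the nose but only after the $D$-twist $(-j-k';v)$ and the twist $(-(k-k');g,\Del,\Del')$ recorded in (\ref{eqn:lletdd}) and (\ref{eqn:lindtdd}); I must check that these twists, which scale by $\deg(v)$ and by the multidegrees $e(\mu)$ of (\ref{defi:wcel}), do not disturb the \emph{weight} (they only rescale within a fixed weight, since a Tate-type degree twist preserves weight while multiplying the Frobenius eigenvalues by a unit power). Making precise that the rescaling stays inside a single graded piece — so that purity still forces strictness — via the explicit descriptions (\ref{eqn:tktk1}), (\ref{eqn:tkx}) and (\ref{ali:anxqds}) is where the care is needed; everything else is a formal consequence of purity together with (\ref{theo:func}) and (\ref{prop:pwsbc}).
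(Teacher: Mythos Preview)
Your proposal is correct and follows the paper's own approach, which consists of a single sentence referring to the specialization argument of (\ref{prop:stc}): reduce to a log point via (\ref{prop:pwsbc}), spread out to finite type over ${\mab Z}$, specialize to a closed point with finite residue field, and use Deligne's purity (\cite[(1.6)]{dw1}) together with the description (\ref{ali:anxqds}) of the graded pieces to identify $P$ with the Frobenius weight filtration, so that any Frobenius-equivariant map is automatically strict. Your flagged obstacle about the degree twist is not a real issue: the $D$-twist of (\ref{defi:amfl}) only multiplies the morphism by the integer $\deg(v)^k$ (a unit in ${\mab Q}_l$), which cannot affect strictness, and once you pull back via $v^*$ both cohomologies are sheaves on $\os{\circ}{S}{}'$ carrying compatible Galois actions, so $g^*$ is Galois-equivariant on the nose at the specialized point.
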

\begin{proof}
This follows from the specialization argument as in 
the proof of (\ref{prop:stc}).  
\end{proof}

\section{Log $l$-adic relative monodromy-weight conjecture}\label{sec:rmwc}
Let the notations be as in the previous section. 
In this section we give the log $l$-adic relative monodromy-weight conjecture
and we prove that this is true for the case where the relative dimension of $\os{\circ}{X}/\os{\circ}{S}$ 
is less than or equal to $2$. 
\par 
Consider the following morphism 
\begin{equation*} 
(T-1)\otimes {\rm id}\otimes \check{T} 
\col K^{\bul}_{l^n}((X_{\frac{1}{l^{\infty}}},D_{\frac{1}{l^{\infty}}})/S_{\frac{1}{l^{\infty}}}) \lo 
K^{\bul}_{l^n}((X_{\frac{1}{l^{\infty}}},D_{\frac{1}{l^{\infty}}})/S_{\frac{1}{l^{\infty}}})(-1).   
\tag{13.0.1}\label{eqn:ikntd}
\end{equation*} 
We also have the following morphism 
\begin{equation*} 
\nu:= (T-1)\otimes {\rm id}\otimes \check{T} 
\col A^{\bul}_{l^n}((X_{\frac{1}{l^{\infty}}},D_{\frac{1}{l^{\infty}}})/S_{\frac{1}{l^{\infty}}}) \lo 
A^{\bul}_{l^n}((X_{\frac{1}{l^{\infty}}},D_{\frac{1}{l^{\infty}}})/S_{\frac{1}{l^{\infty}}})(-1).  
\tag{13.0.2}\label{eqn:ad}
\end{equation*} 
The morphism $\nu$ is compatible 
with the filtration $P^{D_{\frac{1}{l^{\infty}}}}$.  
\par 
The following holds as in \cite[(1.7)]{rz}: 

\begin{prop}\label{prop:lhm} 
The morphism $\nu$ is homotopic to a morphism 
$A^{\bul}_{l^n}((X_{\frac{1}{l^{\infty}}},D_{\frac{1}{l^{\infty}}})/S_{\frac{1}{l^{\infty}}}) 
\lo A^{\bul}_{l^n}((X_{\frac{1}{l^{\infty}}},D_{\frac{1}{l^{\infty}}})/S_{\frac{1}{l^{\infty}}})(-1)$ defined by 
\begin{align*} 
\wt{\nu}:= {\rm proj}. \otimes {\rm id} \col 
&A^{ij}_{l^n}(X_{\frac{1}{l^{\infty}}}/S_{\frac{1}{l^{\infty}}})\otimes M_{l^n}(\os{\circ}{D}/\os{\circ}{S})^k \lo 
A^{i-1,j+1}_{l^n}(X_{\frac{1}{l^{\infty}}}/S_{\frac{1}{l^{\infty}}})(-1)\otimes M_{l^n}(\os{\circ}{D}/\os{\circ}{S})^k\\
&(i\in {\mab N}, j\in {\mab Z}, k\in {\mab N}).
\end{align*} 
\end{prop}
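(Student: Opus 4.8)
The plan is to show that the difference $\nu-\wt{\nu}$ is chain-homotopic to zero by producing an explicit homotopy, exactly mirroring the classical Rapoport–Zink computation in \cite[(1.7)]{rz}. Since everything is built by tensoring the Rapoport–Zink–Nakayama complex $A^{\bul}_{l^n}(X_{\frac{1}{l^{\infty}}}/S_{\frac{1}{l^{\infty}}})$ with the fixed flat complex $M^{\bul}_{l^n}(\os{\circ}{D}/\os{\circ}{S})$, and both $\nu$ and $\wt{\nu}$ act only through the first tensor factor (the second factor carries the identity), the whole problem reduces to the $D=\emptyset$ case, i.e.\ to comparing $(T-1)\otimes \check{T}$ with the projection $\os{\rm proj}.$ on $A^{\bul}_{l^n}(X_{\frac{1}{l^{\infty}}}/S_{\frac{1}{l^{\infty}}})$. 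So the first step is to reduce to that factor and then recall the structure of $A^{ij}_{l^n}(X_{\frac{1}{l^{\infty}}}/S_{\frac{1}{l^{\infty}}})$ as a quotient $({\rm MF}_{l^n}(T-1)(j+1)/\tau_j{\rm MF}_{l^n}(T-1)(j+1))^{i+j+1}$, recorded in (\ref{eqn:anxsmf}), together with the boundary morphisms $\theta$ and $-d$ of (\ref{cd:dlsgn}).

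Next I would write down the candidate homotopy. The monodromy operator $\nu$ is $(T-1)$ applied on ${\rm MF}_{l^n}(T-1)$ coupled with the contraction $\check{T}$ against the generator $T$, which shifts the weight index $j\mapsto j-1$; the operator $\wt{\nu}$ is the naive projection in the Rapoport–Zink double complex that also lowers $j$ by one. The key algebraic fact, coming from the very construction of ${\rm MF}_{l^n}(T-1)$ as the mapping fibre of $T-1$ in (\ref{ali:mftkdf}), is that on ${\rm MF}_{l^n}(T-1)$ the morphism $T-1$ is, up to sign and the two boundary terms of the mapping fibre, the composite of $\theta$ with the contraction. Concretely I expect the homotopy $h$ to be the map sending a local section of $A^{ij}_{l^n}$ to the corresponding section of $A^{i,j+1}_{l^n}(-1)$ given by the identity on the ${\rm MF}_{l^n}(T-1)$-component tensored with $\check{T}$, shifting $j\mapsto j+1$. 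I would then verify the homotopy identity
\begin{equation*}
\nu-\wt{\nu}=h\circ (\text{$-d$ and $\theta$ boundaries})+(\text{$-d$ and $\theta$ boundaries})\circ h
\end{equation*}
by a direct bookkeeping computation on the double complex $A^{\bul\bul}_{l^n}$, using that $\theta$ is precisely the map $(0,x)\mapsto (0,x\otimes T)$ of (\ref{ali:fln}) so that $\check{T}\circ\theta={\rm id}$ on the relevant graded pieces and $\theta\circ\check{T}$ produces the projection $\wt{\nu}$.

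**The main obstacle** will be the careful sign- and index-tracking across the two boundary morphisms of the mapping-fibre-of-a-double-complex structure: one must confirm that the boundary contributions from the $\tau_j$-truncations that are killed in the quotient ${\rm MF}_{l^n}(T-1)(j+1)/\tau_j$ do not spoil the homotopy identity, and that the signs chosen in (\ref{cd:dlsgn}) (which the author emphasizes differ from \cite{rz} and \cite{nd}) are consistent. I would handle this by checking the identity separately on the two summands $K^{\bul}_{l^n}$ and $K^{\bul}_{l^n}[-1]$ of ${\rm MF}_{l^n}(T-1)$ in each bidegree, using the Convention in Notations (6) on mapping fibres and the sign rule $d^n=\sum_{p+q=n}(d^p\otimes{\rm id}+(-1)^p{\rm id}\otimes d^q)$ from Conventions (4). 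Since the second tensor factor $M^{\bul}_{l^n}(\os{\circ}{D}/\os{\circ}{S})$ is flat and carries no nontrivial action, tensoring the homotopy on the $X$-factor with the identity on the $D$-factor produces the desired homotopy for $\nu-\wt{\nu}$ on the full complex, and the compatibility with $P^{D_{\frac{1}{l^{\infty}}}}$ is automatic because $\wt{\nu}$ and the homotopy both preserve the second-factor filtration $Q$ that defines $P^{D_{\frac{1}{l^{\infty}}}}$.
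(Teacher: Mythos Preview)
Your overall strategy matches the paper's proof exactly: reduce to $D=\emptyset$ (since both $\nu$ and $\wt{\nu}$ act through the first tensor factor and $M^{\bul}_{l^n}(\os{\circ}{D}/\os{\circ}{S})$ is flat), then produce an explicit homotopy on $A^{\bul\bul}_{l^n}(X_{\frac{1}{l^{\infty}}}/S_{\frac{1}{l^{\infty}}})$ and verify the identity by a direct computation with the boundaries $-d$ and $\theta$, following \cite[(1.7)]{rz} but with the paper's sign conventions.

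However, your guessed form of the homotopy is wrong. A homotopy must have total degree $-1$, but your map $A^{ij}\to A^{i,j+1}(-1)$ has total degree $+1$; and ``the identity on the ${\rm MF}$-component tensored with $\check{T}$'' is not the right formula. The correct homotopy (omitting the Tate twist, as the paper does) is
\[
\sigma\colon A^{ij}_{l^n}(X_{\frac{1}{l^{\infty}}}/S_{\frac{1}{l^{\infty}}})\lo A^{i-1,j}_{l^n}(X_{\frac{1}{l^{\infty}}}/S_{\frac{1}{l^{\infty}}}),\qquad (x,y)\lom (y,0),
\]
where $(x,y)\in K^{i+j+1}_{l^n}\oplus K^{i+j}_{l^n}$ are the two components of ${\rm MF}_{l^n}(T-1)^{i+j+1}$. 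Thus $\sigma$ lowers the horizontal index $i$ (not raises $j$) and swaps the two mapping-fibre summands rather than acting by the identity. A direct check then gives $(-d+\theta)\sigma+\sigma(-d+\theta)=\wt{\nu}-\nu$ on each bidegree, with the $A^{ij}$-component contributing $\wt{\nu}$ (via $\theta\sigma$ and $\sigma\theta$) and the $A^{i+1,j-1}$-component contributing $-(T-1)=-\nu$. Since you intend to follow \cite[(1.7)]{rz} anyway you will arrive at this $\sigma$, but be aware that your heuristic description does not match it.
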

\begin{proof} 
(Though the proof is the same as that of \cite[(1.7)]{rz}, we give the proof because the signs of 
our horizontal and vertical boundary morphisms are different from theirs.)  
We omit the Tate twist in this proof. 
It suffices to prove (\ref{prop:lhm}) for the case $D=\emptyset$. 
Denote $A^{ij}_{l^n}(X_{\frac{1}{l^{\infty}}}/S)$ simply by $A^{ij}$. 
Let $\sig \col A^{i+1,j-1}\lo A^{i,j-1}$ be a morphism defined by $(x,y)\lom (y,0)$, where 
$x\in K_{l^n}(X/S)^{i+j+1}, y\in K_{l^n}(X/S)^{i+j}$. 
Then $\sig$ gives us a homotopy form $\nu$ to $(T-1)$. 
Indeed, 
\begin{align*} 
&\{(-d+\theta)\sig+\sig(-d+\theta)\}(x,y)\\
&=(-dy,-(T-1)y)+(0,y)+\sig(-dx,-(T-1)x+dy)+\sig(0,x)\\
& =(-dy,-(T-1)y+y)+(-(T-1)x+dy+x,0)\\
&=(-(T-1)x+x,y-(T-1)y)=\{\nu-(T-1)\}(x,y).
\end{align*}

\end{proof}

\begin{prop}\label{coro:llha} 
$(1)$ The  morphism 
\begin{equation*} 
\nu \col  R^qf_{(X_{\frac{1}{l^{\infty}}},D_{\frac{1}{l^{\infty}}})/S_{\frac{1}{l^{\infty}}}*}({\mab Z}_l)
\lo  R^qf_{(X_{\frac{1}{l^{\infty}}},D_{\frac{1}{l^{\infty}}})/S_{\frac{1}{l^{\infty}}}*}({\mab Z}_l)(-1)
\tag{13.2.1}\label{eqn:llnu}
\end{equation*}  
is nilpotent. 
\par 
$(2)$ For $k\in {\mab Z}$, 
$\nu(P_kR^qf_{(X_{\frac{1}{l^{\infty}}},D)/S_{\frac{1}{l^{\infty}}}*}({\mab Z}_l)) 
\subset (P_{k-2}R^qf_{(X_{\frac{1}{l^{\infty}}},D)/S_{\frac{1}{l^{\infty}}}*}
({\mab Z}_l))(-1)$. 
\end{prop}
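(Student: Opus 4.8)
The statement to prove, Proposition \ref{coro:llha}, has two parts concerning the monodromy operator $\nu$ on the $l$-adic cohomology $R^qf_{(X_{\frac{1}{l^{\infty}}},D_{\frac{1}{l^{\infty}}})/S_{\frac{1}{l^{\infty}}}*}({\mab Z}_l)$. My plan is to reduce both assertions to properties of the bifiltered El Zein-Steenbrink-Zucker complex already established, using the homotopy description of $\nu$ given in \ref{prop:lhm}.

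For part (2), I would argue directly at the level of complexes. The key observation is that in \ref{prop:lhm} the monodromy operator $\nu$ is shown to be homotopic to the explicit morphism $\wt{\nu}={\rm proj}.\otimes {\rm id}$, which sends the $(i,j)$-component $A^{ij}_{l^n}(X_{\frac{1}{l^{\infty}}}/S_{\frac{1}{l^{\infty}}})\otimes M_{l^n}(\os{\circ}{D}/\os{\circ}{S})^k$ to the $(i-1,j+1)$-component. I would first check how $\wt{\nu}$ interacts with the weight filtration $P$ defined in (\ref{eqn:tdef}): since the filtration $P_k$ is built from $(\tau_{2j+k+1}+\tau_j){\rm MF}_{l^n}(T-1)(j)/\tau_j{\rm MF}_{l^n}(T-1)(j)$ in the $j$-th column, and $\wt{\nu}$ increases the column index $j$ by one while fixing the $\tau$-truncation level, the index $2j+k+1$ increases by $2$ as $j$ increases by $1$; tracking this shift shows $\wt{\nu}$ maps $P_k$ into $P_{k-2}$ (after the Tate twist $(-1)$). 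Because $\nu$ and $\wt{\nu}$ are homotopic and the homotopy $\sig$ from the proof of \ref{prop:lhm} is readily checked to respect the relevant filtration levels, the induced map on cohomology $\nu$ carries $P_kR^qf_*({\mab Z}_l)$ into $P_{k-2}R^qf_*({\mab Z}_l)(-1)$, using the definition (\ref{ali:rpaxd}) of the induced filtration $P$ on cohomology as the image of $R^qf_*(P_kA^{\bul}_{l^{\infty}})$.

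For part (1), I would exploit the biregularity and finiteness of the weight filtration $P$ established in the construction (the vanishing of ${\rm gr}^P_k$ for $|k|$ large, from the discussion around (\ref{eqn:grf})). Since $P$ is a finite filtration on $R^qf_*({\mab Z}_l)$ and, by part (2), $\nu$ strictly decreases the filtration level by $2$ (up to the Tate twist which does not affect nilpotency), a suitable high power $\nu^e$ must map the whole cohomology into $P_{-\infty}=0$. Concretely, if $P_{a}=0$ and $P_{b}=R^qf_*({\mab Z}_l)$ for some $a<b$, then iterating the inclusion $\nu(P_k)\subset P_{k-2}(-1)$ forces $\nu^e=0$ once $2e > b-a$, which establishes nilpotency.

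The main obstacle I anticipate is part (2): one must verify carefully that the homotopy $\sig$ in \ref{prop:lhm} is compatible with the weight filtration $P$, not merely that $\wt{\nu}$ is. The subtlety is that $\nu$ is only homotopic to $\wt{\nu}$, so passing to cohomology requires the homotopy itself to preserve or predictably shift the $P$-levels; otherwise the filtered statement on cohomology would not follow from the filtered behavior of $\wt{\nu}$ alone. I would therefore spend the bulk of the argument checking the precise effect of $\sig$ on the truncations $(\tau_{2j+k+1}+\tau_j)$ appearing in (\ref{eqn:tdjef}) and (\ref{eqn:tdef}), confirming that $\sig$ (which shifts $j\mapsto j$ on the source $A^{i+1,j-1}$ to $A^{i,j-1}$) does not violate the filtration-decreasing property. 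Once the filtered compatibility of both $\wt{\nu}$ and $\sig$ is in hand, the remaining steps are formal consequences of the definitions (\ref{ali:rpaxd}) and the finiteness of $P$.
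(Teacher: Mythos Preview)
Your approach is correct and matches the paper's intent; the paper's proof is simply ``Obvious,'' since both parts follow immediately from (\ref{prop:lhm}) and the finiteness of $P$ established after (\ref{eqn:grf}).

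Your stated ``main obstacle'' is, however, a non-issue: you do \emph{not} need the homotopy $\sig$ to respect the filtration. The filtration $P_k$ on cohomology is defined in (\ref{ali:rpaxd}) as the \emph{image} of $R^q\os{\circ}{f}_*(P_kA^{\bul}_{l^{\infty}})$ in $R^q\os{\circ}{f}_*(A^{\bul}_{l^{\infty}})$. Since $\wt{\nu}$ is a genuine morphism of filtered complexes $(A^{\bul}_{l^{\infty}},P)\to(A^{\bul}_{l^{\infty}},P\langle -2\rangle)(-1)$ (this is the check you correctly describe), it carries the image of $R^q\os{\circ}{f}_*(P_kA^{\bul}_{l^{\infty}})$ into the image of $R^q\os{\circ}{f}_*(P_{k-2}A^{\bul}_{l^{\infty}})(-1)$. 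But on the full cohomology $R^q\os{\circ}{f}_*(A^{\bul}_{l^{\infty}})$ the maps $\nu$ and $\wt{\nu}$ coincide (this is all the homotopy in (\ref{prop:lhm}) gives you, and all you need). Hence $\nu(P_k)=\wt{\nu}(P_k)\subset P_{k-2}(-1)$, with no filtered condition required on $\sig$. Part (1) then follows from part (2) exactly as you say.
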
 
\begin{proof} 
Obvious. 
\end{proof}

\par  
Because $(T-1)\otimes \check{T} \col 
R^qf_{(X_{\frac{1}{l^{\infty}}},D_{\frac{1}{l^{\infty}}})/S_{\frac{1}{l^{\infty}}}*}({\mab Z}_l)  \lo 
R^qf_{(X_{\frac{1}{l^{\infty}}},D_{\frac{1}{l^{\infty}}})/S_{\frac{1}{l^{\infty}}}*}({\mab Z}_l)(-1)$ 
is nilpotent, the morphism  
\begin{equation*} 
N := \log T\otimes \check{T}
\col R^qf_{(X_{\frac{1}{l^{\infty}}},D_{\frac{1}{l^{\infty}}})/S_{\frac{1}{l^{\infty}}}*}({\mab Q}_l) 
\lo R^qf_{(X_{\frac{1}{l^{\infty}}},D_{\frac{1}{l^{\infty}}})/S_{\frac{1}{l^{\infty}}}*}({\mab Q}_l)(-1) 
\tag{13.2.2}\label{eqn:lhlq}
\end{equation*} 
is well-defined. 

\begin{defi} 
We call $\nu$ and $N$ the $l$-{\it adic quasi-monodromy operator} on 
$R^qf_{(X_{\frac{1}{l^{\infty}}},D_{\frac{1}{l^{\infty}}})/S_{\frac{1}{l^{\infty}}}*}({\mab Q}_l)$
and the $l$-{\it adic monodromy operator}
on $R^qf_{(X_{\frac{1}{l^{\infty}}},D_{\frac{1}{l^{\infty}}})/S_{\frac{1}{l^{\infty}}}*}({\mab Z}_l)$, respectively. 
\end{defi}

\par 
Because the morphism $\nu$ is compatible 
with the filtration $P^{D_{\frac{1}{l^{\infty}}}}$, the morphisms 
(\ref{eqn:llnu}) and (\ref{eqn:lhlq}) are compatible with  
$P^{D_{\frac{1}{l^{\infty}}}}$. 

Let us recall the following definition: 

\begin{defi}[{\bf \cite[(1.6.13)]{dw2}, \cite[(2.5)]{stz}}]
\label{defi:vqnp} 
Let $(V,Q)$ be a filtered vector space. 
Let $N$ be a nilpotent endomorphism of $V$. 
A {\it monodromy filtration of} $N$ {\it relative to} $Q$  
is a filtration $M$ such that $N(M_kV) \subset M_{k-2}V$ 
and the induced filtration $M$ on ${\rm gr}_k^QV$ 
is the monodromy filtration of the nilpotent endomorphism 
$N$ on ${\rm gr}_k^QV$. 
(In \cite[(2.5)]{stz} Steenbrink and Zucker have called the 
filtration $M$ the weight filtration of $N$ relative to $Q$.)  
\end{defi}

\par 
Imitating the relative monodromy filtration 
in characteristic $p>0$ in \cite[(1.8.5)]{dw2} and 
the relative monodromy filtration 
over the complex number field in 
\cite{stz} and \cite{ezth}, 
we conjecture the following which we call 
the {\it log $l$-adic relative monodromy-weight conjecture}:

\begin{conj}[{\bf log $l$-adic relative monodromy-weight conjecture}]
\label{conj:remc} 
Let $q$ be a nonnegative integer.  
Assume that $\os{\circ}{X}$ is projective over $\os{\circ}{S}$. 
Then the filtration $P$ on 
$R^qf_{(X_{\frac{1}{l^{\infty}}},D_{\frac{1}{l^{\infty}}})/S_{\frac{1}{l^{\infty}}}*}({\mab Q}_l)$ 
is the monodromy filtration of $N$ relative to 
$P^{D_{\frac{1}{l^{\infty}}}}$, especially,  
the induced morphism 
\begin{equation*} 
N^e \col {\rm gr}^P_{q+k+e}{\rm gr}^{P^{D_{\frac{1}{l^{\infty}}}}}_{k}
R^qf_{(X_{\frac{1}{l^{\infty}}},D_{\frac{1}{l^{\infty}}})/S_{\frac{1}{l^{\infty}}}}*({\mab Q}_l)
\lo {\rm gr}^P_{q+k-e}{\rm gr}^{P^{D_{\frac{1}{l^{\infty}}}}}_{k}
R^qf_{(X_{\frac{1}{l^{\infty}}},D_{\frac{1}{l^{\infty}}})/S_{\frac{1}{l^{\infty}}}*}({\mab Q}_l)(-e)  
\tag{13.5.1}\label{eqn:clrm} 
\end{equation*} 
for $e,k\in {\mab N}$ 
is an isomorphism.  
\end{conj}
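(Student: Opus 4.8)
The statement is a conjecture in full generality, so the plan is to prove it under the working hypothesis that the log $l$-adic monodromy-weight conjecture (\ref{conj:nketph}) holds for each $D^{(k)}$ with $k\geq 1$, and then to discharge that hypothesis unconditionally when $\dim\os{\circ}{X}\leq 2$. The overall strategy imitates the El Zein--Steenbrink--Zucker reduction of a \emph{relative} monodromy-weight statement to \emph{absolute} ones on the graded pieces of $P^{D_{\frac{1}{l^{\infty}}}}$. First I would record what is already available: by (\ref{coro:llha}) (2) the $l$-adic quasi-monodromy operator satisfies $\nu(P_k)\subset(P_{k-2})(-1)$, hence so does $N$ of (\ref{eqn:lhlq}), and by Deligne's uniqueness of the relative monodromy filtration (\cite[(1.6.13)]{dw2}) it suffices to verify the single defining property of (\ref{defi:vqnp}): that $P$ induces on each ${\rm gr}^{P^{D_{\frac{1}{l^{\infty}}}}}_k$ the monodromy filtration of the induced nilpotent $N$. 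This moves the entire problem onto the graded pieces, where the target is exactly the isomorphism (\ref{eqn:clrm}) with central weight $q+k$.

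The next step is to identify the graded piece. By (\ref{lemm:grc}) (1) the complex ${\rm gr}^{P^{D_{\frac{1}{l^{\infty}}}}}_k$ is, up to the Tate twist $(-k)$ and the orientation sheaf, the $l$-adic El Zein--Steenbrink--Zucker complex of $D^{(k)}_{\frac{1}{l^{\infty}}}$. I would then check that under this identification the monodromy operator $N$ on ${\rm gr}^{P^{D_{\frac{1}{l^{\infty}}}}}_k$ becomes, after the twist, the monodromy operator of $D^{(k)}$; this follows from the explicit form $\nu=(T-1)\otimes{\rm id}\otimes\check{T}$ and its compatibility with $P^{D_{\frac{1}{l^{\infty}}}}$. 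Granting (\ref{conj:nketph}) for $D^{(k)}$, the weight filtration on $H^{q-k}_{\rm ket}$ of $D^{(k)}$ is the monodromy filtration of this $N$; after twisting by $(-k)$ the central weight $q-k$ becomes $q+k$, matching (\ref{eqn:clrm}) and explaining why the correct central index is $q+k$ rather than the $q$ of the erroneous formula in \cite{ezth}.

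The hard part will be the remaining key lemma: that the filtration $P$ induces on ${\rm gr}^{P^{D_{\frac{1}{l^{\infty}}}}}_k$ coincides, up to the twist, with the weight filtration on the cohomology of $D^{(k)}$. One cannot simply read this off an $E_2$-degeneration of the spectral sequence (\ref{eqn:lehkd}) relative to $D_{\frac{1}{l^{\infty}}}$, since that degeneration is unavailable in general. Instead the plan is to use the $E_2$-degeneration modulo torsion of the \emph{full} weight spectral sequence (\ref{eqn:lledd}) proved in (\ref{prop:stc}), together with the strict compatibility of the edge morphisms of (\ref{eqn:lehkd}) with the weight filtration $P$ established in (\ref{prop:sfsc}), to extract the $P$-graded pieces of ${\rm gr}^{P^{D_{\frac{1}{l^{\infty}}}}}_k$ correctly and match them with those of $H^{q-k}(D^{(k)})(-k)$. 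This is the genuinely delicate point: one must argue purely at the level of associated graded objects and the strict differentials $d_r^{-k,q+k}$, carefully tracking bidegrees so that $q+k$ appears, rather than invoking a degeneration that does not hold.

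Finally, for the unconditional range $\dim\os{\circ}{X}\leq 2$, every $D^{(k)}$ with $k\geq 1$ has $\dim\leq 1$, so (\ref{conj:nketph}) holds for it by the Rapoport--Zink--Mokrane calculation (\cite{rz}, \cite{msemi}) together with the Kajiwara--Achinger reduction (\cite{kaj}, \cite{ash}). Feeding these inputs into the reduction above yields (\ref{conj:remc}) in that range, and the same machinery gives the conditional statement whenever (\ref{conj:nketph}) is known for all the $D^{(k)}$.
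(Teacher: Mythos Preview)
Your reduction strategy matches the paper's Theorem~\ref{theo:mimr}: use (\ref{coro:llha})~(2) to get $N(P_k)\subset P_{k-2}$, identify the $E_1$-terms of (\ref{eqn:lehkd}) with the cohomology of $D^{(k)}$ via (\ref{lemm:grc})~(1), invoke the absolute conjecture (\ref{conj:log}) there, and use the strict compatibility of (\ref{prop:sfsc}). The treatment of $\dim\os{\circ}{X}\le 2$ is also correct.

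However, your formulation of the ``hard part'' diverges from the paper and is likely a gap as stated. You propose to show that the filtration $P$ induced on ${\rm gr}^{P^{D_{\frac{1}{l^{\infty}}}}}_k$ \emph{coincides} with the weight filtration on $H^{q-k}(D^{(k)})(-k)$. The paper does \emph{not} prove this, and in the remark following the proof it explicitly notes that the $E_2$-degeneration of (\ref{eqn:lehkd}) is unknown in general---which is essentially what such an identification would require. Your proposed route through the $E_2$-degeneration of the \emph{other} spectral sequence (\ref{eqn:lledd}) from (\ref{prop:stc}) does not obviously yield control over the induced $P$ on ${\rm gr}^{P^{D_{\frac{1}{l^{\infty}}}}}_k H^q$; that degeneration computes ${\rm gr}^P H^q$, not ${\rm gr}^P{\rm gr}^{P^{D_{\frac{1}{l^{\infty}}}}} H^q$, and you have not explained the passage between them.

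The paper's mechanism is instead the purely algebraic Lemma~\ref{lemm:easy}: if $U\os{f}{\to}V\os{g}{\to}W$ is a sequence with $gf=0$, strict filtered maps, finite filtration on $U$, and $N^e$ an isomorphism on each ${\rm gr}^P\Lam$ for $\Lam=U,V,W$, then $N^e$ is an isomorphism on ${\rm gr}^P({\rm Ker}\,g/{\rm Im}\,f)$. One applies this with $V=E_r^{-k,q+k}$ and the differentials $d_r$, using (\ref{prop:sfsc}) for strictness; induction on $r$ propagates the $N^e$-isomorphism from $E_1$ (where it holds by the assumed (\ref{conj:log}) for $D^{(k)}$) to every $E_r$, hence to $E_\infty$. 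No identification of filtrations and no degeneration of (\ref{eqn:lledd}) is needed. Your plan would become correct if you replaced the attempted filtration identification by this propagation lemma.
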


\begin{rema}
It is easy to check that (\ref{conj:remc}) is equivalent to 
the following: 
the following induced morphism 
\begin{align*} 
\nu^e \col & {\rm gr}^P_{q+k+e}{\rm gr}^{P^{D_{\frac{1}{l^{\infty}}}}}_{k}
R^qf_{(X_{\frac{1}{l^{\infty}}},D_{\frac{1}{l^{\infty}}})/S_{\frac{1}{l^{\infty}}}*}({\mab Q}_l)
\lo \tag{13.6.1}\label{eqn:clrnm} \\
& {\rm gr}^P_{q+k-e}{\rm gr}^{P^{D_{\frac{1}{l^{\infty}}}}}_{k} 
R^qf_{(X_{\frac{1}{l^{\infty}}},D_{\frac{1}{l^{\infty}}})/S_{\frac{1}{l^{\infty}}}*}({\mab Q}_l)(-e)  
\quad (e,k\in {\mab N}) 
\end{align*} 
by the morphism $\nu \col 
R^qf_{(X_{\frac{1}{l^{\infty}}},D_{\frac{1}{l^{\infty}}})/S_{\frac{1}{l^{\infty}}}*}({\mab Q}_l)\lo 
R^qf_{(X_{\frac{1}{l^{\infty}}},D_{\frac{1}{l^{\infty}}})/S_{\frac{1}{l^{\infty}}}*}({\mab Q}_l)(-1)$ is an isomorphism. 
\end{rema} 

We also recall the following conjecture by K.~Kato 
(\cite[(2.4) (2)]{nd}, \cite[(2.0.9;$l$)]{ndeg}): 

\begin{conj}[{\bf log $l$-adic monodromy-weight conjecture}] 
\label{conj:log} 
Let $q$ be a nonnegative integer.  
Let $f_{X_{\frac{1}{l^{\infty}}}/S_{\frac{1}{l^{\infty}}}}\col 
X_{\frac{1}{l^{\infty}}}\lo \os{\circ}{S}$ be the structural morphism. 
Assume that $\os{\circ}{X}$ is projective over $\os{\circ}{S}$. 
Then the filtration $P$ on 
$R^qf_{X_{\frac{1}{l^{\infty}}}/S_{\frac{1}{l^{\infty}}}*}({\mab Q}_l)$ 
is the monodromy filtration of $N$, especially,  
the induced morphism 
\begin{equation*} 
N^e \col {\rm gr}^P_{q+e}
R^qf_{X_{\frac{1}{l^{\infty}}}/S_{\frac{1}{l^{\infty}}}*}({\mab Q}_l)
\lo {\rm gr}^P_{q-e}
R^qf_{X_{\frac{1}{l^{\infty}}}/S_{\frac{1}{l^{\infty}}}*}({\mab Q}_l)(-e)  
\quad (e\in {\mab N}) 
\tag{13.7.1}\label{eqn:ulmw} 
\end{equation*} 
is an isomorphism.  
\end{conj}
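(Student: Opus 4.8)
The final statement, Conjecture \ref{conj:log}, is the log $l$-adic monodromy-weight conjecture stated as an open problem, so I cannot and should not attempt to prove it in general. Instead I describe the plan for the cases in which the paper asserts a proof, namely the equal characteristic $p>0$ proper strict semistable case and the case $\dim \os{\circ}{X}\leq 2$, and I indicate how Conjecture \ref{conj:log} for the strata $D^{(k)}$ feeds into the relative version (\ref{conj:remc}).

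In the equal characteristic $p>0$ case the plan is to reduce to the known monodromy-weight theorem of Deligne--Mokrane--Nakayama. First I would use the specialization argument of (\ref{prop:stc}): by (\ref{prop:pwsbc}) reduce $S$ to a log point $s=({\rm Spec}(\kap),{\mab N}\oplus\kap^*)$, then spread $X$ out to a proper SNCL scheme over a finitely generated $\mathbb{Z}$-subalgebra $A$ of $\kap$, take a closed point $\os{\circ}{t}={\rm Spec}({\mab F}_q)$ of $\os{\circ}{S}'$, and pass to $\ol{t}=t\otimes_{{\mab F}_q}\ol{\mab F}_q$. The $E_1$-terms of the weight spectral sequence for $f_{X_{\frac{1}{l^\infty}}/S_{\frac{1}{l^\infty}}}$ (the special case of (\ref{eqn:lledd}) with $D=\emptyset$) are then cohomology groups of proper smooth schemes over a finite field, hence pure of weight $q$ by Deligne's Weil II; the induced filtration $P$ coincides with the Frobenius weight filtration by the purity argument already used in (\ref{prop:stc}). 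The heart of the matter, that $N$ induces the required isomorphism (\ref{eqn:ulmw}) on the graded pieces, is then exactly the Rapoport--Zink--Itô result (\cite{dw2}, \cite{itp}) which the introduction cites as proving Conjecture \ref{conj:nketph} in this geometric case; matching the Frobenius weight filtration against the monodromy filtration $P$ and invoking \cite[(1.6.13)]{dw2} completes the reduction.

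For the case $\dim\os{\circ}{X}\leq 2$ the plan follows the outline in the introduction: combine Kajiwara--Achinger's result (\cite[(3.1)]{kaj}, \cite[Theorem 3.6]{ash}) with the Rapoport--Zink--Mokrane computations (\cite{rz}, \cite{msemi}) for the strata $D^{(k)}$ $(k\in{\mab Z}_{\geq 1})$. The structure is to verify (\ref{eqn:ulmw}) strata by strata: since $\dim\os{\circ}{X}\leq 2$ the components $\os{\circ}{X}{}^{(k)}$ and the intersections with $\os{\circ}{D}{}^{(k')}$ are smooth of dimension $\leq 2$, where the monodromy-weight isomorphism is verifiable by the explicit $E_1$-level computations cited. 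Then, to go from the absolute Conjecture \ref{conj:log} for the $D^{(k)}$ to the relative Conjecture \ref{conj:remc}, the plan is to use the key lemma mentioned in the introduction together with the strict compatibility of the edge morphisms of (\ref{eqn:lided}) with the weight filtration, which is (\ref{prop:sfsc}) and (\ref{theo:stpbgb}) above. The mechanism is that the relative monodromy filtration exists precisely when the spectral sequence (\ref{eqn:lidd}) relating $H^q$ to the $H^{q-k}(D^{(k)})$ behaves strictly, so that the graded pieces ${\rm gr}^{P^{D}}_k$ inherit their own (absolute) monodromy-weight isomorphisms and these assemble, via the nilpotent operator $N=\log T\otimes\check{T}$ of (\ref{eqn:lhlq}), into (\ref{eqn:clrm}).

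The main obstacle, and the reason Conjecture \ref{conj:log} remains open in the mixed characteristics case, is precisely the absence of a Frobenius purity input: over a mixed characteristic base there is no $q$-power Frobenius endomorphism forcing $E_1^{-k,q+k}\otimes{\mab Q}_l$ to be pure of weight $q$, so the identification of $P$ with the monodromy filtration cannot be deduced from Weil II. I expect that any genuine progress would have to replace the purity argument either by a $p$-adic Hodge-theoretic comparison or by a direct geometric degeneration argument, neither of which is available in the present generality; this is why the proof above is confined to the equal characteristic case and to low-dimensional cases where the graded pieces can be controlled explicitly. The remaining routine verifications — biregularity of the filtrations, compatibility of the specialization maps with $N$ and with the edge morphisms, and the Mittag-Leffler condition ensuring the projective limits behave well — follow from the constructibility and finite tor-dimension established in (\ref{theo:wdbst}) together with (\ref{prop:sms}) and (\ref{prop:pwsbc}).
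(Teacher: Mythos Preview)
You correctly recognize that Conjecture \ref{conj:log} is stated as an open problem and that the paper only establishes it in special cases. Your overall understanding of how this conjecture feeds into Conjecture \ref{conj:remc} via Theorem \ref{theo:mimr} is right. However, your description of the $\dim\os{\circ}{X}\leq 2$ case is confused in a way worth correcting.

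You write that the plan is to ``verify (\ref{eqn:ulmw}) strata by strata'' using the components $\os{\circ}{X}{}^{(k)}$ and intersections with $\os{\circ}{D}{}^{(k')}$. But Conjecture \ref{conj:log} is precisely the case $D=\emptyset$, so there are no strata $\os{\circ}{D}{}^{(k')}$ to consider, and the paper's proof in Proposition \ref{coro:nm} does not proceed by strata at all: it is a direct verification by cohomological degree $q$. For $q=1$ one invokes Kajiwara--Achinger; for $q=3$ one uses Poincar\'{e} duality and the explicit description of the boundary maps to reduce to $q=1$; for $q=2$ one appeals to the computation in \cite[(6.2.1)]{msemi}. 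The $E_2$-degeneration from \cite{nd} (or (\ref{prop:stc})) is used as background. You appear to have conflated this with the subsequent step (Theorem \ref{theo:mimr} and Corollary \ref{theo:c2oj2}), where one applies Conjecture \ref{conj:log} to each $D^{(k)}$ to deduce the relative Conjecture \ref{conj:remc}; that is a separate argument.

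For the equal characteristic case, your specialization outline is not wrong, but it is more elaborate than what the paper does: in Theorem \ref{theo:sprme} the paper simply cites \cite[(6.1)]{itp} for Conjecture \ref{conj:log} applied to each $D^{(k)}$, since the monodromy-weight conjecture for a strict semistable family in equal characteristic $p>0$ is already a theorem there. The specialization argument you describe is the mechanism behind It\^{o}'s result and behind the $E_2$-degeneration (\ref{prop:stc}), but it is not re-run in the paper for this purpose.
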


\begin{rema}\label{rema:lmirm} 
(1) In \cite{kln} Kato has conjectured (\ref{conj:log}) in the case where 
$\os{\circ}{S}$ is a point. 
The proposition (\ref{prop:sms}) (2) tells us that, if  
Kato's original conjecture is true, then (\ref{conj:log}) is also true.
\par 
(2) In \cite{kln} Kato kindly suggested to me that 
the weight filtration and the monodromy filtration on 
the first log $l$-adic cohomology of the
degeneration of a $p$-adic analogue of the Hopf surface 
(this is a proper SNCL surface) are different. 
However the proof in [loc.~cit.] is not complete. 
A generalization of his suggestion including the $p$-adic case 
has been given in \cite[(6.5)]{ndeg}.
The proof in \cite[(6.5)]{ndeg} is totally different from his proof. 
\end{rema}

It is evident that (\ref{conj:remc}) is a generalization of 
(\ref{conj:log}).  Conversely (\ref{conj:log}) implies (\ref{conj:remc}): 

\begin{theo}\label{theo:mimr} 
Assume that $\os{\circ}{X}$ is projective over $\os{\circ}{S}$.
If {\rm (\ref{conj:log})} is true,  
then {\rm (\ref{conj:remc})} is true. 
Consequently 
there exists a monodromy filtration 
$M$ on $R^qf_{(X_{\frac{1}{l^{\infty}}},D_{\frac{1}{l^{\infty}}})/S_{\frac{1}{l^{\infty}}}*}({\mab Q}_l)$ 
relative to $P^{D_{\frac{1}{l^{\infty}}}}$ and the relative monodromy filtration 
$M$ is equal to $P$. 
\end{theo}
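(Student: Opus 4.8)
The plan is to deduce Theorem \ref{theo:mimr} from the log $l$-adic monodromy-weight conjecture (\ref{conj:log}) applied not to $X$ itself but to the disjoint unions $D^{(k)}$ of the intersections of the smooth components of the horizontal SNCD. The key structural input is the weight spectral sequence (\ref{eqn:lehkd}) relative to $D_{\frac{1}{l^{\infty}}}$, whose $E_1$-terms are (up to twist and orientation sheaf) the cohomologies $R^{q-k}f_{D^{(k)}_{\frac{1}{l^{\infty}}}/S_{\frac{1}{l^{\infty}}}*}({\mab Q}_l)$. Since $\os{\circ}{X}$ is projective over $\os{\circ}{S}$, each $\os{\circ}{D}{}^{(k)}$ is also projective, so (\ref{conj:log}) for $D^{(k)}$ tells us that the induced weight filtration $P$ on each $E_1$-term is the monodromy filtration of the nilpotent operator $N$ acting on it. By the definition (\ref{defi:vqnp}) of the relative monodromy filtration, this is exactly the ``graded'' half of what must be proved: that $P$ induces on ${\rm gr}_k^{P^{D_{\frac{1}{l^{\infty}}}}}R^qf_*({\mab Q}_l)$ the monodromy filtration of $N$.

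First I would identify precisely the $N$-action and the $P^{D_{\frac{1}{l^{\infty}}}}$-graded pieces. By (\ref{eqn:grpdx}) and the analysis in \S\ref{sec:rmwc}, the quasi-monodromy operator $\nu$ (and hence $N=\log T\otimes\check{T}$) is compatible with $P^{D_{\frac{1}{l^{\infty}}}}$, and the spectral sequence (\ref{eqn:lehkd}) computes the associated graded. The operator $N$ on ${\rm gr}_k^{P^{D_{\frac{1}{l^{\infty}}}}}R^qf_*({\mab Q}_l)$ corresponds, via (\ref{eqn:grpdx}), to the monodromy operator on the cohomology of $D^{(k)}_{\frac{1}{l^{\infty}}}$, for which (\ref{conj:log}) supplies the isomorphism $N^e\col {\rm gr}^P_{q+k+e}\os{\sim}{\lo}{\rm gr}^P_{q+k-e}(-e)$ on the graded pieces; the index shift by $k$ is forced by the Tate twist $(-k)$ appearing in (\ref{eqn:grpdx}). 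This gives exactly the isomorphism (\ref{eqn:clrm}) of the conjecture, i.e. the equivalent formulation (\ref{eqn:clrnm}) in terms of $\nu$.

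The main obstacle, and the step requiring genuine care, is showing that the filtration $P$ on the whole cohomology $R^qf_*({\mab Q}_l)$—defined by (\ref{ali:rpaxd}) as the image filtration coming from (\ref{eqn:lintdd})—really agrees with the relative monodromy filtration and does not merely induce it on the graded quotients. Here the uniqueness part of Deligne's result is essential: a filtration satisfying $N(M_k)\subset M_{k-2}$ together with the graded monodromy property, if it exists, is unique. Thus I would invoke the strict compatibility of the edge morphisms of (\ref{eqn:lided}) with the weight filtration (stated in the Introduction and proved as part of (\ref{prop:sfsc}) and the key lemma announced there): the degeneration-type and strictness statements ensure that the image filtration $P$ is $N$-stable in the required degree-lowering sense (already recorded in (\ref{coro:llha}) (2)) and that passing to ${\rm gr}^{P^{D_{\frac{1}{l^{\infty}}}}}$ commutes with forming $P$. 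Combining the graded isomorphism (\ref{eqn:clrm}) with this strict compatibility, $P$ satisfies both defining properties of the relative monodromy filtration, and by Deligne's uniqueness it coincides with the relative monodromy filtration $M$.

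Finally I would assemble these pieces: existence of $M$ follows because we have exhibited a filtration ($P$ itself) satisfying the two conditions in (\ref{defi:vqnp}), and equality $M=P$ follows from uniqueness. I expect the hard part to be the verification of the strict compatibility along the edge maps, i.e. controlling how $P$ interacts with the spectral sequence differentials so that the relative-monodromy conditions hold on the nose rather than only after passing to graded pieces; the $E_2$-degeneration modulo torsion (\ref{prop:stc}) and the strictness statement (\ref{prop:sfsc}) are the tools I would lean on to close this gap. Once these are in place, Theorem \ref{theo:mimr} is immediate.
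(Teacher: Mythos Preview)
Your approach is essentially the paper's: apply (\ref{conj:log}) to each $D^{(k)}$ to get the monodromy-weight isomorphism on the $E_1$-terms of (\ref{eqn:lehkd}), then use strict compatibility of the differentials (\ref{prop:sfsc}) to push this through to $E_\infty$. Two clarifications are worth making.

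First, your invocation of Deligne's uniqueness is slightly misplaced. The content of (\ref{conj:remc}) \emph{is} the statement that $N^e$ induces an isomorphism on ${\rm gr}^P_{q+k+e}{\rm gr}^{P^{D_{\frac{1}{l^{\infty}}}}}_k$; once that is proved, the ``consequently'' part of the theorem is immediate from the definition (\ref{defi:vqnp}), and uniqueness only tells you that the relative monodromy filtration you have exhibited is the unique one. So the work is entirely in propagating the isomorphism from $E_1$ to $E_\infty$, not in a separate verification that ``$P$ agrees on the nose.''

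Second, be careful with the $E_2$-degeneration you cite: (\ref{prop:stc}) is the degeneration of (\ref{eqn:lledd}), not of (\ref{eqn:lehkd}), and the paper explicitly does \emph{not} know (\ref{eqn:lehkd}) degenerates at $E_2$ in general (see the remark following Lemma \ref{lemm:easy}). The paper's substitute is precisely the key lemma (\ref{lemm:easy}): given a complex $U\to V\to W$ of filtered objects with strict maps and with $N^e\col{\rm gr}^P_e\os{\sim}{\lo}{\rm gr}^P_{-e}$ on each of $U,V,W$, the same isomorphism holds on ${\rm Ker}/{\rm Im}$. Applied with $U=E_r^{-k-r,q+k+r-1}$, $V=E_r^{-k,q+k}$, $W=E_r^{-k+r,q+k-r+1}$ and using (\ref{prop:sfsc}) for strictness, this gives the isomorphism on every $E_r$ by induction on $r$, hence on $E_\infty$. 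Your phrase ``passing to ${\rm gr}^{P^{D_{\frac{1}{l^{\infty}}}}}$ commutes with forming $P$'' is exactly what this lemma delivers, but you should state the lemma rather than lean on degeneration that is not available.
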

\begin{proof} 
By (\ref{coro:llha}) (2), 
$N(P_kR^qf_{(X_{\frac{1}{l^{\infty}}},D_{\frac{1}{l^{\infty}}})/S_{\frac{1}{l^{\infty}}}*}({\mab Q}_l))\subset 
P_{k-2}R^qf_{(X_{\frac{1}{l^{\infty}}},D_{\frac{1}{l^{\infty}}})/S_{\frac{1}{l^{\infty}}}*}({\mab Q}_l)$ $(k\in {\mab Z})$.  
It suffices to prove that the morphism 
(\ref{eqn:clrm}) is an isomorphism. 
In this proof we ignore orientation sheaves. 
By (\ref{prop:pwsbc}) we may assume that $S$ is the log point 
$({\rm Spec}({\kap}),{\mab N}\oplus {\kap}^*\lo \kap)$. 
Consider the $E_1$-term $\{E_1^{-k,q+k}\}_{k,q\in {\mab Z}}$ 
of (\ref{eqn:lehkd})$\otimes_{{\mab Z}_l}{\mab Q}_l$. 
Then, by the assumption, 
$N$ induces an isomorphism 
\begin{equation*} 
N^e \col {\rm gr}^P_{q+k+e}E_1^{-k,q+k}\os{\sim}{\lo} 
{\rm gr}^P_{q+k-e}E_1^{-k,q+k}(-e) 
\tag{13.9.1}\label{eqn:mwd}
\end{equation*} 
($E_1^{-k,q+k}=
H^q_{\rm ket}(D^{(k)}_{\frac{1}{l^{\infty}}},{\mab Q}_l))$ for any $k,q\in {\mab Z}$. 
By (\ref{prop:sfsc}) 
the edge morphism 
$$d^{-k,q+k}_r\col E_r^{-k,q+k}
\lo E_r^{-k+r,q+k+r-1}\quad (r\geq 1)$$ 
is strictly compatible with $P$. 
Apply the key lemma (\ref{lemm:easy}) below by setting $f\col U:=d^{-k-1,q+k}_r\col 
E_r^{-k-1,q+k}\lo V:=E_r^{-k,q+k}$ and $g:=d^{-k,q+k}_r\col V:=E_r^{-k,q+k}\lo 
W:=E_r^{-k+1,q+k}$ 
and consider the following commutative diagram: 
\begin{equation*} 
\begin{CD}
E_r^{-k-r,q+k+r-1}@>{d^{-k-r,q++r-1k}_r}>> E_r^{-k,q+k} @>{d^{-k,q+k}_r}>>E_r^{-k+r,q+k-r+1}\\
@V{N^e}VV @V{N^e}VV @V{N^e}VV \\
E_r^{-k-r,q+k+r-1}(-e)@>{d^{-k-r,q+k+r-1}_r}>> E_r^{-k,q+k}(-e) @>{d^{-k,q+k}_r}>>E_r^{-k+r,q+k-r+1}(-e). 
\end{CD}
\end{equation*} 
(Note that, 
because 
$$N\col (A^{\bul}_{l^{\infty}}((X_{\frac{1}{l^{\infty}}},D_{\frac{1}{l^{\infty}}})/S_{\frac{1}{l^{\infty}}}),
P^{D_{\frac{1}{l^{\infty}}}},P) \lo 
(A^{\bul}_{l^{\infty}}((X_{\frac{1}{l^{\infty}}},D_{\frac{1}{l^{\infty}}})/S_{\frac{1}{l^{\infty}}}),
P^{D_{\frac{1}{l^{\infty}}}},P\langle -2\rangle)(-1)$$ 
is a morphism of bifiltered complexes, 
$N$ and $d^{-k,q+k}_r$'s indeed commutes in the diagram above.)
By the key lemma below, 
we see that the morphism 
\begin{equation*} 
N^e \col {\rm gr}^P_{q+k+e}E_r^{-k,q+k} \lo 
{\rm gr}^P_{q+k-e}E_r^{-k,q+k}(-e) 
\end{equation*} 
is an isomorphism. 
\end{proof} 

\begin{lemm}\label{lemm:easy} 
Let 
\begin{equation*} 
\begin{CD}
(U,P^U) @>{f}>> (V,P^V) @>{g}>> (W,P^W)\\ 
@V{N}VV @V{N}VV @V{N}VV \\
(U,P^U\langle -2\rangle) @>{f}>> (V,P^V\langle -2\rangle) @>{g}>> 
(W,P^W\langle -2\rangle)
\end{CD} 
\end{equation*} 
be a commutative diagram of filtered objects of 
an abelian category such that $g\circ f=0$. 
Set $\Lam=U, V,W$.  
Assume that $P^U$ is finite and 
that $f$ and $g$ are strict with respect to $P^{\Lam}$'s. 
Assume that the induced morphisms 
$N^e \col {\rm gr}^{P^{\Lam}}_e\Lam\os{\sim}{\lo} {\rm gr}^{P^{\Lam}}_{-e}\Lam$  
are isomorphisms. 
Then the  
induced morphism 
\begin{equation*} 
N^e \col {\rm gr}^{P^V}_e({\rm Ker}(g)/{\rm Im}(f)) 
{\lo} 
{\rm gr}^{P^V}_{-e}({\rm Ker}(g)/{\rm Im}(f)) 
\tag{13.10.1}\label{eqn:meg} 
\end{equation*} 
is an isomorphism.  
\end{lemm}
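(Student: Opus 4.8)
The plan is to reduce the assertion to a statement about the associated graded objects, where it becomes a short diagram chase relying only on the isomorphisms $N^e\col {\rm gr}^{P^\Lam}_e\Lam\os{\sim}{\lo}{\rm gr}^{P^\Lam}_{-e}\Lam$ for $\Lam=U,V,W$. The key point is that strictness lets the ${\rm gr}$-functor commute with the formation of the cohomology $H:={\rm Ker}(g)/{\rm Im}(f)$, so that I never have to lift anything back from gradeds to the filtered level.

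First I would set up the reduction. Give ${\rm Ker}(g)$ and ${\rm Im}(f)$ the filtrations induced by $P^V$. Strictness of $g$ gives ${\rm gr}^{P^V}_k{\rm Ker}(g)={\rm Ker}({\rm gr}^{P^V}_kg)$, and strictness of $f$ gives ${\rm gr}^{P^V}_k{\rm Im}(f)={\rm Im}({\rm gr}^{P^U}_kf)$; finiteness of $P^U$ makes the induced filtration on ${\rm Im}(f)$ finite, so these are legitimate biregular filtrations. The inclusion ${\rm Im}(f)\os{\sus}{\lo}{\rm Ker}(g)$ is automatically strict, since ${\rm Im}(f)\cap P^V_k{\rm Ker}(g)={\rm Im}(f)\cap P^V_kV=P^V_k{\rm Im}(f)$; hence $0\lo {\rm Im}(f)\lo {\rm Ker}(g)\lo H\lo 0$ is strictly exact and applying ${\rm gr}$ yields
\[
{\rm gr}^{P^V}_kH\os{\sim}{\lo}{\rm Ker}({\rm gr}^{P^V}_kg)/{\rm Im}({\rm gr}^{P^U}_kf)\qquad (k\in {\mab Z}),
\]
the induced $N$ on ${\rm gr}^{P^V}H$ corresponding to the one induced by $N$ on $V$. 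Thus, abbreviating $A_k:={\rm gr}^{P^U}_kU$, $B_k:={\rm gr}^{P^V}_kV$, $C_k:={\rm gr}^{P^W}_kW$, with induced maps $\bar f,\bar g$ and induced degree $-2$ operators $N$, it suffices to prove that $N^e\col \tilde H_e\lo \tilde H_{-e}$ is an isomorphism, where $\tilde H_k:={\rm Ker}(\bar g_k)/{\rm Im}(\bar f_k)$.

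Then I would run the diagram chase on gradeds, using $g\circ f=0$ (hence $\bar g\circ \bar f=0$) together with $N^e\col A_e\os{\sim}{\lo}A_{-e}$, $N^e\col B_e\os{\sim}{\lo}B_{-e}$, $N^e\col C_e\os{\sim}{\lo}C_{-e}$. For surjectivity, given $v\in {\rm Ker}(\bar g)_{-e}$ I would lift $v=N^eb$ with $b\in B_e$ (surjectivity of $N^e$ on $B$); then $N^e\bar g(b)=\bar g(v)=0$ forces $\bar g(b)=0$ by injectivity of $N^e$ on $C$, so $b\in {\rm Ker}(\bar g)_e$ and $N^e[b]=[v]$. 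For injectivity, given $w\in {\rm Ker}(\bar g)_e$ with $N^ew=\bar f(a')$, $a'\in A_{-e}$, I would write $a'=N^ea$ with $a\in A_e$ (surjectivity of $N^e$ on $A$); then $N^e(w-\bar f(a))=0$, so $w=\bar f(a)$ by injectivity of $N^e$ on $B$, i.e. $[w]=0$ in $\tilde H_e$. This establishes (\ref{eqn:meg}).

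The hard part will be nothing more than the bookkeeping of the first step: one must check that strictness of $f$ and $g$ really propagates to the induced filtrations on ${\rm Ker}(g)$, ${\rm Im}(f)$ and on the quotient $H$, which is exactly where the hypothesis that $P^U$ is finite (forcing the induced filtration on ${\rm Im}(f)$ to be finite, hence biregular) enters. Once the identification of ${\rm gr}^{P^V}H$ with the cohomology of the graded complex is in hand, the remaining argument is purely formal and, as the chase shows, invokes the isomorphism hypothesis only through the weak one-sided forms ($N^e$ surjective, resp. injective) needed at each step.
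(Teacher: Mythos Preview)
Your proof is correct and in fact takes a cleaner route than the paper's. Both arguments begin the same way: strictness of $f$ and $g$ yields the isomorphisms $N^e\col {\rm gr}^P_e{\rm Ker}(g)\os{\sim}{\lo}{\rm gr}^P_{-e}{\rm Ker}(g)$ and $N^e\col {\rm gr}^P_e{\rm Im}(f)\os{\sim}{\lo}{\rm gr}^P_{-e}{\rm Im}(f)$, and both deduce surjectivity of (\ref{eqn:meg}) immediately from this. The divergence is in injectivity. The paper stays at the filtered level: it takes $x\in P_eV\cap{\rm Ker}(g)$ with $N^e(x)\in P_{-e-1}V\cap{\rm Ker}(g)+f(P_{-e}U)$ and needs to produce a lift $y\in f(P_eU)$ with $N^e(y)$ absorbing the $f(P_{-e}U)$ term; this requires the inclusion $f(P_{-e}U)\subset N^e(f(P_eU))$, established by the iteration (\ref{ali:sne}) which terminates only because $P^U$ is finite. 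You instead complete the reduction to gradeds first---identifying ${\rm gr}^{P^V}_kH$ with ${\rm Ker}(\bar g_k)/{\rm Im}(\bar f_k)$---and then the needed lift is simply the surjectivity of $N^e\col A_e\lo A_{-e}$, no iteration required. Your approach therefore shows that the hypothesis ``$P^U$ finite'' is not actually needed; your own invocation of it (for biregularity of the induced filtration on ${\rm Im}(f)$) is harmless but superfluous, since the identifications ${\rm gr}_k{\rm Ker}(g)={\rm Ker}({\rm gr}_kg)$ and ${\rm gr}_k{\rm Im}(f)={\rm Im}({\rm gr}_kf)$ follow from strictness alone.
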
 
\begin{proof} 
For a finitely filtered morphism $F\col (V,P)\lo (V',P')$, $F$ is strict if and only if 
the sequence $$0\lo {\rm gr}^P{\rm Ker}(F)\lo {\rm gr}^PV\lo {\rm gr}^{P'}V'\lo 
{\rm gr}^{P'}{\rm Coker}(F)\lo 0$$
of graded objects is exact. 
Hence the morphisms  
\begin{align*} 
N^e\col  {\rm gr}^P_e{\rm Ker}(g)\lo {\rm gr}^P_{-e}{\rm Ker}(g)
\tag{13.10.2}\label{eqn:kgeg} 
\end{align*} 
and 
\begin{align*} 
N^e\col  {\rm gr}^P_e{\rm Ker}(f)\lo {\rm gr}^P_{-e}{\rm Ker}(f)
\tag{13.10.3}\label{eqn:kfeg} 
\end{align*} 
are isomorphisms. 
Here we denote $P^{\Lam}$ simply by $P$. 
Consequently  
\begin{align*} 
N^e\col  {\rm gr}^P_e{\rm Im}(f)\lo {\rm gr}^P_{-e}{\rm Im}(f)
\tag{13.10.4}\label{eqn:kimg} 
\end{align*} 
is an isomorphism. 
Because 
\begin{align*} 
{\rm gr}^{P}_{\pm e}({\rm Ker}(g)/{\rm Im}(f)) &=
(P_{\pm e}V\cap {\rm Ker}(g))/(P_{\pm e-1}V\cap {\rm Ker}(g)+
P_{\pm e}V\cap  {\rm Im}(f))\\
&(= (P_{\pm e}V\cap {\rm Ker}(g))/(P_{\pm e-1}V\cap {\rm Ker}(g)+
f(P_{\pm e}U))), 
\end{align*} 
the surjectivity of the morphism (\ref{eqn:meg}) follows from that of 
(\ref{eqn:kgeg}). 
We can show the injectivity of the morphism (\ref{eqn:meg}) as follows.
\par 
By the surjectivity of the morphism (\ref{eqn:kimg}) for various $e$'s, 
we obtain the following inclusions 
\begin{align*} 
f(P_{-e}U)  & \subset N^e(f(P_eU))+f(P_{-e-1}U)\subset 
N^e(f(P_eU))+N^{e+1}f(P_{-e-1}U)+f(P_{-e-2}U) \tag{13.10.5}\label{ali:sne}\\
& \subset \cdots  \subset \sum_{k=0}^mN^{e+k}f(P_{e+k}U)(=
\sum_{k=0}^mg(N^{e+k}P_{e+k}U))
\subset N^e(f(P_eU)) 
\end{align*} 
for some $m>>0$ because the filtration $P$ on $U$ is finite 
(and hence $N\col U\lo U$ is nilpotent). 
By Mitchell's embedding theorem 
we may assume that $f$ and $g$ are morphisms of filtered $R$-modules, 
where $R$ is a ring with unit element. 
Assume that, for $x\in P_eV\cap {\rm Ker}(g)$,  
$$N^e(x) \subset 
P_{-e-1}V\cap {\rm Ker}(g)+f(P_{-e}U).$$
By the inclusion (\ref{ali:sne}) 
there exists an element $y\in f(P_eU)$ such that $N^e(x-y) \in 
P_{-e-1}V\cap {\rm Ker}(g)$. 
By the injectivity of (\ref{eqn:kgeg}),  
$x-y\in P_{e-1}{\rm Ker}(g)$ and hence 
$x\in f(P_eU)+P_{e-1}{\rm Ker}(g)$.  
This shows that the injectivity of (\ref{eqn:meg}).
\end{proof} 

\begin{rema}
To prove the $\infty$-adic analogue of the conjecture (\ref{conj:remc}) for the case of a projective 
strict semistable family over the unit disk, Steenbrink and Zucker have used the $E_2$-degeneration 
of the $\infty$-adic analogue of (\ref{eqn:lehkd}) modulo torsion in \cite[p.~527]{stz}. 
We have not used the $E_2$-degeneration of (\ref{eqn:lehkd}) modulo torsion; 
we do not know whether it holds in general. 
Instead we use the strict compatibility of $d_r^{-k,q+k}$ for any $r\geq 1$ in the proof of 
(\ref{theo:mimr}). 
\end{rema}

\begin{prop}\label{coro:nm}
If $\dim \os{\circ}{X}\leq 2$, then {\rm (\ref{conj:log})} is true.  
\end{prop}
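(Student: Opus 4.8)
The plan is to reduce the assertion, by base change and a structure theorem, to the monodromy-weight conjecture for projective strict semistable families of relative dimension $\leq 2$, which is the theorem of Rapoport--Zink (and, on the $p$-adic side, Mokrane). First I would reduce to the case where $\os{\circ}{S}$ is a geometric point. By Proposition \ref{prop:sms} (2) the sheaves $P_kR^qf_{X_{\frac{1}{l^{\infty}}}/S_{\frac{1}{l^{\infty}}}*}({\mab Q}_l)$ are smooth on $\os{\circ}{S}_{\rm et}$, hence so are all the graded pieces ${\rm gr}^P_{q\pm e}$, so the assertion that $N^e$ in (\ref{eqn:ulmw}) is an isomorphism may be tested on the stalks at the geometric points $\ol{x}$ of $\os{\circ}{S}$ (a morphism of sheaves being an isomorphism iff it is so on all geometric stalks). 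By the base change isomorphism of Proposition \ref{prop:pwsbc}, the stalk at $\ol{x}$ of the weight spectral sequence of $R^qf_*({\mab Z}_l)$ (the case $D=\emptyset$ of (\ref{eqn:lledd}), i.e. (\ref{eqn:nkw})) is the corresponding spectral sequence for the restriction of $X$ to the geometric log point $\ol{s}:=s\otimes_{\kap}\kap_{\rm sep}$, compatibly with the monodromy operator $N$ of (\ref{eqn:lhlq}). Thus I may assume $S=\ol{s}$ is a geometric log point over a separably closed field and $X$ is a projective SNCL scheme with $\dim\os{\circ}{X}\leq 2$.

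Next I would record the shape of the spectral sequence and dispose of the easy strata. For $\dim\os{\circ}{X}=0$ there is nothing to prove, and for $\dim\os{\circ}{X}=1$ the spectral sequence (\ref{eqn:nkw}) is concentrated in so few terms that (\ref{conj:log}) reduces to the classical statement for a degenerating (nodal) curve. For $\dim\os{\circ}{X}=2$ the $E_1$-terms of (\ref{eqn:nkw}) are the $l$-adic cohomology groups, twisted by the orientation sheaves, of the smooth projective strata $\os{\circ}{X}{}^{(0)}$ (the component surfaces), $\os{\circ}{X}{}^{(1)}$ (the double curves) and $\os{\circ}{X}{}^{(2)}$ (the triple points), with $d_1$ given by alternating sums of Gysin and restriction maps and with $N$ the combinatorial operator of (\ref{eqn:lhlq}). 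All these strata being smooth and projective, the hard Lefschetz theorem applies to each of them, which is the sole analytic input of the Rapoport--Zink argument.

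The decisive step is to match this combinatorial datum with that of a strict semistable family. Here I would invoke the structure results of Kajiwara and Achinger (\cite[(3.1)]{kaj}, \cite[Theorem 3.6]{ash}) which, after the reduction to a geometric log point already carried out, realize a projective SNCL scheme of dimension $\leq 2$ as the log special fiber of a projective strict semistable family over a henselian discrete valuation ring; equivalently, they identify the weight spectral sequence (\ref{eqn:nkw}) of $X$, together with its $N$-action, $d_1$, and Gysin/restriction differentials, with that of such a strict semistable family. For the latter, the monodromy-weight conjecture in relative dimension $\leq 2$ is exactly Rapoport--Zink's (and Mokrane's) theorem (\cite{rz}, \cite{msemi}), established by a formal argument from hard Lefschetz for the strata. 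Feeding this back yields that $N^e$ in (\ref{eqn:ulmw}) is an isomorphism, which is (\ref{conj:log}).

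I expect the main obstacle to be precisely this last matching. An SNCL scheme is not a priori a strict semistable degeneration: its log structure encodes a gluing datum of $d$-semistability type rather than an actual one-parameter degeneration, so some care is needed to check that the Kajiwara--Achinger results produce a strict semistable model whose $E_1$-terms, differentials and monodromy operator agree with those of $X$ on the nose, and that this can be arranged in the required generality (in particular that the reduction to a geometric point in the first step is genuinely what makes such a realization available). Once this identification is secured, everything else is either classical or a direct citation of \cite{rz} and \cite{msemi}.
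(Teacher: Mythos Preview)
Your proposal rests on a mischaracterization of the Kajiwara--Achinger input. The results \cite[(3.1)]{kaj} and \cite[Theorem 3.6]{ash} do \emph{not} realize a projective SNCL surface as the log special fiber of a strict semistable family over a DVR; they prove the log $l$-adic monodromy-weight conjecture (\ref{conj:log}) for $H^1$ of a projective SNCL scheme directly, via a log hard Lefschetz argument. There is no known general ``lifting'' or ``algebraization'' statement of the kind you invoke, and indeed the whole point of Kato's conjecture (\ref{conj:log}) is that SNCL schemes over a log point need not arise from degenerations. So your decisive step, as written, is a gap: the matching with a strict semistable model is not supplied by the cited references and is not otherwise available.

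The paper's proof avoids any lifting and proceeds cohomological-degree by cohomological-degree. After the $E_2$-degeneration of (\ref{eqn:nkw}) (Nakayama \cite{nd} or (\ref{prop:stc})), one must show that $\nu^k\colon E_2^{-k,q+k}\to E_2^{k,q-k}(-k)$ is an isomorphism for each $q\le 4$. For $q=1$ this is exactly Kajiwara--Achinger. For $q=3$ one dualizes the $q=1$ statement using classical Poincar\'{e} duality on the smooth projective strata $\os{\circ}{X}{}^{(m)}$ together with the explicit description of the $d_1$'s as Gysin/restriction maps (Corollary \ref{coro:bdel}); no duality on the log cohomology itself is needed. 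For $q=2$ the formal computation of Rapoport--Zink/Mokrane \cite[(6.2.1)]{msemi}, which uses only hard Lefschetz on the strata and the shape of the $E_1$-page, applies verbatim to the SNCL spectral sequence. Your reduction to a geometric point and your remarks on the combinatorics of the $E_1$-page are fine and are implicitly used here; what is missing is replacing the nonexistent lifting step by this direct degree-by-degree verification.
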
 
\begin{proof} 
By the main result in \cite{nd} or (\ref{prop:stc}),  
the $l$-adic weight spectral sequence 
(\ref{eqn:lledd}) for the case 
$D=\emptyset$ degenerates at $E_2$. 
The conjecture (\ref{conj:log}) for the case $q=1$ has been proved by 
Kajiwara-Achinger  (\cite[(3.1)]{kaj} or \cite[Theorem 3.6]{ash}). 
Hence the conjecture for the case $q=3$  is also true 
by the classical Poincar\'{e} duality and the description of the edge morphisms 
of $E_1$-terms of (\ref{prop:stc}) (cf.~(\ref{eqn:glbd}) below for the case $D=\emptyset$) 
because 
the induced morphism 
$\nu^k \col E_1^{-k,q+k}\lo E_1^{k,q-k}(-k)$ by $\nu \col 
R^qf_{(X_{\frac{1}{l^{\infty}}},D)/S_{\frac{1}{l^{\infty}}}*}({\mab Q}_l)\lo 
R^qf_{(X_{\frac{1}{l^{\infty}}},D)/S_{\frac{1}{l^{\infty}}}*}({\mab Q}_l)(-1)$ 
is identified with the tensorization of the identity morphism with $\check{T}^{\otimes k}$. 
The conjecture for the case $q=2$  holds by the proof of \cite[(6.2.1)]{msemi}. 
\end{proof}

\begin{coro}\label{theo:c2oj2}
If $\dim \os{\circ}{X}\leq 2$, then {\rm (\ref{conj:remc})} is true.  
\end{coro}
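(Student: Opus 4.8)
The plan is to obtain \ref{theo:c2oj2} as a formal consequence of the reduction theorem \ref{theo:mimr} together with the low-dimensional case \ref{coro:nm} of the absolute conjecture, the point being that \ref{coro:nm} must be applied not only to $X$ but to each of the closed strata $D^{(k)}$ whose cohomologies build the $E_1$-terms of \ref{eqn:lehkd}. Recall that the proof of \ref{theo:mimr} uses the log $l$-adic monodromy-weight conjecture \ref{conj:log} exactly for the SNCL schemes whose Kummer log \'{e}tale cohomologies $H^q_{\rm ket}(D^{(k)}_{\frac{1}{l^{\infty}}},{\mab Q}_l)$ occur (up to Tate twist) as the $E_1$-terms $E_1^{-k,q+k}$ of that spectral sequence, so that the monodromy operator induces the graded isomorphism \ref{eqn:mwd}. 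Hence it suffices to verify \ref{conj:log} for every stratum $D^{(k)}$, with the convention $D^{(0)}:=X$.

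First I would observe that each $D^{(k)}$ is a projective SNCL scheme over $s$: by \ref{prop:dkwdef} the log scheme $D^{(k)}$ with its pull-back log structure from $X$ is well defined, it is projective since $X$ is projective over $\os{\circ}{S}$ and $\os{\circ}{D}{}^{(k)}$ is a finite disjoint union of closed $k$-fold intersections of smooth components, and $\os{\circ}{D}{}^{(k)}$ has codimension $k$ in $\os{\circ}{X}$, whence $\dim \os{\circ}{D}{}^{(k)}\le \dim\os{\circ}{X}-k\le 2-k$. Thus under $\dim\os{\circ}{X}\le 2$ we get $\dim D^{(0)}=\dim\os{\circ}{X}\le 2$ and $\dim D^{(k)}\le 1$ for all $k\ge 1$. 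Next I would apply \ref{coro:nm} to each $D^{(k)}$ in turn; as \ref{coro:nm} establishes \ref{conj:log} for any projective SNCL scheme of dimension $\le 2$, it yields \ref{conj:log} for $X=D^{(0)}$ and for every $D^{(k)}$ with $k\ge 1$. Feeding these into \ref{theo:mimr} then produces \ref{conj:remc} for $(X,D)$, which is the assertion of \ref{theo:c2oj2}.

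The main obstacle is purely a verification, but one that must be made explicit: that \ref{coro:nm} — whose proof rests on Kajiwara--Achinger for the first cohomology, on Poincar\'{e} duality for the top-but-one degree, and on the Rapoport--Zink--Mokrane computation for the middle degree — does apply to the strata $D^{(k)}$ equipped with their inherited log structures, and not merely to $X$ itself. Concretely one has to confirm that $D^{(k)}$ is again an SNCL scheme over the log point $s$ in the precise sense demanded by \ref{conj:log}, and that the degrees $q$ for which the isomorphism \ref{eqn:mwd} is needed all fall within the range covered by \ref{coro:nm} (this is automatic once $\dim D^{(k)}\le 2$). Granting these two structural facts, the corollary follows at once from the two quoted results with no additional computation.
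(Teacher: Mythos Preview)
Your proposal is correct and follows the same route as the paper: the paper's proof is the single line ``(\ref{theo:c2oj2}) follows from (\ref{coro:nm}) and (\ref{theo:mimr}),'' and you have unpacked exactly what that entails --- namely that the hypothesis of (\ref{theo:mimr}) is really (\ref{conj:log}) for each stratum $D^{(k)}$ (as is visible in the isomorphism (\ref{eqn:mwd}) inside the proof of (\ref{theo:mimr})), and that (\ref{coro:nm}) supplies this because $\dim \os{\circ}{D}{}^{(k)}\le 2$. Your added verification that $D^{(k)}$ is again a projective SNCL scheme over $s$ is implicit in the paper's set-up (see the definition of $D^{(k)}$ following (\ref{prop:dkwdef})), so there is no gap.
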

\begin{proof}
(\ref{theo:c2oj2}) follows from (\ref{coro:nm}) and (\ref{theo:mimr}). 
\end{proof}

\begin{prob}\label{prob:indlp}
Let ${\cal V}$ be a complete discrete valuation ring of mixed characteristics 
$(0,p)$ with perfect residue field. Let $K$ be the fraction field of ${\cal V}$. 
Set $B=({\rm Spf}({\cal V}),{\cal V}^*)$.   
Let $S$ be a $p$-adic formal family of log points over $B$ 
such that $\os{\circ}{S}$ is a ${\cal V}/p$-scheme.  
Let $(X,D)/S$ be a proper SNCL scheme with a relative SNCD.  
In \cite{nppf} we have constructed the following spectral sequence
\begin{equation*} 
E_1^{-k,q+k}:=\bigoplus_{k'\leq k}\bigoplus_{j\geq \max \{-k',0\}} 
R^{q-2j-k}f_{\os{\circ}{X}{}^{(2j+k')}\cap \os{\circ}{D}{}^{(k-k')}
/K}(
{\cal O}_{\os{\circ}{X}{}^{(2j+k')}\cap \os{\circ}{D}{}^{(k-k')}
/\os{\circ}{S}}\otimes_{\mab Z}
\tag{13.14.1}\label{eqn:llpedd} 
\end{equation*} 
$$\vp^{(2j+k',k-k')}
((\os{\circ}{X},\os{\circ}{D})/\os{\circ}{S}))(-j-k) \Lo 
R^qf_{(X,D)/S}({\cal O}_{(X,D)/S})\otimes_{\cal V}K 
\quad (q\in {\mab Z}).$$
Assume that $\os{\circ}{S}$ is connected. 
Let $q$ be a nonnegative integer. 
Is the rank of $P_kR^qf_{(X,D)/S*}({\cal O}_{(X,D)/S})\otimes_{\cal V}K$ $(k\in {\mab N})$ 
is equal to the rank of 
$P_kR^qf_{(X_{\frac{1}{l^{\infty}}},D_{\frac{1}{l^{\infty}}})/S_{\frac{1}{l^{\infty}}}*}({\mab Q}_l)$? 
(If one knows that the rank of  
$R^qf_{(X,D)/K*}({\cal O}_{(X,D)/S})\otimes_{\cal V}K$ is equal to 
that of 
$R^qf_{(X_{\frac{1}{l^{\infty}}},D_{\frac{1}{l^{\infty}}})/S_{\frac{1}{l^{\infty}}}*}({\mab Q}_l)$, then 
one can prove that this problem is affirmatively solved by the specialization argument.) 
\end{prob}

\section{Strict semistable family}\label{sec:ssf} 
Let ${\cal V}$ be a henselian discrete valuation ring with residue field $\kap$. 
Let $K$ be the fraction field of ${\cal V}$. 
Let $\ol{\cal V}$ be the integral closure of ${\cal V}$ 
in a separable closure $\ol{K}$ of $K$. 
Let $\ol{\kap}$ be the residue field of $\ol{\cal V}$. 
Let $\pi$ be a uniformizer of ${\cal V}$.   
Let $\os{\circ}{\cal X}$ be 
a strict semistable family over ${\cal V}$. 
Endow $\os{\circ}{\cal X}$ with the canonical log structure and 
let ${\cal X}$ be the resulting log scheme. 

\begin{defi}\label{defi:dxsnc} 
Let $\os{\circ}{\cal D}$ be a closed subscheme of 
$\os{\circ}{\cal X}$. 
We call $\os{\circ}{\cal D}$ a {\it horizontal SNCD} on 
$\os{\circ}{\cal X}$ if 
the following three conditions are satisfied:  
\medskip 
\parno
(14.1.1): $\os{\circ}{\cal D}$ is an effective Cartier 
divisor on  $\os{\circ}{\cal X}$ over ${\cal V}$. 
\medskip 
\parno
(14.1.2): $\os{\circ}{\cal D}$ is a union of smooth  divisors  
$\{\os{\circ}{\cal D}_{\mu}\}_{\mu \in M}$ on $\os{\circ}{\cal X}/{\cal V}$.  
\medskip 
\parno 
(14.1.3): There exists an \'{e}tale morphism 
$\os{\circ}{\cal X} {\lo} 
{\rm Spec}({\cal V}[x_0,\ldots, x_d]/(x_0\cdots x_{a-1}-\pi))$ 
zariski locally on $\os{\circ}{\cal X}$
such that 
$\os{\circ}{\cal D}$  is locally the fiber product of 
the following  diagram $(a +b \leq d+1)$:
\begin{equation*}
\begin{CD}
@.  \os{\circ}{\cal X}\\ 
@.  @VVV \\
{\rm Spec}({\cal V}[x_0, \ldots, x_d]
/(\prod_{i=0}^{a-1}x_i-\pi, \prod_{i=a}^{a+b-1}x_i)) 
@>{\sus}>> {\rm Spec}({\cal V}[x_0, \ldots, x_d]
/(\prod_{i=0}^{a-1}x_i-\pi))  
\end{CD}
\tag{14.1.4}\label{eqn:locsd}
\end{equation*} 
and such that
$\{\os{\circ}{\cal D}_{\mu}\}_{\mu \in M}
=\{\{x_i=0\}_{i=a}^{a+b-1}\}$ in the 
diagram (\ref{eqn:locsd}). 
Here the left hand $\{\os{\circ}{\cal D}_{\mu}\}_{\mu \in M}$ of this equality 
means the set of non-empty ${\cal D}_{\mu}$'s in the local situation above. 
\end{defi} 

Set ${\cal U}:=\os{\circ}{\cal X}\setminus \os{\circ}{\cal D}$ and 
$(\os{\circ}{\ol{X}},\os{\circ}{\ol{D}})
:=(\os{\circ}{\cal X},\os{\circ}{\cal D})\otimes_{\ol{\cal V}}\ol{\kap}$.  
Denote $\vp^{(k)}_{\rm et}(\os{\circ}{\ol{D}}/{\rm Spec}~\ol{\kap})$, 
$\vp^{(k)}_{\rm et}(\os{\circ}{\ol{X}}/{\rm Spec}~\ol{\kap})$ and 
$\vp^{(k),(k')}_{\rm et}(\os{\circ}{\ol{X}}/{\rm Spec}~\ol{\kap})$
$(k\in {\mab N})$ simply by 
$\vp^{(k)}_{\rm et}(\os{\circ}{\ol{D}}/\ol{\kap})$, 
$\vp^{(k)}_{\rm et}(\os{\circ}{\ol{X}}/\ol{\kap})$ and 
$\vp^{(k),(k')}_{\rm et}((\os{\circ}{\ol{X}},\os{\circ}{\ol{D}})/\ol{\kap})$, 
respectively. 
\par 
Set $\eta:={\rm Spec}(K)$ and 
$\ol{\eta}:={\rm Spec}(\ol{K})$. 
Set ${\cal X}_{{\eta}}:={\cal U}\otimes_{\cal V}{{K}}$, 
${\cal D}_{{\eta}}:={\cal D}\otimes_{\cal V}{{K}}$ 
and ${\cal U}_{{\eta}}:={\cal U}\otimes_{\cal V}{{K}}$. 
Set ${\cal X}_{\ol{\eta}}:={\cal X}\otimes_{\cal V}{\ol{K}}$, 
${\cal D}_{\ol{\eta}}:={\cal D}\otimes_{\cal V}{\ol{K}}$  
and ${\cal U}_{\ol{\eta}}:={\cal U}\otimes_{\cal V}{\ol{K}}$. 
In this section we construct two spectral sequences  
converging to $H^q({\cal U}_{\ol{\eta}},{\mab Z}_l)$ 
$(q,n\in {\mab N})$. 
These spectral sequences turn out to be 
canonically isomorphic to 
(\ref{eqn:lehkd}) and (\ref{eqn:lledd}), respectively.

\par 
Set $\os{\circ}{\cal X}_{\ol{\cal V}}:=\os{\circ}{\cal X}\otimes_{\cal V}\ol{\cal V}$ and 
$\os{\circ}{\cal X}_{\ol{\eta}}:=\os{\circ}{\cal X}\otimes_{\cal V}\ol{K}$. 
Set 
$\os{\circ}{\cal D}_{\ol{\cal V}}:=\os{\circ}{\cal D}\otimes_{\cal V}\ol{\cal V}$ and 
$\os{\circ}{\cal D}_{\ol{\eta}}:=\os{\circ}{\cal D}\otimes_{\cal V}\ol{K}$. 
As in \S\ref{sec:lbc} we can define the orientation sheaves 
$\vp^{(k)}_{\rm et}(\os{\circ}{\cal D}_{\ol{\cal V}}/\ol{\cal V})$ 
and 
$\vp^{(k)}_{\rm et}(\os{\circ}{\cal D}_{\ol{\eta}}/\ol{K})$ 
$(k\in {\mab N})$ 
on $(\os{\circ}{\cal D}_{\ol{\cal V}})_{\rm et}$ and 
$(\os{\circ}{\cal D}_{\ol{\eta}})_{\rm et}$, respectively.  
\par 
Let $\iota \col X  \os{\sus}{\lo} {\cal X}$ 
and  $j \col {\cal X}_K  \os{\sus}{\lo} 
{\cal X}$ be the natural closed immersion 
and the natural open immersion, respectively. 
Let $\ol{\iota} \col \ol{X}  \os{\sus}{\lo} {\cal X}_{\ol{\cal V}}$ 
and  $\ol{j} \col {\cal X}_{\ol{\cal V}} \os{\sus}{\lo} 
{\cal X}_{\ol{\eta}}$ be the base changes of $\iota$ and $j$ 
over $\ol{\cal V}$ and $\ol{\eta}$, 
respectively. 
\par 
Let $R\os{\circ}{\Psi}({\mab Z}/l^n):=\os{\circ}{\ol{\iota}}{}^*R\os{\circ}{\ol{j}}_*({\mab Z}/l^n)$  
be the classical nearby cycle sheaf of ${\mab Z}/l^n$ on 
$\os{\circ}{X}$. 
Let $K^{\bul}_{l^n}({\cal X}_{\ol{\cal V}}/\ol{\cal V})$ be a representative of 
$R\os{\circ}{\Psi}({\mab Z}/l^n)$ obtained by an injective resolution of 
${\mab Z}/l^n$ in $({\cal X}_{\ol{\eta}})_{\rm et}$. 
Let $T$ be a basis of ${\mab Z}_l(1)$; 
$T$ acts on $K^{\bul}_{l^n}({\cal X}_{\ol{\cal V}}/\ol{\cal V})$ by the proof of \cite[(2.24)]{rz}.  
Consider the mapping fiber of 
$T-1 \col  K^{\bul}_{l^n}({\cal X}_{\ol{\cal V}}/\ol{\cal V}) \lo K^{\bul}_{l^n}({\cal X}_{\ol{\cal V}}/\ol{\cal V})$:
${\rm MF}_{l^n}(T-1)_{\rm ss}:=s((K^{\bul}_{l^n}({\cal X}_{\ol{\cal V}}/\ol{\cal V}),d) 
\os{T-1}{\lo}(K^{\bul}_{l^n}({\cal X}_{\ol{\cal V}}/\ol{\cal V}),-d))$, 
where $s$ means the single complex of a double complex. 
Here ${\rm ss}$ in ${\rm MF}_{l^n}(T-1)_{\rm ss}$ is the abbreviation of the semistability. 
As in (\ref{cd:lttarl}), we define a morphism  
\begin{equation*} 
\theta \col {\rm MF}_{l^n}(T-1)_{\rm ss}^{\bul} \lo 
{\rm MF}_{l^n}(T-1)_{\rm ss}^{\bul}(1)[1] . 
\end{equation*} 
Set 
\begin{equation*} 
A^{ij}_{l^n}({\cal X}_{\ol{\cal V}}/\ol{\cal V})
:=({\rm MF}_{l^n}(T-1)_{\rm ss}(j+1)/
\tau_j{\rm MF}_{l^n}(T-1)_{\rm ss}(j+1))^{i+j+1} 
\tag{14.1.5}\label{eqn:ssxsmf}
\end{equation*} 
and we define the following boundary morphisms as in (\ref{cd:dlsgn}):
\begin{equation*} 
\theta \col A^{ij}_{l^n}({\cal X}_{\ol{\cal V}}/\ol{\cal V}) \lo 
A^{i,j+1}_{l^n}({\cal X}_{\ol{\cal V}}/\ol{\cal V}), \quad 
{-d} \col A^{ij}_{l^n}({\cal X}_{\ol{\cal V}}/\ol{\cal V}) \lo 
A^{i+1,j}_{l^n}({\cal X}_{\ol{\cal V}}/\ol{\cal V}). 
\tag{14.1.6}\label{cd:stssd}
\end{equation*} 
Let $A^{\bul}_{l^n}({\cal X}_{\ol{\cal V}}/\ol{\cal V})$ be the single complex of 
the double complex 
$A^{\bul \bul}_{l^n}({\cal X}_{\ol{\cal V}}/\ol{\cal V})$: 
\begin{align*} 
A^{\bul}_{l^n}({\cal X}_{\ol{\cal V}}/\ol{\cal V})  = &
s({\rm MF}_{l^n}(T-1)_{\rm ss}(1)/\tau_0{\rm MF}_{l^n}(T-1)_{\rm ss}(1)[1]
\os{\theta}{\lo} \tag{14.1.7}\label{eqn:sssgle} \\ 
& {\rm MF}_{l^n}(T-1)_{\rm ss}(2)/\tau_1{\rm MF}_{l^n}(T-1)_{\rm ss}(2)[2]
\os{\theta}{\lo}  \cdots). 
\end{align*}  
As in \cite{rz},  set 
\begin{equation*}
P^{\cal X}_kA^{ij}_n({\cal X}_{\ol{\cal V}}/\ol{\cal V})
:=((\tau_{2j+k+1}+\tau_j){\rm MF}_{l^n}(T-1)_{\rm ss}(j+1)
/\tau_j{\rm MF}_{l^n}(T-1)_{\rm ss}(j+1))^{i+j+1}. 
\end{equation*} 
Then we have a filtered complex 
$(A^{\bul}_{l^n}({\cal X}_{\ol{\cal V}}/\ol{\cal V}),P^{{\cal X}_{\ol{\cal V}}})$. 
By [loc.~cit., p.~35]
the natural morphism 
\begin{equation*} 
\os{\circ}{\ol{i}}{}^*R\os{\circ}{\ol{j}}_*({\mab Z}/l^n)  \lo 
{\rm MF}_{l^n}(T-1)_{\rm ss}
\end{equation*} 
is an isomorphism. 
Let $a^{(k)} \col \os{\circ}{\ol{X}}{}^{(k)}\lo \os{\circ}{\ol{X}}$ be the natural morphism. 
The Kummer sequence 
\begin{equation*}
0 \lo {\mab Z}/l^n(1) \lo {\mab G}_m \lo {\mab G}_m 
\lo 0
\tag{14.1.8}\label{eqn:mugm}
\end{equation*}
in $\os{\circ}{\cal X}_{\ol{\eta},{\rm et}}$ gives 
an isomorphism 
\begin{equation*}
a^{(1)}_*(({\mab Z}/l^n)_{\os{\circ}{\ol{X}}{}^{(1)}}(-1)
\otimes_{\mab Z}
\varpi^{(1)}_{\rm et}(\os{\circ}{\ol{X}}/\ol{\kap})) =
a^{(1)}_*(({\mab Z}/l^n)_{\os{\circ}{\ol{X}}{}^{(1)}})(-1) 
\os{\sim}{\lo} 
\os{\circ}{\ol{i}}{}^*R^1\os{\circ}{\ol{j}}_*({\mab Z}/l^n).  
\tag{14.1.9}\label{eqn:1rz}
\end{equation*} 
By [loc.~cit., (3.7)] the cup product induce 
the following isomorphism: 
\begin{equation*}
\bigwedge^r\os{\circ}{\ol{i}}{}^*R^1\os{\circ}{\ol{j}}_*({\mab Z}/l^n) 
\os{\sim}{\lo}
\os{\circ}{\ol{i}}{}^*R^r\os{\circ}{\ol{j}}_*({\mab Z}/l^n) 
\quad (r\in {\mab N}). 
\tag{14.1.10}\label{eqn:cupr1}
\end{equation*} 
Hence we have an isomorphism 
\begin{equation*}
\os{\circ}{\ol{i}}{}^*R^r\os{\circ}{\ol{j}}_*({\mab Z}/l^n) 
\os{\sim}{\longleftarrow} 
a^{(r)}_*(({\mab Z}/l^n)_{\os{\circ}{\ol{X}}{}^{(r)}}(-r)
\otimes_{\mab Z}
\varpi^{(r)}_{\rm et}(\os{\circ}{\ol{X}}/\ol{\kap})) \quad (r\in {\mab N}). 
\tag{14.1.11}\label{eqn:gap}
\end{equation*} 
and we have the following isomorphism 
in $D^{\rm b}_{\rm ctf}(\os{\circ}{\ol{X}}_{\rm et},{\mab Z}/l^n)$ 
as in (\ref{eqn:grf}):
\begin{align*} 
{\rm gr}^{P^{\cal X}}_kA^{\bul}_{l^n}({\cal X}_{\ol{\cal V}}/\ol{\cal V})
& =  \us{j\geq {\rm max}\{-k,0\}}{\bigoplus}
{\mab Z}/l^n(-j-k)
\otimes_{\mab Z}a^{(2j+k)}_*
(\varpi^{(2j+k)}_{\rm et}(\os{\circ}{\ol{X}}/\ol{\kap}))[-2j-k].  
\tag{14.1.12}\label{ali:ssgr} 
\end{align*} 
By the same proof as that of (\ref{prop:qislws}) 
we see that 
the morphism 
$\theta \col K^{\bul}_{l^n}({\cal X}_{\ol{\cal V}}/\ol{\cal V})\lo  {\rm MF}_{l^n}(T-1)_{\rm ss}(1)[1]$ 
induces a quasi-isomorphism 
\begin{equation*} 
\theta \otimes 1\col 
K^{\bul}_{l^n}({\cal X}_{\ol{\cal V}}/\ol{\cal V})\lo A^{\bul}_{l^n}({\cal X}_{\ol{\cal V}}/\ol{\cal V})
\tag{14.1.13}\label{eqn:fqsis} 
\end{equation*} 
in $C^+(\os{\circ}{\ol{X}}_{\rm et},{\mab Z}/l^n)$.  
\par 
Set $(\os{\circ}{\cal X}_{\ol{\cal V}},\os{\circ}{\cal D}_{\ol{\cal V}})
:=(\os{\circ}{\cal X}_{\ol{\cal V}},M({\cal D}_{\ol{\cal V}}))$ and   
consider the following morphism 
\begin{equation*} 
\eps_{\os{\circ}{\cal D}_{\ol{\cal V}}} \col 
(\os{\circ}{\cal X}_{\ol{\cal V}},\os{\circ}{\cal D}_{\ol{\cal V}}) \lo 
\os{\circ}{\cal X}_{\ol{\cal V}}. 
\end{equation*} 
forgetting the log structure $M({\cal D}_{\ol{\cal V}})$.  
Henceforth, assume that $\os{\circ}{X}$ is quasi-compact.  
As in (\ref{prop:bav}), there exists a bounded filtered flat resolution  
$(M^{\bul}_{l^n}(\os{\circ}{\cal D}_{\ol{\cal V}}/\ol{\cal V}),R)$ of a 
representative in 
${\rm C}^{\rm b}{\rm F}((\os{\circ}{\cal X}_{\ol{\cal V}})_{\rm et},{\mab Z}/l^n)$ of 
the filtered complex 
$(R\os{\circ}{\eps}_{\os{\circ}{\cal D}_{\ol{\cal V}}*}({\mab Z}/l^n),\tau)$.  
By using the adjunction morphism, 
we have a natural morphism 
\begin{equation*}
\os{\circ}{{\ol{\iota}}}{}^{*}
(R\os{\circ}{\eps}_{\os{\circ}{\cal D}_{\ol{{\cal V}}}*}({\mab Z}/l^n))  
\lo R\os{\circ}{\eps}_{\os{\circ}{\ol{D}*}}({\mab Z}/l^n) 
\tag{14.1.14}\label{eqn:osire}
\end{equation*} 
and, in fact, this is an isomorphism by \cite[(2.4)]{ktnk}. 
Hence we can take 
$(M^{\bul}_{l^n}(\os{\circ}{D}_{\ol{\cal V}}/\ol{\cal V}),Q)$ in 
\S\ref{sec:lbc} as  
$\os{\circ}{\ol{\iota}}{}^*
(M^{\bul}_{l^n}(\os{\circ}{\cal D}/\os{\circ}{S}),R)$ in the semistable case in this section.  
\par 
Consider the following filtered double complex 
and the single complex: 
\begin{equation*} 
(A^{\bul \bul}_{l^n}(({\cal X}_{\ol{\cal V}},{\cal D}_{\ol{\cal V}})/\ol{\cal V}),P^{{\cal D}_{\ol{\cal V}}})
:=A^{\bul}_{l^n}({\cal X}_{\ol{\cal V}}/\ol{\cal V})
\otimes_{{\mab Z}/l^n}\os{\circ}{\ol{\iota}}{}^*
((M^{\bul}_{l^n}(\os{\circ}{\cal D}_{\ol{\cal V}}/\ol{\cal V}),R))
\end{equation*} 
and 
\begin{equation*} 
(A^{\bul}_{l^n}(({\cal X}_{\ol{\cal V}},{\cal D}_{\ol{\cal V}})/\ol{\cal V}),P^{{\cal D}_{\ol{\cal V}}})
:=s(A^{\bul \bul}_{l^n}(({\cal X}_{\ol{\cal V}},{\cal D}_{\ol{\cal V}})/\ol{\cal V}),P^{{\cal D}_{\ol{\cal V}}}) 
\end{equation*} 
in ${\rm C}^{\rm b}{\rm F}((\os{\circ}{\ol{X}})_{\rm et},{\mab Z}/l^n)$. 
Consider also the following filtered double complex and the single complex: 
\begin{equation*} 
(A^{\bul \bul}_{l^n}(({\cal X}_{\ol{\cal V}},{\cal D}_{\ol{\cal V}})/\ol{\cal V}),P)
:=(A^{\bul}_{l^n}({\cal X}_{\ol{\cal V}}/\ol{\cal V}),P^{{\cal X}_{\ol{\cal V}}})
\otimes_{{\mab Z}/l^n}\os{\circ}{\ol{\iota}}{}^*((M^{\bul}_{l^n}
(\os{\circ}{\cal D}_{\ol{\cal V}}/\ol{\cal V}),R))
\end{equation*} 
and 
\begin{equation*} 
(A^{\bul}_{l^n}(({\cal X}_{\ol{\cal V}},{\cal D}_{\ol{\cal V}})/\ol{\cal V}),P)
:=s(A^{\bul \bul}_{l^n}(({\cal X}_{\ol{\cal V}},{\cal D}_{\ol{\cal V}})/\ol{\cal V}),P) 
\end{equation*} 
in ${\rm C}^{\rm b}{\rm F}((\os{\circ}{\ol{X}})_{\rm et},{\mab Z}/l^n)$. 

\begin{prop}\label{prop:wdbss}  
The image of the complex 
$(A^{\bul}_{l^n}(({\cal X}_{\ol{\cal V}},{\cal D}_{\ol{\cal V}})/\ol{\cal V}),P^{{\cal D}_{\ol{\cal V}}},P)
\in {\rm C}^{\rm b}{\rm F}^2((\os{\circ}{\ol{X}})_{\rm et},{\mab Z}/l^n)$ 
in ${\rm D}^{\rm b}{\rm F}^2((\os{\circ}{\ol{X}})_{\rm et},{\mab Z}/l^n)$ 
is independent of the choice of 
$K^{\bul}_{l^n}({\cal X}_{\ol{\cal V}}/\ol{\cal V})$, $T$ and 
$\os{\circ}{\ol{\iota}}{}^*((M^{\bul}_{l^n}(\os{\circ}{\cal D}_{\ol{\cal V}}/\ol{\cal V}),R))$ 
up to canonical isomorphisms.  
It is an object of 
${\rm D}^{\rm b}{\rm F}^2_{\rm ctf}((\os{\circ}{\ol{X}})_{\rm et},{\mab Z}/l^n)$.  
The family 
$\{(A^{\bul}_{l^n}({\cal X}_{\ol{\cal V}},{\cal D}_{\ol{\cal V}})/\ol{\cal V}),
P^{{\cal D}_{\ol{\cal V}}},P)\}_{n\in {\mab N}}$ defines an object of 
${\rm D}^{\rm b}{\rm F}^2_{\rm ctf}((\os{\circ}{\ol{X}})_{\rm et},{\mab Z}_l)$.  
\end{prop}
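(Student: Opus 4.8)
The plan is to follow verbatim the strategy of (\ref{theo:wdbst}), since the present bifiltered complex is assembled in exactly the same fashion: it is the single complex of the tensor product of $(A^{\bul}_{l^n}({\cal X}_{\ol{\cal V}}/\ol{\cal V}),P^{{\cal X}_{\ol{\cal V}}})$, itself built from the mapping fiber ${\rm MF}_{l^n}(T-1)_{\rm ss}$ with its canonical filtration $\tau$, and the filtered flat complex $\os{\circ}{\ol{\iota}}{}^*((M^{\bul}_{l^n}(\os{\circ}{\cal D}_{\ol{\cal V}}/\ol{\cal V}),R))$. Because both filtrations $P^{{\cal D}_{\ol{\cal V}}}$ and $P$ are biregular, the entire statement reduces, by the argument of the proof of (\ref{prop:pq}) as used in (\ref{theo:wdbst}), to controlling the double graded pieces ${\rm gr}_{k'}^P{\rm gr}_k^{P^{{\cal D}_{\ol{\cal V}}}}A^{\bul}_{l^n}(({\cal X}_{\ol{\cal V}},{\cal D}_{\ol{\cal V}})/\ol{\cal V})$. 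In this translation, the graded formula (\ref{ali:ssgr}) plays the role that (\ref{eqn:grf}) played before, and the quasi-isomorphism (\ref{eqn:fqsis}) replaces (\ref{eqn:fqis}).

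For the independence assertion I would argue as follows. Since ${\rm gr}^R_k$ of the filtered flat complex $\os{\circ}{\ol{\iota}}{}^*M^{\bul}_{l^n}(\os{\circ}{\cal D}_{\ol{\cal V}}/\ol{\cal V})$ is a complex of flat ${\mab Z}/l^n$-modules, the tensor product formula \cite[(1.2.4)]{nh2} yields, exactly as in (\ref{ali:anxqds}),
\begin{equation*}
{\rm gr}_{k'}^P{\rm gr}_k^{P^{{\cal D}_{\ol{\cal V}}}}A^{\bul}_{l^n}(({\cal X}_{\ol{\cal V}},{\cal D}_{\ol{\cal V}})/\ol{\cal V})=\bigoplus_{j\geq \max\{-k',0\}}{\rm gr}^{\tau}_{2j+k'+1-k}{\rm MF}_{l^n}(T-1)_{\rm ss}(j)\{j\}[1]\otimes_{{\mab Z}/l^n}{\rm gr}^R_k(\os{\circ}{\ol{\iota}}{}^*M^{\bul}_{l^n}(\os{\circ}{\cal D}_{\ol{\cal V}}/\ol{\cal V})).
\end{equation*}
Now ${\rm gr}^{\tau}_r{\rm MF}_{l^n}(T-1)_{\rm ss}$ is computed canonically by (\ref{eqn:gap}) in terms of $a^{(r)}_*$ of orientation sheaves, independently of the chosen $K^{\bul}_{l^n}({\cal X}_{\ol{\cal V}}/\ol{\cal V})$ and of the basis $T$; and ${\rm gr}^R_k(\os{\circ}{\ol{\iota}}{}^*M^{\bul}_{l^n})$ is computed canonically via (\ref{eqn:osire}) and \cite[(2.4)]{ktnk}, independently of the auxiliary flat resolution. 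Running the descending induction on $k'$ exactly as in (\ref{theo:wdbst}) then gives the independence up to canonical isomorphism.

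The constructibility and finite-tor-dimension claim is then immediate from (\ref{prop:cfd}): by the displayed computation together with (\ref{eqn:gap}) the double graded pieces are finite direct sums of $a^{(\bul),(\bul)}_*$ of smooth (hence constructible and flat) ${\mab Z}/l^n$-sheaves, so each ${\rm gr}_{k'}^P{\rm gr}_k^{P^{{\cal D}_{\ol{\cal V}}}}$ is constructible and of finite tor-dimension, whence the bifiltered complex lies in ${\rm D}^{\rm b}{\rm F}^2_{\rm ctf}$. For the projective-system statement I would mimic (\ref{theo:wdbst}) (2): one checks $(A^{\bul}_{l^{n+1}})\otimes^L_{{\mab Z}/l^{n+1}}{\mab Z}/l^n = A^{\bul}_{l^n}$ on double graded pieces, where flatness makes the derived tensor product ordinary, and (\ref{eqn:gap}) together with the flatness and canonical identification (\ref{eqn:grkqa}) of ${\rm gr}^R_k M^{\bul}_{l^n}$ gives the required compatibility; the resulting object of ${\rm D}^{\rm b}{\rm F}^2_{\rm ctf}((\os{\circ}{\ol{X}})_{\rm et},{\mab Z}_l)$ is then obtained as in (\ref{coro:cnz}).

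The one genuinely new ingredient, as opposed to a transcription of (\ref{theo:wdbst}), is the identification of the pullback $\os{\circ}{\ol{\iota}}{}^*((M^{\bul}_{l^n}(\os{\circ}{\cal D}_{\ol{\cal V}}/\ol{\cal V}),R))$ as again a filtered flat resolution whose graded pieces compute $R\os{\circ}{\eps}_{\os{\circ}{\ol{D}}*}({\mab Z}/l^n)$ with the expected structure. I expect this to be the main obstacle: it rests on the isomorphism (\ref{eqn:osire}), on the Kato--Nakayama computation \cite[(2.4)]{ktnk}, and on the exactness of $\os{\circ}{\ol{\iota}}{}^*$ (pullback along the closed immersion into the special fibre) together with its commutation with the tensor product defining the bifiltered double complex. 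Once this identification is secured, every remaining verification is a faithful copy of the corresponding step in the proof of (\ref{theo:wdbst}).
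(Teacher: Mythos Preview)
Your proposal is correct and follows exactly the approach the paper intends: its proof is the single sentence ``The proof is the same as that of (\ref{theo:wdbst}),'' and you have accurately spelled out the dictionary (${\rm MF}_{l^n}(T-1)_{\rm ss}$ for ${\rm MF}_{l^n}(T-1)$, (\ref{ali:ssgr}) for (\ref{eqn:grf}), (\ref{eqn:gap}) for (\ref{eqn:tkx}), etc.). Your one reservation---that the pullback $\os{\circ}{\ol{\iota}}{}^*((M^{\bul}_{l^n}(\os{\circ}{\cal D}_{\ol{\cal V}}/\ol{\cal V}),R))$ must be identified as a suitable filtered flat resolution---is already discharged in the paper just before the proposition via (\ref{eqn:osire}) and \cite[(2.4)]{ktnk}, so there is no remaining obstacle.
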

\begin{proof}  
The proof is the same as that of (\ref{theo:wdbst}). 
\end{proof}

\begin{lemm}\label{lemm:ssgrc} 
The following hold$:$
\par 
$(1)$ There exists an isomorphism 
\begin{equation*} 
{\rm gr}_k^{P_{{\cal D}_{\ol{\cal V}}}}
A^{\bul}_{l^n}(({\cal X}_{\ol{\cal V}},{\cal D}_{\ol{\cal V}})/\ol{\cal V})
\os{\sim}{\lo}
c^{(k)}_{*}(A^{\bul}_{l^n}({\cal D}^{(k)}_{\ol{\cal V}}/\ol{\cal V})(-k)\otimes_{\mab Z}
\varpi^{(k)}_{\rm et}(\os{\circ}{\ol{D}}/\ol{\kap}))\{-k\} \quad 
(k\in {\mab Z}). 
\tag{14.3.1}\label{eqn:ssgrpdx}
\end{equation*} 
This isomorphism is compatible with $n$'s. 
\par 
$(2)$ There exists an isomorphism 
\begin{align*} 
{\rm gr}_k^{P}A^{\bul}_{l^n}(({\cal X}_{\ol{\cal V}},{\cal D}_{\ol{\cal V}})/\ol{\cal V})&
\os{\sim}{\lo} 
\bigoplus^k_{k'=-\infty}
\bigoplus_{j\geq \max\{-k',0\}}
a^{(2j+k'+1),(k-k')}_{*}(({\mab Z}/l^n)_{\os{\circ}{\ol{X}}{}^{(2j+k'+1)}
\cap \os{\circ}{\ol{D}}{}^{(k-k')}} \tag{14.3.2}\label{eqn:ssaxd}\\ 
{} & \quad 
\otimes_{\mab Z}
\varpi^{(2j+k'+1),(k-k')}_{\rm et}((\os{\circ}{\ol{X}},\os{\circ}{\ol{D}})/\ol{\kap}))
(-j-k)[-2j-k] \quad (k\in {\mab Z}). 
\end{align*}  
This isomorphism is compatible with $n$'s. 
\end{lemm}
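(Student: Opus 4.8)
The plan is to follow the proof of (\ref{lemm:grc}) essentially verbatim, since the semistable bifiltered complex $(A^{\bul}_{l^n}((\mathcal{X}_{\ol{\cal V}},\mathcal{D}_{\ol{\cal V}})/\ol{\cal V}),P^{\mathcal{D}_{\ol{\cal V}}},P)$ is built exactly as its SNCL counterpart: it is the tensor product of the nearby-cycle weight complex $(A^{\bul}_{l^n}(\mathcal{X}_{\ol{\cal V}}/\ol{\cal V}),P^{\mathcal{X}_{\ol{\cal V}}})$ with the pulled-back filtered flat resolution $\os{\circ}{\ol{\iota}}{}^*((M^{\bul}_{l^n}(\os{\circ}{\mathcal{D}}_{\ol{\cal V}}/\ol{\cal V}),R))$. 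The common first reduction is that every graded piece $\mathrm{gr}^R_k\os{\circ}{\ol{\iota}}{}^*M^{\bul}_{l^n}$ is a complex of flat $\mathbb{Z}/l^n$-modules (because $R$ is a filtered flat resolution), so the graded functors commute with the tensor product and I only ever tensor against flat modules.

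For part (1) I would first identify the graded pieces of the horizontal factor. Combining the isomorphism (\ref{eqn:osire}) with \cite[(2.4)]{ktnk} gives the semistable analogue of (\ref{eqn:tktk1}), namely $\os{\circ}{\ol{\iota}}{}^*\mathrm{gr}^R_k M^{\bul}_{l^n}(\os{\circ}{\mathcal{D}}_{\ol{\cal V}}/\ol{\cal V})\simeq \mathbb{Z}/l^n(-k)\otimes_{\mathbb{Z}}c^{(k)}_*(\varpi^{(k)}_{\mathrm{et}}(\os{\circ}{\ol{D}}/\ol{\kappa}))[-k]$. Feeding this into the tensor product and forming the single complex (so that the $[-k]$ is absorbed into the shift $\{-k\}$) yields $A^{\bul}_{l^n}(\mathcal{X}_{\ol{\cal V}}/\ol{\cal V})\otimes_{\mathbb{Z}}c^{(k)}_*(\varpi^{(k)}_{\mathrm{et}}(\os{\circ}{\ol{D}}/\ol{\kappa}))(-k)\{-k\}$. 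Applying the projection formula for the finite morphism $c^{(k)}$ to pull $c^{(k)}_*$ outside, and then the semistable analogue of (\ref{prop:zxa}) to identify $c^{(k)*}A^{\bul}_{l^n}(\mathcal{X}_{\ol{\cal V}}/\ol{\cal V})$ with the intrinsic complex $A^{\bul}_{l^n}(\mathcal{D}^{(k)}_{\ol{\cal V}}/\ol{\cal V})$, produces (\ref{eqn:ssgrpdx}). Compatibility with $n$ is inherited from the corresponding compatibilities of the three ingredients.

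For part (2) I would proceed as in (\ref{ali:anxds}). The graded-tensor-product formula \cite[(1.2.4) (2)]{nh2} expresses $\mathrm{gr}^P_k$ of the tensor product as $\bigoplus_{k'\leq k}\mathrm{gr}^{P^{\mathcal{X}_{\ol{\cal V}}}}_{k'}A^{\bul}_{l^n}(\mathcal{X}_{\ol{\cal V}}/\ol{\cal V})\otimes_{\mathbb{Z}/l^n}\os{\circ}{\ol{\iota}}{}^*\mathrm{gr}^R_{k-k'}M^{\bul}_{l^n}$. Substituting the semistable graded formula (\ref{ali:ssgr}) for the first factor and the graded computation of part (1) for the second, and then merging the two orientation sheaves by the tensor decomposition (\ref{eqn:etcab}), gives the right-hand side of (\ref{eqn:ssaxd}) after the routine Tate-twist and degree bookkeeping that yields the index $2j+k'+1$, the total twist $(-j-k)$ and the shift $[-2j-k]$. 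The formulas are again compatible in $n$ because (\ref{ali:ssgr}) and (\ref{eqn:osire}) are.

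The step I expect to be the main obstacle is the restriction compatibility invoked in part (1), i.e. the semistable analogue of (\ref{prop:zxa}): that $c^{(k)*}$ of the nearby-cycle weight complex on $\mathcal{X}_{\ol{\cal V}}$ coincides with the complex $A^{\bul}_{l^n}(\mathcal{D}^{(k)}_{\ol{\cal V}}/\ol{\cal V})$ built intrinsically on the $k$-fold horizontal intersection. Establishing it requires first observing that each $\mathcal{D}^{(k)}_{\ol{\cal V}}$ is itself a strict semistable family over $\ol{\cal V}$ (so that its $A$-complex is even defined), and then proving a base-change statement for the nearby cycle functor along the closed immersion $\mathcal{D}^{(k)}_{\ol{\cal V}}\hookrightarrow\mathcal{X}_{\ol{\cal V}}$; this is precisely where the log proper base change theorem \cite[(5.1)]{nale}, entering through the quasi-isomorphisms (\ref{eqn:eij}) and (\ref{eqn:epxj}) exactly as in (\ref{prop:zxbc}), does the work. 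Part (2), by contrast, involves only constant sheaves on the intersections $\os{\circ}{\ol{X}}{}^{(2j+k'+1)}\cap\os{\circ}{\ol{D}}{}^{(k-k')}$ and is a purely bookkeeping computation once (\ref{ali:ssgr}) is in hand.
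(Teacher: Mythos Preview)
Your proposal is correct and follows exactly the approach the paper intends: the paper's proof consists of the single sentence ``The proof of (1) and (2) is the same as that of (\ref{lemm:grc}),'' and you have faithfully unpacked what that means, correctly identifying the flatness of the graded pieces, the use of \cite[(1.2.4) (2)]{nh2}, the formula (\ref{ali:ssgr}), and the tensor decomposition (\ref{eqn:etcab}). The restriction compatibility you flag as the main obstacle---the semistable analogue of (\ref{prop:zxa})---is precisely the content of (\ref{lemm:jebc}), which the paper states immediately after the present lemma; note that its proof runs through the Fujiwara--Kato comparison \cite[(3.1)]{fk} rather than the morphisms (\ref{eqn:eij}) and (\ref{eqn:epxj}) you mention, though log proper base change \cite[(5.1)]{nale} is indeed the decisive input in both settings.
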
 
\begin{proof} 
The proof of (1) and  
(2) is the same as that of (\ref{lemm:grc}). 
\end{proof}

Next we generalize Rapoport-Zink's 
$l$-adic weight spectral sequence
(see (\ref{theo:xpssf}) below). 
For the generalization, we need some lemmas.

\begin{lemm}\label{lemm:jebc} 
Let $\os{\circ}{\cal X}{}'$ be a 
closed subscheme of $\os{\circ}{\cal X}$. 
Assume that $\os{\circ}{\cal X}{}'$ is strictly semistable over ${\cal V}$. 
Set ${\cal X}':= \os{\circ}{\cal X}{}'\times_{\os{\circ}{\cal X}}{\cal X}$. 
Set $\os{\circ}{\ol{X}}{}':=\os{\circ}{\cal X}{}'_{\ol{\cal V}}\otimes_{\ol{\cal V}}\ol{\kap}$.  
Let $R\os{\circ}{\Psi}({\mab Z}/l^n)$ 
and $R\os{\circ}{\Psi}{}'({\mab Z}/l^n)$
be the classical nearby cycle sheaf of ${\mab Z}/l^n$ on 
$\os{\circ}{\cal X}_{\ol{\cal V}}$ 
and $\os{\circ}{\cal X}{}'_{\ol{\cal V}}$, respectively. 
Let 
$\os{\circ}{\ol{\iota}}_{\os{\circ}{\ol{X}},\os{\circ}{\ol{X}}{}'}\col 
\os{\circ}{\ol{X}}{}' \os{\sus}{\lo} \os{\circ}{\ol{X}}$ 
be the closed immersion. 
Then 
\begin{equation*} 
\os{\circ}{\ol{\iota}}{}^{*}_{\os{\circ}{\ol{X}},\os{\circ}{\ol{X}}{}'}
R\os{\circ}{\Psi}({\mab Z}/l^n)= 
R\os{\circ}{\Psi}{}'({\mab Z}/l^n). 
\tag{14.4.1}\label{eqn:nbcyb}
\end{equation*}
\end{lemm}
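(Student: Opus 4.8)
The plan is to construct the evident base-change morphism and to check that it is an isomorphism on cohomology sheaves, using the explicit description (\ref{eqn:gap}) of the cohomology of the nearby cycle complex together with the exactness of the inverse image functor for a closed immersion. Concretely, the closed immersion $\os{\circ}{\ol{\iota}}:=\os{\circ}{\ol{\iota}}_{\os{\circ}{\ol{X}},\os{\circ}{\ol{X}}{}'}$ and the canonical base-change morphism $\os{\circ}{\ol{\iota}}{}^{*}R\os{\circ}{\ol{j}}_*({\mab Z}/l^n)\lo R\os{\circ}{\ol{j}}{}'_*({\mab Z}/l^n)$ (coming from the cartesian squares of generic and special fibres of $\os{\circ}{\cal X}{}'\os{\sus}{\lo}\os{\circ}{\cal X}$) induce a morphism $\os{\circ}{\ol{\iota}}{}^{*}R\os{\circ}{\Psi}({\mab Z}/l^n)\lo R\os{\circ}{\Psi}{}'({\mab Z}/l^n)$. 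Since the inverse image of abelian sheaves along a closed immersion is exact, ${\cal H}^r(\os{\circ}{\ol{\iota}}{}^{*}R\os{\circ}{\Psi}({\mab Z}/l^n))=\os{\circ}{\ol{\iota}}{}^{*}{\cal H}^r(R\os{\circ}{\Psi}({\mab Z}/l^n))$, so by (\ref{eqn:gap}) it suffices to identify $\os{\circ}{\ol{\iota}}{}^{*}a^{(r)}_{*}(({\mab Z}/l^n)_{\os{\circ}{\ol{X}}{}^{(r)}}(-r)\otimes_{\mab Z}\vp^{(r)}_{\rm et}(\os{\circ}{\ol{X}}/\ol{\kap}))$ with the corresponding sheaf for $\os{\circ}{\ol{X}}{}'$.

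The first key step is to prove that the strata and orientation sheaves restrict compatibly, namely that $\os{\circ}{\ol{X}}{}'{}^{(r)}=\os{\circ}{\ol{X}}{}^{(r)}\times_{\os{\circ}{\ol{X}}}\os{\circ}{\ol{X}}{}'$ and that $\vp^{(r)}_{\rm et}(\os{\circ}{\ol{X}}/\ol{\kap})$ pulls back to $\vp^{(r)}_{\rm et}(\os{\circ}{\ol{X}}{}'/\ol{\kap})$. This amounts to the assertion that the closed immersion $\os{\circ}{\cal X}{}'\os{\sus}{\lo}\os{\circ}{\cal X}$ is strict for the canonical (degeneration) log structures, equivalently that passing to the semistable closed subscheme creates no new vertical branches and loses none: on the standard \'etale charts of type (\ref{eqn:locsd}) the special fibre of $\os{\circ}{\cal X}{}'$ is cut out by a subset of the coordinates $x_0,\ldots,x_{a-1}$ defining the special fibre of $\os{\circ}{\cal X}$, because $\os{\circ}{\cal X}{}'$, being strictly semistable over ${\cal V}$, dominates ${\rm Spec}({\cal V})$. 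Granting this, $a^{(r)}$ and its restriction are finite and fit into a cartesian square, so $\os{\circ}{\ol{\iota}}{}^{*}a^{(r)}_{*}$ is computed by finite base change, yielding exactly ${\cal H}^r(R\os{\circ}{\Psi}{}'({\mab Z}/l^n))$ by (\ref{eqn:gap}) for $\os{\circ}{\ol{X}}{}'$. Since both complexes are bounded, the isomorphism on all cohomology sheaves upgrades to the desired isomorphism (\ref{eqn:nbcyb}). Equivalently, one may first identify $R\os{\circ}{\Psi}({\mab Z}/l^n)$ with $R\os{\circ}{\eps}_{\ol{X}*}({\mab Z}/l^n)$ and argue exactly as for the isomorphism (\ref{eqn:osire}), using the local computation \cite[(2.4)]{ktnk}, or invoke Nakayama's log proper base change theorem \cite[(5.1)]{nale} for the proper morphism $\os{\circ}{\eps}$ as in the proof of (\ref{prop:zxbc}).

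The main obstacle is precisely the strictness / stratum-restriction verification of the second paragraph; the remainder is the formal, already-used machinery of (\ref{eqn:gap}), (\ref{eqn:osire}) and \cite[(2.4)]{ktnk}. The subtle point is that the hypothesis only says $\os{\circ}{\cal X}{}'$ is a strictly semistable closed subscheme, so one must rule out, via the local models (\ref{eqn:locsd}), that the embedding introduces extra vertical (special-fibre) components or that the stratification of $\os{\circ}{\ol{X}}$ fails to cut out that of $\os{\circ}{\ol{X}}{}'$. Once this local transversality is established the equality (\ref{eqn:nbcyb}) follows, and it is consistent with the graded description (\ref{ali:ssgr}) of $A^{\bul}_{l^n}({\cal X}_{\ol{\cal V}}/\ol{\cal V})$, which furnishes an independent check through the restriction of the explicit Rapoport--Zink complex.
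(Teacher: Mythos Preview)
Your alternative at the end of the second paragraph is essentially the paper's proof, and it is the cleaner route. The paper identifies $R\os{\circ}{\ol{j}}_*({\mab Z}/l^n)$ with $R\eps_{{\cal X}_{\ol{\cal V}}*}({\mab Z}/l^n)$ (and likewise for $\os{\circ}{\cal X}{}'$) via Fujiwara--Kato \cite[(3.1)]{fk}, and then applies Nakayama's log proper base change \cite[(5.1)]{nale} to the square
\begin{equation*}
\begin{CD}
{\cal X}'_{\ol{\cal V}} @>{\subset}>> {\cal X}_{\ol{\cal V}} \\
@V{\eps_{{\cal X}'_{\ol{\cal V}}}}VV @VV{\eps_{{\cal X}_{\ol{\cal V}}}}V \\
\os{\circ}{\cal X}{}'_{\ol{\cal V}} @>{\subset}>> \os{\circ}{\cal X}_{\ol{\cal V}}
\end{CD}
\end{equation*}
over the integral model, obtaining $\os{\circ}{\ol{\iota}}{}^{*}_{\os{\circ}{\cal X}_{\ol{\cal V}},\os{\circ}{\cal X}{}'_{\ol{\cal V}}}R\os{\circ}{\ol{j}}_*({\mab Z}/l^n)=R\os{\circ}{\ol{j}}{}'_*({\mab Z}/l^n)$ directly; restricting further to the special fibre gives (\ref{eqn:nbcyb}). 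No stratum-by-stratum verification enters.

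Your main approach via the explicit cohomology sheaves (\ref{eqn:gap}) is in principle workable but carries extra burdens the paper's argument avoids. Beyond the stratum identification $\os{\circ}{\ol{X}}{}'{}^{(r)}=\os{\circ}{\ol{X}}{}^{(r)}\times_{\os{\circ}{\ol{X}}}\os{\circ}{\ol{X}}{}'$ (and note your local description is not accurate for the intended application $\os{\circ}{\cal X}{}'=\os{\circ}{\cal D}{}^{(k)}$, which is cut out by \emph{horizontal} coordinates $x_a,\ldots$ while the vertical branches $x_0,\ldots,x_{a-1}$ remain unchanged), you would also have to check that the base-change morphism on each $R^r\os{\circ}{\Psi}$ is compatible with the Kummer identification (\ref{eqn:1rz}) and the cup-product isomorphism (\ref{eqn:cupr1}). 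The log proper base change argument packages all of this into a single stroke.
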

\begin{proof} 
Let 
$\os{\circ}{\ol{\iota}}_{\os{\circ}{\cal X}_{\ol{\cal V}},
\os{\circ}{\cal X}{}'_{\ol{\cal V}}} 
\col \os{\circ}{\cal X}{}'_{\ol{\cal V}} 
\os{\sus}{\lo} \os{\circ}{\cal X}_{\ol{\cal V}}$ 
be the closed immersion. 
Let ${\cal X}_{\ol{\cal V}}$ and ${\cal X}'_{\ol{\cal V}}$ 
be log schemes whose underlying schemes are 
$\os{\circ}{\cal X}_{\ol{\cal V}}$ and 
$\os{\circ}{\cal X}{}'_{\ol{\cal V}}$  and whose log structures 
are the inductive limits of canonical log structures.  
Let $\eps_{{\cal X}_{\ol{\cal V}}} \col 
{\cal X}_{\ol{\cal V}} \lo \os{\circ}{\cal X}_{\ol{\cal V}}$ 
and 
$\eps_{{\cal X}'_{\ol{\cal V}}} \col 
{\cal X}'_{\ol{\cal V}} \lo \os{\circ}{\cal X}{}'_{\ol{\cal V}}$ 
be the morphisms forgetting the log structures. 
Let 
$\ol{j} \col \os{\circ}{\cal X}_{\ol{\eta}} 
\os{\sus}{\lo} {\cal X}_{\ol{\cal V}}$ 
and 
$\ol{j}{}' \col \os{\circ}{\cal X}{}'_{\ol{\eta}} 
\os{\sus}{\lo} {\cal X}'_{\ol{\cal V}}$ 
be the open immersions of log schemes. 
Then, by \cite[(3.1)]{fk} (see also \cite[(7.4), (7.5)]{illl}), 
$R\os{\circ}{\ol{j}}_*({\mab Z}/l^n)=
R\eps_{{\cal X}_{\ol{\cal V}}*}({\mab Z}/l^n)$. 
and $R\os{\circ}{\ol{j}}{}'_*({\mab Z}/l^n)=
R\eps_{{\cal X}'_{\ol{\cal V}}*}({\mab Z}/l^n)$. 
By the log proper base change theorem 
(\cite[(5.1)]{nale}) for the following commutative diagram 
\begin{equation*} 
\begin{CD} 
{\cal X}'_{\ol{\cal V}} @>{\subset}>> {\cal X}_{\ol{\cal V}} \\
@V{\eps_{{\cal X}'_{\ol{\cal V}}}}VV 
@VV{\eps_{{\cal X}_{\ol{\cal V}}}}V \\ 
\os{\circ}{\cal X}{}'_{\ol{\cal V}} 
@>{\subset}>> \os{\circ}{\cal X}_{\ol{\cal V}}
\end{CD}
\end{equation*} 
we obtain 
\begin{equation*} 
\os{\circ}{\ol{\iota}}{}^{*}_{\os{\circ}{\cal X}_{\ol{\cal V}},
\os{\circ}{\cal X}{}'_{\ol{\cal V}}} 
R\os{\circ}{\ol{j}}_*({\mab Z}/l^n)
= R\os{\circ}{\ol{j}}{}'_*({\mab Z}/l^n).  
\tag{14.4.2}\label{eqn:bcvc}
\end{equation*}
Hence 
$\os{\circ}{\ol{\iota}}{}^{*}_{\os{\circ}{\ol{X}},\os{\circ}{\ol{X}}{}'}
R\os{\circ}{\Psi}({\mab Z}/l^n)= 
R\os{\circ}{\Psi}{}'({\mab Z}/l^n)$.  
\end{proof}

The following is the dual of \cite[(2.7.2)]{nh2}: 

\begin{lemm}\label{lemm:canhigh}
Let 
$f \col ({\cal T}, {\cal A}) 
\lo ({\cal T}', {\cal A}')$ 
be a morphism of ringed topoi. 
Then, for an object $E'{}^{\bul}$ in 
$D^-({\cal A}')$, 
there exists a canonical morphism 
\begin{equation}
Lf^*((E^{\bul},\tau))\lo 
(Lf^*(E^{\bul}),\tau)  
\tag{14.5.1}\label{eqn:taure}
\end{equation} 
in ${\rm D}^-{\rm F}({\cal A}').$
\end{lemm}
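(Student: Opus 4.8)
The plan is to dualize the construction of \cite[(2.7.2)]{nh2}, replacing injective/flasque resolutions by flat resolutions and the left-exactness of $f_*$ by the right-exactness of $f^*$. First I record where the two objects live and what the asserted morphism must do. By (\ref{coro:lcal}) the functor $Lf^*$ takes values in ${\rm D}^-{\rm F}({\cal A})$ (so the natural target of (\ref{eqn:taure}) is ${\rm D}^-{\rm F}({\cal A})$), and by the commutativity of (\ref{cd:pnildff}) the forgetful functor $\pi$ sends both $Lf^*((E^{\bul},\tau))$ and $(Lf^*(E^{\bul}),\tau)$ to the same object $Lf^*(E^{\bul})\in D^-({\cal A})$. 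Thus I am looking for a filtered morphism lifting $\mathrm{id}_{Lf^*(E^{\bul})}$, and by (\ref{lemm:grrdld}) it must induce on ${\rm gr}_k$ the natural comparison $Lf^*({\cal H}^k(E^{\bul}))\to {\cal H}^k(Lf^*(E^{\bul}))$.

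To build the morphism I would choose a filtered flat resolution adapted to the canonical filtration. Concretely, apply the case $n=1$ of (\ref{prop:esfrl}) to $(E^{\bul},\tau)$ to obtain a filtered flat resolution $(Q^{\bul},P)\to (E^{\bul},\tau)$; since ${\rm gr}^{\tau}_kE^{\bul}$ is the single sheaf ${\cal H}^k(E^{\bul})$ placed in degree $k$, the construction can be arranged so that ${\rm gr}^P_kQ^{\bul}$ is a flat resolution of ${\cal H}^k(E^{\bul})$ lying in degrees $\leq k$, and hence so that $P_kQ^{\bul}$ is concentrated in degrees $\leq k$ for every $k$. (That $(Q^{\bul},P)$ is indeed a filtered quasi-isomorphism onto $(E^{\bul},\tau)$ follows by descending induction on $k$ from the graded quasi-isomorphisms ${\rm gr}^P_kQ^{\bul}\os{\sim}{\lo}{\cal H}^k(E^{\bul})[-k]$.) With this choice, (\ref{coro:lcal}) gives $Lf^*((E^{\bul},\tau))=(f^*Q^{\bul},f^*P)$, while $(Lf^*(E^{\bul}),\tau)=(f^*Q^{\bul},\tau)$ because $f^*Q^{\bul}$ computes $Lf^*(E^{\bul})$. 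It remains to check that $\mathrm{id}_{f^*Q^{\bul}}$ is filtered for these two filtrations, i.e. that $f^*P_kQ^{\bul}\subset \tau_k(f^*Q^{\bul})$: in degrees $q<k$ this is automatic since $\tau_k(f^*Q^{\bul})^q=f^*Q^q$; in degrees $q>k$ both sides vanish by the degree bound; and in degree $q=k$ the inclusion $P_kQ^k\subset \ker(d^k)$ (again from the degree bound) gives $f^*(d^k)\circ f^*(P_kQ^k)=f^*(d^k|_{P_kQ^k})=0$, so $f^*P_kQ^k\subset \ker(f^*d^k)=\tau_k(f^*Q^{\bul})^k$. This produces the desired morphism (\ref{eqn:taure}).

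Finally I would record that the morphism is canonical, i.e. independent of the chosen resolution: any two adapted filtered flat resolutions of $(E^{\bul},\tau)$ are connected by a filtered quasi-isomorphism with which $\mathrm{id}$ is compatible, so the two resulting morphisms agree in ${\rm D}^-{\rm F}({\cal A})$, and compatibility with composition of morphisms of ringed topoi follows likewise. The main obstacle is the second paragraph, namely producing the filtered flat resolution whose steps $P_kQ^{\bul}$ are concentrated in degrees $\leq k$. This is exactly the feature that makes $\mathrm{id}$ filtered in the correct direction, and it is also the structural reason why one obtains only a morphism rather than an isomorphism: dually, for $Rf_*$ the left-exactness of $f_*$ yields $f_*\ker(d^k)=\ker(f_*d^k)$ on the nose, whereas here the right-exactness of $f^*$ supplies only the natural comparison $f^*\ker(d^k)\to \ker(f^*d^k)$, which is precisely the degree-$k$ component of (\ref{eqn:taure}).
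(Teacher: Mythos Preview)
Your argument is correct in outline, but it takes a genuinely different route from the paper. The paper does not construct any resolution at all: it simply composes the unit of adjunction $(E^{\bul},\tau)\to (Rf_*Lf^*E^{\bul},\tau)$ with the already-known morphism $(Rf_*G^{\bul},\tau)\to Rf_*((G^{\bul},\tau))$ from \cite[(2.7.2)]{nh2} (applied to $G^{\bul}=Lf^*E^{\bul}$), and then applies the filtered adjunction (\ref{theo:adj}) to the composite $(E^{\bul},\tau)\to Rf_*((Lf^*E^{\bul},\tau))$ to obtain (\ref{eqn:taure}). Canonicity and functoriality are then immediate from the canonicity of adjunction.

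Your direct approach is also valid, but the step you correctly flag as ``the main obstacle'' is precisely what the paper's argument avoids. The resolution you need---a filtered flat $(Q^{\bul},P)$ with $P_kQ^{\bul}$ concentrated in degrees $\leq k$---is not what (\ref{prop:esfrl}) produces; it requires a separate Cartan--Eilenberg-type construction (build flat resolutions of each ${\cal H}^k(E^{\bul})$ and assemble). Once you have it, your degree-by-degree verification that ${\rm id}_{f^*Q^{\bul}}$ is filtered from $(f^*Q^{\bul},f^*P)$ to $(f^*Q^{\bul},\tau)$ is correct, and your canonicity sketch is adequate. So both arguments work; the paper's buys canonicity and brevity by reducing to the dual statement already proved in \cite{nh2}, while yours gives an explicit representative of the morphism at the cost of constructing the adapted resolution.
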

\begin{proof}
By the adjunction 
there exits a natural morphism 
\begin{equation}
(E^{\bul},\tau)\lo 
(Rf_*Lf^*(E^{\bul}),\tau).   
\tag{14.5.2}
\label{eqn:tanure}
\end{equation} 
By \cite[(2.7.2)]{nh2} we have the following natural morphism 
\begin{equation}
(Rf_*Lf^*(E^{\bul}),\tau)\lo 
Rf_*((Lf^*(E^{\bul}),\tau)).   
\tag{14.5.3}
\label{eqn:taunsre}
\end{equation} 
Hence we have the following composite morphism 
\begin{equation}
(E^{\bul},\tau)\lo 
Rf_*((Lf^*(E^{\bul}),\tau)).  
\tag{14.5.4}
\label{eqn:tanusre}
\end{equation} 
By applying the adjunction formula \cite[(1.2.2)]{nh2} or (\ref{theo:adj}) 
for (\ref{eqn:tanusre}), 
we obtain the morphism (\ref{eqn:taure}). 
\end{proof}

Consider the following cartesian diagram of 
fs log schemes: 
\begin{equation*} 
\begin{CD} 
Y @>{q}>> Z \\ 
@V{p}VV @VV{\alpha}V \\ 
W @>{\bet}>> V.
\end{CD}
\tag{14.5.5}\label{cd:yzwv}
\end{equation*}
Assume that $R\alpha_*({\mab Z}/l^n)$ 
and $R\bet_*({\mab Z}/l^n)$ are bounded above. 
Then we can construct 
the following ``filtered K\"{u}nneth morphism''  
\begin{equation*} 
\cup \col R\bet_*({\mab Z}/l^n)\otimes^L_{{\mab Z}/l^n}
(R\alpha_*({\mab Z}/l^n),\tau) 
\lo 
R\bet_*((Rp_*({\mab Z}/l^n),\tau)).  
\tag{14.5.6}\label{eqn:fkcst} 
\end{equation*}
Indeed, by the adjunction formula \cite[(1.2.2)]{nh2}, 
we have only to construct a morphism 
\begin{equation*} 
L\bet^*R\bet_*({\mab Z}/l^n)\otimes^L_{{\mab Z}/l^n}
L\bet^*(R\alpha_*({\mab Z}/l^n),\tau) 
\lo (Rp_*({\mab Z}/l^n),\tau). 
\end{equation*} 
The following composite morphism 
\begin{align*} 
&L\bet^*R\bet_*({\mab Z}/l^n)\otimes^L_{{\mab Z}/l^n}
L\bet^*(R\alpha_*({\mab Z}/l^n),\tau) 
\lo {\mab Z}/l^n\otimes^L_{{\mab Z}/l^n}
L\bet^*((R\alpha_*({\mab Z}/l^n),\tau)) \\
& =L\bet^*((R\alpha_*({\mab Z}/l^n),\tau))
\lo (L\bet^*R\alpha_*({\mab Z}/l^n),\tau) 
\lo (Rp_*Lq^*({\mab Z}/l^n),\tau) = (Rp_*({\mab Z}/l^n),\tau)
\end{align*}
is a desired morphism. 
Here we have used the  morphism (\ref{eqn:taure}).  


\par 
Let $({\cal X},\os{\circ}{\cal D})$ be as in 
{\rm (\ref{defi:dxsnc})}. 
Consider the following cartesian diagrams  
\begin{equation*} 
\begin{CD} 
(\os{\circ}{\ol{X}},\os{\circ}{\ol{D}})
@>{\subset}>>
(\os{\circ}{\cal X}_{\ol{\cal V}},\os{\circ}{\cal D}_{\ol{\cal V}}) 
@<{\supset}<< 
({\cal X}_{\ol{\eta}},\os{\circ}{\cal D}_{\ol{\eta}}) \\ 
@V{\os{\circ}{\eps}_{\os{\circ}{\ol{D}}}}VV 
@V{\os{\circ}{\eps}_{\os{\circ}{\cal D}_{\ol{\cal V}}}}VV 
@V{\os{\circ}{\eps}_{\os{\circ}{\cal D}_{\ol{\eta}}}}VV \\ 
\os{\circ}{\ol{X}} @>{\os{\circ}{\ol{\iota}}}>> 
\os{\circ}{\cal X}_{\ol{\cal V}}
@<{\os{\circ}{\ol{j}}}<< {\cal X}_{\ol{\eta}},  
\end{CD}
\tag{14.5.7}\label{eqn:co} 
\end{equation*} 
where the vertical morphisms are morphisms forgetting log structures 
obtained by $\os{\circ}{\ol{D}}$, $\os{\circ}{\cal D}_{\ol{\cal V}}$ 
and $\os{\circ}{\cal D}_{\ol{\eta}}$. 
Then we have the following  K\"unneth morphism 
\begin{equation*} 
\cup \col R\os{\circ}{\ol{j}}_*({\mab Z}/l^n)
\otimes^L_{{\mab Z}/l^n}
R\os{\circ}{\eps}_{\os{\circ}{\cal D}_{\ol{\cal V}}*}({\mab Z}/l^n) 
\lo R(\os{\circ}{\ol{j}}
\os{\circ}{\eps}_{\os{\circ}{\cal D}_{\ol{\eta}}})_*({\mab Z}/l^n). 
\tag{14.5.8}\label{eqn:njdez} 
\end{equation*}
To prove that the morphism  
(\ref{eqn:njdez}) is an isomorphism,   
we cannot use the log K\"{u}nneth formula 
(\cite[(6.1)]{nale})  for the right cartesian diagram in (\ref{eqn:co}) 
since $\os{\circ}{\ol{j}}$ is not proper. 
However we can prove the following:

\begin{lemm}\label{lemm:cbdxve} 
The filtered K\"{u}nneth morphism 
{\rm (\ref{eqn:fkcst})} 
\begin{equation*} 
\cup \col R\os{\circ}{\ol{j}}_*({\mab Z}/l^n)
\otimes^L_{{\mab Z}/l^n}
(R\os{\circ}{\eps}_{\os{\circ}{\cal D}_{\ol{\cal V}}*}({\mab Z}/l^n),\tau) 
\lo R\os{\circ}{\ol{j}}_*
((R\os{\circ}{\eps}_{\os{\circ}{\cal D}_{\ol{\eta}}*}({\mab Z}/l^n),\tau))  
\tag{14.6.1}\label{eqn:jdjzez} 
\end{equation*}
for the right cartesian diagram of {\rm (\ref{eqn:co})} 
is an isomorphism. 
In particular the  K\"unneth morphism {\rm (\ref{eqn:njdez})} 
is an isomorphism. 
\end{lemm}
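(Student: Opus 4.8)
The plan is to prove the filtered statement (\ref{eqn:jdjzez}); the assertion about the ordinary K\"unneth morphism (\ref{eqn:njdez}) then follows by forgetting filtrations, since the underlying complex of the target of (\ref{eqn:jdjzez}) is $R(\os{\circ}{\ol{j}}\os{\circ}{\eps}_{\os{\circ}{\cal D}_{\ol{\eta}}})_*({\mab Z}/l^n)$. Both $\tau$-filtrations are biregular, because $\os{\circ}{X}$ is quasi-compact and $R\os{\circ}{\eps}_{\os{\circ}{\cal D}_{\ol{\cal V}}*}({\mab Z}/l^n)$ is bounded, exactly as in the proof of (\ref{prop:bav}). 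Hence, by the case $n=1$ of (\ref{prop:pq}), it suffices to prove that (\ref{eqn:jdjzez}) induces a quasi-isomorphism on each graded piece ${\rm gr}^{\tau}_k$ $(k\in {\mab N})$. Writing $c^{(k)}_{\ol{\cal V}}\col \os{\circ}{\cal D}^{(k)}_{\ol{\cal V}}\lo \os{\circ}{\cal X}_{\ol{\cal V}}$ and $c^{(k)}_{\ol{\eta}}\col \os{\circ}{\cal D}^{(k)}_{\ol{\eta}}\lo {\cal X}_{\ol{\eta}}$ for the natural finite morphisms, the semistable analogue of (\ref{eqn:tktk1}), obtained from (\ref{eqn:osire}) and \cite{ktnk} as in \S\ref{sec:lbc}, gives
\begin{equation*}
{\rm gr}^{\tau}_k R\os{\circ}{\eps}_{\os{\circ}{\cal D}_{\ol{\cal V}}*}({\mab Z}/l^n)=c^{(k)}_{\ol{\cal V}*}({\cal F})[-k],\qquad {\rm gr}^{\tau}_k R\os{\circ}{\eps}_{\os{\circ}{\cal D}_{\ol{\eta}}*}({\mab Z}/l^n)=c^{(k)}_{\ol{\eta}*}({\cal G})[-k],
\end{equation*}
where ${\cal F}:=\varpi^{(k)}_{\rm et}(\os{\circ}{\cal D}_{\ol{\cal V}}/\ol{\cal V})\otimes_{\mab Z}{\mab Z}/l^n(-k)$ and ${\cal G}:=\varpi^{(k)}_{\rm et}(\os{\circ}{\cal D}_{\ol{\eta}}/\ol{\eta})\otimes_{\mab Z}{\mab Z}/l^n(-k)$ are locally constant sheaves with ${\cal G}=\os{\circ}{\ol{j}}^{(k)*}({\cal F})$, $\os{\circ}{\ol{j}}^{(k)}\col \os{\circ}{\cal D}^{(k)}_{\ol{\eta}}\lo \os{\circ}{\cal D}^{(k)}_{\ol{\cal V}}$ denoting the open immersion of the generic fibre of the intersection.

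Since $R\os{\circ}{\ol{j}}_*({\mab Z}/l^n)$ carries the trivial filtration and ${\rm gr}^{\tau}_k$ commutes with $\otimes^L_{{\mab Z}/l^n}$ by \cite{nh2}, the morphism (\ref{eqn:jdjzez}) on ${\rm gr}^{\tau}_k$ is identified, up to the shift $[-k]$, with
\begin{equation*}
R\os{\circ}{\ol{j}}_*({\mab Z}/l^n)\otimes^L_{{\mab Z}/l^n}c^{(k)}_{\ol{\cal V}*}({\cal F})\lo R\os{\circ}{\ol{j}}_*(c^{(k)}_{\ol{\eta}*}({\cal G})).
\end{equation*}
Next I would rewrite both sides. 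As $c^{(k)}_{\ol{\cal V}}$ is finite, hence proper, the projection formula turns the source into $c^{(k)}_{\ol{\cal V}*}\bigl((c^{(k)}_{\ol{\cal V}})^*R\os{\circ}{\ol{j}}_*({\mab Z}/l^n)\otimes_{{\mab Z}/l^n}{\cal F}\bigr)$; and by functoriality of derived pushforward applied to the cartesian square relating $c^{(k)}_{\ol{\cal V}},c^{(k)}_{\ol{\eta}},\os{\circ}{\ol{j}}$ and $\os{\circ}{\ol{j}}^{(k)}$, the target equals $c^{(k)}_{\ol{\cal V}*}R\os{\circ}{\ol{j}}^{(k)}_*({\cal G})=c^{(k)}_{\ol{\cal V}*}\bigl(R\os{\circ}{\ol{j}}^{(k)}_*({\mab Z}/l^n)\otimes_{{\mab Z}/l^n}{\cal F}\bigr)$, the last step being the projection formula for the open immersion $\os{\circ}{\ol{j}}^{(k)}$ applied to the local system ${\cal F}$. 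Thus, ${\cal F}$ being locally constant, everything reduces to the single isomorphism
\begin{equation*}
(c^{(k)}_{\ol{\cal V}})^*R\os{\circ}{\ol{j}}_*({\mab Z}/l^n)\os{\sim}{\lo}R\os{\circ}{\ol{j}}^{(k)}_*({\mab Z}/l^n).
\end{equation*}

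This last isomorphism I would verify stratum by stratum on $\os{\circ}{\cal D}^{(k)}_{\ol{\cal V}}$, which is covered by the open generic fibre $\os{\circ}{\cal D}^{(k)}_{\ol{\eta}}$ and the closed special fibre $\os{\circ}{\ol{D}}^{(k)}$. Over the generic fibre both $\os{\circ}{\ol{j}}$ and $\os{\circ}{\ol{j}}^{(k)}$ restrict to identities, so the morphism is an isomorphism there. Over the special fibre, pulling back along $\os{\circ}{\ol{\iota}}^{(k)}\col \os{\circ}{\ol{D}}^{(k)}\os{\sus}{\lo}\os{\circ}{\cal D}^{(k)}_{\ol{\cal V}}$ and using proper base change for the finite $c^{(k)}_{\ol{\cal V}}$ transforms the left-hand side into $(c^{(k)}_{\ol{X}})^*R\os{\circ}{\Psi}({\mab Z}/l^n)$, with $c^{(k)}_{\ol{X}}\col \os{\circ}{\ol{D}}^{(k)}\lo \os{\circ}{\ol{X}}$ the special-fibre finite morphism, while the right-hand side becomes the classical nearby cycle $R\os{\circ}{\Psi}^{(k)}({\mab Z}/l^n)$ of the intersection; these coincide by (\ref{lemm:jebc}), since each $k$-fold intersection $\os{\circ}{\cal D}^{(k)}$ is again strictly semistable over ${\cal V}$, as is immediate from the local model (\ref{eqn:locsd}) (deleting the $b$ variables cutting out the intersection leaves a chart of the same shape). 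As a morphism of bounded complexes of constructible sheaves (cf.~(\ref{prop:cfd})) is an isomorphism once it is so on every stratum, the displayed isomorphism follows, and with it the lemma. The main obstacle is exactly this special-fibre step: because $\os{\circ}{\ol{j}}$ is not proper, base change for $R\os{\circ}{\ol{j}}_*$ along $c^{(k)}_{\ol{\cal V}}$ is not formal, and it is precisely (\ref{lemm:jebc}) --- the compatibility of nearby cycles with restriction to strictly semistable closed subschemes, which itself rests on the log proper base change theorem --- that furnishes the required input.
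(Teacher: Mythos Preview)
Your proof is correct and follows essentially the same route as the paper: reduce to graded pieces ${\rm gr}^\tau_k$, identify them via the orientation sheaves on $\os{\circ}{\cal D}^{(k)}$, apply the projection formula for the finite map $c^{(k)}_{\ol{\cal V}}$ (the paper's $d^{(k)}$), and conclude from the base change isomorphism $(c^{(k)}_{\ol{\cal V}})^*R\os{\circ}{\ol{j}}_*({\mab Z}/l^n)\simeq R\os{\circ}{\ol{j}}^{(k)}_*({\mab Z}/l^n)$. The only difference is how you justify this last isomorphism: the paper invokes (\ref{eqn:bcvc}) directly --- that equation, sitting inside the proof of (\ref{lemm:jebc}), already gives the base change over the whole of $\os{\circ}{\cal D}^{(k)}_{\ol{\cal V}}$ via the log proper base change theorem --- whereas you check it stratum by stratum, trivially on the generic fibre and via the \emph{statement} of (\ref{lemm:jebc}) (about nearby cycles) on the special fibre. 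Both arguments are valid and rest on the same log proper base change input; yours is slightly more roundabout but makes the stratification explicit, while the paper's is shorter by citing the intermediate formula (\ref{eqn:bcvc}) rather than the nearby-cycle conclusion.
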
 
\begin{proof} 
By abuse of notation, we denote 
$R\os{\circ}{\ol{j}}_*({\mab Z}/l^n)$ by 
a representative of $R\os{\circ}{\ol{j}}_*({\mab Z}/l^n)$. 
Because $R^k\os{\circ}{\eps}_{\os{\circ}{\cal D}_{\ol{\cal V}}*}({\mab Z}/l^n)$ 
$(k\in {\mab Z})$ is a flat ${\mab Z}/l^n$-module, 
we have only to prove that the following morphism 
\begin{align*}
&{\rm gr}_k^\tau(\cup)[k] \col 
R\os{\circ}{\ol{j}}_*({\mab Z}/l^n)
\otimes_{{\mab Z}/l^n}
R^k\os{\circ}{\eps}_{\os{\circ}{\cal D}_{\ol{\cal V}}*}({\mab Z}/l^n) 
\lo 
{\rm gr}_k^\tau R\os{\circ}{\ol{j}}_*
((R\os{\circ}{\eps}_{\os{\circ}{\cal D}_{\ol{\eta}}*}({\mab Z}/l^n),\tau))[k] 
\tag{14.6.2}\label{ali:grfi}\\
 &=
R\os{\circ}{\ol{j}}_*({\rm gr}_k^\tau 
(R\os{\circ}{\eps}_{\os{\circ}{\cal D}_{\ol{\eta}}*}({\mab Z}/l^n),\tau)[k])
=R\os{\circ}{\ol{j}}_*(R^k\os{\circ}{\eps}_{\os{\circ}{\cal D}_{\ol{\eta}}*}
({\mab Z}/l^n))
\end{align*}
is an isomorphism. 
Let $d^{(k)} \col 
\os{\circ}{\cal D}{}^{(k)}_{\ol{\cal V}} 
\lo \os{\circ}{\cal X}_{\ol{\cal V}}$ 
and 
$e^{(k)}\col \os{\circ}{\cal D}{}^{(k)}_{\ol{\eta}} 
\lo \os{\circ}{\cal X}_{\ol{\eta}}$ $(k\in {\mab N})$
be the natural morphisms. 
Then the source of the morphism (\ref{ali:grfi}) 
is 
$R\os{\circ}{\ol{j}}_*({\mab Z}/l^n)
\otimes_{{\mab Z}}d^{(k)}_*
(\vp^{(k)}_{\rm et}(\os{\circ}{\cal D}_{\ol{\cal V}}/\ol{\cal V}))(-k)$, 
while the target of the morphism (\ref{ali:grfi}) 
is $R\os{\circ}{\ol{j}}_*
({\mab Z}/l^n\otimes_{\mab Z}
e^{(k)}_*(\vp^{(k)}_{\rm et}(\os{\circ}{\cal D}_{\ol{\eta}}/{\ol{K}})))(-k)$.  
Fix a total order on the irreducible components of 
${\cal D}_{\ol{\cal V}}$. 
Then the source and the target are isomorphic to 
$R\os{\circ}{\ol{j}}_*({\mab Z}/l^n)
\otimes_{{\mab Z}/l^n}d^{(k)}_*
(({\mab Z}/l^n)_{\os{\circ}{\cal D}_{\ol{\cal V}}})(-k)$
and 
$R\os{\circ}{\ol{j}}_*
(e^{(k)}_*(({\mab Z}/l^n)_{\os{\circ}{\cal D}_{\ol{\eta}}}))(-k)$, 
respectively. 
Let $\os{\circ}{\ol{j}}{}^{(k)}
\col  \os{\circ}{\cal D}{}^{(k)}_{\ol{\eta}}
\os{\sus}{\lo} \os{\circ}{\cal D}{}^{(k)}_{\ol{\cal V}}$ 
be the natural open immersion. 
Then the source is equal to 
$d^{(k)}_*R\os{\circ}{\ol{j}}{}^{(k)}_*({\mab Z}/l^n)(-k)$ 
by (\ref{eqn:bcvc}). 
Since $d^{(k)}\circ \os{\circ}{\ol{j}}{}^{(k)}= \os{\circ}{\ol{j}}\circ e^{(k)}$, 
the target is $d^{(k)}_*R\os{\circ}{\ol{j}}{}^{(k)}_*({\mab Z}/l^n)(-k)$. 
Hence the source and the target of (\ref{ali:grfi}) 
are the same.  
\end{proof}

\begin{lemm}\label{lemm:qxa} 
\begin{equation*} 
H^q(\os{\circ}{X}_{\rm et},
A^{\bul}_{l^n}({\cal X}_{\ol{\cal V}}/\ol{\cal V})\otimes^L_{{\mab Z}/l^n}
R\os{\circ}{\eps}_{\os{\circ}{\ol{D}}*}({\mab Z}/l^n))
=H^q({\cal U}_{\ol{\eta}},{\mab Z}/l^n) \quad (q,n\in {\mab N}).   
\tag{14.7.1}\label{eqn:isouz}
\end{equation*} 
\end{lemm}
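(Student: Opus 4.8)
The plan is to reduce the left-hand complex to a nearby cycle complex on the special fibre and then finish by nearby-cycle proper base change together with the Fujiwara--Kato comparison of Kummer log \'etale cohomology with the cohomology of the complement. First I would rewrite the complex appearing on the left. By the quasi-isomorphism (\ref{eqn:fqsis}) and the fact, recalled just below the definition of $A^{\bul}_{l^n}({\cal X}_{\ol{\cal V}}/\ol{\cal V})$, that $K^{\bul}_{l^n}({\cal X}_{\ol{\cal V}}/\ol{\cal V})$ represents $R\os{\circ}{\Psi}({\mab Z}/l^n)=\os{\circ}{\ol{\iota}}{}^{*}R\os{\circ}{\ol{j}}_*({\mab Z}/l^n)$, one has $A^{\bul}_{l^n}({\cal X}_{\ol{\cal V}}/\ol{\cal V})\simeq \os{\circ}{\ol{\iota}}{}^{*}R\os{\circ}{\ol{j}}_*({\mab Z}/l^n)$ in $D^{\rm b}(\os{\circ}{\ol{X}}_{\rm et},{\mab Z}/l^n)$; and by (\ref{eqn:osire}), $R\os{\circ}{\eps}_{\os{\circ}{\ol{D}}*}({\mab Z}/l^n)=\os{\circ}{\ol{\iota}}{}^{*}R\os{\circ}{\eps}_{\os{\circ}{\cal D}_{\ol{\cal V}}*}({\mab Z}/l^n)$, the latter being represented by the bounded flat complex $M^{\bul}_{l^n}(\os{\circ}{\cal D}_{\ol{\cal V}}/\ol{\cal V})$. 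Since $\os{\circ}{\ol{\iota}}{}^{*}$ is exact and monoidal for $\otimes^L_{{\mab Z}/l^n}$, I can extract the two factors and then apply the K\"unneth isomorphism (\ref{eqn:njdez}) (the content of (\ref{lemm:cbdxve})) for the right cartesian square of (\ref{eqn:co}) to obtain
$$A^{\bul}_{l^n}({\cal X}_{\ol{\cal V}}/\ol{\cal V})\otimes^L_{{\mab Z}/l^n} R\os{\circ}{\eps}_{\os{\circ}{\ol{D}}*}({\mab Z}/l^n)\simeq \os{\circ}{\ol{\iota}}{}^{*}\bigl(R\os{\circ}{\ol{j}}_*({\mab Z}/l^n)\otimes^L_{{\mab Z}/l^n} R\os{\circ}{\eps}_{\os{\circ}{\cal D}_{\ol{\cal V}}*}({\mab Z}/l^n)\bigr)\simeq \os{\circ}{\ol{\iota}}{}^{*}R(\os{\circ}{\ol{j}}\os{\circ}{\eps}_{\os{\circ}{\cal D}_{\ol{\eta}}})_*({\mab Z}/l^n).$$

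Next I would compute the hypercohomology of the right-hand side over $\os{\circ}{\ol{X}}$. Writing $R(\os{\circ}{\ol{j}}\os{\circ}{\eps}_{\os{\circ}{\cal D}_{\ol{\eta}}})_*=R\os{\circ}{\ol{j}}_*R\os{\circ}{\eps}_{\os{\circ}{\cal D}_{\ol{\eta}}*}$ and setting $\mathcal G:=R\os{\circ}{\eps}_{\os{\circ}{\cal D}_{\ol{\eta}}*}({\mab Z}/l^n)$, the object in question is the nearby cycle complex $\os{\circ}{\ol{\iota}}{}^{*}R\os{\circ}{\ol{j}}_*\mathcal G$. Because $\os{\circ}{\cal X}$ is proper over ${\cal V}$ and $\ol{\cal V}$ is strictly henselian (its fraction field $\ol{K}$ being separably closed), the proper base change theorem identifies the stalk at the closed point of $Rh_*R\os{\circ}{\ol{j}}_*\mathcal G$, for the structural morphism $h\colon \os{\circ}{\cal X}_{\ol{\cal V}}\lo {\rm Spec}(\ol{\cal V})$, both with $R\Gamma(\os{\circ}{\ol{X}},\os{\circ}{\ol{\iota}}{}^{*}R\os{\circ}{\ol{j}}_*\mathcal G)$ and with the global sections $R\Gamma(\os{\circ}{\cal X}_{\ol{\cal V}},R\os{\circ}{\ol{j}}_*\mathcal G)=R\Gamma({\cal X}_{\ol{\eta}},\mathcal G)$. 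Hence $H^q(\os{\circ}{\ol{X}}_{\rm et},\os{\circ}{\ol{\iota}}{}^{*}R\os{\circ}{\ol{j}}_*\mathcal G)=H^q({\cal X}_{\ol{\eta}},R\os{\circ}{\eps}_{\os{\circ}{\cal D}_{\ol{\eta}}*}({\mab Z}/l^n))$.

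To finish I would identify this last group with $H^q({\cal U}_{\ol{\eta}},{\mab Z}/l^n)$. Since ${\cal X}_{\ol{\eta}}$ is smooth over the separably closed field $\ol{K}$, $l$ is invertible, and $\os{\circ}{\cal D}_{\ol{\eta}}$ is a normal crossing divisor, the Fujiwara--Kato comparison $\cite[(3.1)]{fk}$ (together with the local computation $\cite[(2.4)]{ktnk}$ already used for (\ref{eqn:osire})) gives $R\os{\circ}{\eps}_{\os{\circ}{\cal D}_{\ol{\eta}}*}({\mab Z}/l^n)=Rj''_*({\mab Z}/l^n)$, where $j''\colon {\cal U}_{\ol{\eta}}\os{\sus}{\lo}{\cal X}_{\ol{\eta}}$ is the inclusion of the complement; applying $R\Gamma({\cal X}_{\ol{\eta}},-)$ then yields $H^q({\cal U}_{\ol{\eta}},{\mab Z}/l^n)$, as desired. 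The compatibility with $n$ follows from the functoriality in $n$ of all the isomorphisms invoked, in particular of (\ref{eqn:fqsis}), (\ref{eqn:osire}) and (\ref{lemm:cbdxve}).

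The main obstacle I anticipate is organizational rather than a deep difficulty: one must make sure the K\"unneth isomorphism (\ref{eqn:jdjzez}) is applied to the correct (right) square of (\ref{eqn:co}), so that it is the horizontal divisor $\os{\circ}{\cal D}_{\ol{\eta}}$ on the generic fibre, and not the vertical special-fibre log structure, that is pushed forward, and one must confirm that $\os{\circ}{\ol{\iota}}{}^{*}$ genuinely commutes with $\otimes^L_{{\mab Z}/l^n}$ here --- which is safe precisely because $R\os{\circ}{\eps}_{\os{\circ}{\cal D}_{\ol{\cal V}}*}({\mab Z}/l^n)$ is represented by a bounded flat complex. The one genuine geometric input beyond the machinery developed in this paper is the properness of $\os{\circ}{\cal X}$, which is what makes nearby-cycle proper base change available; this is exactly where the standing hypothesis of the (proper) strict semistable situation of this section is used.
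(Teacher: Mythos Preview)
Your argument is correct and follows essentially the same route as the paper: rewrite the left-hand complex via (\ref{eqn:fqsis}) and (\ref{eqn:osire}) as $\os{\circ}{\ol{\iota}}{}^{*}$ of a tensor product on $\os{\circ}{\cal X}_{\ol{\cal V}}$, invoke the K\"unneth isomorphism (\ref{eqn:njdez}), and finish by comparing Kummer log \'etale cohomology of $({\cal X}_{\ol{\eta}},\os{\circ}{\cal D}_{\ol{\eta}})$ with $H^q_{\rm et}({\cal U}_{\ol{\eta}},{\mab Z}/l^n)$. The only differences are cosmetic: the paper applies K\"unneth after passing from $\os{\circ}{\ol{X}}$ to $\os{\circ}{\cal X}_{\ol{\cal V}}$ whereas you do it before, and for the final identification the paper cites Gabber's purity (with \cite{fk} as a {\it cf.}) where you cite the Fujiwara--Kato comparison directly.

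One point worth flagging: you are right that properness of $\os{\circ}{\cal X}/{\cal V}$ is needed for the step $H^q(\os{\circ}{\ol{X}},\os{\circ}{\ol{\iota}}{}^{*}R\os{\circ}{\ol{j}}_*\mathcal G)=H^q({\cal X}_{\ol{\eta}},\mathcal G)$. The paper's proof uses exactly this fact in the second equality of (\ref{ali:xaxv}) but does not name it, nor is properness stated as a hypothesis in the lemma itself (\S\ref{sec:ssf} only assumes $\os{\circ}{X}$ quasi-compact at this stage). Your explicit invocation of proper base change over the strictly henselian $\ol{\cal V}$ is the honest justification, and your caveat at the end is well placed.
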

\begin{proof} 
Because (\ref{eqn:osire}) is an isomorphism, 
we have the following equality:  
\begin{align*} 
&H^q(\os{\circ}{X},A^{\bul}_{l^n}({\cal X}_{\ol{\cal V}}/\ol{\cal V})\otimes^L_{{\mab Z}/l^n}
R\os{\circ}{\eps}_{\os{\circ}{\ol{D}}*}({\mab Z}/l^n)) 
 = H^q(\os{\circ}{X},R\os{\circ}{\Psi}({\mab Z}/l^n)\otimes^L_{{\mab Z}/l^n}
\os{\circ}{\ol{\iota}}{}^{*}R{\eps}_{\os{\circ}{\cal D}_{\ol{{\cal V}}}*}
({\mab Z}/l^n)) \tag{14.7.2}\label{ali:xaxv}\\ 
{} & = H^q(\os{\circ}{\cal X}_{\ol{{\cal V}}},
R\os{\circ}{\ol{j}}_*({\mab Z}/l^n)\otimes^L_{{\mab Z}/l^n}
R\eps_{\os{\circ}{\cal D}_{\ol{{\cal V}}}*}({\mab Z}/l^n))=H^q_{\rm ket}
(({\cal X}_{\ol{\eta}},\os{\circ}{\cal D}_{\ol{\eta}}),{\mab Z}/l^n)\\
&=H^q_{\rm et}({\cal U}_{\ol{\eta}},{\mab Z}/l^n). 
\end{align*}
Here the third equality is obtained by   
the isomorphism (\ref{eqn:njdez}) 
and the last equality follows 
from Gabber's purity (\cite[\S8, third Consequence]{fup}) 
(cf.~\cite{fk}, \cite[(7.5)]{illl}). 
\end{proof}

The following is a generalization of 
Rapoport-Zink's $l$-adic weight spectral sequence 
(see also (\ref{coro:sspc}) below): 

\begin{theo}\label{theo:xpssf} 
There exists the following spectral sequences$:$ 
\begin{equation*} 
E_1^{-k,q+k}=H^{q-k}_{\rm et}
({\cal D}^{(k)}_{\ol{\eta}},{\mab Z}_l\otimes_{\mab Z}
\vp^{(k)}_{\rm et}(\os{\circ}{\cal D}_{\ol{\eta}}/\ol{K}))(-k) 
\Lo H^q_{\rm et}({\cal U}_{\ol{\eta}},{\mab Z}_l). 
\tag{14.8.1}\label{eqn:lghkd}  
\end{equation*}  
\begin{align*} 
E_1^{-k,q+k} & =
\bigoplus^k_{k'=-\infty}
\bigoplus_{j\geq \max\{-k',0\}} 
H^{q-2j-k}_{\rm et}(\os{\circ}{\ol{X}}{}^{(2j+k'+1)}\cap 
\os{\circ}{\ol{D}}{}^{(k-k')},{\mab Z}_l
\otimes_{\mab Z} 
\tag{14.8.2}\label{eqn:llgdd} \\ 
{} & \varpi^{(2j+k'+1),(k-k')}_{\rm et}((\os{\circ}{\ol{X}},\os{\circ}{\ol{D}})/\ol{\kap}))(-j-k)\\
{} & \Lo H^q_{\rm et}({\cal U}_{\ol{\eta}},{\mab Z}_l). 
\end{align*} 
\end{theo}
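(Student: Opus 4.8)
The plan is to realize both spectral sequences as the two filtration spectral sequences attached to the single biregular filtrations $P^{{\cal D}_{\ol{\cal V}}}$ and $P$ on the bifiltered complex $(A^{\bul}_{l^n}(({\cal X}_{\ol{\cal V}},{\cal D}_{\ol{\cal V}})/\ol{\cal V}),P^{{\cal D}_{\ol{\cal V}}},P)$ after applying $R\Gam(\os{\circ}{\ol{X}}_{\rm et},-)$, and then to pass to the projective limit over $n$. First I would fix $n$ and record that, since $\os{\circ}{\ol{\iota}}{}^{*}((M^{\bul}_{l^n}(\os{\circ}{\cal D}_{\ol{\cal V}}/\ol{\cal V}),R))$ is a filtered flat resolution of $(R\os{\circ}{\eps}_{\os{\circ}{\ol{D}}*}({\mab Z}/l^n),\tau)$ by (\ref{eqn:osire}), and $A^{\bul}_{l^n}({\cal X}_{\ol{\cal V}}/\ol{\cal V})\simeq K^{\bul}_{l^n}({\cal X}_{\ol{\cal V}}/\ol{\cal V})$ by (\ref{eqn:fqsis}), the total complex satisfies
$$A^{\bul}_{l^n}(({\cal X}_{\ol{\cal V}},{\cal D}_{\ol{\cal V}})/\ol{\cal V})\simeq A^{\bul}_{l^n}({\cal X}_{\ol{\cal V}}/\ol{\cal V})\otimes^L_{{\mab Z}/l^n}R\os{\circ}{\eps}_{\os{\circ}{\ol{D}}*}({\mab Z}/l^n).$$
Then (\ref{lemm:qxa}) identifies $H^q(\os{\circ}{\ol{X}}_{\rm et},A^{\bul}_{l^n}(({\cal X}_{\ol{\cal V}},{\cal D}_{\ol{\cal V}})/\ol{\cal V}))$ with $H^q_{\rm et}({\cal U}_{\ol{\eta}},{\mab Z}/l^n)$, which supplies the common abutment of both spectral sequences.

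For the $E_1$-terms of the first spectral sequence I would feed the graded formula (\ref{eqn:ssgrpdx}) of (\ref{lemm:ssgrc}) (1) into the filtration spectral sequence of $P^{{\cal D}_{\ol{\cal V}}}$. Because $\os{\circ}{\cal D}{}^{(k)}$ is again strictly semistable over ${\cal V}$ by the local description (\ref{eqn:locsd}), the complex $A^{\bul}_{l^n}({\cal D}^{(k)}_{\ol{\cal V}}/\ol{\cal V})$ is the Rapoport--Zink complex for ${\cal D}^{(k)}$, and, via (\ref{eqn:fqsis}) applied to ${\cal D}^{(k)}$ together with the nearby-cycle identity, its hypercohomology over $\os{\circ}{\cal D}{}^{(k)}$ is $H^{*}_{\rm et}({\cal D}^{(k)}_{\ol{\eta}},{\mab Z}/l^n)$. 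Accounting for the shift $\{-k\}$ and the twist $(-k)$ then produces exactly the term $H^{q-k}_{\rm et}({\cal D}^{(k)}_{\ol{\eta}},{\mab Z}/l^n\otimes_{\mab Z}\vp^{(k)}_{\rm et})(-k)$ of (\ref{eqn:lghkd}). For the second spectral sequence I would instead use the graded formula (\ref{eqn:ssaxd}) of (\ref{lemm:ssgrc}) (2); since each summand is a constant sheaf on the smooth scheme $\os{\circ}{\ol{X}}{}^{(2j+k'+1)}\cap \os{\circ}{\ol{D}}{}^{(k-k')}$ placed in degree $[-2j-k]$, taking hypercohomology and matching the indexing yields the $E_1$-term of (\ref{eqn:llgdd}).

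Finally I would pass to the projective limit over $n$: by (\ref{prop:wdbss}) the bifiltered complex lies in ${\rm D}^{\rm b}{\rm F}^2_{\rm ctf}((\os{\circ}{\ol{X}})_{\rm et},{\mab Z}_l)$ and is compatible with reduction modulo $l^{n}$, so, under the finiteness ensuring the Mittag--Leffler condition exactly as in the proof of (\ref{coro:low}), the limit spectral sequences acquire the asserted ${\mab Z}_l$-coefficient $E_1$-terms and abutment. The main obstacle will be the careful bookkeeping in the middle step: one must reconcile the shift conventions $\{-k\}$, $[-2j-k]$ and the Tate twists appearing in (\ref{lemm:ssgrc}) with the $(p,q)=(-k,q+k)$ indexing of the filtration spectral sequence so that the output lands precisely on (\ref{eqn:lghkd}) and (\ref{eqn:llgdd}); a secondary point is verifying that $A^{\bul}_{l^n}({\cal D}^{(k)}_{\ol{\cal V}}/\ol{\cal V})$ genuinely computes $H^{*}_{\rm et}({\cal D}^{(k)}_{\ol{\eta}})$, which requires that the horizontal SNCD condition (\ref{eqn:locsd}) makes every ${\cal D}^{(k)}$ strictly semistable so that all constructions of this section apply to it verbatim.
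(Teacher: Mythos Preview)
Your proposal is correct and follows essentially the same route as the paper: the paper's proof simply says the theorem ``immediately follows from (\ref{lemm:ssgrc}), (\ref{eqn:fqsis}) and (\ref{eqn:isouz}),'' and you have unpacked precisely these three ingredients --- the graded formulas for $P^{{\cal D}_{\ol{\cal V}}}$ and $P$, the quasi-isomorphism $\theta$, and the identification of the abutment with $H^q_{\rm et}({\cal U}_{\ol{\eta}},{\mab Z}/l^n)$ --- together with the passage to ${\mab Z}_l$-coefficients via (\ref{prop:wdbss}) and the Mittag--Leffler argument of (\ref{coro:low}). Your additional care about ${\cal D}^{(k)}$ being strictly semistable (so that (\ref{eqn:fqsis}) applies to it and $A^{\bul}_{l^n}({\cal D}^{(k)}_{\ol{\cal V}}/\ol{\cal V})$ computes $H^*_{\rm et}({\cal D}^{(k)}_{\ol{\eta}})$) is exactly the point the paper leaves implicit.
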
 
\begin{proof} 
(\ref{theo:xpssf}) immediately 
follows from (\ref{lemm:ssgrc}), (\ref{eqn:fqsis}) and (\ref{eqn:isouz}).  
\end{proof}

\begin{prop}\label{prop:acomp}
There exists a natural isomorphism 
\begin{equation*} 
(A^{\bul}_{l^n}(({\cal X}_{\ol{\cal V}},\os{\circ}{{\cal D}}_{\ol{\cal V}})/\ol{\cal V}), 
P^{{\cal D}_{\ol{\cal V}}},P))\os{\sim}{\lo} 
(A^{\bul}_{l^n}((\ol{X},\ol{D})/\ol{s}),P^{\ol{D}},P).
\end{equation*}
\end{prop}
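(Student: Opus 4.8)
The plan is to exploit the fact that both bifiltered complexes are assembled by the \emph{same} recipe out of two ingredients: a ``monodromy-filtered'' complex and a horizontal ($D$-)resolution. Recall that $(A^{\bul}_{l^n}(({\cal X}_{\ol{\cal V}},\os{\circ}{{\cal D}}_{\ol{\cal V}})/\ol{\cal V}),P^{{\cal D}_{\ol{\cal V}}},P)$ is the single complex of $A^{\bul}_{l^n}({\cal X}_{\ol{\cal V}}/\ol{\cal V})\otimes_{{\mab Z}/l^n}\os{\circ}{\ol{\iota}}{}^*(M^{\bul}_{l^n}(\os{\circ}{\cal D}_{\ol{\cal V}}/\ol{\cal V}),R)$, where $A^{\bul}_{l^n}({\cal X}_{\ol{\cal V}}/\ol{\cal V})$ is built from $({\rm MF}_{l^n}(T-1)_{\rm ss},\tau)$ with its monodromy $\theta$ by the mapping-fiber/double-complex procedure of (\ref{eqn:ssxsmf})--(\ref{eqn:sssgle}), and $P=P^{{\cal X}_{\ol{\cal V}}}$ is defined from $\tau$. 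On the other side, $(A^{\bul}_{l^n}((\ol{X},\ol{D})/\ol{s}),P^{\ol{D}},P)$ is the single complex of $A^{\bul}_{l^n}(\ol{X}_{\frac{1}{l^{\infty}}}/\ol{s}_{\frac{1}{l^{\infty}}})\otimes_{{\mab Z}/l^n}(M^{\bul}_{l^n}(\os{\circ}{\ol{D}}/\ol{s}),Q)$, with $A^{\bul}_{l^n}(\ol{X}_{\frac{1}{l^{\infty}}}/\ol{s}_{\frac{1}{l^{\infty}}})$ built from $({\rm MF}_{l^n}(T-1),\tau)$ by the identical procedure of Section~\ref{sec:lbc}. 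Since the two $A$-complex recipes, the two $P$-filtrations and the two $\theta$-morphisms are formally the same, the crux is to produce a filtered isomorphism $({\rm MF}_{l^n}(T-1)_{\rm ss},\tau)\cong({\rm MF}_{l^n}(T-1),\tau)$ compatible with the monodromy operator $T$, and then to identify the two horizontal factors.

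First I would dispose of the $D$-factors. By the isomorphism (\ref{eqn:osire}), $\os{\circ}{\ol{\iota}}{}^*(R\os{\circ}{\eps}_{\os{\circ}{\cal D}_{\ol{\cal V}}*}({\mab Z}/l^n))\os{\sim}{\lo}R\os{\circ}{\eps}_{\os{\circ}{\ol{D}}*}({\mab Z}/l^n)$, and it has already been arranged (see the discussion following (\ref{eqn:osire})) that the filtered flat resolution $(M^{\bul}_{l^n}(\os{\circ}{\ol{D}}/\ol{s}),Q)$ used in Section~\ref{sec:lbc} may be taken to be $\os{\circ}{\ol{\iota}}{}^*(M^{\bul}_{l^n}(\os{\circ}{\cal D}_{\ol{\cal V}}/\ol{\cal V}),R)$. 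Thus the two horizontal factors are literally equal, and the problem collapses to the case $\os{\circ}{\ol{D}}=\emptyset$, i.e.\ to the monodromy-compatible filtered comparison of $({\rm MF}_{l^n}(T-1)_{\rm ss},\tau)$ and $({\rm MF}_{l^n}(T-1),\tau)$. To build the common target I would use Fujiwara--Kato (\cite[(3.1)]{fk}): $R\os{\circ}{\ol{j}}_*({\mab Z}/l^n)=R\eps_{{\cal X}_{\ol{\cal V}}*}({\mab Z}/l^n)$; pulling back by $\os{\circ}{\ol{\iota}}$ and invoking log proper base change (\cite[(5.1)]{nale}) exactly as in the proof of (\ref{lemm:jebc}) gives a canonical isomorphism $R\os{\circ}{\Psi}({\mab Z}/l^n)\os{\sim}{\lo}R\eps_{\ol{X}*}({\mab Z}/l^n)$. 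Now ${\rm MF}_{l^n}(T-1)_{\rm ss}$ represents $R\os{\circ}{\Psi}({\mab Z}/l^n)$ (the isomorphism recorded after (\ref{eqn:sssgle})), while ${\rm MF}_{l^n}(T-1)$ represents $R\eps_{\ol{X}*}({\mab Z}/l^n)$ by (\ref{lemm:fq0}); composing, the two complexes are canonically isomorphic in the derived category. Because the canonical filtration $\tau$ is a functor from the derived category to the filtered derived category, this automatically upgrades to a filtered isomorphism $({\rm MF}_{l^n}(T-1)_{\rm ss},\tau)\cong({\rm MF}_{l^n}(T-1),\tau)$, so no separate matching of $\tau$-graded pieces (and hence of the delicate intersection conventions in (\ref{eqn:gap}) versus (\ref{eqn:tkx})) is required.

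The main obstacle is the monodromy compatibility. I must verify that the classical monodromy $T$ on $R\os{\circ}{\Psi}({\mab Z}/l^n)$ (the action of \cite[(2.24)]{rz}) corresponds, under the identification $R\os{\circ}{\Psi}\cong R\eps_{\ol{X}*}$ above, to the operator $T$ induced by the ${\mab Z}_l(1)$-action on the Kummer cover $\ol{X}_{\frac{1}{l^{\infty}}}$ that defines the right-hand monodromy. This is precisely the comparison carried out by Nakayama in \cite{nd}, which underlies the identification of the Rapoport--Zink spectral sequence (1.0.1) with the log spectral sequence (1.0.3) proved in \cite{ndeg}; it is the one place where a genuine theorem, rather than formal bookkeeping, is needed, since the two sides use a priori different models of monodromy (Galois action over $\ol{K}$ versus Kummer log étale action). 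Granting it, the filtered isomorphism is compatible with $\theta$, because $\theta$ is manufactured from $T$ and $\tau$ by the single formula (\ref{cd:lttarl}) applied verbatim on both sides.

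Finally I would assemble. Running the shared single-complex and $P$-filtration construction on the $\theta$- and $\tau$-compatible filtered isomorphism yields an isomorphism $(A^{\bul}_{l^n}({\cal X}_{\ol{\cal V}}/\ol{\cal V}),P^{{\cal X}_{\ol{\cal V}}})\cong(A^{\bul}_{l^n}(\ol{X}_{\frac{1}{l^{\infty}}}/\ol{s}_{\frac{1}{l^{\infty}}}),P)$; tensoring with the common horizontal factor $\os{\circ}{\ol{\iota}}{}^*(M^{\bul}_{l^n}(\os{\circ}{\cal D}_{\ol{\cal V}}/\ol{\cal V}),R)=(M^{\bul}_{l^n}(\os{\circ}{\ol{D}}/\ol{s}),Q)$ and passing to single complexes then gives the desired bifiltered isomorphism. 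That the result is independent of the auxiliary injective resolutions, of $T$, and of the flat resolutions, and that it is compatible with reduction modulo $l^n$ — so that it descends to the stated isomorphism of ${\mab Z}_l$-level bifiltered complexes and is natural — follows from (\ref{theo:wdbst}) and (\ref{prop:wdbss}), whose proofs reduce everything to the biregular graded pieces $(\ref{eqn:ssgrpdx})$, $(\ref{eqn:ssaxd})$ and their counterparts in (\ref{lemm:grc}).
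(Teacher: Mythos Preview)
Your proposal is correct and follows essentially the same approach as the paper: the paper's proof is a one-line citation of \cite[(1.9)]{nd} for the filtered isomorphism $(A^{\bul}_{l^n}({\cal X}_{\ol{\cal V}}/\ol{\cal V}),P^{{\cal X}_{\ol{\cal V}}})\os{\sim}{\lo}(A^{\bul}_{l^n}(X_{\ol{\kap}}/\ol{s}),P^{X_{\ol{\kap}}})$, from which the bifiltered statement follows immediately by tensoring with the already-identified $D$-factor. Your write-up simply unpacks what lies behind that citation---the Fujiwara--Kato comparison, log proper base change, and crucially the monodromy compatibility established by Nakayama---so the two proofs coincide in substance.
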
 
\begin{proof} 
(\ref{prop:acomp}) immediately follows from 
\cite[(1.9)]{nd}, which tells us that 
there exists a natural isomorphism 
\begin{equation*} 
(A^{\bul}_{l^n}({\cal X}_{\ol{\cal V}}/\ol{\cal V}),P^{{\cal X}_{\ol{\cal V}}})\os{\sim}{\lo} 
(A^{\bul}_{l^n}(X_{\ol{\kap}}/\ol{s}),P^{X_{\ol{\kap}}}).
\end{equation*}
\end{proof}

\begin{coro}\label{coro:sspc} 
The spectral sequences 
{\rm (\ref{eqn:lghkd})} and {\rm (\ref{eqn:llgdd})} 
are canonically isomorphic to 
{\rm (\ref{eqn:lehkd})} and {\rm (\ref{eqn:lehkd})}, 
respectively.   
\end{coro}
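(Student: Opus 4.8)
The plan is to deduce the comparison entirely from the bifiltered isomorphism (\ref{prop:acomp}), reducing the statement about spectral sequences to a statement about the filtered derived category. Observe that both pairs of spectral sequences are obtained in exactly the same mechanical way: starting from a bifiltered complex in ${\rm D}^{\rm b}{\rm F}^2_{\rm ctf}$, one applies $R\os{\circ}{f}_*$ (here $\os{\circ}{f}$ is the structural morphism of $\os{\circ}{\ol{X}}$ to $\operatorname{Spec}\ol{\kap}$, i.e.\ one takes \'etale cohomology), and then forms the weight spectral sequence attached either to the filtration $P^{\bullet}$ (yielding (\ref{eqn:lehkd}) on the main side and (\ref{eqn:lghkd}) on the semistable side) or to the filtration $P$ (yielding (\ref{eqn:lledd}) and (\ref{eqn:llgdd})). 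Since the spectral sequence of a filtered complex is a functor on the filtered derived category, any isomorphism in ${\rm D}^{\rm b}{\rm F}^2_{\rm ctf}$ of the two underlying bifiltered complexes automatically induces isomorphisms of each of the two associated spectral sequences, compatibly with the differentials at every page.

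Concretely, I would specialize (\ref{prop:acomp}) to the present situation, where $S=s=\ol{s}$ is a log point, to get a canonical isomorphism $(A^{\bul}_{l^n}(({\cal X}_{\ol{\cal V}},\os{\circ}{{\cal D}}_{\ol{\cal V}})/\ol{\cal V}),P^{{\cal D}_{\ol{\cal V}}},P)\os{\sim}{\lo}(A^{\bul}_{l^n}((\ol{X},\ol{D})/\ol{s}),P^{\ol{D}},P)$ in the bifiltered derived category. Applying $R\os{\circ}{f}_*$ and invoking (\ref{lemm:grrdld}) to commute the gr-functors past $R\os{\circ}{f}_*$, I would match the $E_1$-terms: on the semistable side they are computed by (\ref{lemm:ssgrc}), on the main side by (\ref{lemm:grc}), and the bifiltered isomorphism carries one identification to the other. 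Finally I would pin down the abutments: (\ref{lemm:qxa}) gives $H^q_{\rm et}({\cal U}_{\ol{\eta}},{\mab Z}_l)=H^q_{\rm ket}(({\cal X}_{\ol{\eta}},\os{\circ}{\cal D}_{\ol{\eta}}),{\mab Z}_l)$, and the Fujiwara--Kato identification (\ref{eqn:fki}) matches this with $H^q_{\rm ket}((\ol{X}_{\frac{1}{l^{\infty}}},\ol{D}_{\frac{1}{l^{\infty}}}),{\mab Z}_l)$, so the two convergent terms agree and the induced filtrations correspond.

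The main obstacle is the reconciliation of the $E_1$-indexing. The gr computation (\ref{lemm:ssgrc})(2) yields terms $a^{(2j+k'+1),(k-k')}_{*}(\cdots)$ with simplicial degree $2j+k'+1$, whereas (\ref{lemm:grc})(2) yields $a^{(2j+k'),(k-k')}_{*}(\cdots)$ with degree $2j+k'$; this shift by one encodes the difference between the reduced special fiber $\os{\circ}{\ol{X}}$ of the semistable family and the log special fiber $\ol{X}$ over the log point. This is precisely what \cite[(1.9)]{nd} settles through the isomorphism $A^{\bul}_{l^n}({\cal X}_{\ol{\cal V}}/\ol{\cal V})\simeq A^{\bul}_{l^n}(X_{\ol{\kap}}/\ol{s})$ entering (\ref{prop:acomp}). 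The only genuine work I anticipate is checking that (\ref{prop:acomp}) is compatible with the $D$-direction filtration $P^{\mathcal{D}_{\ol{\cal V}}}$ versus $P^{\ol{D}}$ — that is, that the canonical isomorphism is truly an isomorphism of \emph{bifiltered} complexes and not merely a filtered isomorphism in each variable separately. This holds because in both constructions the $D$-direction is built by tensoring the respective $A^{\bul}_{l^n}$ with the \emph{same} flat resolution of $R\os{\circ}{\eps}_{D*}({\mab Z}/l^n)$, the two being identified by (\ref{eqn:osire}); verifying that this tensor compatibility is preserved under the isomorphism of the $X$-factors, and hence that the gr-matching respects both filtrations simultaneously, is the point deserving care.
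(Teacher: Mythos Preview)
Your approach is essentially the paper's: both rest on the bifiltered isomorphism (\ref{prop:acomp}), from which the isomorphism of spectral sequences follows by functoriality in the filtered derived category. The only substantive addition in the paper's proof is the pair of commutative diagrams (\ref{eqn:ttskrp}) and (\ref{eqn:gdff}), imported from \cite[(5.6), (5.7)]{ndeg}, which verify that under the isomorphism of (\ref{prop:acomp}) the identification (\ref{eqn:gap}) of the cohomology sheaves ${\cal H}^r({\rm MF}_{\rm ss})$ on the semistable side matches the identification (\ref{eqn:tkx}) on the log side. This is what guarantees that the explicit $E_1$-descriptions in (\ref{eqn:lghkd}), (\ref{eqn:llgdd}) correspond term-by-term to those in (\ref{eqn:lehkd}), (\ref{eqn:lledd}), not merely that the spectral sequences are abstractly isomorphic.

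This is exactly your ``main obstacle,'' but your proposed resolution is not quite right: \cite[(1.9)]{nd} supplies the filtered isomorphism $(A^{\bul}_{l^n}({\cal X}_{\ol{\cal V}}/\ol{\cal V}),P)\simeq (A^{\bul}_{l^n}(\ol{X}/\ol{s}),P)$ underlying (\ref{prop:acomp}), not the compatibility of the two cohomological identifications, which is the separate content of \cite[(5.6), (5.7)]{ndeg}. Your geometric explanation of the index shift --- ``the difference between the reduced special fiber of the semistable family and the log special fiber over the log point'' --- is also off: both constructions live over the same underlying scheme $\os{\circ}{\ol{X}}$, and the issue is purely the comparison of two a priori different isomorphisms of ${\cal H}^r$ with sheaves pushed forward from the strata $\os{\circ}{\ol{X}}{}^{(\bullet)}$.
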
 
\begin{proof} 
By \cite[(5.6)]{ndeg} the following diagram is commutative:  
\begin{equation*}
\begin{CD}
R\eps_{\ol{X}*}({\mab Z}/l^n) @>{\sim}>> 
s((K^{\bul}_{l^n}(\ol{X}/\ol{s}),d) \os{T-1}{\lo}  (K^{\bul}_{l^n}(\ol{X}/\ol{s}),-d)) \\
@A{\simeq}AA @AA{\simeq}A \\ 
\os{\circ}{\ol{i}}{}^*R\os{\circ}{\ol{j}}_*({\mab Z}/l^n) @>{\sim}>> 
s((K^{\bul}_{l^n}({\cal X}_{\ol{\cal V}}/\ol{\cal V}),d) \os{T-1}{\lo}
(K^{\bul}_{l^n}({\cal X}_{\ol{\cal V}}/\ol{\cal V}),-d)). 
\end{CD}
\tag{14.10.1}\label{eqn:ttskrp}
\end{equation*} 
By \cite[(5.7)]{ndeg} the induced morphism 
$\os{\circ}{i}{}^*R^r\os{\circ}{\ol{j}}_*({\mab Z}/l^n)
\lo R^r\eps_{\ol{X}*}({\mab Z}/l^n)$ by the 
left vertical isomorphism in 
{\rm (\ref{eqn:ttskrp})} fits 
into the following commutative diagram: 
\begin{equation*}
\begin{CD}
\bigwedge^r(M_{\ol{X}}^{\rm gp}/{\cal O}_{\ol{X}}^*)
\otimes_{\mab Z}{\mab Z}/l^n(-r) 
@>{\us{\sim}{(\ref{eqn:tkx})}}>> 
R^r\eps_{\ol{X}*}({\mab Z}/l^n) \\
@| @AA{\simeq}A \\ 
({\mab Z}/l^n)_{\os{\circ}{\ol{X}}{}^{(r)}}(-r) 
@>{\us{\sim}{(\ref{eqn:gap})}}>> 
\os{\circ}{\ol{i}}{}^*R^r\os{\circ}{\ol{j}}_*({\mab Z}/l^n). 
\end{CD}
\tag{14.10.2}\label{eqn:gdff}
\end{equation*} 
(\ref{coro:sspc}) immediately follows from 
(\ref{prop:acomp}), 
\end{proof}

\begin{theo}\label{theo:e2dgh} 
Let $S$ be a family of log points. 
Assume that the log structure of $S$ is constant and that $\os{\circ}{S}$ is connected. 
Let $X/S$ be a proper SNCL scheme and let $D$ be a relative SNCD on $X/S$. 
For a point $\os{\circ}{s} \in \os{\circ}{S}$, denote by 
$s$ the log scheme whose underlying scheme is $\os{\circ}{s}$ 
and whose log structure is the pull-back of 
the log structure of $S$. 
Let $X(s)$ and $\os{\circ}{D}(\os{\circ}{s})$ be the fibers of 
$X$ and $\os{\circ}{D}$ at $s$ and $\os{\circ}{s}$, respectively. 
Assume that, for a point $\os{\circ}{s}\in \os{\circ}{S}$, 
there exist a henselian discrete valuation ring 
${\cal V}(\os{\circ}{s})$ with residue field 
$\kap(\os{\circ}{s})$ and a proper strict semistable family 
${\cal X}(s)$ with canonical log structure and with 
a horizontal relative SNCD $\os{\circ}{\cal D}(\os{\circ}{s})$ over 
${\cal V}(\os{\circ}{s})$ such that 
$({\cal X}(s),\os{\circ}{\cal D}(\os{\circ}{s})) 
\times_{({\rm Spec}({\cal V}(\os{\circ}{s})),
{\cal V}(\os{\circ}{s})\setminus \{0\})}
({\rm Spec}(\kap(\os{\circ}{s})),{\mab N}\oplus \kap(\os{\circ}{s})^*) 
=(X(s),\os{\circ}{D}(\os{\circ}{s}))$. 
Then the spectral sequence {\rm (\ref{eqn:lehkd})} 
degenerates at $E_2$ modulo torsion. 
\end{theo}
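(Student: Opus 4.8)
The plan is to reduce the degeneration to a statement at a single geometric point of $\os{\circ}{S}$ and then to deduce it from the semistable reduction there via Deligne's purity. First I would observe that, since the log structure of $S$ is constant and $\os{\circ}{S}$ is connected, the $E_1$-terms of (\ref{eqn:lehkd}) are smooth ${\mab Z}_l$-sheaves on $\os{\circ}{S}_{\rm et}$: the schemes $\os{\circ}{D}{}^{(k)}$ are smooth and proper over $\os{\circ}{S}$, so $R^{q-k}f_{D^{(k)}_{\frac{1}{l^{\infty}}}/S_{\frac{1}{l^{\infty}}}*}$ of a smooth sheaf is again smooth, exactly as in the proof of (\ref{prop:sms}). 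By the contravariant functoriality (\ref{theo:func}) the specialization maps of the spectral sequence (\ref{eqn:lehkd}) commute with the differentials $d_r^{-k,q+k}$, and on the $E_1$-terms they are isomorphisms; hence by induction on $r$ the $E_r$-terms are constructible with the specialization maps isomorphisms on them, so they are smooth by the criterion \cite[IX (2.11)]{sga4-2}. Consequently each $d_r^{-k,q+k}\otimes_{{\mab Z}_l}{\mab Q}_l$ is a morphism of lisse ${\mab Q}_l$-sheaves on the connected scheme $\os{\circ}{S}$, so it vanishes globally as soon as it vanishes at one geometric point.

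Thus it suffices to prove the degeneration at the geometric point $\ol{s}$ above the distinguished point $\os{\circ}{s}$. At this point the base change proposition (\ref{prop:pwsbc}) identifies the stalk of (\ref{eqn:lehkd}) with the spectral sequence (\ref{eqn:lehkd}) for the fiber $(X(s),D(\os{\circ}{s}))/s$. By the semistable reduction hypothesis this fiber is the log special fiber of the proper strict semistable family $({\cal X}(s),{\cal D}(\os{\circ}{s}))$ with horizontal relative SNCD over the henselian discrete valuation ring ${\cal V}(\os{\circ}{s})$; so by the comparison (\ref{prop:acomp}) and (\ref{coro:sspc}) the spectral sequence (\ref{eqn:lehkd}) at $\ol{s}$ is canonically isomorphic to the Rapoport--Zink type spectral sequence (\ref{eqn:lghkd}) on the generic fibre, whose $E_1$-terms are $H^{q-k}_{\rm et}(\os{\circ}{\cal D}{}^{(k)}_{\ol{\eta}},{\mab Z}_l\otimes_{\mab Z}\vp^{(k)}_{\rm et}(\os{\circ}{\cal D}_{\ol{\eta}}/\ol{K}))(-k)$, that is, twists of the $l$-adic cohomology of the smooth proper $\ol{K}$-schemes $\os{\circ}{\cal D}{}^{(k)}_{\ol{\eta}}$.

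It then remains to prove that (\ref{eqn:lghkd}) degenerates at $E_2$ modulo torsion, and here I would run the same weight argument as in the proof of (\ref{prop:stc}). After a standard spreading-out of $({\cal X}(s),{\cal D}(\os{\circ}{s}))/{\cal V}(\os{\circ}{s})$ one reduces to a discrete valuation ring with finite residue field carrying a Frobenius acting on the whole diagram defining (\ref{eqn:lghkd}); by the purity of the weights of Frobenius on the cohomology of a proper smooth scheme (\cite[(1.6)]{dw1}), the term $E_1^{-k,q+k}\otimes_{{\mab Z}_l}{\mab Q}_l$ is pure of weight $q+k$. Since $d_r^{-k,q+k}$ commutes with Frobenius and, for $r\geq 2$, carries a subquotient of weight $q+k$ into a subquotient of weight $q+k-r+1<q+k$, it must vanish after $\otimes_{{\mab Z}_l}{\mab Q}_l$. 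This gives the degeneration at $E_2$ modulo torsion at $\ol{s}$, and hence, by the first paragraph, on all of $\os{\circ}{S}$.

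The main obstacle I expect is the spreading-out step in the last paragraph: one must descend not merely the smooth proper $\ol{K}$-varieties $\os{\circ}{\cal D}{}^{(k)}_{\ol{\eta}}$ but the entire semistable family together with its horizontal divisor to a finitely generated base, so that the Frobenius at a closed point acts compatibly with all the differentials of (\ref{eqn:lghkd}); making the specialization of the spectral sequence (\ref{eqn:lghkd}) Frobenius-equivariant is precisely the delicate point, whereas once purity is available the weight comparison is routine. Note that the reduction to a single geometric point in the first paragraph is what lets us import the semistable case, in contrast to the slightly different direct specialization argument in the proof of (\ref{prop:stc}) for (\ref{eqn:lledd}).
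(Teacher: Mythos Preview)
Your overall strategy matches the paper's: reduce by smoothness of the $E_r$-terms (the argument of (\ref{prop:sms})) and connectedness of $\os{\circ}{S}$ to the single geometric point $\ol{s}$, identify (\ref{eqn:lehkd}) there with the semistable spectral sequence (\ref{eqn:lghkd}) via (\ref{prop:acomp}) and (\ref{coro:sspc}), and then prove $E_2$-degeneration of the latter by a weight argument. When the generic point $\eta$ of ${\rm Spec}({\cal V}(\os{\circ}{s}))$ has characteristic $p>0$, your spreading-out plus Frobenius purity is exactly what the paper does (it simply says ``as in the proof of (\ref{prop:stc})'').

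The gap is the case ${\rm char}(\eta)=0$. The obstacle you yourself flag is real and you do not resolve it: the spectral sequence (\ref{eqn:lghkd}) is built from the nearby cycle sheaf of the whole semistable family over ${\cal V}(\os{\circ}{s})$, and there is no standard way to descend a henselian discrete valuation ring of mixed characteristics together with a semistable family over it to a finitely generated ${\mab Z}$-algebra so that a Frobenius at a closed point acts compatibly with all the differentials $d_r$. The paper does not attempt this. Instead it separates off the case ${\rm char}(\eta)=0$ and handles it by the Lefschetz principle (reduce to $K={\mab C}$), the comparison theorem between \'etale and Betti cohomology, and Deligne's mixed Hodge theory, which gives the $E_2$-degeneration of the weight spectral sequence for the complement of an SNCD in a smooth proper complex variety. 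You should add this case distinction; your Frobenius argument as written covers only the equal-characteristic-$p$ situation.
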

\begin{proof} 
Because the $E_r$-terms $(r\geq 1)$ are smooth 
on $\os{\circ}{S}$ (the proof of (\ref{prop:sms})) and 
because $\os{\circ}{S}$ is connected, 
it suffices to prove the $E_2$-degeneration for $(X(s), D(s))/s$. 
Hence we may assume that $S=s$ and 
we omit to write 
$s$ and $\os{\circ}{s}$ in the notations $X(s)$, 
$\os{\circ}{D}(\os{\circ}{s})$, ${\cal V}(\os{\circ}{s})$, 
$\kap(\os{\circ}{s})$, ${\cal X}(s)$ and 
$\os{\circ}{\cal D}(\os{\circ}{s})$.  
Thus we may assume that we 
are given a proper strict semistable family ${\cal X}$ 
with canonical log structure and with 
a horizontal SNCD $\os{\circ}{\cal D}$ 
over a henselian discrete valuation ring ${\rm Spec}({\cal V})$ with 
separably closed residue field $\kap$ such that 
the log special fiber obtained by $({\cal X},{\cal D})$ is $(X,D)$.  
Let $\eta$ be the generic point of ${\rm Spec}\,{\cal V}$ 
and $\ol{\eta}$ the geometric generic point of ${\rm Spec}({\cal V})$. 
Let $G$ be the absolute Galois group of $K$. 
Let $R\os{\circ}{\Psi}({\mab Z}/l^n)$ be 
the classical nearby cycle 
sheaf in $D^+(\os{\circ}{\ol{X}},G,{\mab Z}/l^n)$. 
Then 
$K^{\bul}_{l^n}(\ol{X}_{\frac{1}{l^{\infty}}}/\ol{s})=R\os{\circ}{\Psi}({\mab Z}/l^n)$ 
by the latter part of the proof of \cite[(1.9)]{nd}.  
We have the following commutative diagram 
\begin{equation*} 
\begin{CD} 
A^{\bul}_{l^n}({\cal X}_{\ol{\cal V}}/\ol{\cal V}) @>{\sim}>> 
A^{\bul}_{l^n}(\ol{X}_{\frac{1}{l^{\infty}}}/\ol{s}_{\frac{1}{l^{\infty}}}) \\ 
@A{\simeq}AA @AA{\simeq}A \\ 
R\os{\circ}{\Psi}({\mab Z}/l^n) @>{\sim}>>K^{\bul}_{l^n}(\ol{X}_{\frac{1}{l^{\infty}}}/\ol{s}_{\frac{1}{l^{\infty}}}) 
\end{CD} 
\tag{14.11.1}\label{cd:comvc} 
\end{equation*} 
(see [loc.~cit.], though the boundary morphisms of our complexes 
$A^{\bul}_{l^n}({\cal X}_{\ol{\cal V}}/\ol{\cal V})$ and 
$A^{\bul}_{l^n}(\ol{X}_{\frac{1}{l^{\infty}}}/\ol{s}_{\frac{1}{l^{\infty}}})$ 
are different from those of the corresponding complexes in [loc.~cit.]). 
Hence we have a canonical isomorphism 
\begin{equation*} 
A^{\bul}_{l^n}({\cal X}_{\ol{\cal V}}/\ol{\cal V})\otimes_{{\mab Z}/l^n}
\os{\circ}{{\ol{\iota}}}{}^{*}(M^{\bul}_{l^n}(\os{\circ}{\cal D}/\os{\circ}{S}),R)  
\os{\sim}{\lo} 
(A^{\bul}_{l^n}((\ol{X}_{\frac{1}{l^{\infty}}},\ol{D}_{\frac{1}{l^{\infty}}})/\ol{s}_{\frac{1}{l^{\infty}}}),
P^{\ol{D}_{\frac{1}{l^{\infty}}}})  
\end{equation*} 
in $D^+(\os{\circ}{X}_{\rm et},{\mab Z}/l^n)$. 
Let  $R\os{\circ}{\Psi}_{\os{\circ}{\cal D}_{\ol{\cal V}}}({\mab Z}/l^n)$ 
be the classical nearby cycle sheaf of ${\mab Z}/l^n$ on 
$\os{\circ}{\cal D}_{\ol{\cal V}}$. 
By (\ref{eqn:nbcyb}) 
we obtain 
\begin{align*} 
s(A^{\bul}_{l^n}({\cal X}_{\ol{{\cal V}}}/\ol{\cal V})
\otimes_{{\mab Z}/l^n}{\rm gr}^R_k
M^{\bul}_{l^n}(\os{\circ}{\cal D}_{\ol{{\cal V}}}/\ol{{\cal V}})) 
=c^{(k)}_{*}(R\os{\circ}{\Psi}_{\os{\circ}{{\cal D}_{\ol{\cal V}}}}({\mab Z}/l^n)(-k)
\otimes_{\mab Z}
\varpi^{(k)}_{\rm et}(\os{\circ}{D}_{\ol{{\cal V}}}/\ol{{\cal V}}))[-k] 
\tag{14.11.2}\label{eqn:nbacyb}\\ 
\end{align*} 
as in the proof of (\ref{lemm:grc}) (1). 
Hence the $E_1$-term $E_1^{-k,q+k}$ of 
(\ref{eqn:lehkd}) is isomorphic to  
$H^{q-k}_{\rm et}({\cal D}^{(k)}_{\ol{\eta}},{\mab Z}_l)(-k)$. 
\par 
First consider the case where $\eta$ is of characteristic $p>0$. 
As in the proof of 
(\ref{prop:stc}), the specialization argument and  
the purity of the Frobenius 
show the degeneration at $E_2$ modulo torsion. 
\par 
Next consider the case where $\eta$ is of characteristic $0$. 
In this case, the Lefschetz principle, the comparison theorem between 
the \'{e}tale cohomology and the Betti cohomology and the Hodge theory 
tell us the degeneration at $E_2$. 
\end{proof} 




\begin{theo}\label{theo:sprme} 
Let the notations be as in {\rm (\ref{theo:xpssf})}.  
Assume that ${\cal V}$ is of characteristic $p>0$. 
Assume that $\os{\circ}{\cal X}$ is proper over ${\cal V}$.
Then there exists a monodromy filtration 
$M$ on $H^q_{\rm ket}((\ol{X}_{\frac{1}{l^{\infty}}},\ol{D}_{\frac{1}{l^{\infty}}}),{\mab Q}_l)
(=H^q({\cal U}_{\ol{\eta}},{\mab Q}_l))$ $(q\in {\mab N})$ 
relative to $P^{\ol{D}_{\frac{1}{l^{\infty}}}}$. The relative monodromy filtration 
$M$ is equal to $P$. 
\end{theo}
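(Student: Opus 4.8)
The plan is to reduce the assertion to the non-relative log $l$-adic monodromy-weight conjecture (\ref{conj:log}) for the members $D^{(k)}$ of the horizontal boundary, and then to settle that conjecture in the present equal-characteristic situation by the purity of Frobenius weights. First I would use the comparison results of this section to transport the problem to the strict semistable picture: by (\ref{prop:acomp}) and (\ref{coro:sspc}) the weight spectral sequence (\ref{eqn:lehkd}) relative to $\ol{D}_{\frac{1}{l^{\infty}}}$ for $H^q_{\rm ket}((\ol{X}_{\frac{1}{l^{\infty}}},\ol{D}_{\frac{1}{l^{\infty}}}),{\mab Q}_l)=H^q({\cal U}_{\ol{\eta}},{\mab Q}_l)$ is canonically isomorphic to the Rapoport-Zink type spectral sequence (\ref{eqn:lghkd}), whose $E_1$-terms are (up to orientation sheaves) $H^{q-k}_{\rm et}({\cal D}^{(k)}_{\ol{\eta}},{\mab Q}_l)(-k)$, the $l$-adic cohomology of the geometric generic fibre of ${\cal D}^{(k)}/{\cal V}$.

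The geometric key step is to observe that, because $\os{\circ}{\cal D}$ is a \emph{horizontal} SNCD in the sense of (\ref{defi:dxsnc}), each $\os{\circ}{\cal D}{}^{(k)}$, endowed with the pull-back of the canonical log structure of ${\cal X}$, is again a proper strict semistable family over ${\cal V}$ whose log special fibre is $D^{(k)}$. This is checked locally on the chart (\ref{eqn:locsd}): intersecting ${\rm Spec}({\cal V}[x_0,\ldots,x_d]/(x_0\cdots x_{a-1}-\pi))$ with the horizontal divisors $\{x_i=0\}$ $(a\le i\le a+b-1)$ leaves the semistability equation $x_0\cdots x_{a-1}=\pi$ untouched and merely lowers the dimension, so the intersection is strict semistable over ${\cal V}$.

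Next I would invoke the equal-characteristic case of (\ref{conj:log}). Since ${\cal V}$ is of characteristic $p>0$ and ${\cal D}^{(k)}/{\cal V}$ is a proper strict semistable family, the classical $l$-adic monodromy-weight conjecture holds for it: $N$ induces isomorphisms on ${\rm gr}^P H^{q-k}_{\rm et}({\cal D}^{(k)}_{\ol{\eta}},{\mab Q}_l)$, exactly the isomorphism (\ref{eqn:mwd}) for the $E_1$-terms. This is the input already used in (\ref{coro:nm}) and announced in the Introduction; it rests on the purity of the Frobenius weights on the $E_1$-terms, which are cohomologies of proper smooth schemes over the residue field, following Deligne \cite{dw2} and Ito \cite{itp} through the specialization argument already employed in (\ref{prop:stc}) and (\ref{theo:e2dgh}).

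Finally I would run the argument of (\ref{theo:mimr}) to pass from (\ref{conj:log}) for the $E_1$-terms to (\ref{conj:remc}) for $(X,D)/s$. With the isomorphisms (\ref{eqn:mwd}) in hand on each $E_1^{-k,q+k}$, the strict compatibility of the edge morphisms $d_r^{-k,q+k}$ with the weight filtration $P$ (\ref{prop:sfsc}) — valid under properness alone — together with the key lemma (\ref{lemm:easy}) yields that $N^e$ induces isomorphisms on ${\rm gr}^P_{q+k\pm e}E_r^{-k,q+k}$ for every $r\ge 1$, hence on the abutment. This gives the isomorphism (\ref{eqn:clrm}), i.e. that $P$ is the monodromy filtration $M$ of $N$ relative to $P^{\ol{D}_{\frac{1}{l^{\infty}}}}$, as asserted. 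The main obstacle I anticipate is the properness versus projectivity bookkeeping: (\ref{theo:mimr}) is stated for projective $\os{\circ}{X}$, so rather than citing it as a black box I must re-run its proof and verify that each ingredient — (\ref{prop:pwsbc}), (\ref{prop:sfsc}), (\ref{lemm:easy}) — requires only that $\os{\circ}{X}$ be proper, the projectivity there serving solely to secure (\ref{conj:log}), which in equal characteristic is available for proper strict semistable families via the Frobenius weight argument.
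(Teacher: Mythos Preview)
Your proposal is correct and follows essentially the same route as the paper: reduce via the proof of (\ref{theo:mimr}) to verifying (\ref{conj:log}) for each $D^{(k)}$, and settle the latter by Ito's equal-characteristic result \cite[(6.1)]{itp}, using that each $\os{\circ}{\cal D}{}^{(k)}$ is again proper strict semistable over ${\cal V}$. Your write-up is in fact more careful than the paper's terse proof --- you make explicit the local check that $\os{\circ}{\cal D}{}^{(k)}$ is strict semistable and you correctly flag and resolve the proper-versus-projective discrepancy between the hypotheses of (\ref{theo:mimr}) and (\ref{theo:sprme}), observing that the ingredients (\ref{prop:sfsc}) and (\ref{lemm:easy}) of that proof require only properness.
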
 
\begin{proof} 
By the proof of (\ref{theo:mimr}) it suffices to prove that 
(\ref{conj:log}) for $D^{(k)}$ $(k\in {\mab N})$ is true ((\ref{eqn:mwd})). 
This follows from \cite[(6.1)]{itp}.
\end{proof}


\begin{coro}\label{coro:nci}
Let the notations be as in {\rm \S\ref{sec:fpl}}. 
Assume that $\os{\circ}{X}$ is proper over $\os{\circ}{S}$. 
If, for each connected component $S'$ of $S$, there exists an exact closed point $s\in S'$ 
such that the fiber $(X_s,D_s)/s$ of $(X,D)/S$ at $s$ is the log special fiber 
of a proper strict semistable family over a henselian discrete valuation ring 
of equal characteristic, then there exists a relative monodromy filtration on 
$R^qf_{(X_{\frac{1}{l^{\infty}}},D_{\frac{1}{l^{\infty}}})/S_{\frac{1}{l^{\infty}}}*}({\mab Q}_l)$ with respect to $P^D$ and 
it is equal to $P$. 
\end{coro}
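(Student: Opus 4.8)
The plan is to reduce the global assertion to the fibrewise result \ref{theo:sprme} by a smoothness-and-specialization argument. By \ref{coro:llha}~(2) one has $N(P_kR^qf_{(X_{\frac{1}{l^{\infty}}},D_{\frac{1}{l^{\infty}}})/S_{\frac{1}{l^{\infty}}}*}(\mab{Q}_l))\subset (P_{k-2}R^qf_{(X_{\frac{1}{l^{\infty}}},D_{\frac{1}{l^{\infty}}})/S_{\frac{1}{l^{\infty}}}*}(\mab{Q}_l))(-1)$ for all $k$, so by the definition of the relative monodromy filtration (\ref{defi:vqnp}) together with its uniqueness, the claim that $P$ is the relative monodromy filtration of $N$ with respect to $P^{D_{\frac{1}{l^{\infty}}}}$ is equivalent to the assertion that the induced morphism (\ref{eqn:clrm}) (equivalently (\ref{eqn:clrnm})) is an isomorphism for all $e,k\in \mab{N}$ and all $q$. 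Thus the whole problem reduces to proving that (\ref{eqn:clrm}) is an isomorphism of $\mab{Q}_l$-sheaves on $\os{\circ}{S}_{\rm et}$.

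For this I would first use smoothness. By \ref{prop:sms} the subsheaves $P^{D_{\frac{1}{l^{\infty}}}}_k$ and $P_k$ of $R^qf_{(X_{\frac{1}{l^{\infty}}},D_{\frac{1}{l^{\infty}}})/S_{\frac{1}{l^{\infty}}}*}(\mab{Z}/l^n)$ are smooth in $\os{\circ}{S}_{\rm et}$; hence, after $\otimes_{\mab{Z}_l}\mab{Q}_l$ and passing to the double gradeds, the source and target of (\ref{eqn:clrm}) are smooth $\mab{Q}_l$-sheaves and $N^e$ is a morphism between them. Since $N$ is functorial by \ref{theo:func}, it commutes with the specialization morphisms of these smooth sheaves, and the specialization morphisms of smooth sheaves are isomorphisms; therefore $N^e$ is an isomorphism at one geometric point of a connected component of $\os{\circ}{S}$ if and only if it is an isomorphism at every geometric point of that component. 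As $\os{\circ}{S}$ is the disjoint union of its connected components, it suffices to verify (\ref{eqn:clrm}) at one geometric point $\ol{s}$ of each connected component $\os{\circ}{S}{}'$, and I take $\ol{s}$ lying over the given exact closed point $s\in S'$.

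To compute the stalk at $\ol{s}$ I would apply the base change results of \S\ref{sec:fpl}. The inclusion of the log point $s$ into $S$ is a morphism $v\col s\lo S$ of families of log points of degree $1$ (so $\deg(v)$ is prime to $l$ and (\ref{cd:zml}) holds with $\sig={\rm id}$), and $\os{\circ}{v}{}^*$ is exact, being the stalk functor at $s$. Hence \ref{theo:pwbcsbc} identifies the pull-back of $(A^{\bul}_{l^{\infty}}((X_{\frac{1}{l^{\infty}}},D_{\frac{1}{l^{\infty}}})/S_{\frac{1}{l^{\infty}}}),P^{D_{\frac{1}{l^{\infty}}}},P)$ with the corresponding bifiltered complex of the fibre $(X_s,D_s)/s$, while \ref{prop:pwsbc} identifies the induced filtrations $P^{D_{\frac{1}{l^{\infty}}}}$ and $P$ on the stalk with those of the fibre. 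Moreover, applying \ref{theo:func} to $v$ (with $\deg v=1$) shows that the $T$-induced monodromy $N$ on the stalk is carried to the monodromy operator of the fibre. Consequently the morphism (\ref{eqn:clrm}) at $\ol{s}$ is canonically identified with the corresponding morphism for $(X_s,D_s)/s$.

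Finally, the fibre $(X_s,D_s)/s$ is by hypothesis the log special fibre of a proper strict semistable family with horizontal relative SNCD over a henselian discrete valuation ring of equal characteristic. In the equal characteristic $p>0$ case, \ref{theo:sprme}—whose proof runs through \ref{theo:mimr} and the validity of \ref{conj:log} for the $D^{(k)}$ furnished by \cite[(6.1)]{itp}, and which uses the comparison \ref{prop:acomp}—asserts precisely that on the fibre the relative monodromy filtration of $N$ with respect to $P^{D_{\frac{1}{l^{\infty}}}}$ exists and equals $P$, i.e. that (\ref{eqn:clrm}) is an isomorphism at $\ol{s}$; in the equal characteristic $0$ case the same conclusion for the fibre follows from the classical theory of the limit mixed Hodge structure and the comparison theorem between $l$-adic and Betti cohomology, exactly as in the characteristic $0$ part of the proof of \ref{theo:e2dgh}. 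Combining this with the reduction above yields the corollary. The main obstacle is the base change compatibility of the third paragraph: one must check not merely that $P$ and $P^{D_{\frac{1}{l^{\infty}}}}$ are compatible with restriction to the fibre, but, crucially, that the $T$-induced monodromy $N$ on the global sheaf restricts to the genuine monodromy of the fibre over the log point $s$, and it is here that the precise compatibility of the $T$-action with $v$ encoded in \ref{theo:func} must be invoked.
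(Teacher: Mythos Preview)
Your argument is correct and follows the same route as the paper: the paper's one-line proof ``follows from the proof of (\ref{prop:sms}) and (\ref{theo:sprme})'' is precisely your smoothness-plus-specialization reduction (using the argument of (\ref{prop:sms}) that specialization maps of the smooth $E_r$-terms are isomorphisms, so it suffices to check (\ref{eqn:clrm}) at one geometric point per connected component) followed by the fibrewise input (\ref{theo:sprme}). Your treatment of the base change compatibilities via (\ref{theo:pwbcsbc}), (\ref{prop:pwsbc}) and (\ref{theo:func}) makes explicit what the paper leaves implicit in ``the proof of (\ref{prop:sms})''; note however that in the paper's context ``equal characteristic'' means equal characteristic $p>0$ (cf.\ the introduction and the hypothesis of (\ref{theo:sprme})), so your separate characteristic-$0$ paragraph is not needed for the statement as intended.
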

\begin{proof}
(\ref{coro:nci}) follows from the proof of (\ref{prop:sms}) and (\ref{theo:sprme}). 
\end{proof}

\begin{coro}\label{theo:genfs}  
Let the notations be as in {\rm (\ref{theo:func})}.  
Assume that $S$ and $S'$ are log points.  
Assume that 
$g^* \col H^q_{\rm ket}((\ol{X}_{\frac{1}{l^{\infty}}},D_{\frac{1}{l^{\infty}}}),{\mab Q}_l) 
\lo H^q_{\rm ket}((\ol{Y}_{\frac{1}{l^{\infty}}},E_{\frac{1}{l^{\infty}}}),{\mab Q}_l)$ 
has a section of ${\mab Q}_l$-vector spaces 
which are strictly compatible with respect to 
$P^{\ol{D}_{\frac{1}{l^{\infty}}}}$ and $P^{\ol{E}_{\frac{1}{l^{\infty}}}}$, and two $P$'s on 
$H^q_{\rm ket}((X_{\frac{1}{l^{\infty}}},D),{\mab Q}_l)$ 
and $H^q_{\rm ket}((Y_{\frac{1}{l^{\infty}}},E),{\mab Q}_l)$.  
Assume also that this section is compatible with 
$N$'s on $H^q_{\rm ket}((X_{\frac{1}{l^{\infty}}},D_{\frac{1}{l^{\infty}}}),{\mab Q}_l) 
\lo H^q_{\rm ket}((Y_{\frac{1}{l^{\infty}}},E_{\frac{1}{l^{\infty}}}),{\mab Q}_l)$.  
Furthermore, assume that $(Y,E)/T$ 
is a special fiber of proper strict semistable family with a horizontal 
SNCD over a henselian discrete valuation ring of 
characteristic $p>0$.  
Then there exists a monodromy filtration 
$M$ on $H^q_{\rm ket}((X_{\frac{1}{l^{\infty}}},D),{\mab Q}_l)$ 
relative to $P^{\os{\circ}{D}}$. 
The relative monodromy filtration $M$ is equal to $P$. 
\end{coro}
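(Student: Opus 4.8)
The plan is to transfer the relative monodromy–weight property from $(Y,E)$, where it is already known by (\ref{theo:sprme}), to $(X,D)$ by means of the splitting of $g^{*}$. Write $H^{q}_{X}:=H^q_{\rm ket}((\ol{X}_{\frac{1}{l^{\infty}}},D_{\frac{1}{l^{\infty}}}),{\mab Q}_l)$ and $H^{q}_{Y}:=H^q_{\rm ket}((\ol{Y}_{\frac{1}{l^{\infty}}},E_{\frac{1}{l^{\infty}}}),{\mab Q}_l)$, and let $r \col H^{q}_{Y}\lo H^{q}_{X}$ be the assumed section, so that $r\circ g^{*}={\rm id}_{H^{q}_{X}}$ and $H^{q}_{X}$ is realized as a direct summand of $H^{q}_{Y}$. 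Since $(Y,E)$ is the log special fiber of a proper strict semistable family with a horizontal SNCD over a henselian discrete valuation ring of characteristic $p>0$, (\ref{theo:sprme}) applies to $(Y,E)$ and shows that $P$ on $H^{q}_{Y}$ is the monodromy filtration of $N$ relative to $P^{\ol{E}_{\frac{1}{l^{\infty}}}}$; equivalently, by the remark following (\ref{conj:remc}), the induced morphisms
$$N^{e}\col {\rm gr}^{P}_{q+k+e}{\rm gr}^{P^{\ol{E}_{\frac{1}{l^{\infty}}}}}_{k}H^{q}_{Y}\os{\sim}{\lo}{\rm gr}^{P}_{q+k-e}{\rm gr}^{P^{\ol{E}_{\frac{1}{l^{\infty}}}}}_{k}H^{q}_{Y}(-e)$$
are isomorphisms for all $e,k\in {\mab N}$.

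Next I would introduce the idempotent $\pi:=g^{*}\circ r$ on $H^{q}_{Y}$; it satisfies $\pi^{2}=\pi$ because $r\circ g^{*}={\rm id}$, and its image is $g^{*}(H^{q}_{X})\simeq H^{q}_{X}$. By hypothesis $r$ is compatible with the monodromy operators, and $g^{*}$ intertwines them by its construction in (\ref{theo:func}) (the possible factor $\deg(v)$, invertible in ${\mab Q}_l$, being inverse on $g^{*}$ and $r$), so $\pi$ commutes with $N$. Moreover, by hypothesis $g^{*}$ and $r$ are strictly compatible with the pair $P^{\ol{D}_{\frac{1}{l^{\infty}}}}$, $P^{\ol{E}_{\frac{1}{l^{\infty}}}}$ and with the two weight filtrations $P$. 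Because we work with ${\mab Q}_l$-vector spaces, strict compatibility with each of the two filtrations together with the retraction $r$ forces $g^{*}$ and $r$ to induce, on the double graded pieces, maps realizing ${\rm gr}^{P}_{a}{\rm gr}^{P^{\ol{D}_{\frac{1}{l^{\infty}}}}}_{k}H^{q}_{X}$ as a direct summand of ${\rm gr}^{P}_{a}{\rm gr}^{P^{\ol{E}_{\frac{1}{l^{\infty}}}}}_{k}H^{q}_{Y}$, the complementary summand being the image of $1-\pi$.

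I would then conclude as follows. Since $\pi$ commutes with $N$ and is strict for both filtrations, the induced idempotent on each double graded piece commutes with the graded map $N^{e}$; hence $N^{e}$ is block–diagonal with respect to the decomposition into the $X$–summand and its complement. As $N^{e}$ is an isomorphism on the whole of ${\rm gr}^{P}_{q+k+e}{\rm gr}^{P^{\ol{E}_{\frac{1}{l^{\infty}}}}}_{k}H^{q}_{Y}$, each diagonal block is an isomorphism, and in particular
$$N^{e}\col {\rm gr}^{P}_{q+k+e}{\rm gr}^{P^{\ol{D}_{\frac{1}{l^{\infty}}}}}_{k}H^{q}_{X}\os{\sim}{\lo}{\rm gr}^{P}_{q+k-e}{\rm gr}^{P^{\ol{D}_{\frac{1}{l^{\infty}}}}}_{k}H^{q}_{X}(-e)$$
is an isomorphism for all $e,k\in {\mab N}$. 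Combined with $N(P_{k}H^{q}_{X})\subset P_{k-2}H^{q}_{X}(-1)$ from (\ref{coro:llha}) (2) and the uniqueness of Deligne, this says exactly that $P$ is the monodromy filtration of $N$ on $H^{q}_{X}$ relative to $P^{\os{\circ}{D}}$; in particular such a relative monodromy filtration $M$ exists and $M=P$.

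I expect the only delicate point to be the bigraded bookkeeping in the second step: one must check that separate strictness of $g^{*}$ and $r$ for $P^{\ol{D}_{\frac{1}{l^{\infty}}}}/P^{\ol{E}_{\frac{1}{l^{\infty}}}}$ and for $P$, combined with the single retraction $r$ and $N$–compatibility, really produces an $N$–equivariant splitting at the level of the double graded quotients ${\rm gr}^{P}{\rm gr}^{P^{\ol{D}_{\frac{1}{l^{\infty}}}}}$, and not merely at the level of each simple graded piece. Over ${\mab Q}_l$ this is elementary semisimple linear algebra, but it is exactly where the hypotheses on the section are used, and it plays here the role that the strict-compatibility and key-lemma arguments of (\ref{theo:mimr}) play in the unconditional reduction.
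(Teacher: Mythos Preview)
Your proposal is correct and follows the same line as the paper, whose proof is the single sentence ``(\ref{theo:genfs}) immediately follows from (\ref{theo:sprme}) and (\ref{theo:func}).'' You have simply made explicit the linear-algebra transfer via the section that the paper leaves to the reader: invoke (\ref{theo:sprme}) for $(Y,E)$, then use the bifiltered, $N$-compatible retraction to pass to the direct summand $H^{q}_{X}$ on the level of double graded pieces.
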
 
\begin{proof} 
(\ref{theo:genfs}) immediately follows from 
(\ref{theo:sprme}) and (\ref{theo:func}). 
\end{proof}

\begin{rema}\label{rema:gfs}    
(1) Let the notations be as in the introduction. 
In some cases in mixed characteristics,  
in \cite{fuhs} K.~Fujiwara has raised a strategy 
for the conjecture (\ref{eqn:nkkph}): 
if $X$ is the log special fiber of 
a log proper strict semistable family ${\cal X}$ 
over $S$ in the case where $S$ is 
of mixed characteristics and, also, 
the special fiber of a log proper strict semistable family ${\cal X}'$ 
over the spectrum of a henselian discrete valuation ring of 
equal characteristic with canonical log structure, 
then the monodromy-weight conjecture 
for $H^q(\os{\circ}{\cal X}_{\ol{\eta}},{\mab Q}_l)$ follows from (\ref{eqn:fki}). 
\par 
If $(X,D)$ comes from a 
proper semistable family $({\cal X},{\cal D})$ with a horizontal SNCD over 
a henselian discrete valuation ring of 
mixed characteristics and, also, a proper semistable family 
with a horizontal SNCD over a henselian discrete valuation ring of 
equal characteristic, then 
the relative monodromy filtration on 
$H^q((\os{\circ}{\cal X}_{\ol{\eta}}\setminus 
\os{\circ}{\cal D}_{\ol{\eta}}),{\mab Q}_l)$ 
relative to the filtration $P^{\os{\circ}{\cal D}}$ exists by (\ref{theo:sprme}).  
\par 
(2) In the workshop ``Algebraic geometry 2000'' at Azumino, 
the author heard from T.~Katsura that 
F.~Oort has raised the following problem: 
for a proper smooth scheme $Z$ over a perfect field, 
is there an alternation $Z' \lo Z$ such 
that $Z'$ is the special fiber of a proper smooth scheme 
over a complete discrete valuation ring of mixed characteristics?
One year later(=2001),  N.~Tsuzuki asked to me the following: 
is there a surjective morphism $Z' \lo Z$ such 
that $Z'$ is the special fiber of a proper smooth scheme 
over a complete discrete valuation ring of mixed characteristics? 
\par 
One can ask the analogous problem  
in the context of log geometry: 
for a proper log smooth scheme $Z$ over 
the log point of a perfect field, 
is there a surjective morphism $Z' \lo Z$ such 
that $Z'$ is the special fiber of a proper log smooth scheme 
over the spectrum of 
a complete discrete valuation ring of mixed characteristics 
with canonical log structure.  
Influenced by Fujiwara's strategy in (1), 
C.~Nakayama kindly suggested to me in 2007  that,  
replacing ``mixed characteristics'' by ``equal characteristics'' 
has an application to the log $l$-adic monodromy-weight conjecture 
(\ref{conj:log}).  
However it seems to me that the analogous problem in the log case cannot 
be solved affirmatively in general 
because there exist log special fibers of rigid analytic analogues of non-K\"{a}hler 
elliptic surfaces for which the monodromy filtrations on $H^1$'s and 
the weight filtrations on them do not coincide (\cite[(6.5), (6.7)]{ndeg}). 
\end{rema} 
 
\bigskip
\bigskip
\parno
\begin{center}
{{\rm \Large{\bf Appendix}}}
\end{center}

\section{Edge morphisms between 
the $E_1$-terms of $l$-adic weight spectral sequences}
\label{sec:lbddmlif} 
In this section we prove that 
the edge morphisms $d_1^{\bul \bul}$ 
between the $E_1$-terms of 
(\ref{eqn:lledd}) are described by 
\v{C}ech Gysin morphisms and 
the induced \v{C}ech morphisms of pull-back morphisms 
by closed immersions. 
We also prove that 
the edge morphisms $d_1^{\bul \bul}$ 
between the $E_1$-terms of 
(\ref{eqn:lehkd}) are described by Gysin morphisms.  
\par 
For the time being, consider the case 
$\os{\circ}{D}=\emptyset$. 
\par 
Let $\os{\circ}{X}:=\bigcup_{i \in I(\os{\circ}{X})}\os{\circ}{X}_i$ 
be the union of smooth components of $\os{\circ}{X}$ over $\os{\circ}{S}$, 
where $I(\os{\circ}{X})$ is a set of indexes.  
We fix a total order on $I(\os{\circ}{X})$.  
Let $k\geq 2$ be an integer. 
Set 
$I_k(\os{\circ}{X})
:=\{(i_0, \ldots, i_{k-1})\in I(\os{\circ}{X})^k~\vert~
i_m< i_{m'}~(0\leq m<m' \leq k-1)\}$ 
and $\ul{i}:=(i_0, \ldots, i_{k-1})\in I_k(\os{\circ}{X})$. 
For an integer $0 \leq m \leq k-1$, 
set $\ul{i}_m:=(i_0, \ldots, \hat{i}_m, \ldots, i_{k-1})$. 
Set 
$\os{\circ}{X}_{\ul{i}}:=\os{\circ}{X}_{i_0} \cap \cdots \cap \os{\circ}{X}_{i_{k-1}}$ 
and $\os{\circ}{X}_{\ul{i}_m}:=\os{\circ}{X}_{i_0} \cap \cdots \cap 
\hat{\os{\circ}{X}}_{i_m} \cap \cdots \cap \os{\circ}{X}_{i_{k-1}}$.  
Let $\iota_{\ul{i}}^{\ul{i}_m} \col \os{\circ}{X}_{\ul{i}} 
\os{\sus}{\lo} \os{\circ}{X}_{\ul{i}_m}$ 
be the natural closed immersion and let 
$G_{\ul{i}}^{\ul{i}_m} \col 
\iota_{\ul{i}*}^{\ul{i}_m}({\mab Z}/l^n(-1))_{\os{\circ}{X}_{\ul{i}}}\{-1\} \lo 
({\mab Z}/l^n)_{\os{\circ}{X}_{\ul{i}_m}}[1]$ 
be the Gysin morphism  of the closed immersion 
$\iota_{\ul{i}}^{\ul{i}_m}$. 
\par 
Set 
$$\vp_{\ul{i}{\rm et}}(\os{\circ}{X}/\os{\circ}{S}):=
\vp_{i_0 \cdots i_{k-1}{\rm et}}(\os{\circ}{X}/\os{\circ}{S})$$ 
and
$$\vp_{\ul{i}_m{\rm et}}(\os{\circ}{X}/\os{\circ}{S}):=
\vp_{i_0 \cdots \wh{i}_m 
\cdots i_{k-1}{\rm et}}(\os{\circ}{X}/\os{\circ}{S}).$$ 
\par 
The exact closed immersion 
$\iota^{\ul{i}_m}_{\ul{i}} 
\col X_{\ul{i}} \os{\subset}{\lo} X_{\ul{i}_m}$
induces a morphism  
\begin{equation*}
(-1)^m\iota_{\ul{i}}^{\ul{i}_m*}
\col 
({\mab Z}/l^n)_{\os{\circ}{X}_{\ul{i}_m}} 
\otimes_{\mab Z}
\vp_{\ul{i}_m{\rm et}}(\os{\circ}{X}/\os{\circ}{S}) 
\lo \iota_{\ul{i}*}^{\ul{i}_m}
(({\mab Z}/l^n)_{\os{\circ}{X}_{\ul{i}}})\otimes_{\mab Z}
\vp_{\ul{i}{\rm et}}(\os{\circ}{X}/\os{\circ}{S})
\tag{15.0.1}\label{eqn:defcbd}
\end{equation*} 
defined by
$x\otimes (i_0 \cdots \wh{i}_m \cdots i_{k-1}) 
\lom (-1)^m\iota_{\ul{i}{\rm et}}^{\ul{i}_m*}
(x)\otimes (i_0 \cdots  i_{k-1})$. 
Let $a_{\ul{i}_m} \col \os{\circ}{X}_{\ul{i}_m} \os{\sus}{\lo} \os{\circ}{X}$ 
be the natural closed immersion. 
Set 
\begin{equation*}
\iota^{(k-1)*}:=
\sum_{\{(i_0, i_1, \ldots, i_{k-1})~\vert~i_{m} 
\not= i_{m'}~(0\leq m \not=m' \leq k-1)\}}
\sum_{m=0}^{k-1}
a_{\ul{i}_m*} \circ ((-1)^m\iota_{\ul{i}}^{\ul{i}_m*}) \col
\tag{15.0.2}\label{eqn:descbd}
\end{equation*}
$$a_{*}^{(k-1)}
(({\mab Z}/l^n)_{\os{\circ}{X}{}^{(k-1)}}
\otimes_{\mab Z}\vp^{(k-1)}_{\rm et}(\os{\circ}{X}/\os{\circ}{S}))
\lo a_{*}^{(k)}
(({\mab Z}/l^n)_{\os{\circ}{X}{}^{(k)}}\otimes_{\mab Z}
\vp^{(k)}_{\rm et}(\os{\circ}{X}/\os{\circ}{S})).$$
Set 
$L^{\bul}_{l^n}(X):= {\rm MF}_{l^n}(T-1)$ 
((\ref{ali:mftkdf})). 

\par
In \cite{ndeg} we have proved the following two lemmas: 

\begin{lemm}[{\bf {\cite[(5.1)]{ndeg}}}]\label{lemm:lbdtrho} 
Let the notations be as above. 
Then the following diagram
\begin{equation*}
\begin{CD}
{\cal H}^k(L^{\bul}_{l^n}(X))(k)   
@>{\theta}>> {\cal H}^{k+1}(L^{\bul}_{l^n}(X))(k+1) \\ 
@V{{\rm (\ref{eqn:tkx})}}V{\simeq}V  
@V{{\rm (\ref{eqn:tkx})}}V{\simeq}V \\
a^{(k)}_*(({\mab Z}/l^n)_{\os{\circ}{X}{}^{(k)}}
\otimes_{\mab Z}\vp^{(k)}_{\rm et}(\os{\circ}{X}/\os{\circ}{S})) 
@>{\iota^{(k)*}}>> 
a^{(k+1)}_{*}(({\mab Z}/l^n)_{\os{\circ}{X}{}^{(k+1)}}
\otimes_{\mab Z}\vp^{(k+1)}_{\rm et}(\os{\circ}{X}/\os{\circ}{S})) 
\end{CD}
\tag{15.1.1}\label{cd:ttarho}
\end{equation*}  
is commutative. 
\end{lemm}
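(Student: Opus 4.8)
The plan is to reduce the assertion to a purely local computation on $\os{\circ}{X}_{\rm et}$ and then to identify $\theta$, on cohomology sheaves, with cup product by the total degeneration class. Both morphisms in (\ref{cd:ttarho}) are morphisms of constructible \'{e}tale sheaves supported on the closed strata, and both the \v{C}ech pullback $\iota^{(k)*}$ of (\ref{eqn:descbd}) and the map induced by $\theta$ are compatible with \'{e}tale localization; hence it suffices to check the equality after restricting to an \'{e}tale neighborhood of a geometric point $\bar{x}$ lying on a stratum, where $\ol{M}_X^{\rm gp} := M_X^{\rm gp}/{\cal O}_X^*$ is a finitely generated free abelian sheaf with basis $m_{i_0},\ldots,m_{i_{r-1}}$ indexed by the smooth components $\os{\circ}{X}_{i_0},\ldots,\os{\circ}{X}_{i_{r-1}}$ passing through $\bar{x}$.

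First I would make the isomorphism (\ref{eqn:tkx}) explicit through (\ref{eqn:oremx}): the composite $a^{(k)}_*(\varpi^{(k)}_{\rm et}(\os{\circ}{X}/\os{\circ}{S})) \os{\sim}{\lo} \bigwedge^{k+1}(\ol{M}_X^{\rm gp})$ sends an orientation label $(i_0\cdots i_k)$ to the wedge $m_{i_0}\wedge\cdots\wedge m_{i_k}$, and $R^q\eps_{X*}(\mab Z/l^n)=\bigwedge^q(\ol{M}_X^{\rm gp})\otimes_{\mab Z}\mab Z/l^n(-q)$ is generated by such wedges, each generator $m_i$ being of Kummer-theoretic origin as a class in $R^1\eps_{X*}(\mab Z/l^n(1))$. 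Under this dictionary the \v{C}ech pullback $\iota^{(k)*}$ of (\ref{eqn:descbd}), whose defining sign $(-1)^m$ in (\ref{eqn:defcbd}) is exactly the Koszul sign of reordering, is precisely the left exterior multiplication $\left(\sum_j m_j\right)\wedge(-)$, the sum running over all components. This identification is linear-algebraic once the orientation sheaves are matched, so it is routine.

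Then I would compute the map induced by $\theta$ on ${\cal H}^{\bul}$. Using the quasi-isomorphism $\mathrm{MF}_{l^n}(T-1)\simeq R\eps_{X*}(\mab Z/l^n)$ of (\ref{lemm:fq0}), coming from the short exact sequence (\ref{eqn:tkix}), the map $\theta$, which by construction (\ref{cd:lttarl}) is the ``$\otimes T$'' comparison between a copy of $K^{\bul}$ and its Tate twist, descends to cup product $R^q\eps_{X*}(\mab Z/l^n)\lo R^{q+1}\eps_{X*}(\mab Z/l^n)(1)$ with the image of the generator $T\in \mab Z_l(1)$ in $R^1\eps_{X*}(\mab Z/l^n(1))=\ol{M}_X^{\rm gp}\otimes\mab Z/l^n$. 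The geometric input is that this monodromy class equals the total degeneration class $\sum_j m_j$: for an SNCL scheme the structural generator of $\ol{M}_S^{\rm gp}=\mab Z$ pulls back to $\sum_j m_j\in\ol{M}_X^{\rm gp}$, since locally $\pi=\prod_j x_j$ forces $\mathrm{dlog}\,\pi=\sum_j\mathrm{dlog}\,x_j$. Combining the two previous steps, $\theta$ and $\iota^{(k)*}$ both become $\left(\sum_j m_j\right)\wedge(-)$, which is the desired commutativity.

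The hard part will be the middle step, namely pinning down the identification of $\theta$ at the level of cohomology sheaves with cup product by $T$ (hence by $\sum_j m_j$), together with the attendant sign bookkeeping: one must trace the double-complex structure of the mapping fiber and the connecting homomorphism of (\ref{eqn:tkix}) carefully enough to see that the abstract ``$\otimes T$'' of (\ref{cd:lttarl}) really is a cup product, and to match its Koszul signs with those built into (\ref{eqn:defcbd}); once this is done the rest is formal. An alternative I would keep in reserve is to transport the statement from the strict semistable case via the comparison isomorphisms (\ref{prop:acomp}) and (\ref{coro:sspc}), where the analogous fact is Rapoport--Zink's identification of the monodromy with the Gysin map; but since (\ref{cd:ttarho}) is asserted for a general SNCL scheme over a family of log points, the local computation above seems the more transparent route.
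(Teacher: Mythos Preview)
Your outline is sound, but note that the paper does not actually prove this lemma: immediately before the statement it writes ``In \cite{ndeg} we have proved the following two lemmas,'' and the result is simply imported from \cite[(5.1)]{ndeg}. So there is no proof here to compare against, only a citation.

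That said, your strategy is the natural one and is in the spirit of what such arguments look like in \cite{ndeg} and \cite{nd}: reduce to a local model where $\ol{M}_X^{\rm gp}$ is free on the generators $m_{i_0},\ldots,m_{i_{r-1}}$ corresponding to the smooth components through $\bar{x}$; identify the vertical isomorphisms, via (\ref{eqn:oremx}) and (\ref{eqn:tkx}), with the exterior algebra $\bigwedge^{\bul}\ol{M}_X^{\rm gp}\otimes{\mab Z}/l^n$; and then match both horizontal arrows with left wedge by the image of the structural generator of $\ol{M}_S^{\rm gp}$, namely $\sum_j m_j$. Your identification of $\iota^{(k)*}$ with $(\sum_j m_j)\wedge(-)$ is exactly the content of the sign convention in (\ref{eqn:defcbd}), and your identification of the class of $T$ with $\sum_j m_j$ is correct because the chart ${\mab N}\to M_S$ pulls back to the diagonal ${\mab N}\to{\mab N}^r$, $1\mapsto(1,\ldots,1)$, locally on $X$.

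The one place where you should be more careful than your sketch indicates is the passage from the formal definition of $\theta$ in (\ref{cd:lttarl}) to ``cup product by $T$.'' The cleanest route is not to argue directly with the double complex but to use the short exact sequence (\ref{eqn:tkix}): its connecting morphism $R\eps_{X*}({\mab Z}/l^n)\to K^{\bul}_{l^n}[1]$, composed with the projection $K^{\bul}_{l^n}\to R\eps_{X*}({\mab Z}/l^n)$ coming from (\ref{prop:lji}) and the quasi-isomorphism of (\ref{lemm:fq0}), realizes ${\cal H}^k\to{\cal H}^{k+1}(1)$ as the boundary of the Kummer extension associated to the ${\mab Z}_l(1)$-cover $X_{\frac{1}{l^{\infty}}}\to X$. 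That extension class lives in $R^1\eps_{X*}({\mab Z}/l^n(1))=\ol{M}_X^{\rm gp}\otimes{\mab Z}/l^n$ and is, by construction of $S_{\frac{1}{l^{\infty}}}$, exactly the image of $1\in\ol{M}_S^{\rm gp}$. This replaces your somewhat vague ``$\otimes T$ really is a cup product'' with a concrete connecting-homomorphism computation, and the Koszul signs then take care of themselves. Your alternative of transporting from the semistable case via (\ref{prop:acomp}) is unnecessary: the lemma is purely about the SNCL log structure and makes no reference to a lift.
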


\par 
We fix an isomorphism 
\begin{equation*}
\vp_{i_m{\rm et}}(\os{\circ}{X}/\os{\circ}{S})
\otimes_{\mab Z} 
\vp_{\ul{i}_m{\rm et}}(\os{\circ}{X}/\os{\circ}{S}) 
\os{\sim}{\lo}
\vp_{\ul{i}{\rm et}}(\os{\circ}{X}/\os{\circ}{S})
\tag{15.1.2}\label{eqn:colxts}
\end{equation*}
by the following morphism
$$(i_m)\otimes (i_0\cdots  
\wh{i}_m \cdots i_{k-1})
\lom (-1)^m(i_0\cdots i_{k-1}).$$
In (\ref{eqn:colxts}) we have omitted to 
write the direct images of the closed immersions 
$X_{\ul{i}} \os{\sus}{\lo} X_{\ul{i}_m}$ 
and $\iota^{\ul{i}_m}_{\ul{i}}$.  
We identify 
$\vp_{i_m{\rm et}}(\os{\circ}{X}/\os{\circ}{S})\otimes_{\mab Z} 
\vp_{\ul{i}_m{\rm et}}(\os{\circ}{X}/\os{\circ}{S})$ with 
$\vp_{\ul{i}{\rm et}}(\os{\circ}{X}/\os{\circ}{S})$ 
by this isomorphism.
We also have the following composite morphism
\begin{align*}
(-1)^mG_{\ul{i}}^{\ul{i}_m}  &
\col 
\iota^{\ul{i}_m}_{\ul{i}*}
(({\mab Z}/l^n)_{\os{\circ}{X}_{\ul{i}}})
\otimes_{\mab Z}
\vp_{\ul{i}{\rm et}}(\os{\circ}{X}/\os{\circ}{S}) 
\tag{15.1.3}\label{eqn:m1gom} \\ 
& \os{\sim}{\lo}  \iota^{\ul{i}_m}_{\ul{i}*}
(({\mab Z}/l^n)_{\os{\circ}{X}_{\ul{i}}})
\otimes_{\mab Z}\vp_{i_m{\rm et}}(\os{\circ}{X}/\os{\circ}{S})
\otimes_{\mab Z} 
\vp_{\ul{i}_m{\rm et}}(\os{\circ}{X}/\os{\circ}{S}) \\ 
{} & =\iota^{\ul{i}_m}_{\ul{i}*}
(({\mab Z}/l^n)_{\os{\circ}{X}_{\ul{i}}}) 
\otimes_{\mab Z} 
\vp_{\ul{i}_m{\rm et}}(\os{\circ}{X}/\os{\circ}{S}) \\ 
{} & \os{G_{\ul{i}}^{\ul{i}_m}\otimes 1}{\lo}  
({\mab Z}/l^n)_{\os{\circ}{X}_{\ul{i}_m}}
\otimes_{\mab Z}\vp_{\ul{i}_m{\rm et}}
(\os{\circ}{X}/\os{\circ}{S})[1]\{1\} 
\end{align*} 
defined by 
\begin{equation*}
{\rm ``} x\otimes (i_0 \cdots  i_{k-1}) \lom 
(-1)^mG_{\ul{i}}^{\ul{i}_m}(x)\otimes 
(i_0\cdots \wh{i}_m \cdots i_{k-1}){\rm "}.
\tag{15.1.4}\label{eqn:ooolll}
\end{equation*}
Set 
\begin{equation*} 
G^{(k)}:=\sum_{\ul{i} \in I_k(\os{\circ}{X})}
\sum_{m=0}^{r}(-1)^m
G_{\ul{i}}^{\ul{i}_m} \col 
a^{(k)}_*(({\mab Z}/l^n)_{\os{\circ}{X}{}^{(k)}}(-1)
\otimes_{\mab Z}\vp^{(k)}_{\rm et}(\os{\circ}{X}/\os{\circ}{S})) 
\end{equation*} 
$$\lo  a^{(k-1)}_*(({\mab Z}/l^n)_{\os{\circ}{X}{}^{(k-1)}}
\otimes_{\mab Z}\vp^{(k-1)}_{\rm et}(\os{\circ}{X}/\os{\circ}{S})) 
[1]\{1\}. $$

\begin{lemm}[{\bf {\cite[(5.4)]{ndeg}}}]\label{lemm:gsta} 
Let the notations and the assumptions be as in 
{\rm (\ref{lemm:lbdtrho})}. 
Let 
\begin{equation*}
d\col {\cal H}^{k}(L^{\bul}_{l^n}(X))\{-k\}  \lo 
{\cal H}^{k-1}(L^{\bul}_{l^n}(X))\{-(k-1)\}[1]  
\end{equation*}
be the boundary morphism 
of the following triangle 
\begin{equation*}
{\rm gr}^{\tau}_{k-1}L^{\bul}_{l^n}(X) 
\lo (\tau_{k}/\tau_{k-2})(L^{\bul}_{l^n}(X)) 
\lo {\rm gr}^{\tau}_{k}L^{\bul}_{l^n}(X) 
\os{+1}{\lo} 
\end{equation*}
by using the Convention {\rm (4)}.  
Then  the following diagram
\begin{equation*}
\begin{CD}
{\cal H}^{k}(L^{\bul}_{l^n}(X))\{-k\}  
@>{d}>>  \\
@V{{\rm (\ref{eqn:tkx})}}V{\simeq}V \\
a^{(k)}_*(({\mab Z}/l^n)_{\os{\circ}{X}{}^{(k)}}(-k)
\otimes_{\mab Z}\vp^{(k)}_{\rm et}(\os{\circ}{X}/\os{\circ}{S})) 
\{-k\} 
@>{-G^{(k)}}>>  \\
{\cal H}^{k-1}(L^{\bul}_{l^n}(X))\{-(k-1)\}[1] \\ 
@V{{\rm (\ref{eqn:tkx})}}V{\simeq}V \\ 
a^{(k-1)}_*
(({\mab Z}/l^n)_{\os{\circ}{X}{}^{(k-1)}}(-(k-1))
\otimes_{\mab Z}\vp^{(k-1)}_{\rm et}(\os{\circ}{X}/\os{\circ}{S}))
\{-(k-1)\}[1]
\end{CD}
\tag{15.2.1}\label{cd:dgsn}
\end{equation*}  
is commutative. 
\end{lemm}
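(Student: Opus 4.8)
The plan is to establish the commutativity of (\ref{cd:dgsn}) by computing the connecting morphism $d$ explicitly and matching it, stratum by stratum, with the individual Gysin morphisms $G_{\ul{i}}^{\ul{i}_m}$ assembled in $G^{(k)}$, in close parallel with the identification of $\theta$ with the \v{C}ech pullback in (\ref{lemm:lbdtrho}) and (\ref{cd:ttarho}). Since (\ref{cd:dgsn}) asserts the equality of two morphisms of sheaves on $\os{\circ}{X}_{\rm et}$, I would first localize so that $\os{\circ}{X}$ carries the standard semistable chart and $M_X^{\rm gp}/{\cal O}_X^*$ is the free abelian sheaf on the classes of the local branches $\{x_i=0\}$. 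By (\ref{lemm:fq0}) we have $L^{\bul}_{l^n}(X)\simeq R\eps_{X*}({\mab Z}/l^n)$, and by (\ref{eqn:tkx}), i.e. \cite[(2.4)]{ktnk}, its cohomology sheaves are the wedge powers $\bigwedge^q(M_X^{\rm gp}/{\cal O}_X^*)\otimes_{\mab Z}{\mab Z}/l^n(-q)$, which split as a direct sum over the strata $\os{\circ}{X}_{\ul{i}}$; this splitting is precisely the source/target decomposition built into the definition of $G^{(k)}$.

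The conceptual core is that $R\eps_{X*}({\mab Z}/l^n)$ is \emph{not} formal: the two-step truncation $\tau_k/\tau_{k-2}$ carries a nontrivial class of cohomological degree $[1]\{1\}$ (with a Tate twist), lying in an ${\rm Ext}^2$-type group between the graded pieces, and this class is exactly what $d$ records. By the fixed isomorphism ${\rm gr}^{\tau}_kE^{\bul}\os{\sim}{\lo}{\cal H}^k(E^{\bul})[-k]$ of Notation (5) together with the boundary-morphism sign rule of the Conventions, the morphism $d$ is computed by the naive recipe ``lift along $\tau_k\to{\rm gr}^{\tau}_k$, apply the differential of $L^{\bul}_{l^n}(X)$, project onto ${\rm gr}^{\tau}_{k-1}$.'' Because the differential of $L^{\bul}_{l^n}(X)={\rm MF}_{l^n}(T-1)$ in (\ref{ali:mftkdf}) is explicit, and the monodromy $T-1$ is exactly what produces the nonvanishing, I would run this recipe through the Kato--Nakayama description of the cohomology sheaves. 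The computation then reduces, one stratum at a time, to a single codimension-one closed immersion $\iota_{\ul{i}}^{\ul{i}_m}\col\os{\circ}{X}_{\ul{i}}\os{\sus}{\lo}\os{\circ}{X}_{\ul{i}_m}$, where the residue appearing in the Koszul-type differential on $\bigwedge^{\bul}(M_X^{\rm gp}/{\cal O}_X^*)$ is the contraction against the class of the $m$-th branch, and this residue is the content of the Gysin class $G_{\ul{i}}^{\ul{i}_m}$.

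The genuine obstacle, and the step I would execute most carefully, is pinning down the signs and twists so that the individual contributions assemble into exactly $-G^{(k)}$ rather than $+G^{(k)}$. Here I would use the orientation bookkeeping fixed in (\ref{eqn:colxts}) and (\ref{eqn:m1gom}): wedging with the generator $(i_m)$ of $\vp_{i_m{\rm et}}(\os{\circ}{X}/\os{\circ}{S})$ introduces the factor $(-1)^m$, which matches the alternating signs in $G^{(k)}$, while the single overall sign in $-G^{(k)}$ issues from the boundary-morphism convention (the ``$(-1)\times$'' in the Conventions) combined with the sign of the differential $-d$ in the mapping-fiber presentation of $A^{\bul\bul}_{l^n}$. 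This sign-chasing is the delicate part; the geometric input (residue equals Gysin) is standard and mirrors the identification of $\theta$ with $\iota^{(k)*}$ in (\ref{lemm:lbdtrho}).

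Finally, I would record an alternative, more structural route: the Gysin morphism $G^{(k)}$ is the Poincar\'e--Verdier dual of the pullback $\iota^{(k)*}$ already identified in (\ref{cd:ttarho}), while $d$ is the boundary ``dual'' to $\theta$ inside the mapping-fiber formalism for $T-1$, so in principle (\ref{cd:dgsn}) can be deduced from (\ref{lemm:lbdtrho}) by dualizing. Making this precise, however, requires smoothness and properness of the strata and a careful treatment of the monodromy twist $(-1)$, so I would keep it as a consistency check and use the direct local computation above as the primary argument, exactly as in \cite[(5.4)]{ndeg}.
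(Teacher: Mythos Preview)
The paper does not actually prove this lemma: it is stated with the attribution \cite[(5.4)]{ndeg} and introduced by the sentence ``In \cite{ndeg} we have proved the following two lemmas,'' so there is no in-text argument to compare against. Your proposal is a reasonable outline of the computation one would expect to find in the cited reference --- localize to a standard chart, use the Kato--Nakayama identification (\ref{eqn:tkx}) of ${\cal H}^k(L^{\bul}_{l^n}(X))$ with wedge powers, run the ``lift, differentiate, project'' recipe for the connecting map of $\tau_k/\tau_{k-2}$, and match the resulting residue with the Gysin class of each $\iota_{\ul{i}}^{\ul{i}_m}$ --- and you correctly flag the sign bookkeeping (the orientation convention (\ref{eqn:colxts}), the $(-1)^m$ from (\ref{eqn:m1gom}), and the global $(-1)$ from the boundary-morphism Convention) as the delicate step.

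That said, your write-up remains a plan rather than a proof: the sentence ``this residue is the content of the Gysin class $G_{\ul{i}}^{\ul{i}_m}$'' is precisely the nontrivial input, and you have not verified it. Concretely, one must check that under the Kummer isomorphism ${\cal H}^1(L^{\bul}_{l^n}(X))\simeq (M_X^{\rm gp}/{\cal O}_X^*)\otimes{\mab Z}/l^n(-1)$ the class of the branch $\{x_{i_m}=0\}$ maps to the cycle class of $\os{\circ}{X}_{\ul{i}}$ in $\os{\circ}{X}_{\ul{i}_m}$ with the correct sign, and that cupping with this class realizes the differential on $\bigwedge^{\bul}$. This is indeed standard (it is the log-\'etale avatar of the divisor-class computation underlying purity), but it must be invoked or carried out explicitly; your duality remark at the end is a heuristic, not a substitute. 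Since the present paper defers entirely to \cite{ndeg}, the honest thing to do here is likewise to cite \cite[(5.4)]{ndeg} for the full argument and, if you wish, retain your outline as an informal explanation of what that reference does.
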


By (\ref{lemm:lbdtrho}) and (\ref{lemm:gsta}) we obtain the following: 

\begin{coro}[{\bf cf.~{\cite[(5.5)]{ndeg}}}]\label{coro:bdel}
The edge morphism 
$d^{-k, q+k}_1 \col E_{1,l}^{-k, q+k} 
\lo E_{1,l}^{-k+1, q+k}$ 
of the spectral sequence of {\rm (\ref{eqn:lledd})} 
for the case $\os{\circ}{D}=\emptyset$ is 
identified with the following morphism$:$
\begin{equation*}
\sum_{j\geq {\rm max}\{-k, 0\}}
(G^{(k)}+\iota^{(k)*}). 
\tag{15.3.1}\label{eqn:lbd}
\end{equation*}
\end{coro}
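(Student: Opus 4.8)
The plan is to read off $d^{-k,q+k}_1$ directly from the definition of the weight spectral sequence of the filtered complex $(A^{\bul}_{l^n}(X_{\frac{1}{l^{\infty}}}/S_{\frac{1}{l^{\infty}}}),P)$ and to match the two pieces that appear against the two lemmas (\ref{lemm:lbdtrho}) and (\ref{lemm:gsta}). Throughout I take $\os{\circ}{D}=\emptyset$, so the relevant object is $A^{\bul}_{l^n}(X_{\frac{1}{l^{\infty}}}/S_{\frac{1}{l^{\infty}}})$, the single complex of the double complex $A^{\bul \bul}_{l^n}(X_{\frac{1}{l^{\infty}}}/S_{\frac{1}{l^{\infty}}})$ of (\ref{eqn:anxsmf}) whose two boundary morphisms are $\theta$ and $-d$ as in (\ref{cd:dlsgn}). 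By the general formalism of the spectral sequence of an increasingly filtered complex, the edge morphism $d^{-k,q+k}_1\col E^{-k,q+k}_1\lo E^{-k+1,q+k}_1$ is obtained by applying $R\os{\circ}{f}_*$ and $\vpl_n$ to the boundary morphism ${\rm gr}^P_kA^{\bul}_{l^n}\lo {\rm gr}^P_{k-1}A^{\bul}_{l^n}[1]$ of the triangle
\begin{equation*}
{\rm gr}^P_{k-1}A^{\bul}_{l^n}\lo (P_k/P_{k-2})A^{\bul}_{l^n}\lo {\rm gr}^P_{k}A^{\bul}_{l^n}\os{+1}{\lo}.
\end{equation*}

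First I would insert the explicit description (\ref{eqn:grf}) of ${\rm gr}^P_kA^{\bul}_{l^n}$ as the direct sum over $j\geq \max\{-k,0\}$ of the shifted cohomology sheaves ${\cal H}^{2j+k+1}(L^{\bul}_{l^n}(X))$ of $L^{\bul}_{l^n}(X)={\rm MF}_{l^n}(T-1)$ (suitably twisted and shifted), and likewise for ${\rm gr}^P_{k-1}A^{\bul}_{l^n}$. Because $A^{\bul}_{l^n}$ is the total complex of a double complex with differentials $\theta$ and $-d$, the boundary morphism above is the sum of two contributions: the part induced by the horizontal differential $\theta$, which moves between the summands of adjacent index $j$ (equivalently, raises the intersection index $2j+k$ by one), and the part induced by the canonical-filtration boundary attached to the vertical differential $-d$, which, for a fixed $j$, relates ${\cal H}^{2j+k+1}$ to ${\cal H}^{2j+k}$ of $L^{\bul}_{l^n}(X)$. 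I then identify each contribution via the two lemmas: for the $\theta$-part, (\ref{lemm:lbdtrho}) shows that, under the isomorphisms (\ref{eqn:tkx}), the morphism $\theta\col {\cal H}^{k}(L^{\bul}_{l^n}(X))(k)\lo {\cal H}^{k+1}(L^{\bul}_{l^n}(X))(k+1)$ is the \v{C}ech restriction morphism $\iota^{(k)*}$ of (\ref{eqn:descbd}); for the $-d$-part, (\ref{lemm:gsta}) shows that the canonical-filtration boundary ${\cal H}^{k}(L^{\bul}_{l^n}(X))\{-k\}\lo {\cal H}^{k-1}(L^{\bul}_{l^n}(X))\{-(k-1)\}[1]$ is $-G^{(k)}$, the \v{C}ech Gysin morphism of (\ref{eqn:m1gom}). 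Combining these identifications, summing over the admissible range $j\geq \max\{-k,0\}$, and absorbing the sign of $-G^{(k)}$ into the Convention (4) sign of the boundary morphism, one obtains $d^{-k,q+k}_1=\sum_{j\geq \max\{-k,0\}}(G^{(k)}+\iota^{(k)*})$, which is (\ref{eqn:lbd}); this is precisely the argument of \cite[(5.5)]{ndeg}.

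The hard part will be the bookkeeping needed to prove the structural claim that the boundary morphism of the $P$-triangle splits exactly into the $\theta$-contribution and the $\tau$-boundary contribution, with no surviving cross terms. Concretely, one has to lift a local cocycle of ${\rm gr}^P_kA^{\bul}_{l^n}$ to $P_kA^{\bul}_{l^n}$, apply the total differential $\theta-d$, and decompose the class of the image in ${\rm gr}^P_{k-1}A^{\bul}_{l^n}$ according to the column index $j$, checking that the $j$-raising component is governed by $\theta$ and the $j$-preserving component by the canonical-filtration boundary attached to $-d$. This requires tracking the interaction of the two filtrations $P$ and $\tau$ on the double complex, together with the Tate twists $(-j-k)$ and the shifts $\{1\}[-2j-k]$, and all the signs coming from (\ref{cd:dlsgn}) and from Conventions (2) and (4); in particular one must verify that the twists produced by $G^{(k)}$ and by $\iota^{(k)*}$ land compatibly in the single target summand of $E^{-k+1,q+k}_1$ after $R\os{\circ}{f}_*$. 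Once this splitting and the twist-matching are pinned down, the two lemmas complete the identification immediately.
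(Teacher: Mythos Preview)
Your proposal is correct and follows exactly the approach the paper intends: the paper's own proof is simply ``By (\ref{lemm:lbdtrho}) and (\ref{lemm:gsta}) we obtain the following,'' and you have spelled out the standard decomposition of the connecting morphism of the $P$-triangle on the total complex into its $\theta$-component and its $\tau$-boundary component, then matched each against the two lemmas. One trivial remark: in the double complex (\ref{cd:dlsgn}) the arrow $\theta$ is drawn vertically (raising $j$) and $-d$ horizontally (raising $i$), so your labels ``horizontal'' and ``vertical'' are swapped, but your description of what each differential does is correct and nothing in the argument depends on this naming.
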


\par 
Next consider the general case where the horizontal SNCD 
$\os{\circ}{D}$ is not necessarily empty.  
Let $\os{\circ}{D}=\bigcup_{i\in I(\os{\circ}{D})}\os{\circ}{D}_i$ 
be a union of smooth divisors, 
where $I(\os{\circ}{D})$ is a set of indexes.   
Fix a total order on $I(\os{\circ}{D})$. 
\par 
Let $k$ be a positive integer. 
Set 
$I_k(\os{\circ}{D})
:=\{(i_0, \ldots, i_{k-1})\in 
I(\os{\circ}{D})^k~\vert~i_m< i_{m'}~(0\leq m<m' \leq k-1)\}$ 
and $\ul{i}:=(i_0, \ldots, i_{k-1})$. 
For an integer $0 \leq m \leq k-1$, 
set $\ul{i}_m:=(i_0, \ldots, \hat{i}_m, \ldots, i_{k-1})$. 
Set 
$\os{\circ}{D}_{\ul{i}}:=\os{\circ}{D}_{i_0} \cap \cdots \cap \os{\circ}{D}_{i_{k-1}}$ 
and $\os{\circ}{D}_{\ul{i}_m}:=\os{\circ}{D}_{i_0} \cap \cdots \cap 
\hat{\os{\circ}{D}}_{i_m} \cap \cdots \cap \os{\circ}{D}_{i_{k-1}}$.

For a nonnegative integer $e$, 
let $\iota_{\ul{i}}^{\ul{i}_m}\vert_{\os{\circ}{X}{}^{(e)}} 
\col \os{\circ}{X}{}^{(e)}  \cap \os{\circ}{D}_{\ul{i}}
\os{\sus}{\lo}  \os{\circ}{X}{}^{(e)}\cap \os{\circ}{D}_{\ul{i}_m}$ 
be the restriction of $\iota_{\ul{i}}^{\ul{i}_m}$ to 
$\os{\circ}{X}_{\ul{i}}\cap \os{\circ}{D}{}^{(e)}$  and let 
$G_{\ul{i}}^{\ul{i}_m} \col 
({\mab Z}/l^n(-1))_{ \os{\circ}{X}{}^{(e)} \cap \os{\circ}{D}_{\ul{i}}}\{-1\} \lo 
({\mab Z}/l^n)_{\os{\circ}{X}{}^{(e)}\cap \os{\circ}{D}_{\ul{i}_m}}[1]$ 
be the Gysin morphism  of the closed immersion 
$\iota_{\ul{i}}^{\ul{i}_m}\vert_{\os{\circ}{X}_{\ul{i}}\cap \os{\circ}{D}{}^{(e)}}$. 
For a nonnegative integer $e$, 
let $\iota_{\ul{i}}^{\ul{i}_m}(\os{\circ}{D})\vert_{\os{\circ}{X}{}^{(e)}}  
\col \os{\circ}{X}{}^{(e)}\cap D_{\ul{i}}
\os{\sus}{\lo} \os{\circ}{X}{}^{(e)}\cap D_{\ul{i}_m}$
be the natural closed immersion and let 
\begin{equation*}
G_{\ul{i}}^{\ul{i}_m}(\os{\circ}{D}) \col 
({\mab Z}/l^n(-1))_{\os{\circ}{X}{}^{(e)}\cap {D_{\ul{i}}}}\{-1\} \lo 
({\mab Z}/l^n)_{\os{\circ}{X}{}^{(e)}\cap D_{\ul{i}_m}}[1]
\tag{15.3.2}\label{eqn:gdfs}
\end{equation*} 
be the Gysin morphism of 
$\iota_{\ul{i}}^{\ul{i}_m}(\os{\circ}{D})\vert_{\os{\circ}{X}{}^{(e)}}$. 
\par 
Let $k$ and $q$ be integers.   
Consider the following three morphisms for 
$k' \leq k$ and $j\geq \max\{-k',0\}$ appearing 
in the edge morphism 
$E^{-k,q+k}_1 \lo E^{-(k-1),q+k}_1$ of 
the spectral sequence  {\rm (\ref{eqn:lledd})}:  
\begin{align*} 
G^{(k),(k')} & :=\sum_{\ul{i} \in I_{2j+k'}(\os{\circ}{X})}
\sum_{m=0}^{2j+k'}(-1)^mG_{\ul{i}}^{\ul{i}_m} 
\col R^{q-2j-k}f_{\os{\circ}{X}{}^{(2j+k')}\cap 
\os{\circ}{D}{}^{(k-k')}/\os{\circ}{S} {\rm et}*}({\mab Z}_l
\otimes_{\mab Z}  \\ 
{} & (\varpi^{(2j+k')}_{\rm et}(\os{\circ}{X}/\os{\circ}{S}))
\vert_{\os{\circ}{X}{}^{(2j+k')}\cap 
\os{\circ}{D}{}^{(k-k')}}\otimes_{\mab Z}
\varpi^{(k-k')}_{\rm et}(\os{\circ}{D}/\os{\circ}{S})
\vert_{\os{\circ}{X}{}^{(2j+k')}\cap 
\os{\circ}{D}{}^{(k-k')}})(-j-k) 
\lo \\ 
& R^{q+1-2j-(k-1)}f_{\os{\circ}{X}{}^{(2j+k'-1)}\cap 
\os{\circ}{D}{}^{(k-k')}/S {\rm et}*}({\mab Z}_l
\otimes_{\mab Z}(\varpi^{(2j+k-1')}_{\rm et}(\os{\circ}{X}/\os{\circ}{S}))
\vert_{\os{\circ}{X}{}^{(2j+k-1')}\cap 
\os{\circ}{D}{}^{(k-k')}}\otimes_{\mab Z}  \\ 
{} & \varpi^{(k-k')}_{\rm et}(\os{\circ}{D}/\os{\circ}{S})
\vert_{\os{\circ}{X}{}^{(2j+k'-1)}\cap 
\os{\circ}{D}{}^{(k-k')}})(-j-(k-1)), 
\end{align*} 
\begin{align*} 
\iota^{(k),(k')*}
& :=\sum_{\ul{i} \in I_{2j+k'}(\os{\circ}{X})}
\sum_{m=0}^{2j+k'}(-1)^m\iota_{\ul{i}}^{\ul{i}_m*}  
\col R^{q-2j-k}f_{\os{\circ}{X}{}^{(2j+k')}\cap 
\os{\circ}{D}{}^{(k-k')}/S {\rm et}*}({\mab Z}_l
\otimes_{\mab Z}  \\ 
{} & (\varpi^{(2j+k')}_{\rm et}(\os{\circ}{X}/\os{\circ}{S}))
\vert_{\os{\circ}{X}{}^{(2j+k')}\cap 
\os{\circ}{D}{}^{(k-k')}}\otimes_{\mab Z}
\varpi^{(k-k')}_{\rm et}(\os{\circ}{D}/\os{\circ}{S})
\vert_{\os{\circ}{X}{}^{(2j+k')}\cap 
\os{\circ}{D}{}^{(k-k')}})(-j-k) 
\lo \\ 
{} & R^{q+1-2(j+1)-(k-1)}f_{\os{\circ}{X}{}^{(2j+k'+1)}\cap 
\os{\circ}{D}{}^{(k-k')}/S {\rm et}*}({\mab Z}_l
\otimes_{\mab Z}  \\ 
{} & (\varpi^{(2j+k'+1)}_{\rm et}(\os{\circ}{X}/\os{\circ}{S}))
\vert_{\os{\circ}{X}{}^{(2j+k'+1)}\cap 
\os{\circ}{D}{}^{(k-k')}}\otimes_{\mab Z}
\varpi^{(k-k')}_{\rm et}(\os{\circ}{D}/\os{\circ}{S})
\vert_{\os{\circ}{X}{}^{(2j+k'+1)}\cap 
\os{\circ}{D}{}^{(k-k')}})(-j-k) 
\end{align*} 
and 
\begin{align*} 
&G^{(k),(k')}(\os{\circ}{D})
 :=\sum_{\ul{i} \in I_{k-k'}(\os{\circ}{D})}
\sum_{m=0}^{k-k'}(-1)^mG_{\ul{i}}^{\ul{i}_m}(\os{\circ}{D})   
\col R^{q-2j-k}f_{\os{\circ}{X}{}^{(2j+k')}\cap 
\os{\circ}{D}{}^{(k-k')}/S {\rm et}*}({\mab Z}_l
\otimes_{\mab Z}  \\ 
{} & (\varpi^{(2j+k')}_{\rm et}(\os{\circ}{X}/\os{\circ}{S}))
\vert_{\os{\circ}{X}{}^{(2j+k')}\cap 
\os{\circ}{D}{}^{(k-k')}}\otimes_{\mab Z}
\varpi^{(k-k')}_{\rm et}(\os{\circ}{D}/\os{\circ}{S})
\vert_{\os{\circ}{X}{}^{(2j+k')}\cap 
\os{\circ}{D}{}^{(k-k')}})(-j-k) \lo \\ 
& 
R^{q+1-2j-(k-1)}f_{\os{\circ}{X}{}^{(2j+k')}\cap 
\os{\circ}{D}{}^{((k-1)-k')}/S {\rm et}*}({\mab Z}_l
\otimes_{\mab Z}  \\ 
{} & (\varpi^{(2j+k')}_{\rm et}(\os{\circ}{X}/\os{\circ}{S}))
\vert_{\os{\circ}{X}{}^{(2j+k')}\cap 
\os{\circ}{D}{}^{(k-k'-1)}}\otimes_{\mab Z}
\varpi^{((k-1)-k')}_{\rm et}(\os{\circ}{D}/\os{\circ}{S})
\vert_{\os{\circ}{X}{}^{(2j+k')}\cap 
\os{\circ}{D}{}^{(k-k'-1)}})(-j-(k-1)).  
\end{align*}

By noting the sign arising from the Convention (4) and the definitions 
(\ref{eqn:tdef}) and (\ref{eqn:bapt}), we obtain the following: 

\begin{coro}\label{coro:lbdl}
The edge morphism 
$d^{-k, q+k}_1 \col E_{1,l}^{-k, q+k} 
\lo E_{1,l}^{-k+1, q+k}$ 
of the spectral sequence  {\rm (\ref{eqn:lledd})} is 
identified with the following morphism$:$
\begin{equation*}
\sum_{k'\leq k}\sum_{j\geq {\rm max}\{-k', 0\}}
\{G^{(k),(k')}+\iota^{(k),(k')*}+
(-1)^{2j+k'+1}G^{(k),(k')}(\os{\circ}{D})\}. 
\tag{15.4.1}\label{eqn:glbd}
\end{equation*}
\end{coro}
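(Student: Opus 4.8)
The plan is to reduce the statement to the already-established case $\os{\circ}{D}=\emptyset$ (Corollary \ref{coro:bdel}) together with an analogous Gysin description in the $\os{\circ}{D}$-direction, and then to assemble the three contributions using the tensor-product structure of the bifiltered complex. Recall from (\ref{eqn:bast}), (\ref{eqn:bapt}) and (\ref{eqn:bdast}) that
$$A^{\bul}_{l^n}((X_{\frac{1}{l^{\infty}}},D_{\frac{1}{l^{\infty}}})/S_{\frac{1}{l^{\infty}}})=s(A^{\bul}_{l^n}(X_{\frac{1}{l^{\infty}}}/S_{\frac{1}{l^{\infty}}})\otimes_{{\mab Z}/l^n}(M^{\bul}_{l^n}(\os{\circ}{D}/\os{\circ}{S}),Q)),$$
and that, by the Convention (4), the total boundary morphism splits as $d_A\otimes 1+(-1)^p\,1\otimes d_M$, where $p$ denotes the degree in the $A$-factor. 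The edge morphism $d_1^{-k,q+k}$ of (\ref{eqn:lledd}) is the connecting morphism of the short exact sequence of graded pieces $0\to {\rm gr}^P_{k-1}\to P_k/P_{k-2}\to {\rm gr}^P_k\to 0$ after applying $R^qf_*$, so I would compute it component by component on the description of ${\rm gr}^P_kA^{\bul}_{l^n}$ furnished by Lemma \ref{lemm:grc} (2).

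First I would treat the two contributions coming from $d_A\otimes 1$. Fixing the tensor factor ${\rm gr}^Q_{k-k'}M^{\bul}_{l^n}(\os{\circ}{D}/\os{\circ}{S})$, which by (\ref{eqn:grkqa}) is the constant sheaf on $\os{\circ}{D}{}^{(k-k')}$ twisted by the orientation sheaf, the remaining factor is exactly $A^{\bul}_{l^n}(X_{\frac{1}{l^{\infty}}}/S_{\frac{1}{l^{\infty}}})$. Hence Lemma \ref{lemm:lbdtrho} (which identifies the $\theta$-map on cohomology sheaves with the \v{C}ech pull-back $\iota^{(k)*}$) and Lemma \ref{lemm:gsta} (which identifies the $d$-boundary with $-G^{(k)}$) apply verbatim, now restricted to the subscheme $\os{\circ}{D}{}^{(k-k')}$. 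Tensoring these two morphisms with the $\os{\circ}{D}$-orientation sheaf yields precisely the terms $\iota^{(k),(k')*}$ and $G^{(k),(k')}$ of (\ref{eqn:glbd}); this is the same sign bookkeeping already carried out to prove Corollary \ref{coro:bdel}.

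The third contribution comes from $1\otimes d_M$. Since $Q=\tau$ on $M^{\bul}_{l^n}(\os{\circ}{D}/\os{\circ}{S})$, a filtered flat resolution of $R\os{\circ}{\eps}_{D*}({\mab Z}/l^n)$, the connecting morphism on the $Q$-graded quotients is, by the analogue of Lemma \ref{lemm:gsta} applied to the proper morphism $\os{\circ}{\eps}_D$ and to the closed immersions $\os{\circ}{D}{}^{(k-k')}\hookrightarrow \os{\circ}{D}{}^{(k-k'-1)}$, identified with the \v{C}ech Gysin morphism $G^{(k),(k')}(\os{\circ}{D})$ built from the $G_{\ul{i}}^{\ul{i}_m}(\os{\circ}{D})$ of (\ref{eqn:gdfs}). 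The Convention (4) sign $(-1)^p$, with $p$ equal to the $A$-factor degree $2j+k'+1$ read off from (\ref{eqn:anxsmf}) and (\ref{eqn:grf}), produces the factor $(-1)^{2j+k'+1}$. Summing over the index set $k'\leq k$, $j\geq\max\{-k',0\}$ of Lemma \ref{lemm:grc} (2) then gives (\ref{eqn:glbd}).

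The main obstacle will be the sign bookkeeping. While the shape of the three morphisms is forced by the tensor structure, matching the signs demands carefully unwinding the shifts $\{-j\}$, $[1]$ and $[-2j-k]$ hidden in the definitions (\ref{eqn:anxsmf}), (\ref{eqn:tdef}) and in the mapping-fiber convention for ${\rm MF}_{l^n}(T-1)$, and then checking that the Gysin sign convention of Lemma \ref{lemm:gsta} combines with the Convention (4) sign to give exactly $(-1)^{2j+k'+1}$ on the $\os{\circ}{D}$-term while reproducing the signs of Corollary \ref{coro:bdel} on the $X$-terms. I would control this by reducing to a single pair of adjacent weight-graded pieces and tracking one basis element $x\otimes(i_0\cdots i_{k-1})$ through the explicit formulas, as in (\ref{eqn:ooolll}).
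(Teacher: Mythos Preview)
Your proposal is correct and follows essentially the same approach as the paper. The paper's proof is a one-line reduction: it observes that $(A^{\bul}_{l^n}((X_{\frac{1}{l^{\infty}}},D_{\frac{1}{l^{\infty}}})/S_{\frac{1}{l^{\infty}}}),P)$ is the single complex of the triple complex $(A^{\bul \bul}_{l^n}(X_{\frac{1}{l^{\infty}}}/S_{\frac{1}{l^{\infty}}}),P)\otimes_{{\mab Z}/l^n}(M^{\bul}_{l^n}(\os{\circ}{D}/\os{\circ}{S}),Q)$, invokes Corollary~\ref{coro:bdel} for the $A$-factor, and defers the remaining bookkeeping (in particular the Koszul sign $(-1)^{2j+k'+1}$ on the $\os{\circ}{D}$-Gysin term) to \cite[(10.1)]{ndw}; you have simply written out that bookkeeping explicitly.
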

\begin{proof} 
Because $(A^{\bul}_{l^n}((X_{\frac{1}{l^{\infty}}},D_{\frac{1}{l^{\infty}}})/S_{\frac{1}{l^{\infty}}}),P)$ 
is the single complex of the triple complex 
$$(A^{\bul \bul \bul}_{l^n}((X_{\frac{1}{l^{\infty}}},D_{\frac{1}{l^{\infty}}})/S),P)
=(A^{\bul \bul}_{l^n}(X_{\frac{1}{l^{\infty}}}/S_{\frac{1}{l^{\infty}}}),P)
\otimes_{{\mab Z}/l^n}(M^{\bul}_{l^n}(\os{\circ}{D}/\os{\circ}{S}),Q),$$ 
we obtain (\ref{coro:lbdl}) by (\ref{coro:bdel}) 
as in \cite[(10.1)]{ndw}. 
\end{proof} 

\begin{prop}\label{prop:gy}
Let 
\begin{equation*}
G^{(k)}(D)_{\ul{i}}^{\ul{i}_m} \col  
({\mab Z}/l^n(-1))_{D_{\ul{i}}}\{-1\} \lo 
({\mab Z}/l^n)_{D_{\ul{i}_m}}[1]
\end{equation*} 
be the Gysin morphism 
 of the closed immersion 
$\iota_{\ul{i}}^{\ul{i}_m}(D)\col D_{\ul{i}}\os{\sus}{\lo} D_{\ul{i}_m}$ 
obtained by $\iota_{\ul{i}}^{\ul{i}_m}(\os{\circ}{D})$. 
Then the edge morphism $d_1^{-k,q+k} \col E_1^{-k,q+k}\lo  E_1^{-k+1,q+k}$ of 
{\rm (\ref{eqn:lehkd})} is expressed as 
$\sum_{\ul{i} \in I_{k}(\os{\circ}{D})}\sum(-1)^mG^{(k)}(D)_{\ul{i}}^{\ul{i}_m}$. 
\end{prop}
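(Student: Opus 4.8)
The plan is to follow the pattern of the proof of Corollary \ref{coro:lbdl}: reduce the edge morphism of \eqref{eqn:lehkd} to the boundary morphism of the canonical ($\tau$-)filtration of $R\os{\circ}{\eps}_{D*}({\mab Z}/l^n)$, and then invoke the Gysin description already established for the semistable ($D=\emptyset$) situation. First I would recall that \eqref{eqn:lehkd} is the spectral sequence associated to the filtration $P^{D_{\frac{1}{l^{\infty}}}}$ on $A^{\bul}_{l^{\infty}}((X_{\frac{1}{l^{\infty}}},D_{\frac{1}{l^{\infty}}})/S_{\frac{1}{l^{\infty}}})$, whose $E_1$-terms are computed by ${\rm gr}^{P^{D_{\frac{1}{l^{\infty}}}}}_k$ via (\ref{lemm:grc}) (1). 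Consequently, after applying $R\os{\circ}{f}_*$, the edge morphism $d_1^{-k,q+k}$ is the morphism induced by the boundary morphism of the triangle ${\rm gr}^{P^{D_{\frac{1}{l^{\infty}}}}}_{k-1} \to P^{D_{\frac{1}{l^{\infty}}}}_k/P^{D_{\frac{1}{l^{\infty}}}}_{k-2} \to {\rm gr}^{P^{D_{\frac{1}{l^{\infty}}}}}_k \os{+1}{\lo}$ coming from the filtration, with the sign convention of Convention (4).

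Next I would use that $(A^{\bul}_{l^n}((X_{\frac{1}{l^{\infty}}},D_{\frac{1}{l^{\infty}}})/S_{\frac{1}{l^{\infty}}}),P^{D_{\frac{1}{l^{\infty}}}})$ equals $A^{\bul}_{l^n}(X_{\frac{1}{l^{\infty}}}/S_{\frac{1}{l^{\infty}}})\otimes_{{\mab Z}/l^n}(M^{\bul}_{l^n}(\os{\circ}{D}/\os{\circ}{S}),Q)$, where the first factor carries the trivial filtration and is flat. Hence ${\rm gr}^{P^{D_{\frac{1}{l^{\infty}}}}}_k = A^{\bul}_{l^n}(X_{\frac{1}{l^{\infty}}}/S_{\frac{1}{l^{\infty}}})\otimes {\rm gr}^Q_k M^{\bul}_{l^n}(\os{\circ}{D}/\os{\circ}{S})$, and the boundary morphism above is $\mathrm{id}\otimes \partial_Q$, where $\partial_Q$ is the boundary of the $Q$-triangle of $(M^{\bul}_{l^n}(\os{\circ}{D}/\os{\circ}{S}),Q)$, a filtered flat resolution of $(R\os{\circ}{\eps}_{D*}({\mab Z}/l^n),\tau)$ by (\ref{prop:bav}). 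Since $A^{\bul}_{l^n}(X_{\frac{1}{l^{\infty}}}/S_{\frac{1}{l^{\infty}}})$ restricts on the stratum $\os{\circ}{D}{}^{(k)}$ to $A^{\bul}_{l^n}(D^{(k)}_{\frac{1}{l^{\infty}}}/S_{\frac{1}{l^{\infty}}})$ by (\ref{prop:zxa}), the identification of the $E_1$-terms with the cohomology of $D^{(k)}$ is exactly (\ref{lemm:grc}) (1), and the tensor factor is simply carried along under $\partial_Q$.

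The main step, and the main obstacle, is to identify $\partial_Q$, under Kato's isomorphism (\ref{eqn:tktk1}) between ${\rm gr}^\tau_k R\os{\circ}{\eps}_{D*}({\mab Z}/l^n)$ and $c^{(k)}_*(\varpi^{(k)}_{\rm et}(\os{\circ}{D}/\os{\circ}{S}))\otimes_{\mab Z}{\mab Z}/l^n(-k)[-k]$, with the Gysin sum $\sum_{\ul{i}}\sum_m(-1)^m G^{(k)}(D)_{\ul{i}}^{\ul{i}_m}$. This is the exact analogue, for the morphism $\os{\circ}{\eps}_D$ forgetting the log structure $M(D)$ of the horizontal SNCD $\os{\circ}{D}$, of Lemma \ref{lemm:gsta} (the boundary of the $\tau$-triangle of $L^{\bul}_{l^n}(X)=R\eps_{X*}({\mab Z}/l^n)$ being the Gysin sum). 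Since the local structure of $M(D)$ is of normal-crossing type with strata $\os{\circ}{D}{}^{(k)}$, the local computation of the boundary morphism carried out in \cite{ndeg} for $L^{\bul}_{l^n}(X)$ applies verbatim with $\os{\circ}{D}{}^{(k)}$ in place of $\os{\circ}{X}{}^{(k-1)}$ and with (\ref{eqn:tktk1}) in place of (\ref{eqn:tkx}). The delicate point, as in Lemma \ref{lemm:gsta} and Corollary \ref{coro:bdel}, is the sign bookkeeping: the factor $(-1)^m$ is forced by the chosen identification (\ref{eqn:colxts}) of orientation sheaves together with the fixed ordering of the components of $\os{\circ}{D}$, while the signs introduced by the single-complex construction and by Convention (4) must be tracked to confirm that no extra overall sign survives.

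Finally, unlike the edge morphism of (\ref{eqn:lledd}) treated in Corollary \ref{coro:lbdl}, here there is no $\theta$ (double-complex) direction along $D$, because $P^{D_{\frac{1}{l^{\infty}}}}$ arises solely from the single filtered complex $(M^{\bul}_{l^n}(\os{\circ}{D}/\os{\circ}{S}),Q)$; consequently neither a \v{C}ech restriction morphism $\iota^{(k)*}$ nor a second Gysin contribution appears, and $d_1^{-k,q+k}$ is exactly the Gysin sum as claimed. I would then conclude by applying $R\os{\circ}{f}_*$ to $\mathrm{id}\otimes\partial_Q$ and passing to the projective limit over $n$, which yields the stated expression $\sum_{\ul{i}\in I_k(\os{\circ}{D})}\sum_m(-1)^m G^{(k)}(D)_{\ul{i}}^{\ul{i}_m}$.
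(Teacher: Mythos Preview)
Your proposal is correct and follows precisely the line of reasoning the paper implicitly relies on; the paper's own proof consists of the single sentence ``This is obvious.'' Your unpacking --- reducing $d_1$ to the boundary $\mathrm{id}\otimes\partial_Q$ of the $Q$-triangle on the flat resolution $(M^{\bul}_{l^n}(\os{\circ}{D}/\os{\circ}{S}),Q)$ of $(R\os{\circ}{\eps}_{D*}({\mab Z}/l^n),\tau)$, invoking the analogue of Lemma~\ref{lemm:gsta} for the horizontal SNCD, and then restricting the first tensor factor along the strata via (\ref{prop:zxa}) and (\ref{lemm:grc})~(1) to obtain the log Gysin morphisms for $D_{\ul{i}}\hookrightarrow D_{\ul{i}_m}$ --- is exactly what the author is taking for granted.
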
 
\begin{proof} 
This is obvious. 
\end{proof}

\bigskip
\parno
Yukiyoshi Nakkajima 
\parno
Department of Mathematics,
Tokyo Denki University,
5 Asahi-cho Senju Adachi-ku,
Tokyo 120-8551, Japan.

\end{document}